\author{
\normalsize Fran\c{c}ois Delarue \\[8pt]
	\small Laboratoire J.A.Dieudonn\'e \\
	\small UMR CNRS-UNS No 7351 \\
	\small Universit\'e de Nice Sophia-Antipolis \\
	\small Parc Valrose \\
    \small France-06108 NICE Cedex 2\\
	\small delarue@unice.fr
\and
\normalsize William Salkeld \\[8pt]
	\small Laboratoire J.A.Dieudonn\'e \\
	\small UMR CNRS-UNS No 7351 \\
	\small Universit\'e de Nice Sophia-Antipolis \\
	\small Parc Valrose \\
       \small France-06108 NICE Cedex 2\\
	\small  salkeld@unice.fr
}
\numberwithin{equation}{section}
\theoremstyle{plain}
\newtheorem{theorem}{Theorem}[section]
\newtheorem{lemma}[theorem]{Lemma}
\newtheorem{proposition}[theorem]{Proposition}
\newtheorem{corollary}[theorem]{Corollary}
\newtheorem{definition}[theorem]{Definition}
\newtheorem{remark}[theorem]{Remark}
\newtheorem{example}[theorem]{Example}
\newcommand{\bE}{\mathbb{E}}
\newcommand{\bN}{\mathbb{N}}
\newcommand{\bP}{\mathbb{P}}
\newcommand{\bR}{\mathbb{R}}
\newcommand{\bW}{\mathbb{W}}
\newcommand{\bX}{\mathbb{X}}
\newcommand{\bY}{\mathbb{Y}}
\newcommand{\bZ}{\mathbb{Z}}
\newcommand{\cA}{\mathcal{A}}
\newcommand{\cB}{\mathcal{B}}
\newcommand{\cD}{\mathcal{D}}
\newcommand{\cE}{\mathcal{E}}
\newcommand{\cF}{\mathcal{F}}
\newcommand{\cG}{\mathcal{G}}
\newcommand{\cL}{\mathcal{L}}
\newcommand{\cN}{\mathcal{N}}
\newcommand{\cO}{\mathcal{O}}
\newcommand{\cP}{\mathcal{P}}
\newcommand{\fJ}{\mathfrak{J}}
\newcommand{\fP}{\mathfrak{P}}
\newcommand{\fR}{\mathfrak{R}}
\newcommand{\fV}{\mathfrak{V}}
\newcommand{\scA}{\mathscr{A}}
\newcommand{\scC}{\mathscr{C}}
\newcommand{\scF}{\mathscr{F}}
\newcommand{\scG}{\mathscr{G}}
\newcommand{\scH}{\mathscr{H}}
\newcommand{\scI}{\mathscr{I}}
\newcommand{\scN}{\mathscr{N}}
\newcommand{\scP}{\mathscr{P}}
\newcommand{\scS}{\mathscr{S}}
\newcommand{\scT}{\mathscr{T}}
\newcommand{\rD}{\mathbf{D}}
\newcommand{\rp}{\mathbf{p}}
\newcommand{\rv}{\mathbf{V}}
\newcommand{\rw}{\mathbf{W}}
\newcommand{\rId}{\mathbf{1}}
\newcommand{\vertiii}{{\vert\kern-0.25ex \vert\kern-0.25ex \vert}}
\DeclareMathOperator{\lip}{Lip}
\DeclareMathOperator{\lin}{Lin}
\newcommand{\A}[1]{A^{(0)}_{#1}}
\tikzstyle{vertex} = [fill, shape=circle,inner sep=2pt,]
\tikzstyle{edge} = [fill, line width = 0.5pt]
\tikzstyle{zhyedge1} = [opacity=.5,fill opacity=.5, line cap=round, line join=round, line width=27pt]
\tikzstyle{zhyedge2} = [opacity=.5,fill opacity=.5, line cap=round, line join=round, line width=25pt,color=white]
\tikzstyle{hyedge1} = [opacity=.5,fill opacity=.5, line cap=round, line join=round, line width=27pt]
\tikzstyle{hyedge2} = [opacity=.5,fill opacity=.5, line cap=round, line join=round, line width=25pt, color=white]
\tikzstyle{vertexS} = [fill, shape=circle,inner sep=1pt,]
\tikzstyle{edgeS} = [fill, line width = 0.25pt]
\tikzstyle{zhyedge1S} = [opacity=.5,fill opacity=.5, line cap=round, line join=round, line width=12pt,color=black]
\tikzstyle{zhyedge2S} = [opacity=.5,fill opacity=.5, line cap=round, line join=round, line width=10pt,color=white]
\tikzstyle{hyedge1S} = [opacity=.5,fill opacity=.5, line cap=round, line join=round, line width=12pt]
\tikzstyle{hyedge2S} = [opacity=.5,fill opacity=.5, line cap=round, line join=round, line width=10pt, color=white]
\title{Probabilistic rough paths II \\Lions-Taylor expansions and Random controlled rough paths}
\begin{document}

\maketitle

\begin{abstract} 
In line with the notion of probabilistic rough paths introduced in the previous contribution \cite{salkeld2021Probabilistic}, we address corresponding random controlled rough paths (first introduced in \cite{2019arXiv180205882.2B}), the structure of which is indexed by Lions forests. These are statistical distributions over the space of paths described by the combination of a jet on the underlying probabilistic rough path and a remainder term. The regularity of the latter facilitates the definition of a rough integral. 

We establish closedness and stability of two key operators on random controlled rough paths: rough integration and composition by a smooth function on the Wasserstein space. These are important results towards a complete theory of rough McKean-Vlasov equations that is still in gestation. The proof goes through a higher-order Taylor expansion for the Lions derivative which we rigorously expound. 

The coupled Hopf algebra structure (see \cite{salkeld2021Probabilistic}) and the Lions-Taylor expansion (established in Section \ref{section:TaylorExpansions}) introduce a number of additional challenges which mean these results are not simply a natural extension of classical theory. We dedicate this work to pursuing these details. 

\end{abstract} 


{\bf Keywords:} Probabilistic rough paths, Random controlled rough paths, McKean-Vlasov equations

\vspace{0.3cm}

\noindent
{\bf MSC2020 Mathematics Subject Classification System:}\\
Primary: 60L30	
,
41A58  

Secondary: 
60G07  
, 
46G05  


\setcounter{tocdepth}{2}
\tableofcontents

\section{Introduction}

The main purpose of this paper is to elaborate on the notion of \emph{probabilistic rough paths} introduced in \cite{salkeld2021Probabilistic}. As such, it is the second of a series of papers whose aim is to provide a global theory for rough mean-field equations. 

\subsection{A short review of mean-field models and rough path theory}

The relevance of our program is evident from the many developments in the theories of rough paths and of dynamic mean-field  models. An overview has been given in \cite{salkeld2021Probabilistic}, but we consider it useful to summarise some of the results here. On the one hand, the theory of rough paths, introduced in \cite{lyons1998differential}, gives meaning to differential systems driven by irregular noises. Although the theory is in itself deterministic, it covers many probabilistic examples, including of course Brownian motion, where the associated stochastic integral is understood in the It\^o or Stratonovich sense. In many cases, the random structure helps in the construction of the stack of iterated integrals, whose collection forms the signature of the signal and which plays a crucial role in the analysis of the associated differential systems. 

Lyons' pioneering work was developed further in the article of Gubinelli \cite{gubinelli2010ramification} which identifies iterated integrals of the driving signal with trees. A vector space spanned by the trees paired with a product and coproduct generates a Hopf algebra, called the Connes-Kreimer algebra. It is now well understood that a rough path is a trajectory, with values in the set of characters of a Hopf algebra, which is additive for the convolution product. This additivity constraint is often referred to as the Chen relation. In particular, this concept is reflected in the theory of regularity structures, developed by Hairer \cite{hairer2014theory} to study singular stochastic partial differential equations. In this respect, the systematic use of algebraic structures to encode the underlying derivation and integration operations plays a decisive role in the associated renormalisation steps. Inspired by this, our previous contribution \cite{salkeld2021Probabilistic} describes an alternative representation built around the Lions derivative which introduces paths on the so-called McKean-Vlasov group of characters. 

On the other hand, the theory of mean-field models, originating from statistical mechanics (\cites{kac1956foundations, McKean1966}), has blossomed after numerous developments in the line of the theory of stochastic processes (\cites{kurtz1999particle, meleard1996asymptotic, Sznitman}). This was driven by a new revival of interest in connection with large population optimisation problems, studied for example in the framework of mean-field game and mean-field control theories, see for example \cites{lasry2007mean, Huang2006Large}, the books \cites{CarmonaDelarue2017book1, CarmonaDelarue2017book2} as well as the literature cited within. 

\subsection{Probabilistic rough paths and beyond}

In most studied examples, mean-field models are driven by Markovian noise. Nevertheless, the question of extending the theory of mean-field models to systems driven by general rough signals is natural and was posed long before our first work \cite{salkeld2021Probabilistic}. The first paper in this direction is due to Cass and Lyons (\cite{CassLyonsEvolving}). This was followed by \cite{Bailleul2015Flows} and \cite{deuschel2017enhanced}. In all these works, the mean-field interaction appears explicitly only in the transport term (which is absolutely continuous) and not in the coefficients (sometimes called volatility) driving the rough signals. The extension to models where the volatility is truly mean-field raises a conceptual difficulty, which has been systematically addressed in \cite{2019arXiv180205882.2B} for noises whose H\"older exponent is between $1/3$ and $1/2$. 

The main idea of \cite{2019arXiv180205882.2B} is to treat the mean-field volatility as a function of a random variable (as an object of infinite dimension) and not directly as a function of a measure. This approach is inspired by Lions' interpretation of the derivative (also known as Wasserstein's derivative) on the space of probability measures in the form of a Fr\'echet derivative on the Hilbert space of random variables. This perspective is particularly well adapted to particle descriptions of mean-field models in which an observer follows the evolution of a tagged particle within a population. The lifting procedure from the space of probability measures to the space of random variables, which is used systematically in Lions' approach to the Wasserstein derivative, also plays a fundamental role in our first work \cite{salkeld2021Probabilistic}. 

In a schematic way, \cite{salkeld2021Probabilistic} proposes a general definition of a probabilistic rough path with a given law $\mu$ over path space. The iterated integrals are constructed on probability spaces of increasing size, obtained by successive tensorization of the Wiener space, with each new copy of the Wiener space carrying a new sample from the law $\mu$ (independent of the previous ones). 

The thrust of \cite{salkeld2021Probabilistic} is to provide an algebraic structure for encoding the stack of these iterated integrals. For instance, one could naively think of labelling all the underlying samples and then equipping the trees from the Connes-Kreimer algebra with those labels, but this would lead to costly and useless repetitions due to the exchangeable structure of the model. The construction of \cite{salkeld2021Probabilistic} proceeds efficiently as it does not rely on the explicit values of the labels that could be assigned to the realisations. Intuitively, only the clusters formed by the nodes equipped with the same labels are used. This gives rise to trees equipped with a partition of the tree nodes, called hyperedges, instead of trees equipped with labels. However, a striking fact of the theory initiated in \cite{salkeld2021Probabilistic} is  that not all partitions are relevant. In the mean-field setting, only certain types of partitions (a rigorous description of which can be found in Definition \ref{definition:Forests} below) suffice: we called the trees equipped with such partitions Lions trees. 
 
In this new contribution, we make another step forward toward a complete theory of rough mean-field models. Whilst \cite{salkeld2021Probabilistic} addressed the definition of the signature and the construction of an algebraic structure associated with it, we did not address the dynamics driven by these signals. The goal of this article is to explore this question. 

In order to proceed, we adopt the perspective of \cite{gubinelli2004controlling} and define a general notion of \emph{random controlled rough paths} that is consistent with the coupled Hopf algebras introduced in  \cite{salkeld2021Probabilistic}. Our results include a stability property for mean-field controlled rough paths under composition by smooth functions depending on the time marginal of the paths. This is a milestone in our program since similar stability properties play a crucial role in the analysis of related rough differential equations in the classical (Lyons-Gubinelli) setting. Noticeably, the derivation of this stability property in the mean-field setting goes through a generalisation of the notion of higher-order derivatives for functionals defined on the space of probability measures and is, in turn, based on a proper form of higher order Taylor expansion. To the best of our knowledge, this Taylor expansion for the Lions derivative is new (at least in this form, see \cite{TseHigher2021} for another formulation), and we strongly believe it to have its own interest beside the specific application that we address here. In this regard, a striking fact in our analysis is the form of the expansion itself: it is encoded by means of partition sequences that are used to encode grafting operations of Lions trees. 

\subsection{From elementary differentials of a McKean-Vlasov equation to random controlled paths}

In order to provide a meaningful motivation for the central results proved in this manuscript, we must discuss the contraction operators used for solving rough differential equations and classical mean-field equations by means of a Banach fixed point theorem. To ensure that there is no confusion over this point, these contraction operators will not be addressed directly in this work. However, we hope that the reader well versed in rough path and regularity structure literature will see that multiple contraction operators chosen on appropriate spaces would ensure (provided they indeed exist) the existence and uniqueness of a rough differential equation of the form
\begin{equation}
\label{eq:meanfield:equation}
dX_{t} = f\Big(X_{t}, \cL^{X}_{t} \Big) dW_{t}. 
\end{equation}
The results of this paper could serve to establish key stability results that would allow for a well structured and concise description of such contraction operators. We hope that a sceptical reader will delay any expectations for such contraction operators until a future paper and will approach this work as a demonstration of the power of the higher order Lions-Taylor expansions that we prove in Section \ref{section:TaylorExpansions} and as a direct sequel to \cite{salkeld2021Probabilistic} that proves how these Lions-Taylor expansions are inter-weaved with Lions trees and coupled Hopf algebras. 

In the classical theory of rough paths, solutions of rough differential equations may be locally expanded as a series of elementary differentials acted on by components of the rough path corresponding to iterated integrals of the driving signal. By collecting the elementary differentials of a solution together, we obtain an element of a Hopf algebra which characterises the solution and which provides a reformulation of the corresponding differential equation as an equation on the elementary differentials. In fact, this idea was pushed further by Gubinelli: By gathering together all paths on the Hopf algebra that satisfy a necessary regularity condition, Gubinelli was able to describe the concept of a controlled rough path, see \cite{gubinelli2004controlling}. The advantage of this notion, which encompasses elementary differentials themselves, is that the space of controlled rough paths is a Banach space, unlike the collection of rough paths. Hence, techniques from functional analysis such as constructing a contraction operator to prove the existence and uniqueness of an element with particular properties can still be used (although the obtention of a contraction is a separate problem). This concept was further generalised in \cite{hairer2014theory} to describe the concept of modelled distributions and is now widely recognised as the correct approach for approximating the dynamics of singular stochastic partial differential equations. 

We saw in \cite{salkeld2021Probabilistic} that a probabilistic rough path takes its values on the aforementioned McKean-Vlasov group of characters, a subgroup of the characters that satisfies an additional identity relating the probability distributions of random variables associated to Lions trees with similar but differently tagged hyperedges. Indeed, a key part of the theory from \cite{salkeld2021Probabilistic} is that one (possibly empty) hyperedge of a Lions tree is identified as being distinct from all other hyperedges, referred to as the $0$-hyperedge. Intuitively, this $0$-tag means that the corresponding probability space carries a tagged particle. In this framework, the actions of McKean-Vlasov characters on a tree with a $0$-hyperedge and on the same tree but with the $0$-hyperedge being untagged are strongly connected. Here, the McKean-Vlasov characters should be thought of as functionals on a \textit{coupled} Hopf algebra in \cite{salkeld2021Probabilistic}, the elements of which can be decomposed in random variables indexed by Lions trees. This Hopf algebra like structure is said to be coupled because coupling operators are necessary to explain properly the statistical correlations that do exist between random variables indexed by two different Lions trees. Hence, we want to find a collection of path-valued random variables that together form an element of this coupled Hopf algebra and satisfy favourable regularity properties. Such a collection of paths (see Definition \ref{definition:RandomControlledRP}), endowed with a complete topology (see Theorem \ref{theorem:Banachspace}), would be invaluable in describing the microscopic dynamics of equations of the form \eqref{eq:meanfield:equation}. A similar concept has already been explored in  \cite{2019arXiv180205882.2B} and are referred to as \emph{random controlled rough paths}. 

\subsection{Contributions of this paper}

As previously announced, the central contribution of this paper is the extension of the concept of a modelled distribution from regularity structures with respect to probabilistic rough paths as developed in \cite{salkeld2021Probabilistic} which we refer to as random controlled rough paths. The main statement in this regard is Definition \ref{definition:RandomControlledRP}, which can be summarized as follows. If $\rw$ is a probabilistic rough path on a coupled Hopf algebra $\scH$, a path $\bX$ from $[0,1]$ to $\scH$ is called a \emph{Random Controlled Rough Path} (RCRP) controlled by $\rw$ if $\forall s, t\in[0,1]$ with $s<t$, any component of the random controlled rough path evaluated at $t$ can be expressed in terms of a jet containing terms from the RCRP evaluated at $s$ and increments of the probabilistic rough path plus an additional remainder term dependent on $s$ and $t$ which has higher regularity. In comparison with the classical (non mean-field) framework, the key point here is that all the aforementioned terms are random variables constructed on different spaces. 

Further, we should say that the coupled Hopf algebra $\scH$ is graded, with the grade of a tree depending on the number of nodes in the $0$-hyperedge and the number of nodes in detagged hyperedges. In turn, the probability spaces on which the terms of an RCRP are constructed get larger and larger with the number of hyperedges indexing these terms. In fact, those spaces are products $\Omega^{\times m}$ of a common path space $\Omega$, with each new copy of $\Omega$ accounting for a new hyperedge in the tree indexing the corresponding random variables. Hence, part of the challenge in this construction is to bring back all the terms involved in the expansion of a component of $\bX$ onto a common probability space. This is achieved by taking conditional expectations appropriately, which thus requires suitable integrability properties of all the random variables in hand. We refer to Equation \eqref{eq:definition:RandomControlledRP1} and \eqref{eq:definition:RandomControlledRP2} for the complete form of these conditional expectations. Moreover, due to the coupled coproduct equipping the Hopf algebra (which implicitly induces statistical correlations between the random variables belonging to the coupled Hopf algebra), we need to additionally include coupling functions to formalise the various conditionings, see Definition \ref{definition:CouplingFunctions}. This necessarily introduces new challenges and significantly increases  the difficulty of otherwise standard results. Hence the contributions of this paper are much more challenging than simple consequences of the results proved in \cite{hairer2014theory} or in earlier works in rough path theory.

As with classical theory, one needs to measure the regularity of the various terms underpinning a random controlled rough path in a convenient way. Following the standard framework, one then considers the regularity of each remainder term individually in order to define a norm on random controlled rough paths. However, there are more subtleties because of the random nature each term and some care should be taken to evaluate each probability space appropriately and precisely: in particular, the detagged probability spaces should be integrated over whereas the tagged probability space should be evaluated path by path. This is reflected in Equation \eqref{eq:definition:RandomControlledRP4}. 

In the end, we claim that random controlled rough paths provide the ideal framework for giving meaning to rough integrals with mean-field coefficients. The intuition for this is the same as in the standard theory: the increments of the random controlled rough path are determined uniquely up to a ``smooth'' remainder term by the increments of a probabilistic rough path and there is a canonical way of describing the integral of any coefficient of a probabilistic rough path by the probabilistic rough path. Hence, we can define a mean-field rough integral in terms of an extended Riemann sum comprising the jet of a random controlled rough path integrated with respect to the probabilistic rough path locally.

Our study comprises further critical results. As an application of the Lions-Taylor expansion obtained in  Section \ref{section:TaylorExpansions}, we are additionally able to prove that the smooth image of a random controlled rough path is also a random controlled rough path (see Theorem \ref{theorem:ContinIm-RCRPs}). More specifically, for any function $f \in C_b^{n, (n)}\big( \bR^e \times \cP_2(\bR^e); \lin(\bR^d, \bR^e) \big)$ and any pair $(\bX, \bY)$ of random controlled rough paths (controlled by a common probabilistic rough path $\rw$), there is a random controlled rough path $\bZ$ (controlled by $\rw$) that satisfies
$$
\Big\langle \bZ_t, \rId \Big\rangle(\omega_0) = f\Big(  \big\langle \bX_t, \rId \big\rangle(\omega_0), \cL^{\langle \bY_t, \rId\rangle} \Big),
$$
where $(\langle \bZ_t, \rId \rangle)_{t \in [0,1]}$ (and similarly for 
$(\langle \bX_t, \rId \rangle)_{t \in [0,1]}$ and $(\langle \bY_t, \rId \rangle)_{t \in [0,1]}$) is a standard notation for the path, taking values in the physical space $\bR^e$, that lives below the random controlled rough path $\bZ$. Equivalently, the local increments of 
$(\langle \bZ_t, \rId \rangle)_{t \in [0,1]}$ are precisely described by means of $\bZ$. The proof of this chain rule is by no means classical and we highlight the most noticeable difference in Equation \eqref{eq:theorem:ContinIm-RCRPs}: again, it requires the management of the statistical correlations that exist between the various random variables that enter the chain rule; here, this is achieved for a given Lions tree by systematically distinguishing a collection of hyperedges that are ``lost'' via the application of the decoupling operation that determine the various conditionings appearing in the terms of $\bZ$. These specific hyperedges are called ghost hyperedges, see Definition \ref{definition:Z-set}. They provide an efficient way of describing all of the combinatorics associated with mean-field contributions that arise from pathwise dependencies within mean-field coefficients. As a simple example, they capture the mean-field contributions that arise from a pathwise dependency within a mean-field coefficient. Such inter-dependencies are not naturally explainable with the English language, but we found our approach to be productive and accurate. 

To conclude this work, we consider the local stability results for random controlled rough paths in Section \ref{section:Stability-RCRP}. When two random controlled rough paths are controlled by the same path, they coexist in a linear space of random variables and we can define a (complete, separable) norm using Lebesgue integrals. However, two random controlled rough paths that are not controlled by the same path exist on totally different spaces. Nonetheless, we can define a pseudo-metric between two random controlled rough paths by solving the optimal transport for the cost function that compares the initial conditions and the regularity of the remainder terms for all coefficients of the random controlled rough path, see Definition \ref{definition:InhomogeneousMetric-RCRPs}. This pseudo-metric has many similarities with the Wasserstein distance, but a point worth emphasising is that, in this pseudo-metric, the minimum is taken over couplings between marginal laws equipping the path space $\Omega$ and not between laws equipping products of the type $\Omega^{\times n}$. In other words, couplings are constructed between each probability space associated to each (detagged) hyperedge of each Lions tree and the pseudo-metric then finds the optimal coupling that minimizes a single transport cost that accounts for all Lions trees at once. In this approach, the tagged probability space (which implicitly carries out the realisation of the tagged particle) can be treated differently depending on the context corresponding to whether one compares the distribution or the paths of two random controlled rough paths. To make it clear, one may or may not freeze the realisation of the tagged particle in 
the definition of the pseudo-metric.

Using this pseudo-metric, we are able to establish the locally pathwise stability result for the integral of a random controlled rough path (see Theorem \ref{theorem:Stability-RoughInt}) and the smooth image of random controlled rough paths (see Theorem \ref{theorem:Stability-ContinImage}). These estimates are pathwise locally Lipschitz, but under appropriate localisation assumptions can be strengthened to Lipschitz. 

Importantly, this paper also contains several contributions that individually have the potential to create new directions of research in the study of numerical approximations for mean-field dynamics. In particular, we highlight Theorem \ref{theorem:LionsTaylor2} which provides a generalised Taylor expansion of a function of two variables, a spacial variable and a variable on the Wasserstein space of measures. The derivatives used are in the sense of Lions calculus of variations and we express the iterated sequences of derivatives in spacial, measure and free variables using partition sequences. We hope to address the possible numerical applications of it in a future work. 

\subsection{Organisation}

In Section \ref{section:TaylorExpansions}, we introduce a new notation for iterative Lions derivatives, demonstrate the link between these derivatives and partitions and prove a Taylor expansion for functions of measures, see Theorem \ref{theorem:LionsTaylor1}. These results are then extended to the multivariate case in Subsection \ref{subsect:Multivariate_Lions-Taylor} and we also provide a Schwarz Theorem (see Theorem \ref{thm:Schwarz-Lions}) which identifies partition sequences whose Lions derivatives are equal. These results were first stated in \cite{salkeld2021Probabilistic}, but we choose to prove them in this sequel as they are fundamental to the results of Section \ref{section:ModelledDistributions} but were only motivational in the former. That said, the notion of a partition sequence and their link with iterative Lions derivatives is critical to the conceptual development of the Lions tree so it was also necessary to include these results in our earlier work. We hope the reader will forgive us for this. 

The key contributions of Section \ref{section:ModelledDistributions} is the definition of a random controlled rough path and the proof that two key operators are closed on the space of random controlled rough paths (see Theorem \ref{theorem:Reconstruction} and Theorem \ref{theorem:ContinIm-RCRPs}). 

Finally, Section \ref{section:Stability-RCRP} introduces a notion of metric between two random controlled rough paths (controlled by different probabilistic rough paths). These are used to prove two local stability estimates, see Theorem \ref{theorem:Stability-RoughInt} and Theorem \ref{theorem:Stability-ContinImage} which will turn out to be fundamental in establishing existence and uniqueness of a solution to rough differential equations in future works. 

\subsection{Notations and notions from standard rough path theory}
\label{subsection:Notation}

Let ${\mathbb N}$ be the set of positive integers and ${\mathbb N}_{0}={\mathbb N} \cup \{0\}$. Let $\bR$ be the field of real numbers and for $d \in \bN$, let $\bR^d$ be the $d$-dimensional vector space over the field $\bR$. For vector spaces $U$ and $V$, we define $\lin(U, V)$ to be the collection of linear operators from $U$ to $V$. Let $\oplus$ and $\otimes$ be the direct sum and tensor product operations. 

On the tensor space $T(V, U) = \bigoplus_{n=0}^\infty \lin( V^{\otimes n}, U)$, we define the ring multiplication 
$$
\otimes : \lin(V^{\otimes m}, U) \times \lin(V^{\otimes n}, U) \to \lin(V^{\otimes (m+n)}, U).
$$
In particular, when $m,n=0$, $\otimes$ defines a coordinatewise product $U\times U \to U$. 

For a vector space $U$, let $\cB(U)$ be the Borel $\sigma$-algebra. Let $(\Omega, \cF, \bP)$ be a probability space. For $p \in [1, \infty)$, let $L^p(\Omega, \cF, \bP; U)$ be the space of $p$-integrable random variables taking values in $U$. When the $\sigma$-algebra is not ambiguous, we will simply write $L^p(\Omega, \bP; U)$. Further, let $L^0(\Omega, \bP; U)$ be the space of measurable mappings $(\Omega, \cF) \mapsto (U, \cB(U))$. 

For a set $N$, we call $2^N$ the collection of subsets of $N$ and $\scP(N)$ the set of all partitions of the set $N$. This means $\scP(N) \subseteq 2^{2^N}$. A partition $P \in \scP(N)$
if and only if the following three properties are satisfied:
$$
\forall x \in N, \quad \exists p \in P: x \in p; 
\qquad
\forall p, q \in P, \quad p \cap q = \emptyset; 
\qquad
\emptyset \notin P.
$$

\subsubsection*{Hopf algebras}
\label{subsubsection:Connes-Kreimer}

For a modern compendium on Hopf algebras, we direct the reader to \cite{cartier2021hopf}. A Hopf algebra $(\scH, \odot, \triangle, \scS, \rId, \epsilon)$ is a module over a ring $R$ such that simultaneously $(\scH, \odot, \rId)$ is a unital associative algebra and $(\scH, \triangle, \epsilon)$ is a counital coassociative coalgebra and further $\scS: \scH \to \scH$ is an antiautomorphism that satisfies
\begin{equation}
\label{eq:HopfAlgebraAntipode}
\odot \circ \scS \otimes I \circ \triangle = \odot \circ I \otimes \scS \circ \triangle = \rId \epsilon. 
\end{equation}
A Hopf algebra is described as graded if there exists a monoid $(\scN, +)$ such that $\scH$ can be represented of the form
$$
\scH = \bigoplus_{n \in \scN} \scH_n, 
\quad
\scH_m \odot \scH_n \subseteq \scH_{m+n}, 
\quad
\triangle\big[ \scH_n\big] \subseteq \bigoplus_{p+q=n} \scH_p \otimes \scH_q, 
\quad
\scS[ \scH_n ] \subseteq \scH_n
$$
Further, a graded Hopf algebra is said to be connected if $\scH_0 = R$. 

Let $\scI=\{N+1, N+2, ...\}$ be an ideal of the monoid $\scN=\bN_0$ and denote $\tilde{\scN} = \scN / \scI$. We define
$$
\scH^N:= \bigoplus_{n \in \tilde{\scN}} \scH_n. 
$$
Then $\big( \scH^N, \Delta, \epsilon\big)$ is a counital subcoalgebra of $\big( \scH, \Delta, \epsilon\big)$ and $\big( \scH^N, \odot, \rId\big)$ is a quotient algebra of $\big( \scH, \odot, \rId\big)$. 

Let $G(\scH)$ be the set of characters of the Hopf algebra, linear functionals $f: \scH \to R$ that satisfy the identity
$$
f \otimes f = f \circ \odot. 
$$
The coproduct of $\scH$ induces the convolution product $\ast: G(\scH) \otimes G(\scH) \to G(\scH)$ defined by 
$$
f \ast g = f \otimes g \circ \Delta, 
$$
rendering $G(\scH)$ a group with inverse $f^{-1} = f\circ \scS$. 

Hopf algebras are one of the more ubiquitous structures in mathematics with applications in quantum field theory, condensed-matter physics, algebraic topology and deformation theory. For the purpose of this work, we draw the readers attention to the use of Hopf algebras in regularity structures, see \cite{hairer2014theory}. 

\subsubsection*{Rough paths}

The theory of rough paths, first proposed in \cite{lyons1998differential}, is now a wide ranging, multi-disciplined field of research. Over the last twenty years, the field has developed and there are now many different approaches to defining what a rough path is with differing levels of abstraction. The concept of a branched rough paths was first introduced in \cite{gubinelli2010ramification}. However, here we use a definition closer to that of the recent work \cite{tapia2020geometry}. 

Let $\big(\scH, \odot, \triangle, \rId, \epsilon, \scS\big)$ be a connected, graded Hopf algebra over a ring $R$ with basis $\scF_0$ (with the convention $\scF = \scF_0\backslash\{\rId\}$) and grading $\scG: \scF_0 \to \bN_0$ such that $\forall n\in \scN$, the module $\scH_n$ is normed. Let $\scI=\{N+1, N+2, ...\}$ be the monoid ideal of $(\bN_0, +)$ and let $\big( \scH^N, \odot, \triangle, \rId, \epsilon, \scS\big)$ be the Hopf algebra with finite grading and basis $\scF^N$. 

Let $\alpha>0$ and set $N = \big\lfloor \tfrac{1}{\alpha}\big\rfloor$. We say that $\rw:[0,1] \to G(\scH^N)$ is a $(\scH^N, \alpha)$-rough path if it satisfies that $\forall s,t,u\in [0,1]$, 
\begin{equation}
\label{eq:BranchedRP1}
\rw_{s, t} = (\rw_s)^{-1} \ast \rw_t, 
\qquad
\rw_{s,t} \ast \rw_{t,u} = \rw_{s,u}
\end{equation}
and $\forall \tau\in \scF_0^N$
\begin{equation}
\label{eq:BranchedRP2}
\Big\| \big\langle \rw_{s, t}, \tau \big\rangle \Big\|_{\scH_{\scG[\tau]}} \lesssim |t-s|^{\alpha \cdot \scG[\tau]} . 
\end{equation}

When the Hopf algebra is chosen to be the tensor shuffle algebra with the deconcatenation coproduct $\big( T(\bR^d, \bR^e), \shuffle, \triangle, \rId, \epsilon, \scS \big)$ (see \cite{reutenauer2003free}), one obtains the so-called \emph{weak geometric rough paths}. Alternatively, by choosing the Connes-Kreimer Hopf algebra (see \cite{connes1999hopf}), one obtains the so-called \emph{Branched rough paths}. 

In \cite{hairer2014theory}, this concept was generalised to solve singular stochastic partial differential equations. Regularity structures use an abstract Taylor expansion that best approximates the solution to determine the relevant necessary information about the driving noise to solve an equation. Although the first ideas of regularity structures were described a decade ago, the fundamental philosophies can be traced back many years before. 

\subsubsection*{Controlled rough paths}

The central ambition of the theory of controlled rough paths is to describe a collection of paths that are rough enough to allow for a rich solution theory for rough differential equations but structured enough that one can ``\emph{integrate}'' them with respect to some rough signal. The collection of controlled rough paths is favourable to work with due to its linearity and that one can integrate with respect to a fixed rough signal without worrying about the existence of the resulting integral. However, they do not provide a natural approximation theory and the associated \emph{Gubinelli derivatives} are often not uniquely defined. 

Let $\rw:[0,1] \to G(\scH^N)$ be a $(\scH^N, \alpha)$-rough path. We say that $\bX:[0,1] \to \scH^{N-1}$ is a \emph{controlled rough path} (controlled by $\rw$) if $\forall s, t \in [0,1]$ and $\forall \rho \in \scF_0^{N-1}$
\begin{equation}
\label{eq:Controlled_RP1}
\begin{split}
\big\langle \bX_{s, t}, \rId\big\rangle =& \sum_{\tau \in \scF^{N-}} \big\langle \bX_s, \tau \big\rangle \cdot \big\langle \rw_{s, t}, \tau \big\rangle + \big\langle \bX_{s, t}^{\sharp}, \rId\big\rangle
\\
\big\langle \bX_{s, t}, \rho \big\rangle =& \sum_{\varsigma, \tau \in \scF^{N-}} c'\big( \varsigma, \tau, \rho\big) \cdot \big\langle \bX_s, \varsigma \big\rangle \cdot \big\langle \rw_{s, t}, \tau \big\rangle + \big\langle \bX_{s, t}^{\sharp}, \rho \big\rangle
\end{split}
\end{equation}
where $c': \scF_0 \times \scF_0 \times \scF_0 \to \bN_0$ is the reduced coproduct counting function and
\begin{equation}
\label{eq:Controlled_RP2}
\sup_{s, t\in [0,1]} \frac{ \Big| \big\langle \bX_{s, t}^{\sharp}, \rId\big\rangle \Big|}{|t-s|^{N \alpha}} < \infty, 
\qquad
\sup_{s, t \in [0,1]} \frac{\Big| \big\langle \bX_{s, t}^{\sharp}, \rho \big\rangle \Big|}{|t-s|^{(N - \scG[\rho])\cdot \alpha}}< \infty. 
\end{equation}

It is now well known that given a controlled rough path $\bX$ controlled by $\rw$, we can define the rough integral
$$
\int_0^1 \bX_r d\rw_r = \lim_{|D_n| \to 0} \sum_{[u, v] \in D_n} \sum_{T \in \scF_0^{N-1}} \big\langle \bX_u, \tau \big\rangle \cdot \big\langle \rw_{u, v}, \lfloor \tau\rfloor \big\rangle. 
$$
Further, given a $N$-times differentiable function $f:R \to R$ and a controlled rough path $\bX$ controlled by a rough path $\rw$, we can find another controlled rough path $\bZ$ (controlled by $\rw$) that satisfies
$$
\big\langle \bZ_s, \rId \big\rangle = f\Big( \big\langle \bX_s, \rId\big\rangle \Big), 
\quad 
\big\langle \bZ_s, \tau \big\rangle = \sum_{i=1}^{N-1} \frac{\nabla^i f\Big( \big\langle \bX_s, \rId\big\rangle \Big)}{i!} \cdot \Bigg[ \sum_{\substack{\tau_1, ..., \tau_i\in \scF \\ \odot_{j=1}^i\tau_j = \tau }} \bigotimes_{j=1}^i \big\langle \bX_s, \tau_j\big\rangle \Bigg]. 
$$


\newpage
\section{Taylor expansions over the Wasserstein space}
\label{section:TaylorExpansions}

Motivated by the differential equation \eqref{eq:meanfield:equation}, we want to consider some Taylor expansion for a function
$$
f: \bR^e \times \cP_2(\bR^e) \to \lin(\bR^d, \bR^e)
$$
where $e$ is the dimension of the solution process and $d$ is the dimension of the driving signal. However, to streamline the notation somewhat for the reader, in this section we will simply consider 
$$
f: \bR^d \times \cP_2(\bR^d) \to \bR^e. 
$$
We emphasise that this does not change the mathematics beyond the dimension of the associated vector spaces. 

Taylor's Theorem is a well-known result that states that for a function $f$ that is $n$ times differentiable, we have
$$
f(x) - f(y) = \sum_{k=1}^n \frac{\nabla^k f(y)}{n!} [ (x-y)^{\otimes k}] + O\Big( |x-y|^{n+1} \Big). 
$$
Our objective is to address a similar version for functionals depending on a measure argument. 

Throughout this paper, the derivatives we consider are thus constructed on the space $\cP_{p}(\bR^d)$ for $p=2$, the so-called `Wasserstein space' of probability measures with a finite second moment. For any $p \geq 1$, $\cP_p(\bR^d)$ can be equipped with the $\bW^{(p)}$-Wasserstein distance, defined by:
\begin{equation}
\label{eq:WassersteinDistance}
\bW^{(p)}(\mu,\nu) = \inf_{\Pi \in \cP_{p}(\bR^d \times \bR^d)}
\biggl( \int_{\bR^d \times \bR^d}
| x-y |^p
d \Pi(x,y) \biggr)^{1/p},
\end{equation}
the infimum being taken with respect to all the probability measures $\Pi$ on the product space $\bR^d \times \bR^d$ with $\mu$ and $\nu$ as respective $d$-dimensional marginal laws.

Higher order Taylor expansions are central to approximation techniques throughout the mathematical sciences. Therefore, it is perfectly natural to desire a differential calculus on the space of measures when attempting to approximate the dynamics of large populations and their associated mean-field limits. The origins of this philosophy can be found in \cite{Jordan1998variation} where the connection between the Fokker-Planck equations and gradient flows on the Wasserstein space is first established. We refer the reader to the monograph \cite{Ambrosio2008Gradient} for a complete overview of the subject. 

\subsection{Integer-valued sequences with a 1-Lip sup envelope}
\label{subsection:1-Lip_sup_envelope}

We build a series of differential operators on the $2$-Wasserstein space. For a function $f:\cP_2(\bR^d) \to \bR^e$, we consider the canonical lift $F: L^2(\Omega, \cF, \bP; \bR^d) \to \bR^e$ defined by $F(X) = f( \bP\circ X^{-1})$. We say that $f$ is $L$-differentiable at $\mu$ if $F$ is Fr\'echet differentiable at some point $X$ such that $\mu = \bP\circ X^{-1}$. Denoting the Fr\'echet derivative by $DF$, it is now well known (see for instance \cite{GangboDifferentiability2019} that $DF$ is a $\sigma(X)$-measurable random variable of the form $DF(\mu, \cdot):\bR^d \to \lin(\bR^d, \bR^e)$ depending on the law of $X$ and satisfying $DF(\mu, \cdot) \in L^2\big( \bR^d, \cB(\bR^d), \mu; \lin(\bR^d, \bR^e) \big)$. We denote the $L$-derivative of $f$ at $\mu$ by the mapping $\partial_\mu f(\mu)(\cdot): \bR^d \ni v \to \partial_\mu f(\mu)(v) \in \lin(\bR^d, \bR^e)$ satisfying $DF(\mu, X) = \partial_\mu f(\mu)(X)$. This derivative is known to coincide with the so-called Wasserstein derivative, as defined in for instance \cite{Ambrosio2008Gradient}, \cite{CarmonaDelarue2017book1} and \cite{GangboDifferentiability2019}. As we explained in the introduction, Lions' approach is well-fitted to probabilistic approaches for mean-field models since, very frequently, we have  a `canonical' random 
variable $X$ for representing the law of a given probability measure $\mu$. 

The second order derivatives are obtained by differentiating $\partial_{\mu} f$ with respect to $v$ (in the standard Euclidean sense) and $\mu$ (in the same Lions' sense). The two derivatives $\nabla_{v} \partial_{\mu} f$ and $\partial_{\mu} \partial_{\mu} f$ are thus very different functions: The first one is defined on ${\cP}_{2}({\bR}^d) \times {\bR}^d$ and writes $(\mu,v) \mapsto \nabla_{v} \partial_{\mu} f(\mu,v)$ whilst the second one is defined on ${\cP}_{2}({\bR}^d) \times {\bR}^d \times {\bR}^d$ and writes $(\mu,v,v') \mapsto \partial_{\mu} \partial_{\mu} f(\mu,v,v')$. The $d$-dimensional entries of $\nabla_{v} \partial_{\mu} f$ and $\partial_{\mu} \partial_{\mu} f$ are called here the \textit{free} variables, since they are integrated with respect to the measure $\mu$ itself. In words, $v$ is the free variable of 
$\partial_{\mu} f$ and $(v,v')$ are the free variables of $\partial_{\mu} \partial_{\mu} f$. Accordingly, the quadratic form $L^2(\Omega,\cF,\bP; \bR^d)$ associated on with respect to these two second-order derivatives is
\begin{align*}
L^2(\Omega,\cF,\bP; \bR^d) \ni X \mapsto &\bE^1 \Big[ \nabla_{v} \partial_{\mu} f \big( \mu, X(\omega_1) \big) \cdot X(\omega_1) \otimes X(\omega_1) \Big] 
\\
&+ \bE^{1,2} \Big[ \partial_{\mu} \partial_{\mu}f \big( \mu, X(\omega_1), X(\omega_2) \big) \cdot X(\omega_1) \otimes X(\omega_2) \Big]. 
\end{align*}
In the first term of the right-hand side, the expectation makes sense if 
$$
\nabla_{v} \partial_{\mu}f(\mu,X) \in L^\infty(\Omega,\cF,\bP; \bR^d),
$$
which is the case if $\partial_{\mu} f$ is Lipschitz continuous in $v$.

Despite the obvious differences between the two second order derivatives, both capture necessary information for the Taylor expansion and we want to find a common system of notation that easily extends to higher order derivatives. This leads us to Definition \ref{def:a} below, the principle of which can be stated as follows for the first and second order derivatives: The derivative symbol $\partial_{\mu}$ can be denoted by $\partial_{1}$ and then the two derivative symbols $\nabla_{v} \partial_{\mu}$ and $\partial_{\mu} \partial_{\mu}$ can be respectively denoted by $\partial_{(1,1)}$ and $\partial_{(1,2)}$. In the first case, the length of the index is 1, hence indicating that the derivative is of order 1. In the other two cases, the length of the vector-valued index is 2, indicating that the derivative is of order 2. Also, in the notation $\partial_{(1,1)}$, the repetition of the index $1$ indicates that we use the same free variable for the second order derivative, or equivalently that the second derivative has to be $\nabla_{v}$. In the notation $\partial_{(1,2)}$, the fact that the second index (in $(1,2)$) is different from the first one says that we use a new free variable for the second order derivative, which, in turn, must be $\partial_{\mu} \partial_{\mu}$. 

\begin{definition}
\label{def:a}
The sup-envelope of an integer-valued sequence $(a_{k})_{k=1,...,n}$ of length $n$ is the non-decreasing sequence $(\max_{l=1,...,k} a_{l})_{k=1,...,n}$. The sup-envelope is said to be $1$-Lipschitz (or just $1$-Lip) if, for any $k \in \{2,...,n\}$, 
\begin{equation*}
\max_{l=1,...,k} a_{l} \leq 1+ \max_{l=1,...,k-1} a_{l}.
\end{equation*}
We call $A_{n}$ the collection of all ${\bN}$-valued sequences of length $n$, with $a_{1}=1$ as initial value and with a 1-Lip sup-envelope. Thus $A_n$ is the collection of all sequences $(a_k)_{k=1, ..., n} \in A_n$ taking values on $\{1, ..., n\}$ such that 
$$
a_1 = 1, \quad a_k \in \Big\{1, ..., 1+ \max_{l=1, ..., k-1} a_l \Big\}. 
$$
We refer to $A_n$ as the collection of \emph{partition sequences}. The length $n$ of $a$ is denoted by $|a|$ and the maximum $\max_{l=1,...,n} a_{l}$ is denoted by $m[a]$. Moreover, we let $l[a] \in \bN^{m[a]}$ be the $m[a]$-tuple $\big( l[a]_{i} = \sum_{k=1}^n {\mathbf 1}_{i}(a_{k}) \big)_{i=1, ..., m[a]}$, i.e., for $i \in \{1, ..., m[a]\}$, $l[a]_i$ is the number of entries `$i$' in the sequence $a$. 
\end{definition}

\begin{example}
We have
\begin{align*}
A_0 =& \emptyset, \quad A_1 = \Big\{ (1)\Big\}, \quad A_2 = \Big\{ (1,1), (1,2) \Big\}, 
\\
A_3 =& \Big\{ (1,1,1), (1,1,2), (1,2,1), (1,2,2), (1,2,3) \Big\}, 
\\
A_4 =& \Big\{ (1,1,1,1), (1,1,1,2), (1,1,2,1), (1,1,2,2), (1,1,2,3), (1,2,1,1), (1,2,1,2), 
\\
& \quad (1,2,1,3), (1,2,2,1), (1,2,2,2), (1,2,2,3), (1,2,3,1), (1,2,3,2), (1,2,3,3), (1,2,3,4) \Big\}, 
\end{align*}
and so on. 
\end{example}

Observe that by construction, an element $a=(a_{k})_{k=1,...,n}$ is 
a surjective mapping from $\{1,...,n\}$ onto $\{1,...,\max[a]\}$. However, it must be stressed that the representation of the arrival set does not matter so much for our purpose. In short, any other arrival set of cardinality $m[a]$ could be used in our analysis. In fact, what really matters in our definition of a sequence $a=(a_{k})_{k=1,...,n} \in A_{n}$ are the repetitions, since they permit us to distinguish between \textit{already used} free variables and \textit{new} free variables below. This idea may be formalised by identifying integer-valued sequences that can be labelled by the same element of $A_{n}$. For $a \in A_{n}$, we can indeed call $\llbracket a\rrbracket$ the collection of all sequences $(b_{1},...,b_{n})$ of length $n$ taking values in $\bN$ that take the form
$$
(b_{i})_{i=1,...,n} = (k_{a_i})_{i=1, ..., n} \, ; \, k_1,..., k_{\max a} \in \bN, k_i \neq k_j.
$$
For example
$$
(1, 2), \  (2, 1), \  (4, 5) \in \bigl\llbracket (1,2)\bigr\rrbracket.  
$$
For two sequences $(b_{1},...,b_{n})$ and 
$(b_{1}',...,b_{n}')$, we write $(b_{1},...,b_{n}) \equiv 
(b_{1}',...,b_{n}')$ if they belong to the same $\llbracket a\rrbracket$.

\subsection{Lions-Taylor expansion}

We now have all the ingredients needed to define the symbols associated with higher-order Lions' derivatives. Indeed, for $n\in \bN$ and $a\in A_n$, we are willing to define $\partial_a$ inductively by
\begin{align*}
\partial_{(1)} =& \partial_\mu, 
\\
\partial_{(a_1, ..., a_{k-1}, a_k)} =&
\begin{cases} 
\nabla_{v_{a_k}} \cdot \partial_{(a_1, ..., a_{k-1})} & \quad a_k \leq \max \{a_1, ..., a_{k-1}\}, 
\\
\partial_\mu \cdot \partial_{(a_1, ..., a_{k-1})} & \quad a_k > \max \{a_1, ..., a_{k-1}\}. 
\end{cases}
\end{align*}
To be consistent with our discussion in the previous subsection, we start with the following reminder, taken for instance from \cites{buckdahn2017mean, chassagneux2014classical, CarmonaDelarue2017book1}:

\begin{definition}
\label{definition:Twice-Different}
We say that a function $f:\cP_2(\bR^d) \to \bR^e$ is in $C_b^{(2)}\big( \cP_2(\bR^d); \bR^e \big)$ if 
\begin{itemize}
\item $f$ is continuously Lions-differentiable with Lions derivative $\partial_\mu f:\cP_2(\bR^d) \times \bR^d \to \lin( \bR^d, \bR^e)$ . 
\item For every $\mu \in \cP_2(\bR^d)$, the $\mu$-measurable function $\partial_\mu f(\mu, \cdot)$ is differentiable with bounded and Lipschitz derivative $\nabla_v \partial_\mu f$ (with the Lipschitz property with respect to $\mu$ being for $\bW^{(1)}$) that satisfies
$$
\nabla_v \partial_\mu f: \cP_2(\bR^d) \times \bR^d \to \lin\big( (\bR^d)^{\otimes 2}, \bR^e\big). 
$$ 
\item For every $v\in \bR^d$, the function $\partial_\mu f(\cdot, v)$ has Lions derivative 
$$
\partial_\mu \partial_\mu f: \cP_2(\bR^d) \times \bR^d \times \bR^d \to \lin\big( (\bR^d)^{\otimes 2} , \bR^e\big)
$$
which is bounded and Lipschitz (with the Lipschitz property with respect to $\mu$ being for $\bW^{(1)}$). 
\end{itemize}
\end{definition}
When there is no ambiguity, we will often drop the output space and write $C_b^{(2)}\big( \cP_2(\bR^d) \big)$. Before we go on into the generalisation of the above definition, we feel useful to make the following remarks:
\begin{remark}
\label{remark:WassersteinRemark1}
As pointed out in the definition, the Lipschitz property with respect to the measure argument in Definition \ref{definition:Twice-Different} is understood as being for the aforementioned $\bW^{(1)}$-distance. The Lipschitz properties on the product spaces $\cP_{2}(\bR^d) \times \bR^d$ and $\cP_{2}(\bR^d) \times \bR^d \times \bR^d$ are understood for the corresponding product distances, $\bR^d$ being equipped with the Euclidean norm. Our choice to impose Lipschitz continuity with respect to the $\bW^{(1)}$-distance, which is obviously coarser than $\bW^{(2)}$, is explained in Remark \ref{remark:WassersteinRemark3} below. 
\end{remark}

\begin{remark}
\label{remark:WassersteinRemark2}
The requirement to have joint continuity with respect to all the arguments is in fact a strong requirement, which is known to be suboptimal in practical applications. Indeed, Lions' derivative $\partial_{\mu} u(\mu,v)$ is typically `well-defined' at elements $v \in {\mathbb R}^d$ that belong to the support of $\mu$. Put differently, the definition of the derivative outside the support of $\mu$ is somewhat arbitrary in the sense that any choice outside the support leads to a convenient  derivative. However, things become much more rigid when global continuity is imposed, as is the case here. In this case, the values of $\partial_{\mu} u(\mu,v)$ for $v$ outside the support are necessarily prescribed since we can always write $\partial_{\mu} u(\mu,v)=\lim_{n \rightarrow \infty} \partial_{\mu} u(\mu_{n},v)$, where $(\mu_{n})_{n \geq 1}$ is a sequence of fully supported probability measures that converges in $\cP_2(\bR^d)$ towards $\mu$. 

Thus, there is a slight loss of generality in our definition. Actually, the same restriction is imposed in \cites{buckdahn2017mean, CarmonaDelarue2017book2, 2019arXiv180205882.2B}. Handling the general case leads to many technicalities, even when the derivatives that are studied are of order 2, see \cite{chassagneux2014classical} together with \cite{CarmonaDelarue2017book1}. 
\end{remark}

\begin{remark}
\label{remark:WassersteinRemark3}
The boundedness requirements on $\partial_{\mu} f$ and $\partial_{\mu} \partial_{\mu} f$ are also more demanding than what the general theory could allow. Typically, the Lions derivative of a function that is continuously differentiable and Lipschitz continuous is bounded in $L^2$, i.e., $\sup_{\mu \in \cP_{2}(\bR^d)} \int_{\bR^d} |v|^2 d\mu(v) < \infty$, and not globally in $L^\infty$, as we require here. 

In fact, it is pretty easy to see that requiring the derivative to be globally bounded imposes the function $f$ to be globally Lipschitz for the $\bW^{(1)}$-distance, which is obviously stronger. Once again, similar restrictions are imposed in \cites{buckdahn2017mean, CarmonaDelarue2017book2, 2019arXiv180205882.2B}, and handling the general case leads to cumbersome technicalities. 

In the end, this explains why in Remark \ref{remark:WassersteinRemark1} we decided to require Lipschitz continuity for $\bW^{(1)}$. 
\end{remark}

In particular, a function $f\in C^{(2)}_b\big( \cP_2(\bR^d) \big)$ satisfies
$$
\partial_\mu f(\mu, \cdot)\in C_b^1\Big(\bR^d; \lin\big( \bR^d, \bR^d\big) \Big), 
$$
that is; it is both bounded, $\mu$-measurable and differentiable.  

\begin{definition}
\label{def:general:Lions:derivative}
We say that a function $f:\cP_2(\bR^d) \to \bR^e$ belongs to $C_b^{(n)}\big( \cP_2(\bR^d); \bR^e \big)$ if there exists a collection of functions $(\partial_{a} f)_{a \in \cup_{k=1}^n A_{k}}$ such that:

\begin{enumerate}
\item For any $k \in \{0, 1,.., n\}$, for any $a \in A_{k}$
\begin{align*}
\partial_{a} f : \cP_{2}( \bR^d) \times ( \bR^d)^{\times m[a]} &\rightarrow
\lin\Big( (\bR^d)^{\otimes k}, \bR^e\Big)
\\
\big( \mu, v_{1},...,v_{m[a]} \big) &\mapsto \partial_{a} f(\mu,v_{1}, ..., v_{m[a]}).
\end{align*}

\item For any $k \in \{1,...,n\}$ and any $a \in A_{k}$,  the function 
$\partial_{a} f$ is bounded and Lipschitz continuous on $\cP_{2}( \bR^d) \times (\bR^d)^{\times m[a]}$, the first space being equipped with the $1$-Wasserstein distance.

\item For any $k \in \{1,...,n-1\}$ and any $a \in A_{k}$, the function 
$\partial_{a} f$ is differentiable with respect to $(v_{1},...,v_{m[a]})$ and 
$$
\partial_{v_{j}} \partial_{a} f = \partial_{(a_{1},\cdots,a_{k},j)} f.
$$

\item For any $k \in \{1,\cdots,n-1\}$ and any $a \in A_{k}$, the function 
$\partial_{a} f$ is differentiable with respect to $\mu$ and
\begin{equation*}
\partial_{\mu} \partial_{a} f = \partial_{(a_{1},\cdots,a_{k},m[a] + 1)} f.
\end{equation*}
\end{enumerate}
\end{definition}

As with Remark \ref{remark:WassersteinRemark3}, we directly require all the derivatives $\partial_{a} f$ to be Lipschitz continuous with respect to $\bW^{(1)}$. In fact, this only makes a difference for the derivatives $\partial_{a} f$, with $|a|=n$: These derivatives should just be required to be $\bW^{(2)}$-Lipschitz continuous if we wanted to fit the standard construction of the Lions' derivative. Whenever $|a| \leq n-1$, we have by assumption that $\partial_{\mu} \partial_{a} f$ is bounded which, by the same third item, implies that $\partial_{a} f$ is necessarily $\bW^{(1)}$-Lipschitz continuous.

On the road to a general Lions-Taylor expansion, we recall the first-order expansion, which underpins the very definition of the Lions derivative. For any two $\mu$ and $\nu$ in $\cP_{2}(\bR^d)$, let $\Pi^{\mu, \nu}$ be a measure on $(\bR^d)^{\oplus 2}$ with marginal distributions $\mu$ and $\nu$. Then, for a continuously Lions differentiable function $f : \cP_{2}(\bR^d) \rightarrow \bR^e$,
\begin{equation}
\label{eq:n=1:ito:lions}
f(\nu) - f(\mu) = \int_{(\bR^d)^{\oplus 2}} \partial_{\mu} f(\mu,u) \cdot (v-u) d \Pi^{\mu,\nu}(u,v) 
+ o \biggl[ \biggl( 
\int_{(\bR^d)^{\oplus 2}}
|u-v|^2 d\Pi^{\mu,\nu}(u,v)
\biggr)^{1/2} \biggr]. 
\end{equation}
In fact, the remainder can be explicitly written out:
\begin{align}
\nonumber
o \biggl[& \biggl( \int_{(\bR^d)^{\oplus 2}} \vert u-v \vert^2 d\Pi^{\mu,\nu}(u,v) \biggr)^{1/2} \biggr]
\\
\label{eq:n=1:expression:remainder}
&=  
\int_{0}^1 \bigg( \partial_{\mu} f \Big( \Pi_{\xi}^{\mu,\nu}, u + \xi (v-u) \Big) - \partial_{\mu} f \Big( \mu, u \Big) \bigg) \cdot (v-u) d \Pi^{\mu,\nu}(u,v),
\end{align}
with the notation
\begin{equation}
\label{eq:Pixi}
\Pi_\xi^{\mu, \nu} = \Pi^{\mu, \nu} \circ \Big( u + \xi(v-u)\Big)^{-1}. 
\end{equation}
In particular, when $f$ is in $C^{(1)}(\cP_{2}(\bR^d))$ in the sense of 
Definition \ref{def:general:Lions:derivative}, the Landau symbol in \eqref{eq:n=1:ito:lions} can be easily upper bounded by
\begin{equation}
\label{eq:n=1:bound:remainder}
\begin{split}
o \Bigg( \biggl( \int_{(\bR^d)^{\oplus 2}} \vert u-v \vert^2 d\Pi^{\mu,\nu}(u,v) \biggr)^{1/2} \Bigg)
&\leq O \Bigl( \bW^{(2)}(\mu,\nu)^2 \Bigr).
\end{split}
\end{equation}
Finally, observe that there is no other constraint on the probability measure $\Pi^{\mu,\nu}$ than it being a \textit{coupling} of $\mu$ and $\nu$. There is no need to require any optimality (say for instance in the sense of Equation \eqref{eq:WassersteinDistance}) in the choice of the coupling. In fact, the possible accuracy of the coupling (for the $L^2$ norm) reads not only in the first term in the right-hand side but also in the second term.

In order to generalise \eqref{eq:n=1:ito:lions}, we define, for $a\in A_n$,
the corresponding differential operator, which is acting on elements $f \in C_b^{(n)}\big( \cP_2(\bR^d); \bR^e \big)$ in the following way:
\begin{align}
\nonumber
\rD^a &f(\mu)[\Pi^{\mu, \nu}] 
\\
\label{eq:D^a:without:0}
&= \underbrace{\int_{(\bR^d)^{\oplus 2}} ... \int_{(\bR^d)^{\oplus 2}} }_{\times m[a]} \partial_a f\Big( \mu, u_1, ..., u_{m[a]} \Big) \cdot \bigotimes_{i=1}^n ( v_{a_i} - u_{a_i}) d\Pi^{\mu, \nu}(u_1, v_1)  ... d\Pi^{\mu, \nu}(u_{m[a]}, v_{m[a]} ) . 
\end{align}

\begin{theorem}[Lions-Taylor Theorem]
\label{theorem:LionsTaylor1}
Let $n \in \bN$ and let $f\in C_b^{(n)} \big( \cP_2(\bR^d); \bR^e \big)$. Then for any $\mu, \nu \in \cP_{n+1} (\bR^d)$ with joint distribution $\Pi^{\mu, \nu}$, we have that
\begin{align}
\label{eq:TaylorExpansion}
f(\nu) - f(\mu) &= \sum_{k=1}^n \sum_{a\in A_k} \frac{\rD^a f(\mu)[\Pi^{\mu, \nu}] }{k!} + R_n^{a, \Pi^{\mu, \nu}}(f), 
\end{align}
where
\begin{align*}
&R_n^{a, \Pi^{\mu, \nu}}(f) 
\\
&= \frac{1}{(n-1)!} \sum_{a\in A_{n}} \underbrace{\int_{(\bR^d)^{\oplus 2}} ... \int_{(\bR^d)^{\oplus 2}} }_{\times m[a]} f^{a, \mu, \nu} \cdot \bigotimes_{i=1}^n ( v_{a_i} - u_{a_i}) d\Pi^{\mu, \nu}(u_1, v_1)  ... d\Pi^{\mu, \nu}(u_{m[a]}, v_{m[a]} ), 
\end{align*}
and
$$
f^{a, \mu, \nu} = \int_0^1 \Big( \partial_a f(\Pi^{\mu, \nu}_\xi, u_1 + \xi(v_1 - u_1), ..., u_{m[a]} + \xi(v_{m[a]} - u_{m[a]} )) - \partial_a f(\mu, u_1, ..., u_{m[a]} )\Big) (1-\xi)^{n-1} d\xi. 
$$
The probability measure $\Pi^{\mu,\nu}_{\xi}$ is defined as in \eqref{eq:Pixi}. Further, the remainder term on the second line of \eqref{eq:TaylorExpansion} can be upper bounded by:
\begin{equation}
\label{eq:bound:remainder:TaylorExpansion}
\bigl\vert 
R_n^{a, \Pi^{\mu, \nu}}(f)
\bigr\vert
\leq C \int_{(\bR^d)^{\oplus 2}} |u-v|^{n+1} d\Pi^{\mu, \nu}(u, v),
\end{equation}
for a constant $C$ depending only on the bounds for $f$ and its derivatives (including the Lipschitz bounds).
\end{theorem}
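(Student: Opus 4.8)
The plan is to prove the expansion by induction on $n$, using the first-order expansion \eqref{eq:n=1:ito:lions}--\eqref{eq:n=1:expression:remainder} as the base case and peeling off one order at a time. More precisely, I would fix a coupling $\Pi^{\mu,\nu}$ of $\mu$ and $\nu$, and for $\xi \in [0,1]$ set $\mu_\xi := \Pi^{\mu,\nu}_\xi$ as in \eqref{eq:Pixi}, so that $\mu_0 = \mu$ and $\mu_1 = \nu$. The key observation is that the map $\xi \mapsto f(\mu_\xi)$ is, under the hypothesis $f \in C_b^{(n)}$, a $C^n$ function of the single real variable $\xi$: indeed by \eqref{eq:n=1:ito:lions} applied along the curve and a standard chain-rule computation, its derivative is $\tfrac{d}{d\xi} f(\mu_\xi) = \int_{(\bR^d)^{\oplus 2}} \partial_\mu f(\mu_\xi, u + \xi(v-u)) \cdot (v-u)\, d\Pi^{\mu,\nu}(u,v)$. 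Then I would apply the classical one-dimensional Taylor formula with integral remainder, $f(\mu_1) - f(\mu_0) = \sum_{k=1}^{n} \tfrac{1}{k!}\,\phi^{(k)}(0) + \tfrac{1}{(n-1)!}\int_0^1 (1-\xi)^{n-1}\big(\phi^{(n)}(\xi) - \phi^{(n)}(0)\big)\,d\xi$ where $\phi(\xi) := f(\mu_\xi)$, and identify $\phi^{(k)}(0) = \sum_{a \in A_k} \rD^a f(\mu)[\Pi^{\mu,\nu}]$.

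The heart of the matter is therefore the combinatorial identity for the iterated $\xi$-derivative: I claim that for $1 \le k \le n$,
\begin{equation*}
\frac{d^k}{d\xi^k} f(\mu_\xi) = \sum_{a \in A_k} \underbrace{\int_{(\bR^d)^{\oplus 2}} \!\!\cdots \!\!\int_{(\bR^d)^{\oplus 2}}}_{\times m[a]} \partial_a f\big(\mu_\xi, u_1 + \xi(v_1-u_1), \ldots\big) \cdot \bigotimes_{i=1}^k (v_{a_i} - u_{a_i})\, d\Pi^{\mu,\nu}(u_1,v_1)\cdots,
\end{equation*}
with the understanding that each new copy of the free variable is driven by an independent copy of the coupling $\Pi^{\mu,\nu}$. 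This is proved by induction on $k$: differentiating one summand indexed by $a \in A_k$ in $\xi$ produces two types of terms by the product rule — one from differentiating each already-present free variable slot $u_j + \xi(v_j - u_j)$ (which, via $\partial_{v_j}\partial_a f = \partial_{(a_1,\ldots,a_k,j)} f$ from item (3) of Definition \ref{def:general:Lions:derivative}, yields the child sequences $(a_1,\ldots,a_k,j)$ with $j \le m[a]$), and one from differentiating the measure argument $\mu_\xi$ (which, via $\partial_\mu \partial_a f = \partial_{(a_1,\ldots,a_k,m[a]+1)} f$ from item (4), introduces a genuinely new free variable integrated against a fresh copy of $\Pi^{\mu,\nu}$, yielding the child sequence $(a_1,\ldots,a_k,m[a]+1)$). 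One checks that these child sequences are exactly the elements of $A_{k+1}$, each arising from a unique parent, so the sum over $A_k$ differentiates into the sum over $A_{k+1}$; the appearance of the extra factor $(v_{a_{k+1}} - u_{a_{k+1}})$ in each case matches the tensor product in $\rD^a f$. Care is needed to justify differentiating under the integral sign, which is where the integrability hypothesis $\mu, \nu \in \cP_{n+1}(\bR^d)$ and the boundedness of the derivatives $\partial_a f$ for $|a| \le n$ enter: the integrands are dominated, uniformly in $\xi$, by $C \prod_{i=1}^{m[a]} (1 + |u_i| + |v_i|)$ times products of $|v_{a_i} - u_{a_i}|$, which is $\Pi^{\mu,\nu}$-integrable precisely under an $(n+1)$-moment assumption since $k \le n$ factors of the increment appear.

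With the identity in hand, the stated form of $R_n^{a,\Pi^{\mu,\nu}}(f)$ follows by substituting the $k = n$ version of the derivative formula into the integral remainder of the one-dimensional Taylor expansion and pulling the $\int_0^1 (1-\xi)^{n-1}\, d\xi$ inside, which is exactly the definition of $f^{a,\mu,\nu}$. Finally, the bound \eqref{eq:bound:remainder:TaylorExpansion}: for $|a| = n$, the function $f^{a,\mu,\nu}$ is bounded by $C \big(\bW^{(1)}(\mu_\xi, \mu) + \sum_i \xi |v_i - u_i|\big)$ using Lipschitz continuity of $\partial_a f$ (item (2)), and since $\bW^{(1)}(\mu_\xi,\mu) \le \bW^{(2)}(\mu_\xi,\mu) \le \xi \big(\int |v-u|^2 d\Pi^{\mu,\nu}\big)^{1/2}$, after integrating $|f^{a,\mu,\nu}|$ against $\bigotimes_{i=1}^n |v_{a_i} - u_{a_i}|\, d\Pi^{\mu,\nu}\cdots d\Pi^{\mu,\nu}$ and using the Cauchy--Schwarz / Jensen inequality to control the product of $m[a]$ independent copies, one lands on $C \int |u-v|^{n+1} d\Pi^{\mu,\nu}(u,v)$, uniformly over $a \in A_n$ (of which there are finitely many). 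I expect the main obstacle to be the bookkeeping in the inductive differentiation step — tracking which free variables are shared versus new, ensuring the parent-child correspondence between $A_k$ and $A_{k+1}$ is a bijection onto, and keeping the independent copies of $\Pi^{\mu,\nu}$ correctly matched to the $m[a]$ integration variables — rather than any analytic subtlety, which is routine given the stated regularity and moment hypotheses.
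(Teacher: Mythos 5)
Your argument is correct, and it reaches the statement by a route organized differently from the paper's. The paper proves \eqref{eq:TaylorExpansion} by induction on the order of the expansion itself: assuming the identity at rank $n-1$, it rewrites the rank-$(n-1)$ remainder kernel $f^{a,\mu,\nu}$, $a\in A_{n-1}$, as a double integral of a derivative along the interpolation, evaluates the constant part of that derivative exactly (producing the terms $\partial_\mu\partial_a f$ and $\nabla_{v_i}\partial_a f$, i.e.\ the children $(a,m[a]+1)$ and $(a,i)$, with the factor $\tfrac{1}{n(n-1)}$) and repackages the remainder with weight $(1-\xi)^{n-1}$. You instead reduce to the classical one-variable Taylor formula for $\phi(\xi)=f(\Pi^{\mu,\nu}_\xi)$ together with the closed identity $\phi^{(k)}(0)=\sum_{a\in A_k}\rD^a f(\mu)[\Pi^{\mu,\nu}]$, proved by induction on $k$. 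The one-step differentiation in your induction (spatial slots via item (3) of Definition \ref{def:general:Lions:derivative}, the measure slot via item (4), and the parent--child bijection between $A_k$ and $A_{k+1}$) is exactly the computation carried out inside the paper's inductive step, so the analytic content coincides; what your packaging buys is that the coefficients $1/k!$ and the integral-remainder form drop out of the scalar Taylor theorem, at the cost of having to justify the $C^n$ regularity of $\phi$ and differentiation under the integral sign uniformly in $\xi$ for every $k\le n$ — which your domination argument does handle, though note that since the $\partial_a f$ are bounded, domination by $C\prod_i|v_{a_i}-u_{a_i}|$ suffices (no factor $1+|u_i|+|v_i|$ is needed) and only $n$-th moments are used there; the $(n+1)$-moment hypothesis enters only in the remainder estimate. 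Your proof of \eqref{eq:bound:remainder:TaylorExpansion} — Lipschitz continuity of $\partial_a f$ in the measure (for $\bW^{(1)}$, estimated along the coupling by $\xi\int|v-u|\,d\Pi^{\mu,\nu}$) and in each free variable, followed by H\"older/Jensen using $l[a]_1+\cdots+l[a]_{m[a]}=n$ — is the same estimate as in the paper.
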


\begin{remark}
Notice that the integrability property at order $n+1$ (imposed on $\mu$ and $\nu$) explicitly appears in the bound \eqref{eq:bound:remainder:TaylorExpansion}. This is consistent with \eqref{eq:n=1:bound:remainder}, which corresponds to $n=1$.  
\end{remark}

\begin{proof}
Firstly, we observe that \eqref{eq:n=1:ito:lions}-\eqref{eq:n=1:expression:remainder} can be rewritten in the form:
\begin{align*}
f(\nu) - f(\mu) =\rD^{(1)} f(\mu)[ \Pi^{\mu, \nu}] + \int_{(\bR^d)^{\oplus 2}} f^{(1), \mu, \nu}\cdot (v_1-u_1) d\Pi^{\mu, \nu}(u_1, v_1). 
\end{align*}
Together with \eqref{eq:n=1:bound:remainder}, this gives the result when $n=1$.

Now we proceed by induction on $n$. Consider an integer $n \geq 2$ such that the conclusion of the statement holds true for any $a\in A_{n-1}$ and any $f \in C_b^{(n-1)} \big( \cP_2(\bR^d); \bR^e \big)$. In turn, for $f \in C_b^{(n)} \big( \cP_2(\bR^d);\bR^e \big)$ and  $a \in A_{n-1}$, we have that $\partial_a f: \cP_2(\bR^d) \times (\bR^d)^{\times m[a]} \to \lin\big( (\bR^d)^{\otimes |a|}, \bR^e \big)$ is differentiable in all variables and the derivatives are bounded and Lipschitz. Hence
\begin{align*}
&f^{a, \mu, \nu} = \int_0^1 \int_0^\xi \frac{d\Big( \partial_a f\big(\Pi^{\mu, \nu}_\theta, u_1 + \theta(v_1 - u_1), ..., u_{m[a]} + \theta(v_{m[a]} - u_{m[a]} )\big) \Big)}{d\theta} d\theta (1-\xi)^{n-2} d\xi
\\
&=\int_0^1 \int_0^\xi \bigg( \int_{(\bR^d)^{\oplus 2}} \partial_\mu \partial_a f\big( \Pi_\theta^{\mu, \nu}, u_1 + \theta(v_1-u_1), ..., u_{m[a]+1} + \theta(v_{m[a]+1} - u_{m[a]+1} )\big) 
\\
& \hspace{40pt} \cdot (v_{m[a]+1} - u_{m[a]+1}) d\Pi^{\mu, \nu} (u_{m[a]+1}, v_{m[a]+1}) \bigg) d\theta (1-\xi)^{n-2} d\xi 
\\
&+\int_0^1 \int_0^\xi \sum_{i=1}^{m[a]} \nabla_{v_i} \partial_a f\Big( \Pi_\theta^{\mu, \nu}, u_1+\theta(v_1- u_1), ..., u_{m[a]} + \theta(v_{m[a]} - u_{m[a]}) \Big) \cdot (v_i - u_i) d\theta (1-\xi)^{n-2} d\xi
\\
&=\tfrac{1}{n(n-1)} \int_{(\bR^d)^{\oplus 2}} \partial_\mu \partial_a f\Big( \mu, u_1, ..., u_{m[a]+1}\Big) \cdot (v_{m[a]+1} - u_{m[a]+1})d\Pi^{\mu, \nu}(u_{m[a]+1}, v_{m[a]+1}) 
\\
&+\tfrac{1}{n(n-1)} \sum_{i=1}^{m[a]} \nabla_{v_i} \partial_a f\Big( \mu, u_1, ..., u_{m[a]} \Big) \cdot (v_i - u_i) 
\\
&+ \tfrac{1}{n-1} \int_0^1 \bigg(  \int_{(\bR^d)^{\oplus 2}} \partial_\mu \partial_a f\Big( \Pi_\xi^{\mu, \nu}, ..., u_{m[a]+1}+ \xi( v_{m[a]+1} - u_{m[a]+1}) \Big) - \partial_\mu \partial_a f\Big(\mu, ..., u_{m[a]+1} \Big) 
\\
&\hspace{40pt} \cdot ( v_{m[a]+1} - u_{m[a]+1}) d\Pi^{\mu, \nu}(u_{m[a]+1}, v_{m[a]+1})\bigg) (1-\xi)^{n-1} d\xi
\\
&+\tfrac{1}{n-1} \int_0^1 \sum_{i=1}^{m[a]} \bigg( \nabla_{v_i} \partial_a f\Big( \Pi_\xi^{\mu, \nu}, ..., u_{m[a]} + \xi( v_{m[a]} - u_{m[a]}) \Big) - \nabla_{v_i}\partial_a f\Big( \mu, ..., u_{m[a]}\Big) \bigg) 
\\
&\hspace{40pt} \cdot (v_i - u_i) (1-\xi)^{n-1} d\xi. 
\end{align*}
Substituting this into Equation \eqref{eq:TaylorExpansion} at rank $n-1$, we get the same expansion at rank $n$.

As for the estimate \eqref{eq:bound:remainder:TaylorExpansion} of the remainder, we have that the error term $R_n^{a, \mu, \nu}(f)$ satisfies that
\begin{align*}
R_n^{a, \Pi^{\mu, \nu}}(f)
=&
O \Bigg( \bW^{(2)} (\mu, \nu) \cdot \prod_{j=1}^{m[a]} \int_{(\bR^d)^{\oplus 2}} |u-v|^{l[a]_j} d\Pi^{\mu, \nu}(u, v)  \Bigg)
\\
&+ \sum_{i=1}^{m[a]} O\Bigg( \prod_{j=1}^{m[a]} \int_{(\bR^d)^{\oplus 2}} |u-v|^{l[a]_j + \delta_{i,j}} d\Pi^{\mu, \nu}(u, v) \Bigg)
\\
=& O\Bigg( \int_{(\bR^d)^{\oplus 2}} |u-v|^{n+1} d\Pi^{\mu, \nu}(u, v) \Bigg), 
\end{align*}
where we used H\"older's inequality together with the fact that $l[a]_{1}+\cdots+l[a]_{m[a]}=n$.
\end{proof}

\subsection{Multivariate Lions-Taylor expansion}
\label{subsect:Multivariate_Lions-Taylor}

The Lions-Taylor expansion given in the statement of Theorem \ref{theorem:LionsTaylor1} cannot suffice for the study of mean-field equations of the form \eqref{eq:meanfield:equation}, as we need to consider functionals depending on both a Euclidean variable $x$ and 
a measure argument $\mu$. To address this increase in complexity, we must revisit the framework introduced in Section \ref{subsection:1-Lip_sup_envelope}, in order to have a convenient system of notation for the mixed derivatives with respect to $x$ and $\mu$. 

Indeed, unlike the derivative $\partial_{a} f$ in item $(i)$ of Definition \ref{def:general:Lions:derivative}, in which the $m^{\rm th}$ variable $v_{m}$ can only appear if $\partial_{a} f$ contains at least $m$ derivatives with respect to $\mu$, the variable $x$ now appears 
in any derivatives of $f$ whenever $f$ is a function of the form $f(x,\mu)$.  

We clarify this in the next definition. Intuitively, derivatives with respect to the $x$-component are encoded in the corresponding sequence $a$ through insertions of a `$0$'. Repeated $0$'s thus account for repeated derivatives in the direction of $x$. There is no constraint on the way that those $0$'s may appear in the corresponding $a$. 

\begin{definition}
Let $k, n\in \bN_0$ and denote $(a\cdot b)$ to be the concatenation of two sequences $a$ and $b$. Let $A_{k,n}$ be the collection of all sequences 
$a' = (a_i')_{i=1, ..., k+n}$ of length $k+n$ taking values in $\{0, ..., n\}$ of the form $a' = \sigma( (0) \cdot a)$, where $(0) = (0)_{i=1, ..., k}$ is the sequence of length $k$ with all entries $0$, $a\in A_n$ and $\sigma$ is a $(k,n)$-shuffle, i.e., a permutation of $\{1, ..., n+k\}$ such that $\sigma(1) < ... < \sigma(k)$ and $\sigma(k+1)<... < \sigma(n+k)$. 
\end{definition}

The rationale for requiring $\sigma$ to be a $(k,n)$-suffle is that the positive entries of $a$, which encode the derivatives with respect to the measure argument $\mu$, obey the prescriptions of Definition  \ref{def:a}. $k$ is the number of $0$ in the sequence and $n$ is the number of non-zero values in the sequence. 

\begin{example}
We have
\begin{align*}
A_{1,1} =& \Big\{ (0,1), (1,0) \Big\}, 
\quad 
A_{1,2} = \Big\{ (0,1,1), (0,1,2), (1,0,1), (1,0,2), (1,1,0), (1,2,0) \Big\}, 
\\
A_{2,1} =& \Big\{ (0,0,1), (0,1,0), (1,0,0) \Big\}, 
\quad
A_{0,3} = \Big\{ (1,1,1), (1,1,2), (1,2,1), (1,2,2), (1,2,3) \Big\}
\end{align*}
and so on. 
\end{example}

Moreover, for $a\in A_{k,n}$, we call $\llbracket a \rrbracket$ the collection of all sequences $(p_{1},...,p_{n+k})$ of length $k+n$ taking values on $\bN_0$ that take the form
$$
(p_{a_i})_{i=1, ..., n}; \ p_0 = 0, \ p_1,..., p_{\max a} \in \bN, \ p_i\neq p_j \ (\textrm{\rm for} \ i \not = j). 
$$

\begin{definition}
\label{definition:A_0^n}
For a given $n \in {\mathbb N}$, we let 
\begin{equation*}
\A{n}= \bigcup_{k=0}^n A_{k,n-k}.
\end{equation*}
Then, for $a \in \bigcup_{n \in \bN} \A{n}$, we call $|a|$ the length of $a$ (i.e., $|a|=n$ if and only if $a \in \A{n}$), $m[a] := \max_{i=1, ...,|a|} a_i$ and $l[a]\in \bN_0^{\{0,\cdots,m[a]\}}$ such that $l[a]_i = |\{ j: a_j = i\}|$, for $i \in \{0,\cdots,m[a]\}$.
\end{definition}

In the next lemma, we provide another interpretation of the set $\A{n}$, as we prove it to be in bijection with the set of partitions $\scP \big( \{0,1, ..., n\}\big)$ of $\{0,1, ..., n\}$. It turns out that the set of partitions is key to understanding the Lions derivative. Intuitively, the partition will indicate how the free variables generated by the iterated Lions derivatives will interact with each other, thus providing us with information on which probability space the Lions derivatives should be considered. 

\begin{lemma}
\label{Lemma:Bijection-Partition}
For $n\in \bN$, there exists a bijection between the set $\A{n}$ and $\scP\big( \{0,1, ..., n\}\big)$. 
\end{lemma}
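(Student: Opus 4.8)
The plan is to exhibit an explicit map from $\A{n}$ to $\scP(\{0,1,\dots,n\})$ and check it is a bijection by constructing its inverse. Recall that a sequence $a = (a_1,\dots,a_n) \in \A{n}$ has length $n$ and takes values in $\{0,1,\dots,m[a]\}$ with $a_1 \in \{0,1\}$; the positive entries obey the $1$-Lip prescription of Definition~\ref{def:a} while the zeros may occur anywhere. Given such an $a$, I would first define, for each value $i \in \{0,1,\dots,m[a]\}$, the block
$$
B_i^{(a)} := \{\, j \in \{1,\dots,n\} : a_j = i \,\} \cup \bigl(\{0\} \text{ if } i = 0\bigr),
$$
so that $B_0^{(a)}$ always contains the formal symbol $0$ together with the positions of the zero entries, and $B_i^{(a)}$ for $i \geq 1$ collects the positions carrying the label $i$. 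Since $a$ is (by the surjectivity remark following Definition~\ref{def:a}, together with the insertion of zeros) onto $\{0,\dots,m[a]\}$, every block is nonempty; the blocks are pairwise disjoint by construction and cover $\{0,1,\dots,n\}$. Hence $P(a) := \{B_0^{(a)}, B_1^{(a)}, \dots, B_{m[a]}^{(a)}\}$ is a genuine partition of $\{0,1,\dots,n\}$. This gives the forward map.

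For the inverse, given a partition $Q \in \scP(\{0,1,\dots,n\})$, there is a unique block containing $0$; call it $Q_0$. The remaining blocks $Q \setminus \{Q_0\}$ partition $\{1,\dots,n\} \setminus Q_0$. I would order these remaining blocks by their \emph{least element}: list them as $Q_1, Q_2, \dots, Q_m$ with $\min Q_1 < \min Q_2 < \cdots < \min Q_m$. Now define a sequence $b = (b_1,\dots,b_n)$ by setting $b_j = 0$ if $j \in Q_0$, and $b_j = i$ if $j \in Q_i$ for $i \geq 1$. The key claim is that $b \in \A{n}$: the zero entries are unconstrained, as required; and if one deletes the zero entries to obtain the subsequence indexed by $\{1,\dots,n\}\setminus Q_0$, the "first time a new positive label appears" happens exactly in increasing order $1,2,\dots,m$ precisely because we ordered the blocks by least element — this is exactly the $1$-Lip sup-envelope condition with initial value $1$ (after deleting zeros, the first surviving entry is the label of the block containing the smallest non-$Q_0$ index, which is block $1$). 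One then checks that $a \mapsto P(a)$ and $Q \mapsto b(Q)$ are mutually inverse: starting from $a$, the blocks $B_i^{(a)}$ for $i\geq 1$ are already ordered by least element because the $1$-Lip rule forces the first occurrence of label $i$ to precede the first occurrence of label $i+1$, so relabelling them by least-element order returns the same labels; conversely, starting from $Q$, reading off the blocks of $b(Q)$ recovers $Q_0, Q_1, \dots, Q_m$.

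I would organise this as: (1) define the forward map and verify it lands in $\scP(\{0,1,\dots,n\})$ (nonemptiness, disjointness, covering); (2) define the candidate inverse via the least-element ordering and verify the resulting sequence lies in $\A{n}$, i.e.\ satisfies the $1$-Lip condition with $a_1$-initial-value constraint after the zeros are stripped; (3) check the two compositions are the identity. The main obstacle — and the only place requiring genuine care rather than bookkeeping — is step (2): showing that the least-element ordering of the non-$0$ blocks is \emph{forced} to coincide with the order in which fresh labels appear in any $\A{n}$-sequence, so that the map is well defined and inverts the forward map on the nose. This is a small combinatorial lemma about the $1$-Lip sup-envelope (essentially: the first occurrences of the labels $1,2,\dots,m[a]$ appear in that order), and once it is isolated the rest is routine. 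An alternative, slicker route would be to note that $\bigcup_{k} A_{k,n-k}$ is, by the shuffle description, in obvious bijection with pairs (a subset $S \subseteq \{1,\dots,n\}$ of zero-positions, an $A_{n-|S|}$-sequence on the complement), and that $A_m$ is classically in bijection with partitions of an $m$-element set (the "first-occurrence" normal form of a set partition); combining these and adjoining $0$ to the block of zero-positions yields partitions of $\{0,1,\dots,n\}$ directly. I would likely present the explicit first-occurrence bijection as the cleanest self-contained argument.
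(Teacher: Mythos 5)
Your proposal is correct and follows essentially the same route as the paper: the forward map is the preimage/block map (with $0$ adjoined to the block of zero-positions, matching the paper's convention $a_0=0$), and the inverse is built by ordering the remaining blocks by least element and checking the $1$-Lip condition via the first-occurrence observation, which is exactly how the paper establishes surjectivity (it phrases the argument as injectivity plus an inductive verification rather than checking the two compositions, but the content is the same). No gaps to flag.
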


Intuitively, the partition associated with an element $a \in A_{n}^{(0)}$ is obtained by gathering (in a common element of the partition) the indices $i$ of $\{0, ..., n\}$ that have the same value $a_{i}$
in the sequence $a$, with the convention that $a_{0}=0$. The reader may skip ahead on an initial reading.

\begin{proof}
For $a \in \A{n}$, we associate the collection of sets
\begin{equation*}
p_{k}:=a^{-1}(\{k\})=\bigl\{i \in \{0,\cdots,n\} : a_{i} = k \bigr\}, \quad k \in \{0,\dots,\max(a)\},
\end{equation*}
with the convention that $a_{0}=0$. Then the collection of sets $P=\{ p_{0}, ..., p_{m[a]}\}$ is a partition of $\{0,\cdots,n\}$. This creates a mapping ${\mathfrak m}$ from $\A{n}$ into $\scP( \{0,1, ..., n\})$. This mapping is injective, since any two $a$ and $b$ in $\A{n}$ such that ${\mathfrak m}[a]={\mathfrak m}[b]$ have the same pre-images $(a^{-1}(\{k\}))_{k =0, ..., n}$ and $(b^{-1}(\{k\}))_{k=0, ..., n}$ and hence coincide. 

It thus remains to prove that ${\mathfrak m}$ is surjective onto $\scP(\{0,1,...,n\})$. Let $P \in \scP(\{0,...,n\})$ so that $|P| \leq n+1$. Then there exists $p_0 \in P$ such that $0 \in p_0$. Thus $P \backslash \{p_0\} \in \scP( \{0, 1, ..., n\} \backslash p_0)$ and the set $\{0, 1, ... n\}\backslash p_0$ is ordered. In turn, there exists $x\in \{0, 1, ..., n\} \backslash p_0$ such that $x = \min \{0, 1, ..., n\} \backslash p_0$, which allows us to  call $p_1$ the unique element of $P$ such that  $p_{1} \ni x$. Continuing in this fashion, we obtain an enumeration of $P$ in the form $P= \{p_0, p_1, ...,p_{m} \}$, with $m \leq n+1$. 

Define the sequence $(a_i)_{i=1, ..., n}$ by $a_i = j$ if and only if $i \in p_j$. We verify that $a \in A_n$. Firstly, either $1\in p_0$ or $1 \notin p_0$. If $1 \in p_0$ then $a_1 = 0$. If $1 \notin p_0$ then $1 \in \{0, 1, ..., n\} \backslash p_0$ and $0 \notin \{0, 1, ..., n\} \backslash p_0$ so that $1 = \min \{0, 1, ..., n\} \backslash p_0$. Thus $1 \in p_1$ and then $a_1 =1$. 

Next suppose for $k <n$ that $\tilde{a} = (a_i)_{i=1, ..., k} \in \A{k}$. We must prove that $m[\tilde{a}]:=\max_{l =1,...,k} a_{l} \leq 
\max_{l =1,...,k+1} a_{l}$. We already have a sequence of sets $p_0, ..., p_{m[\tilde{a}]} \in P$ such that $1, ..., k \in \bigcup_{i=0}^{m[\tilde{a}]} p_i$. Then either $k+1 \in \bigcup_{i=0}^{m[\tilde{a}]} p_i$ or $k+1 \in \{0, 1, ..., n\} \backslash ( \bigcup_{i=0}^{m[\tilde{a}]} p_i )$. If $k+1 \in \bigcup_{i=0}^{m[\tilde{a}]} p_i$, then there exists $j \in \{0, 1, ..., m[\tilde{a}]\}$ such that $k+1 \in p_j$ so that $a_{k+1} =j \leq \max_{l=1,...,k} a_{l}$. On the other hand if $k+1 \notin  \bigcup_{i=0}^{m[\tilde{a}]} p_i$, then $k+1 = \min \{ 0, 1, ..., n\} \backslash  (\bigcup_{i=0}^{m[\tilde{a}]} p_i)$, so that $k+1 \in p_{m[\tilde{a}] +1}$ and $a_{k+1} = m[\tilde{a}]+1$.  We conclude by induction. 
\end{proof}

Following our notation $\partial_{a}$, $a \in A_{n}$, for the iterated Lions' derivative defined by $a$, we now want to define
$a\in A_n^{(0)}$, for $n\in \bN$, according to the following induction:
\begin{align*}
\partial_{(1)} =& \partial_\mu, \quad \partial_{(0)} = \nabla_x
\\
\partial_{(a_1, ..., a_{k-1}, a_k)} =&
\begin{cases} 
\nabla_x \cdot \partial_{(a_1, ..., a_{k-1})} & \quad a_k =0, 
\\
\nabla_{v_{a_k}} \cdot \partial_{(a_1, ..., a_{k-1})} & \quad 0 < a_k \leq \max \{ a_1, ..., a_{k-1}\}, 
\\
\partial_\mu \cdot \partial_{(a_1, ..., a_{k-1})} & \quad a_k > \max \{a_1, ..., a_{k-1}\}.  
\end{cases}
\end{align*}

\begin{definition}
\label{def:general:Lions-spacial:derivative}
For any $n \in \bN$, we say that a function $f: \bR^d \times \cP_2(\bR^d) \to \bR^e$ belongs to $C_{b}^{n, (n)}\big( \bR^d \times \cP_2(\bR^d); \bR^e \big)$ if there exists a collection of functions $\big\{ \partial_a f: a \in \bigcup_{i=1}^n \A{i} \big\}$ such that:
\begin{enumerate}
\item For $i=1, ..., n$ and for all $a\in \A{i}$, there exists a function
\begin{align*}
\partial_a f: \bR^d \times \cP_2(\bR^d) \times (\bR^d)^{\times m[a]} &\to \lin\big( (\bR^d)^{\otimes i}, \bR^e\big)
\\
\bigl(x,\mu,(v_{1},\cdots,v_{m[a]}) \bigr) &\mapsto \partial_{a} f(x,\mu,v_{1},\cdots,v_{m[a]}). 
\end{align*}

\item For all $a\in \bigcup_{i=1}^n \A{i}$, the functions $\partial_a f$ are bounded and Lipschitz continuous on $\bR^d \times \cP_2(\bR^d) \times (\bR^d)^{\times m[a]}$, the second space being equipped with the $\bW^{(1)}$-distance. 

\item For any $a\in \A{n-1}$, the function $\partial_{a} f$ is differentiable with respect to $x$, and 
$$
\partial_{x} \partial_{a} f = \partial_{(a,0)} f.
$$
\item For any $a \in \A{n-1}$, the function $\partial_a f$ is differentiable with respect to $\mu$, and
$$
\partial_\mu \partial_a f = \partial_{(a, m[a]+1)} f
$$

\item For any $a \in \A{n-1}$, the function $\partial_{a} f$ is differentiable with respect to $(v_1, ..., v_{m[a]})$ and for any $p \in \{1, ..., m[a]\}$, 
\begin{equation*}
\partial_{v_p} \partial_{a} f = \partial_{(a, p)} f.
\end{equation*}
\end{enumerate}
\end{definition}

We can now extend the differential operator defined in \eqref{eq:D^a:without:0} to the multivariate case. Let $x_0, y_0\in \bR^d$ and let $\Pi^{\mu, \nu}$ be a measure on $(\bR^d)^{\oplus 2}$ with marginal distribution $\mu, \nu \in \cP_{n+1}$. For $a\in A_{k,n}$, we define the operator
\begin{align}
\label{eq:rDa}
\rD^a& f(x_0, \mu)[ y_0-x_0, \Pi^{\mu, \nu}]
\\
\nonumber
&= \underbrace{\int_{(\bR^d)^{\oplus 2}} ... \int_{(\bR^d)^{\oplus 2}} }_{\times m[a]} \partial_a f\Big( x_0, \mu, x_1, ..., x_{m[a]} \Big) \cdot \bigotimes_{i=1}^{k+n} ( y_{a_i} - x_{a_i}) d\Pi^{\mu, \nu}(x_1, y_1)  ... d\Pi^{\mu, \nu}(x_{m[a]}, y_{m[a]} ) 
\end{align}

Then Theorem \ref{theorem:LionsTaylor1} admits the following generalisation:

\begin{theorem}
\label{theorem:LionsTaylor2}
Let $n \in \bN$ and let $f\in C_b^{n,(n)} \big( \bR^d \times \cP_2(\bR^d); \bR^e \big)$. Then for any $\mu, \nu \in \cP_{n+1} (\bR^d)$ with joint distribution $\Pi^{\mu, \nu}$ and any $x_0, y_0\in \bR^d$ we have that
\begin{align}
\label{eq:FullTaylorExpansion}
f(y_0, \nu) -& f(x_0, \mu) 
= \sum_{i=1}^n \sum_{a\in \A{i}} \frac{\rD^a f(x_0, \mu)[y_0 - x_0, \Pi^{\mu, \nu}] }{i!} 
+ R_{n}^{(x_0, y_0), \Pi^{\mu, \nu}}(f) 
\end{align}
where
\begin{align}
\nonumber
R_{n}^{(x_0, y_0), \Pi^{\mu, \nu}}(f)
= 
\frac{1}{(n-1)!}\sum_{a\in \A{n}} & \underbrace{\int_{(\bR^d)^{\oplus 2}} ... \int_{(\bR^d)^{\oplus 2}} }_{\times m[a]} f^{a, (x_0, y_0), \Pi^{\mu, \nu}} 
\\
\label{eq:theorem:LionsTaylor2_remainder1}
&\cdot \bigotimes_{p=1}^{n} ( y_{a_p} - x_{a_p}) d\Pi^{\mu, \nu}(x_1, y_1)  ... d\Pi^{\mu, \nu}(x_{m[a]}, y_{m[a]} ), 
\end{align}
and
\begin{align}
\nonumber
f^{a, (x_0, y_0), \Pi^{\mu, \nu}} = \int_0^1& \Big( \partial_a f \big(x_0 + \xi(y_0 - x_0), \Pi^{\mu, \nu}_\xi, x_1 + \xi(y_1 - x_1), ..., x_{m[a]} + \xi(y_{m[a]} - x_{m[a]} )\big)
\\
\label{eq:theorem:LionsTaylor2_remainder1_1}
 - \partial_a f\big(x_0, \mu,& x_1, ..., x_{m[a]} \big)\Big) (1-\xi)^{n-1} d\xi. 
\end{align}
The probability measure $\Pi^{\mu,\nu}_{\xi}$ is defined as in \eqref{eq:Pixi}. 

As before, for $a \in A_{i, n-i}$ the error term satisfies
\begin{align}
\nonumber
\underbrace{\int_{(\bR^d)^{\oplus 2}} ... \int_{(\bR^d)^{\oplus 2}} }_{\times m[a]} & f^{a, (x_0, y_0), \Pi^{\mu, \nu}} \cdot \bigotimes_{p=1}^{n} ( y_{a_p} - x_{a_p}) d\Pi^{\mu, \nu}(x_1, y_1)  ... d\Pi^{\mu, \nu}(x_{m[a]}, y_{m[a]} ) \\
\nonumber
=&O\Bigg( |y_0 - x_0|^{i+1} \cdot \prod_{p=1}^{m[a]} \int_{(\bR^d)^{\oplus 2}} |y_p - x_p|^{l[a]_p} d\Pi^{\mu, \nu}(x_p, y_p) \Bigg)
\\
\nonumber
&+ O\Bigg( \bW^{(1)} (\mu,\nu) \cdot |y_0 - x_0|^{i} \cdot \prod_{p=1}^{m[a]} \int_{(\bR^d)^{\oplus 2}} |y_p - x_p|^{l[a]_p} d\Pi^{\mu, \nu}(x_p, y_p) \Bigg)
\\
\label{eq:theorem:LionsTaylor2_Rem1}
&+ \sum_{q=1}^{m[a]} O \Bigg( |y_0 - x_0|^{i} \cdot \prod_{p=1}^{m[a]} \int_{(\bR^d)^{\oplus 2}} |y_q - x_q|^{l[a]_q + \delta_{p,q}} d\Pi^{\mu, \nu}(x_q, y_q) \Bigg). 
\end{align}
\end{theorem}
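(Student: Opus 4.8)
The plan is to interpolate between $(x_0,\mu)$ and $(y_0,\nu)$ along the flow $\xi\mapsto\big(x_0^\xi,\Pi^{\mu,\nu}_\xi\big)$, where $x_0^\xi:=x_0+\xi(y_0-x_0)$ and $\Pi^{\mu,\nu}_\xi$ is as in \eqref{eq:Pixi}, and to reduce the statement to the one-dimensional Taylor formula with integral remainder. As in the proof of Theorem \ref{theorem:LionsTaylor1}, I would argue by induction on $n$, so that at each step only one further differentiation is performed. For the base case $n=1$, the joint chain rule along the flow, which combines the Euclidean chain rule in the $x_0^\xi$-slot with the Lions chain rule underpinning \eqref{eq:n=1:ito:lions}--\eqref{eq:n=1:expression:remainder} in the $\Pi^{\mu,\nu}_\xi$-slot (both licensed because $f\in C_b^{1,(1)}$), gives, with $g(\xi):=f\big(x_0^\xi,\Pi^{\mu,\nu}_\xi\big)$,
\[
g'(\xi)=\nabla_x f\big(x_0^\xi,\Pi^{\mu,\nu}_\xi\big)\cdot(y_0-x_0)+\int_{(\bR^d)^{\oplus 2}}\partial_\mu f\big(x_0^\xi,\Pi^{\mu,\nu}_\xi,u+\xi(v-u)\big)\cdot(v-u)\,d\Pi^{\mu,\nu}(u,v),
\]
which is the sum over $a\in\A{1}$ of the $\rD^a$-integrands of \eqref{eq:rDa} evaluated at interpolation time $\xi$ (base point $\big(x_0^\xi,\Pi^{\mu,\nu}_\xi\big)$, free variables $x_i+\xi(y_i-x_i)$); in particular $g'(0)=\sum_{a\in\A{1}}\rD^a f(x_0,\mu)[y_0-x_0,\Pi^{\mu,\nu}]$, so writing $g(1)-g(0)=g'(0)+\int_0^1\big(g'(\xi)-g'(0)\big)d\xi$ gives \eqref{eq:FullTaylorExpansion}--\eqref{eq:theorem:LionsTaylor2_remainder1_1} for $n=1$.

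\textbf{Inductive step.} Assuming the expansion at rank $n-1$, for $a\in\A{n-1}$ the integrand $f^{a,(x_0,y_0),\Pi^{\mu,\nu}}$ of the rank-$(n-1)$ remainder equals $\int_0^1\big(\partial_a f(x_0^\xi,\Pi^{\mu,\nu}_\xi,x_1^\xi,\dots,x_{m[a]}^\xi)-\partial_a f(x_0,\mu,x_1,\dots,x_{m[a]})\big)(1-\xi)^{n-2}d\xi$ with $x_i^\xi:=x_i+\xi(y_i-x_i)$; the structural point is that, once the free variables are written against fixed copies of $\Pi^{\mu,\nu}$ rather than against $\Pi^{\mu,\nu}_\xi$, each of them carries its own $\xi$-dependence through $x_i^\xi$. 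I would write the bracket as $\int_0^\xi\frac{d}{d\theta}\big[\partial_a f(x_0^\theta,\Pi^{\mu,\nu}_\theta,x_1^\theta,\dots,x_{m[a]}^\theta)\big]d\theta$ and differentiate $\partial_a f$ along the flow (legitimate since $\partial_a f$ is itself continuously Lions-differentiable and Lipschitz, with the identifications $\partial_{(a,0)}f=\nabla_x\partial_a f$, $\partial_{(a,m[a]+1)}f=\partial_\mu\partial_a f$ and $\partial_{(a,p)}f=\nabla_{v_p}\partial_a f$ from Definition \ref{def:general:Lions-spacial:derivative} as $|a|\le n-1$); this generates exactly three families of terms: one $\nabla_x\partial_a f\cdot(y_0-x_0)$ term, one $\int\partial_\mu\partial_a f(\dots,u+\theta(v-u))\cdot(v-u)d\Pi^{\mu,\nu}$ term, and, because of the $\xi$-dependence just mentioned, $m[a]$ terms $\nabla_{v_p}\partial_a f\cdot(y_p-x_p)$, $p=1,\dots,m[a]$. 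These are indexed by $(a,0)$, $(a,m[a]+1)$ and $(a,p)$ respectively, and the combinatorial core is the recursive identity
\[
\A{n}=\bigsqcup_{a\in\A{n-1}}\big\{(a,j):\,j\in\{0,1,\dots,m[a]+1\}\big\},
\]
read off Definition \ref{definition:A_0^n} (equivalently the partition picture of Lemma \ref{Lemma:Bijection-Partition}): appending $0$ inserts a new $0$ into the underlying shuffle, appending $j\le m[a]$ is an admissible repetition in the sense of Definition \ref{def:a}, and appending $m[a]+1$ is the admissible fresh symbol. Performing the Fubini exchange $\int_0^1(1-\xi)^{n-2}\int_0^\xi(\cdot)(\theta)\,d\theta\,d\xi=\tfrac{1}{n-1}\int_0^1(1-\theta)^{n-1}(\cdot)(\theta)\,d\theta$ and adding and subtracting the value at $\theta=0$, the $\theta=0$ part contributes $\tfrac{1}{(n-2)!}\cdot\tfrac{1}{n(n-1)}\rD^{(a,j)}f(x_0,\mu)[\cdot]=\tfrac{1}{n!}\rD^{(a,j)}f(x_0,\mu)[\cdot]$ while the remainder keeps weight $(1-\theta)^{n-1}$ and prefactor $\tfrac{1}{(n-2)!}\cdot\tfrac{1}{n-1}=\tfrac{1}{(n-1)!}$; summing over $a\in\A{n-1}$ and $j$ and regrouping through the displayed disjoint union reproduces \eqref{eq:FullTaylorExpansion}--\eqref{eq:theorem:LionsTaylor2_remainder1_1} at rank $n$.

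\textbf{The remainder estimate.} For \eqref{eq:theorem:LionsTaylor2_Rem1}, I would fix $a\in A_{i,n-i}$ and bound $\big|f^{a,(x_0,y_0),\Pi^{\mu,\nu}}\big|$ pointwise in the integration variables. Since $\partial_a f$ is Lipschitz on $\bR^d\times\cP_2(\bR^d)\times(\bR^d)^{\times m[a]}$ for the product of the Euclidean, $\bW^{(1)}$- and Euclidean distances, and since $\bW^{(1)}\big(\Pi^{\mu,\nu}_\xi,\mu\big)\le\int_{(\bR^d)^{\oplus 2}}|x-y|\,d\Pi^{\mu,\nu}(x,y)$ (use the coupling of $\Pi^{\mu,\nu}_\xi$ with $\mu$ induced by $(x,y)\mapsto(x+\xi(y-x),x)$), the difference $\partial_a f(x_0^\xi,\Pi^{\mu,\nu}_\xi,x_1^\xi,\dots)-\partial_a f(x_0,\mu,x_1,\dots)$ splits into a contribution $O(|y_0-x_0|)$ from the $x$-slot, $O(\bW^{(1)}(\mu,\nu))$ from the $\mu$-slot and $O\big(\sum_{q=1}^{m[a]}|y_q-x_q|\big)$ from the free-variable slots. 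Multiplying by $\big|\bigotimes_{p=1}^n(y_{a_p}-x_{a_p})\big|=|y_0-x_0|^{i}\prod_{j\ge 1}|y_j-x_j|^{l[a]_j}$ and integrating against $\prod_p d\Pi^{\mu,\nu}(x_p,y_p)$ yields the three lines of \eqref{eq:theorem:LionsTaylor2_Rem1}, and a final H\"older step using $\sum_{j=0}^{m[a]}l[a]_j=n$ collapses each product of moments into a single moment of order at most $n+1$, exactly as at the end of the proof of Theorem \ref{theorem:LionsTaylor1}.

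\textbf{Main obstacle.} I expect the principal difficulty to be the bookkeeping in the inductive step: one must keep track that, written against fixed copies of $\Pi^{\mu,\nu}$, every already-present free variable $x_i^\xi$ carries $\xi$-dependence, so that differentiating a rank-$(n-1)$ term produces not only the ``new'' $\nabla_x$ and $\partial_\mu$ contributions but also one $\nabla_{v_p}$ contribution per existing free variable, and then to match this bijectively with the recursive description of $\A{n}$ (hence with the partition sequences of Definition \ref{def:a} and Lemma \ref{Lemma:Bijection-Partition}) while the factorial prefactors telescope correctly. A secondary, more routine point is the rigorous justification at each order of differentiation under the integral sign and of the chain rule along the flow $\xi\mapsto\big(x_0^\xi,\Pi^{\mu,\nu}_\xi\big)$; the global boundedness and Lipschitz requirements built into $C_b^{n,(n)}$ (cf. Remarks \ref{remark:WassersteinRemark2}--\ref{remark:WassersteinRemark3}) are precisely what makes this possible.
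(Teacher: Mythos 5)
Your proposal is correct and follows exactly the route the paper intends: the paper declares this theorem a multivariate adaptation of the proof of Theorem \ref{theorem:LionsTaylor1} (left as an exercise), and your induction on $n$ — differentiating the rank-$(n-1)$ remainder integrand along the flow $\xi\mapsto(x_0+\xi(y_0-x_0),\Pi^{\mu,\nu}_\xi)$, matching the resulting $\nabla_x$, $\partial_\mu$ and $\nabla_{v_p}$ contributions with the recursive decomposition of $\A{n}$, and closing with the Lipschitz/H\"older bound — is precisely that adaptation, with the factorial bookkeeping $\tfrac{1}{(n-2)!}\cdot\tfrac{1}{n(n-1)}=\tfrac{1}{n!}$ and $\tfrac{1}{(n-2)!}\cdot\tfrac{1}{n-1}=\tfrac{1}{(n-1)!}$ handled correctly.
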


\begin{proof}
The proof is a multivariate adaption of the proof of Theorem \ref{theorem:LionsTaylor1} and is left as an exercise for those who are curious. The authors stress there is no extra novelty in including this proof. 
\end{proof}

\begin{remark}
We can sum up all the remainder terms of \eqref{eq:FullTaylorExpansion}. An application of the H\"older inequality to Equation  \eqref{eq:theorem:LionsTaylor2_Rem1} (in the same fashion as in Equation \eqref{eq:bound:remainder:TaylorExpansion}) and summing over all the terms of Equation \eqref{eq:theorem:LionsTaylor2_remainder1} gives that
\begin{align*}
R_{n}^{(x_0, y_0), \Pi^{\mu, \nu}}(f) \lesssim& \sum_{j=0}^n O\bigg( |x-y|^{j+1} \cdot \int |v-u|^{n-j} d\Pi^{\mu, \nu}(u, v) \bigg), 
\\
&+ \sum_{j=0}^n O\bigg( |x-y|^{j} \cdot \int |v-u|^{n-j+1} d\Pi^{\mu, \nu}(u, v) \bigg). 
\end{align*}
\end{remark}

\subsubsection{Differentiability of Multivariate derivatives}

Here, we provide a Taylor expansion for $\partial_{a} f$ itself, assuming that the derivative is sufficiently differentiable. This requires that we first introduce some suitable notation for denoting a sequence $\overline a$ whose restriction on $\{1, ..., |a| \}$ coincides
with $a$ itself, in which case $\overline a$ reads as an extension of $a$. 

For $i,j\in \bN_0$, let $a\in A_{i,j}$ and for integers $\tilde{i}\geq i, \tilde{j}\geq j$ such that $\tilde{i}+\tilde{j}>i+j$, define 
\begin{align*}
&A_{\tilde{i},\tilde{j} |a}:= \Big\{ \overline{a} \in A_{\tilde{i},\tilde{j}}: \forall p=1, ..., i+j, \quad \overline{a}_p = a_p \Big\}. 
\end{align*}
Finally, for another integer $n>i+j$, we let
$$
\A{n|a}:= \bigcup_{k=i}^{n-j} A_{k, n-k |a}. 
$$

Then, for $a\in A_{i,j}$ and $\overline{a}\in A_{\tilde{i},\tilde{j}|a}$, we define the operator
\begin{align*}
\rD^{\overline{a}|a}& \partial_a f(x_0, \mu, x_1, ..., x_{m[a]} )\Big[ y_0-x_0, \Pi^{\mu, \nu}, y_1-x_1, ..., y_{m[a]} - x_{m[a]} \Big]
\\
&= \underbrace{\int_{(\bR^d)^{\oplus 2}} ... \int_{(\bR^d)^{\oplus 2}} }_{\times (m[\overline{a}] - m[a])} \partial_{\overline{a}} f\Big( x_0, \mu, x_1, ..., x_{m[\overline{a}]}\Big) \cdot \bigotimes_{p=i+j+1}^{\tilde{i}+\tilde{j}} ( y_{\overline{a}_p} - x_{\overline{a}_p}) 
\\
&\qquad \qquad d\Pi^{\mu, \nu}(x_{m[a]+1}, y_{m[a]+1})  ... d\Pi^{\mu, \nu}(x_{m[\overline{a}]}, y_{m[\overline{a}]} ). 
\end{align*}

Theorem \ref{theorem:LionsTaylor2} admits the following useful Corollary:

\begin{corollary}
\label{corollary:LionsTaylor2}
Let $n \in \bN$ and let $f\in C_b^{n, (n)}\big( \bR^d \times \cP_2(\bR^d); \bR^e \big)$. Let $(i, j)\in \bN_0 \times \bN_0$ such that $0<i+j<n$ and let $a \in A_{i, j}$. 

For any $x_0, y_0, x_1, y_1, ..., x_{m[a]}, y_{m[a]} \in \bR^d$ and for any $\mu, \nu \in \cP_{n+1}(\bR^d)$ with joint distribution $\Pi^{\mu, \nu}$ we have that
\begin{align*}
\partial_a &f(y_0, \nu, y_1, ..., y_{m[a]}) - \partial_a f(x_0, \mu, x_1, ..., x_{m[a]}) 
\\
=& \sum_{k=i+j+1}^n \sum_{\overline{a}\in \A{k|a}} \frac{\rD^{\overline{a}|a} \partial_a f(x_0, \mu, x_1, ..., x_{m[a]} )}{ \big( k-(i+j) \big)!} \Big[ y_0-x_0, \Pi^{\mu, \nu}, y_1-x_1, ..., y_{m[a]} - x_{m[a]} \Big]
\\
&+R_{n|a}^{(x_0, y_0), \Pi^{\mu, \nu}} \Big( \partial_a f \Big), 
\end{align*}
where
\begin{align*}
R_{n|a}^{(x_0, y_0), \Pi^{\mu, \nu}}\Big( \partial_a f \Big)
=&
\frac{1}{(n - (i+j)-1)!} \sum_{\overline{a}\in \A{n|a}} \underbrace{\int_{(\bR^d)^{\oplus 2}} ... \int_{(\bR^d)^{\oplus 2}} }_{\times m[\overline{a}] - m[a]} (\partial_a f)^{\overline{a}, (x_0, y_0), \Pi^{\mu, \nu}}
\\
& \cdot \bigotimes_{r=i+j+1}^{n} ( y_{\overline{a}_r} - x_{\overline{a}_r}) d\Pi^{\mu, \nu}(x_{m[a]+1}, y_{m[a]+1})  ... d\Pi^{\mu, \nu}(x_{m[\overline{a}]}, y_{m[\overline{a}]} )
\end{align*}
and
\begin{align}
\nonumber
(\partial_a f&)^{\overline{a}, (x_0, y_0), \Pi^{\mu, \nu}}
\\
\nonumber
&=\int_0^1 \Big( \partial_{\overline{a}} f \big(x_0 + \xi(y_0 - x_0), \Pi^{\mu, \nu}_\xi, x_1 + \xi(y_1 - x_1), ..., x_{m[\overline{a}]} + \xi(y_{m[\overline{a}]} - x_{m[\overline{a}]} ) \big)
\\
\label{eq:corollary:LionsTaylor2_remainder1_1}
&\quad - \partial_{\overline{a}} f(x_0, \mu, x_1, ..., x_{m[\overline{a}]} ) \big) \Big) (1-\xi)^{(n-(i+j) -1} d\xi. 
\end{align}
\end{corollary}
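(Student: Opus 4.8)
The plan is to deduce Corollary \ref{corollary:LionsTaylor2} directly from Theorem \ref{theorem:LionsTaylor2}, applied not to $f$ but to $\partial_a f$ itself, after reinterpreting the latter as a function of \emph{one} higher-dimensional Euclidean variable and one measure variable. Fix $a \in A_{i,j}$ with $0<i+j<n$ and set
\[
g\bigl( (x_0,x_1,\ldots,x_{m[a]}),\mu \bigr) := \partial_a f(x_0,\mu,x_1,\ldots,x_{m[a]}), \qquad g : \bigl(\bR^d\bigr)^{\times(m[a]+1)}\times \cP_2(\bR^d) \to \lin\bigl((\bR^d)^{\otimes(i+j)},\bR^e\bigr).
\]
Here the bundled variable $\mathbf{x}:=(x_0,\ldots,x_{m[a]})$ takes over the role of the spatial variable of Theorem \ref{theorem:LionsTaylor2}, while $\mu$ retains the role of the measure variable; any new free variables produced by further Lions differentiation of $g$ still live in $\bR^d$ and will be relabelled $v_{m[a]+1},v_{m[a]+2},\ldots$ so as to match the parent notation $\partial_{\overline a}f$.

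First I would verify that $g$ has exactly the regularity required by Theorem \ref{theorem:LionsTaylor2} (in the version with several Euclidean arguments), namely that it lies in $C_b^{n-(i+j),\,(n-(i+j))}$ relative to the Euclidean space $(\bR^d)^{\times(m[a]+1)}$. This is read off from the boundedness, spatial-differentiability and Lions-differentiability clauses of Definition \ref{def:general:Lions-spacial:derivative}, applied iteratively (which is legitimate since $|a|=i+j\le n-1$): differentiating $g$ with respect to the sub-block $x_0$ appends a $0$ to the index sequence, differentiating with respect to $x_p$ for $1\le p\le m[a]$ appends the already-present symbol $p$, and differentiating with respect to $\mu$ appends a fresh symbol $m[\cdot]+1$ together with one new integrated free variable. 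Consequently the partial derivatives of $g$ of order $k-(i+j)$ are exactly the family $\{\partial_{\overline a}f:\overline a\in\A{k|a}\}$ — which is precisely how the sets $A_{\tilde i,\tilde j|a}$ and $\A{k|a}$ were constructed — and each such $\partial_{\overline a}f$ is bounded and $\bW^{(1)}$-Lipschitz, and is moreover differentiable whenever $k\le n-1$.

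Then I would invoke the routine extension of Theorem \ref{theorem:LionsTaylor2} to functions of finitely many Euclidean variables and one measure variable (the proof is identical, the purely spatial part being ordinary multivariate Taylor), applied to $g$ with spatial increment $(y_0-x_0,\ldots,y_{m[a]}-x_{m[a]})$ and coupling $\Pi^{\mu,\nu}$, expanded to order $n-(i+j)$. Unwinding the index bijections of this section, the order $k-(i+j)$ terms of the resulting expansion are exactly $\bigl(k-(i+j)\bigr)!^{-1}\,\rD^{\overline a|a}\partial_a f(x_0,\mu,x_1,\ldots,x_{m[a]})[\cdots]$ summed over $\overline a\in\A{k|a}$, and the Lagrange remainder is precisely $R_{n|a}^{(x_0,y_0),\Pi^{\mu,\nu}}(\partial_a f)$ with the integrand \eqref{eq:corollary:LionsTaylor2_remainder1_1}; the summand-by-summand bound on the remainder is inherited verbatim from \eqref{eq:theorem:LionsTaylor2_Rem1} and the remark following Theorem \ref{theorem:LionsTaylor2}, using the boundedness and $\bW^{(1)}$-Lipschitz continuity of the $\partial_{\overline a}f$ and of their first derivatives. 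I expect the only real work to be organisational rather than analytic: one must check that identifying the $m[a]+1$ Euclidean sub-blocks of $g$ with the symbols $0,1,\ldots,m[a]$, and relabelling the free variables of $g$ from $1,2,\ldots$ back to $m[a]+1,m[a]+2,\ldots$, reproduces exactly the index sets $\A{k|a}$, the operators $\rD^{\overline a|a}$ and the factorial weights as defined before the statement. Since the genuine analytic content — the iterated Taylor/integration-by-parts induction and the Hölder estimate on the remainder — is furnished entirely by Theorem \ref{theorem:LionsTaylor2}, I do not anticipate any obstacle beyond this bookkeeping.
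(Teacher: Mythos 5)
Your proposal is correct and is essentially the argument the paper has in mind: the corollary is stated without proof precisely because it follows from (the multivariate extension of) Theorem \ref{theorem:LionsTaylor2} applied to $\partial_a f$, viewed as a function of the bundled Euclidean argument $(x_0,v_1,\ldots,v_{m[a]})$ and the measure, with the regularity supplied by Definition \ref{def:general:Lions-spacial:derivative} exactly as you check. The only content beyond that is the combinatorial identification of the sub-block choices for the spatial derivatives and the relabelled free variables with the extension sets $\A{k|a}$, the operators $\rD^{\overline a|a}$ and the weights $1/(k-(i+j))!$, which you have correctly identified as a bijection of index sets.
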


\subsubsection{Schwarz Theorem for Lions derivatives}

Ascending sequences of elements $a\in A_{k,n}$ expose how new derivative terms are added to the Taylor expansion. For each term in the remainder of the Taylor expansion, we perform the Mean Value Theorem on every variable which yields a new derivative associated to all the possible sequences $(a, i) \in A_{k+1,n} \cup A_{k,n+1}$ where $i\in \{0, 1, ..., m[a]+1\}$. However, what this representation does not convey is that many  derivatives $\partial_a f$ in this expansion are equal. 
	
\begin{theorem}[Schwarz Theorem for Lions Derivatives]
\label{thm:Schwarz-Lions}
Let $k,n\in \bN$ such that $k+n\geq 2$. Let $a, b\in A_{k,n}$ such that $\forall i\in \big\{ 1, ..., m[a] \big\}$, $l[a]_i = l[b]_i$. Then for any $f\in C_{b}^{n,(n)} \big( \bR^d \times \cP_2(\bR^d) \big)$, 
$$
\partial_a f = \partial_b f,
$$
in the sense that, for any $e_{1},... ,e_{n} \in \bR^d$ and any permutation $\sigma$ from $\{1, ..., n\}$ into itself such that $a_{\sigma(i)}=b_{i}$ for all $i \in \{1, ..., n\}$, and all $\mu \in \cP_{2}(\bR^d)$ and $v_{1},... ,v_{m} \in \bR^d$,
$$
\partial_{a} f(\mu,v_{1},...,v_{m}) \cdot \otimes_{i=1}^n e_{i}
=
\partial_{b} f(\mu,v_{1},...,v_{m}) \cdot \otimes_{i=1}^n e_{\sigma(i)}.
$$
\end{theorem}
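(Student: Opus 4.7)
The plan is to prove the theorem by induction on the length $L = k+n$ of the sequence, reducing a general permutation to a product of adjacent transpositions and invoking the classical Schwarz theorem for Fréchet derivatives on appropriate Hilbert-space liftings of $f$. Since the symmetric group on $\{1, \ldots, L\}$ is generated by adjacent transpositions, it suffices to show that swapping two neighbouring entries $a_j, a_{j+1}$ in the sequence leaves the evaluated derivative unchanged up to a corresponding permutation of the tensor arguments. This reduces the proof to a finite collection of elementary commutation identities between pairs of elementary derivative operators.

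For the base case $L=2$, I would enumerate the adjacent pairs: $(\nabla_x, \nabla_x)$, $(\nabla_x, \nabla_{v_i})$, $(\nabla_x, \partial_\mu)$, $(\nabla_{v_i}, \nabla_{v_j})$ with $i\neq j$, $(\nabla_{v_i}, \partial_\mu)$, and $(\partial_\mu, \partial_\mu)$. The pairs involving only spatial or free-variable derivatives act on disjoint Euclidean coordinates and commute by the classical Schwarz theorem on $\bR^d$. For the mixed cases involving $\partial_\mu$, I would lift $f$ to the Hilbert space: the function $F(x, X) = f(x, \bP \circ X^{-1})$ on $\bR^d \times L^2(\Omega, \bP; \bR^d)$ is continuously twice Fréchet-differentiable under the assumptions of Definition \ref{def:general:Lions-spacial:derivative}, and classical Schwarz yields $D_x D_X F = D_X D_x F$; projecting back to the Lions framework gives $\nabla_x \partial_\mu f = \partial_\mu \nabla_x f$. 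The subtlest case $(\partial_\mu, \partial_\mu)$ is handled analogously: the second Fréchet derivative $D^2 F(X)$ is a symmetric bilinear form on $L^2 \times L^2$, and the standard decomposition of $D^2 F$ into a diagonal contribution $\nabla_v \partial_\mu f$ and an off-diagonal contribution $\partial_\mu \partial_\mu f$ (see \cite{CarmonaDelarue2017book1}) transfers the Schwarz symmetry of $D^2 F$ directly to the identity $\partial_\mu \partial_\mu f(\mu, v_1, v_2) \cdot e_1 \otimes e_2 = \partial_\mu \partial_\mu f(\mu, v_2, v_1) \cdot e_2 \otimes e_1$ after testing against independent random variables $Y_1, Y_2 \in L^2$.

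For the inductive step, given $a, b \in A_{k,n}$ with $l[a]_i = l[b]_i$ for all $i$, I would select a permutation $\sigma$ with $a_{\sigma(i)} = b_i$, decompose it into adjacent transpositions $\tau_1, \ldots, \tau_r$, and apply the base-case commutation between the two adjacent operators at each step, using the inductive hypothesis to handle the outer and inner portions of the chain of derivatives around the swapped pair. The main obstacle is combinatorial rather than analytic: an adjacent swap may convert an entry that is a \emph{fresh} value (contributing $\partial_\mu$ and introducing a new free variable) into a \emph{repeated} value (contributing $\nabla_{v_i}$), or vice versa, which moves the intermediate sequence outside the canonical normal form $A_{k,n}$. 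I would resolve this by working inside the equivalence class $\llbracket a \rrbracket$ introduced in Section \ref{subsection:1-Lip_sup_envelope}: on this broader class the derivative is unambiguously defined once the free variables are correspondingly relabelled, and one normalizes back to $A_{k,n}$ only at the end of the chain of swaps. This final renormalization produces precisely the permutation of tensor arguments $e_i \mapsto e_{\sigma(i)}$ appearing in the statement, closing the induction.
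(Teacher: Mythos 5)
Your proposal is correct in outline and runs on the same engine as the paper's proof: both arguments reduce the statement to repeatedly exchanging two adjacent elementary derivative symbols, the only non-classical exchange being that of a Euclidean derivative ($\nabla_x$ or $\nabla_{v_i}$) with a Lions derivative $\partial_\mu$. The paper organises this as an induction on $n$ that inserts the last entry $b_n$ into its sorted position (so every intermediate sequence keeps a canonical non-decreasing prefix), and it imports the key mixed commutation from \cite{CarmonaDelarue2017book2}*{Remark 4.16}; you instead decompose an arbitrary matching permutation $\sigma$ into adjacent transpositions and prove the mixed commutations yourself by lifting to $\bR^d\times L^2(\Omega,\cF,\bP;\bR^d)$ and invoking the classical Schwarz theorem for Fr\'echet derivatives, which is essentially how the cited fact is established, so your route is more self-contained at the price of checking that the lift is jointly $C^2$ in the required sense (this does follow from the boundedness and Lipschitz continuity demanded in Definition \ref{def:general:Lions-spacial:derivative}). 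Two refinements would tighten the argument. First, your $(\partial_\mu,\partial_\mu)$ base case is never needed: every element of $A_{k,n}$ lists the first occurrences of the values $1,2,\dots$ in increasing order, so $a$ and $b$ create their free variables in the same order, and since $\sigma$ only permutes positions carrying equal values while $(\mu,v_1,\dots,v_{m})$ is held fixed on both sides of the identity, no exchange of two fresh Lions derivatives ever has to be performed. Second, the genuinely delicate pair is not the one you flag but the swap of the fresh occurrence of a value $j$ with a repeated occurrence of the same $j$ (for instance $a=b=(1,1)$ with $\sigma$ the transposition): this is not an operator commutation at all but the symmetry of $\nabla_v\partial_\mu f$ as a bilinear form, and your equivalence-class device $\llbracket a\rrbracket$ — which only relabels fresh values — does not reach it; the symmetry of the lifted second derivative $D^2F$ that you invoke for the $\mu$--$\mu$ case is exactly the tool to use there, so the appeal to it should be made at that point. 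With these adjustments your bubble-sort decomposition and the paper's insertion-type induction prove the same pairwise identities, yours being self-contained where the paper's relies on a citation.
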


\begin{proof}
By proceeding by induction on $n$ and by denoting $a^{-n}$ and $b^{-n}$, we can assume that the vectors $(a_{1},...,a_{n-1})$ and $(b_{1},...,b_{n-1})$ are non-decreasing. Without any loss of generality, we can assume that $a$ itself is non-decreasing, so that the only thing we have to do is to prove that we can ``shift'' $b_{n}$ so that $b$ becomes itself non-decreasing. 

In fact, there are two cases. If $b_{n} \geq b_{n-1}$, then there is nothing to do and the proof is over! So, we can assume that $b_{n}<b_{n-1}$, which means that there is another index $i<n-1$ such that $b_{n}=b_{i}$. We then impose $i$ to be the largest of all the integers in $\{1,...,n-1\}$ such that $b_{n}=b_{i}$. Then, we can let $c=(b_{1},...,b_{i})$. We can regard the function $\partial_{c} f \cdot e_{1} \otimes ... \otimes e_{i} $ as a function defined on $\cP(\bR^d) \times (\bR^d)^{\times p}$, with $p=\max(c)$. Then, by \cite{CarmonaDelarue2017book2}*{Remark 4.16}, for any $v_{p+1} \in \bR^d$, 
\begin{align*}
\partial_{v_{b_{n}}} &\partial_{\mu} \bigl[ \partial_{c} f(\mu,v_{1},...,v_{p}) \cdot e_{1} \otimes ... \otimes e_{i} \bigr](v_{p+1}) \cdot e_{i+1} \otimes e_{{n}} 
\\
&=
\partial_{\mu} \partial_{v_{b_{n}}}  \bigl[ \partial_{c} f(\mu,v_{1},\cdots,v_{p}) ,  e_{1} \otimes ... \otimes e_{i} \bigr] (v_{p+1} ) \cdot e_{{n}} \otimes e_{i+1},
\end{align*}
which yields (the identity below being obviously true if $b_{i+1}$ coincides with $b_j$ for some $j=1,...,i$)
\begin{align*}
&\partial_{(c,b_{i+1},b_{n})} f(\mu)(v_{1},...,v_{p},v_{p+1}) \cdot e_{1} \otimes ... \otimes e_{i} \otimes e_{i+1} \otimes e_{{n}}
\\
&= \partial_{(c,b_{n},b_{i+1})} f(\mu)(v_{1},...,v_{p},v_{p+1}) \cdot e_{1} \otimes ... \otimes e_{i} \otimes e_{{n}}\otimes e_{i+1}.
\end{align*}
For $j$ such that $b_{i+1}=b_{i+2}=\cdots=b_{j}$, we get
\begin{align*}
\partial_{v_{p+1}}^{j-(i+1)} &\Bigl[ \partial_{(c,b_{i+1},b_{n})} f(\mu,v_{1},...,v_{p},v_{p+1}) \cdot e_{1} \otimes ... \otimes e_{i} \otimes e_{i+1} \otimes e_{{n}} \Bigr] \cdot e_{i+2} \otimes ... \otimes e_{j} 
\\
&=
\partial_{v_{p+1}}^{j-(i+1)} \Bigl[ \partial_{(c,b_{n},b_{i+1})} f(\mu,v_{1},...,v_{p},v_{p+1}) \cdot e_{1} \otimes ... \otimes e_{i} \otimes e_{{n}}\otimes e_{i+1} \Bigr] \cdot e_{i+2} \otimes ... \otimes e_{j} 
\\
&= \partial_{(c,b_{n},b_{i+1},b_{i+2},...,b_{j})} f(\mu,v_{1},...,v_{p},v_{p+1}) \cdot e_{1} \otimes ... \otimes e_{i} \otimes e_{{n}}\otimes e_{i+1} \otimes e_{i+2} \otimes ... \otimes e_{j} .
\end{align*}
Now,
\begin{align*}
\partial_{v_{p+1}}^{j-(i+1)} &\Bigl[ \partial_{(c,b_{i+1},b_{n})} f(\mu,v_{1},...,v_{p},v_{p+1}) \cdot e_{1} \otimes ... \otimes e_{i} \otimes e_{i+1} \otimes e_{{n}} \Bigr] \cdot e_{i+2} \otimes ... \otimes e_{j} 
\\
&=
\partial_{v_{p+1}}^{j-(i+1)} \partial_{v_{b_{n}}} \Bigl[ \partial_{(c,b_{i+1})} f(\mu,v_{1},...,v_{p},v_{p+1}) \cdot e_{1} \otimes ... \otimes e_{i} \otimes e_{i+1} \Bigr] \cdot e_{{n}} \otimes e_{i+2} \otimes ... \otimes e_{j} 
\\
&=\partial_{v_{b_{n}}} \partial_{v_{p+1}}^{j-(i+1)} \Bigl[ \partial_{(c,b_{i+1})} f(\mu,v_{1},...,v_{p},v_{p+1}) \cdot e_{1} \otimes ... \otimes e_{i} \otimes e_{i+1} \Bigr] \cdot e_{i+2} \otimes ... \otimes e_{j} \otimes e_{{n}} 
\\
&=
\partial_{(c,b_{i+1},...,b_{j},b_{n})} f(\mu,v_{1},...,v_{p},v_{p+1}) \cdot e_{1} \otimes ...\otimes e_{i} \otimes e_{i+1} \otimes e_{i+2} \otimes ... \otimes e_{j} \otimes e_{{n}},
\end{align*}
from which we deduce that 
\begin{align*}
&\partial_{(c,b_{n},b_{i+1},b_{i+2},...,b_{j})}  f(\mu,v_{1},...,v_{p},v_{p+1}) \cdot e_{1} \otimes ... \otimes e_{i} \otimes e_{{n}}\otimes e_{i+1} \otimes e_{i+2} \otimes ... \otimes e_{j} 
\\
&=
\partial_{(c,b_{i+1},\cdots,b_{j},b_{n})} f(\mu)(v_{1},\cdots,v_{p},v_{p+1}) \cdot e_{1} \otimes \cdots \otimes e_{i} \otimes e_{i+1} 
\otimes e_{i+2} \otimes ... \otimes e_{j} \otimes e_{{n}},
\end{align*}
In fact, we can iterate this argument as long as $b_{j} \leq b_{i+1}$ (even though equality does not hold). Assume now that $b_{j+1}>b_{i+1}$, which means that, when adding $b_{j+1}$ to $(c,b_{i+1},...,b_{j},b_{n})$, we take a new Lions derivative. Then, proceeding as before, but with $c$ being replaced by $(c,b_{i+1},...,b_{j})$, we deduce that, for any $v_{p+2} \in \bR^d$, 
\begin{align*}
&\partial_{(c,b_{n},b_{i+1},b_{i+2},...,b_{j},b_{j+1})} f(\mu,v_{1},...,v_{p+2}) \cdot e_{1} \otimes ... \otimes e_{i} \otimes e_{{n}}\otimes e_{i+1} \otimes e_{i+2} \otimes ... \otimes e_{j}\otimes e_{j+1} 
\\
&=
\partial_{(c,b_{i+1},...,b_{j+1},b_{n})} f(\mu,v_{1},...,v_{p+2}) \cdot e_{1} \otimes ... \otimes e_{i} \otimes e_{i+1} \otimes e_{i+2} \otimes ... \otimes e_{j} \otimes e_{j+1} \otimes e_{{n}},
\end{align*}
and, by iteration, we complete the proof. 
\end{proof}

\begin{remark}
We leave it as an exercise to the reader to find a more compact statement of Equation \eqref{theorem:LionsTaylor2} that additionally captures the equalities proved in \ref{thm:Schwarz-Lions}. 
\end{remark}

\subsection{Norms on differentiable functions}

Finally, we introduce a norm on the collection of functions described in Definition \ref{def:general:Lions-spacial:derivative}:

\begin{definition}
\label{definition:FunctionNorm}
For $a\in \A{n}$, we denote
\begin{equation}
\label{eq:definition:FunctionNorm1}
\| \partial_a f\|_\infty:= \sup_{x_0 \in \bR^d} \sup_{\mu \in \cP_2(\bR^d)}  \sup_{(x_1, ..., x_{m[a]}) \in (\bR^d)^{\times m[a]}} \big| \partial_a f(x_0, \mu, x_1, ..., x_{m[a]}) \big|. 
\end{equation}

Further, we denote
\begin{equation}
\label{eq:definition:FunctionNorm2}
\left.\begin{aligned}
\big\| \partial_a f \big\|_{\lip, 0}
&:=
\sup_{\substack{x_0, y_0\in \bR^d \\ \mu \in \cP_1(\bR^d) \\ x_1, ..., x_{m[a]} \in \bR^d}} \frac{\big| \partial_a f(x_0, \mu, ..., x_{m[a]}) - \partial_af(y_0, \mu, ..., x_{m[a]}) \big|}{|x_0 - y_0|}, 
\\
\big\| \partial_a f \big\|_{\lip, \mu}
&:=
\sup_{\substack{ \mu, \nu\in \cP_1(\bR^d) \\ x_0, x_1, ..., x_{m[a]} \in \bR^d}} \frac{\big| \partial_a f(x_0, \mu, ..., x_{m[a]}) - \partial_af(x_0, \nu, ..., x_{m[a]}) \big|}{\bW^{(1)}(\mu, \nu)}, 
\\
\big\| \partial_a f \big\|_{\lip, j}
&:= 
\sup_{\substack{x_0,x_1, ..., x_{m[a]} \in \bR^d \\ y_j \in \bR^d \\ \mu \in \cP_1(\bR^d)}} \frac{\big| \partial_a f(x_0, \mu, ..., x_j, ..., x_{m[a]}) - \partial_af(x_0, \mu, ..., y_j, ..., x_{m[a]}) \big|}{|x_j - y_j|}. 
\end{aligned}\right\rbrace
\end{equation}

We define
\begin{equation}
\label{eq:definition:FunctionNorm}
\big\| f \big\|_{C_b^{n, (n)}} := \sum_{ a\in \bigcup_{k=1}^{n} \A{k}} \big\| \partial_a f \big\|_\infty
+
\sum_{a\in \A{n}} \bigg( \big\| \partial_a f \big\|_{\lip, 0} + \big\| \partial_a f \big\|_{\lip, \mu} + \sum_{j=1}^{m[a]} \big\| \partial_a f \big\|_{\lip, j} \bigg). 
\end{equation}
\end{definition}
	
\newpage

\section{Probabilistic rough paths}
\label{section:ProbabilisticRoughPaths}

The goal of this section is to provide a (very) short introduction to the key mathematical ideas and notation of \cite{salkeld2021Probabilistic} that will be relevant. 

\subsection{Coupled Hopf algebra}
\label{subsection:CoupledHopf}

One of the main contributions of \cite{salkeld2021Probabilistic} is to introduce a new form of labelled tree that is tailor-made to the analysis of mean-field equations, see Definition \ref{definition:Forests} below. Such trees are called Lions' trees because they are connected to Lions' derivatives through the notion of sup-envelope of an integer-valued sequence as introduced in Definition \ref{def:a}. 

The tree is paired with a collection of hyperedges that form a partition of the nodes of the tree. Each partition element contains a collection of nodes that correspond to instances of a driving signal that are tagged together but are independent of all other instances. An identified hyperedge, denoted by a `0', refers to the instance of the driving signal that corresponds to the solution of the mean-field equation. 

\begin{definition}
\label{definition:Forests}
Let $N$ be a non-empty set containing a finite number of elements. Let $E\subset N\times N$ such that $(x,y)\in E \implies (y,x)\notin E$. Let $h_0 \subseteq N$ and $H\in \scP(N\backslash h_0)$ is a partition of $N\backslash h_0$. We denote $H' = H \cup \{h_0\}$. Let $L:N \to \{1, ..., d\}$. We refer to $T=(N, E, h_0, H, L)$ as a \emph{Lions forest} if
\begin{enumerate}
\item $(N, E, L)$ is a non-planar, labelled, directed forest with partial ordering $<$ (obtained by comparing the distance to roots)
\item $(N, H')$ is a 1-regular, matching hypergraph where the hyperedges $h_0, h \in H'$ satisfy:
\begin{itemize}
\item If $h_0 \neq \emptyset$, then $\exists x\in h_0$ such that $\forall y\in N$, $x\leq y$. 
\item For $h_i \in H'$, suppose $x, y\in h_i$ and $x<y$. Then $\exists z \in h_i$ such that $(y, z)\in E$. 
\item For $h_i \in H'$, suppose $x_1, y_1 \in h_i$, $x_1 \leq \geq y_1$, $(x_1, x_2), (y_1, y_2) \in E$ and $x_2 \neq y_2$. Then $x_2, y_2 \in h_i$. 
\end{itemize}
\end{enumerate}
The collection of all such Lions forests is denoted by $\scF$. When a Lions forest $T=(N,E,h_0, H,L)$ satisfies that $\exists! x_0 \in N$ such that $\forall y\in N$  $\exists! (y_i)_{i=1, ..., n}$ such that $y_1=y$, $y_n=x_0$ and $(y_i,y_{i+1})\in E$, it is referred to as a Lions tree and the collection of all lions trees is denoted $\scT$. 

We define $\scF_0= \scF\cup\{\rId\}$ where $\rId=(\emptyset, \emptyset, \emptyset, \emptyset,L)$  is the empty tree. On a separate note, we define $\scT_0 = \{T \in \scT: h_0^T \neq \emptyset\}$. 
\end{definition}

We use the terminology `hyperedge' introduced above to denote the elements $h$ of $H$. The next Definition \ref{definition:Lionsproduct} describes operations on Lions trees, including operations on the hyperedges themselves:

\begin{definition}
\label{definition:Lionsproduct}
We define $\circledast: \scF_0 \times \scF_0 \to \scF_0$ so that for two Lions forests $T_1=(N^1, E^1, h_0^1, H^1, L^1)$ and $T_2 = (N^2, E^2, h_0^2, H^2, L^2)$, we have $T_1 \circledast T_2 = (\tilde{N}, \tilde{E}, \tilde{h_0}, \tilde{H}, \tilde{L})$ such that 
\begin{align*}
&\tilde{N} = N^1 \cup N^2,
\quad
\tilde{E} = E^1 \cup E^2, 
\quad
\tilde{h}_0 = h_0^1 \cup h_0^2 
\quad
\tilde{H} = H^1 \cup H^2,  
\\
&\tilde{L}: \tilde{N} \to \{1, ..., d\} \quad \mbox{ such that } \quad \tilde{L}|_{N^i} = L^i. 
\end{align*}

We define $\cE: \scF_0 \to \scF_0$ for $T=(N, E, h_0, H, L)$, we have 
$$
\cE[T] = (N, E, \emptyset, H', L).
$$ 

For $i\in\{1, ..., d\}$, we define the operator $\lfloor \cdot \rfloor_i: \scF_0 \to \scT$ for $T=(N, E, h_0, H, L)$, $\lfloor T\rfloor_i = (\tilde{N}, \tilde{E}, \tilde{h_0}, H, \tilde{L})$ where
\begin{align*}
\mbox{For} \quad x_0 \notin N,
\quad
\tilde{N} = N \cup \{ x_0\}, 
\quad&
\tilde{h_0} = h_0 \cup\{x_0\}, 
\\
\mbox{For} \quad \{x_1, ..., x_n\}\subseteq N 
\mbox{ such that } \quad&
\forall j=1,...,n \mbox{ and } \forall y\in N, x_j\leq y. \quad\mbox{Then }
\\
\tilde{E} = E \cup& \{ (x_1, x_0), ..., (x_n, x_0) \}.
\end{align*}
\end{definition}

The $\circledast$ operation is an associative and commutative product on the collection of forests with unit $\rId$. The $\cE$ operation transforms the $h_0$-hyperedge of a forest into an $H$-hyperedge, leaving the $h_0$-hyperedge of the resulting forest empty. Meanwhile, $\lfloor \cdot \rfloor_i$ is a grafting operation where the new root is placed within the 0-hyperedge. 

\subsubsection{Coupled algebras}

Throughout, we use a probability space $(\Omega, \cF, \bP)$ and we use the generic notation $L^0$ for the corresponding collection of (possibly vector valued) measurable functions (without any further integrability constraints). For any $m,n\in \bN$, we have that
$$
L^0\Big(\Omega \times \Omega^{\times m}, \bP \times \bP^{\times m}; \lin\big( (\bR^d)^{\otimes n}, \bR^e\big) \Big)
$$
is a module with respect to the ring $L^0(\Omega, \bP; \bR^e)$. 

In the next definition, we let this module act on Lions trees equipped with $n$ nodes and $m+1$ hyperedges (including the $h_{0}$-hyperedge). 

\begin{definition}
\label{definition:AlgebraRVs}
Let $\scH$ be the module spanned by the forests $\scF_0$ multiplied by measurable functions over product probability spaces, 
\begin{equation}
\label{eq:definition:AlgebraRVs}
\scH = \bigoplus_{T\in \scF_0} L^0 \Big( \Omega \times \Omega^{\times |H^T|}, \bP \times (\bP)^{\times |H^T|}; \lin \big( (\bR^d)^{\otimes |N^T|}, \bR^e\big) \Big) \cdot T
\end{equation}
$\scH$ is a module over the ring $L^0(\Omega, \bP; \bR^e)$. 

Let $T = (N, E, h_0, H, L)\in \scF$. Then for $X\in \scH$, we have
$$
X = \sum_{T\in \scF_0} \langle X, T\rangle \cdot T. 
$$
where
\begin{equation}
\label{eq:definition:AlgebraRVs:<X,T>}
\langle X, T\rangle \in L^{0} \Big( \Omega \times \Omega^{\times |H|}; \bP \times (\bP)^{\times |H|} ; \lin\big( (\bR^d)^{\otimes |N|}, \bR^e\big) \Big). 
\end{equation}
By extending $\circledast$ to be bilinear, the triple $(\scH, \circledast, \rId)$ becomes a commutative algebra over the ring $L^0(\Omega, \bP; \bR^e)$ since
\begin{equation}
\label{eq:definition:AlgebraRVs:Product}
X_1 \circledast X_2 = \sum_{T\in \scF_0} \langle X_1 \circledast X_2, T\rangle \cdot T = \sum_{T\in \scF_0} \Big( \sum_{\substack{T_1, T_2\in \scF_0\\ T_1\circledast T_2 = T }} \langle X_1, T_1\rangle \otimes \langle X_2, T_2\rangle \Big) \cdot T
\end{equation}
where
\begin{align*}
\Big\langle X_1, T_1\Big\rangle& (\omega_0, \omega_{H^{T_1}}) \otimes \Big\langle X_2, T_2\Big\rangle (\omega_0, \omega_{H^{T_2}}) 
\\
\in& L^{0}\Big( \Omega \times \Omega^{\times |H^{T_1\circledast T_2}|}, \bP \times \bP^{\times |H^{T_1\circledast T_2}|} ; \lin\big( (\bR^d)^{\otimes |N^{T_1\circledast T_2}|}, \bR^e\big) \Big). 
\end{align*}
The operator $\cE$ can be extended to a linear operator $\cE: \scH \to \scH$. For $X\in \scH$ we have
$$
\cE[ X ] = \sum_{T\in \scF_0} \langle \cE[X], T \rangle \cdot T = \sum_{T\in \cE[\scF_0]} \Big( \sum_{T' \in \cE^{-1}[T]} \langle X, T'\rangle \Big) \cdot T. 
$$
Let $a \in \A{n}$ and $m=m[a]$. Let $T_1, ..., T_n\in \scF_0$ be a sequence of Lions forests. We define the operator $\cE^a: \scF_0^{\times n} \to \scF_0$ by
\begin{equation}
\label{eq:definition:E^a-notation}
\cE^a\Big[ T_1, ..., T_n\Big] = \Big[ \underset{\substack{i:\\ a_i=0}}{\circledast} T_i \Big] \circledast \cE\Big[ \underset{\substack{i:\\ a_i=1}}{\circledast} T_i \Big] \circledast ... \circledast \cE\Big[ \underset{\substack{i:\\ a_i=m[a]}}{\circledast} T_i \Big] . 
\end{equation}
Intuitively, the partition sequence $a$ groups the sequence of trees into $m[a]+1$ partition sets. 
\end{definition}

\subsubsection{Coupled coalgebras}

A coupling between two partitions $P$ and $Q$ is a way of describing a bijective mapping between a subset of $P$ and a subset of $Q$. 

\begin{definition}
\label{definition:psi-Mappings}
Let $M$ and $N$ be two non-empty, disjoint, finite sets and let $P$ and $Q$ be partitions of $M$ and $N$ respectively. We define
$$
P \tilde{\cup} Q = \Big\{ G \in \scP(M\cup N): \{ g\cap M: g\in G\}\backslash \emptyset = P, \{ g\cap N: g\in G\}\backslash \emptyset = Q\Big\}. 
$$
We refer to $P \tilde{\cup} Q$ as the set of coupled partitions. For every $G\in P\tilde{\cup} Q$, we denote the two injective mappings $\psi^{P,G}: P \to G$ and $\psi^{Q,G}: Q \to G$ such that $\forall p\in P$ and $\forall q\in Q$, 
\begin{equation}
\label{eq:definition:psi}
\psi^{P,G}[ p] \cap M = p, \quad \psi^{Q,G}[ q] \cap N = q. 
\end{equation}
\end{definition}

$P \tilde{\cup} Q $ is the collection of partitions of the set $M\cup N$ that agree with the partition $P$ when restricting to $M$ and agrees with the partition $Q$ when restricting to $N$. The operator $\psi^{P, G}$ maps the set $p$, an element of the partition $P$, to the unique element of the partition $G$ that contains the set $p$. 

Suppose that we have random variables $X \in L^0(\Omega^{\times |P|}, \bP^{\times |P|}; U)$ and $Y\in L^0(\Omega^{\times |Q|}, \bP^{\times |Q|}; V)$. Then, for $G\in P\tilde{\cup}Q$, we define $X\otimes^G Y \in L^0(\Omega^{\times |G|}, \bP^{\times |G|}; U \otimes V)$ defined by
\begin{equation}
\label{eq:CoupledTensor_rvs}
X\otimes^G Y(\underbrace{\omega_g, ...}_{g\in G}) = X(\underbrace{\omega_{\psi^{P,G}[p]}, ...}_{p \in P}) \otimes Y(\underbrace{\omega_{\psi^{Q,G}[q]}, ...}_{q \in Q}). 
\end{equation}
For each choice of $G$, there is a product of random variable $X\otimes^G Y$ constructed on the top of $X$ and $Y$, where the transport plan between $X$ and $Y$ is specified through the partition $G$.

This motivates the following definition:

\begin{definition}
\label{definition:coupled_trees}
We define the set
$$
\scF_0 \tilde{\times} \scF_0:=\Big\{ T_1 \times^G T_2: \quad T_1, T_2\in \scF_0,\quad G\in H^1\tilde{\cup} H^2\Big\} = \bigsqcup_{T_1 \times T_2 \in \scF_0 \times \scF_0} H^1 \tilde{\cup} H^2. 
$$
We refer to the element $T_1 \times^G T_2$ as a coupled pair. We define the $L^0(\Omega, \bP; \bR^e)$-module
$$
\scH \tilde{\otimes} \scH = \bigoplus_{T_1 \times^G T_2 \in \scF_0\tilde{\times} \scF_0 } L^0 \Big( \Omega \times \Omega^{\times |G|}, \bP \times (\bP)^{\times |G|}; \lin \big( (\bR^d)^{\otimes ( |N^{T_1}| + |N^{T_2}|)}, \bR^e\big) \Big) \cdot T_1 \times^G T_2. 
$$
\end{definition}

The choice of symbol $\tilde{\otimes}$ is pertinent here since $\scH\tilde{\otimes}\scH$ is not a tensor product of modules but rather a coupled tensor module. 

An admissible cut is a way of dividing a tree into two subtrees, one a rooted tree (referred to as the root) and one a forest (referred to as the prune), see for instance \cite{connes1999hopf}. In our context, a cut removes edges from a graph, but does not alter hyperedges. Thus, a cut takes a tree to a coupled pair of forests contained in $\scF_0 \tilde{\times} \scF_0$ rather than simply $\scF_0 \times \scF_0$. 

\begin{definition}
\label{definition:CutsCoproduct}
Let $T=(N, E, h_0, H, L)$ be a rooted tree. A subset $c\subseteq E$ is called an admissible cut if $\forall y\in N$, the unique path $(e_i)_{i=1, ..., n}$ from $y$ to the root $x$ satisfies that if $e_i\in c \implies \forall j\neq i, e_j\notin c$. The set of admissible cuts for the tree $T$ is denoted $C(T)$. 

We call the root of the cut $c$ the rooted tree $T_c^R = (N_c^R, E_c^R, h_0 \cap N_c^R, H_c^R, L_c^R)$ where 
\begin{align*}
N_c^R:&= \big\{ y\in N: \exists (y_i)_{i=1, ..., n}\in N, y_1 = y, y_n=x, (y_i, y_{i+1})\in E\backslash c \big\}, 
\\
E_c^R:&= \big\{ (y,z)\in E: y,z\in N_c^R \big\}, 
\qquad
H_c^R:= \big\{ h\cap N_c^R: h\in H \big\}\backslash \{ \emptyset \}, 
\\
L_c^R:& N_c^R \to \{1, ..., d\}, \quad L_c^R = L|_{N_c^R}
\end{align*} 

The prune of the cut $c$ is the rooted forest $(N_c^P, E_c^P, h_0\cap N_c^P, H_c^p, L_c^P)$ where $N_c^P = N\backslash N_c^R$ and $E_c^P = E\backslash (E_c^R \cup c)$, $H_c^P:= \{ h\cap N_c^P: h\in H \}\backslash \emptyset$ and $L_c^P = L|_{N_c^R}$. 
\end{definition}

Following the ideas of rough paths, we wish to construct a coproduct over the module that will encode the incremental relationships of paths (in the sense of Chen). However, we need to frame this concept within the context of the couplings we have developed.  

\begin{definition}
\label{definition:coproduct}
Let $\Delta: \scH \to \scH \tilde{\otimes} \scH$ be the linear operator such that for $T=(N,E,h_0,H,L) \in \scF_0$, 
\begin{equation}
\label{eq:lemma:Coproduct-Equivalence}
\Delta[T] = \rId \times^H T + T \times^H \rId + \sum_{c\in C(T)} T_c^P \times^{H^T} T_c^R
\end{equation}
and extended to $\scF_0$ using $\Delta[T_1 \circledast T_2] = \circledast^{(2)}\big[ \Delta[T_1], \Delta[T_2]\big]$. Further, $\Delta[\rId] = \rId \times^{\emptyset} \rId$ and for the tree $T_1=(\{x_0\}, \emptyset, (\{x_0\}, \emptyset), L)$ with a single node contained in $h_0$, we have $\Delta[T_1]= T_1 \times^{\emptyset} \rId + \rId \times^{\emptyset} T_1$. 

Then $\Delta$ satisfies
\begin{equation}
\label{eq:definition:coproduct}
\begin{split}
\Delta\Big[ \lfloor T\rfloor_i \Big] = \lfloor T\rfloor_i \times^H &\rId + \Big( I \times^H \lfloor \cdot \rfloor_i\Big) \Delta[T], 
\\
\Delta\Big[ T_1 \circledast  T_2\Big] = \circledast^{(2)} \Big[ \Delta[T_1] , \Delta[T_2] \Big], &
\qquad
\Delta\Big[ \cE[T] \Big] = (\cE \tilde{\times} \cE) \Big[ \Delta [T] \Big]. 
\end{split}
\end{equation}

Thus for $X\in \scH$, we have
$$
\Delta\Big[ X \Big] = \sum_{T \in \scF_0} \Big\langle X, T \Big\rangle \cdot \Delta \Big[ T \Big]
$$
and we denote
\begin{equation}
\label{eq:Coproduct-counting}
c\Big( T_1, T_2, T_3 \Big) := \Big\langle \Delta[T_1], T_2 \times^{H^{T_1}} T_3 \Big\rangle. 
\end{equation}
We pair the operator $\Delta$ with the linear functional $\epsilon: \scH \to L^0(\Omega, \bP; \bR^e)$ which satisfies $\epsilon(\rId) = 1$, and $\forall T\in \scF$, $\epsilon(T) = 0$. The operator $\epsilon$ is the counit of $\Delta$. 
\end{definition}

\subsubsection{Grading}

\begin{definition}
Let $\scG:\scF_0 \to \bN_0^2$ such that for $T=(N^T, E^T, h_0^T, H^T, L^T)$, 
\begin{equation}
\label{eq:lemma:grading}
\scG[T]:= \Big( |h_0^T|, |N^T\backslash h_0^T| \Big). 
\end{equation}
\end{definition}

The grading is given by the number of nodes contained in the $0$-hyperedge and by the number of nodes contained in all the other hyperedges. We will often write $\scG_{\alpha, \beta}[T] = \alpha|h_0^T| + \beta|N^T\backslash h_0^T|$ for $\alpha,\beta \in \bR^+$ and henceforth use the notation
$$
\scF^{\gamma, \alpha, \beta} = \Big\{ T \in \scF: \scG_{\alpha, \beta}[T] \leq \gamma \Big\}, 
\quad
\scT^{\gamma, \alpha, \beta} = \Big\{ T \in \scT: \scG_{\alpha, \beta}[T] \leq \gamma \Big\}, 
$$
for some $\gamma\geq \alpha \wedge \beta$. 

Then we can prove that the module
\begin{equation}
\label{eq:Coupled-HopfAlgebra}
\scH^{\gamma, \alpha, \beta}:= \bigoplus_{\substack{T \in \scF_0 \\ \scG_{\alpha \beta}[T]\leq \gamma}} L^0 \Big( \Omega \times \Omega^{\times |H^T|}, \bP \times (\bP)^{\times |H^T|}; \lin \big( (\bR^d)^{\otimes |N^T|}, \bR^e\big) \Big) \cdot T
\end{equation}
satisfies that $(\scH^{\gamma, \alpha, \beta}, \Delta, \epsilon)$ is a coupled counital subcoalgebra of $(\scH, \Delta, \epsilon)$ and $(\scH^{\gamma, \alpha, \beta}, \circledast, \rId)$ is a quotient algebra of $(\scH, \circledast, \rId)$. Thus $(\scH^{\gamma, \alpha, \beta}, \circledast, \Delta, \rId, \epsilon)$ is also a coupled Bialgebra. 

\subsection{Group structures of a coupled Hopf algebra}

The coupled Bialgebra algebra $\scH^{\gamma, \alpha, \beta}$ does not have a canonical topology so there is no sense of analytic dual space. However, this detail is addressed with the convention
$$
\Big(\scH^{\gamma, \alpha, \beta} \Big)^\dagger =
\bigoplus_{T\in \scF_0^{\gamma, \alpha, \beta}} L^{0}\Big(\Omega \times \Omega^{ \times |H^T|}, \bP \times \bP^{ \times |H^T|}; (\bR^d)^{\otimes |N^T|} \Big) \cdot T. 
$$

Let $(\cA, m_{\cA})$ be a commutative algebra over $L^0(\Omega, \bP; \bR^d)$. For two mappings $f,g\in \lin(\scH, \cA)$, we define the convolution product $\ast: \lin(\scH, \cA) \times \lin(\scH, \cA) \to \lin(\scH, \cA)$ by
$$
f \ast g = m_{\cA} \circ f\tilde{\otimes} g \circ \Delta. 
$$
We define the set of all characters $\cG(\scH, \cA)$ as the collection of algebra homomorphisms from $\scH$ to $\cA$ that satisfy
$$
\cG(\scH, \cA) = \Big\{ f\in \lin(\scH, \cA): f\circ \circledast = m_{\cA} \circ f \otimes f \Big\}. 
$$
In practice, we choose $\cA = L^0(\Omega, \bP; \bR^d)$. Then $f$ is a character if and only if $f \in (\scH^{\gamma, \alpha, \beta})^\dagger$ and satisfies that $\langle f, \rId\rangle = 1$, $\forall T_1, T_2 \in \scF_0$
\begin{equation}
\label{eq:definition:DualModule1}
\Big\langle f, T_2 \circledast T_2 \Big\rangle = \Big\langle f, T_1 \Big\rangle \otimes \Big\langle f, T_2 \Big\rangle
\end{equation}

\subsubsection{Antipode and McKean-Vlasov characters}

The final step of our construction is to equip the coupled Bialgebra $(\scH, \circledast, \rId, \Delta, \epsilon)$ with a Hopf structure:
\begin{definition}
\label{definition:Antipode-Inverse}
Let $\scS: \scH^\dagger \to \scH^\dagger$ be defined to be the inverse of the operation $I: \scH^\dagger \to \scH^\dagger$ with respect to the convolution product. That is, 
$$
\scS:= \rId \epsilon + \sum_{i=1}^\infty ( \rId \epsilon - I)^{\ast i}
$$
for any value of $\scH^\dagger$ for which this series converges. Then $\scS$ satisfies the two inductive relationships
\begin{equation}
\label{eq:definition:Antipode-Inverse}
\begin{split}
\Big\langle \scS[X], T \Big\rangle &= -\Big\langle X, T\Big\rangle - \sum_{c\in C(T)} \Big\langle \scS[X], T_c^P\Big\rangle \otimes^{H^T} \Big\langle X, T_c^R\Big\rangle
\\
&= - \Big\langle X, T\Big\rangle - \sum_{c\in C(T)} \Big\langle X, T_c^P\Big\rangle \otimes^{H^T} \Big\langle \scS[X], T_c^R\Big\rangle. 
\end{split}
\end{equation}
\end{definition}

Thus we have that 
$$
\scS \ast I = I \ast \scS = \epsilon
$$

The next definition is a key step in the definition of probabilistic rough paths.

\begin{definition}
\label{definition:DualModule}
Let $G\big( \scH^{\gamma, \alpha, \beta}, L^0(\Omega, \bP; \bR^e)\big) \subseteq \cG\big( \scH^{\gamma, \alpha, \beta}, L^0(\Omega, \bP; \bR^e)\big)$ such that $\forall T \in \scF_0^{\gamma, \alpha, \beta}$ such that $h_0^T \neq \emptyset$, $\exists \mathcal{N}_T \subset \Omega\times \Omega^{\times |H^{\cE[T]}|}$ such that $\bP \times \bP^{\times |H^{\cE[T]}|} \big[ \mathcal{N}_T\big] = 1$ and $\forall (\omega_0, \omega_{h_0^T}, \omega_{H^T}) \in \mathcal{N}_T$, 
\begin{equation}
\label{eq:definition:DualModule2}
\Big\langle f, T \Big\rangle(\omega_{h_0^T}, \omega_{H^T}) = \Big\langle f, \cE[T] \Big\rangle (\omega_0, \omega_{h_0^T}, \omega_{H^T}). 
\end{equation}
When there is no ambiguity in the choice of coupled Hopf module $\scH^{\gamma, \alpha, \beta}$, we will denote 
$$
G\big( \scH^{\gamma, \alpha, \beta}, L^0(\Omega, \bP; \bR^e)\big) = G^{\gamma, \alpha, \beta}\big( L^0(\Omega, \bP; \bR^e)\big).
$$ 
We refer to $G^{\gamma, \alpha, \beta}\big( L^0(\Omega, \bP; \bR^e)\big)$ as the group of McKean-Vlasov characters with the group operation
\begin{equation}
\label{eq:definition:DualModule_ConvolProd}
f_1 \ast f_2 = \circledast \circ f_1 \tilde{\otimes} f_2 \circ \Delta
\end{equation}
and unit $\epsilon$. 
\end{definition}
Equation \eqref{eq:definition:DualModule1} is the canonical property that defines a character for a classical Hopf algebra. For a coupled Hopf algebra, we additionally need Equation \eqref{eq:definition:DualModule2}, which captures the idea of mapping an observation to the distribution of observations. 

As upshot of Equation \eqref{eq:definition:DualModule2} is that for any $f \in G^{\gamma, \alpha, \beta}\big( L^0(\Omega, \bP; \bR^e)\big)$, $X\in \scH^{\gamma, \alpha, \beta}$ and $T\in \scF_0$ such that $h_0^T \neq \emptyset$, 
\begin{align}
\nonumber
\bE^0\bigg[& \bE^{H^{\cE[T]}} \Big[ \big\langle f, \cE[T] \big\rangle (\omega_0, \omega_{h_0}, \omega_{H^T}) \cdot \big\langle X, \cE[T] \big\rangle(\omega_0, \omega_{h_0}, \omega_{H^T}) \Big] \bigg] 
\\
\label{eq:proposition:omegaIndependence}
&= 
\bE^{(H^{T})'} \bigg[ \big\langle f, T \big\rangle( \omega_{h_0}, \omega_{H^T}) \cdot \bE^0 \Big[ \big\langle X, \cE[T] \big\rangle (\omega_0, \omega_{h_0}, \omega_{H^T}) \Big] \bigg]. 
\end{align}

\subsubsection{Ghost hyperedges}

In our subsequent analysis of probabilistic rough paths, we often let characters act onto forests $T$ of the form $T= \cE^a[ T_1, ..., T_n]$, with the latter notation being defined in \eqref{eq:definition:E^a-notation}. This requires us to identify carefully the hyperedges that equip $T$ in terms of the hyperedges that equip each of the forests $T_1,...,T_n$:

\begin{definition}
\label{definition:Z-set}
Let $a\in \A{n}$ and let $T_1, ... T_n \in \scF_0$. For $j=1, ... m[a]$, let $\tilde{h}_j^{\cE^a[T_1, ..., T_n]}$ be a collection of disjoint, non-empty sets such that
\begin{align*}
&\bigcup_{\substack{k, \\a_k = j}} h_0^{T_k} \neq \emptyset 
\quad \implies \quad
\tilde{h}_j^{\cE^a[T_1, ..., T_n]} = \bigcup_{\substack{k, \\a_k = j}} h_0^{T_k}, 
\\
&Z^a[T_1, ..., T_n]: = \Big\{ \tilde{h}_j^{\cE^a[T_1, ..., T_n]}: j=1, ..., m[a], \bigcup_{\substack{k, \\a_k = j}} h_0^{T_k} = \emptyset \Big\}. 
\end{align*}
The elements of $Z^a[T_1, ..., T_n]$ are referred to as \emph{ghost hyperedges}, although we should emphasise that they not strictly speaking hyperedges of any hypergraph. We denote the set of ghost hyperedges
$$
\tilde{H}^{\cE^a[T_1, ..., T_n]} = \big\{ \tilde{h}_1^{\cE^a[T_1, ..., T_n]}, ..., \tilde{h}_{m[a]}^{\cE^a[T_1, ..., T_n]} \big\}. 
$$

Then
\begin{equation}
\label{eq:definition:Z-set-hyperedges}
\tilde{H}^{\cE^a[T_1, ..., T_n]} \cup \bigcup_{i=1}^n H^{T_i} = Z^a[T_1, ..., T_n] \cup H^{\cE^a[T_1, ..., T_n]}. 
\end{equation}
\end{definition}

The set $Z^a[T_1, ..., T_n]$ contains a collection of sets that can be identified with all the tagged hyperedges of the forests $T_1$ through to $T_n$ which are not included in $\cE^a\big[ T_1, ..., T_n \big]$, whence the terminology `ghost hyperedges'. These ghost hyperedges are disjoint, non-empty sets and we do not specify their content.

The requirement for ghost hyperedges is elucidated in Proposition \ref{proposition:EAction} below. The intuitive reason why we need ghost hyperedges is that, even though the $0$-hyperedge $h_{0}^T$ of a forest $T$ may be empty, the elements $\langle X,T\rangle$, for $X \in \scH$, may depend on the variable $\omega_{h_{0}^T}$. Accordingly, it is necessary to have a consistent notation to keep track of these variables when performing the operation ${\mathcal E}^a[T_{1},...,T_{n}]$. 

\begin{lemma}
Let $n\in \bN$, let $a\in \A{n}$ and let $T_1, ..., T_n, \Upsilon_1, ..., \Upsilon_n, Y_1, ...Y_n\in \scF$. Suppose that for all $i=1, ..., n$, 
$$
c' \Big( T_i, \Upsilon_i, Y_i \Big) >0. 
$$
Then
$$
Z^a\big[ T_1, ..., T_n \big] = \Big\{ \tilde{h}_j^{\cE^a[T_1, ..., T_n]}: 
\tilde{h}_j^{\cE^a[\Upsilon_1, ..., \Upsilon_n]} \in Z^a[\Upsilon_1, ..., \Upsilon_n] 
\quad \mbox{and} \quad
\tilde{h}_j^{\cE^a[Y_1, ..., Y_n]} \in Z^a[Y_1, ..., Y_n] \bigg\}
$$
\end{lemma}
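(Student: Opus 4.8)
The strategy is to reduce the claimed equality of sets of (ghost) hyperedges to an equality of index sets inside $\{1,\dots,m[a]\}$, and then to derive that equality from the fact that an admissible cut splits the nodes of a Lions forest while leaving its hyperedges — in particular its $0$-hyperedge — untouched.

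First I would unwind Definition \ref{definition:Z-set}. For an arbitrary $n$-tuple $(S_1,\dots,S_n)$ of forests in $\scF$ and any $j\in\{1,\dots,m[a]\}$, the ghost hyperedge $\tilde h_j^{\cE^a[S_1,\dots,S_n]}$ lies in $Z^a[S_1,\dots,S_n]$ exactly when $\bigcup_{k:\,a_k=j} h_0^{S_k}=\emptyset$, which — an empty union of sets forcing each summand to be empty — means $h_0^{S_k}=\emptyset$ for every $k$ with $a_k=j$. Since for a fixed tuple the sets $\tilde h_1^{\cE^a[S_1,\dots,S_n]},\dots,\tilde h_{m[a]}^{\cE^a[S_1,\dots,S_n]}$ are pairwise disjoint and non-empty (hence distinct), both members of the asserted identity are subsets of $\tilde H^{\cE^a[T_1,\dots,T_n]}$ enumerated injectively by $j$, so it is enough to prove the equality of the corresponding index sets, namely
\[
\bigl\{\,j:\ h_0^{T_k}=\emptyset\ \text{for all }k\text{ with }a_k=j\,\bigr\}
=\bigl\{\,j:\ h_0^{\Upsilon_k}=\emptyset\ \text{for all }k\text{ with }a_k=j\,\bigr\}\cap\bigl\{\,j:\ h_0^{Y_k}=\emptyset\ \text{for all }k\text{ with }a_k=j\,\bigr\}.
\]

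The key structural input is the interaction between $c'$ and the $0$-hyperedge. If $c'(T_k,\Upsilon_k,Y_k)>0$, then by the explicit form \eqref{eq:lemma:Coproduct-Equivalence} of $\Delta$ (together with its multiplicative extension to forests) there is an admissible cut of $T_k$ whose prune is isomorphic to $\Upsilon_k$ and whose root is isomorphic to $Y_k$; the contribution cannot be one of the primitive terms $\rId\times^{H^{T_k}}T_k$ or $T_k\times^{H^{T_k}}\rId$ because $\Upsilon_k,Y_k\in\scF$ are non-empty. As stressed after Definition \ref{definition:coupled_trees}, such a cut only deletes edges and leaves every hyperedge unchanged, so by Definition \ref{definition:CutsCoproduct} the node set splits as $N^{T_k}\cong N^{\Upsilon_k}\sqcup N^{Y_k}$ and, restricting to the $0$-hyperedge, $h_0^{T_k}\cong h_0^{\Upsilon_k}\sqcup h_0^{Y_k}$ (identifying via the two isomorphisms). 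In particular $|h_0^{T_k}|=|h_0^{\Upsilon_k}|+|h_0^{Y_k}|$, hence
\[
h_0^{T_k}=\emptyset\quad\Longleftrightarrow\quad h_0^{\Upsilon_k}=\emptyset\ \text{and}\ h_0^{Y_k}=\emptyset .
\]
Applying this for each $k$ with $a_k=j$ and commuting the universal quantifier over such $k$ with the conjunction yields precisely the displayed identity of index sets, which, by the reduction above, proves the lemma.

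\textbf{Main obstacle.} I do not anticipate anything deep here; the argument is essentially bookkeeping. The only two points that deserve care are: (i) confirming that $c'(T_k,\Upsilon_k,Y_k)>0$ genuinely produces an admissible cut realising the prune/root identification (immediate, as noted, once one discards the primitive terms), together with the fact that isomorphisms of Lions forests preserve $h_0$; and (ii) making explicit the convention that membership of $\tilde h_j^{\cE^a[\,\cdot\,]}$ in $Z^a[\,\cdot\,]$ depends on the tuple only through the index $j$, so that the three families of ghost hyperedges appearing in the statement may legitimately be compared index by index.
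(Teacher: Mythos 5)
Your proposal is correct and follows essentially the same route as the paper: both arguments rest on the single fact that $c'(T_i,\Upsilon_i,Y_i)>0$ forces $h_0^{T_i}=h_0^{\Upsilon_i}\cup h_0^{Y_i}$, so the emptiness condition $\bigcup_{k:\,a_k=j}h_0^{T_k}=\emptyset$ defining a ghost hyperedge holds if and only if the corresponding conditions hold for the $\Upsilon$'s and the $Y$'s, index by index. The only difference is presentational — you spell out how the $0$-hyperedge decomposition follows from the cut definition and make the index-wise comparison explicit, whereas the paper simply asserts the decomposition and runs the same chain of equivalences.
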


\begin{proof}
By assumption, for each $i=1, ..., n$ we have that $c'(T_i, \Upsilon_i, Y_i)>0$ so that
$$
h_0^{T_i} = h_0^{\Upsilon_i} \cup h_0^{Y_i}
\quad \mbox{and} \quad
H^{T_i} \in H^{\Upsilon_i} \tilde{\cup} H^{Y_i}. 
$$
Hence
\begin{align*}
\tilde{h}_j^{\cE^{T_1, ..., T_n]}} \in Z^a[T_1, ..., T_n] \iff& \bigcup_{\substack{i: \\ a_i=j}} h_0^{T_i} = \emptyset
\\
\iff& \bigcup_{\substack{i: \\ a_i=j}} h_0^{\Upsilon_i} = \emptyset
\quad \mbox{and} \quad
\bigcup_{\substack{i: \\ a_i=j}} h_0^{Y_i} = \emptyset
\\
\iff& \tilde{h}_j^{\cE^{\Upsilon_1, ..., \Upsilon_n]}} \in Z^a[\Upsilon_1, ..., \Upsilon_n]
\quad \mbox{and} \quad
\tilde{h}_j^{\cE^{Y_1, ..., Y_n]}} \in Z^a[Y_1, ..., Y_n]. 
\end{align*}
\end{proof}

Ghost hyperedges are a way to capture the contributions for the tagged probability space of some mean-field term that are viewed as a mean-field contribution. We do not see such contributions from the probabilistic rough path due to Equation \eqref{eq:definition:DualModule2} which states that mean-field terms are independent of the tagged probability space (equivalently, the distribution of the driving signal is independent of the sample). 

However, we do see these contributions outside of the McKean-Vlasov group of characters and these will be critical in Section \ref{section:ModelledDistributions} below. 

\begin{proposition}
\label{proposition:EAction}
Let $n\in\bN$, let $T_1, ..., T_n \in \scF_0^{\gamma, \alpha, \beta}$ and let $a\in \A{n}$. Let $f\in G^{\gamma, \alpha, \beta} \big( L^0(\Omega, \bP; \bR^e) \big)$ and $X\in \scH^{\gamma, \alpha, \beta}$. Suppose that
$$
\bE^{1, ..., m[a]}\bigg[ \Big| g(\omega_0, ..., \omega_{m[a]}) \cdot \bigotimes_{i=1}^n \bE^{H^{T_i}}\Big[ \big\langle X, T_i \big\rangle(\omega_{a_i}, \omega_{H^{T_i}}) \cdot \big\langle f, T_i \big\rangle(\omega_{a_i}, \omega_{H^{T_i}}) \Big] \Big| \bigg] <\infty
$$
For brevity, denote $T = \cE^a[ T_1, ..., T_n]$. Then we can equivalently state
\begin{align}
\nonumber
&\bE^{1, ..., m[a]}\bigg[ g(\omega_0, ..., \omega_{m[a]}) \cdot \bigotimes_{i=1}^n \bE^{H^{T_i}}\Big[ \big\langle X, T_i \big\rangle (\omega_{a_i}, \omega_{H^{T_i}}) \cdot \big\langle f, T_i \big\rangle(\omega_{a_i}, \omega_{H^{T_i}}) \Big] \bigg] 
\\
\label{eq:proposition:EAction}
&= 
\bE^{H^T}\bigg[ \bE^{Z^a[T_1, ..., T_n]}\Big[ g( \omega_{0}, \omega_{\tilde{h}_1^{T}}, ..., \omega_{\tilde{h}_{m[a]}^{T}} ) \cdot \bigotimes_{i=1}^n \big\langle X, T_i \big\rangle ( \omega_{\tilde{h}_{a_i}^{T}}, \omega_{H^{T_i}}) \Big] \cdot \big\langle f, T \big\rangle (\omega_{0}, \omega_{H^T}) \bigg]  
\end{align}
where we exchangeably denote $\omega_0$ and $\omega_{\tilde{h}_0^T}$. 
\end{proposition}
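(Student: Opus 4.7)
The approach is to start from the right-hand side of \eqref{eq:proposition:EAction} and recover the left-hand side by factorising $\langle f, T\rangle$ through the character property of $f$ together with the McKean--Vlasov identity \eqref{eq:definition:DualModule2}, and then regrouping the expectations by Fubini.

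First I would decompose $T = \tilde{T}_0 \circledast \cE[\tilde{T}_1] \circledast \cdots \circledast \cE[\tilde{T}_{m[a]}]$ with $\tilde{T}_j := \circledast_{i : a_i = j} T_i$, following \eqref{eq:definition:E^a-notation}. Iterating the character identity \eqref{eq:definition:DualModule1} then yields
\begin{equation*}
\bigl\langle f, T\bigr\rangle(\omega_0, \omega_{H^T}) = \bigl\langle f, \tilde{T}_0 \bigr\rangle(\omega_0, \omega_{H^{\tilde T_0}}) \cdot \prod_{j=1}^{m[a]} \bigl\langle f, \cE[\tilde{T}_j] \bigr\rangle(\omega_0, \omega_{H^{\cE[\tilde T_j]}}),
\end{equation*}
each factor depending only on the block of hyperedge variables inherited from its position in the product.

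Next, for every $j \in \{1, \dots, m[a]\}$ with $\tilde h_j^T \notin Z^a[T_1,\dots,T_n]$ (i.e., $h_0^{\tilde T_j} \neq \emptyset$), I would invoke \eqref{eq:definition:DualModule2} to rewrite $\langle f, \cE[\tilde T_j]\rangle$ as $\langle f, \tilde T_j\rangle(\omega_{\tilde h_j^T}, \omega_{H^{\tilde T_j}})$, removing the apparent $\omega_0$-dependence. For ghost $j$, i.e.\ $\tilde h_j^T \in Z^a$, each $T_i$ with $a_i = j$ satisfies $h_0^{T_i} = \emptyset$; writing each such $T_i$ as $\cE[T_i']$ for a suitable $T_i'$ with $h_0^{T_i'} \neq \emptyset$ and applying \eqref{eq:definition:DualModule2} to it shows $\langle f, T_i\rangle$ is independent of $\omega_0$, so that the ghost variable $\omega_{\tilde h_j^T}$ plays no role in $f$ (it enters only through $X$). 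A final application of the character property yields $\langle f, \tilde T_j\rangle = \prod_{i : a_i = j}\langle f, T_i\rangle$ with matched tagged variables: $\omega_0$ when $j = 0$, $\omega_{\tilde h_j^T}$ when $j$ is non-ghost, and an inert ghost variable when $j$ is ghost.

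Finally, I would merge the two nested expectations. By \eqref{eq:definition:Z-set-hyperedges}, $H^T \cup Z^a[T_1,\dots,T_n] = \tilde H^T \cup \bigcup_i H^{T_i}$, and the integrability hypothesis licenses Fubini, so $\bE^{H^T}[\bE^{Z^a}[\,\cdot\,]]$ collapses to a single expectation over independent copies indexed by $\tilde H^T$ (which recovers $\omega_1, \dots, \omega_{m[a]}$ after relabelling $\omega_{\tilde h_j^T} \mapsto \omega_j$) and by $\bigcup_i H^{T_i}$. Independence of the per-$i$ copies of $\Omega^{\times |H^{T_i}|}$ then lets one factor out each $\bE^{H^{T_i}}[\langle X, T_i\rangle \cdot \langle f, T_i\rangle]$, which is exactly the tensor product appearing on the left-hand side of \eqref{eq:proposition:EAction}. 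The main obstacle is the careful bookkeeping of variable dependencies when the McKean--Vlasov identity is applied, and in particular the dichotomy between ghost and non-ghost hyperedges: this is precisely the motivation for the set $Z^a[T_1, \dots, T_n]$, and verifying that all substitutions remain consistent across the factorisation is the delicate step.
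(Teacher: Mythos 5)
Your proposal is correct and follows essentially the same route as the paper's proof, merely run in reverse: the paper starts from the left-hand side, applies Fubini with the relabelling $\omega_j\mapsto\omega_{\tilde h_j^T}$, groups each block $\tilde T_j=\circledast_{i:a_i=j}T_i$, and then uses the ghost/non-ghost dichotomy together with \eqref{eq:definition:DualModule2} (via \eqref{eq:proposition:omegaIndependence}) and the character property \eqref{eq:definition:DualModule1} to reassemble $\langle f,T\rangle$, which are exactly your ingredients. The only cosmetic difference is that you justify the ghost case at the level of the individual $T_i$ by exhibiting them as $\cE[T_i']$, whereas the paper argues the constancy of $\langle f,\tilde T_j\rangle$ in $\omega_{\tilde h_j^T}$ directly at block level; both rest on the same McKean--Vlasov identity.
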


\begin{proof}
Let $a\in \A{n}$. Following Equation \eqref{eq:definition:E^a-notation}, 
$$
T = \cE^a\Big[ T_1, ..., T_n\Big] = \Big[ \underset{\substack{i:\\ a_i=0}}{\circledast} T_i \Big] \circledast \cE\Big[ \underset{\substack{i:\\ a_i=1}}{\circledast} T_i \Big] \circledast ... \circledast \cE\Big[ \underset{\substack{i:\\ a_i=m[a]}}{\circledast} T_i \Big]. 
$$
By assumption, we can swap the order of integration and relabel the probability spaces $\Omega_1 \times ... \times \Omega_{m[a]}$ by $\Omega_{\tilde{h}_1^T} \times ... \times \Omega_{\tilde{h}_{m[a]}^T}$ to get
\begin{align*}
&\bE^{1, ..., m[a]}\bigg[ g(\omega_0, ..., \omega_{m[a]}) \cdot \bigotimes_{i=1}^n \bE^{H^{T_i}} \Big[ \big\langle f, T_i \big\rangle(\omega_{a_i}, \omega_{H^{T_i}}) \cdot \big\langle X, T_i \big\rangle(\omega_{a_i}, \omega_{H^{T_i}}) \Big] \bigg] 
\\
&=\bE^{\bigcup_{i=1}^n H^{T_i}} \Bigg[ \bE^{\{\tilde{h}_1^{T}, ..., \tilde{h}_{m[a]}^{T} \}}\Big[ g(\omega_{0}, \omega_{\tilde{h}_1^T} ..., \omega_{\tilde{h}_{m[a]}^{T}}) \cdot \bigotimes_{i=1}^n \big\langle X, T_i \big\rangle (\omega_{\tilde{h}_{a_i}^{T}}, \omega_{H^{T_i}}) 
\\
&\qquad \cdot \bigotimes_{j=0}^{m[a]} \big\langle f, \underset{\substack{i:\\ a_i=j}}{\circledast} T_i \big\rangle (\omega_{\tilde{h}_j^{T}}, \omega_{H^{T_i}}) \Big] \Bigg]. 
\end{align*}
Then using Equation \eqref{eq:definition:Z-set-hyperedges} allows us to state this in the same form as Equation \eqref{eq:proposition:EAction}. 

For brevity, we denote for $j=0, ..., m[a]$, the Lions forest $\tilde{T}_j= \underset{\substack{i:\\ a_i=j}}{\circledast} T_i$. 

Suppose that for some $j=1, ..., m[a]$, 
$$
\bigcup_{\substack{i:\\a_i=j}} h_0^{T_i} \neq \emptyset
$$
so that $\tilde{h}_j^{T} \in H^{T}$. Since we are integrating over $\omega_{\tilde{h}_j^{T}}$, we could replace $\tilde{T_j}$ by $\cE\big[ \tilde{T_j}\big]$. Further, the set $\tilde{h}_j^{T}$ will be included in the collection of sets $H^T$ and not in $Z^a[T_1, ..., T_n]$. 

On the other hand, if 
$$
\bigcup_{\substack{i:\\a_i=j}} h_0^{T_i} = \emptyset,
$$
then the random variable $\big\langle f, \tilde{T}_j \big\rangle$ is constant in $\omega_{\tilde{h}_j^{T}}$ by Equation \eqref{eq:proposition:omegaIndependence} so we can move this term outside the expectation over this variable. We also retain equality by rewriting this as $\big\langle f, \cE\big[ \tilde{T}_j \big]\big\rangle $. 

To conclude, we apply \eqref{eq:definition:DualModule1} from the definition of a character.
\end{proof}

The following remark is borrowed directly from \cite{salkeld2021Probabilistic}. We feel it useful for the reader.

\begin{remark}
The reader should observe that, on the first line of \eqref{eq:proposition:EAction}, the variable $\omega_{a_{1}}, ...,\omega_{a_{n}}$ that appear in the tensorial product are the same
as the variables $\omega_{0}, ..., \omega_{m[a]}$ that appear inside the function $g$. This comes from the very definition of $a$.

The second line is more subtle since the expectation is split into two subsequent expectation symbols. Chiefly, the character $f$ is independent of the variables $\omega_{\tilde{h}_{j}^T}$, for $\tilde{h}_{j}^{T} \in Z^a[T_{1}, ..., T_{n}]$. As we explained above, these variables correspond to empty $0$-hyperedges in the collection of forests $\{T_1, ..., T_n\}$. Those variables appear as arguments of the function $g$ (but not all the arguments of $g$ are indexed by those variables) and $\Big\langle X, T_i\Big\rangle$ for appropriate choices of $i=1, ..., n$. Notice that, in this scenario, the hyperedge $\tilde h_{0}^{T_{i}}$ is not a hyperedge of $T$ and so will not be included in the expansion on the left hand side of Equation \eqref{eq:proposition:EAction}. This motivates why we integrate over all hyperedges contained in $Z^a[T_1, ..., T_n]$ on the right hand side of Equation \eqref{eq:proposition:EAction}. We refer to \textit{ghost variables} as the variables $\omega_{\tilde{h}_{i}^T}$ labelled by ghost hyperedges $\tilde{h}_{i}^{T} \in Z^a[T_{1},... ,T_{n}]$.

Lastly, from Definition \ref{definition:Z-set} we emphasise that the sets $\tilde{h}_j^T$ are disjoint and thus the variables $\omega_{\tilde{h}_i^T}$ and $\omega_{\tilde{h}_j^T}$ cannot be identified.
\end{remark}

\subsection{Graded norms and metrics}
\label{subsection:Integrability-Characters}

We now introduce a class of norms on the coupled Hopf algebra $\scH^{\gamma, \alpha, \beta}$. These norms are graded, with the grading relying on an auxiliary integrability function $p$. 

\begin{definition}
\label{definition:Graded-norm}
Let $\alpha, \beta>0$, $\gamma\geq\alpha\wedge\beta$. Let $p: \scF_0^{\gamma, \alpha, \beta} \to [1, \infty)$. We refer to $p$ as an \emph{integrability functional}. Let 
$$
\fV = \bigg\{ L^0\Big( \Omega \times \Omega^{\times |H^T|}, \bP \times (\bP)^{\times |H^T|}; \lin\big( (\bR^d)^{\otimes |N^T|}, \bR^e\big) \Big): (N^T, E^T, h_0^T, H^T, L^T) \in \scF_0^{\gamma, \alpha, \beta} \bigg\}. 
$$
Given an integrability functional $p$, we define a graded norm $\rp$ of $\scH^{\gamma, \alpha, \beta}$ by
$$
\| X \|_{\rp} = \sum_{T \in \scF^{\gamma, \alpha, \beta}} \bE^{(H^T)'} \bigg[ \Big| \langle X, T\rangle(\omega_{h_0}, \omega_{H^T}) \Big|^{p[T]}  \bigg]^{\tfrac{1}{p[T]}}. 
$$
\end{definition}

In general, we will be less interested in norms on $\scH^{\gamma,\alpha, \beta}$ and more interested in dual norms on $(\scH^{\gamma,\alpha, \beta})^*$ with certain integrability conditions that we will need for our rough paths. 

\begin{definition}
\label{definition:(dual)_integrab-functional}
Let $\alpha, \beta>0$, $\gamma\geq \alpha\wedge\beta$. Let $p:\scF_0^{\gamma, \alpha, \beta} \to [1, \infty)$ be an integrability functional. Let $q: \scF^{\gamma, \alpha, \beta} \to [1, \infty)$ such that $\forall T \in \scF^{\gamma, \alpha, \beta}$, 
\begin{equation}
\label{eq:IntegrabilityP1}
\frac{1}{p[\rId]} = \frac{1}{p[T]} + \frac{1}{q[T]}
\end{equation}
and $\forall T_1, T_2, T_3 \in \scF^{\gamma, \alpha, \beta}$ such that $c(T_1, T_2, T_3) > 0$, 
\begin{equation}
\label{eq:IntegrabilityP2}
\frac{1}{q[T_1]} \geq \frac{1}{q[T_2]} + \frac{1}{q[T_3]}, 
\quad 
q\big[ \cE[T_1] \big] = q\big[ T_1 \big]
\quad \mbox{and} \quad
\frac{1}{q[T_1 \circledast T_2]} = \frac{1}{q[T_1]} + \frac{1}{q[T_2]}. 
\end{equation}
Here $c:\scF_0 \times \scF_0 \times \scF_0 \to \bN_0$ is the coproduct counting function (see Equation \eqref{eq:Coproduct-counting}). 

Then we say that $q$ is the dual integrability functional of $p$. We refer to the pair $(p, q)$ as a dual pair of integrability functionals. 
\end{definition}

\begin{lemma}
\label{lemma:(dual)_integrab-functional}
Let $\alpha, \beta>0$, $\gamma>\alpha\wedge\beta$ and let $(p, q)$ be a dual pair of integrability functionals. Then $\forall T, \Upsilon, Y\in \scF$ such that $c'(T, \Upsilon, Y)>0$, 
$$
\frac{1}{p[Y]} \geq \frac{1}{p[T]} + \frac{1}{q[\Upsilon]}. 
$$
\end{lemma}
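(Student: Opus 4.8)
The plan is to reduce the assertion to the first displayed inequality in \eqref{eq:IntegrabilityP2} by upgrading the reduced coproduct count $c'$ to the full coproduct count $c$, and then to remove the occurrences of $q[T]$ and $q[Y]$ using the additivity relation \eqref{eq:IntegrabilityP1}. After this, the proof is a one-line rearrangement.

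First I would note that $c'(T,\Upsilon,Y)>0$ with $T,\Upsilon,Y\in\scF$ forces $c(T,\Upsilon,Y)>0$. Indeed, the reduced coproduct differs from $\Delta$ in \eqref{eq:lemma:Coproduct-Equivalence} only by the two boundary terms $\rId\times^{H^T}T$ and $T\times^{H^T}\rId$, each of which carries the empty tree in one of its two slots; since $\Upsilon$ and $Y$ are non-empty, the multiplicity of $\Upsilon\times^{H^T}Y$ in $\Delta[T]$ coincides with its multiplicity in the reduced coproduct, which is positive by hypothesis. (Equivalently: a genuine admissible cut of a rooted tree always has non-empty root, and has non-empty prune precisely when the cut is itself non-empty.) Moreover, by the grading property of $\Delta$ — recall that $(\scH^{\gamma,\alpha,\beta},\Delta,\epsilon)$ is a counital subcoalgebra, so that $\scG[\Upsilon]+\scG[Y]=\scG[T]$ component-wise whenever $\Upsilon\times^{H^T}Y$ occurs in $\Delta[T]$ — we have $\Upsilon,Y\in\scF^{\gamma,\alpha,\beta}$ as soon as $T\in\scF^{\gamma,\alpha,\beta}$, so all three trees lie in the domains of $p$ and $q$ and the defining relations of a dual pair may legitimately be invoked.

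Then, applying the first inequality of \eqref{eq:IntegrabilityP2} to $c(T,\Upsilon,Y)>0$ gives
$$
\frac{1}{q[T]} \geq \frac{1}{q[\Upsilon]} + \frac{1}{q[Y]},
$$
while \eqref{eq:IntegrabilityP1} applied to $T$ and to $Y$ yields $\tfrac{1}{q[T]}=\tfrac{1}{p[\rId]}-\tfrac{1}{p[T]}$ and $\tfrac{1}{q[Y]}=\tfrac{1}{p[\rId]}-\tfrac{1}{p[Y]}$. Substituting these into the previous display and cancelling $\tfrac{1}{p[\rId]}$ leaves $-\tfrac{1}{p[T]}\geq \tfrac{1}{q[\Upsilon]}-\tfrac{1}{p[Y]}$, which rearranges to exactly $\tfrac{1}{p[Y]}\geq \tfrac{1}{p[T]}+\tfrac{1}{q[\Upsilon]}$, as claimed. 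The argument is essentially bookkeeping; the only point requiring any care is the first step — confirming that a positive reduced-coproduct count entails a positive full count and that the three forests all sit inside the truncated coupled Hopf algebra — and I do not anticipate any genuine obstacle beyond this.
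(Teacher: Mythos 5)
Your proof is correct and follows essentially the same route as the paper's: apply \eqref{eq:IntegrabilityP1} (to relate $q[T]$, $q[Y]$ to $p[T]$, $p[Y]$ via $p[\rId]$) together with the first inequality of \eqref{eq:IntegrabilityP2}, and rearrange. The only addition is your explicit check that $c'(T,\Upsilon,Y)>0$ implies $c(T,\Upsilon,Y)>0$ for non-empty $\Upsilon,Y$, which the paper leaves implicit but which changes nothing of substance.
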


\begin{proof}
Thanks to Equation \eqref{eq:IntegrabilityP1} and Equation \eqref{eq:IntegrabilityP2}, we have that
\begin{align*}
\frac{1}{p[Y]} =& \frac{1}{p[\rId]} - \frac{1}{q[Y]}
\\
=& \Big( \frac{1}{p[T]} + \frac{1}{q[T]} \Big) - \frac{1}{q[Y]}
\geq
\frac{1}{p[T]} + \frac{1}{q[\Upsilon]}. 
\end{align*}
\end{proof}

\subsection{Probabilistic rough paths}
\label{subsection:ProbabilisticRPsDefs}

With a sense of integrability and a metric over our group of characters, we are able to introduce a definition for the titular probabilistic rough path:

\begin{definition}
\label{definition:ProbabilisticRoughPaths}
Let $\alpha, \beta>0$ and let $\gamma:= \inf\{ \scG_{\alpha, \beta}[T]: T \in \scF, \scG_{\alpha, \beta}[T]>1-\alpha \}$. Let $(p,q)$ be a dual pair of integrability functionals. 

We say that $\rw:[0,1]^2 \to G^{\gamma, \alpha, \beta}\big( L^{p[\rId]}(\Omega, \bP; \bR^e)\big)$ is a $(\scH^{\gamma, \alpha, \beta}, p,q)$-probabilistic rough path if 
\begin{align}
\label{eq:definition:ProbabilisticRoughPaths1}
&\rw_{s,t} \ast \rw_{t,u} = \rw_{s,u}
\end{align}
and $\forall T \in \scF^{\gamma, \alpha, \beta}$, 
\begin{align}
\label{eq:definition:ProbabilisticRoughPaths2}
\bE^0 \Bigg[ \sup_{s, t\in[0,1]} \frac{\bE^{H^T}\bigg[ \Big| \big\langle \rw_{s, t} , T\big\rangle(\omega_{0}, \omega_{H^T}) \Big|^{q[T]} \bigg]}{\big| t-s \big|^{q[T]\scG_{\alpha, \beta}[T]}} \Bigg]^{\tfrac{1}{q[T]}} < \infty. 
\end{align}
We denote the set of probabilistic rough paths by $\scC(\scH^{\gamma, \alpha, \beta}, p,q)$. 

We define the inhomogeneous (random) pseudo-metric 
$$
\rho_{(\alpha, \beta, p, q), 0}: \scC(\scH^{\gamma, \alpha, \beta}, p, q) \times \scC(\scH^{\gamma, \alpha, \beta}, p, q) \to L^{p[\rId]}( \Omega, \bP; \bR^+)
$$
by
\begin{align}
\nonumber
\rho_{(\alpha, \beta, p, q), 0}\Big( \rv, \rw\Big)(\omega_0) 
=& 
\inf_{\Pi} \sum_{T \in \scF^{\gamma, \alpha, \beta}} \sup_{s, t\in[0,1]} \frac{1}{|t-s|^{\scG_{\alpha, \beta}[T]}} \bigg( \int_{( \hat{\Omega}\times \overline{\Omega})^{\times |H^T|}} \Big| \big\langle \rv_{s, t}, T\big\rangle(\omega_0, \hat{\omega}_{H^T}) 
\\
\label{eq:definition:ProbabilisticRoughPaths3}
& - \big\langle \rw_{s, t}, T\big\rangle(\omega_0, \overline{\omega}_{H^T})\Big|^{q[T]} d\Big( \Pi(\hat{\omega}, \overline{\omega}) \Big)^{\times|H^T|} \bigg)^{\tfrac{1}{q[T]}} 
\end{align}
where $\inf_{\Pi}$ runs over all probability measures on $\big( \hat{\Omega}\times \overline{\Omega}, \hat{\cF}\otimes \overline{\cF} \big)$ with left and right marginals $\bP$ and $\bP$ respectively. 
\end{definition}

Using \eqref{eq:definition:DualModule2}, the reader may observe that for $T\in \scF$ such that $h_0^T \neq \emptyset$, the random variable 
$$
\Big\langle \rw_{s, t} , T\Big\rangle(\omega_{h_0}, \omega_{H^T}) = \Big\langle \rw_{s, t}, \cE[T] \Big\rangle(\omega_0, \omega_{h_0}, \omega_{H^T})
$$
so that $\langle \rw_{s, t}, \cE[T] \rangle(\omega_0, \omega_{h_0}, \omega_{H^T})$ is constant in $\omega_0$ and Equation \eqref{eq:definition:ProbabilisticRoughPaths2} can be restated as
$$
\bE^{H^{\cE[T]}}\bigg[ \Big| \big\langle \rw_{s, t} , \cE[T] \big\rangle(\omega_0, \omega_{h_0}, \omega_{H^T}) \Big|^{q[T]} \bigg]^{\tfrac{1}{q[T]}} \leq C_{\cE[T]}\cdot \big| t-s \big|^{\scG_{\alpha, \beta}\big[ \cE[T] \big]}. 
$$


\newpage
\section{Random controlled rough paths and couplings}
\label{section:ModelledDistributions}

Now that we have briefly laid out the definition of probabilistic rough paths first introduced in \cite{salkeld2021Probabilistic}, we want to introduce the related notion of paths controlled by a probabilistic rough path following following the ideas of \cite{gubinelli2004controlling}. To this end, the goal of this section is to define and prove some key properties of \emph{random controlled rough paths}. This idea was first introduced in \cite{2019arXiv180205882.2B}, although the definition we use in this work builds on the key philosophy that a probabilistic rough path is a path of the characters of a coupled Hopf algebra along with a choice of graded norm. 

Throughout this section, we will use that convention that $T$ is a tree with prunes $\Upsilon$ and roots $Y$. This will aid distinction and identification for the reader when there are multiple prunes and roots within an expression. Further, following on from Definition \ref{definition:coproduct} we will be using the reduced coproduct counting function $c':\scF \times \scF \times \scF\to \bN$ defined by
\begin{align*}
c' \Big( T, \Upsilon, Y \Big) = \Big\langle \Delta'\big[ T \big] , \Upsilon \times^{H^T} Y \Big\rangle,
\\
\Delta'\big[ T \big] = \Delta[T] - \rId\times^{H^T} T - T \times^{H^T} \rId. 
\end{align*}

For a set $\scA \subseteq \scF_0$, we will henceforward denote 
$$
\sum_{T\in\scA}^{\gamma, \alpha, \beta}
:= 
\sum_{\substack{T\in \scA\\ \scG_{\alpha, \beta}[T]\leq \gamma}}, 
\quad
\sum_{T\in\scA}^{\gamma-, \alpha, \beta}
:= 
\sum_{\substack{T\in \scA\\ \scG_{\alpha, \beta}[T] < \gamma}}. 
$$ 

This section is central to our paper. In Subsection \ref{subse:def:RCRP}, we provide and describe the Definition of a random controlled rough path, see Definition \ref{definition:RandomControlledRP}. In Subsection \ref{subsec:Int_MF_ElemDiff}, we provide some motivation for this choice by considering elementary differentials of a McKean-Vlasov equation. Subsection \ref{subsec:Operat_RCRPs} contains two key theorems of this paper along with the necessary notational explanations. Subsections \ref{subsec:CoupledCoproductId} and \ref{subsec:DualityId} include a collection of definitions and results that will be key to working with random controlled rough paths. The reader should be able to take these results on trust during the first read through. Subsection \ref{subsec:RCRPs} addresses some of the technical details of working with random controlled rough paths, including a proof that the space of random controlled rough paths is a Banach space (see Theorem \ref{theorem:Banachspace}). Finally, Subsection \ref{subsection:Proofs-Sec4} provides the proofs to the results of Subsection \ref{subsec:Operat_RCRPs}. 

\subsection{Definition of a random controlled rough path}
\label{subse:def:RCRP}

We first provide the definition of a random controlled rough path, which is the key notion in our contribution. The reader will notice some obvious similarities with the definition of a standard controlled rough path in \eqref{eq:Controlled_RP1}. However, a modicum of care is needed as the random feature of the paths create a lot of additional technicalities, which represent the novelty and interest of this definition. In particular, the encoding of the jets in the expansion of a random controlled rough path requires a proper identification of the connections between the hyperedges of a Lions forest $T$ and the hyperedges of the prune and the root for some cut of $T$. Formally, this identification goes through Definitions \ref{definition:CouplingFunctions} and \ref{definition:E-Set}. The reader can study these two definitions directly, but, to ease the reading, we have chosen to provide here a more intuitive primer of them 
at this stage of the exposition.

For $T,\Upsilon,Y \in \scF$ such that $c'(T,\Upsilon,Y) >0$ and for a non-0 hyperedge $h^\Upsilon$ of $\Upsilon$ (i.e., $h^\Upsilon \in H^{\Upsilon}$), $h^\Upsilon$ can be regarded as the restriction to the nodes of $\Upsilon$ of a wider hyperedge of $T$, which we denote in Definition \ref{definition:CouplingFunctions} by $\psi^{\Upsilon,T}(h^\Upsilon)$. Here, we use the simpler notation $h^T$ for $\psi^{\Upsilon,T}(h^\Upsilon)$. Then we say that $h^{\Upsilon}$ is not connected to $Y$ (with respect to the coupling $H^T$) if $h^{T}$ does not contain any node of $Y$. We call $E^{T,\Upsilon,Y}$ the collection of hyperedges of $\Upsilon$ that are not connected to $Y$, which implicitly means that the $0$-hyperedge of $\Upsilon$ is always regarded as being connected. When $h^{\Upsilon}$ is connected to $Y$, we can associate with it an hyperedge of $Y$, namely an element of $H^Y$, by considering $h^Y := h^T \cap N^Y$ where $N^Y$ denotes the collection of nodes of $Y$. In the following definition, $h^Y$ is denoted by $\varphi^{T,\Upsilon,Y}[h^\Upsilon]$. 

Conversely, if we are given first $h^T$ an hyperedge of $T$, we may divide it according to the cut that leads to $\Upsilon$ and $Y$ and then retain one piece only. If $h^T$ does not contain any node of $Y$, then we regard $h^T$ as an hyperedge of $\Upsilon$, which we retain as a result of the cut. If $h^T$ has a non-empty intersection with $N^Y$, then $h^T\cap N^Y$ is an hyperedge of $Y$ and we retain it as a result of the cut. We let $h^\Upsilon:=h^T$ in the first case, with $h^\Upsilon$ being formally regarded as an hyperedge of $\Upsilon$, and $h^{Y}:=h^T \cap N^Y$ in the second case, with $h^\Upsilon$ being seen as an hyperedge of $Y$. Using our terminology, it should be clear to the reader that, in the former case, $h^{\Upsilon}$ is not connected to $Y$ (with the respect to the coupling $H^T$). In Definition \ref{definition:CouplingFunctions}, the operation that maps $h^T$ to either $h^\Upsilon$ or $h^Y$ is encoded by means of a function $\phi^{T,\Upsilon,Y}:H^T \rightarrow H^\Upsilon \cup H^Y$: $\phi^{T,\Upsilon,Y}[h^T]$ is either equal to $h^\Upsilon$ or $h^Y$ whether it contains nodes of $Y$ or not. 

With this detail clarified, we introduce the core contribution of this work:
\begin{definition}
\label{definition:RandomControlledRP}
Let $\alpha, \beta>0$ and $\gamma:=\inf\{ \scG_{\alpha, \beta}[T]: T \in \scF, \scG_{\alpha, \beta}[T]>1-\alpha \}$. Let $(p,q)$ be a dual pair of integrability functionals and let $\rw$ be an $(\scH^{\gamma, \alpha, \beta}, p, q)$-probabilistic rough path. 

A path $\bX:[0,1]\to \scH^{\gamma-, \alpha, \beta}$ is called a $p$-Random Controlled Rough Path (RCRP) controlled by $\rw$ if $\forall Y \in \scF^{\gamma-, \alpha, \beta}$, $\forall s, t\in[0,1]$ with $s<t$, 
\begin{align}
\label{eq:definition:RandomControlledRP1}
\Big\langle \bX_{s, t},& \rId \Big\rangle(\omega_0) = \sum_{T \in \scF}^{\gamma-, \alpha, \beta} \bE^{H^T} \bigg[ \Big\langle \bX_s, T \Big\rangle(\omega_0, \omega_{H^T}) \cdot \Big\langle \rw_{s, t}, T \Big\rangle(\omega_0, \omega_{H^T}) \bigg] + \Big\langle \bX^{\sharp}_{s, t}, \rId \Big\rangle(\omega_0), 
\\
\nonumber
\Big\langle \bX_{s, t},& Y \Big\rangle(\omega_0, \omega_{H^Y}) 
\\
\nonumber
=& \sum_{T \in \scF}^{\gamma-,\alpha, \beta} \sum_{\Upsilon \in \scF} c'\Big( T, \Upsilon, Y \Big) \cdot \bE^{E^{T, \Upsilon, Y}}\bigg[ \Big\langle \bX_s, T\Big\rangle(\omega_0, \omega_{\phi^{ T, \Upsilon, Y}[H^T]}) \cdot \Big\langle \rw_{s, t}, \Upsilon \Big\rangle(\omega_0, \omega_{\varphi^{T, \Upsilon, Y}[H^\Upsilon]}) \bigg] 
\\
\label{eq:definition:RandomControlledRP2}
&+ \Big\langle \bX^{\sharp}_{s, t}, Y \Big\rangle(\omega_0, \omega_{H^Y}), 
\end{align}
and $\forall Y \in \scF_0^{\gamma-, \alpha, \beta}$, 
\begin{equation}
\label{eq:definition:RandomControlledRP4}
\begin{split}
\bE^0\bigg[ \sup_{t\in[0,1]} & \bE^{H^Y}\bigg[ \Big| \big\langle \bX_t, Y \big\rangle (\omega_{0}, \omega_{H^Y}) \Big|^{p[Y]} \bigg] \bigg]<\infty, 
\\
\bE^0\Bigg[ \sup_{s, t\in[0,1]} &
\frac{\bE^{H^Y}\Big[\big| \langle \bX^{\sharp}_{s, t}, Y \rangle(\omega_0, \omega_{H^Y}) \big|^{p[Y]} \Big]}{|t-s|^{p[Y](\gamma - \scG_{\alpha, \beta}[Y])}} \Bigg]^{\tfrac{1}{p[Y]}} <\infty. 
\end{split}
\end{equation}

We call $\cD_{\rw}^{\gamma, p, q}(\scH^{\gamma, \alpha, \beta})$ the space of all $p$-RCRPs controlled by $\rw$. We equip it with the (random) seminorm $\big\| \cdot \big\|_{\rw, \gamma, p, q, 0}: \cD_{\rw}^{\gamma, p, q}(\scH^{\gamma, \alpha, \beta}) \to L^{p[\rId]}\big( \Omega, \bP; \bR^+\big)$ defined by
\begin{align}
\nonumber
\big\| \bX \big\|_{\rw, \gamma, p, q, 0}(\omega_0)
=& \sum_{T \in \scF_0}^{\gamma-, \alpha, \beta} \bE^{H^T}\bigg[ \Big| \big\langle \bX_0, T \big\rangle(\omega_0, \omega_{H^T}) \Big|^{p[T]} \bigg]^{\tfrac{1}{p[T]}}
\\
\label{eq:definition:RandomControlledRP3}
&+ \sum_{T\in \scF_0}^{\gamma-, \alpha, \beta} \sup_{s, t\in[0,1]} 
\frac{ \bE^{H^T}\bigg[ \Big| \big\langle \bX_{s,t}^{\sharp}, T \big\rangle (\omega_{0}, \omega_{H^T}) \Big|^{p[T]} \bigg]^{\tfrac{1}{p[T]}} }{|t-s|^{\gamma - \scG_{\alpha, \beta}[T]}}. 
\end{align}

When there is no ambiguity in the choice of coupled Hopf algebra $\scH^{\gamma, \alpha, \beta}$, we denote the space of random controlled rough paths (controlled by $\rw$) by $\cD_\rw^{\gamma, p,q}$. 
\end{definition}

\begin{remark}
\label{remark:timeint-length}
Throughout this entire work, the time interval is taken to be $[0,1]$, but we could equivalently take it to be any finite interval of the real line. When another time interval is specified, it is always as a subset $[u, v]\subseteq [0,1]$ which will be used to demonstrate the dependence in those estimates to the length of the time interval. This is vital for key rough path localisation techniques. 
\end{remark}

\begin{remark}
Let us consider the use of the set of untagged hyperedges $E^{T, \Upsilon, Y}$: in Equation \eqref{eq:definition:RandomControlledRP1}, we should think that the only hyperedge being tagged is the 0-hyperedge $h_0$. Hence, $\omega_0$ is the only variable on the left-hand side of the expression and on the right hand side of the expression all other hyperedges (the set $H^T$) are being integrated over. 

By contrast, in Equation \eqref{eq:definition:RandomControlledRP2}, we only integrate over the set of decoupled hyperedges $E^{T, \Upsilon, Y}$. Once again, we recall that this is the collection of hyperedges of $\Upsilon$ that are not connected to $Y$ and this does not contain the $0$-hyperedge. The 0-hyperedge remains coupled as before via $\omega_0$, but we also fix $\omega_{H^Y}$ on the left-hand side. The operation $\phi^{T, \Upsilon, Y}$ maps hyperedges of the Lions tree $T$ onto the hyperedges of the Lions tree $Y$ except when there is no associated hyperedge, in which case it maps to $H^\Upsilon$. In the latter case, the image is regarded as a non-connected hyperedge of $\Upsilon$, i.e., as an element of $E^{T,\Upsilon,Y}$. Thus, any hyperedge $h\in H^T$ such that $\phi^{T, \Upsilon, Y}[h] \in H^{\Upsilon}$ will be contained in $E^{T, \Upsilon, Y}$ and any other hyperedges are contained in $H^Y$ and so are tagged. 

In the same fashion, $\varphi^{T, \Upsilon, Y}$ maps to hyperedges of $\Upsilon$ to $H^Y$ when there is an associated hyperedge tagged by $H^T$, and $H^\Upsilon$ otherwise. Thus $\varphi^{T, \Upsilon, Y}[h] \in H^\Upsilon$ will be contained in $E^{T, \Upsilon, Y}$ and all other hyperedges will be contained in $H^Y$, making them tagged. 
\end{remark}

\begin{example}
\label{example:Tikz}
Let us visualise Definition \ref{definition:RandomControlledRP} in the case where $d=1$ (so that all nodes are labelled identically). Let $\alpha = \frac{11}{50}$ and $\beta = \tfrac{15}{50}$. Thus $\alpha \in \big( \tfrac{1}{5}, \tfrac{1}{4} \big)$ and $\beta \in \big( \tfrac{1}{4}, \tfrac{1}{3}\big)$ and $\gamma = \tfrac{41}{50}=\alpha+2\beta$ since
$$
4\alpha < 3\alpha + \beta < 1 < 2\alpha + 2\beta < 5 \alpha < \alpha + 3 \beta. 
$$
Hence
$$
\Big\{ \alpha i + \beta j: (i, j)\in \bN_0: \alpha i + \beta j<\gamma\big\} = \Big\{ \alpha, \beta, 2\alpha, \alpha + \beta, 2\beta, 3\alpha, 2\alpha + \beta \Big\}
$$
and we are left considering the collection of Lions forests:
\begin{align*}
\scF^{\gamma-, \alpha, \beta}=\Bigg\{
&
\vcenter{\hbox{
\begin{tikzpicture}
\node[vertexS] at (0,0) {}; 
\begin{pgfonlayer}{background}
\draw[zhyedge1S] (0,0) -- (-0,0);
\draw[zhyedge2S] (0,0) -- (-0,0);
\end{pgfonlayer}
\end{tikzpicture}
}}
, 
\vcenter{\hbox{
\begin{tikzpicture}
\node[vertexS] at (0,0) {}; 
\begin{pgfonlayer}{background}
\draw[hyedge1S, color=red] (0,0) -- (-0,0);
\draw[hyedge2S] (0,0) -- (-0,0);
\end{pgfonlayer}
\end{tikzpicture}
}}
,
\vcenter{\hbox{
\begin{tikzpicture}
\node[vertexS] at (0,0) {}; 
\node[vertexS] at (0,0.5) {}; 
\draw[edge] (0,0) -- (0,0.5);
\begin{pgfonlayer}{background}
\draw[zhyedge1S] (0,0) -- (0,0.5);
\draw[zhyedge2S] (0,0) -- (0,0.5);
\end{pgfonlayer}
\end{tikzpicture}
}}
, 
\vcenter{\hbox{
\begin{tikzpicture}
\node[vertexS] at (0,0) {}; 
\node[vertexS] at (0.5,0) {}; 
\begin{pgfonlayer}{background}
\draw[zhyedge1S] (0,0) -- (0.5,0);
\draw[zhyedge2S] (0,0) -- (0.5,0);
\end{pgfonlayer}
\end{tikzpicture}
}}
,
\vcenter{\hbox{
\begin{tikzpicture}
\node[vertexS] at (0,0) {}; 
\node[vertexS] at (0,0.5) {}; 
\draw[edge] (0,0) -- (0,0.5);
\begin{pgfonlayer}{background}
\draw[zhyedge1S] (0,0) -- (0,0);
\draw[zhyedge2S] (0,0) -- (0,0);
\draw[hyedge1S, color=red] (0,0.5) -- (0,0.5);
\draw[hyedge2S] (0,0.5) -- (0,0.5);
\end{pgfonlayer}
\end{tikzpicture}
}}
, 
\vcenter{\hbox{
\begin{tikzpicture}
\node[vertexS] at (0,0) {}; 
\node[vertexS] at (0.5,0) {}; 
\begin{pgfonlayer}{background}
\draw[zhyedge1S] (0,0) -- (0,0);
\draw[zhyedge2S] (0,0) -- (0,0);
\draw[hyedge1S, color=red] (0.5,0) -- (0.5,0);
\draw[hyedge2S] (0.5,0) -- (0.5,0);
\end{pgfonlayer}
\end{tikzpicture}
}}
,
\vcenter{\hbox{
\begin{tikzpicture}
\node[vertexS] at (0,0) {}; 
\node[vertexS] at (0, 0.5) {}; 
\draw[edgeS] (0,0) -- (0,0.5);
\begin{pgfonlayer}{background}
\draw[hyedge1S, color=red] (0,0) -- (0,0.5);
\draw[hyedge2S] (0,0) -- (0,0.5);
\end{pgfonlayer}
\end{tikzpicture}
}}
,
\vcenter{\hbox{
\begin{tikzpicture}
\node[vertexS] at (0,0) {}; 
\node[vertexS] at (0.5,0) {}; 
\begin{pgfonlayer}{background}
\draw[hyedge1S, color=red] (0,0) -- (0.5,0);
\draw[hyedge2S] (0,0) -- (0.5,0);
\end{pgfonlayer}
\end{tikzpicture}
}}
,
\vcenter{\hbox{
\begin{tikzpicture}
\node[vertexS] at (0,0) {}; 
\node[vertexS] at (0, 0.5) {}; 
\draw[edgeS] (0,0) -- (0,0.5);
\begin{pgfonlayer}{background}
\draw[hyedge1S, color=red] (0,0) -- (0,0);
\draw[hyedge2S] (0,0) -- (0,0);
\draw[hyedge1S, color=blue] (0,0.5) -- (0,0.5);
\draw[hyedge2S] (0,0.5) -- (0,0.5);
\end{pgfonlayer}
\end{tikzpicture}
}}
,
\vcenter{\hbox{
\begin{tikzpicture}
\node[vertexS] at (0,0) {}; 
\node[vertexS] at (0.5,0) {}; 
\begin{pgfonlayer}{background}
\draw[hyedge1S, color=red] (0,0) -- (0,0);
\draw[hyedge2S] (0,0) -- (0,0);
\draw[hyedge1S, color=blue] (0.5,0) -- (0.5,0);
\draw[hyedge2S] (0.5,0) -- (0.5,0);
\end{pgfonlayer}
\end{tikzpicture}
}}
,
\\
&
\vcenter{\hbox{
\begin{tikzpicture}
\node[vertexS] at (0,0) {}; 
\node[vertexS] at (0,0.5) {}; 
\node[vertexS] at (0,1) {}; 
\draw[edgeS] (0,0) -- (0,0.5);
\draw[edgeS] (0,0.5) -- (0,1);
\begin{pgfonlayer}{background}
\draw[zhyedge1S] (0,0) -- (0,1);
\draw[zhyedge2S] (0,0) -- (-0,1);
\end{pgfonlayer}
\end{tikzpicture}
}}
,
\vcenter{\hbox{
\begin{tikzpicture}
\node[vertexS] at (0,0) {}; 
\node[vertexS] at (0.25,0.5) {}; 
\node[vertexS] at (-0.25,0.5) {}; 
\draw[edgeS] (0,0) -- (0.25,0.5);
\draw[edgeS] (0,0) -- (-0.25,0.5);
\begin{pgfonlayer}{background}
\draw[zhyedge1S] (-0.25,0.5) -- (0,0) -- (0.25,0.5);
\draw[zhyedge2S] (-0.25,0.5) -- (0,0) -- (0.25,0.5);
\end{pgfonlayer}
\end{tikzpicture}
}}
,
\vcenter{\hbox{
\begin{tikzpicture}
\node[vertexS] at (0,0) {}; 
\node[vertexS] at (0.5,0) {}; 
\node[vertexS] at (0.5,0.5) {}; 
\draw[edgeS] (0.5,0) -- (0.5,0.5);
\begin{pgfonlayer}{background}
\draw[zhyedge1S] (0,0) -- (0.5,0) -- (0.5,0.5);
\draw[zhyedge2S] (0,0) -- (0.5,0) -- (0.5,0.5);
\end{pgfonlayer}
\end{tikzpicture}
}}
,
\vcenter{\hbox{
\begin{tikzpicture}
\node[vertexS] at (0,0) {}; 
\node[vertexS] at (0.25,0) {}; 
\node[vertexS] at (0.5,0) {}; 
\begin{pgfonlayer}{background}
\draw[zhyedge1S] (0,0) -- (0.5,0);
\draw[zhyedge2S] (0,0) -- (0.5,0);
\end{pgfonlayer}
\end{tikzpicture}
}}
,
\vcenter{\hbox{
\begin{tikzpicture}
\node[vertexS] at (0,0) {}; 
\node[vertexS] at (0,0.5) {}; 
\node[vertexS] at (0,1) {}; 
\draw[edgeS] (0,0) -- (0,0.5);
\draw[edgeS] (0,0.5) -- (0,1);
\begin{pgfonlayer}{background}
\draw[zhyedge1S] (0,0) -- (0,0.5);
\draw[zhyedge2S] (0,0) -- (-0,0.5);
\draw[hyedge1S, color=red] (0,1) -- (0,1);
\draw[hyedge2S] (0,1) -- (0,1);
\end{pgfonlayer}
\end{tikzpicture}
}}
,
\vcenter{\hbox{
\begin{tikzpicture}
\node[vertexS] at (0,0) {}; 
\node[vertexS] at (0.25,0.5) {}; 
\node[vertexS] at (-0.25,0.5) {}; 
\draw[edgeS] (0,0) -- (0.25,0.5);
\draw[edgeS] (0,0) -- (-0.25,0.5);
\begin{pgfonlayer}{background}
\draw[zhyedge1S] (-0.25,0.5) -- (0,0);
\draw[zhyedge2S] (-0.25,0.5) -- (0,0);
\draw[hyedge1S, color=red] (0.25,0.5) -- (0.25,0.5);
\draw[hyedge2S] (0.25,0.5) -- (0.25,0.5);
\end{pgfonlayer}
\end{tikzpicture}
}}
,
\vcenter{\hbox{
\begin{tikzpicture}
\node[vertexS] at (0,0) {}; 
\node[vertexS] at (0.5,0) {}; 
\node[vertexS] at (0.5,0.5) {}; 
\draw[edgeS] (0.5,0) -- (0.5,0.5);
\begin{pgfonlayer}{background}
\draw[zhyedge1S] (0,0) -- (0.5,0);
\draw[zhyedge2S] (0,0) -- (0.5,0);
\draw[hyedge1S, color=red] (0.5,0.5) -- (0.5,0.5);
\draw[hyedge2S] (0.5,0.5) -- (0.5,0.5);
\end{pgfonlayer}
\end{tikzpicture}
}}
,
\vcenter{\hbox{
\begin{tikzpicture}
\node[vertexS] at (0,0) {}; 
\node[vertexS] at (0.5,0) {}; 
\node[vertexS] at (0.5,0.5) {}; 
\draw[edgeS] (0.5,0) -- (0.5,0.5);
\begin{pgfonlayer}{background}
\draw[zhyedge1S] (0.5,0.5) -- (0.5,0);
\draw[zhyedge2S] (0.5,0.5) -- (0.5,0);
\draw[hyedge1S, color=red] (0,0) -- (0,0);
\draw[hyedge2S] (0,0) -- (0,0);
\end{pgfonlayer}
\end{tikzpicture}
}}
,
\vcenter{\hbox{
\begin{tikzpicture}
\node[vertexS] at (0,0) {}; 
\node[vertexS] at (0.25,0) {}; 
\node[vertexS] at (0.75,0) {}; 
\begin{pgfonlayer}{background}
\draw[zhyedge1S] (0,0) -- (0.25,0);
\draw[zhyedge2S] (0,0) -- (0.25,0);
\draw[hyedge1S, color=red] (0.75,0) -- (0.75,0);
\draw[hyedge2S] (0.75,0) -- (0.75,0);
\end{pgfonlayer}
\end{tikzpicture}
}}
\Bigg\}
\end{align*}
Then each of the jets described by Equation \eqref{eq:definition:RandomControlledRP2} are equivalent to the following:
\begin{align*}
\Big\langle \bX_{s, t}, &
\vcenter{\hbox{
\begin{tikzpicture}
\node[vertexS] at (0,0) {}; 
\begin{pgfonlayer}{background}
\draw[zhyedge1S] (0,0) -- (-0,0);
\draw[zhyedge2S] (0,0) -- (-0,0);
\end{pgfonlayer}
\end{tikzpicture}
}}
\Big\rangle(\omega_0) 
= 
\bE^1\Bigg[ \Big\langle \bX_s, 
\vcenter{\hbox{
\begin{tikzpicture}
\node[vertexS] at (0,0) {}; 
\node[vertexS] at (0,0.5) {}; 
\draw[edge] (0,0) -- (0,0.5);
\begin{pgfonlayer}{background}
\draw[zhyedge1S] (0,0) -- (0,0);
\draw[zhyedge2S] (0,0) -- (0,0);
\draw[hyedge1S, color=red] (0,0.5) -- (0,0.5);
\draw[hyedge2S] (0,0.5) -- (0,0.5);
\end{pgfonlayer}
\end{tikzpicture}
}}
+
\vcenter{\hbox{
\begin{tikzpicture}
\node[vertexS] at (0,0) {}; 
\node[vertexS] at (0.5,0) {}; 
\begin{pgfonlayer}{background}
\draw[zhyedge1S] (0,0) -- (0,0);
\draw[zhyedge2S] (0,0) -- (0,0);
\draw[hyedge1S, color=red] (0.5,0) -- (0.5,0);
\draw[hyedge2S] (0.5,0) -- (0.5,0);
\end{pgfonlayer}
\end{tikzpicture}
}}
\Big\rangle(\omega_0, {\color{red}\omega_1}) \Big\langle \rw_{s, t}, 
\vcenter{\hbox{
\begin{tikzpicture}
\node[vertexS] at (0,0) {}; 
\begin{pgfonlayer}{background}
\draw[hyedge1S, color=red] (0,0) -- (-0,0);
\draw[hyedge2S] (0,0) -- (-0,0);
\end{pgfonlayer}
\end{tikzpicture}
}}
\Big\rangle(\omega_0, {\color{red}\omega_1}) \Bigg]
\\
&+
\Big\langle \bX_s, 
\vcenter{\hbox{
\begin{tikzpicture}
\node[vertexS] at (0,0) {}; 
\node[vertexS] at (0,0.5) {}; 
\draw[edge] (0,0) -- (0,0.5);
\begin{pgfonlayer}{background}
\draw[zhyedge1S] (0,0) -- (0,0.5);
\draw[zhyedge2S] (0,0) -- (0,0.5);
\end{pgfonlayer}
\end{tikzpicture}
}}
+ 2 
\vcenter{\hbox{
\begin{tikzpicture}
\node[vertexS] at (0,0) {}; 
\node[vertexS] at (0.5,0) {}; 
\begin{pgfonlayer}{background}
\draw[zhyedge1S] (0,0) -- (0.5,0);
\draw[zhyedge2S] (0,0) -- (0.5,0);
\end{pgfonlayer}
\end{tikzpicture}
}}
\Big\rangle(\omega_0) \Big\langle \rw_{s, t}, 
\vcenter{\hbox{
\begin{tikzpicture}
\node[vertexS] at (0,0) {}; 
\begin{pgfonlayer}{background}
\draw[zhyedge1S] (0,0) -- (-0,0);
\draw[zhyedge2S] (0,0) -- (-0,0);
\end{pgfonlayer}
\end{tikzpicture}
}}
\Big\rangle(\omega_0)
+
\Big\langle \bX_s, 
\vcenter{\hbox{
\begin{tikzpicture}
\node[vertexS] at (0,0) {}; 
\node[vertexS] at (0,0.5) {}; 
\node[vertexS] at (0,1) {}; 
\draw[edgeS] (0,0) -- (0,0.5);
\draw[edgeS] (0,0.5) -- (0,1);
\begin{pgfonlayer}{background}
\draw[zhyedge1S] (0,0) -- (0,1);
\draw[zhyedge2S] (0,0) -- (-0,1);
\end{pgfonlayer}
\end{tikzpicture}
}}
+ 
\vcenter{\hbox{
\begin{tikzpicture}
\node[vertexS] at (0,0) {}; 
\node[vertexS] at (0.5,0) {}; 
\node[vertexS] at (0.5,0.5) {}; 
\draw[edgeS] (0.5,0) -- (0.5,0.5);
\begin{pgfonlayer}{background}
\draw[zhyedge1S] (0,0) -- (0.5,0) -- (0.5,0.5);
\draw[zhyedge2S] (0,0) -- (0.5,0) -- (0.5,0.5);
\end{pgfonlayer}
\end{tikzpicture}
}}
\Big\rangle(\omega_0) \Big\langle \rw_{s, t}, 
\vcenter{\hbox{
\begin{tikzpicture}
\node[vertexS] at (0,0) {}; 
\node[vertexS] at (0,0.5) {}; 
\draw[edge] (0,0) -- (0,0.5);
\begin{pgfonlayer}{background}
\draw[zhyedge1S] (0,0) -- (0,0.5);
\draw[zhyedge2S] (0,0) -- (0,0.5);
\end{pgfonlayer}
\end{tikzpicture}
}}
\Big\rangle(\omega_0)
\\
&+
\Big\langle \bX_s, 
\vcenter{\hbox{
\begin{tikzpicture}
\node[vertexS] at (0,0) {}; 
\node[vertexS] at (0.25,0.5) {}; 
\node[vertexS] at (-0.25,0.5) {}; 
\draw[edgeS] (0,0) -- (0.25,0.5);
\draw[edgeS] (0,0) -- (-0.25,0.5);
\begin{pgfonlayer}{background}
\draw[zhyedge1S] (-0.25,0.5) -- (0,0) -- (0.25,0.5);
\draw[zhyedge2S] (-0.25,0.5) -- (0,0) -- (0.25,0.5);
\end{pgfonlayer}
\end{tikzpicture}
}}
+ 
\vcenter{\hbox{
\begin{tikzpicture}
\node[vertexS] at (0,0) {}; 
\node[vertexS] at (0.5,0) {}; 
\node[vertexS] at (0.5,0.5) {}; 
\draw[edgeS] (0.5,0) -- (0.5,0.5);
\begin{pgfonlayer}{background}
\draw[zhyedge1S] (0,0) -- (0.5,0) -- (0.5,0.5);
\draw[zhyedge2S] (0,0) -- (0.5,0) -- (0.5,0.5);
\end{pgfonlayer}
\end{tikzpicture}
}}
+3
\vcenter{\hbox{
\begin{tikzpicture}
\node[vertexS] at (0,0) {}; 
\node[vertexS] at (0.25,0) {}; 
\node[vertexS] at (0.5,0) {}; 
\begin{pgfonlayer}{background}
\draw[zhyedge1S] (0,0) -- (0.5,0);
\draw[zhyedge2S] (0,0) -- (0.5,0);
\end{pgfonlayer}
\end{tikzpicture}
}}
\Big\rangle(\omega_0) \Big\langle \rw_{s, t}, 
\vcenter{\hbox{
\begin{tikzpicture}
\node[vertexS] at (0,0) {}; 
\node[vertexS] at (0.5,0) {}; 
\begin{pgfonlayer}{background}
\draw[zhyedge1S] (0,0) -- (0.5,0);
\draw[zhyedge2S] (0,0) -- (0.5,0);
\end{pgfonlayer}
\end{tikzpicture}
}}
\Big\rangle(\omega_0)
\\
&+
{\color{red} \bE^1}\Bigg[ 
\Big\langle \bX_s, 
\vcenter{\hbox{
\begin{tikzpicture}
\node[vertexS] at (0,0) {}; 
\node[vertexS] at (0,0.5) {}; 
\node[vertexS] at (0,1) {}; 
\draw[edgeS] (0,0) -- (0,0.5);
\draw[edgeS] (0,0.5) -- (0,1);
\begin{pgfonlayer}{background}
\draw[zhyedge1S] (0,0) -- (0,0.5);
\draw[zhyedge2S] (0,0) -- (-0,0.5);
\draw[hyedge1S, color=red] (0,1) -- (0,1);
\draw[hyedge2S] (0,1) -- (0,1);
\end{pgfonlayer}
\end{tikzpicture}
}}
+
\vcenter{\hbox{
\begin{tikzpicture}
\node[vertexS] at (0,0) {}; 
\node[vertexS] at (0.5,0) {}; 
\node[vertexS] at (0.5,0.5) {}; 
\draw[edgeS] (0.5,0) -- (0.5,0.5);
\begin{pgfonlayer}{background}
\draw[zhyedge1S] (0,0) -- (0.5,0);
\draw[zhyedge2S] (0,0) -- (0.5,0);
\draw[hyedge1S, color=red] (0.5,0.5) -- (0.5,0.5);
\draw[hyedge2S] (0.5,0.5) -- (0.5,0.5);
\end{pgfonlayer}
\end{tikzpicture}
}}
\Big\rangle(\omega_0, {\color{red} \omega_1}) \Big\langle \rw_{s, t}, 
\vcenter{\hbox{
\begin{tikzpicture}
\node[vertexS] at (0,0) {}; 
\node[vertexS] at (0,0.5) {}; 
\draw[edge] (0,0) -- (0,0.5);
\begin{pgfonlayer}{background}
\draw[zhyedge1S] (0,0) -- (0,0);
\draw[zhyedge2S] (0,0) -- (0,0);
\draw[hyedge1S, color=red] (0,0.5) -- (0,0.5);
\draw[hyedge2S] (0,0.5) -- (0,0.5);
\end{pgfonlayer}
\end{tikzpicture}
}}
\Big\rangle(\omega_0, {\color{red}\omega_1}) \Bigg]
\\
&+
{\color{red}\bE^1}\Bigg[ 
\Big\langle \bX_s, 
\vcenter{\hbox{
\begin{tikzpicture}
\node[vertexS] at (0,0) {}; 
\node[vertexS] at (0.25,0.5) {}; 
\node[vertexS] at (-0.25,0.5) {}; 
\draw[edgeS] (0,0) -- (0.25,0.5);
\draw[edgeS] (0,0) -- (-0.25,0.5);
\begin{pgfonlayer}{background}
\draw[zhyedge1S] (-0.25,0.5) -- (0,0);
\draw[zhyedge2S] (-0.25,0.5) -- (0,0);
\draw[hyedge1S, color=red] (0.25,0.5) -- (0.25,0.5);
\draw[hyedge2S] (0.25,0.5) -- (0.25,0.5);
\end{pgfonlayer}
\end{tikzpicture}
}}
+
\vcenter{\hbox{
\begin{tikzpicture}
\node[vertexS] at (0,0) {}; 
\node[vertexS] at (0.5,0) {}; 
\node[vertexS] at (0.5,0.5) {}; 
\draw[edgeS] (0.5,0) -- (0.5,0.5);
\begin{pgfonlayer}{background}
\draw[zhyedge1S] (0,0) -- (0.5,0);
\draw[zhyedge2S] (0,0) -- (0.5,0);
\draw[hyedge1S, color=red] (0.5,0.5) -- (0.5,0.5);
\draw[hyedge2S] (0.5,0.5) -- (0.5,0.5);
\end{pgfonlayer}
\end{tikzpicture}
}}
+2
\vcenter{\hbox{
\begin{tikzpicture}
\node[vertexS] at (0,0) {}; 
\node[vertexS] at (0.25,0) {}; 
\node[vertexS] at (0.75,0) {}; 
\begin{pgfonlayer}{background}
\draw[zhyedge1S] (0,0) -- (0.25,0);
\draw[zhyedge2S] (0,0) -- (0.25,0);
\draw[hyedge1S, color=red] (0.75,0) -- (0.75,0);
\draw[hyedge2S] (0.75,0) -- (0.75,0);
\end{pgfonlayer}
\end{tikzpicture}
}}
\Big\rangle(\omega_0, {\color{red}\omega_1}) \Big\langle \rw_{s, t}, 
\vcenter{\hbox{
\begin{tikzpicture}
\node[vertexS] at (0,0) {}; 
\node[vertexS] at (0.5,0) {}; 
\begin{pgfonlayer}{background}
\draw[zhyedge1S] (0,0) -- (0,0);
\draw[zhyedge2S] (0,0) -- (0,0);
\draw[hyedge1S, color=red] (0.5,0) -- (0.5,0);
\draw[hyedge2S] (0.5,0) -- (0.5,0);
\end{pgfonlayer}
\end{tikzpicture}
}}
\Big\rangle(\omega_0, {\color{red}\omega_1}) \Bigg]
+
\Big\langle \bX_{s, t}^{\sharp}, 
\vcenter{\hbox{
\begin{tikzpicture}
\node[vertexS] at (0,0) {}; 
\begin{pgfonlayer}{background}
\draw[zhyedge1S] (0,0) -- (-0,0);
\draw[zhyedge2S] (0,0) -- (-0,0);
\end{pgfonlayer}
\end{tikzpicture}
}}
\Big\rangle(\omega_0), 
\end{align*}
\begin{align*}
\Big\langle \bX_{s, t}, &
\vcenter{\hbox{
\begin{tikzpicture}
\node[vertexS] at (0,0) {}; 
\begin{pgfonlayer}{background}
\draw[hyedge1S, color=red] (0,0) -- (-0,0);
\draw[hyedge2S] (0,0) -- (-0,0);
\end{pgfonlayer}
\end{tikzpicture}
}}
\Big\rangle(\omega_0, {\color{red}\omega_1}) 
=
\Big\langle \bX_s, 
\vcenter{\hbox{
\begin{tikzpicture}
\node[vertexS] at (0,0) {}; 
\node[vertexS] at (0.5,0) {}; 
\begin{pgfonlayer}{background}
\draw[zhyedge1S] (0,0) -- (0,0);
\draw[zhyedge2S] (0,0) -- (0,0);
\draw[hyedge1S, color=red] (0.5,0) -- (0.5,0);
\draw[hyedge2S] (0.5,0) -- (0.5,0);
\end{pgfonlayer}
\end{tikzpicture}
}}
\Big\rangle(\omega_0, {\color{red}\omega_1})
\Big\langle \rw_{s, t}, 
\vcenter{\hbox{
\begin{tikzpicture}
\node[vertexS] at (0,0) {}; 
\begin{pgfonlayer}{background}
\draw[zhyedge1S] (0,0) -- (-0,0);
\draw[zhyedge2S] (0,0) -- (-0,0);
\end{pgfonlayer}
\end{tikzpicture}
}}
\Big\rangle(\omega_0)
\\
&+
\Big\langle \bX_s, 
\vcenter{\hbox{
\begin{tikzpicture}
\node[vertexS] at (0,0) {}; 
\node[vertexS] at (0.5,0) {}; 
\node[vertexS] at (0.5,0.5) {}; 
\draw[edgeS] (0.5,0) -- (0.5,0.5);
\begin{pgfonlayer}{background}
\draw[zhyedge1S] (0.5,0.5) -- (0.5,0);
\draw[zhyedge2S] (0.5,0.5) -- (0.5,0);
\draw[hyedge1S, color=red] (0,0) -- (0,0);
\draw[hyedge2S] (0,0) -- (0,0);
\end{pgfonlayer}
\end{tikzpicture}
}}
\Big\rangle(\omega_0, {\color{red}\omega_1})
\Big\langle \rw_{s, t}, 
\vcenter{\hbox{
\begin{tikzpicture}
\node[vertexS] at (0,0) {}; 
\node[vertexS] at (0,0.5) {}; 
\draw[edge] (0,0) -- (0,0.5);
\begin{pgfonlayer}{background}
\draw[zhyedge1S] (0,0) -- (0,0.5);
\draw[zhyedge2S] (0,0) -- (0,0.5);
\end{pgfonlayer}
\end{tikzpicture}
}}
\Big\rangle(\omega_0)
+
\Big\langle \bX_s, 
\vcenter{\hbox{
\begin{tikzpicture}
\node[vertexS] at (0,0) {}; 
\node[vertexS] at (0.25,0) {}; 
\node[vertexS] at (0.75,0) {}; 
\begin{pgfonlayer}{background}
\draw[zhyedge1S] (0,0) -- (0.25,0);
\draw[zhyedge2S] (0,0) -- (0.25,0);
\draw[hyedge1S, color=red] (0.75,0) -- (0.75,0);
\draw[hyedge2S] (0.75,0) -- (0.75,0);
\end{pgfonlayer}
\end{tikzpicture}
}}
\Big\rangle(\omega_0, {\color{red}\omega_1})
\Big\langle \rw_{s, t},
\vcenter{\hbox{ 
\begin{tikzpicture}
\node[vertexS] at (0,0) {}; 
\node[vertexS] at (0.5,0) {}; 
\begin{pgfonlayer}{background}
\draw[zhyedge1S] (0,0) -- (0.5,0);
\draw[zhyedge2S] (0,0) -- (0.5,0);
\end{pgfonlayer}
\end{tikzpicture}
}}
\Big\rangle(\omega_0)
\\
&+
\Big\langle \bX_s, 
\vcenter{\hbox{
\begin{tikzpicture}
\node[vertexS] at (0,0) {}; 
\node[vertexS] at (0, 0.5) {}; 
\draw[edgeS] (0,0) -- (0,0.5);
\begin{pgfonlayer}{background}
\draw[hyedge1S, color=red] (0,0) -- (0,0.5);
\draw[hyedge2S] (0,0) -- (0,0.5);
\end{pgfonlayer}
\end{tikzpicture}
}}
+2
\vcenter{\hbox{
\begin{tikzpicture}
\node[vertexS] at (0,0) {}; 
\node[vertexS] at (0.5,0) {}; 
\begin{pgfonlayer}{background}
\draw[hyedge1S, color=red] (0,0) -- (0.5,0);
\draw[hyedge2S] (0,0) -- (0.5,0);
\end{pgfonlayer}
\end{tikzpicture}
}}
\Big\rangle(\omega_0, {\color{red}\omega_1})
\Big\langle \rw_{s, t}, 
\vcenter{\hbox{
\begin{tikzpicture}
\node[vertexS] at (0,0) {}; 
\begin{pgfonlayer}{background}
\draw[hyedge1S, color=red] (0,0) -- (-0,0);
\draw[hyedge2S] (0,0) -- (-0,0);
\end{pgfonlayer}
\end{tikzpicture}
}}
\Big\rangle(\omega_0, {\color{red}\omega_1})
\\
&+
{\color{blue}\bE^2}\Bigg[ \Big\langle \bX_s, 
\vcenter{\hbox{
\begin{tikzpicture}
\node[vertexS] at (0,0) {}; 
\node[vertexS] at (0, 0.5) {}; 
\draw[edgeS] (0,0) -- (0,0.5);
\begin{pgfonlayer}{background}
\draw[hyedge1S, color=red] (0,0) -- (0,0);
\draw[hyedge2S] (0,0) -- (0,0);
\draw[hyedge1S, color=blue] (0,0.5) -- (0,0.5);
\draw[hyedge2S] (0,0.5) -- (0,0.5);
\end{pgfonlayer}
\end{tikzpicture}
}}
+
\vcenter{\hbox{
\begin{tikzpicture}
\node[vertexS] at (0,0) {}; 
\node[vertexS] at (0.5,0) {}; 
\begin{pgfonlayer}{background}
\draw[hyedge1S, color=red] (0,0) -- (0,0);
\draw[hyedge2S] (0,0) -- (0,0);
\draw[hyedge1S, color=blue] (0.5,0) -- (0.5,0);
\draw[hyedge2S] (0.5,0) -- (0.5,0);
\end{pgfonlayer}
\end{tikzpicture}
}}
\Big\rangle(\omega_0, {\color{red}\omega_1}, {\color{blue}\omega_2})
\Big\langle \rw_{s, t}, 
\vcenter{\hbox{
\begin{tikzpicture}
\node[vertexS] at (0,0) {}; 
\begin{pgfonlayer}{background}
\draw[hyedge1S, color=blue] (0,0) -- (-0,0);
\draw[hyedge2S] (0,0) -- (-0,0);
\end{pgfonlayer}
\end{tikzpicture}
}}
\Big\rangle(\omega_0, {\color{blue}\omega_2}) \Bigg]
+
\Big\langle \bX_{s, t}^{\sharp}, 
\vcenter{\hbox{
\begin{tikzpicture}
\node[vertexS] at (0,0) {}; 
\begin{pgfonlayer}{background}
\draw[hyedge1S, color=red] (0,0) -- (-0,0);
\draw[hyedge2S] (0,0) -- (-0,0);
\end{pgfonlayer}
\end{tikzpicture}
}}
\Big\rangle(\omega_0, {\color{red}\omega_1}), 
\end{align*}
\begin{align*}
\Big\langle \bX_{s, t}, &
\vcenter{\hbox{
\begin{tikzpicture}
\node[vertexS] at (0,0) {}; 
\node[vertexS] at (0,0.5) {}; 
\draw[edge] (0,0) -- (0,0.5);
\begin{pgfonlayer}{background}
\draw[zhyedge1S] (0,0) -- (0,0.5);
\draw[zhyedge2S] (0,0) -- (0,0.5);
\end{pgfonlayer}
\end{tikzpicture}
}}
\Big\rangle(\omega_0) 
=
\Big\langle \bX_s, 
\vcenter{\hbox{
\begin{tikzpicture}
\node[vertexS] at (0,0) {}; 
\node[vertexS] at (0,0.5) {}; 
\node[vertexS] at (0,1) {}; 
\draw[edgeS] (0,0) -- (0,0.5);
\draw[edgeS] (0,0.5) -- (0,1);
\begin{pgfonlayer}{background}
\draw[zhyedge1S] (0,0) -- (0,1);
\draw[zhyedge2S] (0,0) -- (-0,1);
\end{pgfonlayer}
\end{tikzpicture}
}}
+
\vcenter{\hbox{  
\begin{tikzpicture}
\node[vertexS] at (0,0) {}; 
\node[vertexS] at (0.5,0) {}; 
\node[vertexS] at (0.5,0.5) {}; 
\draw[edgeS] (0.5,0) -- (0.5,0.5);
\begin{pgfonlayer}{background}
\draw[zhyedge1S] (0,0) -- (0.5,0) -- (0.5,0.5);
\draw[zhyedge2S] (0,0) -- (0.5,0) -- (0.5,0.5);
\end{pgfonlayer}
\end{tikzpicture}
}}
+2
\vcenter{\hbox{
\begin{tikzpicture}
\node[vertexS] at (0,0) {}; 
\node[vertexS] at (0.25,0.5) {}; 
\node[vertexS] at (-0.25,0.5) {}; 
\draw[edgeS] (0,0) -- (0.25,0.5);
\draw[edgeS] (0,0) -- (-0.25,0.5);
\begin{pgfonlayer}{background}
\draw[zhyedge1S] (-0.25,0.5) -- (0,0) -- (0.25,0.5);
\draw[zhyedge2S] (-0.25,0.5) -- (0,0) -- (0.25,0.5);
\end{pgfonlayer}
\end{tikzpicture}
}}
\Big\rangle(\omega_0) \Big\langle \rw_{s, t}, 
\vcenter{\hbox{
\begin{tikzpicture}
\node[vertexS] at (0,0) {}; 
\begin{pgfonlayer}{background}
\draw[zhyedge1S] (0,0) -- (-0,0);
\draw[zhyedge2S] (0,0) -- (-0,0);
\end{pgfonlayer}
\end{tikzpicture}
}}
\Big\rangle(\omega_0)
\\
&+
{\color{red}\bE^1}\Bigg[ \Big\langle \bX_s, 
\vcenter{\hbox{
\begin{tikzpicture}
\node[vertexS] at (0,0) {}; 
\node[vertexS] at (0,0.5) {}; 
\node[vertexS] at (0,1) {}; 
\draw[edgeS] (0,0) -- (0,0.5);
\draw[edgeS] (0,0.5) -- (0,1);
\begin{pgfonlayer}{background}
\draw[zhyedge1S] (0,0) -- (0,0.5);
\draw[zhyedge2S] (0,0) -- (-0,0.5);
\draw[hyedge1S, color=red] (0,1) -- (0,1);
\draw[hyedge2S] (0,1) -- (0,1);
\end{pgfonlayer}
\end{tikzpicture}
}}
+
\vcenter{\hbox{
\begin{tikzpicture}
\node[vertexS] at (0,0) {}; 
\node[vertexS] at (0.25,0.5) {}; 
\node[vertexS] at (-0.25,0.5) {}; 
\draw[edgeS] (0,0) -- (0.25,0.5);
\draw[edgeS] (0,0) -- (-0.25,0.5);
\begin{pgfonlayer}{background}
\draw[zhyedge1S] (-0.25,0.5) -- (0,0);
\draw[zhyedge2S] (-0.25,0.5) -- (0,0);
\draw[hyedge1S, color=red] (0.25,0.5) -- (0.25,0.5);
\draw[hyedge2S] (0.25,0.5) -- (0.25,0.5);
\end{pgfonlayer}
\end{tikzpicture}
}}
+
\vcenter{\hbox{
\begin{tikzpicture}
\node[vertexS] at (0,0) {}; 
\node[vertexS] at (0.5,0) {}; 
\node[vertexS] at (0.5,0.5) {}; 
\draw[edgeS] (0.5,0) -- (0.5,0.5);
\begin{pgfonlayer}{background}
\draw[zhyedge1S] (0.5,0.5) -- (0.5,0);
\draw[zhyedge2S] (0.5,0.5) -- (0.5,0);
\draw[hyedge1S, color=red] (0,0) -- (0,0);
\draw[hyedge2S] (0,0) -- (0,0);
\end{pgfonlayer}
\end{tikzpicture}
}}
\Big\rangle(\omega_0, {\color{red}\omega_1}) 
\Big\langle \rw_{s, t}, 
\vcenter{\hbox{
\begin{tikzpicture}
\node[vertexS] at (0,0) {}; 
\begin{pgfonlayer}{background}
\draw[hyedge1S, color=red] (0,0) -- (-0,0);
\draw[hyedge2S] (0,0) -- (-0,0);
\end{pgfonlayer}
\end{tikzpicture}
}}
\Big\rangle(\omega_0, {\color{red}\omega_1})\Bigg]
+
\Big\langle \bX_{s, t}^{\sharp}, 
\vcenter{\hbox{
\begin{tikzpicture}
\node[vertexS] at (0,0) {}; 
\node[vertexS] at (0,0.5) {}; 
\draw[edge] (0,0) -- (0,0.5);
\begin{pgfonlayer}{background}
\draw[zhyedge1S] (0,0) -- (0,0.5);
\draw[zhyedge2S] (0,0) -- (0,0.5);
\end{pgfonlayer}
\end{tikzpicture}
}}
\Big\rangle(\omega_0), 
\end{align*}
\begin{align*}
\Big\langle \bX_{s, t}, &
\vcenter{\hbox{
\begin{tikzpicture}
\node[vertexS] at (0,0) {}; 
\node[vertexS] at (0.5,0) {}; 
\begin{pgfonlayer}{background}
\draw[zhyedge1S] (0,0) -- (0.5,0);
\draw[zhyedge2S] (0,0) -- (0.5,0);
\end{pgfonlayer}
\end{tikzpicture}
}}
\Big\rangle(\omega_0) 
=
\Big\langle \bX_s, 
\vcenter{\hbox{
\begin{tikzpicture}
\node[vertexS] at (0,0) {}; 
\node[vertexS] at (0.5,0) {}; 
\node[vertexS] at (0.5,0.5) {}; 
\draw[edgeS] (0.5,0) -- (0.5,0.5);
\begin{pgfonlayer}{background}
\draw[zhyedge1S] (0,0) -- (0.5,0) -- (0.5,0.5);
\draw[zhyedge2S] (0,0) -- (0.5,0) -- (0.5,0.5);
\end{pgfonlayer}
\end{tikzpicture}
}}
+3
\vcenter{\hbox{
\begin{tikzpicture}
\node[vertexS] at (0,0) {}; 
\node[vertexS] at (0.25,0) {}; 
\node[vertexS] at (0.5,0) {}; 
\begin{pgfonlayer}{background}
\draw[zhyedge1S] (0,0) -- (0.5,0);
\draw[zhyedge2S] (0,0) -- (0.5,0);
\end{pgfonlayer}
\end{tikzpicture}
}}
\Big\rangle(\omega_0) \Big\langle \rw_{s, t}, 
\vcenter{\hbox{
\begin{tikzpicture}
\node[vertexS] at (0,0) {}; 
\begin{pgfonlayer}{background}
\draw[zhyedge1S] (0,0) -- (-0,0);
\draw[zhyedge2S] (0,0) -- (-0,0);
\end{pgfonlayer}
\end{tikzpicture}
}}
\Big\rangle(\omega_0)
\\
&+
{\color{red}\bE^1}\Bigg[ \Big\langle \bX_s, 
\vcenter{\hbox{
\begin{tikzpicture}
\node[vertexS] at (0,0) {}; 
\node[vertexS] at (0.5,0) {}; 
\node[vertexS] at (0.5,0.5) {}; 
\draw[edgeS] (0.5,0) -- (0.5,0.5);
\begin{pgfonlayer}{background}
\draw[zhyedge1S] (0,0) -- (0.5,0);
\draw[zhyedge2S] (0,0) -- (0.5,0);
\draw[hyedge1S, color=red] (0.5,0.5) -- (0.5,0.5);
\draw[hyedge2S] (0.5,0.5) -- (0.5,0.5);
\end{pgfonlayer}
\end{tikzpicture}
}}
+
\vcenter{\hbox{
\begin{tikzpicture}
\node[vertexS] at (0,0) {}; 
\node[vertexS] at (0.25,0) {}; 
\node[vertexS] at (0.75,0) {}; 
\begin{pgfonlayer}{background}
\draw[zhyedge1S] (0,0) -- (0.25,0);
\draw[zhyedge2S] (0,0) -- (0.25,0);
\draw[hyedge1S, color=red] (0.75,0) -- (0.75,0);
\draw[hyedge2S] (0.75,0) -- (0.75,0);
\end{pgfonlayer}
\end{tikzpicture}
}}
\Big\rangle(\omega_0, {\color{red}\omega_1}) 
\Big\langle \rw_{s, t}, 
\vcenter{\hbox{
\begin{tikzpicture}
\node[vertexS] at (0,0) {}; 
\begin{pgfonlayer}{background}
\draw[hyedge1S, color=red] (0,0) -- (-0,0);
\draw[hyedge2S] (0,0) -- (-0,0);
\end{pgfonlayer}
\end{tikzpicture}
}}
\Big\rangle(\omega_0, {\color{red}\omega_1})\Bigg]
+
\Big\langle \bX_{s, t}^{\sharp}, 
\vcenter{\hbox{
\begin{tikzpicture}
\node[vertexS] at (0,0) {}; 
\node[vertexS] at (0.5,0) {}; 
\begin{pgfonlayer}{background}
\draw[zhyedge1S] (0,0) -- (0.5,0);
\draw[zhyedge2S] (0,0) -- (0.5,0);
\end{pgfonlayer}
\end{tikzpicture}
}}
\Big\rangle(\omega_0), 
\end{align*}
\begin{align*}
\Big\langle \bX_{s, t}, &
\vcenter{\hbox{
\begin{tikzpicture}
\node[vertexS] at (0,0) {}; 
\node[vertexS] at (0,0.5) {}; 
\draw[edge] (0,0) -- (0,0.5);
\begin{pgfonlayer}{background}
\draw[zhyedge1S] (0,0) -- (0,0);
\draw[zhyedge2S] (0,0) -- (0,0);
\draw[hyedge1S, color=red] (0,0.5) -- (0,0.5);
\draw[hyedge2S] (0,0.5) -- (0,0.5);
\end{pgfonlayer}
\end{tikzpicture}
}}
\Big\rangle(\omega_0, {\color{red}\omega_1}) 
=
\Big\langle \bX_s, 
\vcenter{\hbox{
\begin{tikzpicture}
\node[vertexS] at (0,0) {}; 
\node[vertexS] at (0.25,0.5) {}; 
\node[vertexS] at (-0.25,0.5) {}; 
\draw[edgeS] (0,0) -- (0.25,0.5);
\draw[edgeS] (0,0) -- (-0.25,0.5);
\begin{pgfonlayer}{background}
\draw[zhyedge1S] (-0.25,0.5) -- (0,0);
\draw[zhyedge2S] (-0.25,0.5) -- (0,0);
\draw[hyedge1S, color=red] (0.25,0.5) -- (0.25,0.5);
\draw[hyedge2S] (0.25,0.5) -- (0.25,0.5);
\end{pgfonlayer}
\end{tikzpicture}
}}
+
\vcenter{\hbox{
\begin{tikzpicture}
\node[vertexS] at (0,0) {}; 
\node[vertexS] at (0.5,0) {}; 
\node[vertexS] at (0.5,0.5) {}; 
\draw[edgeS] (0.5,0) -- (0.5,0.5);
\begin{pgfonlayer}{background}
\draw[zhyedge1S] (0,0) -- (0.5,0);
\draw[zhyedge2S] (0,0) -- (0.5,0);
\draw[hyedge1S, color=red] (0.5,0.5) -- (0.5,0.5);
\draw[hyedge2S] (0.5,0.5) -- (0.5,0.5);
\end{pgfonlayer}
\end{tikzpicture}
}}
\Big\rangle(\omega_0, {\color{red}\omega_1}) 
\Big\langle \rw_{s,t}, 
\vcenter{\hbox{
\begin{tikzpicture}
\node[vertexS] at (0,0) {}; 
\begin{pgfonlayer}{background}
\draw[zhyedge1S] (0,0) -- (-0,0);
\draw[zhyedge2S] (0,0) -- (-0,0);
\end{pgfonlayer}
\end{tikzpicture}
}}
\Big\rangle(\omega_0)
+
\Big\langle \bX_{s, t}^{\sharp},
\vcenter{\hbox{ 
\begin{tikzpicture}
\node[vertexS] at (0,0) {}; 
\node[vertexS] at (0,0.5) {}; 
\draw[edge] (0,0) -- (0,0.5);
\begin{pgfonlayer}{background}
\draw[zhyedge1S] (0,0) -- (0,0);
\draw[zhyedge2S] (0,0) -- (0,0);
\draw[hyedge1S, color=red] (0,0.5) -- (0,0.5);
\draw[hyedge2S] (0,0.5) -- (0,0.5);
\end{pgfonlayer}
\end{tikzpicture}
}}
\Big\rangle(\omega_0, {\color{red}\omega_1}), 
\end{align*}
\begin{align*}
\Big\langle \bX_{s, t}, &
\vcenter{\hbox{
\begin{tikzpicture}
\node[vertexS] at (0,0) {}; 
\node[vertexS] at (0.5,0) {}; 
\begin{pgfonlayer}{background}
\draw[zhyedge1S] (0,0) -- (0,0);
\draw[zhyedge2S] (0,0) -- (0,0);
\draw[hyedge1S, color=red] (0.5,0) -- (0.5,0);
\draw[hyedge2S] (0.5,0) -- (0.5,0);
\end{pgfonlayer}
\end{tikzpicture}
}}
\Big\rangle(\omega_0, {\color{red}\omega_1}) 
\\
&=
\Big\langle \bX_s, 
\vcenter{\hbox{
\begin{tikzpicture}
\node[vertexS] at (0,0) {}; 
\node[vertexS] at (0.5,0) {}; 
\node[vertexS] at (0.5,0.5) {}; 
\draw[edgeS] (0.5,0) -- (0.5,0.5);
\begin{pgfonlayer}{background}
\draw[zhyedge1S] (0.5,0.5) -- (0.5,0);
\draw[zhyedge2S] (0.5,0.5) -- (0.5,0);
\draw[hyedge1S, color=red] (0,0) -- (0,0);
\draw[hyedge2S] (0,0) -- (0,0);
\end{pgfonlayer}
\end{tikzpicture}
}}
+2
\vcenter{\hbox{
\begin{tikzpicture}
\node[vertexS] at (0,0) {}; 
\node[vertexS] at (0.25,0) {}; 
\node[vertexS] at (0.75,0) {}; 
\begin{pgfonlayer}{background}
\draw[zhyedge1S] (0,0) -- (0.25,0);
\draw[zhyedge2S] (0,0) -- (0.25,0);
\draw[hyedge1S, color=red] (0.75,0) -- (0.75,0);
\draw[hyedge2S] (0.75,0) -- (0.75,0);
\end{pgfonlayer}
\end{tikzpicture}
}}
\Big\rangle(\omega_0, {\color{red}\omega_1}) 
\Big\langle \rw_{s, t}, 
\vcenter{\hbox{
\begin{tikzpicture}
\node[vertexS] at (0,0) {}; 
\begin{pgfonlayer}{background}
\draw[zhyedge1S] (0,0) -- (-0,0);
\draw[zhyedge2S] (0,0) -- (-0,0);
\end{pgfonlayer}
\end{tikzpicture}
}}
\Big\rangle(\omega_0)
+
\Big\langle \bX_{s, t}^{\sharp}, 
\vcenter{\hbox{
\begin{tikzpicture}
\node[vertexS] at (0,0) {}; 
\node[vertexS] at (0.5,0) {}; 
\begin{pgfonlayer}{background}
\draw[zhyedge1S] (0,0) -- (0,0);
\draw[zhyedge2S] (0,0) -- (0,0);
\draw[hyedge1S, color=red] (0.5,0) -- (0.5,0);
\draw[hyedge2S] (0.5,0) -- (0.5,0);
\end{pgfonlayer}
\end{tikzpicture}
}}
\Big\rangle(\omega_0, {\color{red}\omega_1}) 
\end{align*}
and for all Lions trees in $\scF^{\gamma-, \alpha, \beta}$ not listed above, 
$$
\Big\langle \bX_{s, t}, T\Big\rangle(\omega_0, \omega_{H^T}) 
= 
\Big\langle \bX_{s, t}^{\sharp}, T\Big\rangle(\omega_0, \omega_{H^T}).  
$$
for the simple reason that in this case we cannot find a forest $T \in 
\scF^{\gamma-, \alpha, \beta}$ together with a non-trivial cut such that $Y$ is the root of $T$ under this cut. This claim is obvious when $Y$ contains three nodes since $\scF^{\gamma-, \alpha, \beta}$ does not contain any forest with (strictly) more than three nodes. When $Y$ is one of the forests with two nodes that is non-listed in the above enumeration, this should be checked case by case taking into account the colouring of the hyperedges of $Y$. For instance, 
$
\vcenter{\hbox{
\begin{tikzpicture}
\node[vertexS] at (0,0) {}; 
\node[vertexS] at (0.5,0) {}; 
\begin{pgfonlayer}{background}
\draw[hyedge1S, color=red] (0,0) -- (0.5,0);
\draw[hyedge2S] (0,0) -- (0.5,0);
\end{pgfonlayer}
\end{tikzpicture}
}}
$
cannot be regarded as the root of an element of $\scF^{\gamma-, \alpha, \beta}$, whilst 
$
\vcenter{\hbox{
\begin{tikzpicture}
\node[vertexS] at (0,0) {}; 
\node[vertexS] at (0.5,0) {}; 
\begin{pgfonlayer}{background}
\draw[hyedge1S] (0,0) -- (0.5,0);
\draw[hyedge2S] (0,0) -- (0.5,0);
\end{pgfonlayer}
\end{tikzpicture}
}}
$ is the root of an element of $\scF^{\gamma-, \alpha, \beta}$.

Further, we have that $\bP$-almost surely:
\begin{align*}
\sup_{s, t\in[0,1]} \frac{\Big\langle \bX_{s, t}^{\sharp}, \rId \Big\rangle(\omega_0)}{|t-s|^{\gamma}} < \infty, 
\quad&\quad
\\
\sup_{s, t\in[0,1]} \frac{\Big\langle \bX_{s, t}^{\sharp}, 
\vcenter{\hbox{
\begin{tikzpicture}
\node[vertexS] at (0,0) {}; 
\begin{pgfonlayer}{background}
\draw[zhyedge1S] (0,0) -- (-0,0);
\draw[zhyedge2S] (0,0) -- (-0,0);
\end{pgfonlayer}
\end{tikzpicture}
}}
\Big\rangle(\omega_0)}
{|t-s|^{\gamma-\alpha}} < \infty, 
\quad&\quad
\sup_{s, t\in[0,1]} \frac{ {\color{red}\bE^1}\bigg[ \Big| \Big\langle \bX_{s, t}^{\sharp}, 
\vcenter{\hbox{
\begin{tikzpicture}
\node[vertexS] at (0,0) {}; 
\begin{pgfonlayer}{background}
\draw[hyedge1S, color=red] (0,0) -- (-0,0);
\draw[hyedge2S] (0,0) -- (-0,0);
\end{pgfonlayer}
\end{tikzpicture}
}}
\Big\rangle(\omega_0, {\color{red}\omega_1})\Big| \bigg]}
{|t-s|^{\gamma - \beta}} < \infty, 
\\
\sup_{s, t\in[0,1]} \frac{
\Big\langle \bX_{s, t}^{\sharp}, 
\vcenter{\hbox{
\begin{tikzpicture}
\node[vertexS] at (0,0) {}; 
\node[vertexS] at (0,0.5) {}; 
\draw[edge] (0,0) -- (0,0.5);
\begin{pgfonlayer}{background}
\draw[zhyedge1S] (0,0) -- (0,0.5);
\draw[zhyedge2S] (0,0) -- (0,0.5);
\end{pgfonlayer}
\end{tikzpicture}
}}
\Big\rangle(\omega_0)}
{|t-s|^{\gamma - 2\alpha}} < \infty, 
\quad&\quad
\sup_{s, t\in[0,1]} \frac{
\Big\langle \bX_{s, t}^{\sharp}, 
\vcenter{\hbox{
\begin{tikzpicture}
\node[vertexS] at (0,0) {}; 
\node[vertexS] at (0.5,0) {}; 
\begin{pgfonlayer}{background}
\draw[zhyedge1S] (0,0) -- (0.5,0);
\draw[zhyedge2S] (0,0) -- (0.5,0);
\end{pgfonlayer}
\end{tikzpicture}
}}
\Big\rangle(\omega_0)}
{|t-s|^{\gamma - 2\alpha}} < \infty, 
\\
\sup_{s, t\in[0,1]} \frac{ {\color{red}\bE^1}\bigg[ \Big|
\Big\langle \bX_{s, t}^{\sharp}, 
\vcenter{\hbox{
\begin{tikzpicture}
\node[vertexS] at (0,0) {}; 
\node[vertexS] at (0,0.5) {}; 
\draw[edge] (0,0) -- (0,0.5);
\begin{pgfonlayer}{background}
\draw[zhyedge1S] (0,0) -- (0,0);
\draw[zhyedge2S] (0,0) -- (0,0);
\draw[hyedge1S, color=red] (0,0.5) -- (0,0.5);
\draw[hyedge2S] (0,0.5) -- (0,0.5);
\end{pgfonlayer}
\end{tikzpicture}
}}
\Big\rangle(\omega_0, {\color{red}\omega_1})
\Big| \bigg]}
{|t-s|^{\gamma - \alpha - \beta}} < \infty, 
\quad&\quad
\sup_{s, t\in[0,1]} \frac{ {\color{red}\bE^1}\bigg[ \Big|
\Big\langle \bX_{s, t}^{\sharp}, 
\vcenter{\hbox{
\begin{tikzpicture}
\node[vertexS] at (0,0) {}; 
\node[vertexS] at (0.5,0) {}; 
\begin{pgfonlayer}{background}
\draw[zhyedge1S] (0,0) -- (0,0);
\draw[zhyedge2S] (0,0) -- (0,0);
\draw[hyedge1S, color=red] (0.5,0) -- (0.5,0);
\draw[hyedge2S] (0.5,0) -- (0.5,0);
\end{pgfonlayer}
\end{tikzpicture}
}}
\Big\rangle(\omega_0, {\color{red}\omega_1}) 
\Big| \bigg]}
{|t-s|^{\gamma - \alpha - \beta}} < \infty, 
\end{align*}
and for all other Lions trees $Y \in \scF^{\gamma-, \alpha, \beta}$ not listed above, 
$$
\sup_{s, t\in[0,1]} \frac{\bE^{H^Y}\bigg[ \Big| \Big\langle \bX^{\sharp}_{s, t}, Y \Big\rangle(\omega_0, \omega_{H^Y}) \Big| \bigg]}{|t-s|^{\gamma - \scG_{\alpha, \beta}[Y]}} <\infty. 
$$
For these forests, we note that $0< \gamma - \scG_{\alpha, \beta}[Y]\leq \alpha$ so that there are no Lions forests $T, \Upsilon\in \scF^{\gamma-, \alpha, \beta}$ such that $c'(T, \Upsilon, Y)>0$. 
\end{example}

For $Y\in\scF_0^{\gamma, \alpha, \beta}$, we denote
\begin{equation}
\label{eq:Jet_Operator}
\Big\langle \fJ\big[ \bX \big]_{s, t}, Y \Big\rangle(\omega_0, \omega_{H^{Y}}) = \Big\langle \bX_{s, t}, Y \Big\rangle(\omega_0, \omega_{H^{Y}}) - \Big\langle \bX_{s, t}^{\sharp}, Y \Big\rangle(\omega_0, \omega_{H^{Y}}). 
\end{equation}
In broad terms, the operation $\fJ$ is a concise way for denoting the jet of a random controlled rough path, which has differing regularity from the remainder term $\langle \bX_{s, t}^{\sharp}, Y\rangle$. 

\subsection{Motivation: Mean-field elementary differentials}
\label{subsec:Int_MF_ElemDiff}

In this Subsection, we wish to briefly consider how one could provide a local expansion of Butcher type for the olution of the McKean-Vlasov equation
\begin{equation}
\label{eq:McKean-Increment}
X_{s, t}(\omega_0) = \int_s^t f\big( X_r(\omega_0), \cL_r^X \big) \otimes dW_r(\omega_0). 
\end{equation}
where (for this Subsection alone) $X$ and $W$ are some smooth path valued random variables and $f$ is a smooth function on $\bR^d \times \cP_2(\bR^d)$. By Taylor expanding the function $f$ by means of Theorem 
\ref{theorem:LionsTaylor2}, we hope to motivate the choice of structure for random controlled rough paths as stated in Definition \ref{definition:RandomControlledRP} above. 

We first fix $\alpha,\beta >0$ as in Definition \ref{definition:RandomControlledRP}. For simplicity, we assume $\alpha=\beta$ and we define $\gamma$ accordingly, i.e., $\gamma = \alpha \big\lfloor \tfrac{1}{\alpha} \big\rfloor$. Indeed, for an arbitrary tree $T$, $\scG_{\alpha,\beta}[T] = |N^T|\alpha$ and $\scG_{\alpha,\beta}[T] > 1 - \alpha \Leftrightarrow \vert N^T \vert > \tfrac{1}{\alpha}-1 \Leftrightarrow |N^T| \geq \tfrac{1}{\alpha}$. Thus $\gamma$ is indeed equal to $\alpha \big\lfloor \tfrac{1}{\alpha} \big\rfloor$ and $T \in \scF^{\gamma-,\alpha,\alpha}$ if and only if $|N^T| \leq n-1$, with $n:=\big\lfloor \tfrac{1}{\alpha} \big\rfloor$. Intuitively, $\gamma$ prescribes the order that one wants to reach in the local expansion of the increments of $X$, with the following rule: the smaller $\alpha$, the larger the order of the expansion. We then suppose that the field $f$ belongs to $C_b^{n, (n)}$. By applying Theorem \ref{theorem:LionsTaylor2}, we get, for fixed $s,t$ such that $0<s<t$,
\begin{align}
\nonumber
\eqref{eq:McKean-Increment} =& f\Big( X_s(\omega_0), \cL_s^X \Big) \otimes W_{s, t} + \int_s^t \Big[ f\big( X_r(\omega_0), \cL_r^X \big) - f\big( X_s(\omega_0), \cL_s^X \big) \Big] \otimes dW_r(\omega_0) 
\\
\nonumber
=& f\Big( X_s(\omega_0), \cL_s^X \Big) \otimes W_{s, t}(\omega_0) 
+ \sum_{i=1}^n \sum_{a\in \A{i}} \int_{s}^{t} \frac{\rD^a f\big( X_s(\omega_0), \cL_s^X \big)\big[ X_{s, r}(\omega_0), \Pi_{s, r}^X \big] }{i!} \otimes dW_r
\\
\label{eq:TaylorMotivation}
& + \int_s^t R_n^{X_{s, r}(\omega_0), \Pi_{s, r}^X} \otimes dW_r,
\end{align}
where $\Pi_{s,r}^X$ is here the (joint) law of $(X_s,X_r)$, which is here `the' natural coupling between $\cL(X_s)$ and $\cL(X_r)$. The exact expression for the integrand in the time-integral on the second line is then given by \eqref{eq:rDa}. 

Next, we substitute the whole right hand side of Equation \eqref{eq:TaylorMotivation} for each increment of $X_{s, r}(\omega_0)$ within this formula. This provides
\begin{align}
\nonumber
\eqref{eq:McKean-Increment} =& f\Big( X_s(\omega_0), \cL_s^X \Big) \otimes W_{s, t}(\omega_0) 
\\
\nonumber
&+ \sum_{i=1}^n \sum_{a\in \A{i}} \frac{1}{i!} \bE^{1, ..., m[a]}\Bigg[ \bigg\langle \partial_a f\Big( X_s(\omega_0), \cL_s^X, X_s(\omega_1), ..., X_s(\omega_{m[a]})\Big), \bigotimes_{j=1}^{i} f\Big(X_s(\omega_{a_j}), \cL_s^X\Big) \bigg\rangle
\\
&
\label{eq:ElemDiff_motivation}
\qquad \cdot \int_{s}^{t}\Big( \bigotimes_{j=1}^{i} W_{s,r}(\omega_{a_j}) \Big) \otimes dW_r(\omega_0) \Bigg] + \Big\{ Remainder\Big\}, 
\end{align}
where \textit{Remainder} is a remainder term which is intuitively not important at this stage of the discussion and whose exact contribution is addressed next. For the time being, we want to explain first how the above expansion fits the framework used in Definition \ref{definition:RandomControlledRP}. In particular, we now spend some time reformulating the second term in the right-hand side by means of Lions' trees. 

In order to so, we start with the following observation.
By Lemma \ref{Lemma:Bijection-Partition}, for any partition sequence $a\in \A{i}$, we can consider the pre-image of the set $\{ 1, ..., i\}$ by $a$ and then associate a partition of the integers $\{1, ..., i\}$ which we write $H^a$ along with a (possibly empty) tagged partition element $h_0:=\big\{ j\in \{1, ..., i\}: a_j=0\big\}$. Let us consider the Lions tree $T^a=(N, E, h_0^a, H^a, L)$ where
\begin{align*}
N:=&\Big\{ 0, 1, ..., i \Big\}, 
\quad
E:=\Big\{ (1, 0), ... (i, 0) \Big\},
\quad
L:N \to \{1, ..., d\}, 
\\
h_0:=&\{0\}\cup \Big\{ j\in \{1, ..., i\}: a_j = 0\Big\}, 
\quad 
H:= \Big\{ \big\{ j\in\{1, ..., i\}: a_j=k\big\}: k=1, ..., m[a]\Big\}. 
\end{align*}
It is worth observing that $H$ and $H^a$ can be identified since the hyperedge $\{ j : a_j =k \}$ is nothing but the collection of nodes that are labelled by a pre-image of $k$ by $a$.

Let $\rw$ be the probabilistic rough path associated the lift of the smooth path-valued random variable $W$ (see \cite{salkeld2021Probabilistic} for the construction). Then we have that
$$
\int_s^t \prod_{j=1}^i \big\langle W_{s, r}(\omega_{a_j}), e_{L[j]} \big\rangle \cdot d \big\langle W_r, e_{L[0]}\big\rangle = \Big\langle \rw_{s, t}, T \Big\rangle(\omega_0, \omega_1, ..., \omega_{m[a]}). 
$$
which gives another expression of the stochastic integrals in \eqref{eq:ElemDiff_motivation}, based on Lions' trees. What is more, we can index the $\omega$'s in the above formula by the hyperedges of the tree itself. Indeed, there is an obvious bijection between $\{1,...,m[a]\}$ and the collection $H^a$ of non-0 hyperedges. By relabelling the probability spaces, the above can be re-expressed of the form
\begin{equation}
\label{eq:tree:stochastic:integral:elementary}
\int_s^t \prod_{j=1}^i \big\langle W_{s, r}(\omega_{h_{a_j}}), e_{L[j]} \big\rangle \cdot d \big\langle W_r, e_{L[0]}\big\rangle = \Big\langle \rw_{s, t}, T^{a} \Big\rangle(\omega_0, \omega_{H^a}),
\end{equation}
where $h_1,...,h_{m[a]}$ is an enumeration of the hyperedges of $T^a$, i.e., of the elements of $H^a$. This formulation is very advantageous as it permits to track how two random variables of the above type are correlated when they are labelled by two different forests. In short, the statistical correlations are then exhaustively described by means of the couplings between the two forests.

Similarly, we want to associate to the Lions tree the random variable
\begin{equation}
\label{eq:ElementaryDiff_intro}
\frac{1}{i!} \bigg\langle \partial_a f\Big( X_s(\omega_0), \cL_s^X, X_s(\omega_1), ..., X_s(\omega_{m[a]})\Big), \bigotimes_{j=1}^{i} f\Big(X_s(\omega_{a_j}), \cL_s^X\Big) \bigg\rangle. 
\end{equation}
which also appears in \eqref{eq:ElemDiff_motivation}. This however requires a preliminary discussion about the term \textit{Remainder} in the expansion \eqref{eq:ElemDiff_motivation}. Basically, the remainder has been obtained by replacing $[X_r-X_s](\omega_0)$ by $f(X_s(\omega_0),\cL_s^X) \otimes W_{s,r}$, which is indeed licit up to a rest of order $(r-s)^2$ (using the smoothness of $W$). Although it prompted us to introduce the rough path $\rw$ and then to derive the identity \eqref{eq:tree:stochastic:integral:elementary}, this approach has the severe drawback to lead to a global remainder (namely \textit{Remainder}) of a low order. Indeed, when taking the index $i$ in the summand in \eqref{eq:ElemDiff_motivation} as being equal to 1 (or 2, 3...), we may get a contribution to \textit{Remainder} that is in fact of  a lower order than some of the terms indexed by higher values of $i$. The strategy to improve the value of the remainder is in fact well known. For ordinary differential equations, it relies on the elementary differentials associated to the Connes-Kreimer-Hopf algebra. 

When adapted to our setting, it may be implemented as follows. The very main idea is to replace the random variable \eqref{eq:ElementaryDiff_intro} by a random variable of the more general form 
$$
\tilde{\Psi}^f \Big[ T^a \Big] \Big(X_s(\omega_{0}), \cL_s^X \Big)\Big( X_s(\omega)_{H^{a}} \Big)
$$
and then to postulate an expansion of the type
\begin{align}
\nonumber
&\eqref{eq:McKean-Increment} 
\\
\label{eq:RCRP_SortOf}
&= 
\sum_{T \in \scT_{0, n}} \bE^{H^T} \bigg[ \tilde{\Psi}^f \Big[ T \Big] \Big(X_s(\omega_{0}), \cL_s^X \Big)\Big( X_s(\omega)_{H^T} \Big) \cdot \Big\langle \rw_{s, t}, T \Big\rangle\bigl(\omega_0, \omega_{H^T}\bigr) \bigg]
+
O \Big( |t-s|^{n+1}\Big),
\end{align}
with a remainder whose contribution is now clearly identified. The core idea of elementary differentials is that they can be defined inductively, by iterating on the grades of the forests. Notice that in the above right-hand side, we used the notation $X_s(\omega)_{H^T}$ to denote $(X_s(\omega_h))_{h \in H^T}$. 

By abstracting the technique used to obtain \eqref{eq:ElemDiff_motivation}, we come to the following definition:
\begin{definition}
\label{definition:MeanField-ElementaryDiffer-Primary}
Let $f: \bR^e \times \cP_2(\bR^e) \to \lin(\bR^d, \bR^e)$ be a vector field such that $f \in C^{ n, (n)}\big( \bR^e \times \cP_2(\bR^e)\big)$. Let
\begin{align*}
\scT_{0, n}:=& \Big\{ T \in \scT: h_0^T \neq \emptyset, \quad |N^T|\leq n \Big\}
\\
A_\ast^{n}:=& \bigcup_{i=1}^n \A{i}
\end{align*}

Let $\tilde{\Psi}^f: \scT_{0, n} \to C\Big( \bR^e \times \cP_2(\bR^e), \lip\Big( (\bR^e)^{\times \bN_0} , \lin\Big( \bigoplus_{j=0}^\infty  (\bR^d)^{\otimes j}, \bR^e\Big)\Big)\Big)$ so that
$$
\tilde{\Psi}^f\Big[ (N, E, h_0, H, L) \Big] (x, \mu) \in \lip\Big( (\bR^e)^{\times |H|}, \lin\Big( (\bR^d)^{\otimes |N|}, \bR^e\Big) \Big)
$$
and be defined inductively $\forall (x, \mu) \in \bR^e \times \cP_2(\bR^e)$ and $\forall T \in \scT_{0,n}$ by 
\begin{align*}
&\tilde{\Psi}^f \Big[ \big\lfloor \rId \big\rfloor \Big] \Big( x_{h_0}, \mu \Big) = f\Big( x_{h_0}, \mu \Big)
\\
&\tilde{\Psi}^f \Big[ T \Big] \Big(x_{h_0^{T}}, \mu\Big)\Big( x_{H^T} \Big) 
\\
&=
\sum_{a \in A_\ast^{n}} \sum_{\substack{T_1, ..., T_{|a|} \\ \in \scT_0}} \frac1{|a|!} \mathbbm{1}_{\Big\{ T= \big\lfloor \cE^a[T_1, ..., T_{|a|}]\big\rfloor \Big\}}
\cdot 
\partial_a f\Big( x_{h_0^{T}}, \mu, x_{\tilde{h}_1^{T}}, ..., x_{\tilde{h}_{m[a]}^{T}} \Big) \cdot \bigotimes_{r=1}^{|a|} \tilde{\Psi}^f\Big[ T_{r} \Big]\Big( x_{\tilde{h}_{a_{r}}^{T}} , \mu\Big) ( x_{H^{T_{r}}})
\end{align*}
where $x_{H^{T}}$ denotes the tuple $(\underbrace{x_h, ...}_{h \in H^{T}})$ and the sets 
$$
\tilde{h}_{r}^{T} = \underset{\substack{p\\ a_p = r}}{\bigcup} h_0^{T_p} \neq \emptyset
$$
are defined as in Definition \ref{definition:Z-set} and $T=\cE^a[T_1, ..., T_{|a|}]$.
\end{definition}
In order to check that this definition is appropriate, it suffices to return to \eqref{eq:McKean-Increment}. It is then pretty easy to get 
\eqref{eq:RCRP_SortOf} when $n=1$. Next, we can iterate inductively on the value of $n$ in order to get \eqref{eq:RCRP_SortOf} at any order. The key point now is to replace $\Pi_{s,r}^X$ in the right-hand side by the expansion \eqref{eq:RCRP_SortOf} but at rank $n-1$ so that there is no loop in the derivation. Returning to \eqref{eq:definition:RandomControlledRP1}, this prompts us to regard $X_{s,t}$ as $\langle \bX_{s, t} \rId \rangle$ for a random controlled rough path $\bX$ whose higher order components (in 
$\scH$) are precisely given by the elementary differentials. 

Thus the next step is to compute the increment of an elementary differential and check that the resulting expansion 
is consistent with the form of the jets postulated in Definition 
\ref{definition:RandomControlledRP}. By taking a Lions-Taylor expansion and substituting Equation \eqref{eq:RCRP_SortOf}, we can first prove that for $Y=\big( \{0\}, \emptyset, \{0\}, \emptyset, L \big)$, we have
\begin{align*}
\tilde{\Psi}^f \Big[& Y \Big] \Big(X(\omega_{0}), \cL^X \Big)_{s, t} = f\Big( X(\omega_0), \cL^X\Big)_{s, t}
\\
&= \sum_{i=1}^{n-1} \sum_{a\in \A{i}} \frac{\rD^a f\big( X_s(\omega_0), \cL_s^X \big)\big[ X_{s, t}(\omega_0), \Pi_{s, r}^X \big] }{i!} + O\Big( |t-s|^{n} \Big)
\\
&= \sum_{i=1}^{n-1} \sum_{a\in \A{i}} \frac1{|a|!} \bE^{1,...,m[a]} \bigg[  \partial_a f\Big(X_s(\omega_0), \cL_s^X, X_s(\omega_1), ..., X_s(\omega_{m[a]} )\Big) \cdot \bigotimes_{j=1}^{|a|} X_{s,t}(\omega_{a_j}) \bigg]
\\
&\quad + O\Big( |t-s|^{n} \Big),
\end{align*}
where the order of the remainder here follows from the smoothness of $X$. The key point now is to replace $\Pi_{s,r}^X$ in the right-hand side by the expansion \eqref{eq:RCRP_SortOf} to get
\begin{align*}
\tilde{\Psi}^f &\Big[ Y \Big] \Big(X(\omega_{0}), \cL^X \Big)_{s, t}  
\\
&= \sum_{i=1}^{n-1} \sum_{a \in \A{i}} \sum_{\substack{T_1, ..., T_{|a|} \\ \in \scT_0}} \frac1{|a|!} \bE^{1,...,m[a]} \bigg[ \partial_a f\Big(X_s(\omega_0), 
\cL_s^X, X_s(\omega_1), ..., X_s(\omega_{m[a]}) \Big) 
\\
&\qquad  \cdot \bigotimes_{j=1}^{|a|} \tilde{\Psi}^f\Big[ T_{j} \Big]\Big( X_s(\omega_{a_j}) , \cL_s^X \Big) \Big( X_s(\omega)_{H^{T_{j}}} \Big) \cdot \bigotimes_{j=1}^{|a|} \Big\langle \rw_{s, t}, T_j \Big\rangle (\omega_{a_j}, \omega_{H^{T_j}} ) \bigg]
+ 
O\Big( |t-s|^{n} \Big),
\end{align*}

For $a$ and $T_1,...,T_{|a|}$ as in the summand above, we use the operator $\cE^a$ in \eqref{eq:definition:E^a-notation} to define $T=\cE^a[ T_1, ..., T_n]$. Then, by Definition \ref{definition:Z-set} (recalling that $T_1,...,T_{|a|}$ have non-empty 0-hyperedges),
$$
\Big\langle \rw_{s, t}, \Upsilon \Big\rangle(\omega_0, \omega_{H^\Upsilon})
=
\bigotimes_{j=1}^{|a|} \Big\langle \rw_{s, t}, T_j \Big\rangle\big(\omega_{\tilde{h}_j^{T}}, \omega_{H^{T_j}}\big),
$$
so that, by \eqref{eq:proposition:EAction} (noticing that $Z^a[T_1,...,T_n]$ is empty here) and by Definition \ref{definition:MeanField-ElementaryDiffer-Primary}, 
\begin{align*}
&\tilde{\Psi}^f \Big[ Y \Big] \Big(X(\omega_{0}), \cL^X \Big)_{s, t}  
\\
&=
\sum_{T \in \scF} \sum_{i=1}^{n-1} \sum_{a \in \A{i}}  \sum_{\substack{T_1, ..., T_{|a|} \\ \in \scT_0}} \frac1{|a|!} {\mathbf 1}_{\bigl\{ T=\cE^a[T_1,...,T_n]\bigr\}} \bE^{1,...,m[a]} \bigg[ \partial_a f\Big( X_s(\omega_0), \cL_s^X, X_s(\omega_1), ..., X_s(\omega_{m[a]}) \Big) 
\\
&\qquad \cdot \bigotimes_{j=1}^{|a|} \tilde{\Psi}^f\Big[ T_{j} \Big]\Big( X_s(\omega_{a_j}), \cL_s^X \Big) \Big(  X_s(\omega)_{H^{T_{j}}} \Big)
\cdot
\Big\langle \rw_{s, t}, \Upsilon \Big\rangle(\omega_0, \omega_{H^T}) \biggr]
\\
&=\sum_{\substack{T\in\scF: \\ \lfloor T\rfloor \in \scT_{0,n}}} \bE^{H^T} \bigg[ \tilde{\Psi}^f \Big[ \lfloor T \rfloor \Big] \Big( X_s(\omega_{0}), \cL_s^X \Big) \Big( X_s(\omega)_{H^T} \Big) \cdot \Big\langle \rw_{s, t}, T \Big\rangle(\omega_0, \omega_{H^T}) \bigg] + O\Big( |t-s|^{n} \Big).
\end{align*}

We remark that for any choice of $(T, \Upsilon)$ such that $c'(T, \Upsilon, Y)>0$ (for the same choice of $Y$ as above), a non-0 hyperedge of $\Upsilon$ cannot be connected to $Y$ as otherwise it would contain the root of $Y$ and thus of $T$, which is impossible since it is a non-0 hyperedge. Therefore,  $E^{T, \Upsilon, Y}=H^\Upsilon$, $\phi^{T, \Upsilon, Y}[H^T] = H^{\Upsilon}$ and $\varphi^{T, \Upsilon, Y}[H^{\Upsilon}] = H^{\Upsilon}$. By making the appropriate substitution, we get 
\begin{align}
\nonumber
&\tilde{\Psi}^f \Big[ Y \Big] \Big(X(\omega_{0}), \cL^X \Big)_{s, t} = f\Big( X(\omega_0), \cL^X\Big)_{s, t}
\\
\nonumber
&= \sum_{\substack{T \in \scT_{0, n} \\ \Upsilon\in \scF}} c'\Big( T, \Upsilon, Y \Big) \cdot \bE^{E^{T, \Upsilon, Y}}\bigg[ \tilde{\Psi}^f \Big[ T \Big] \Big(X_s(\omega_{0}), \cL_s^X \Big)\Big( X_s(\omega)_{\phi^{T, \Upsilon, Y}[H^T]}\Big) 
\\
\label{eq:Motivation_ElemDiff_increm1}
&\qquad \cdot \Big\langle \rw_{s, t}, \Upsilon \Big\rangle(\omega_0, \omega_{\varphi^{T, \Upsilon, Y}[H^{\Upsilon}]}) \bigg] + O \Big( |t-s|^{n+1-|N^Y|} \Big), 
\end{align}
from which the similarity with Equation \eqref{eq:definition:RandomControlledRP2} can be seen. The order of the remainder should be compared with \eqref{eq:definition:RandomControlledRP4} in Definition \ref{definition:RandomControlledRP}. Here $\scG_{\alpha,\beta}=\alpha$ and in turn $\gamma - \scG_{\alpha,\beta} = \alpha (n-1)$. 

Now proceeding via induction, let us suppose that Equation \eqref{eq:Motivation_ElemDiff_increm1} holds for all $\hat{Y}\in \scT_0$ such that $|N^{\hat{Y}}|\leq k$ and let $Y\in \scT_0$ such that $|N^Y|=k+1$. Then $Y$ can be expressed of the form
$$
Y= \Big\lfloor \cE^a\big[ Y_1, ..., Y_n \big] \Big\rfloor
$$
where the inductive hypothesis applies for each $Y_i$. Thanks to Definition \ref{definition:MeanField-ElementaryDiffer-Primary}, we have that
\begin{align*}
&\tilde{\Psi}^f \Big[ Y \Big] \Big(X(\omega_{h_0^{Y}}), \cL^{X} \Big)\Big( X(\omega)_{H^Y} \Big)_{s, t}
=
\sum_{i=1}^{n-1} \sum_{a \in \A{i}} \sum_{\substack{Y_1, ..., Y_{|a|} \\ \in \scT_0}} \frac1{|a|!} \mathbbm{1}_{\Big\{ Y= \big\lfloor \cE^a[Y_1, ..., Y_{|a|}]\big\rfloor \Big\}}
\\
&\Bigg( 
\bigg\langle \partial_a f\Big( X(\omega_{h_0^{Y}}), \cL^X, ..., X(\omega_{\tilde{h}_{m[a]}^{Y}}) \Big)_{s, t}, \bigotimes_{r=1}^{|a|} \tilde{\Psi}^f\Big[ Y_{r} \Big]\Big( X_s(\omega_{\tilde{h}_{a_{r}}^{Y}}), \cL_s^X \Big) \Big( X_s(\omega)_{H^{Y_{r}}} \Big) \bigg\rangle
\\
&\quad+
\bigg\langle \partial_a f\Big( X_s(\omega_{h_0^{Y}}), \cL_s^X, ..., X_s(\omega_{\tilde{h}_{m[a]}^{Y}}) \Big), \bigg( \bigotimes_{r=1}^{|a|} \tilde{\Psi}^f\Big[ Y_{r} \Big]\Big( X(\omega_{\tilde{h}_{a_{r}}^{Y}}), \cL^X \Big) \Big( X(\omega)_{H^{Y_{r}}} \Big)\bigg)_{s, t} \bigg\rangle
\\
&\quad+
\bigg\langle \partial_a f\Big( X(\omega_{h_0^{Y}}), \cL^X, ..., X(\omega_{\tilde{h}_{m[a]}^{Y}}) \Big)_{s, t}, \bigg( \bigotimes_{r=1}^{|a|} \tilde{\Psi}^f\Big[ Y_{r} \Big]\Big( X(\omega_{\tilde{h}_{a_{r}}^{Y}}), \cL^X \Big) \Big( X(\omega)_{H^{Y_{r}}} \Big)\bigg)_{s, t} \bigg\rangle \Bigg). 
\end{align*}
At this stage, we feel better not to carry out the computations 
explicitly as this would be too lengthy. However, the strategy should be clear to the reader. By Taylor expanding the functions $\partial_a f$ and applying the inductive hypothesis, we should indeed eventually obtain that for any choice of $Y\in \scT_{0, n}$, 
\begin{align}
\nonumber
\tilde{\Psi}^f \Big[ Y \Big]& \Big(X(\omega_0), \cL^{X} \Big)\Big( X(\omega)_{H^Y} \Big)_{s, t}
\\
\nonumber
&= \sum_{\substack{T \in \scT_{0, n} \\ \Upsilon \in \scF}} c'\Big( T, \Upsilon, Y \Big) \cdot \bE^{E^{T, \Upsilon, Y}}\bigg[ \tilde{\Psi}^f \Big[ T \Big] \Big(X_s(\omega_0), \cL_s^{X} \Big)\Big( X_s(\omega)_{\phi^{T, \Upsilon, Y}[H^T]} \Big) 
\\
\label{eq:Motivation_ElemDiff_increm2}
&\qquad \cdot \Big\langle \rw_{s, t}, \Upsilon \Big\rangle(\omega_0, \omega_{\varphi^{T, \Upsilon, Y}[H^{\Upsilon}]}) \bigg] 
+ 
O \Big( |t-s|^{n+1-|N^Y|} \Big). 
\end{align}
The technical details of this proof are unimportant at this point in the proceedings, but can easily be reproduced from results that follow. We refer the reader to \cite{salkeld2021Probabilistic3} for further results relating to elementary differentials. 

\begin{remark}
The reader may wonder why, unlike the general Definition \ref{definition:RandomControlledRP}, Definition \ref{definition:MeanField-ElementaryDiffer-Primary} solely involves trees with a non-empty 0-hyperedge. In fact, this is a peculiarity of the structure inherited from the mean-field equation \eqref{eq:McKean-Increment}. Basically, the formula for the increment $X_{s,t}(\omega_0)$ relies on the outcome of the noise $W$ for the same realisation $\omega_0$ and does not involve the realisation of any independent copy of the noise. To emphasise this point, such an equation would take the form
$$
dX_{r}(\omega_0) = f\big( X_r(\omega_0), \cL_r^X \big) \otimes dW_r(\omega_0) + \hat{\bE}\Big[ f\big( X_r(\hat{\omega}), \cL_r^X \big) \otimes dW_r(\hat{\omega}) \Big]. 
$$
We propose such equations as a direction of future study. 
\end{remark}

\subsection{Operations on random controlled rough paths}
\label{subsec:Operat_RCRPs}

In this section, our goal is to explore a collection of key results that demonstrate that random controlled rough paths are the appropriate tool for studying mean-field equations. 

\subsubsection{The Reconstruction Theorem}
\label{subsubsection:ReconstructionTheorem}

The reconstruction theorem is a now celebrated result, proved in its first form in \cite{gubinelli2004controlling}*{Proposition 1} and later in a more general form  in \cite{hairer2014theory}*{Theorem 3.10}. In this next Theorem, we describe how the probabilistic structure is interwoven into this classical result. For the reader who is not aware of the notion of `reconstruction' but who is more familiar with mean-field systems driven by a standard Brownian motion, the following statement should be seen as the definition of the integral of a random controlled rough path with respect to a probabilistic rough path.

\begin{theorem}
\label{theorem:Reconstruction}
Let $\alpha, \beta>0$ and $\gamma:=\inf\{ \scG_{\alpha, \beta}[T]: T \in \scF, \scG_{\alpha, \beta}[T]>1-\alpha \}$. Let $(\ddot{\scH})^{\gamma, \alpha, \beta}$ denote the $L^0 \big(\Omega, \bP; \lin(\bR^d, \bR^e) \big)$-module
\begin{equation}
\label{eq:theorem:Reconstruction_Space}
(\ddot{\scH})^{\gamma, \alpha, \beta} = \bigoplus_{T\in \scF_0}^{\gamma, \alpha, \beta} L^0\Big( \Omega \times \Omega^{\times |H^T|}, \bP \times \bP^{\times |H^T|}; \lin\big( (\bR^d)^{\otimes |N^T|}, \lin(\bR^d, \bR^e) \big) \Big) \cdot T
\end{equation}
so that $\big( (\ddot{\scH})^{\gamma, \alpha, \beta}, \circledast, \rId, \Delta, \epsilon, \scS\big)$ is a coupled Hopf algebra. 

Let $(p_x,q)$ be a dual pair of integrability functionals and suppose that 
\begin{equation}
\label{eq:proposition:Reconstruction2}
\sup_{T\in \scF^{\gamma, \alpha, \beta}} \tfrac{1}{q[T]}
<
\sup_{T\in \scF_0^{\gamma-\alpha, \alpha, \beta}} \Big( \tfrac{1}{p_x[T]} + \tfrac{1}{q\big[ \lfloor T \rfloor \big]} \Big)
\leq 1. 
\end{equation}
We define $p_y:\scF_0^{\gamma, \alpha, \beta} \to [1, \infty)$ by
\begin{equation}
\label{eq:proposition:Reconstruction1}
\frac{1}{p_y[\rId]}:= \sup_{T\in \scF_0^{\gamma-\alpha, \alpha, \beta}} \bigg( \frac{1}{p_x[T]} + \frac{1}{q\big[ \lfloor T \rfloor \big]} \bigg), \quad \frac{1}{p_y[T]} := \frac{1}{p_y[\rId]} - \frac{1}{q[T]}.  
\end{equation}
Then $(p_y, q)$ is also a dual pair of integrability functional. 

Let $\rw$ be an $(\scH^{\gamma, \alpha, \beta}, p_x, q)$-probabilistic rough path and let $\bX\in \cD_{\rw}^{\gamma, p_x, q}\big( (\ddot{\scH})^{\gamma, \alpha, \beta} \big)$. We define
\begin{align}
\nonumber
&\int_0^1 X_r dW_r(\omega_0) 
\\
\label{eq:theorem:Reconstruction}
&= \lim_{|D_n|\to 0} \sum_{[u, v]\in D_n} \sum_{T \in \scF_0}^{\gamma-\alpha, \alpha, \beta} \bE^{H^T} \bigg[ \Big\langle \bX_u, T \Big\rangle(\omega_0, \omega_{H^T}) \cdot \Big\langle \rw_{u, v}, \lfloor T \rfloor_i \Big\rangle (\omega_0, \omega_{H^T}) \bigg]. 
\end{align}
Then the following two results hold:
\begin{enumerate}
\item The limit in Equation \eqref{eq:theorem:Reconstruction} exists and there exists a constant $C>0$ depending only on $\alpha, \beta$ such that for any $u, v\in[0,1]$, 
\begin{align}
\nonumber
\Bigg| \int_u^v& X_r dW_r(\omega_0) - \sum_{T\in \scF_0}^{\gamma-\alpha, \alpha, \beta} \bE^{H^T} \bigg[ \Big\langle \bX_u, T \Big\rangle(\omega_0, \omega_{H^T}) \cdot \Big\langle \rw_{u, v}, \lfloor T \rfloor_i \Big\rangle (\omega_0, \omega_{H^T}) \bigg] \Bigg|
\\
\nonumber
&\leq C \Bigg( \sum_{T \in \scF_0}^{\gamma-\alpha, \alpha, \beta} \Big\| \big\langle \bX^{\sharp}, T \big\rangle (\omega_{0}) \Big\|_{p[T],\gamma - \scG_{\alpha, \beta}[T]} 
\\
\label{eq:theorem:Reconstruction:Inequality}
&\qquad \qquad 
\cdot 
\Big\| \big\langle \rw, \lfloor T\rfloor \big\rangle(\omega_{0}) \Big\|_{q[ \lfloor T \rfloor_i ], \scG_{\alpha, \beta}[T]+\alpha} \Bigg) \cdot |v-u|^{\gamma + \alpha}.  
\end{align}
\item Let $\Phi$ be the map $\Phi: \cD_{\rw}^{\gamma, p_x, q}\big( (\ddot{\scH})^{\gamma, \alpha, \beta} \big) \to \cD_{\rw}^{\gamma, p_y, q} \big( \scH^{\gamma, \alpha, \beta}\big)$ defined by
\begin{equation}
\label{eq:theorem:Reconstruction:Phi}
\begin{split}
\Big\langle \Phi\big[ \bX \big]_{t}, \rId \Big\rangle(\omega_0) = \int_0^t X_r dW_r(\omega_0), 
&\quad 
\Big\langle \Phi\big[ \bX \big]_{t}, \lfloor T \rfloor \Big\rangle(\omega_0, \omega_{H^T}) = \Big\langle \bX_{t}, T\Big\rangle(\omega_0, \omega_{H^T}), 
\\
\Big\langle \Phi\big[ \bX\big]_t, T_1 \circledast T_2 \Big\rangle&(\omega_0, \omega_{H^{T_1}}, \omega_{H^{T_2}}) =0
\end{split}
\end{equation}
Then $\Phi$ is a continuous map between Banach spaces and satisfied that for some constant $C=C(\alpha, \beta)>0$ dependent only on $\alpha$ and $\beta$,
\end{enumerate}
\begin{align}
\nonumber
\sum_{Y \in \scF_0}^{\gamma-, \alpha, \beta}& \sup_{s,t\in[u,v]} \frac{ \Big\| \big\langle \Phi[ \bX]_{s, t}^{\sharp}, Y \big\rangle(\omega_0) \Big\|_{p_y[ Y ]} }{|t-s|^{\gamma - \scG_{\alpha, \beta}[Y] }}  
\\
\nonumber
\leq& C\Bigg( \sum_{T \in \scF_0}^{\gamma-\alpha, \alpha, \beta} 
\Big\| \big\langle \bX^{\sharp}, T \big\rangle(\omega_0) \Big\|_{p_x[T],\gamma - \scG_{\alpha, \beta}[T]} 
\cdot \bigg(
\Big\| \big\langle \rw, \lfloor T \rfloor \big\rangle(\omega_0) \Big\|_{q[T],\scG_{\alpha, \beta}[T] + \alpha} + 1 \bigg) \cdot |v-u|^{\alpha}
\\
\nonumber
&+ \sum_{\substack{ T\in \scF \\ \scG_{\alpha, \beta}[T]\in [\gamma-\alpha, \gamma)}} \Big\| \big\langle \bX^{\sharp}, T\big\rangle(\omega_0) \Big\|_{p_x[T], \gamma-\scG_{\alpha, \beta}[T]}
\cdot 
\bigg( \sum_{\Upsilon\in \scF}^{\gamma-, \alpha, \beta} \Big\| \big\langle \rw, \Upsilon\big\rangle(\omega_0) \Big\|_{q[\Upsilon],\scG_{\alpha, \beta}[\Upsilon]} \bigg) \cdot |v-u|^{\alpha}
\\
\nonumber
&+ \sum_{\substack{ T\in \scF \\ \scG_{\alpha, \beta}[T]\in [\gamma-\alpha, \gamma)}} \Big\| \big\langle \bX_u, T \big\rangle(\omega_0) \Big\|_{p_x[T]} 
\cdot
\bigg( \Big\| \big\langle \rw, \lfloor T \rfloor \big\rangle(\omega_0) \Big\|_{q[\lfloor T \rfloor], \scG_{\alpha, \beta}[\lfloor T \rfloor]} 
\\
\label{eq:theorem:Reconstruction:Phi-2}
&\qquad +
\sum_{\Upsilon\in \scF}^{\gamma, \alpha, \beta} \Big\| \big\langle \rw, \Upsilon \big\rangle(\omega_0) \Big\|_{q[\Upsilon], \scG_{\alpha, \beta}[\Upsilon]} \bigg) \cdot |v-u|^{\scG_{\alpha, \beta}[T] - (\gamma-\alpha)} \Bigg). 
\end{align}
\end{theorem}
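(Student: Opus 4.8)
The plan is to recast \eqref{eq:theorem:Reconstruction} as an instance of the sewing lemma adapted to the probabilistic setting. For $0\le u<v\le 1$ set the candidate local approximation
\[
\Xi_{u,v}(\omega_0):=\sum_{T\in\scF_0}^{\gamma-\alpha,\alpha,\beta}\bE^{H^T}\Big[\langle\bX_u,T\rangle(\omega_0,\omega_{H^T})\cdot\langle\rw_{u,v},\lfloor T\rfloor\rangle(\omega_0,\omega_{H^T})\Big].
\]
The first task is to prove that $\Xi$ is \emph{almost additive}, i.e.\ that $\delta\Xi_{u,s,v}:=\Xi_{u,v}-\Xi_{u,s}-\Xi_{s,v}$ obeys $|\delta\Xi_{u,s,v}|\lesssim|v-u|^{\gamma+\alpha}$ pointwise in $\omega_0$ (and, after taking $p_x[\rId]$-moments, in $L^{p_x[\rId]}$), the exponent exceeding $1$ precisely because $\gamma>1-\alpha$ by the choice of $\gamma$. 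Granting this, the sewing lemma yields a unique additive functional $\big(\int_u^{\cdot}X_r\,dW_r\big)$ with $\big|\int_u^vX_r\,dW_r-\Xi_{u,v}\big|\lesssim|v-u|^{\gamma+\alpha}$, which is assertion (1) together with the quantitative bound \eqref{eq:theorem:Reconstruction:Inequality}; the constant there records the H\"older pairing of $\langle\bX^\sharp,T\rangle$ against $\langle\rw,\lfloor T\rfloor\rangle$ produced when estimating $\delta\Xi$.

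To compute $\delta\Xi_{u,s,v}$ I would proceed in three steps. First, apply Chen's relation \eqref{eq:definition:ProbabilisticRoughPaths1}, $\rw_{u,v}=\rw_{u,s}\ast\rw_{s,v}$, together with the coproduct identity $\Delta[\lfloor T\rfloor]=\lfloor T\rfloor\times^H\rId+(I\times^H\lfloor\cdot\rfloor)\Delta[T]$ from \eqref{eq:definition:coproduct}, to expand $\langle\rw_{u,v},\lfloor T\rfloor\rangle$ as $\langle\rw_{u,s},\lfloor T\rfloor\rangle+\langle\rw_{s,v},\lfloor T\rfloor\rangle$ plus cross terms $\langle\rw_{u,s},\Upsilon\rangle\otimes^{H}\langle\rw_{s,v},\lfloor Y'\rfloor\rangle$ indexed by cuts of $\lfloor T\rfloor$. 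Second, in the cross terms I would re-sum over $\bX_u$ using the controlled structure \eqref{eq:definition:RandomControlledRP1}--\eqref{eq:definition:RandomControlledRP2}, replacing the jet part $\fJ[\bX]$ of $\langle\bX_u,\cdot\rangle$ by the corresponding sum over prunes paired with increments of $\rw$; the Hopf-algebraic compatibility of $\lfloor\cdot\rfloor$ with the coproduct (the identity above) then forces an exact cancellation between all purely jet-valued contributions, leaving a finite sum in which at least one factor is a genuine remainder $\langle\bX^\sharp_{u,s},\cdot\rangle$ (or the sewing error already produced) paired with an increment of $\rw$ of strictly positive grade; counting grades $\scG_{\alpha,\beta}$ shows each surviving term is $O(|v-u|^{\gamma+\alpha})$. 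Third --- and this is where the theory genuinely departs from the classical argument --- every rearrangement above crosses products $\Omega^{\times m}$ of growing size, so each manipulation of nested $\bE^{H^{\cdot}}$ operators must be licensed: here I would invoke Proposition \ref{proposition:EAction} and \eqref{eq:proposition:omegaIndependence} to commute conditional expectations past characters and to identify ghost hyperedges, and the coupling functions of Definition \ref{definition:CouplingFunctions} (and \ref{definition:E-Set}) to match the hyperedges of $T$ with those of its prune and root. The integrability hypotheses \eqref{eq:proposition:Reconstruction2}, the dual-pair axioms of Definition \ref{definition:(dual)_integrab-functional}, and Lemma \ref{lemma:(dual)_integrab-functional} are exactly what make the requisite H\"older inequalities close, e.g.\ $\langle\bX^\sharp,T\rangle\in L^{p_x[T]}$ against $\langle\rw,\lfloor T\rfloor\rangle\in L^{q[\lfloor T\rfloor]}$ with $\tfrac1{p_x[T]}+\tfrac1{q[\lfloor T\rfloor]}\le\tfrac1{p_y[\rId]}\le1$.

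For assertion (2) I would first check that $\Phi[\bX]$ as defined by \eqref{eq:theorem:Reconstruction:Phi} is a bona fide element of $\cD^{\gamma,p_y,q}_{\rw}(\scH^{\gamma,\alpha,\beta})$: that $(p_y,q)$ is a dual pair follows by unwinding \eqref{eq:proposition:Reconstruction1} against Definition \ref{definition:(dual)_integrab-functional}; the grading is correct since $\scG_{\alpha,\beta}[\lfloor T\rfloor]=\scG_{\alpha,\beta}[T]+\alpha$ carries $\scF_0^{\gamma-\alpha,\alpha,\beta}$ into $\scF^{\gamma-,\alpha,\beta}$; and the defining jet expansions \eqref{eq:definition:RandomControlledRP1}--\eqref{eq:definition:RandomControlledRP2} for $\Phi[\bX]$ are read off from two facts, namely that at $\rId$ one has $\langle\Phi[\bX]_{s,t},\rId\rangle=\int_s^tX_r\,dW_r=\Xi_{s,t}+O(|t-s|^{\gamma+\alpha})$ with $\Xi_{s,t}$ already in the required jet form (the index $T$ of $\bX$ playing the role of $\lfloor T\rfloor$ for $\Phi[\bX]$, products $T_1\circledast T_2$ contributing nothing), and that at $\lfloor Y'\rfloor$ one has $\langle\Phi[\bX]_{s,t},\lfloor Y'\rfloor\rangle=\langle\bX_{s,t},Y'\rangle$, whose jet expansion is inherited from that of $\bX$ once cuts of $Y'$ are translated into cuts of $\lfloor Y'\rfloor$ via \eqref{eq:definition:coproduct}. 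Taking $p_y[\cdot]$-moments of the three sources of error --- the sewing error at level $\rId$, the remainder $\langle\bX^\sharp,T\rangle$ for $T$ of near-critical grade $\scG_{\alpha,\beta}[T]\in[\gamma-\alpha,\gamma)$, and the frozen value $\langle\bX_u,T\rangle$ for such $T$ --- produces exactly the three sums on the right of \eqref{eq:theorem:Reconstruction:Phi-2}, whence \eqref{eq:definition:RandomControlledRP4} holds for $\Phi[\bX]$. Continuity of $\Phi$ between Banach spaces (the target being complete by Theorem \ref{theorem:Banachspace}) is then immediate from linearity of $\Phi$ in $\bX$, the bound \eqref{eq:theorem:Reconstruction:Phi-2}, and the trivial estimate on $\langle\Phi[\bX]_0,\cdot\rangle$. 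The principal obstacle throughout is the combinatorial–probabilistic cancellation of the second step above: keeping the Hopf-algebraic identity for $\lfloor\cdot\rfloor$ synchronised with the coupling bookkeeping so that the jet terms cancel on the nose, leaving a remainder genuinely of order $\gamma+\alpha$, while every intermediate expression lives on the correct product probability space.
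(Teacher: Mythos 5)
Your proposal is correct and follows essentially the same route as the paper: the same local germ $\Xi$, almost-additivity via Chen's relation and the coproduct compatibility of $\lfloor\cdot\rfloor$ (the paper's Lemma \ref{lemma:CoprodCountIdent2}), which cancels the jet of $\bX_{s,t}$ exactly and leaves $\delta\Xi$ as a sum of $\langle\bX^{\sharp}_{s,t},Y\rangle$ paired with $\langle\rw_{t,u},\lfloor Y\rfloor\rangle$, followed by H\"older estimates under the dual integrability functionals and the Sewing Lemma. Your treatment of part (2) — verifying $(p_y,q)$, reading off the jet of $\Phi[\bX]$ at $\rId$ and $\lfloor Y\rfloor$, and attributing the bound \eqref{eq:theorem:Reconstruction:Phi-2} to the sewing error, the near-critical remainders, and the frozen terms $\langle\bX_u,T\rangle$ — likewise matches the paper's argument.
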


The proof of Theorem \ref{theorem:Reconstruction} is delayed until Section \ref{subsec:ProofReconstruction}. 

\begin{remark}
It is worth observing that in  the inequality \eqref{eq:theorem:Reconstruction:Inequality}, $\gamma+\alpha>1$. In particular, \eqref{eq:theorem:Reconstruction:Inequality} provides an approximation of the integral in the left-hand side in terms of a compensated Riemann sum, which is one of the key ingredient of rough path theory. In comparison with the same result but in the standard rough setting, the main novelty here lies precisely in the form of the Riemann sum, which contains extra terms. Typically, those extra terms are due to some mean-field interaction, as it is the case in \eqref{eq:McKean-Increment}. 
\end{remark}

\begin{remark}
We highlight that the difference between $(\ddot{\scH})^{\gamma, \alpha, \beta}$ defined in Equation \eqref{eq:theorem:Reconstruction_Space} and $\scH^{\gamma, \alpha, \beta}$ as described in Equation \eqref{eq:Coupled-HopfAlgebra} is solely that the associated module of measurable functions runs over a different vector space, $\lin(\bR^d, \bR^e)$ and $\bR^e$ respectively. 
\end{remark}

\subsubsection{Continuous Image of Random controlled rough paths}
\label{subsubsect:Contin-Image-RandContRP}

We consider the composition of a RCRP by a smooth functions using a Lions-Taylor expansion. The structure of random controlled rough paths has been designed specifically to naturally combine with Theorem \ref{theorem:LionsTaylor2}, see Theorem \ref{theorem:ContinIm-RCRPs} below. 

For concise notation, we are going to use the following notation:
\begin{definition}
\label{definition:B_set}
Let $\alpha, \beta>0$ and let 
$$
\gamma:=\inf \big\{ \alpha  i + \beta j: (i, j)\in \bN_0^{\times 2}, \alpha i + \beta j > 1-\alpha\big\}, 
\quad
n:=\sup \big\{ m\in \bN_0: m < \tfrac{\gamma}{\alpha\wedge \beta} \big\}.
$$
We define the sets
\begin{align*}
B^{\alpha, \beta}:=& \Big\{ (i, j) \in \bN_0^{\times 2}: 
i+j \leq n, 
\quad
\alpha i + \beta j < \gamma \Big\}, 
\\
B_\ast^{\alpha, \beta}:=& \Big\{ (i, j) \in \bN_0^{\times 2} : 
i+j \leq n, 
\quad
\alpha i + \beta j \geq \gamma \Big\}. 
\end{align*}

Further, for $(i,j)\in B^{\alpha, \beta}$, we define
\begin{align*}
B_{i, j}^{\alpha, \beta} =& \Big\{ (\tilde{\imath}, \tilde{\jmath})\in B^{\alpha, \beta}: \tilde{\imath}\geq i, \tilde{\jmath}\geq j, \tilde{\imath}+\tilde{\jmath}>i+j \Big\} \quad \mbox{and} 
\\
B_{i, j|\ast}^{\alpha, \beta} =& \Big\{ (\tilde{\imath}, \tilde{\jmath})\in B_\ast^{\alpha, \beta}: \tilde{\imath}\geq i, \tilde{\jmath}\geq j, \tilde{\imath}+\tilde{\jmath}>i+j \Big\}. 
\end{align*}
With these in mind, we denote
$$
A^{\alpha, \beta}:= \bigcup_{(i, j)\in B^{\alpha, \beta}} A_{i, j}, 
\quad
A_{\ast}^{\alpha, \beta}:= \bigcup_{(i, j)\in B_{\ast}^{\alpha, \beta}} A_{i, j}, 
$$
and for $a\in A_{i, j}$
$$
A^{\alpha, \beta| a}:= \bigcup_{(\tilde{\imath}, \tilde{\jmath})\in B_{i, j}^{\alpha, \beta}} A_{i, j}, 
\quad
A_{\ast}^{\alpha, \beta| a}:= \bigcup_{(\tilde{\imath}, \tilde{\jmath})\in B_{i, j|\ast}^{\alpha, \beta}} A_{i, j}.  
$$
\end{definition}

The set of partition sequences $A^{\alpha, \beta}$ is a concise way of collecting the necessary derivatives when considering a Taylor expansion in spacial and measure variables when the regularity of the spacial variable is of order $\alpha$ and regularity of the measure variable is of order $\beta$. In particular, this representation captures the appropriate number of derivatives in the spacial and measure variables in our setting. 

For brevity, we use the notation that for $a \in \A{n}$, $i\in\{1, ..., n\}$ and $T \in \scF$, 
\begin{equation}
\label{eq:omega_zero-zero_hyperedge}
\begin{split}
&\Big\langle \big[ \bX, \bY \big]_{s, t}, T\Big\rangle (\omega_{a_i}) = 
\begin{cases}
\Big\langle \bX_{s, t}, T\Big\rangle (\omega_{0}) 
& \quad \mbox{if} \quad a_i=0
\\
\Big\langle \bY_{s, t}, T\Big\rangle (\omega_{a_i})
& \quad \mbox{if} \quad a_i>0
\end{cases}, 
\\
&\Big\langle \big[ \bX, \bY \big]_{s, t}, T\Big\rangle (\omega_{\tilde{h}_{a_i}^{T}}) = 
\begin{cases}
\Big\langle \bX_{s, t}, T\Big\rangle (\omega_{0}) 
& \quad \mbox{if} \quad a_i=0
\\
\Big\langle \bY_{s, t}, T\Big\rangle (\omega_{\tilde{h}_{a_i}^{T}})
& \quad \mbox{if} \quad a_i>0
\end{cases},  
\end{split}
\end{equation}

\begin{theorem}
\label{theorem:ContinIm-RCRPs}
Let $\alpha, \beta>0$ and let 
$$
\gamma:=\inf \big\{ \alpha  i + \beta j: (i, j)\in \bN_0^{\times 2}, \alpha i + \beta j > 1-\alpha\big\}, 
\quad
n:=\sup \big\{ m\in \bN_0: m < \tfrac{\gamma}{\alpha\wedge \beta} \big\}.
$$
Let $(p_x, q)$ and $(p_y, q)$ be pairs of dual integrability functionals and additionally suppose that
\begin{equation}
\label{eq:theorem:ContinIm-RCRPs4}
\sup_{T\in \scF^{\gamma, \alpha, \beta}} \Big( \tfrac{1}{q[T]} \Big) < \tfrac{n+1}{p_y[\rId]} \leq 1,  
\quad
\sup_{T\in \scF^{\gamma, \alpha, \beta}} \Big( \tfrac{1}{q[T]} \Big) < \tfrac{n+1}{p_x[\rId]} \leq 1. 
\end{equation}
We define $p_z:\scF_0^{\gamma, \alpha, \beta}\to [1, \infty)$ such that
\begin{equation}
\label{eq:theorem:ContinIm-RCRPs3}
\frac{1}{p_z[\rId]}:= \frac{n+1}{ p_x[\rId]}, 
\quad
\frac{1}{p_z[T]}:= \frac{1}{p_z[\rId]} - \frac{1}{q[T]}. 
\end{equation}
Then $(p_z, q)$ are a dual pair of integrability functionals.

Let $\rw$ be an $(\scH^{\gamma, \alpha, \beta}, p_z, q)$-probabilistic rough path, and let 
$$
\bX \in \cD_{\rw}^{\gamma, p_x, q}\big( \scH^{\gamma-, \alpha, \beta} \big)
\quad \mbox{and} \quad
\bY\in \cD_{\rw}^{\gamma, p_y, q}\big( \scH^{\gamma-, \alpha, \beta} \big).
$$ 
Let $f: \bR^e \times \cP_2(\bR^e) \to \lin\big( \bR^d, \bR^e\big)$ satisfy that $f \in C_b^{n, (n)}\big( \bR^e \times \cP_2(\bR^e) \big)$. 

Recalling Equation \eqref{eq:theorem:Reconstruction_Space}, we define $\bZ:[0,1]\to (\ddot{\scH})^{\gamma-, \alpha, \beta}$ by 
\begin{equation}
\label{eq:theorem:ContinIm-RCRPs2}
\Big\langle \bZ_t, \rId \Big\rangle(\omega_0) = f\Big(  \big\langle \bX_t, \rId \big\rangle(\omega_0), \cL^{\langle \bY_t, \rId\rangle} \Big). 
\end{equation}
Then there exists a random controlled rough path $\bZ\in \cD_\rw^{\gamma, p_z, q}\big((\ddot{\scH})^{\gamma-, \alpha, \beta} \big)$ that satisfies Equation \eqref{eq:theorem:ContinIm-RCRPs2} given by
\begin{align}
\nonumber
&\Big\langle \bZ_t, Y \Big\rangle (\omega_{0}, \omega_{H^Y})
=
\sum_{a\in A^{\alpha, \beta} }  \frac{1}{|a|!} 
\sum_{\substack{Y_1, ..., Y_{|a|} \in \scF \\ Y = \cE^a[Y_1, ..., Y_{|a|}] }} 
\bE^{Z^a[Y_1, ..., Y_{|a|}]} \bigg[ 
\\
\label{eq:theorem:ContinIm-RCRPs}
&\qquad \partial_a f\Big( \big\langle \bX_{t}, \rId \big\rangle (\omega_{0}), \cL^{\langle \bY_t, \rId\rangle}, ..., \big\langle \bY_{t}, \rId \big\rangle (\omega_{\tilde{h}_{m[a]}^Y}) \Big) \cdot
\bigotimes_{r=1}^{|a|} \Big\langle [\bX, \bY]_t, Y_r \Big\rangle (\omega_{\tilde{h}_{a_r}^Y}, \omega_{H^{Y_r}}) \bigg] . 
\end{align}

Further, there exists a polynomial, $\fP:\big( \bR^+ \big)^{\times 7} \to \bR^+$ increasing in every variable such that
\begin{align}
\nonumber
\sum_{T\in \scF_0}^{\gamma-, \alpha, \beta}& \sup_{s, t\in[u,v]} \frac{ \Big\| \big\langle \bZ_{s, t}^{\sharp}, T \big\rangle(\omega_0) \Big\|_{p_z[T]} }{|t-s|^{\gamma - \scG_{\alpha, \beta}[T]}} 
\\
\nonumber
&\leq \| f \|_{C_b^{n, (n)}} \cdot 
\fP\bigg(
 \sum_{T\in\scF_0}^{\gamma-, \alpha, \beta} \Big\| \big\langle \bX^{\sharp}, T \big\rangle(\omega_0) \Big\|_{p_x[T], \gamma - \scG_{\alpha, \beta}[T]}
 , 
\sum_{T\in \scF}^{\gamma-, \alpha, \beta} \Big\| \big\langle \bX_u, T\big\rangle(\omega_0) \Big\|_{p_x[T]}, 
\\
\nonumber
&\qquad 
\sum_{T\in\scF_0}^{\gamma-, \alpha, \beta} \Big\| \big\langle \bY^{\sharp}, T \big\rangle \Big\|_{p_y[T], \gamma - \scG_{\alpha, \beta}[T]}
, 
\sum_{T\in \scF}^{\gamma-, \alpha, \beta} \Big\| \big\langle \bY_u, T\big\rangle \Big\|_{p_y[T]}, 
\\
\label{eq:theorem:ContinIm-RCRPs-Est}
&\qquad \sum_{T\in \scF}^{\gamma, \alpha, \beta} \Big\| \big\langle \rw, T\big\rangle(\omega_0) \Big\|_{q[T], \scG_{\alpha, \beta}[T]}
, 
|v-u|^\alpha
, 
|v-u|^{\beta} \bigg)
\end{align}
\end{theorem}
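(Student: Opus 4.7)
The strategy is to verify that the path $\bZ$ defined by \eqref{eq:theorem:ContinIm-RCRPs} and \eqref{eq:theorem:ContinIm-RCRPs2} is a random controlled rough path with respect to $\rw$. The integrability assumption \eqref{eq:theorem:ContinIm-RCRPs3}, together with H\"older's inequality, first ensures that each summand on the right-hand side of \eqref{eq:theorem:ContinIm-RCRPs} defines an $L^{p_z[Y]}$-valued random variable. To establish the controlled structure, I fix $Y \in \scF^{\gamma-,\alpha,\beta}$ and $s<t$, form the candidate remainder
$$
\Big\langle \bZ_{s,t}^\sharp, Y\Big\rangle(\omega_0,\omega_{H^Y}) := \Big\langle \bZ_t, Y\Big\rangle - \Big\langle \bZ_s, Y\Big\rangle - \sum_{T\in\scF}^{\gamma-,\alpha,\beta}\sum_{\Upsilon\in\scF} c'(T,\Upsilon,Y)\cdot \bE^{E^{T,\Upsilon,Y}}\Big[\langle \bZ_s,T\rangle\cdot \langle\rw_{s,t},\Upsilon\rangle\Big],
$$
and then show that $\langle \bZ_{s,t}^\sharp, Y\rangle$ has the required regularity $|t-s|^{\gamma-\scG_{\alpha,\beta}[Y]}$ in $L^{p_z[Y]}$.

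The first key step is to expand each functional $\partial_a f$ appearing in $\langle \bZ_t, Y\rangle$ using Corollary \ref{corollary:LionsTaylor2} around the base point $(\langle\bX_s,\rId\rangle(\omega_0),\cL^{\langle \bY_s,\rId\rangle})$. The free-variable increments produced by this expansion are the increments $\langle\bY_t,\rId\rangle - \langle\bY_s,\rId\rangle$ and the spatial increment $\langle\bX_t,\rId\rangle(\omega_0) - \langle\bX_s,\rId\rangle(\omega_0)$, each of which can in turn be expanded via the RCRP relation \eqref{eq:definition:RandomControlledRP1} as a jet on $\rw_{s,t}$ plus a remainder term of higher regularity. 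Simultaneously, the tensorial increments $\bigotimes_r \langle[\bX,\bY]_t,Y_r\rangle - \bigotimes_r \langle[\bX,\bY]_s,Y_r\rangle$ are expanded using \eqref{eq:definition:RandomControlledRP2} for each factor. The coupling partition implicit in the tensor product is handled by Proposition \ref{proposition:EAction}, which makes it possible to re-express iterated expectations over $\Omega^{\times m[a]}$ as single expectations over $\Omega^{\times |H^Y|}$ with appropriate use of ghost hyperedges.

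The main obstacle lies in the combinatorial reorganisation of the resulting sums to match the prescribed jet on the right-hand side. Substituting the jet expansions produces a sum indexed by extended partition sequences $\overline{a} \in A^{\alpha,\beta|a}$ and by admissible cuts of the constituent forests $Y_1,\dots,Y_{|a|}$. The claim is that this double sum is in bijection, via the coupled coproduct identities \eqref{eq:definition:coproduct} and the grafting identity for $\cE^a$, with pairs $(T,\Upsilon)$ for which $c'(T,\Upsilon,Y) > 0$: the forest $T$ is recovered by gluing the roots of the chosen cuts onto the new free-variable nodes via $\cE^{\overline{a}}$, the prune is $\Upsilon$, and the coefficient $c'(T,\Upsilon,Y)$ counts exactly the symmetric enumerations absorbed by the $1/|a|!$ in \eqref{eq:theorem:ContinIm-RCRPs}. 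The ghost hyperedges of Definition \ref{definition:Z-set} ensure that hyperedges of $\Upsilon$ not connected to $Y$ (i.e.\ belonging to $E^{T,\Upsilon,Y}$) are correctly accounted for when transferring the decoupling operator $\phi^{T,\Upsilon,Y}$ across the coupled tensor product. The Schwarz Theorem \ref{thm:Schwarz-Lions} is invoked to identify iterated Lions derivatives whose free variables have been permuted by the relabelling.

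Finally, the residual terms are of two kinds: the Taylor-remainders produced by Corollary \ref{corollary:LionsTaylor2} and the RCRP-remainders $\langle\bX^\sharp_{s,t},\cdot\rangle$, $\langle\bY^\sharp_{s,t},\cdot\rangle$ from \eqref{eq:definition:RandomControlledRP4}. The former are controlled using the bound \eqref{eq:theorem:LionsTaylor2_Rem1} together with H\"older's inequality (whose exponents are dictated by Lemma \ref{lemma:(dual)_integrab-functional} and the integrability compatibility \eqref{eq:theorem:ContinIm-RCRPs3}--\eqref{eq:theorem:ContinIm-RCRPs4}), producing contributions of order $|t-s|^{(n+1)(\alpha\wedge\beta)}$ which exceed $|t-s|^{\gamma-\scG_{\alpha,\beta}[Y]}$ by the choice of $n$. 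The latter are bounded directly by the RCRP norms of $\bX$ and $\bY$ paired against the probabilistic rough path norm of $\rw$, each pairing producing a time-factor $|t-s|^{\gamma - \scG_{\alpha,\beta}[Y] + \delta}$ for some $\delta > 0$. Summing these contributions yields a polynomial $\fP$ in the norms appearing in \eqref{eq:theorem:ContinIm-RCRPs-Est}, whose degrees are prescribed by how many factors of $\|\bX\|$, $\|\bY\|$ and $\|\rw\|$ enter through the multilinear nature of $\partial_a f$. The degree at which $\|f\|_{C_b^{n,(n)}}$ appears is linear, since each occurrence of a derivative of $f$ comes with a single factor, and the completeness of the space of RCRPs (Theorem \ref{theorem:Banachspace}) ensures that $\bZ$ is then indeed an element of $\cD_\rw^{\gamma,p_z,q}$.
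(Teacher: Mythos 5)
Your plan follows essentially the same route as the paper's proof: a Lions--Taylor expansion of $f$ and of each $\partial_a f$ (packaged in the paper as Corollary \ref{corollary:LionsTaylor3}), substitution of the RCRP jets for the increments, ghost-hyperedge and coupling bookkeeping via Proposition \ref{proposition:EAction}, Lemma \ref{lemma:ESets} and the maps of Definition \ref{definition:CouplingFunctions-Ghost} to recast everything in the form \eqref{eq:definition:RandomControlledRP2}, and remainder bounds assembled into the polynomial $\fP$. The only cosmetic difference is that the paper sorts into the remainder also the jet-like contributions indexed by $A_{\ast}^{\alpha,\beta}$ and by forests $\cE^a[T_1, ..., T_{|a|}]$ of grade at least $\gamma$, which your coarser two-type classification of residual terms subsumes, so the argument is the same.
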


The proof of Theorem \ref{theorem:ContinIm-RCRPs} is delayed until Section \ref{subsubsec:SfRCRPs,LTE}. 

\begin{remark}
\label{remark:Polynomial-comments1}
Drawing inspiration from the representations found in \cite{boedihardjo2020lipschitz}, the polynomial 
$$
\fP\Big( x_1, x_2, y_1, y_2, w, t_1, t_2 \Big) = \sum_{i\in I_{\fP}} C_i \cdot x_1^{i_1} \cdot x_2^{i_2} \cdot y_1^{i_3} \cdot y_2^{i_4} \cdot w^{i_5} \cdot t_1^{i_6} \cdot t_2^{i_7}. 
$$
in Equation \eqref{eq:theorem:ContinIm-RCRPs-Est} satisfies the following identities:
\begin{itemize}
\item $i_1, ..., i_5 \in \bN_0$
\item $i_1+i_2+i_3+i_4 \leq n+1$.In particular, this means that $i_1 + i_2 \leq n+1$ and $i_3+i_4 \leq n+1$. 
\item $i_5 \leq i_1 + i_2 + i_3 + i_4$. 
\item While $i_6, i_7\in \bZ$ (that is, may be negative), we always have that $\alpha \cdot i_6 + \beta \cdot i_7\geq 0$. 
\end{itemize}
\end{remark}

\subsection{Coupled coproduct identity}
\label{subsec:CoupledCoproductId}

The following two Lemmas are natural extensions of known results for coproducts extended to the coupled coproduct setting. We need them in the analysis that is carried out below.

\begin{lemma}
\label{lemma:CoprodCountIdent2}
Let $T, \Upsilon$ and $Y \in \scF_0$. Then we have that $\forall i\in\{1, ..., d\}$, 
\begin{equation}
\label{eq:lemma:CoprodCountIdent2}
c\Big( \lfloor T \rfloor_i, \Upsilon, \lfloor Y \rfloor_i\Big) = c\Big( T, \Upsilon, Y\Big)
\end{equation}
\end{lemma}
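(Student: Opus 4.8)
The identity to prove is purely combinatorial: it compares the counting functions obtained from the coproduct $\Delta$ when one grafts a new root onto $T$ and onto $Y$ using the \emph{same} label $i$. The natural tool is the grafting identity for the coproduct recorded in \eqref{eq:definition:coproduct}, namely
$$
\Delta\Big[ \lfloor T\rfloor_i \Big] = \lfloor T\rfloor_i \times^{H^T} \rId + \Big( I \times^{H^T} \lfloor \cdot \rfloor_i\Big) \Delta[T].
$$
First I would pair both sides of this identity with $\Upsilon \times^{H^{\lfloor T\rfloor_i}} \lfloor Y\rfloor_i$. Since $\lfloor \cdot \rfloor_i$ does not change the hyperedge structure (from Definition \ref{definition:Lionsproduct}, $\lfloor T\rfloor_i$ has the same $H$-hyperedges as $T$, only the $0$-hyperedge gains the new root $x_0$), the coupling label $H^{\lfloor T\rfloor_i}$ restricts naturally to $H^T$, and the first term $\lfloor T\rfloor_i \times^{H^T}\rId$ contributes $0$ unless $\lfloor Y\rfloor_i = \rId$, which is impossible as $\lfloor Y\rfloor_i$ always has at least one node. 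So $c(\lfloor T\rfloor_i,\Upsilon,\lfloor Y\rfloor_i)$ equals the pairing of $(I\times^{H^T}\lfloor\cdot\rfloor_i)\Delta[T]$ against $\Upsilon\times^{H^T}\lfloor Y\rfloor_i$.

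**Main step.** The key observation is that the operator $\lfloor\cdot\rfloor_i$, acting on the root component of $\Delta[T]=\sum T_c^P \times^{H^T} T_c^R$ (plus the two trivial terms), is injective on the relevant forests: $\lfloor Y_1\rfloor_i = \lfloor Y_2\rfloor_i$ forces $Y_1 = Y_2$, since one recovers $Y$ from $\lfloor Y\rfloor_i$ by deleting the unique root and its incident edges, and this deletion is the inverse of grafting. Hence
$$
\Big\langle \big(I\times^{H^T}\lfloor\cdot\rfloor_i\big)\Delta[T],\ \Upsilon\times^{H^T}\lfloor Y\rfloor_i\Big\rangle
= \Big\langle \Delta[T],\ \Upsilon\times^{H^T} Y\Big\rangle = c(T,\Upsilon,Y),
$$
where I also use that $\lfloor\cdot\rfloor_i$ preserves the coupling data so the coupled-tensor pairing is unaffected. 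Combining with the previous paragraph gives \eqref{eq:lemma:CoprodCountIdent2}. An alternative, more hands-on route would be to argue directly on admissible cuts: cuts $c$ of $\lfloor T\rfloor_i$ correspond bijectively to cuts $c'$ of $T$ (the new edges $(x_1,x_0),\dots,(x_n,x_0)$ can never lie in an admissible cut together with an edge on a path below them, but more to the point any cut of $\lfloor T\rfloor_i$ whose root contains $x_0$ restricts to a cut of $T$ with the same prune and with root $\lfloor T_{c'}^R\rfloor_i$), matching prune-to-prune and root-to-$\lfloor\cdot\rfloor_i$-of-root, which visibly preserves the coefficient. I would present whichever is shorter; the first is cleaner.

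**Expected obstacle.** The only genuinely delicate point is bookkeeping of the \emph{coupled} tensor structure: one must check that when $\lfloor\cdot\rfloor_i$ is applied to the root factor, the set of coupled partitions $H^{T_c^R}\,\tilde\cup\,(\text{hyperedges of }\Upsilon)$ appearing in the definition of $c(\cdot,\cdot,\cdot)$ is in canonical bijection with the corresponding set for $\lfloor T_c^R\rfloor_i$, so that no extra or missing couplings are introduced by the grafting. This follows because $\lfloor\cdot\rfloor_i$ adds the new node $x_0$ only to the $0$-hyperedge $h_0$, and in the pairing $\langle\Delta[\cdot],\Upsilon\times^{H^T}Y\rangle$ the coupling is recorded by $H^T$ (respectively $H^{\lfloor T\rfloor_i}$), whose non-$0$ part is literally unchanged; the $0$-hyperedge is never involved in a nontrivial coupling constraint. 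Once this is checked, the rest is a one-line consequence of \eqref{eq:definition:coproduct} and the injectivity of $\lfloor\cdot\rfloor_i$.
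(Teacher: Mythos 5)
Your proposal is correct and follows essentially the same route as the paper: both apply the grafting identity $\Delta[\lfloor T\rfloor_i]=\lfloor T\rfloor_i\times^{H^T}\rId+(I\times^{H^T}\lfloor\cdot\rfloor_i)\Delta[T]$ from \eqref{eq:definition:coproduct}, discard the trivial term since $\lfloor Y\rfloor_i\neq\rId$, and use that roots of admissible cuts of $\lfloor T\rfloor_i$ are exactly of the form $\lfloor Y\rfloor_i$ (i.e.\ injectivity of $\lfloor\cdot\rfloor_i$ and preservation of the hyperedge/coupling data) to identify the remaining pairing with $\langle\Delta[T],\Upsilon\times^{H^T}Y\rangle=c(T,\Upsilon,Y)$. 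Your extra remarks on the coupled-tensor bookkeeping and the alternative cut-by-cut bijection are consistent with, and slightly more detailed than, the paper's terse argument.
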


\begin{proof}
Firstly, $\lfloor T\rfloor_i\in \scT$ so any root from an admissible cut must be expressible as $\lfloor Y\rfloor_i$ where $Y\in \scF_0$. Thanks to Equation \eqref{eq:definition:coproduct}, we have $\Delta\big[\lfloor T \rfloor_i \big] = (I \times^{H^T} \lfloor \cdot \rfloor_i)\circ \Delta\big[ T \big] + \lfloor T \rfloor_i \times^{H^T} \rId$ so that
\begin{align*}
c\Big( \lfloor T \rfloor_i, \Upsilon, \lfloor Y \rfloor_i \Big) =& \Big\langle \Delta\Big[ \lfloor T \rfloor_i \Big], \Upsilon \times^{H^T} \lfloor Y \rfloor_i \Big\rangle
\\
=& \Big\langle \lfloor T \rfloor_i \times^{H^T} \rId + \Big( I \times^{H^T} \lfloor \cdot \rfloor_i \Big) \circ \Delta\big[ T \big], \Upsilon \times^{H^T} \lfloor Y \rfloor_i \Big\rangle
\\
=& \Big\langle \Delta\big[ T \big], \Upsilon \times^{H^T} Y \Big\rangle = c\Big( T, \Upsilon, Y\Big). 
\end{align*}
\end{proof}

\begin{lemma}
\label{lemma:CoprodCountIdent}
Let $a\in \A{n}$, suppose that $T_1, ..., T_n \in \scF_0$ such that $\cE^a[T_1, ..., T_n] \in \scF$. 

Then for $\Upsilon, Y \in \scF$, we have
\begin{equation}
\label{eq:lemma:CoprodCountIdent}
c\Big( \cE^a[T_1, ..., T_n], \Upsilon, Y\Big) = \sum_{\substack{\Upsilon_1, ..., \Upsilon_n\in\scF_0 \\\cE^a[ \Upsilon_1, ..., \Upsilon_n] = \Upsilon}} \sum_{\substack{Y_1, ..., Y_n\in\scF_0 \\\cE^a[ Y_1, ..., Y_n] = Y}} \prod_{i=1}^n c \Big( T_i, \Upsilon_i, Y_i \Big) 
\end{equation}
\end{lemma}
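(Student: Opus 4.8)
The identity is the "coupled" analogue of the classical fact that admissible cuts of $\cE^a[T_1,\dots,T_n]$ are in bijection with tuples of admissible cuts of the $T_i$'s, compatible with the $\cE^a$-grouping. The plan is to prove it by induction on $n$, feeding off the two structural relations for $\Delta$ in \eqref{eq:definition:coproduct}, namely $\Delta[T_1\circledast T_2]=\circledast^{(2)}[\Delta[T_1],\Delta[T_2]]$ and $\Delta[\cE[T]]=(\cE\tilde\times\cE)[\Delta[T]]$, together with the definition \eqref{eq:definition:E^a-notation} of $\cE^a$.

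First I would reduce to the two elementary operations. Recall from \eqref{eq:definition:E^a-notation} that $\cE^a[T_1,\dots,T_n]$ is built by first $\circledast$-multiplying together the trees in each block $\{i:a_i=j\}$, then applying $\cE$ to every block with $j\geq 1$, and finally $\circledast$-multiplying the resulting $m[a]+1$ forests. So the whole statement follows once we have: (a) a product rule, $c(T_1\circledast T_2,\Upsilon,Y)=\sum c(T_1,\Upsilon_1,Y_1)\,c(T_2,\Upsilon_2,Y_2)$ where the sum runs over $\Upsilon_1\circledast\Upsilon_2=\Upsilon$ and $Y_1\circledast Y_2=Y$ (and one must also sum over the coupling $G$ decomposing as the corresponding coupling of the factors — this is where the coupled structure enters); and (b) an $\cE$-rule, $c(\cE[T],\Upsilon,Y)=\sum_{\cE[\Upsilon']=\Upsilon,\ \cE[Y']=Y}c(T,\Upsilon',Y')$. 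Claim (a) is immediate by pairing $\Delta[T_1\circledast T_2]=\circledast^{(2)}[\Delta[T_1],\Delta[T_2]]$ against $\Upsilon\times^{H}Y$ and expanding $\circledast^{(2)}$; claim (b) is immediate from $\Delta[\cE[T]]=(\cE\tilde\times\cE)[\Delta[T]]$ since $\cE$ is injective on forests (it only moves nodes from $h_0$ into $H$, which is reversible), so that pairing against $\Upsilon\times^{H}Y$ picks out exactly the $(\Upsilon',Y')$ with $\cE[\Upsilon']=\Upsilon$, $\cE[Y']=Y$. One should note here that $\cE$-images have empty $0$-hyperedge, so only such $\Upsilon,Y$ contribute, matching the right-hand side of \eqref{eq:lemma:CoprodCountIdent}.

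Then I would assemble the induction. Write $\cE^a[T_1,\dots,T_n]=F_0\circledast \cE[G_1]\circledast\cdots\circledast\cE[G_{m[a]}]$ with $F_0=\circledast_{i:a_i=0}T_i$ and $G_j=\circledast_{i:a_i=j}T_i$. Applying (a) repeatedly to split the $\circledast$-product into its $m[a]+1$ factors, then (b) to each $\cE[G_j]$, then (a) again inside each block to split $G_j$ (and $F_0$) back into the individual $T_i$'s, yields exactly a sum over decompositions $\Upsilon_i,Y_i$ of a product $\prod_{i=1}^n c(T_i,\Upsilon_i,Y_i)$, with the constraint that the reassembled $\cE^a[\Upsilon_1,\dots,\Upsilon_n]$ equals $\Upsilon$ and likewise for $Y$. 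Matching this reconstruction constraint against the indexing on the right-hand side of \eqref{eq:lemma:CoprodCountIdent} finishes the proof; Lemma \ref{lemma:CoprodCountIdent2} (the already-proved $\lfloor\cdot\rfloor_i$ version) is not needed here but serves as a template for the pairing computations.

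\textbf{Main obstacle.} The routine algebra is in claims (a) and (b); the genuinely delicate point is bookkeeping the \emph{couplings} $G\in H^{\Upsilon}\tilde\cup H^{Y}$ correctly through $\circledast^{(2)}$ and through $\cE\tilde\times\cE$. One must check that the coupling of $\cE^a[T_1,\dots,T_n]$ induced by a cut decomposes \emph{uniquely} into couplings of the $T_i$'s — i.e. that the map "restrict the coupled partition to each factor" is a bijection onto tuples of couplings — so that no overcounting or undercounting occurs when the $\otimes^{G}$ products are expanded. This is exactly the content of Definition \ref{definition:psi-Mappings} applied blockwise, and verifying it (that $H^{\cE^a[\cdot]}$-couplings correspond bijectively to families of $H^{T_i}$-couplings, using that the hyperedges of $\cE^a[T_1,\dots,T_n]$ are the $\tilde h_j^{\cE^a[\cdots]}$ together with the $H^{T_i}$ as in \eqref{eq:definition:Z-set-hyperedges}) is where the care must be concentrated; once it is in place, the counting identity drops out of the two-step induction above.
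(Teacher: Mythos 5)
Your proposal is correct and takes essentially the same route as the paper: its proof likewise expands $\Delta\big[\cE^a[T_1,\dots,T_n]\big]$ using $\Delta[T_1\circledast T_2]=\circledast^{(2)}\big[\Delta[T_1],\Delta[T_2]\big]$ and $\Delta\big[\cE[T]\big]=(\cE\tilde{\times}\cE)\big[\Delta[T]\big]$ and then reads off the coefficient of $\Upsilon\times^{H^{\cE^a[T_1,\dots,T_n]}}Y$, only carried out in one displayed computation rather than split into your sub-claims (a) and (b). One harmless slip: $\cE$ need not be injective on forests, but injectivity is not needed for your claim (b), since the sum over preimages is exactly what $\Delta[\cE[T]]=(\cE\tilde{\times}\cE)[\Delta[T]]$ yields.
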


\begin{proof}
Let $a \in \A{n}$ and $T_i \in \scF_0$. To alleviate complicated notation, for $n\in \bN$ such that $n\geq 2$ we denote $\circledast^{(2)(n)}: ( \scF_0 \tilde{\times} \scF_0)^{\times n} \to (\scF_0 \tilde{\times} \scF_0)$ by
$$
\circledast^{(2)(n)}\Big[ \Upsilon_1 \times^{H^{T_1}} Y_1, ..., \Upsilon_n \times^{H^{T_n}} Y_n\Big] = \Big( \circledast_{i=1}^n \Upsilon_i \Big) \times^{\bigcup_{i=1}^n H^{T_i}} \Big( \circledast_{i=1}^n Y_i \Big). 
$$

Using the coproduct identities from Equation \eqref{eq:definition:coproduct}, we get
\begin{align*}
&\Delta\Big[ \cE^a[T_1, ..., T_n] \Big] = \Delta\bigg[ \Big(\underset{\substack{i: \\ a_i = 0}}{\circledast} T_i\Big) \circledast \cE \Big[\underset{\substack{i: \\ a_i = 1}}{\circledast}  T_i\Big] \circledast ... \circledast \cE\Big[\underset{\substack{i: \\ a_i = m[a]}}{\circledast} T_i\Big] \bigg]
\\
=& \circledast^{(2)(m[a]+1)}\bigg[ \circledast^{(2)(l[a]_0)}\Big[ \underbrace{\Delta[T_i], ...}_{i: a_i=0} \Big], \cE \tilde{\times} \cE \circ \circledast^{(2)(l[a]_1)}\Big[ \underbrace{\Delta[T_i], ...}_{i: a_i=1} \Big], ..., \cE \tilde{\times} \cE \circ \circledast^{(2)(l[a]_{m[a]})}\Big[ \underbrace{\Delta[T_i], ...}_{i: a_i=m[a]} \Big] \bigg]
\\
=&\sum_{\Upsilon_1, ..., \Upsilon_n \in \scF_0} \sum_{Y_1, ..., Y_n \in \scF_0} \Big( \prod_{i=1}^n c\big( T_i, \Upsilon_i, Y_i \big)\Big) \cdot \cE^a\Big[ \Upsilon_1, ..., \Upsilon_n \Big] \times^{H^{\cE^a[T_1, ..., T_n]}} \cE^a\Big[ Y_1, ..., Y_n \Big]. 
\end{align*}
Therefore
\begin{align*}
c\Big( \cE^a[T_1, ..., T_n], \Upsilon, Y\Big) =& \Big\langle \Delta\Big[ \cE^a[T_1, ..., T_n]\Big], \Upsilon \times^{H^{\cE^a[T_1, ..., T_n]}} Y \Big\rangle
\\
=&\sum_{\substack{\Upsilon_1, ..., \Upsilon_n \in \scF_0\\ \cE^a[\Upsilon_1, ..., \Upsilon_n] = \Upsilon}} \sum_{\substack{Y_1, ..., Y_n \in \scF_0 \\ \cE^a[Y_1, ..., Y_n] = Y}}  \prod_{i=1}^n c\Big( T_i, \Upsilon_i, Y_i \Big). 
\end{align*}
\end{proof}

\subsection{Duality identities}
\label{subsec:DualityId}

Classically, the increments of controlled rough paths are elements of a Hopf algebra with incremental properties determined by the coproduct counting function. In our setting, we additionally need to encode the couplings from the convolution product into our incremental properties. This is described and addressed in this section. We already gave some intuition about this feature in the introduction of Subsection \ref{subse:def:RCRP}. This is described and addressed in a more systematic and rigorous manner in this subsection.

\subsubsection{Coupling identities}

Motivated by Equation \eqref{eq:definition:psi}, we define the collection of operators for describing a coupled pair of partitions. 

\begin{definition}
\label{definition:CouplingFunctions}
Let $T, \Upsilon, Y\in\scF$ and suppose that $c'(T, \Upsilon, Y)>0$. We define $\psi^{\Upsilon, T}: H^\Upsilon \to H^T$ and $\psi^{Y, T}: H^Y \to H^T$ using the identities
$$
\psi^{\Upsilon, T}[h^\Upsilon] \cap N^{\Upsilon} = h^\Upsilon, 
\quad 
\psi^{Y, T}[h^Y] \cap N^{Y} = h^Y. 
$$
Further, we define $\phi^{T, \Upsilon,Y}: H^T \to H^Y \cup H^\Upsilon$ and $\varphi^{T, \Upsilon, Y}:H^\Upsilon \to H^{Y}\cup H^{\Upsilon}$ as follows:
\begin{align*}
\phi^{T, \Upsilon,Y} [h] =& 
\begin{cases} 
h & \quad \in H^\Upsilon \quad \mbox{if } h\cap N^Y = \emptyset, 
\\
h\cap N^Y \qquad  & \quad \in H^Y  \quad \mbox{if } h\cap N^Y \neq \emptyset. 
\end{cases}
\\
\varphi^{T, \Upsilon, Y}[h] =& 
\begin{cases}
h & \quad \in H^\Upsilon \quad \mbox{if } \psi^{\Upsilon, T}[h]\cap N^Y  = \emptyset, 
\\
\psi^{\Upsilon, T}[h]\cap N^Y & \quad \in H^Y \quad \mbox{if } \psi^{\Upsilon, T}[h]\cap N^Y  \neq \emptyset. 
\end{cases}
\end{align*}
\end{definition}

As with Definition \ref{definition:psi-Mappings}, the operator $\psi$ uses the property of the coproduct counting function that $H^T \in H^\Upsilon \tilde{\cup} H^Y$ and the natural injection for $H^\Upsilon$ and $H^Y$ into $H^T$. The operations $\phi$ and $\varphi$ invert these injections. The mappings $\psi$ are not (always) surjective, so we extend the images of $\phi$ and $\varphi$ in order to define a meaningful mapping. 

The operations $\phi$ and $\varphi$ are key to decoupling random variables that have been obtained via the convolution product (see Definition \ref{definition:DualModule} and Equation \eqref{eq:definition:DualModule_ConvolProd}) and so are necessarily coupled. 

\begin{lemma}
\label{lemma:TechnicalPhiVarphi}
Let $n\in \bN$, let $a\in \A{n}$ and let $T_1, ..., T_n, \Upsilon_1, ..., \Upsilon_n, Y_1, ..., Y_n \in \scF$. 

Suppose that for all $i=1, ..., n$, 
$$
c'\Big( T_i, \Upsilon_i, Y_i \Big)>0. 
$$
Then for all $i=1, ..., n$, 
\begin{align}
\label{eq:lemma:TechnicalPhiVarphi1}
\phi^{T_i, \Upsilon_i, Y_i} = \phi^{\cE^a[T_1, ..., T_n], \cE^a[\Upsilon_1, ..., \Upsilon_n], \cE^a[Y_1, ..., Y_n]}\Big|_{H^{T_i}}
\\
\label{eq:lemma:TechnicalPhiVarphi2}
\varphi^{T_i, \Upsilon_i, Y_i} = \varphi^{\cE^a[T_1, ..., T_n], \cE^a[\Upsilon_1, ..., \Upsilon_n], \cE^a[Y_1, ..., Y_n]}\Big|_{H^{\Upsilon_i}}
\end{align}
\end{lemma}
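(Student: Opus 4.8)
\textbf{Proof proposal for Lemma \ref{lemma:TechnicalPhiVarphi}.}

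The plan is to unwind the definitions of $\phi$ and $\varphi$ from Definition \ref{definition:CouplingFunctions} and to show that the decision ``does this hyperedge meet the nodes of $Y_i$ / of $Y=\cE^a[Y_1,\dots,Y_n]$'' gives the same answer at the level of the individual tree $T_i$ and at the level of the grafted forest. The key structural fact is that $\cE^a$, as defined in \eqref{eq:definition:E^a-notation}, is built only out of the commutative product $\circledast$ (which takes disjoint unions of node sets and of hyperedge sets) and of the operator $\cE$ (which only detags the $0$-hyperedge, leaving $N^{T_i}$ unchanged). Consequently the node set of $\cE^a[T_1,\dots,T_n]$ is the disjoint union $\bigsqcup_{i=1}^n N^{T_i}$, and likewise for $\Upsilon$ and $Y$; moreover every hyperedge of $\cE^a[T_1,\dots,T_n]$ is either a hyperedge of some $T_i$, or (when $\cE$ merges a tagged $0$-hyperedge into the $H$-collection) equals a union $\bigcup_{i:a_i=j} h_0^{T_i}$ — in all cases it is obtained by concatenating pieces coming from distinct $T_i$'s, and its intersection with a fixed $N^{T_i}$ is exactly the corresponding piece.

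First I would record, using the hypothesis $c'(T_i,\Upsilon_i,Y_i)>0$, that $h_0^{T_i}=h_0^{\Upsilon_i}\cup h_0^{Y_i}$ and $H^{T_i}\in H^{\Upsilon_i}\tilde\cup H^{Y_i}$ for each $i$ (this is the same identity already used in the proof of the Lemma preceding Proposition \ref{proposition:EAction}). Feeding this into the definition of $\cE^a$ and using that $\circledast$ and $\cE$ act nodewise, I would then check the analogous relation for the grafted forests:
$$
h_0^{\cE^a[T_1,\dots,T_n]} = h_0^{\cE^a[\Upsilon_1,\dots,\Upsilon_n]} \cup h_0^{\cE^a[Y_1,\dots,Y_n]},
\qquad
H^{\cE^a[T_1,\dots,T_n]} \in H^{\cE^a[\Upsilon_1,\dots,\Upsilon_n]} \,\tilde\cup\, H^{\cE^a[Y_1,\dots,Y_n]},
$$
so that $\psi^{\cE^a[\Upsilon_\bullet],\cE^a[T_\bullet]}$ and $\psi^{\cE^a[Y_\bullet],\cE^a[T_\bullet]}$ are well defined. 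The crucial compatibility is that for a hyperedge $h\in H^{\Upsilon_i}\subseteq H^{\cE^a[\Upsilon_1,\dots,\Upsilon_n]}$ one has $\psi^{\Upsilon_i,T_i}[h]=\psi^{\cE^a[\Upsilon_\bullet],\cE^a[T_\bullet]}[h]\cap N^{T_i}$, which follows because the hyperedge of $\cE^a[T_\bullet]$ containing $h$ restricts, on $N^{T_i}$, precisely to the hyperedge of $T_i$ containing $h$ (disjointness of the $N^{T_i}$). From this, $\psi^{\cE^a[\Upsilon_\bullet],\cE^a[T_\bullet]}[h]\cap N^{Y_i}=\psi^{\Upsilon_i,T_i}[h]\cap N^{Y_i}$, and since $N^{Y}=\bigsqcup_j N^{Y_j}$ with all $N^{Y_j}$ disjoint, $\psi^{\cE^a[\Upsilon_\bullet],\cE^a[T_\bullet]}[h]\cap N^{Y}=\psi^{\Upsilon_i,T_i}[h]\cap N^{Y_i}$. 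Plugging this into the case distinction defining $\varphi$ gives \eqref{eq:lemma:TechnicalPhiVarphi2}; the same bookkeeping with $h\in H^{T_i}$ and $h\cap N^{Y_i}$ versus $h\cap N^Y$ gives \eqref{eq:lemma:TechnicalPhiVarphi1}.

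The step I expect to be the main obstacle is the careful verification that the hyperedges produced by the $\cE$'s inside $\cE^a$ behave correctly: when $\bigcup_{i:a_i=j}h_0^{T_i}\neq\emptyset$ this union becomes a genuine hyperedge of $\cE^a[T_\bullet]$, and one must check that its intersections with $N^{Y_i}$ and with $N^{\Upsilon_i}$ still match what $\phi^{T_i,\Upsilon_i,Y_i}$ and $\varphi^{T_i,\Upsilon_i,Y_i}$ produce on the piece $h_0^{T_i}$ — here the identity $h_0^{T_i}=h_0^{\Upsilon_i}\cup h_0^{Y_i}$ is what makes it work, but one must be attentive that $h_0$-hyperedges are always treated as ``connected'' in the sense of the primer preceding Definition \ref{definition:RandomControlledRP}, so there is a small case split according to whether the relevant image lands in $H^{Y}$ or in $H^{\Upsilon}$ (equivalently, in $E^{T,\Upsilon,Y}$). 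Once these case checks are dispatched, both displayed identities follow by restricting the big coupling functions to $H^{T_i}$ and $H^{\Upsilon_i}$ respectively, which is exactly what is asserted.
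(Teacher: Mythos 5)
Your proposal is correct and follows essentially the same route as the paper: after ensuring the grafted triple is again coupled (the paper deduces $c'\big(\cE^a[T_1,\dots,T_n],\cE^a[\Upsilon_1,\dots,\Upsilon_n],\cE^a[Y_1,\dots,Y_n]\big)>0$ from $\Delta$ being an algebra homomorphism commuting with $\cE$, while you verify the node/hyperedge structure directly), both arguments reduce the case distinctions defining $\phi$ and $\varphi$ to the observation that an element of $H^{T_i}$, resp. $\psi^{\Upsilon_i,T_i}[h]$ for $h\in H^{\Upsilon_i}$, is contained in $N^{T_i}$, so its intersection with $N^{\cE^a[Y_1,\dots,Y_n]}$ coincides with its intersection with $N^{Y_i}$ by disjointness of the node sets. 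The only superfluous step is your flagged ``main obstacle'' concerning the hyperedges $\bigcup_{i:a_i=j}h_0^{T_i}$ created by $\cE$: these never belong to any $H^{T_i}$ or $H^{\Upsilon_i}$, hence lie outside the domains of the restrictions in \eqref{eq:lemma:TechnicalPhiVarphi1}--\eqref{eq:lemma:TechnicalPhiVarphi2} and need no checking here (they only become relevant in the ghost-hyperedge bookkeeping of Lemma \ref{lemma:ESets}).
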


\begin{proof}
By the hypothesis we know that for all $i=1, ..., n$, $\Big\langle \Delta[T_i], \Upsilon_i \otimes^{H^{T_i}} Y_i \Big\rangle \neq 0$. Using that $\Delta$ is an algebra homomorphism and that $\Delta$ commutes with $\cE$, we have
$$
\Big\langle \Delta\Big[ \cE^a[ T_1, ..., T_n]\Big] , \cE^a[\Upsilon_1, ..., \Upsilon_n] \otimes^{H^{\cE^a[ T_1, ..., T_n]}} \cE^a[Y_1, ..., Y_n] \Big\rangle \neq 0
$$
for any choice of $a\in \A{n}$ so that
$$
c'\Big( \cE^a[T_1, ..., T_n], \cE^a[ \Upsilon_1, ..., \Upsilon_n], \cE^a[ Y_1, ..., Y_n] \Big)>0. 
$$
Suppose for $i=1, ..., n$ that $h^{T_i} \in H^{T_i}$ and $h^{T_i} \cap N^{Y_i} = \emptyset$. Then $h^{T_i} \in H^{\cE^a[T_1, ..., T_n]}$ and since $h^{T_i} \cap N^{Y_j} = \emptyset$ for $j\neq i$ we also have $h^{T_i} \cap N^{\cE^a[Y_1, ..., Y_n]} = \emptyset$. On the other hand if $h^{T_i} \cap N^{Y_i}\neq \emptyset$ then $h^{T_i} \cap N^{Y_i} \subseteq h^{T_i} \cap \Big( \bigcup_{i=1}^n N^{Y_i}\Big) = h^{T_i} \cap N^{\cE^a[Y_1, ..., Y_n]} \neq \emptyset$. Thus
Equation \eqref{eq:lemma:TechnicalPhiVarphi1} holds. 

For $\psi^{\Upsilon, T}[h^{\Upsilon_i}] \in H^{T_i}$, we have $\psi^{\Upsilon, T}[h^{\Upsilon_i}] \cap N^{Y_j}=\emptyset$ for $j\neq i$ so that $\psi^{\Upsilon, T}[h]^{\Upsilon_i} \cap N^{Y_i}=\emptyset$ implies $\psi^{\Upsilon, T}[h]^{\Upsilon_i} \cap N^{\cE^a[Y_1, ..., Y_n]}=\emptyset$. On the other hand if $\psi^{\Upsilon, T}[h^{\Upsilon_i}] \cap N^{Y_i}\neq \emptyset$ then $\psi^{\Upsilon, T}[h^{\Upsilon_i}] \cap N^{\cE^a[Y_1, ...Y_n]} = \psi^{\Upsilon, T}[h^{\Upsilon_i}] \cap N^{Y_i} \neq \emptyset$. Thus
Equation \eqref{eq:lemma:TechnicalPhiVarphi2} holds. 
\end{proof}

\subsubsection{Untagged collections of hyperedges and ghost hyperedges}

Up until this point, when considering a Lions tree $T$, we have thought of the hyperedge $h_0$ as being tagged and distinct from all other hyperedges $h\in H$. Hence, $H$ is the set of untagged hyperedges. However, when expressing the incremental relationships of random controlled rough paths, we should think of all hyperedges as being tagged and additional hyperedges that are uncoupled with these as being untagged. 

This leads us to consider the following set:
\begin{definition}
\label{definition:E-Set}
Let $T, \Upsilon, Y \in \scF$ and suppose that $c'(T, \Upsilon, Y)>0$. Then we define the set
$$
E^{T, \Upsilon, Y} = \{ h\in H^\Upsilon: \varphi^{T, \Upsilon, Y}(h) = h\}. 
$$ 
\end{definition}
As we already alluded to in the introduction of Subsection \ref{subse:def:RCRP}, the set $E^{T, \Upsilon, Y}$ is crucial in defining random controlled rough paths (see Definition \ref{definition:RandomControlledRP} above). Heuristically, this is the set of hyperedges of $\Upsilon$ that are not coupled with any hyperedges of $Y$ with respect to the coupling $H^T$.

Following on from Definition \ref{definition:CouplingFunctions}, we also need a way of describing how ghost hyperedges are coupled. 
\begin{definition}
\label{definition:CouplingFunctions-Ghost}
Let $n\in \bN$, let $a\in \A{n}$ and let $T_1, ..., T_n, \Upsilon_1, ..., \Upsilon_n, Y_1, ...Y_n\in \scF$. For brevity, we denote $T=\cE^a[T_1, ..., T_n]$, $\Upsilon = \cE^a[\Upsilon_1, ..., \Upsilon_n]$ and $Y = \cE^a[Y_1, ..., Y_n]$ and
$$
\tilde{H}^T: = H^T \cup Z^a[T_1, ..., T_n]. 
$$
Suppose that for all $i=1, ..., n$, 
$$
c' \Big( T_i, \Upsilon_i, Y_i \Big) >0. 
$$

Then we define $\tilde{\phi}^{T, \Upsilon, Y}:\{ \tilde{h}_j^T: j=1, ..., m[a]\} \to H^{Y} \cup H^{\Upsilon} \cup Z^a[T_1, ..., T_n]$ by
$$
\tilde{\phi}^{T, \Upsilon, Y}\big[ \tilde{h}_j^{Y} \big] = 
\begin{cases} 
\tilde{h}_j^Y \quad & \quad \mbox{if}\quad \underset{\substack{k:\\ a_k=j}}{\bigcup} h_0^{Y_k}\neq \emptyset 
\\ 
\tilde{h}_j^\Upsilon \quad & \quad \mbox{if}\quad \underset{\substack{k:\\ a_k=j}}{\bigcup} h_0^{Y_k}= \emptyset, \quad \underset{\substack{k:\\ a_k=j}}{\bigcup} h_0^{\Upsilon_k} \neq \emptyset, 
\\
\tilde{h}_j^T \quad & \quad \mbox{if}\quad \underset{\substack{k:\\ a_k=j}}{\bigcup} h_0^{Y_k} = \underset{\substack{k:\\ a_k=j}}{\bigcup} h_0^{\Upsilon_k} = \emptyset 
\end{cases}. 
$$
\end{definition}

\begin{lemma}
\label{lemma:ESets}
Let $n\in \bN$, let $a\in \A{n}$ and let $T_1, ..., T_n, \Upsilon_1, ..., \Upsilon_n, Y_1, ...Y_n\in \scF$. For brevity, we denote $T=\cE^a[T_1, ..., T_n]$, $\Upsilon = \cE^a[\Upsilon_1, ..., \Upsilon_n]$ and $Y = \cE^a[Y_1, ..., Y_n]$. Suppose that for all $i=1, ..., n$, 
$$
c' \Big( T_i, \Upsilon_i, Y_i \Big) >0. 
$$
Then
\begin{equation}
\label{eq:lemma:ESets}
E^{T, \Upsilon, Y} \cup Z^a\big[ T_1, ..., T_n \big]
= 
\bigg( \bigcup_{i=1}^n E^{T_i, \Upsilon_i, Y_i} \bigg) 
\cup \Big\{ \tilde{\phi}^{T, \Upsilon, Y}\big[ \tilde{h}_j^{Y} \big] : \tilde{h}_j^{Y} \in Z^a\big[ Y_1, ..., Y_n \big] \Big\}. 
\end{equation}
where the sets $\tilde{h}_j^{\Upsilon}$ and $Z^a[\Upsilon_1, ..., \Upsilon_n]$ were defined in Definition \ref{definition:Z-set}. 
\end{lemma}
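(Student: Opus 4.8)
The plan is to unwind both sides of \eqref{eq:lemma:ESets} by tracing how the operator $\cE^a$ combines the hyperedges of the forests $T_i$, $\Upsilon_i$, $Y_i$ and then matching the description of $E^{T,\Upsilon,Y}$ from Definition \ref{definition:E-Set} with the characterisation of the ghost hyperedges from Definition \ref{definition:Z-set}. First I would record the structural consequences of the hypothesis $c'(T_i,\Upsilon_i,Y_i)>0$, namely that $h_0^{T_i}=h_0^{\Upsilon_i}\cup h_0^{Y_i}$ and $H^{T_i}\in H^{\Upsilon_i}\,\tilde{\cup}\,H^{Y_i}$, and (as in the lemma preceding Proposition \ref{proposition:EAction}) deduce that the same relation lifts through $\cE^a$, i.e. $H^{T}\in H^{\Upsilon}\,\tilde{\cup}\,H^{Y}$ and $Z^a[T_1,\dots,T_n]$ is determined by the ghost sets of the $\Upsilon_i$'s and $Y_i$'s exactly as in that lemma. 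This gives a clean bookkeeping device: every element of $\tilde H^T = H^T\cup Z^a[T_1,\dots,T_n]$ is either an ordinary hyperedge coming from some $H^{T_i}$, or a ghost hyperedge $\tilde h_j^T$ formed by gathering the (possibly empty) 0-hyperedges $h_0^{T_k}$ with $a_k=j$.

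Next I would prove the two inclusions of \eqref{eq:lemma:ESets} separately. For ``$\subseteq$'': take $h\in E^{T,\Upsilon,Y}$, so $h\in H^\Upsilon$ and $\varphi^{T,\Upsilon,Y}(h)=h$, meaning $\psi^{\Upsilon,T}[h]\cap N^Y=\emptyset$. Either $h$ is an honest hyperedge lying inside a single $N^{\Upsilon_i}$ — then by Lemma \ref{lemma:TechnicalPhiVarphi} (Equation \eqref{eq:lemma:TechnicalPhiVarphi2}) $\varphi^{T_i,\Upsilon_i,Y_i}$ restricted agrees with $\varphi^{T,\Upsilon,Y}$, so $h\in E^{T_i,\Upsilon_i,Y_i}$ — or $h$ is one of the ghost-type sets $\tilde h_j^\Upsilon$, and then the defining condition $\psi^{\Upsilon,T}[\tilde h_j^\Upsilon]\cap N^Y=\emptyset$ translates, via $h_0^{T_k}=h_0^{\Upsilon_k}\cup h_0^{Y_k}$, into $\bigcup_{k:a_k=j}h_0^{Y_k}=\emptyset$, i.e. $\tilde h_j^Y\in Z^a[Y_1,\dots,Y_n]$ and $\tilde\phi^{T,\Upsilon,Y}[\tilde h_j^Y]=\tilde h_j^\Upsilon=h$; this lands in the second set on the right of \eqref{eq:lemma:ESets}. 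Symmetrically, any $\tilde h_j^\Upsilon\in Z^a[\Upsilon_1,\dots,\Upsilon_n]\subseteq Z^a[T_1,\dots,T_n]$ is handled by the same case analysis on whether $\bigcup_{k:a_k=j}h_0^{Y_k}$ vanishes. For ``$\supseteq$'' I would run the argument in reverse: each $E^{T_i,\Upsilon_i,Y_i}$ sits inside $E^{T,\Upsilon,Y}$ again by \eqref{eq:lemma:TechnicalPhiVarphi2}, and each image $\tilde\phi^{T,\Upsilon,Y}[\tilde h_j^Y]$ with $\tilde h_j^Y\in Z^a[Y_1,\dots,Y_n]$ falls into $E^{T,\Upsilon,Y}$ or $Z^a[T_1,\dots,T_n]$ according to the three cases in Definition \ref{definition:CouplingFunctions-Ghost}, which were designed precisely to match.

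The main obstacle I anticipate is purely notational rather than conceptual: one has to be scrupulous about the distinction between a hyperedge as a subset of the node set of a small forest $\Upsilon_i$ versus its image under the natural injection $\psi^{\Upsilon_i,T_i}$ (and then $\psi^{\Upsilon,T}$) into the large forest, and about the fact that the ghost hyperedges $\tilde h_j^{\bullet}$ are \emph{not} genuine hyperedges of any hypergraph but formal unions of 0-hyperedges, so that identities like $\psi^{\Upsilon,T}[\tilde h_j^\Upsilon]=\tilde h_j^T$ have to be read in the extended sense introduced in Definition \ref{definition:Z-set}. Once the correspondence ``$\tilde h_j^\bullet \leftrightarrow \{k:a_k=j\}$'' is fixed and one checks that the trichotomy in the definition of $\tilde\phi^{T,\Upsilon,Y}$ exactly mirrors the trichotomy (0-hyperedge of $Y$-part nonempty / empty but $\Upsilon$-part nonempty / both empty) that governs membership in $H^Y$, $E^{T,\Upsilon,Y}$, $Z^a[T_1,\dots,T_n]$ respectively, the set equality falls out. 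I would close by noting that the displayed equation holds as an equality of \emph{sets} (not multisets), which is automatic since the $\tilde h_j^T$ are pairwise disjoint and the $H^{T_i}$ are disjoint across $i$, as recorded in Definition \ref{definition:Z-set}.
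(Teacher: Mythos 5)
Your proposal is correct and follows essentially the same route as the paper: establish $h_0^{T_i}=h_0^{\Upsilon_i}\cup h_0^{Y_i}$ and $H^{T_i}\in H^{\Upsilon_i}\,\tilde{\cup}\,H^{Y_i}$, lift this through $\cE^a$, use Lemma \ref{lemma:TechnicalPhiVarphi} to place ordinary hyperedges of the $\Upsilon_i$ into the $E^{T_i,\Upsilon_i,Y_i}$, and match the trichotomy (\,$\bigcup_{k:a_k=j}h_0^{Y_k}\neq\emptyset$ / empty with $\bigcup_{k:a_k=j}h_0^{\Upsilon_k}\neq\emptyset$ / both empty\,) against Definition \ref{definition:CouplingFunctions-Ghost}, exactly as in the paper's three-part computation. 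One small correction: the containment you assert, $Z^a[\Upsilon_1,\dots,\Upsilon_n]\subseteq Z^a[T_1,\dots,T_n]$, is reversed — since $h_0^{T_k}=h_0^{\Upsilon_k}\cup h_0^{Y_k}$, an index $j$ is a ghost of $T$ precisely when it is a ghost of both $\Upsilon$ and $Y$ (the unnumbered lemma after Definition \ref{definition:Z-set}) — but this does not damage your argument, because the case analysis on whether $\bigcup_{k:a_k=j}h_0^{Y_k}$ vanishes, which you invoke at that point, is what actually sorts these sets correctly onto the two sides of \eqref{eq:lemma:ESets}.
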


\begin{proof}
By the supposition we have that for all $i=1, ..., n$, $N^{T_i} = N^{\Upsilon_i} \cup N^{Y_i}$, $h_0^{T_i} = h_0^{\Upsilon_i} \cup h_0^{Y_i}$ and $H^{T_i} \in H^{\Upsilon_i} \tilde{\cup} H^{Y_i}$. Thus 
\begin{align*}
&N^{\cE^a[T_1, ..., T_n]} = N^{\cE^a[\Upsilon_1, ..., \Upsilon_n]} \cup N^{\cE^a[Y_1, ..., Y_n]}, 
\quad
h_0^{\cE^a[T_1, ..., T_n]} = h_0^{\cE^a[\Upsilon_1, ..., \Upsilon_n]} \cup h_0^{\cE^a[Y_1, ..., Y_n]}, 
\\
&H^{\cE^a[T_1, ..., T_n]} \in H^{\cE^a[\Upsilon_1, ..., \Upsilon_n]} \tilde{\cup} H^{\cE^a[Y_1, ..., Y_n]} . 
\end{align*}
Further, by construction we have
\begin{align*}
H^{\cE^a[T_1, ..., T_n]}  = \Big( \bigcup_{i=1}^n H^{T_i} \Big) \cup \bigg\{ \bigcup_{\substack{k: \\a_k = i}} h_0^{T_k}: i=1, ..., m[a] \bigg\}\backslash \{\emptyset\}, 
\\
H^{\cE^a[\Upsilon_1, ..., \Upsilon_n]}  = \Big( \bigcup_{i=1}^n H^{\Upsilon_i} \Big) \cup \bigg\{ \bigcup_{\substack{k: \\a_k = i}} h_0^{\Upsilon_k}: i=1, ..., m[a] \bigg\}\backslash \{\emptyset\}. 
\end{align*}
Firstly, let $h \in H^{\Upsilon_i}$ and suppose that $\varphi^{T_i, \Upsilon_i, Y_i}[h] = h$. Then $\exists h' \in H^{T_i}$ such that $\psi^{\Upsilon, T}[h] = h'$ and $h' \cap N^{Y_i} = \emptyset$ (so that $h$ and $h'$ can be identified). Then $h \in H^{\Upsilon}$ and $h' \in H^{T}$.  By Lemma \ref{lemma:TechnicalPhiVarphi}, we have that
$$
\varphi^{T, \Upsilon, Y}[h] = h'
\quad
\mbox{and}
\quad
h' \cap N^{Y} = h' \cap N^{Y_i} = \emptyset
$$
so that $E^{T_i, \Upsilon, Y_i} \subseteq E^{T, \Upsilon, Y}$ for each $i=1, ..., n$. Next consider the sets $\tilde{h}_j^{\Upsilon}$ for $j=1, ..., m[a]$ such that $\tilde{h}_j^{\Upsilon} \notin Z^a[\Upsilon_1, ..., \Upsilon_n]$ and $\tilde{h}_j^{Y} \in Z^a[Y_1, ..., Y_n]$. Then we have that
$$
\tilde{h}_j^\Upsilon = \bigcup_{\substack{k: \\ a_k = j}} h_0^{\Upsilon_k}
\quad
\mbox{and}
\bigcup_{\substack{k: \\ a_k = j}} h_0^{Y_k} = \emptyset 
$$
so that $\tilde{\phi}^{T, \Upsilon, Y}[ \tilde{h}_j^Y] = \tilde{h}_j^{\Upsilon}$. 

Using that $h_0^{T_k} = h_0^{\Upsilon_k} \cup h_0^{Y_k}$, we get that
$$
\bigcup_{\substack{k: \\ a_k = j}} h_0^{T_k} = \bigcup_{\substack{k: \\ a_k = j}} h_0^{\Upsilon_k},  
$$
so that
$$
\varphi^{T, \Upsilon, Y}\bigg[ \bigcup_{\substack{k: \\ a_k = j}} h_0^{\Upsilon_k} \bigg] = \bigcup_{\substack{k: \\ a_k = j}} h_0^{\Upsilon_k}. 
$$
This implies that
\begin{equation}
\label{eq:lemma:ESets-1.1}
\Big\{ \tilde{\phi}^{T, \Upsilon, Y}[ \tilde{h}_j^Y]: 
\tilde{h}_j^Y \in Z^a[Y_1, ..., Y_n], 
\quad
\tilde{h}_j^\Upsilon \notin Z^a[\Upsilon_1, ..., \Upsilon_n] \Big\} \subseteq E^{T, \Upsilon, Y}. 
\end{equation}
Finally, suppose that $\tilde{h}_j^Y \in Z^a[Y_1, ..., Y_n]$ and $\tilde{h}_j^\Upsilon \in Z^a[\Upsilon_1, ..., \Upsilon_n]$. This implies that
$$
\bigcup_{\substack{k: \\ a_k = j}} h_0^{Y_k} = \bigcup_{\substack{k: \\ a_k = j}} h_0^{\Upsilon_k} = \emptyset,  
\quad \iff \quad
\bigcup_{\substack{k: \\ a_k = j}} h_0^{T_k} = \emptyset. 
$$
In this case, $\tilde{\phi}^{T, \Upsilon, Y}[ \tilde{h}_j^Y] = \tilde{h}_j^{T}$ and
\begin{equation}
\label{eq:lemma:ESets-1.2}
\Big\{ \tilde{\phi}^{T, \Upsilon, Y}[ \tilde{h}_j^Y]: 
\tilde{h}_j^Y \in Z^a[Y_1, ..., Y_n], 
\quad
\tilde{h}_j^\Upsilon \in Z^a[\Upsilon_1, ..., \Upsilon_n] \Big\} = Z^a[T_1, ..., T_n]. 
\end{equation}

For the second implication, suppose that $h\in E^{T, \Upsilon, Y}$. By disjointedness of the elements of the partition $H^\Upsilon$, either
$$
h \in \bigg( \bigcup_{i=1}^n H^{\Upsilon_i}\bigg) 
\quad
\mbox{or}
\quad
h \in \bigg\{ \tilde{h}_j^\Upsilon: j=1, ..., m[a], \quad \tilde{h}_j^\Upsilon \notin Z^a[\Upsilon_1, ..., \Upsilon_n]\bigg\}. 
$$
Suppose that $h\in H^{\Upsilon_i}$ for some choice of $i=1, ..., n$. Then an application of Lemma \ref{lemma:TechnicalPhiVarphi} gives that
$$
h =\varphi^{T, \Upsilon, Y}[h] = \varphi^{T_i, \Upsilon_i, Y_i}[h]
\quad \implies \quad
h \in E^{T_i, \Upsilon_i, Y_i}. 
$$
Alternatively, suppose that $h = \tilde{h}_j^{\Upsilon}$ for some choice of $j$ such that $\tilde{h}_j^{\Upsilon} \notin Z^a[\Upsilon_1, ..., \Upsilon_n]$. Then
\begin{align*}
&\varphi^{T,\Upsilon, Y} \Big[ \bigcup_{\substack{k \\a_k = j}} h_0^{\Upsilon_k} \Big] = \bigcup_{\substack{k \\a_k = i}} h_0^{\Upsilon_k} 
\quad \iff \quad 
\bigg( \bigcup_{\substack{k \\a_k = j}} h_0^{T_k} \bigg) \cap N^{Y} = \emptyset, 
\\
&\iff \quad \emptyset = \bigg( \bigcup_{\substack{k \\a_k = j}} h_0^{\Upsilon_k} \cup h_0^{Y_k} \bigg) \cap \bigg( \bigcup_{i=1}^n N^{Y_i} \bigg) = \bigcup_{\substack{k \\a_k = j}} h_0^{Y_k}
\quad \iff \quad
\tilde{h}_j^Y \in Z^a[Y_1, ..., Y_n]. 
\end{align*}
Thus
\begin{equation}
\label{eq:lemma:ESets-1.3}
E^{T, \Upsilon, Y} \subseteq \bigg( \bigcup_{i=1}^n E^{T_i, \Upsilon_i, Y_i} \bigg) \cup \Big\{ \tilde{\phi}^{T, \Upsilon, Y}\big[ \tilde{h}_j^{Y} \big] : \tilde{h}_j^{Y} \in Z^a\big[ Y_1, ..., Y_n \big], 
\quad
\tilde{h}_j^{\Upsilon} \in Z^a\big[ \Upsilon_1, ..., \Upsilon_n \big] \Big\}
\end{equation}
Combining Equations \eqref{eq:lemma:ESets-1.1}, \eqref{eq:lemma:ESets-1.2} and \eqref{eq:lemma:ESets-1.3} yields Equation \eqref{eq:lemma:ESets}. 
\end{proof}

\subsection{The space of random controlled rough paths}
\label{subsec:RCRPs}

The positive functional introduced in Equations  \eqref{eq:definition:RandomControlledRP3} integrate over the free variables associated to the hyperedges $H^T$ before taking the supremum. Hence, when taking limits these free variables should only lead to convergence in mean type results. By contrast, one takes supremums over $s, t\in[0,1]$ before taking expectations on the tagged free variable $\omega_0$ in Equation \eqref{eq:definition:RandomControlledRP3} so that this is very much an expectation over a graded norm on pathspace. Thus Equation \eqref{eq:definition:RandomControlledRP3} describes an almost sure convergence in the tagged variable. 

The theory of rough paths is a pathwise theory, so statements in this work will often be in an almost sure setting. However, we additionally need to establish integrability of the associated distributions so that convergence in mean will also be established. This should usually only be a verification task since
\begin{align*}
\bE^0\Big[ \big\| \bX \big\|_{\rw, \gamma, p, q, 0}(\omega_0)^{p[\rId]} \Big]^{\tfrac{1}{p[\rId]}}
\lesssim& 
\sum_{T\in \scF_0}^{\gamma-, \alpha, \beta} \bE^0\bigg[ \bE^{H^T}\bigg[ \Big| \big\langle \bX_0, T\big\rangle(\omega_0, \omega_{H^T}) \Big|^{p[T]} \bigg] \bigg]^{\tfrac{1}{p[T]}}
\\
&+\sum_{T\in\scF_0}^{\gamma-, \alpha, \beta} \bE^0\Bigg[ \sup_{s, t\in[0,1]} \frac{ \bE^{H^T}\bigg[ \Big| \big\langle \bX_{s,t}^{\sharp}, T \big\rangle (\omega_{0}, \omega_{H^T}) \Big|^{p[T]} \bigg]}{|t-s|^{p[T](\gamma - \scG_{\alpha, \beta}[T])}} \Bigg]^{\tfrac{1}{p[T]}} < \infty
\end{align*}
since $p[\rId]\leq p[T]$ (due to Equation \eqref{eq:IntegrabilityP1}). 

In order to compress notation, we will often write
\begin{align*}
\Big\| \big\langle \bX_t, T\big\rangle(\omega_0) \Big\|_p := \bE^{H^T}\bigg[ \Big| \big\langle \bX_t, T\big\rangle(\omega_0, \omega_{H^T}) \Big|^p \bigg]^{\tfrac{1}{p}},
\quad
&\Big\| \big\langle \bX_t, T\big\rangle \Big\|_p := \bE^0\bigg[ \bE^{H^T}\bigg[ \Big| \big\langle \bX_t, T\big\rangle(\omega_0, \omega_{H^T}) \Big|^p \bigg] \bigg]^{\tfrac{1}{p}}, 
\\
\Big\| \big\langle \bX, T \big\rangle(\omega_0) \Big\|_{p, \alpha}:= \sup_{s, t\in[0,1]} \frac{ \Big\| \big\langle \bX_{s, t}, T \big\rangle(\omega_0) \Big\|_{p} }{|t-s|^{\alpha} }, 
\quad
&\Big\| \big\langle \bX, T \big\rangle \Big\|_{p, \alpha}:= \bE^0\bigg[ \Big\| \big\langle \bX, T \big\rangle(\omega_0) \Big\|_{p, \alpha}^p \bigg]^{\tfrac{1}{p}} . 
\end{align*}

\begin{remark}
\label{remark:(dual)_integrab-functional}
In Definition \ref{definition:RandomControlledRP}, we see that a dual pair of integrability functionals $(p, q)$ are key to describing the integrability of the components of a random controlled rough path. 

Following on from Lemma \ref{lemma:(dual)_integrab-functional} and applying the H\"older inequality gives that
\begin{align*}
\Big\| \big\langle \bX_{s, t},\rId\big\rangle(\omega_0) \Big\|_{p[\rId]}
\leq&
\sum_{T\in\scF}^{\gamma-, \alpha, \beta} \Big\| \big\langle \bX_s, T \big\rangle(\omega_0) \Big\|_{p[T]} \cdot \Big\| \big\langle \rw_{s, t}, T \big\rangle(\omega_0) \Big\|_{q[T]}
\\
&+ \Big\| \big\langle \bX_{s, t}^{\sharp}, \rId \big\rangle(\omega_0) \Big\|_{p[\rId]}, 
\\
\Big\| \big\langle \bX_{s, t}, Y \big\rangle(\omega_0) \Big\|_{p[Y]}
\leq&
\sum_{T\in\scF}^{\gamma-, \alpha, \beta} \sum_{\Upsilon\in \scF} c'\big(T, \Upsilon, Y\big) \cdot \Big\| \big\langle \bX_s, T \big\rangle(\omega_0) \Big\|_{p[T]} \cdot \Big\| \big\langle \rw_{s, t}, \Upsilon \big\rangle(\omega_0) \Big\|_{q[\Upsilon]}
\\
&+ \Big\| \big\langle \bX_{s, t}^{\sharp}, Y \big\rangle(\omega_0) \Big\|_{p[Y]}. 
\end{align*}
These H\"older type estimates are baked into every estimate that we use in this paper. 
\end{remark}

Although it looks quite standard, the next statement is in fact a key ingredient for further investigations on the solvability of mean-field equations driven by rough paths.

\begin{theorem}
\label{theorem:Banachspace}
Let $\alpha, \beta>0$ and $\gamma:=\inf\{ \scG_{\alpha, \beta}[T]: T \in \scF, \scG_{\alpha, \beta}[T]>1-\alpha \}$. Let $(p,q)$ be a dual pair of integrability functionals and let $\rw$ be an $(\scH^{\gamma, \alpha, \beta}, p, q)$-probabilistic rough path. 

Let $\| \cdot \|_{\rw, \gamma, p, q}: \cD_{\rw}^{\gamma, p, q} \to \bR^+$ be defined by
\begin{equation}
\label{eq:theorem:Banachspace}
\big\| \bX \big\|_{\rw, \gamma, p, q} = \bE^0\Big[ \big\| \bX \big\|_{\rw, \gamma, p, q, 0}(\omega_0)^{p[\rId]} \Big]^{\tfrac{1}{p[\rId]}} . 
\end{equation}
Then $\big\| \cdot \big\|_{\rw, \gamma, p, q}$ is a norm and $\Big( \cD_{\rw}^{\gamma, p, q}, \big\| \cdot \big\|_{\rw, \gamma, p, q} \Big)$ is a Banach space over the field $\bR^e$. 
\end{theorem}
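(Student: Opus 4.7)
The plan is to first verify the norm axioms and then establish completeness. Positivity and absolute homogeneity follow immediately from \eqref{eq:definition:RandomControlledRP3} and \eqref{eq:theorem:Banachspace}. The triangle inequality follows by successively applying Minkowski's inequality: fibrewise to each $L^{p[T]}(\Omega^{\times |H^T|})$-norm appearing inside $\|\cdot\|_{\rw,\gamma,p,q,0}(\omega_0)$, then to the finite sums and suprema over $s,t$ that assemble these into the fibrewise seminorm, and finally to the outer $L^{p[\rId]}(\Omega_0)$-integration (noting $p[\rId]\leq p[T]$ by \eqref{eq:IntegrabilityP1}). For definiteness, if $\|\bX\|_{\rw,\gamma,p,q}=0$ then $\langle \bX_0, T\rangle = 0$ and $\langle \bX^\sharp_{s,t}, T\rangle = 0$ almost surely for every $T \in \scF_0^{\gamma-,\alpha,\beta}$ and all $s,t \in [0,1]$; the jet identities \eqref{eq:definition:RandomControlledRP1}--\eqref{eq:definition:RandomControlledRP2} applied at $s=0$ then force $\langle \bX_t, T\rangle = 0$ for all $t$ and $T$, so that $\bX \equiv 0$ in $\scH^{\gamma-,\alpha,\beta}$.

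For completeness, let $\{\bX^{(n)}\}_n$ be Cauchy in $\cD_\rw^{\gamma,p,q}$. Dominating each summand in the defining norm by the full norm, for each $T \in \scF_0^{\gamma-,\alpha,\beta}$ the initial data $\langle \bX^{(n)}_0, T\rangle$ is Cauchy in the mixed Lebesgue space $L^{p[\rId]}\bigl(\Omega_0; L^{p[T]}(\Omega^{\times |H^T|})\bigr)$, while the rescaled remainder $(s,t) \mapsto |t-s|^{-(\gamma - \scG_{\alpha,\beta}[T])} \langle \bX^{(n),\sharp}_{s,t}, T\rangle$ is Cauchy in $L^{p[\rId]}\bigl(\Omega_0; C^0([0,1]^2; L^{p[T]}(\Omega^{\times |H^T|}))\bigr)$. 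Each such space is Banach, yielding limits $Y^0_T$ and $Y^\sharp_T$ respectively, indexed by $T \in \scF_0^{\gamma-,\alpha,\beta}$.

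The candidate limit $\bX$ is then constructed tree by tree: set $\langle \bX_0, Y\rangle := Y^0_Y$ and, for $t \in (0,1]$, define $\langle \bX_t, Y\rangle$ through the jet identity at $s = 0$ applied to the data $(Y^0, Y^\sharp_Y, \rw_{0,t})$. Convergence $\bX^{(n)} \to \bX$ in norm then follows, by a dominated/Fatou argument on the tagged variable, from the componentwise convergences together with the uniform Cauchy control; the boundedness of $\rw$ encoded in \eqref{eq:definition:ProbabilisticRoughPaths2} ensures that the products appearing in the jets converge in the appropriate mixed Lebesgue spaces through a Hölder-inequality argument of the type described in Remark \ref{remark:(dual)_integrab-functional}.

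The main obstacle I anticipate is verifying that the $\bX$ produced this way is genuinely a random controlled rough path, i.e.\ that \eqref{eq:definition:RandomControlledRP2} holds at \emph{every} pair $(s,t)$ rather than only at $(0,t)$. This reduces to an algebraic consistency on $\scH^{\gamma-,\alpha,\beta}$ between the expansions around $0$ and around $s$, driven by the cocycle identity $\rw_{s,t} \ast \rw_{t,u} = \rw_{s,u}$ of \eqref{eq:definition:ProbabilisticRoughPaths1}, the coassociativity of $\Delta$, and the counting identities from Lemmas \ref{lemma:CoprodCountIdent2} and \ref{lemma:CoprodCountIdent}. The subtle bookkeeping concerns tracking the hyperedge expectations through the coupling maps $E^{T,\Upsilon,Y}$, $\phi^{T,\Upsilon,Y}$, $\varphi^{T,\Upsilon,Y}$ of Definitions \ref{definition:CouplingFunctions} and \ref{definition:E-Set}: the couplings induced by concatenating the sub-increments $(0,s)$ and $(s,t)$ on the right-hand side must match those arising from the direct expansion on $(s,t)$ on the left-hand side. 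Once this identity is established at the level of the coupled Hopf module, the H\"older regularity of $\bX^\sharp$ at arbitrary $(s,t)$ is inherited from $Y^\sharp$ through the same algebraic manipulation, completing the proof.
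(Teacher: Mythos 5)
Your verification of the norm axioms matches the paper's, and the skeleton of your completeness argument (extract limits of the initial data $\langle\bX^{(n)}_0,T\rangle$ and of the rescaled remainders in suitable Banach spaces, then rebuild the limit path through the jets) is essentially the route the paper takes, up to the cosmetic difference that the paper works with absolutely convergent series rather than Cauchy sequences. The problem is that you stop exactly at what you yourself call the main obstacle: you never actually prove that the candidate limit satisfies \eqref{eq:definition:RandomControlledRP2} at a general pair $(s,t)$ (nor the bounds \eqref{eq:definition:RandomControlledRP4} for the limit remainder), you only sketch a strategy ``once this identity is established \dots completing the proof.'' As written, the central step of the completeness argument is asserted, not proved, so the proposal is incomplete.

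Moreover, the strategy you sketch for that step is misdirected. No appeal to the Chen relation \eqref{eq:definition:ProbabilisticRoughPaths1}, to coassociativity of $\Delta$, or to the counting Lemmas \ref{lemma:CoprodCountIdent2}--\ref{lemma:CoprodCountIdent} is needed, and no consistency between the expansions around $0$ and around $s$ has to be checked algebraically. The identities \eqref{eq:definition:RandomControlledRP1}--\eqref{eq:definition:RandomControlledRP2} are \emph{linear} in the controlled path for fixed $\rw$, and they hold for every approximant $\bX^{(n)}$ at \emph{every} pair $(s,t)$. Since $\langle\bX^{(n)}_s,T\rangle\to\langle\bX_s,T\rangle$ (via the $(0,s)$-jet together with the convergence of the initial data and remainders), since $\rw$ is fixed with the integrability \eqref{eq:definition:ProbabilisticRoughPaths2} so that the products in the jets pass to the limit by H\"older (Lemma \ref{lemma:(dual)_integrab-functional}), and since the remainders $\langle\bX^{(n),\sharp}_{s,t},Y\rangle$ converge to the extracted limits, one simply lets $n\to\infty$ in the identity at $(s,t)$ term by term; the regularity bound for the limit remainder then follows by Fatou from the uniform (Cauchy) control. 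This is precisely how the paper proceeds: it \emph{defines} $\langle\bY_{s,t},Y\rangle$ as the limit of the right-hand side of the $(s,t)$-expansion of the partial sums, so the defining relations hold by construction. To repair your proof you should replace the appeal to a Hopf-algebraic consistency by this passage to the limit in the approximants' identities (and, as a minor point, phrase the completeness of the remainder space in terms of the weighted supremum over $s\neq t$ rather than a $C^0([0,1]^2)$ norm, since continuity of the rescaled remainder on the diagonal is neither required by Definition \ref{definition:RandomControlledRP} nor available).
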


\begin{proof}
It easy to verify that for any choice of $(\scH^{\gamma, \alpha, \beta}, p, q)$-probabilistic rough path, the set $\cD_{\rw}^{\gamma, p, q}$ is a vector space over the field $\bR^e$. 

\emph{Part (i)} We verify that Equation \eqref{eq:theorem:Banachspace} is a norm: The first thing to check is that this integral is finite for any choice of $\bX \in \cD_\rw^{\gamma, p, q}$. Thanks to Equation \eqref{eq:definition:RandomControlledRP3} and Equation \eqref{eq:IntegrabilityP1},   
\begin{align*}
\bE^0\Big[ \big\| \bX &\big\|_{\rw, \gamma, p, q, 0}(\omega_0)^{p[\rId]} \Big]^{\tfrac{1}{p[\rId]}}
\\
&= \bE^0\Bigg[ \bigg| \sum_{T\in \scF_0}^{\gamma-, \alpha, \beta} \Big\| \big\langle \bX_0, T\big\rangle(\omega_0) \Big\|_{p[T]} + \Big\| \big\langle \bX^{\sharp}, T\big\rangle(\omega_0) \Big\|_{p[T], \gamma - \scG_{\alpha, \beta}[T]} \bigg|^{p[\rId]} \Bigg]^{\tfrac{1}{p[\rId]}}
\\
&\leq \sum_{T\in\scF_0}^{\gamma-, \alpha, \beta} \bE^0\bigg[ \Big\| \big\langle \bX_0, T \big\rangle(\omega_0) \Big\|_{p[T]}^{p[\rId]} \bigg]^{\tfrac{1}{p[\rId]}} + \bE^0\bigg[ \Big\| \big\langle \bX^{\sharp}, T \big\rangle(\omega_0) \Big\|_{p[T], \gamma - \scG_{\alpha, \beta}[T]}^{p[\rId]} \bigg]^{\tfrac{1}{p[\rId]}} 
\\
&\leq \sum_{T\in\scF_0}^{\gamma-, \alpha, \beta} \Big\| \big\langle \bX_0, T \big\rangle \Big\|_{p[T]} 
+
\Big\| \big\langle \bX^{\sharp}, T \big\rangle \Big\|_{p[T], \gamma - \scG_{\alpha, \beta}[T]} < \infty. 
\end{align*}
so that $\| \bX \|_{\rw, \gamma, p, q}<\infty$ for any choice of $\bX \in \cD_\rw^{\gamma, p, q}$. 

Now let $\bX \in \cD_\rw^{\gamma, p, q}$ and suppose that $\| \bX \|_{\rw, \gamma, p, q}=0$. Then $\forall T\in \scF_0^{\gamma-, \alpha, \beta}$, 
$$
\Big\| \big\langle \bX_0, T\big\rangle \Big\|_{p[T]}, 
\quad 
\Big\| \big\langle \bX^{\sharp}, T \big\rangle \Big\|_{p[T], \gamma-\scG_{\alpha, \beta}[T]} = 0. 
$$

By assumption, we have that $\forall t\in[0,1]$, 
\begin{align*}
\Big\langle \bX_{t}, \rId \Big\rangle(\omega_0) =& \Big\langle \bX_{0}, \rId \Big\rangle(\omega_0) + \Big\langle \bX_{s, t}^{\sharp}, \rId\Big\rangle(\omega_0)
\\
&+ \sum_{T\in \scF}^{\gamma-, \alpha, \beta} \bE^{H^T} \bigg[  \Big\langle \bX_{0}, T\Big\rangle(\omega_0, \omega_{H^T}) \cdot \Big\langle \rw_{0, t}, T\Big\rangle(\omega_0, \omega_{H^T}) \bigg]
\end{align*}
so that
\begin{align*}
\bE^0\bigg[ \sup_{t\in[0,1]}& \Big| \big\langle \bX_{t}, \rId \big\rangle(\omega_0)\Big|^{p[\rId]} \bigg]^{\tfrac{1}{p[\rId]}} 
\\
\leq& 
\Big\| \big\langle \bX_{0}, \rId\big\rangle\Big\|_{p[\rId]}
+
\Big\| \big\langle \bX^{\sharp}, \rId\big\rangle\Big\|_{p[\rId], \gamma}
+
\sum_{T\in \scF}^{\gamma-, \alpha, \beta}  \Big\| \big\langle \bX_{0}, T \big\rangle\Big\|_{p[T]} \cdot \Big\| \big\langle \rw, T \big\rangle\Big\|_{q[T], \scG_{\alpha, \beta}[T]} 
=0. 
\end{align*}
Similarly, for any choice of $Y\in \scF^{\gamma-, \alpha, \beta}$, 
\begin{align*}
\Big\langle \bX_{t}&, Y \Big\rangle(\omega_0, \omega_{H^Y}) 
= 
\Big\langle \bX_{0}, Y \Big\rangle(\omega_0, \omega_{H^Y})
+ 
\Big\langle \bX_{0, t}^{\sharp}, Y \Big\rangle(\omega_0, \omega_{H^Y})
\\
&+ \sum_{T\in \scF}^{\gamma-, \alpha, \beta} \sum_{\Upsilon\in \scF} c'\Big(T, \Upsilon, Y\Big) \bE^{E^{T, \Upsilon, Y}} \bigg[  \Big\langle \bX_{0}, T\Big\rangle(\omega_0, \omega_{\phi^{T, \Upsilon, Y}[H^T]}) \cdot \Big\langle \rw_{0, t}, \Upsilon \Big\rangle(\omega_0, \omega_{\varphi^{T, \Upsilon, Y}[H^\Upsilon]}) \bigg] 
\end{align*}
so that
\begin{align*}
\bE^0\bigg[ \sup_{t\in[0,1]} \bE^{H^Y}\bigg[ \Big| \big\langle \bX_{t}&, Y \big\rangle(\omega_0, \omega_{H^Y}) \Big|^{p[Y]}\bigg] \bigg]^{\tfrac{1}{p[Y]}} 
\leq 
\Big\| \big\langle \bX_0, \rId\big\rangle\Big\|_{p[Y]}
+
\Big\| \big\langle \bX^{\sharp}, \rId\big\rangle\Big\|_{p[Y], \gamma-\scG_{\alpha, \beta}[Y]}
\\
&+ 
\sum_{T\in \scF}^{\gamma-, \alpha, \beta}  \sum_{\Upsilon\in \scF} c'\Big( T, \Upsilon, Y\Big) \cdot \Big\| \big\langle \bX_{0}, T \big\rangle\Big\|_{p[T]} \cdot \Big\| \big\langle \rw, \Upsilon \big\rangle\Big\|_{q[\Upsilon], \scG_{\alpha, \beta}[\Upsilon]} 
=0. 
\end{align*}

In particular, $\forall T \in \scF_0^{\gamma-, \alpha, \beta}$, $\exists \cN^T\subseteq \Omega \times \Omega^{\times |H^T|}$ such that 
$$
(\bP \times \bP^{\times |H^T|} )\Big[ \cN^T \Big] = 1
$$
and $\forall (\omega_0, \omega_{H^T}) \in \cN^T$, $\forall t\in[0,1]$
$$
\Big\langle \bX_t, T \Big\rangle(\omega_0, \omega_{H^T}) = 0. 
$$
Thus $\| \bX \|_{\rw, \gamma, p, q} = 0$ if and only if $\bX = 0$. Finally, for any $r\in \bR^e$ and $\bX_1, \bX_2\in \cD_{\rw}^{\gamma, p, q}$
$$
\big\| r \cdot \bX \big\|_{\rw, \gamma, p, q} = |r| \cdot \big\| \bX \big\|_{\rw, \gamma, p, q}, \qquad \big\| \bX_1 + \bX_2 \big\|_{\rw, \gamma, p, q} \leq \big\| \bX_1 \big\|_{\rw, \gamma, p, q} + \big\| \bX_2 \big\|_{\rw, \gamma, p, q}. 
$$

\emph{Part (ii)} We prove completeness of the normed vector space $\Big( \cD_{\rw}^{\gamma, p, q}, \big\| \cdot \big\|_{\rw, \gamma, p, q} \Big)$. Let $( \bX^{(i)})_{i\in \bN} \in \cD_{\rw}^{\gamma, p, q}$ be an absolutely convergent sequence, so that
\begin{equation}
\label{eq:theorem:Banachspace-1.1}
\sum_{i=1}^\infty \big\| \bX^{(i)} \big\|_{\rw, \gamma, p, q}< \infty. 
\end{equation}
Let $n\in \bN$. For each $T\in \scF_0^{\gamma-, \alpha, \beta}$ and $t\in[0,1]$, we define the random variables
$$
\Big\langle \bY_{t}^{(n)}, T \Big\rangle(\omega_0, \omega_{H^T}) = \sum_{i=1}^n \Big\langle \bX_{t}^{(i)}, T \Big\rangle(\omega_0, \omega_{H^T}). 
$$
Then $\bY^{(n)}$ satisfies the identies
\begin{align*}
\Big\langle \bY_{s, t}^{(n)},& \rId \Big\rangle(\omega_0) = \sum_{T \in \scF}^{\gamma-, \alpha, \beta} \bE^{H^T} \bigg[ \Big\langle \bY_s^{(n)}, T \Big\rangle(\omega_0, \omega_{H^T}) \cdot \Big\langle \rw_{s, t}, T \Big\rangle(\omega_0, \omega_{H^T}) \bigg] + \Big\langle \bY^{(n), {\sharp}}_{s, t}, \rId \Big\rangle(\omega_0), 
\\
\Big\langle \bY_{s, t}^{(n)},& Y \Big\rangle(\omega_0, \omega_{H^Y}) 
\\
=& \sum_{T, \Upsilon \in \scF}^{\gamma-,\alpha, \beta}  c'\Big( T, \Upsilon, Y \Big) \cdot \bE^{E^{T, \Upsilon, Y}}\bigg[ \Big\langle \bY_s^{(n)}, T\Big\rangle(\omega_0, \omega_{\phi^{ T, \Upsilon, Y}[H^T]}) \cdot \Big\langle \rw_{s, t}, \Upsilon \Big\rangle(\omega_0, \omega_{\varphi^{T, \Upsilon, Y}[H^\Upsilon]}) \bigg] 
\\
&+ \Big\langle \bY^{(n), {\sharp}}_{s, t}, Y \Big\rangle(\omega_0, \omega_{H^Y}), 
\end{align*}
where, for $s, t\in[0,1]$ and $T\in \scF_0^{\gamma-, \alpha, \beta}$, we have
$$
\Big\langle \bY_{s, t}^{(n), {\sharp}}, T \Big\rangle(\omega_0, \omega_{H^T}) = \sum_{i=1}^{n} \Big\langle \bX_{s, t}^{(i),{\sharp}}, T \Big\rangle(\omega_0, \omega_{H^T}). 
$$
For each choice of $T\in\scF_0^{\gamma-, \alpha, \beta}$ the sequence of random variables
$$
\Big\langle \bY_0^{(n)}, T\Big\rangle \in L^{p[T]}\Big( \Omega \times \Omega^{\times |H^T|} , \bP \times \bP^{\times |H^T|}; \lin\big( (\bR^d)^{\otimes |N^T|}, \bR^e\big) \Big)
$$
are absolutely convergent thanks to Equation \eqref{eq:theorem:Banachspace-1.1}. Due to the Riesz-Fischer Theorem, they converge and we denote $\lim_{n\to \infty} \big\langle \bY_0^{(n)}, T\big\rangle = \big\langle \bY_0, T\big\rangle$ and
$$
\Big\langle \bY_0, T\Big\rangle = \sum_{i=1}^\infty \Big\langle \bX_0^{(i)}, T\Big\rangle. 
$$
Similarly, for each $s, t\in[0,1]$ and $T\in \scF_0^{\gamma-, \alpha, \beta}$, the sequence of random variables
$$
\Big\langle \bY_{s, t}^{\sharp}, T \Big\rangle \in L^{p[T]}\Big( \Omega \times \Omega^{\times |H^T|}, \bP \times \bP^{\times |H^T|}; \lin\big( (\bR^d)^{\otimes |N^T|}, \bR^e\big) \Big)
$$
are absolutely convergent thanks to Equation \eqref{eq:theorem:Banachspace-1.1}. Due to the Riesz-Fischer Theorem, they converge and
$$
\Big\langle \bY_{s, t}^{\sharp}, T \Big\rangle = \sum_{i=1}^{\infty} \Big\langle \bX_{s, t}^{\sharp}, T \Big\rangle. 
$$
Next, we define for $s, t\in[0,1]$ 
\begin{align*}
&\Big\langle \bY_{0, t}, \rId\Big\rangle(\omega_0) 
\\
&= \lim_{n\to \infty} \Bigg( \sum_{T\in \scF}^{\gamma-, \alpha, \beta} \bE^{H^T} \bigg[ \Big\langle \bY_0^{(n)}, T\Big\rangle(\omega_0, \omega_{H^T}) \cdot \Big\langle \rw_{0, t}, T\Big\rangle(\omega_0, \omega_{H^T}) \bigg] + \Big\langle \bY_{0, t}^{(n),{\sharp}}, \rId\Big\rangle(\omega_0) \Bigg), 
\\
&\Big\langle \bY_{s, t}, \rId\Big\rangle(\omega_0) 
\\
&= \lim_{n\to \infty} \Bigg( \sum_{T\in \scF}^{\gamma-, \alpha, \beta} \bE^{H^T} \bigg[ \bigg( \Big\langle \bY_0^{(n)} + \bY_{0, s}^{(n)}, T\Big\rangle(\omega_0, \omega_{H^T})\bigg) \cdot \Big\langle \rw_{s, t}, T\Big\rangle(\omega_0, \omega_{H^T}) \bigg] + \Big\langle \bY_{s, t}^{(n),{\sharp}}, \rId\Big\rangle(\omega_0) \Bigg), 
\end{align*}
and for each $Y\in \scF^{\gamma-, \alpha, \beta}$
\begin{align*}
&\Big\langle \bY_{0, t}, Y\Big\rangle(\omega_0, \omega_{H^Y}) 
= \lim_{n\to \infty} \Big\langle \bY_{0, t}^{(n)}, Y\Big\rangle(\omega_0, \omega_{H^Y}) 
\\
&= \lim_{n\to \infty} \Bigg( \sum_{T, \Upsilon \in \scF}^{\gamma-, \alpha, \beta} c'\Big(T, \Upsilon, Y) \cdot \bE^{E^{T, \Upsilon, Y}} \bigg[ \Big\langle \bY_0^{(n)}, T\Big\rangle(\omega_0, \omega_{\phi^{T, \Upsilon, Y}[H^T]})
\\
&\qquad \cdot \Big\langle \rw_{0, t}, \Upsilon \Big\rangle(\omega_0, \omega_{\varphi^{T, \Upsilon, Y}[H^\Upsilon]}) \bigg] + \Big\langle \bY_{0, t}^{(n),{\sharp}}, Y \Big\rangle(\omega_0, \omega_{H^Y}) \Bigg), 
\\
&\Big\langle \bY_{s, t}, Y\Big\rangle(\omega_0, \omega_{H^Y}) 
= \lim_{n\to \infty} \Big\langle \bY_{s, t}^{(n)}, Y\Big\rangle(\omega_0, \omega_{H^Y}) 
\\
&=\lim_{n\to \infty} \Bigg( \sum_{T, \Upsilon\in \scF}^{\gamma-, \alpha, \beta} c'\Big(T, \Upsilon, Y) \cdot \bE^{E^{T, \Upsilon, Y}} \bigg[ \bigg( \Big\langle \bY_0^{(n)} + \bY_{0, s}^{(n)}, T\Big\rangle(\omega_0, \omega_{\phi^{T, \Upsilon, Y}[H^T]})\bigg) 
\\
&\qquad \cdot \Big\langle \rw_{s, t}, \Upsilon \Big\rangle(\omega_0, \omega_{\varphi^{T, \Upsilon, Y}[H^\Upsilon]}) \bigg] + \Big\langle \bY_{s, t}^{(n),{\sharp}}, Y \Big\rangle(\omega_0, \omega_{H^Y}) \Bigg). 
\end{align*}
Finally, we define $\bY:[0,1] \to \scH^{\gamma-, \alpha, \beta}$ by
$$
\bY_t = \sum_{T\in\scF_0}^{\gamma-, \alpha, \beta} \bigg( \Big\langle \bY_0, T\Big\rangle + \Big\langle \bY_{0,t}, T\Big\rangle \bigg) \cdot T. 
$$
By construction, $\bY$ satisfies Equations \eqref{eq:definition:RandomControlledRP1} and \eqref{eq:definition:RandomControlledRP2}, and
$$
\bY_t = \sum_{i=1}^\infty \bX_t^{(i)}. 
$$
Next, for $T\in \scF_0^{\gamma-, \alpha, \beta}$ we consider the random variable
$$
\Big\| \big\langle \bY_{s, t}^{(n),{\sharp}}, T \big\rangle(\omega_0) \Big\|_{p[T], \gamma - \scG_{\alpha, \beta}[T]} 
\in 
L^{p[T]} \Big( \Omega, \bP, \lin\big( (\bR^d)^{\otimes |N^T|}, \bR^e \big) \Big). 
$$
These are absolutely convergent thanks to Equation \eqref{eq:theorem:Banachspace-1.1}, and a final application of the Riesz-Fischer Theorem implies that a limit exists. We also have that
$$
\Big\| \big\langle \bY^{\sharp}, T\big\rangle \Big\|_{p[T],\gamma - \scG_{\alpha, \beta}[T]} 
\leq \sum_{i=1}^\infty \Big\| \big\langle \bX^{(i),{\sharp}}, T\big\rangle \Big\|_{p[T],\gamma - \scG_{\alpha, \beta}[T]} < \infty. 
$$
Finally
\begin{align*}
&\bE^0\Bigg[ \bigg| \sup_{s, t\in[0,1]} \frac{\bE^{H^T}\bigg[ \Big| \big\langle \bY_{s, t}^{(n),{\sharp}}, T\big\rangle(\omega_0, \omega_{H^T}) \Big|^{p[T]} \bigg]^{\tfrac{1}{p[T]}}}{|t-s|^{\gamma - \scG_{\alpha, \beta}[T]}} 
\\
&\qquad - \sup_{s, t\in[0,1]} \frac{\bE^{H^T}\bigg[ \Big| \big\langle \bY_{s, t}^{\sharp}, T\big\rangle(\omega_0, \omega_{H^T}) \Big|^{p[T]} \bigg]^{\tfrac{1}{p[T]}}}{|t-s|^{\gamma - \scG_{\alpha, \beta}[T]}} \bigg|^{p[T]} \Bigg]^{\tfrac{1}{p[T]}}
\\
&\leq \Big\| \big\langle \bY^{(n),{\sharp}} - \bY^{\sharp}, T\big\rangle \Big\|_{p[T], \gamma - \scG_{\alpha, \beta}[T]} 
\leq
\sum_{i=n+1}^\infty \Big\| \big\langle \bX^{(i),{\sharp}}, T \big\rangle \Big\|_{p[T],\gamma - \scG_{\alpha, \beta}[T]}  \to 0 
\end{align*}
as $n\to \infty$. Thus $\bY\in \cD_{\rw}^{\gamma, p, q}$ and
$$
\lim_{n\to \infty} \big\| \bY - \bY^{(n)} \big\|_{\rw, \gamma, p, q} = 0
$$
Thus absolutely convergence sequences converge and the normed vector space $\Big( \cD_{\rw}^{\gamma, p, q}, \big\| \cdot \big\|_{\rw, \gamma, p, q} \Big)$ is complete. 
\end{proof}

\subsubsection{Regularity of random controlled rough paths}

The following is motivated by Definition \ref{definition:E-Set} and Definition \ref{definition:CouplingFunctions}:
\begin{definition}
Let $T, \Upsilon, \Upsilon', Y \in \scF_0$. We define $c': \scF \times \scF \times \scF \times \scF \to \bN_0$ by
$$
c'\Big( T, \Upsilon', \Upsilon, Y\Big) = \Big\langle I \tilde{\otimes} \Delta \circ \Delta\big[ T \big], \big( \Upsilon', \Upsilon, Y, H^T \big) \Big\rangle
$$
or equivalently
$$
c'\Big( T, \Upsilon', \Upsilon, Y \Big) = \sum_{T'\in \scF_0} c'\Big( T, \Upsilon', T'\Big) \cdot c'\Big( T', \Upsilon, Y\Big). 
$$
We denote
$$
H^T \cap \Big( N^{\Upsilon'} \cup N^{\Upsilon}\Big):=  \Big\{ h \cap \big( N^{\Upsilon'} \cup N^{\Upsilon} \big): h\in H^T\Big\} \backslash\{ \emptyset\}. 
$$
Then for any $T, \Upsilon', \Upsilon, Y \in \scF$ such that $c'( T, \Upsilon', \Upsilon, Y)>0$, we have that
$$
H^T \cap \Big( N^{\Upsilon'} \cup N^{\Upsilon}\Big) \in H^{\Upsilon'} \tilde{\cup} H^{\Upsilon}. 
$$
We define 
\begin{itemize}
\item $\psi^{(\Upsilon', \Upsilon), T}: H^T \cap ( N^{\Upsilon'} \cup N^{\Upsilon}) \to H^T$ by
$$
\psi^{(\Upsilon', \Upsilon), T}\big[ h \cap ( N^{\Upsilon'} \cup N^{\Upsilon}) \big] = h. 
$$
\item $\psi^{\Upsilon',(T, \Upsilon)}: H^{\Upsilon'}\to H^T \cap ( N^{\Upsilon'} \cup N^{\Upsilon})$ by
$$
\psi^{\Upsilon',(T, \Upsilon)}\big[ h^{\Upsilon'} \big] \cap N^{\Upsilon'} = h^{\Upsilon'}. 
$$
\item $\psi^{\Upsilon,(T, \Upsilon')}: H^{\Upsilon}\to H^T \cap ( N^{\Upsilon'} \cup N^{\Upsilon})$ by
$$
\psi^{\Upsilon,(T, \Upsilon')}\big[ h^{\Upsilon} \big] \cap N^{\Upsilon} = h^{\Upsilon}. 
$$
\end{itemize}
Notice that the above definition makes sense because elements of $H^T \cap (N^{\Upsilon '} \cap N^{\Upsilon})$ cannot be empty (which is part of the above definition): When, $h  \cap (N^{\Upsilon '} \cap N^{\Upsilon})$ is not empty, the knowledge of any element of the intersection is sufficient to identify the entire hyperedge 
$h$; Of course, this would be false if the intersection were empty.

We define 
\begin{align*}
\phi^{T, (\Upsilon', \Upsilon), Y}:& H^T \to \big(H^T \cap (N^{\Upsilon'} \cup N^{\Upsilon}) \big) \cup H^Y, 
\\
\varphi^{T, (\Upsilon', \Upsilon), Y}:& H^T \cap (N^{\Upsilon'} \cup N^{\Upsilon}) \to H^{Y}\cup \big( H^T \cap (N^{\Upsilon'} \cup N^{\Upsilon}) \big)
\end{align*}
as follows:
\begin{align*}
\phi^{T, (\Upsilon', \Upsilon),Y} \big[ h^T \big] =& 
\begin{cases} 
h^T & \quad \in H^T \cap (N^{\Upsilon'} \cup N^{\Upsilon}) \quad \mbox{if } h^T\cap N^Y = \emptyset, 
\\
h^T \cap N^Y \qquad  & \quad \in H^Y  \quad \mbox{if } h^T \cap N^Y \neq \emptyset. 
\end{cases}
\\
\varphi^{T, (\Upsilon',\Upsilon), Y}\big[ h^T \big] =& 
\begin{cases}
h^T & \quad \in H^T\cap(N^{\Upsilon'} \cup N^{\Upsilon}) \quad \mbox{if } \psi^{(\Upsilon', \Upsilon), T}\big[ h^T \big]\cap N^Y  = \emptyset, 
\\
\psi^{(\Upsilon, \Upsilon'), T}\big[ h^T \big] \cap N^Y & \quad \in H^Y \quad \mbox{if } \psi^{(\Upsilon', \Upsilon), T}\big[ h^T \big] \cap N^Y  \neq \emptyset. 
\end{cases}
\end{align*}

Finally, we define the set
$$
E^{T, (\Upsilon', \Upsilon), Y} = \Big\{ h^T \in H^T\cap \big( N^{\Upsilon'} \cup N^{\Upsilon}\big): \varphi^{T, (\Upsilon',\Upsilon), Y}\big[h^T \big] = h^T \Big\}. 
$$ 
\end{definition}

With these definitions at hand, we are able to restate Equation \eqref{eq:definition:RandomControlledRP2} as follows: $\forall Y\in \scF_0^{\gamma-, \alpha, \beta}$ and $\forall s, t\in[u, v]$, 
\begin{align}
\nonumber
\Big\langle& \bX_{s, t}, Y\Big\rangle(\omega_0, \omega_{H^Y}) = \Big\langle \bX_{s, t}^{\sharp}, Y\Big\rangle(\omega_, \omega_{H^Y})
\\
\nonumber
&+ \sum_{T, \Upsilon\in \scF}^{\gamma-, \alpha, \beta} c'\Big( T, \Upsilon, Y\Big) \cdot \bE^{E^{T, \Upsilon, Y}}\bigg[ \Big\langle \bX_u, T\Big\rangle(\omega_0, \omega_{\phi^{T, \Upsilon, Y}[H^T]}) \cdot \Big\langle \rw_{s, t}, \Upsilon\Big\rangle(\omega_0, \omega_{\varphi^{T, \Upsilon, Y}[H^{\Upsilon}]}) \bigg]
\\
\nonumber
&+ \sum_{T, \Upsilon\in \scF}^{\gamma-, \alpha, \beta} c'\Big( T, \Upsilon, Y\Big) \cdot \bE^{E^{T, \Upsilon, Y}}\bigg[ \Big\langle \bX_{u, s}^{\sharp}, T\Big\rangle(\omega_0, \omega_{\phi^{T, \Upsilon, Y}[H^T]}) \cdot \Big\langle \rw_{s, t}, \Upsilon\Big\rangle(\omega_0, \omega_{\varphi^{T, \Upsilon, Y}[H^{\Upsilon}]}) \bigg]
\\
\nonumber
&+\sum_{T, \Upsilon', \Upsilon \in \scF}^{\gamma-, \alpha, \beta} c'\Big( T, \Upsilon', \Upsilon, Y\Big) \cdot \bE^{E^{T, (\Upsilon', \Upsilon), Y}}\bigg[ \Big\langle \bX_u, T \Big\rangle(\omega_0, \omega_{\phi^{T, (\Upsilon', \Upsilon), Y}[H^T]}) 
\\
\label{eq:increment:identity}
&\qquad \cdot \Big\langle \rw_{u, s}, \Upsilon' \Big\rangle(\omega_0, \omega_{\varphi^{T, (\Upsilon', \Upsilon), Y}[\psi^{\Upsilon',(T,\Upsilon)}[H^{\Upsilon'}]]}) \cdot \Big\langle \rw_{s, t}, \Upsilon \Big\rangle(\omega_0, \omega_{\varphi^{T, (\Upsilon', \Upsilon), Y}[\psi^{\Upsilon,(T,\Upsilon')}[H^{\Upsilon}]]}) \bigg]
\end{align}

We now briefly highlight some regularity upper bounds well known in the field of rough paths. Due to dual integrability of the components of the rough path and the controlled rough path, some extra care must be taken but many aspects of the proofs are similar. 

\begin{proposition}
\label{prop:Regu-RCRP}
Let $\alpha, \beta>0$ and $\gamma:=\inf\{ \scG_{\alpha, \beta}[T]: T \in \scF, \scG_{\alpha, \beta}[T]>1-\alpha \}$. Let $(p,q)$ be a dual pair of integrability functionals and let $\rw$ be an $(\scH^{\gamma, \alpha, \beta}, p, q)$-probabilistic rough path. 

Let $\bX \in \cD_{\rw}^{\gamma, p, q}$. Let $u, v\in [0,1]$ and denote $|v-u|=\eta$. Then 
\begin{enumerate}
\item For $Y\in \scF_0^{\gamma-, \alpha, \beta}$, 
\begin{align}
\nonumber
\sup_{t\in[u, v]}& \Big\| \big\langle \bX_t, Y \big\rangle(\omega_0) \Big\|_{p[Y]} 
\\
\label{eq:prop:Regu-RCRP-1}
&\leq \Big\| \big\langle \bX_u, Y \big\rangle(\omega_0) \Big\|_{p[Y]} 
+ \sup_{s, t\in[u,v]} \frac{ \Big\| \big\langle \bX_{s,t}, Y \big\rangle(\omega_0) \Big\|_{p[Y]} }{|t-s|^{\alpha\wedge(\gamma - \scG_{\alpha, \beta}[Y])}} \cdot \eta^{\alpha\wedge(\gamma - \scG_{\alpha, \beta}[Y])}. 
\end{align}
\item We have that
\begin{align}
\nonumber
\sup_{s, t \in [u, v]}& \frac{\Big| \big\langle \bX_{s, t}, \rId \big\rangle(\omega_0)\Big|}{|t-s|^{\alpha}} 
\leq 
\sup_{s, t\in[u, v]} \frac{ \Big| \big\langle \bX_{s, t}^{\sharp}, \rId \big\rangle (\omega_0) \Big|}{|t-s|^{\gamma}} \cdot \eta^{\gamma - \alpha}
\\
\label{eq:prop:Regu-RCRP-2}
& +\sum_{T\in\scF}^{\gamma-, \alpha, \beta} \sup_{t\in[u, v]} \Big\| \big\langle \bX_t, T \big\rangle(\omega_0) \Big\|_{p[T]} 
\cdot 
\sup_{s,t\in[u, v]} \frac{\Big\| \big\langle \rw_{s, t}, T \big\rangle(\omega_0) \Big\|_{q[T]} }{|t-s|^{\scG_{\alpha, \beta}[T]}} \cdot \eta^{\scG_{\alpha, \beta}[T] - \alpha}. 
\end{align}
\item For $Y \in \scF^{\gamma-,\alpha, \beta}$, 
\begin{align}
\nonumber
\sup_{s, t\in[u,v]}& \frac{ \Big\| \big\langle \bX_{s,t}, Y \big\rangle(\omega_0) \Big\|_{p[Y]} }{|t-s|^{\alpha\wedge(\gamma - \scG_{\alpha, \beta}[Y])}}
\\
\nonumber
\leq& \sum_{T, \Upsilon\in \scF}^{\gamma-, \alpha, \beta} c'\Big(T, \Upsilon, Y\Big) \cdot \sup_{t\in[u,v]} \Big\| \big\langle \bX_{t}, T \big\rangle(\omega_0) \Big\|_{p[T]}
\cdot \sup_{s, t\in[u,v]} \frac{ \Big\| \big\langle \rw_{s,t}, \Upsilon \big\rangle(\omega_0) \Big\|_{q[\Upsilon]}  }{|t-s|^{\scG_{\alpha, \beta}[\Upsilon]}} \cdot \eta^{\scG_{\alpha, \beta}[\Upsilon] - \alpha} 
\\
\label{eq:prop:Regu-RCRP-3}
&+ \sup_{s, t\in[u,v]} \frac{ \Big\| \big\langle \bX_{s,t}^{\sharp}, Y \big\rangle(\omega_0) \Big\|_{p[Y]} }{|t-s|^{\gamma - \scG_{\alpha, \beta}[Y]}} \cdot  \eta^{\gamma - \scG_{\alpha, \beta}[Y] - \alpha \wedge (\gamma - \scG_{\alpha, \beta}[Y])}. 
\end{align}
\end{enumerate}
In particular, for all $Y\in \scF^{\gamma-, \alpha, \beta}$, 
\begin{align}
\nonumber
\sup_{s, t\in[u, v]}& \frac{ \Big\| \big\langle \bX_{s, t}, Y\big\rangle(\omega_0) \Big\|_{p[Y]} }{|t-s|^{\alpha \wedge (\gamma - \scG_{\alpha, \beta}[Y])}} 
\leq 
\Big\| \big\langle \bX^{\sharp}, Y\big\rangle(\omega_0) \Big\|_{p[Y], \alpha \wedge (\gamma - \scG_{\alpha, \beta}[Y])}  \cdot \eta^{\gamma - \scG_{\alpha, \beta}[Y] - \alpha\wedge(\gamma - \scG_{\alpha, \beta}[Y])} 
\\
\nonumber
&+ \sum_{T, \Upsilon\in \scF }^{\gamma-, \alpha, \beta} c'\Big( T, \Upsilon, Y\Big) \cdot \Big\| \big\langle \bX_u, T\big\rangle(\omega_0) \Big\|_{p[T]} 
\cdot 
\Big\| \big\langle \rw, \Upsilon \big\rangle(\omega_0) \Big\|_{q[\Upsilon], \scG_{\alpha, \beta}[\Upsilon]} 
\cdot 
\eta^{\scG_{\alpha, \beta}[\Upsilon] - \alpha\wedge(\gamma - \scG_{\alpha, \beta}[Y])}
\\
\nonumber
&+ \sum_{T, \Upsilon\in \scF}^{\gamma-, \alpha, \beta} c'\Big( T, \Upsilon, Y\Big) \cdot \Big\| \big\langle \bX^{\sharp}, T\big\rangle(\omega_0) \Big\|_{p[T], \gamma - \scG_{\alpha, \beta}[T]}  
\\
\nonumber
&\qquad \qquad\cdot \Big\| \big\langle \rw, \Upsilon \big\rangle(\omega_0) \Big\|_{q[\Upsilon], \scG_{\alpha, \beta}[\Upsilon]}
\cdot 
\eta^{\gamma - \scG_{\alpha, \beta}[Y] - \alpha\wedge(\gamma - \scG_{\alpha, \beta}[Y])}
\\
\nonumber
&+ \sum_{T, \Upsilon', \Upsilon \in \scF}^{\gamma-, \alpha, \beta} c'\Big( T, \Upsilon', \Upsilon, Y\Big) 
\cdot
\Big\| \big\langle \bX_u, T\big\rangle(\omega_0) \Big\|_{p[T]} 
\cdot
\Big\| \big\langle \rw, \Upsilon' \big\rangle(\omega_0) \Big\|_{q[\Upsilon'], \scG_{\alpha, \beta}[\Upsilon']} 
\\
\label{eq:prop:Regu-RCRP-4}
&\qquad \qquad \cdot
\Big\| \big\langle \rw, \Upsilon \big\rangle(\omega_0) \Big\|_{q[\Upsilon], \scG_{\alpha, \beta}[\Upsilon]} \cdot \eta^{\scG_{\alpha, \beta}[\Upsilon'] + \scG_{\alpha, \beta}[\Upsilon] - \alpha\wedge(\gamma-\scG_{\alpha, \beta}[Y])}. 
\end{align}
and (recalling Equation \eqref{eq:Jet_Operator})
\begin{align}
\nonumber
\sup_{s, t\in[u, v]}& \frac{ \Big\| \big\langle \fJ[\bX]_{s, t}, Y\big\rangle(\omega_0) \Big\|_{p[Y]} }{|t-s|^{\alpha \wedge (\gamma - \scG_{\alpha, \beta}[Y])}}
\\
\nonumber
&+ \sum_{T, \Upsilon\in \scF }^{\gamma-, \alpha, \beta} c'\Big( T, \Upsilon, Y\Big) \cdot \Big\| \big\langle \bX_u, T\big\rangle(\omega_0) \Big\|_{p[T]} 
\cdot 
\Big\| \big\langle \rw, \Upsilon \big\rangle(\omega_0) \Big\|_{q[\Upsilon], \scG_{\alpha, \beta}[\Upsilon]} 
\cdot 
\eta^{\scG_{\alpha, \beta}[\Upsilon] - \alpha\wedge(\gamma - \scG_{\alpha, \beta}[Y])}
\\
\nonumber
&+ \sum_{T, \Upsilon\in \scF}^{\gamma-, \alpha, \beta} c'\Big( T, \Upsilon, Y\Big) \cdot \Big\| \big\langle \bX^{\sharp}, T\big\rangle(\omega_0) \Big\|_{p[T], \gamma - \scG_{\alpha, \beta}[T]}  
\\
\nonumber
&\qquad \qquad \cdot \Big\| \big\langle \rw, \Upsilon \big\rangle(\omega_0) \Big\|_{q[\Upsilon], \scG_{\alpha, \beta}[\Upsilon]}
\cdot 
\eta^{\gamma - \scG_{\alpha, \beta}[Y] - \alpha\wedge(\gamma - \scG_{\alpha, \beta}[Y])}
\\
\nonumber
&+ \sum_{T, \Upsilon', \Upsilon \in \scF}^{\gamma-, \alpha, \beta} c'\Big( T, \Upsilon', \Upsilon, Y\Big) 
\cdot
\Big\| \big\langle \bX_u, T\big\rangle(\omega_0) \Big\|_{p[T]} 
\cdot
\Big\| \big\langle \rw, \Upsilon' \big\rangle(\omega_0) \Big\|_{q[\Upsilon'], \scG_{\alpha, \beta}[\Upsilon']} 
\\
\label{eq:prop:Regu-RCRP-5}
&\qquad \qquad \cdot
\Big\| \big\langle \rw, \Upsilon \big\rangle(\omega_0) \Big\|_{q[\Upsilon], \scG_{\alpha, \beta}[\Upsilon]} \cdot \eta^{\scG_{\alpha, \beta}[\Upsilon'] + \scG_{\alpha, \beta}[\Upsilon] - \alpha\wedge(\gamma-\scG_{\alpha, \beta}[Y])}. 
\end{align}

Finally, we have that for all $Y\in \scF_0^{\gamma-, \alpha, \beta}$ that
\begin{equation}
\label{eq:prop:Regu-RCRP-6}
\bE^0 \Bigg[ \sup_{s, t\in[u, v]} \frac{ \Big\| \big\langle \bX_{s, t}, Y\big\rangle(\omega_0) \Big\|_{p[Y]}^{p[Y]} }{|t-s|^{p[Y](\alpha \wedge (\gamma - \scG_{\alpha, \beta}[Y]))}} \Bigg], 
\quad
\bE^0 \Bigg[ \sup_{s, t\in[u, v]} \frac{ \Big\| \big\langle \fJ[\bX]_{s, t}, Y\big\rangle(\omega_0) \Big\|_{p[Y]}^{p[Y]} }{|t-s|^{p[Y](\alpha \wedge (\gamma - \scG_{\alpha, \beta}[Y]))}} \Bigg] < \infty. 
\end{equation}
\end{proposition}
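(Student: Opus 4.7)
The plan is to establish each estimate in turn by combining the two defining relations \eqref{eq:definition:RandomControlledRP1}--\eqref{eq:definition:RandomControlledRP2} and the refinement \eqref{eq:increment:identity} with repeated applications of H\"older's inequality against the dual pair $(p,q)$. Throughout, the key probabilistic input is the identity $1/p[\rId] = 1/p[T] + 1/q[T]$ from \eqref{eq:IntegrabilityP1}, together with $1/p[Y] \geq 1/p[T] + 1/q[\Upsilon]$ from Lemma \ref{lemma:(dual)_integrab-functional}, which is precisely what allows the product of random variables indexed by $T$ and $\Upsilon$ to be controlled in the $p[Y]$ norm.

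First I would establish \eqref{eq:prop:Regu-RCRP-1} by writing $\langle \bX_t, Y\rangle(\omega_0) = \langle \bX_u, Y\rangle(\omega_0) + \langle \bX_{u,t}, Y\rangle(\omega_0)$, applying the $L^{p[Y]}(\omega_{H^Y})$-triangle inequality, and bounding the increment term by the supremum of $\|\langle \bX_{s,t}, Y\rangle(\omega_0)\|_{p[Y]}/|t-s|^{\alpha\wedge(\gamma - \scG_{\alpha,\beta}[Y])}$ times $|t-u|^{\alpha\wedge(\gamma - \scG_{\alpha,\beta}[Y])} \leq \eta^{\alpha\wedge(\gamma - \scG_{\alpha,\beta}[Y])}$. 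For \eqref{eq:prop:Regu-RCRP-2}, I would insert \eqref{eq:definition:RandomControlledRP1} into $|\langle \bX_{s,t}, \rId\rangle(\omega_0)|/|t-s|^\alpha$, then apply H\"older's inequality to each term $\bE^{H^T}[\langle \bX_s, T\rangle \cdot \langle \rw_{s,t}, T\rangle]$ with dual exponents $p[T]$ and $q[T]$, factoring out $|t-s|^{\scG_{\alpha,\beta}[T]}$ from the rough path norm and $|t-s|^\gamma$ from the remainder; this produces the claimed $\eta$-gain $\scG_{\alpha,\beta}[T] - \alpha$ on each summand. Estimate \eqref{eq:prop:Regu-RCRP-3} follows by the same strategy applied to \eqref{eq:definition:RandomControlledRP2}: decouple the expectation $\bE^{E^{T,\Upsilon,Y}}$ via H\"older, bound each factor separately, and note that the exponent $\alpha \wedge (\gamma - \scG_{\alpha,\beta}[Y])$ is chosen precisely so that both the jet contribution (regular at order $\scG_{\alpha,\beta}[\Upsilon]$) and the remainder (regular at order $\gamma - \scG_{\alpha,\beta}[Y]$) can be absorbed.

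For the compound estimates \eqref{eq:prop:Regu-RCRP-4} and \eqref{eq:prop:Regu-RCRP-5}, the starting point is the decomposition \eqref{eq:increment:identity}, which rewrites $\langle \bX_{s,t}, Y\rangle$ as the sum of four terms: the remainder $\langle \bX^{\sharp}_{s,t}, Y\rangle$, a ``frozen at $u$'' rough term $\langle \bX_u, T\rangle \cdot \langle \rw_{s,t}, \Upsilon\rangle$, an intermediate remainder $\langle \bX^{\sharp}_{u,s}, T\rangle \cdot \langle \rw_{s,t}, \Upsilon\rangle$ and a second-order rough term $\langle \bX_u, T\rangle \cdot \langle \rw_{u,s}, \Upsilon'\rangle \cdot \langle \rw_{s,t}, \Upsilon\rangle$. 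To each of these I would apply H\"older's inequality on the appropriate coupled product space; the three-factor step for the last term iterates Lemma \ref{lemma:(dual)_integrab-functional} together with the bookkeeping identity $c'(T,\Upsilon',\Upsilon,Y) = \sum_{T'} c'(T,\Upsilon',T')\cdot c'(T',\Upsilon,Y)$ for the coupled hyperedges via $\phi^{T,(\Upsilon',\Upsilon),Y}$ and $\varphi^{T,(\Upsilon',\Upsilon),Y}$. Dividing by $|t-s|^{\alpha \wedge (\gamma - \scG_{\alpha,\beta}[Y])}$ and collecting powers yields the $\eta$-exponents in \eqref{eq:prop:Regu-RCRP-4}; estimate \eqref{eq:prop:Regu-RCRP-5} is identical except that the first term is omitted since $\fJ[\bX] = \bX - \bX^{\sharp}$ by \eqref{eq:Jet_Operator}.

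Finally, \eqref{eq:prop:Regu-RCRP-6} follows by taking $\bE^0$ of the $p[Y]$-th power of the right-hand sides of \eqref{eq:prop:Regu-RCRP-4}--\eqref{eq:prop:Regu-RCRP-5} and applying H\"older once more (now for the $\omega_0$ integral) together with the defining finiteness properties of random controlled rough paths and probabilistic rough paths, see \eqref{eq:definition:RandomControlledRP4} and \eqref{eq:definition:ProbabilisticRoughPaths2}. The main technical obstacle, and what distinguishes the analysis from the classical (non mean-field) situation, is the correct accounting of the coupled product probability spaces under the decoupling operations $\phi$ and $\varphi$: one must verify that the expectation $\bE^{E^{T,\Upsilon,Y}}$, respectively $\bE^{E^{T,(\Upsilon',\Upsilon),Y}}$, genuinely factorises the factors of $\bX$ and $\rw$ into independent groups on which the H\"older inequality applies term-by-term, which in turn is ensured by the relation $H^T \in H^\Upsilon \tilde{\cup} H^Y$ and the Definitions \ref{definition:CouplingFunctions} and \ref{definition:E-Set}.
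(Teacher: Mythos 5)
Your proposal is correct and follows essentially the same route as the paper's own proof: \eqref{eq:prop:Regu-RCRP-1}--\eqref{eq:prop:Regu-RCRP-3} from the defining relations \eqref{eq:definition:RandomControlledRP1}--\eqref{eq:definition:RandomControlledRP2} plus H\"older with the dual pair $(p,q)$, \eqref{eq:prop:Regu-RCRP-4}--\eqref{eq:prop:Regu-RCRP-5} from the decomposition \eqref{eq:increment:identity} together with \eqref{eq:Jet_Operator}, and \eqref{eq:prop:Regu-RCRP-6} by integrating over the tagged variable and invoking \eqref{eq:definition:RandomControlledRP4}. Your sketch simply spells out the details the paper leaves implicit, including the correct bookkeeping of the decoupling maps $\phi$, $\varphi$ and the exponent identities from Lemma \ref{lemma:(dual)_integrab-functional}.
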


\begin{proof}
The proof Equation \eqref{eq:prop:Regu-RCRP-1} is standard. Equation \eqref{eq:prop:Regu-RCRP-2} follows from Equation \eqref{eq:definition:RandomControlledRP1} and the H\"older inequality. Similarly, Equation \eqref{eq:prop:Regu-RCRP-3} follows from Equation \eqref{eq:definition:RandomControlledRP2} and the H\"older inequality. 

Equation \eqref{eq:prop:Regu-RCRP-4} follows from Equation \eqref{eq:increment:identity} and the H\"older inequality. Equation \eqref{eq:prop:Regu-RCRP-5} comes similarly thanks to Equation \eqref{eq:Jet_Operator}. 

Equation \eqref{eq:prop:Regu-RCRP-6} follows by integrating over the tagged probability space and using Equation \eqref{eq:definition:RandomControlledRP4}. 
\end{proof}

\subsection{Proof of the results in Section \ref{subsec:Operat_RCRPs}}
\label{subsection:Proofs-Sec4}

The proofs contained in this subsection are, for the most part, novel adaptions of methods that have been well established in the rough path literature to the probabilistic setting. As such, a reader not well read on classical results relating to rough paths may find that some important details of proofs have been skipped for conciseness. On the other hand, a reader familiar with rough path techniques but not with the probabilistic framework developed in \cite{salkeld2021Probabilistic} and this work may not recognise how the probabilistic framework convolutes these proofs with only a fleeting glance at them. 

This technical Lemma will by used a number of times. 
\begin{lemma}
\label{lemma:TechLem_Products}
Let $V$ be a vector space. Let $n\in \bN$ and for $i=1, ..., n$, let $x_i, y_i \in V$. Then
$$
\bigotimes_{i=1}^n (x_i+y_i) - \bigotimes_{i=1}^n x_i = \sum_{k=1}^n \bigg( \bigotimes_{i=1}^{k-1} x_i \bigg) \otimes y_k \otimes \bigg( \bigotimes_{i=k+1}^n (x_i+y_i) \bigg). 
$$
\end{lemma}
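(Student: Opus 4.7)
The plan is to prove this identity by a direct telescoping argument, which is arguably more transparent than induction on $n$ (though induction would work equally well). The key idea is to interpolate between the two sides of the identity by progressively replacing the terms $(x_i+y_i)$ by $x_i$ one index at a time.

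For $k \in \{0,1,\ldots,n\}$, I would define the interpolating tensor
\[
S_k := \bigg( \bigotimes_{i=1}^{k} x_i \bigg) \otimes \bigg( \bigotimes_{i=k+1}^n (x_i+y_i) \bigg),
\]
with the usual convention that an empty tensor product equals the unit. Then by construction $S_0 = \bigotimes_{i=1}^n (x_i+y_i)$ and $S_n = \bigotimes_{i=1}^n x_i$, so that the left-hand side of the identity equals $S_0 - S_n$, which telescopes as $\sum_{k=1}^n (S_{k-1} - S_k)$.

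The core step is to compute the individual increments $S_{k-1}-S_k$. Using the bilinearity of $\otimes$ in the $k$-th slot, for each $k \in \{1,\ldots,n\}$ one has
\[
S_{k-1} - S_k = \bigg( \bigotimes_{i=1}^{k-1} x_i \bigg) \otimes \Bigl[ (x_k+y_k) - x_k \Bigr] \otimes \bigg( \bigotimes_{i=k+1}^n (x_i+y_i) \bigg) = \bigg( \bigotimes_{i=1}^{k-1} x_i \bigg) \otimes y_k \otimes \bigg( \bigotimes_{i=k+1}^n (x_i+y_i) \bigg),
\]
which is precisely the $k$-th summand on the right-hand side of the claimed identity. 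Summing over $k$ yields the result.

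There is no serious obstacle here: the only point requiring minor care is book-keeping of the boundary cases $k=1$ (where the first factor is empty) and $k=n$ (where the trailing factor is empty), which are both handled by the empty-tensor-product convention. No further structure on $V$ is needed beyond the bilinearity of $\otimes$.
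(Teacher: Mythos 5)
Your proof is correct and follows essentially the same route as the paper: a telescoping sum over the interpolating tensors $S_k = \big(\bigotimes_{i=1}^{k} x_i\big)\otimes\big(\bigotimes_{i=k+1}^{n}(x_i+y_i)\big)$, with each increment $S_{k-1}-S_k$ evaluated by bilinearity in the $k$-th slot. Your write-up is in fact slightly cleaner, since the paper's displayed telescoping leaves $(x_k+y_k)$ where $y_k$ should appear in the final line.
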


\begin{proof}
By a telescoping summation, 
\begin{align*}
\bigotimes_{i=1}^n (x_i + y_i) - \bigotimes_{i=1}^n x_i =& \sum_{k=1}^{n} \Bigg[ \bigg( \bigotimes_{i=1}^{k-1} x_i \otimes \bigotimes_{i=k}^{n} (x_i+y_i) \bigg) - \bigg( \bigotimes_{i=1}^{k} x_i \otimes \bigotimes_{i=k+1}^{n} (x_i + y_i) \bigg) \Bigg]
\\
=& \sum_{k=1}^{n} \bigg( \bigotimes_{i=1}^{k-1} x_i \bigg) \otimes (x_k+y_k) \otimes \bigg( \bigotimes_{i=k+1}^n (x_i + y_i) \bigg)
\end{align*}
\end{proof}

\subsubsection{Proof of Theorem \ref{theorem:Reconstruction}}
\label{subsec:ProofReconstruction}

This first Proposition will allow us to prove the existence of the limit described in Equation \eqref{eq:theorem:Reconstruction} using an easy application of the Sewing lemma (see for instance \cite{frizhairer2014}*{Lemma 4.2}). 

\begin{proposition}
\label{proposition:Reconstruction}
Let $\alpha, \beta>0$ and let $\gamma:=\inf\{ \scG_{\alpha, \beta}[T]: T \in \scF, \scG_{\alpha, \beta}[T]>1-\alpha \}$. Let $(p, q)$ be a pair of dual integrability functionals and suppose $\exists r>1$ such that, for any $i\in\{1, ..., d\}$,
\begin{equation}
\label{eq:proposition:Reconstruction-pq}
\frac{1}{r} := \sup_{T\in \scF_0^{\gamma-\alpha, \alpha, \beta}} \Big( \frac{1}{p_x[T]} + \frac{1}{q\big[ \lfloor T \rfloor_i \big]} \Big)< 1. 
\end{equation}

Let $i \in\{1, ..., d\}$, let $\rw$ be a $(\scH^{\gamma, \alpha, \beta}, p, q)$- probabilistic rough path and let $\bX \in \cD_{\rw}^{\gamma, p, q}$. Define $\Xi:[0,1]^2 \to L^r \big( \Omega, \bP; \bR^e \big)$ by
\begin{align*}
\Xi_{s, t}(\omega_0):=& \sum_{T \in \scF_0}^{\gamma-\alpha,\alpha, \beta} \bE^{H^T} \bigg[ \Big\langle \bX_s, T \Big\rangle(\omega_0, \omega_{H^T}) \cdot \Big\langle \rw_{s, t}, \lfloor T \rfloor_i \Big\rangle (\omega_0, \omega_{H^T}) \bigg], 
\\
\Xi_{s, t, u}(\omega_0) :=& \Xi_{s, u}(\omega_0) - \Xi_{s,t}(\omega_0) - \Xi_{t, u}(\omega_0). 
\end{align*}
Then 
\begin{equation}
\label{eq:proposition:Reconstruction}
\bP\bigg[ \sup_{s, t, u} \frac{ \Xi_{s, t, u}(\omega_0) }{|u-s|^{\gamma+\alpha}} < \infty \bigg] = 1, 
\quad \mbox{and} \quad 
\bE^0\bigg[ \sup_{s, t, u} \frac{ \big| \Xi_{s, t, u}(\omega_0) \big|^r }{|u-s|^{r(\gamma+\alpha)}}  \bigg] < \infty. 
\end{equation}
\end{proposition}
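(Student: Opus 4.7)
The plan is to expand $\Xi_{s,t,u}$ by simultaneously invoking Chen's relation for $\rw$ and the RCRP decomposition for $\bX$, identify the massive algebraic cancellation between the two, and then bound the surviving remainder by Hölder's inequality against the dual integrability functionals $(p_x,q)$.

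I would begin by writing $\Xi_{s,u}-\Xi_{s,t} = \sum_{T \in \scF_0^{\gamma-\alpha,\alpha,\beta}} \bE^{H^T}[\langle \bX_s, T\rangle \cdot (\langle \rw_{s,u}, \lfloor T\rfloor_i\rangle - \langle \rw_{s,t}, \lfloor T\rfloor_i\rangle)]$ and expanding the increment of $\rw$ via the coproduct identity
\begin{equation*}
\Delta[\lfloor T\rfloor_i] = \lfloor T\rfloor_i \times^{H^T} \rId + (I \,\tilde\otimes^{H^T}\, \lfloor\cdot\rfloor_i) \circ \Delta[T],
\end{equation*}
which combined with $\rw_{s,u} = \rw_{s,t} \ast \rw_{t,u}$ yields a sum indexed by cuts of $T$. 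The ``empty cut'' piece $\langle \rw_{t,u}, \lfloor T\rfloor_i\rangle$ cancels exactly with the portion of $\Xi_{t,u}$ obtained by substituting $\bX_t = \bX_s + \bX_{s,t}$ and keeping the $\bX_s$ contribution. For the non-trivial cut contributions, I would invoke Lemma \ref{lemma:CoprodCountIdent2} (giving $c(\lfloor T\rfloor_i, \Upsilon, \lfloor Y\rfloor_i) = c(T, \Upsilon, Y)$) and match them against the RCRP jet of $\langle \bX_{s,t}, Y\rangle$ given by \eqref{eq:definition:RandomControlledRP1}--\eqref{eq:definition:RandomControlledRP2}. The matching uses the critical identity $\bE^{H^Y} \bE^{E^{T,\Upsilon,Y}}[\,\cdot\,] = \bE^{H^T}[\,\cdot\,]$ for integrands factoring through the coupling $H^T \in H^\Upsilon \,\tilde\cup\, H^Y$, a consequence of Definitions \ref{definition:CouplingFunctions} and \ref{definition:E-Set} together with Lemma \ref{lemma:ESets}. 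After this cancellation, only two classes of terms survive: a $\bX^{\sharp}$-remainder collection
\begin{equation*}
-\sum_{Y \in \scF_0^{\gamma-\alpha,\alpha,\beta}} \bE^{H^Y}\bigl[ \langle \bX^{\sharp}_{s,t}, Y\rangle \langle \rw_{t,u}, \lfloor Y\rfloor_i\rangle \bigr],
\end{equation*}
together with a ``boundary'' collection indexed by $T' \in \scF^{\gamma-,\alpha,\beta} \setminus \scF^{\gamma-\alpha,\alpha,\beta}$ arising from the mismatch between the summation range of $\Xi$ and the jet range in the RCRP definition, which, after applying Chen's relation in reverse on $\lfloor T'\rfloor_i$, may be rewritten as $\bE^{H^{T'}}\bigl[\langle \bX_s, T'\rangle \cdot \langle \rw_{s,u} - \rw_{s,t} - \rw_{t,u}, \lfloor T'\rfloor_i\rangle\bigr]$.

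Finally, I would estimate both classes using Hölder's inequality against the dual pair $(p_x,q)$: the $\bX^{\sharp}$ contribution yields $|u-s|^{\gamma+\alpha}$ directly, since $\langle \bX^{\sharp}_{s,t}, Y\rangle$ has regularity $\gamma - \scG_{\alpha,\beta}[Y]$ in $L^{p_x[Y]}$ and $\langle \rw_{t,u}, \lfloor Y\rfloor_i\rangle$ has regularity $\scG_{\alpha,\beta}[Y]+\alpha$ in $L^{q[\lfloor Y\rfloor_i]}$, summing to $\gamma+\alpha$. The boundary contribution is controlled by the product of $\|\langle \bX_s, T'\rangle\|_{p_x[T']}$ and the Chen-type defect of $\rw$ at $\lfloor T'\rfloor_i$, which, because $\scG_{\alpha,\beta}[\lfloor T'\rfloor_i]=\scG_{\alpha,\beta}[T']+\alpha>\gamma$, also matches the required exponent. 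The $L^r$-integrability then follows from the hypothesis \eqref{eq:proposition:Reconstruction-pq} via the pairing of $p_x[T]$ and $q[\lfloor T\rfloor_i]$, and the almost-sure pathwise statement follows by combining the $L^r$ bound with a Kolmogorov-type continuity argument. The main obstacle throughout is the bookkeeping of the coupled probability spaces: identifying the fully coupled expectation $\bE^{H^T}$ emerging from the Chen expansion with the composite $\bE^{H^Y}\bE^{E^{T,\Upsilon,Y}}$ appearing in the RCRP jet, and tracking how the maps $\phi^{T,\Upsilon,Y}$ and $\varphi^{T,\Upsilon,Y}$ align variables across distinct $\Omega^{\times|H|}$ spaces---a subtlety with no counterpart in the classical Gubinelli theory.
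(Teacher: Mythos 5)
Your overall strategy is the one the paper uses: expand $\Xi_{s,u}$ via Chen's relation and the coproduct identity for $\Delta[\lfloor T\rfloor_i]$, cancel the trivial-cut piece against the $\bX_s$-part of $\Xi_{t,u}$, regroup the nontrivial cuts by their root $\lfloor Y\rfloor_i$ using $c'(\lfloor T\rfloor_i,\Upsilon,\lfloor Y\rfloor_i)=c'(T,\Upsilon,Y)$ together with Fubini over the hyperedges of $T$ not connected to $Y$, identify the resulting inner sum with the jet of Definition \ref{definition:RandomControlledRP}, and finish with H\"older against $(p,q)$. Up to that point you coincide with the paper, whose proof concludes that the bracketed quantity is exactly $-\langle\bX^{\sharp}_{s,t},Y\rangle$, so that $\Xi_{s,t,u}=-\sum_{Y}\bE^{H^Y}\big[\langle\bX^{\sharp}_{s,t},Y\rangle\cdot\langle\rw_{t,u},\lfloor Y\rfloor_i\rangle\big]$, and both claims in \eqref{eq:proposition:Reconstruction} then follow from H\"older alone; no Kolmogorov-type continuity argument is needed or used, since the almost-sure statement is the pathwise H\"older bound combined with the a.s. finiteness of the norms built into Definitions \ref{definition:ProbabilisticRoughPaths} and \ref{definition:RandomControlledRP}, and the $L^r$ statement is the same bound integrated using \eqref{eq:proposition:Reconstruction-pq}.

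The genuine gap is your treatment of the ``boundary collection'' indexed by $T'$ with $\scG_{\alpha,\beta}[T']\in(\gamma-\alpha,\gamma)$. First, rewriting those terms as $\bE^{H^{T'}}\big[\langle\bX_s,T'\rangle\cdot\langle\rw_{s,u}-\rw_{s,t}-\rw_{t,u},\lfloor T'\rfloor_i\rangle\big]$ is not meaningful: $\scG_{\alpha,\beta}[\lfloor T'\rfloor_i]=\scG_{\alpha,\beta}[T']+\alpha>\gamma$, so $\lfloor T'\rfloor_i\notin\scF^{\gamma,\alpha,\beta}$ and $\rw$ is simply not defined on it --- this is precisely why the Riemann sum in $\Xi$ is truncated at $\scF_0^{\gamma-\alpha,\alpha,\beta}$; ``applying Chen's relation in reverse'' there presupposes an extension of the probabilistic rough path that you neither have nor construct. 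Second, even taken formally, such a term is of order $|u-s|^{\scG_{\alpha,\beta}[T']+\alpha}$, and since any grade strictly below $\gamma$ is at most $1-\alpha$ (by the definition of $\gamma$ as the smallest grade exceeding $1-\alpha$), this exponent is at most $1$, hence strictly smaller than the exponent $\gamma+\alpha$ demanded by \eqref{eq:proposition:Reconstruction}; it does not ``match the required exponent,'' and it would also be fatal to the subsequent sewing argument, which needs an exponent exceeding $1$. You have in fact put your finger on the one delicate piece of bookkeeping in this step --- the mismatch between the summation range $\scF_0^{\gamma-\alpha,\alpha,\beta}$ of $\Xi$ and the jet range $\scF^{\gamma-,\alpha,\beta}$ in \eqref{eq:definition:RandomControlledRP1}--\eqref{eq:definition:RandomControlledRP2} --- but the paper's proof asserts that after regrouping the inner sum reproduces the full jet, leaving exactly the $\bX^{\sharp}$ collection and nothing else; if you maintain that terms with $\scG_{\alpha,\beta}[T']\in(\gamma-\alpha,\gamma)$ survive, you must show that they cancel or vanish (or modify $\Xi$ so that they do), because estimating them as you propose cannot close the proof of the stated bound.
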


\begin{proof}
Using the coproduct identity and relabelling appropriately,
\begin{align*}
\Xi_{s, t, u}(\omega_0) =& \sum_{T \in \scF_0}^{\gamma-\alpha,\alpha, \beta} \bE^{H^T}\bigg[ \Big\langle \bX_s, T \Big\rangle (\omega_0, \omega_{H^T}) \cdot \Big\langle \rw_{s,t} \otimes^{H^T} \rw_{t, u}, \Delta'\big[ \lfloor T \rfloor_i \big] \Big\rangle (\omega_0, \omega_{H^T}) \bigg]
\\
& - \sum_{T \in \scF_0}^{\gamma-\alpha,\alpha, \beta} \bE^{H^T} \bigg[ \Big\langle \bX_{s, t}, T \Big\rangle (\omega_0, \omega_{H^T}) \cdot \Big\langle \rw_{t, u}, \lfloor T \rfloor_i \Big\rangle (\omega_0, \omega_{H^T}) \bigg]
\\
=& \sum_{T \in \scF_0}^{\gamma-\alpha,\alpha, \beta} \bE^{H^T}\bigg[ \Big\langle \bX_s, T \Big\rangle (\omega_0, \omega_{H^T}) \cdot \sum_{\Upsilon \in \scF} \sum_{Y \in \scF} c'\Big( \lfloor T\rfloor_i, \Upsilon, \lfloor Y \rfloor_i \Big) 
\\
& \qquad \cdot \Big\langle \rw_{s,t}, \Upsilon \Big\rangle (\omega_0, \omega_{\psi^{\Upsilon, T}[H^\Upsilon]}) \otimes \Big\langle \rw_{t, u}, \lfloor Y \rfloor_i \Big\rangle (\omega_0, \omega_{\psi^{Y, T}[H^Y]}) \bigg]
\\
& - \sum_{Y \in \scF_0}^{\gamma-\alpha,\alpha, \beta} \bE^{H^Y} \bigg[ \Big\langle \bX_{s, t}, Y \Big\rangle (\omega_0, \omega_{H^Y}) \cdot \Big\langle \rw_{t, u}, \lfloor Y \rfloor_i \Big\rangle (\omega_0, \omega_{H^Y}) \bigg]
\\
=&\sum_{Y\in \scF_0}^{\gamma-\alpha,\alpha, \beta} \bE^{H^Y}\Bigg[ \bigg( \bigg( \sum_{\substack{T \in \scF \\ \Upsilon \in \scF}}^{\gamma-, \alpha, \beta} c'\Big( \lfloor T \rfloor_i, \Upsilon, \lfloor Y\rfloor_i \Big) \cdot \bE^{E^{T, \Upsilon, Y}} \bigg[ \Big\langle \bX_s, T \Big\rangle (\omega_0, \omega_{\phi^{T, \Upsilon, Y}[H^T]}) 
\\
&\quad \cdot \Big\langle \rw_{s, t}, \Upsilon \Big\rangle (\omega_0, \omega_{\varphi^{T, \Upsilon, Y}[H^\Upsilon]}) \bigg] \bigg) - \Big\langle \bX_{s, t}, Y \Big\rangle (\omega_0, \omega_{H^Y}) \bigg) \cdot \Big\langle \rw_{t, u} \lfloor Y \rfloor_i \Big\rangle (\omega_0, \omega_{H^Y}) \Bigg]. 
\end{align*}
Notice that to get the last expression, we used Fubini's theorem by distinguishing in $H^T$ between the hyperedges that are connected to $Y$ and those that are not.

Now using the Equation \eqref{eq:lemma:CoprodCountIdent2} and applying Definition \ref{definition:RandomControlledRP} gives
$$
\Xi_{s, t, u}(\omega_0) = - \sum_{Y \in \scF_0}^{\gamma-\alpha, \alpha, \beta} \bE^{H^Y}\bigg[ \Big\langle \bX_{s, t}^{\sharp}, Y \Big\rangle (\omega_0, \omega_{H^Y}) \cdot \Big\langle \rw_{t, u}, \lfloor Y\rfloor_i \Big\rangle (\omega_0, \omega_{H^Y}) \bigg]. 
$$
An application of the H\"older inequality via Equation \eqref{eq:proposition:Reconstruction1} gives
\begin{align}
\nonumber
\bP\bigg[& \sup_{s, t, u\in[0,1]} \frac{\big| \Xi_{s, t, u}(\omega_0) \big|}{|u-s|^{\gamma+\alpha}}< \infty \bigg]
\\ 
\label{eq:proposition:Reconstruction.1}
&\geq \bP\Bigg[ \sum_{Y \in \scF_0}^{\gamma-\alpha, \alpha, \beta} \Big\| \big\langle \rw, \lfloor Y\rfloor_i \big\rangle(\omega_{0}) \Big\|_{q[\lfloor Y \rfloor_i], \scG_{\alpha, \beta}[Y]+\alpha}
\cdot 
\Big\| \big\langle \bX^{\sharp}, Y \big\rangle (\omega_{0}) \Big\|_{p[Y], \gamma - \scG_{\alpha, \beta}[Y]} < \infty \Bigg]=1. 
\end{align}

Further, thanks to Equation \eqref{eq:proposition:Reconstruction-pq} and the H\"older inequality again
\begin{align*}
\bE^0\bigg[& \sup_{s, t, u\in[0,1]} \frac{\big| \Xi_{s, t, u}(\omega_0) \big|^r}{|u-s|^{r(\gamma+\alpha)}} \bigg]^{\tfrac{1}{r}} 
\\
\lesssim &
\sum_{Y\in \scF_0}^{\gamma-\alpha, \alpha, \beta} 
\bE^0\bigg[ \Big\| \big\langle \rw, \lfloor Y\rfloor_i \big\rangle(\omega_0) \Big\|_{q[\lfloor Y \rfloor_i], \scG_{\alpha, \beta}[Y]+\alpha}^r
\cdot
\Big\| \big\langle \bX^{\sharp}, Y \big\rangle(\omega_0) \Big\|_{p[Y], \gamma - \scG_{\alpha, \beta}[Y]}^r \bigg]^{\tfrac{1}{r}}
\\
\lesssim &
\sum_{Y\in \scF_0}^{\gamma-\alpha, \alpha, \beta} 
\Big\| \big\langle \rw, \lfloor Y\rfloor_i \big\rangle \Big\|_{q[\lfloor Y \rfloor_i], \scG_{\alpha, \beta}[Y]+\alpha}
\cdot
\Big\| \big\langle \bX^{\sharp}, Y \big\rangle \Big\|_{p[Y], \gamma - \scG_{\alpha, \beta}[Y]}
< \infty. 
\end{align*}
\end{proof}

\begin{proof}[Proof of Theorem \ref{theorem:Reconstruction}]
First of all, we verify that $p_y:\scF_0^{\gamma, \alpha, \beta}\to (1, \infty)$ as defined in Equation \eqref{eq:proposition:Reconstruction1} paired with $q$ form a pair of dual integrability functionals. By construction, $p_y$ satisfies Equation \eqref{eq:IntegrabilityP1} so that $(p_y, q)$ is a pair of dual integrability functionals. 

Let $D_n$ be a sequence of partitions of $[0,1]$ such that the mesh size $|D_n|\to 0$ as $n\to \infty$. By the Sewing Lemma (see for instance \cite{frizhairer2014}*{Lemma 4.2}) and Equation \eqref{eq:proposition:Reconstruction}, the limit in Equation \eqref{eq:theorem:Reconstruction} exists and we define
$$
\int_0^1 X_r dW_r(\omega_0) := \lim_{|D_n| \to 0} \sum_{[s, t]\in D_n} \Xi_{s, t}(\omega_0), 
$$ 
where the limit is $\bP$-almost surely. Similarly, for any $u, v\in[0,1]$ and $D_n^{[u, v]}$ a sequence of partitions of $[u, v]$ such that the mesh size $|D_n^{[u, v]}| \to 0$ as $n\to \infty$, we can define
$$
\int_u^v X_r dW_r(\omega_0):= \lim_{|D_n^{[u, v]}|\to \infty} \sum_{[s, t]\in D_n^{[u, v]}} \Xi_{s, t}(\omega_0)
$$
and obtain that $\forall u ,v, w\in[0,1]$
$$
\int_u^v X_r dW_r(\omega_0) + \int_v^w X_r dW_r(\omega_0) = \int_u^w X_r dW_r(\omega_0). 
$$
Further, a dominated convergence type result ensures this also converges in $L^{p_y[\rId]}\big( \Omega, \bP; \bR^e \big)$. 

Finally, we have that there exists a constant $C>0$ dependent on $\alpha, \gamma+\alpha$ and the random variable
$$
\big\| \Xi^\flat(\omega_0) \big\|_{\gamma+\alpha}: = \sup_{s, t, u\in[0,1]} \frac{ \big\| \Xi_{s, t, u}(\omega_0) \big\| }{|u-s|^{\gamma+\alpha}}
$$
such that
$$
\Big| \int_s^t X_r dW_r(\omega_0) - \Xi_{s, t}(\omega_0) \Big| \leq C \big\| \Xi^\flat(\omega_0) \big\|_{\gamma+\alpha} \cdot  \big| t-s \big|^{\gamma+\alpha} . 
$$
However, by applying Equation \eqref{eq:proposition:Reconstruction.1} to this, we have proved Equation \eqref{eq:theorem:Reconstruction:Inequality}. 

Using the construction from Equation \eqref{eq:theorem:Reconstruction:Phi} and applying Equation \eqref{eq:theorem:Reconstruction:Inequality}, we have that for $s, t\in [u, v]$, 
\begin{align}
\nonumber
&\bigg| \Big\langle \Phi[ \bX]_{s, t}^{\sharp}, \rId \Big\rangle(\omega_0) \bigg|
\\
\nonumber
&= \bigg| \Big\langle \Phi[ \bX]_{s, t}, \rId \Big\rangle(\omega_0) - \sum_{T\in \scF}^{\gamma-, \alpha, \beta} \bE^{H^T} \bigg[ \Big\langle \Phi[\bX]_s, T\Big\rangle(\omega_0, \omega_{H^T}) \cdot \Big\langle \rw_{s, t}, T \Big\rangle(\omega_0, \omega_{H^T}) \bigg] \bigg|
\\
\nonumber
&\leq C \Bigg( \sum_{T \in \scF_0}^{\gamma-\alpha, \alpha, \beta} \sup_{r, s, t\in[u,v] } \frac{\bE^{H^T}\bigg[ \Big| \big\langle \bX_{r, s}^{\sharp}, T \big\rangle (\omega_{0}, \omega_{H^T}) \Big| \cdot \Big| \big\langle \rw_{s, t}, \lfloor T\rfloor \big\rangle(\omega_{0}, \omega_{H^T}) \Big| \bigg] }{|s-r|^{\gamma - \scG_{\alpha, \beta}[T]} \cdot |t-s|^{\scG_{\alpha, \beta}[T]+\alpha}} \Bigg) \cdot |v-u|^{\gamma + \alpha} 
\\
\label{eq:norm-Phi-1.1}
&+ \Bigg( \sum_{\substack{T\in \scF \\ \scG_{\alpha, \beta}[ \lfloor T \rfloor ] =\gamma}} \sup_{s, t\in[u, v]}
\frac{\bE^{H^T} \bigg[ \Big| \Big\langle \bX_s, T \Big\rangle(\omega_0, \omega_{H^T}) \Big| \cdot \Big| \Big\langle \rw_{s, t}, \big\lfloor T\big\rfloor \Big\rangle(\omega_0, \omega_{H^T}) \Big| \bigg]}{|t-s|^{\scG_{\alpha, \beta}[\lfloor T \rfloor]}} \Bigg) \cdot |v-u|^{\gamma}. 
\end{align}
Thanks to Equation \eqref{eq:proposition:Reconstruction1}, we have that for any $[u, v]\subseteq [0,1]$, 
\begin{align*}
\sup_{s, t\in[u,v]}& \frac{ \Big| \big\langle \Phi[ \bX]_{s, t}^{\sharp}, \rId \big\rangle(\omega_0) \Big|}{|t-s|^{\gamma}} 
\\
\leq& C \sum_{T\in \scF_0}^{\gamma - \alpha, \alpha, \beta} \Big\| \big\langle \bX^{\sharp}, T\big\rangle(\omega_0) \Big\|_{p_x[T],\gamma - \scG_{\alpha, \beta}[T]}
\cdot 
\Big\| \big\langle \rw, \lfloor T \rfloor \big\rangle(\omega_0) \Big\|_{q[\lfloor T\rfloor], \scG_{\alpha, \beta}[\lfloor T \rfloor]} \cdot |v-u|^{\alpha}
\\
&+ \sum_{\substack{T\in \scF \\ \scG_{\alpha, \beta}[ \lfloor T \rfloor ] = \gamma}} \sup_{s\in[u, v]} \Big\| \big\langle \bX_s, T\big\rangle(\omega_0) \Big\|_{p_x[T]}  \cdot \Big\| \big\langle \rw, \lfloor T \rfloor \big\rangle(\omega_0) \Big\|_{q[\lfloor T \rfloor ],\scG_{\alpha, \beta}[\lfloor T \rfloor]}
\end{align*}
and for $T\in \scF$ such that $\scG_{\alpha, \beta}[T]=\gamma-\alpha$, an application of Proposition \ref{prop:Regu-RCRP} yields
\begin{equation}
\label{eq:norm-Phi-1.6}
\sup_{s\in[u, v]} \Big\| \big\langle \bX_s, T\big\rangle(\omega_0) \Big\|_{p_x[T]} \leq \Big\| \big\langle \bX_u, T\big\rangle(\omega_0) \Big\|_{p_x[T]} + \Big\| \big\langle \bX^{\sharp}, T\big\rangle(\omega_0) \Big\|_{p_x[T], \alpha} \cdot |v-u|^{\alpha}. 
\end{equation}
Thus we have that $(\Omega_0, \bP)$-almost surely
\begin{align}
\nonumber
\sup_{s, t\in[u,v]}& \frac{ \Big| \big\langle \Phi[ \bX]_{s, t}^{\sharp}, \rId \big\rangle(\omega_0) \Big|}{|t-s|^{\gamma}} 
\\
\nonumber
\leq& C \sum_{T\in \scF_0}^{\gamma - \alpha, \alpha, \beta} \Big\| \big\langle \bX^{\sharp}, T\big\rangle(\omega_0) \Big\|_{p_x[T],\gamma - \scG_{\alpha, \beta}[T]}
\cdot 
\Big\| \big\langle \rw, \lfloor T \rfloor \big\rangle(\omega_0) \Big\|_{q[\lfloor T\rfloor], \scG_{\alpha, \beta}[\lfloor T \rfloor]} \cdot |v-u|^{\alpha}
\\
\label{eq:norm-Phi-1.2}
&+ \sum_{\substack{T\in \scF \\ \scG_{\alpha, \beta}[ \lfloor T \rfloor ] = \gamma}} \Big\| \big\langle \bX_u, T\big\rangle(\omega_0) \Big\|_{p_x[T]}  \cdot \Big\| \big\langle \rw, \lfloor T \rfloor \big\rangle(\omega_0) \Big\|_{q[\lfloor T \rfloor ],\scG_{\alpha, \beta}[\lfloor T \rfloor]}. 
\end{align}
Similarly, thanks to Equation \eqref{eq:proposition:Reconstruction1} and the H\"older inequality we also have that
\begin{align}
\nonumber
\bE^0\Bigg[& \sup_{s, t\in [u,v]} \frac{\Big| \big\langle \Phi[\bX]_{s, t}^{\sharp}, \rId\big\rangle(\omega_0)\Big|^{p_y[\rId]}}{|t-s|^{p_y[\rId] \gamma}} \Bigg]^{\tfrac{1}{p_y[\rId]}}
\\
\nonumber
\lesssim& \sum_{T\in\scF_0}^{\gamma-\alpha, \alpha, \beta}\bE^0\bigg[ \Big\| \big\langle \bX^{\sharp}, T\big\rangle(\omega_0) \Big\|_{p_x[T],\gamma - \scG_{\alpha, \beta}[T]}^{p_y[\rId]}
\cdot 
\Big\| \big\langle \rw, \lfloor T \rfloor \big\rangle(\omega_0) \Big\|_{q[\lfloor T\rfloor], \scG_{\alpha, \beta}[\lfloor T \rfloor]}^{p_y[\rId]}\bigg]^{\tfrac{1}{p_y[\rId]}} \cdot |v-u|^{\alpha}
\\
\nonumber
&+\sum_{\substack{T\in \scF \\ \scG_{\alpha, \beta}[ \lfloor T \rfloor ] = \gamma}} \bE^0\bigg[ \Big\| \big\langle \bX_u, T\big\rangle(\omega_0) \Big\|_{p_x[T]}^{p_y[\rId]}
\cdot \Big\| \big\langle \rw, \lfloor T \rfloor \big\rangle(\omega_0) \Big\|_{q[\lfloor T \rfloor ],\scG_{\alpha, \beta}[\lfloor T \rfloor]}^{p_y[\rId]} \bigg]^{\tfrac{1}{p_y[\rId]}}
\\
\nonumber
\lesssim& \sum_{T\in\scF_0}^{\gamma-\alpha, \alpha, \beta} \Big\| \big\langle \bX^{\sharp}, T\big\rangle \Big\|_{p_x[T],\gamma - \scG_{\alpha, \beta}[T]}
\cdot 
\Big\| \big\langle \rw, \lfloor T \rfloor \big\rangle \Big\|_{q[\lfloor T\rfloor], \scG_{\alpha, \beta}[\lfloor T \rfloor]} \cdot |v-u|^{\alpha}
\\
\label{eq:norm-Phi-1.2-integrated}
&+ \sum_{\substack{T\in \scF \\ \scG_{\alpha, \beta}[ \lfloor T \rfloor ] = \gamma}} \Big\| \big\langle \bX_u, T\big\rangle \Big\|_{p_x[T]} 
\cdot 
\Big\| \big\langle \rw, \lfloor T \rfloor \big\rangle \Big\|_{q[\lfloor T \rfloor ],\scG_{\alpha, \beta}[\lfloor T \rfloor]}. 
\end{align}

Arguing in the same fashion, we also get 
\begin{align}
\nonumber
\Big\langle \Phi&[\bX]_{s, t}^{\sharp}, \lfloor Y \rfloor \Big\rangle (\omega_0, \omega_{H^Y} ) = \Big\langle \bX_{s, t}^{\sharp}, Y \Big\rangle (\omega_0, \omega_{H^Y} )
\\
\nonumber
&+ \sum_{\substack{T\in\scF, \Upsilon\in\scF \\ \scG_{\alpha, \beta}[T]\in [\gamma-\alpha, \gamma)}} c'\Big( T, \Upsilon, Y\Big) \cdot \bE^{E^{T, \Upsilon, Y}}\bigg[ \Big\langle \bX_s, T\Big\rangle(\omega_0, \omega_{\phi^{T, \Upsilon, Y}[H^T]})
\\
\label{eq:norm-Phi-1.3}
&\qquad \cdot 
\Big\langle \rw_{s,t}, \Upsilon \Big\rangle (\omega_0, \omega_{\varphi^{T, \Upsilon, Y}[H^\Upsilon]}) \bigg] 
\end{align}
so that an application of Equation \eqref{eq:proposition:Reconstruction1} along with Lemma \ref{lemma:(dual)_integrab-functional}, and Equation \eqref{eq:norm-Phi-1.6} yields that $(\Omega_0, \bP)$-almost surely
\begin{align}
\nonumber
\sup_{s,t\in[u,v]}& \frac{ \Big\| \big\langle \Phi[ \bX]_{s, t}^{\sharp}, \lfloor Y \rfloor \big\rangle(\omega_0) \Big\|_{p_y[ \lfloor Y \rfloor ]}  }{|t-s|^{\gamma - \scG_{\alpha, \beta}[\lfloor Y \rfloor] }} 
\leq
\Big\| \big\langle \bX^{\sharp}, Y \big\rangle(\omega_0) \Big\|_{p_x[Y],\gamma - \scG_{\alpha, \beta}[Y]} \cdot |v-u|^{\alpha}
\\
\nonumber
+& \sum_{\substack{T\in\scF, \Upsilon\in\scF \\ \scG_{\alpha, \beta}[T]\in [\gamma-\alpha, \gamma)}} c'\Big( T, \Upsilon, Y\Big) 
\cdot 
\bigg( \Big\| \big\langle \bX^{\sharp}, T\big\rangle(\omega_0) \Big\|_{p_x[T], \gamma-\scG_{\alpha, \beta}[T]} 
\cdot 
\Big\| \big\langle \rw, \Upsilon \big\rangle(\omega_0) \Big\|_{q[\Upsilon],\scG_{\alpha, \beta}[\Upsilon]}
\cdot 
|v-u|^{\alpha} 
\\
\label{eq:norm-Phi-1.4}
&\quad +
\Big\| \big\langle \bX_u, T\big\rangle(\omega_0) \Big\|_{p_x[T]}
\cdot 
\Big\| \big\langle \rw, \Upsilon \big\rangle(\omega_0) \Big\|_{q[\Upsilon],\scG_{\alpha, \beta}[\Upsilon]} \cdot |v-u|^{\scG_{\alpha, \beta}[T] - (\gamma - \alpha)} \bigg). 
\end{align}
In the same fashion as before, integrating through gives
\begin{align}
\nonumber
\bE^0\Bigg[& \sup_{s,t\in[u,v]} \frac{ \Big\| \big\langle \Phi[ \bX]_{s, t}^{\sharp}, \lfloor Y \rfloor \big\rangle(\omega_0) \Big\|_{p_y[ \lfloor Y \rfloor ]}^{p_y[\lfloor Y\rfloor]} }{|t-s|^{p_y[\lfloor Y\rfloor](\gamma - \scG_{\alpha, \beta}[\lfloor Y \rfloor]) }} \Bigg]
\leq
\Big\| \big\langle \bX^{\sharp}, Y \big\rangle \Big\|_{p_x[Y],\gamma - \scG_{\alpha, \beta}[Y]} \cdot |v-u|^{\alpha}
\\
\nonumber
+& \sum_{\substack{T\in\scF, \Upsilon\in\scF \\ \scG_{\alpha, \beta}[T]\in [\gamma-\alpha, \gamma)}} c'\Big( T, \Upsilon, Y\Big) 
\cdot 
\bigg( \Big\| \big\langle \bX^{\sharp}, T\big\rangle \Big\|_{p_x[T], \gamma-\scG_{\alpha, \beta}[T]} 
\cdot 
\Big\| \big\langle \rw, \Upsilon \big\rangle \Big\|_{q[\Upsilon],\scG_{\alpha, \beta}[\Upsilon]}
\cdot 
|v-u|^{\alpha} 
\\
\label{eq:norm-Phi-1.4-integrated}
&\quad +
\Big\| \big\langle \bX_u, T\big\rangle \Big\|_{p_x[T]}
\cdot 
\Big\| \big\langle \rw, \Upsilon \big\rangle \Big\|_{q[\Upsilon],\scG_{\alpha, \beta}[\Upsilon]} \cdot |v-u|^{\scG_{\alpha, \beta}[T] - (\gamma - \alpha)} \bigg)
\end{align}
Combining Equations \eqref{eq:norm-Phi-1.2} and \eqref{eq:norm-Phi-1.4} and applying Proposition \ref{prop:Regu-RCRP}, we get Equation \eqref{eq:theorem:Reconstruction:Phi-2}. 

Similarly, by summing over the terms in Equations \eqref{eq:norm-Phi-1.2-integrated} and \eqref{eq:norm-Phi-1.4-integrated} yields
\begin{align*}
\sum_{Y \in \scF_0}^{\gamma-, \alpha, \beta}& \sup_{s,t\in[u,v]} \frac{ \Big\| \big\langle \Phi[ \bX]_{s, t}^{\sharp}, Y \big\rangle \Big\|_{p_y[ Y ]} }{|t-s|^{\gamma - \scG_{\alpha, \beta}[Y] }}  
\\
\lesssim& \Bigg( \sum_{T \in \scF_0}^{\gamma-\alpha, \alpha, \beta} 
\Big\| \big\langle \bX^{\sharp}, T \big\rangle \Big\|_{p_x[T],\gamma - \scG_{\alpha, \beta}[T]} 
\cdot \bigg(
\Big\| \big\langle \rw, \lfloor T \rfloor \big\rangle \Big\|_{q[T],\scG_{\alpha, \beta}[T] + \alpha} + 1 \bigg) \cdot |v-u|^{\alpha}
\\
&+ \sum_{\substack{ T\in \scF \\ \scG_{\alpha, \beta}[T]\in [\gamma-\alpha, \gamma)}} \Big\| \big\langle \bX^{\sharp}, T\big\rangle \Big\|_{p_x[T], \gamma-\scG_{\alpha, \beta}[T]}
\cdot 
\bigg( \sum_{\Upsilon\in \scF}^{\gamma-, \alpha, \beta} \Big\| \big\langle \rw, \Upsilon\big\rangle \Big\|_{q[\Upsilon],\scG_{\alpha, \beta}[\Upsilon]} \bigg) \cdot |v-u|^{\alpha}
\\
&+ \sum_{\substack{ T\in \scF \\ \scG_{\alpha, \beta}[T]\in [\gamma-\alpha, \gamma)}} \Big\| \big\langle \bX_u, T \big\rangle \Big\|_{p_x[T]} 
\cdot
\bigg( \Big\| \big\langle \rw, \lfloor T \rfloor \big\rangle \Big\|_{q[\lfloor T \rfloor], \scG_{\alpha, \beta}[\lfloor T \rfloor]} 
\\
&\qquad +
\sum_{\Upsilon\in \scF}^{\gamma, \alpha, \beta} \Big\| \big\langle \rw, \Upsilon \big\rangle \Big\|_{q[\Upsilon], \scG_{\alpha, \beta}[\Upsilon]} \bigg) \cdot |v-u|^{\scG_{\alpha, \beta}[T] - (\gamma-\alpha)} \Bigg). 
\end{align*}
\end{proof}

\subsubsection{Products and Expectations of RCRPs}

In this section, we study products (see Proposition \ref{proposition:RCRP-2product}) and expectations (see Proposition \ref{proposition:ExpectationRCRP}) of random controlled rough paths. Then we combine these techniques to achieve Proposition \ref{proposition:productRCRP} which plays a key role in the proof of Theorem \ref{theorem:ContinIm-RCRPs}. 

\begin{proposition}
\label{proposition:RCRP-2product}
Let $\alpha, \beta>0$ and $\gamma:=\inf\{ \scG_{\alpha, \beta}[T]: T \in \scF, \scG_{\alpha, \beta}[T]>1-\alpha \}$. Let $(\Omega, \cF, \bP)$ be a probability space and let $\hat{\scH}^{\gamma, \alpha, \beta}(\Omega)$ denote the $L^0\big( \Omega, \bP; \bR^e \otimes \bR^e \big)$-module
$$
\hat{\scH}^{\gamma, \alpha, \beta}(\Omega) = \bigoplus_{T\in \scF_0}^{\gamma, \alpha, \beta} L^0\Big( \Omega \times \Omega^{\times |H^T|}, \bP \times \bP^{\times |H^T|}; \lin\big( (\bR^d)^{\otimes |N^T|}, \bR^e \otimes \bR^e\big) \Big) \cdot T
$$
so that $\big( \hat{\scH}^{\gamma, \alpha, \beta}(\Omega), \circledast, \rId, \Delta, \epsilon, \scS\big)$ is a coupled Hopf algebra (which is very similar to the first claim in the statement of Theorem \ref{theorem:Reconstruction} except that the arrival space right above is not the same). 

Let $(p_1, q)$ and $(p_2, q)$ be two pairs of dual integrability functionals such that 
\begin{equation}
\label{eq:proposition:RCRP-2product-pq-cond}
\sup_{T\in \scF^{\gamma, \alpha, \beta}} \frac{1}{q[T]} \leq \frac{1}{p_1[\rId]} + \frac{1}{p_2[\rId]}\leq 1. 
\end{equation}
Let $p:\scF_0^{\gamma, \alpha, \beta}\to [1, \infty)$ defined by
\begin{equation}
\label{eq:proposition:RCRP-2product-pq}
\frac{1}{p[\rId]} = \frac{1}{p_1[\rId]} + \frac{1}{p_2[\rId]}, 
\quad
\frac{1}{p[T]} =  \frac{1}{p[\rId]} - \frac{1}{q[T]}. 
\end{equation}
Let $\rw$ be a $\big( \scH^{\gamma, \alpha, \beta}(\Omega), p, q\big)$-probabilistic rough path, let $\bX^1 \in \cD_{\rw}^{\gamma, p_1, q}\big( \scH^{\gamma, \alpha, \beta}(\Omega) \big)$ and let $\bX^2 \in \cD_{\rw}^{\gamma, p_2, q}\big( \scH^{\gamma, \alpha, \beta}(\Omega) \big)$. We define
\begin{equation}
\label{eq:proposition:RCRP-2product}
\Big\langle \bY_t, \rId \Big\rangle(\omega_0) = \Big\langle \bX^1_t, \rId \Big\rangle(\omega_0) \otimes \Big\langle \bX^2_t, \rId\Big\rangle(\omega_0). 
\end{equation}
Then there is a random controlled rough path $\bY\in \cD_{\rw}^{\gamma, p, q}\big( \hat{\scH}^{\gamma, \alpha, \beta}(\Omega) \big)$ that satisfies Equation \eqref{eq:proposition:RCRP-2product} and $\forall T \in \scF_0^{\gamma-, \alpha, \beta}$, 
\begin{equation}
\label{eq:proposition:RCRP-2product:GubDeriv}
\Big\langle \bY_s, T \Big\rangle(\omega_0, \omega_{H^T}) = \sum_{\substack{T_1, T_2\in \scF_0\\ T_1\circledast T_2 = T}} \Big\langle \bX_s^1, T_1 \Big\rangle(\omega_0, \omega_{H^{T_1}}) \otimes \Big\langle \bX_s^2, T_2 \Big\rangle(\omega_0, \omega_{H^{T_2}}). 
\end{equation}
\end{proposition}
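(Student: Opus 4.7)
The plan is to verify directly that the path $\bY$ defined by \eqref{eq:proposition:RCRP-2product:GubDeriv} (which collapses to \eqref{eq:proposition:RCRP-2product} when $T=\rId$) satisfies the structural equations \eqref{eq:definition:RandomControlledRP1}--\eqref{eq:definition:RandomControlledRP2} and the regularity bounds \eqref{eq:definition:RandomControlledRP4} with the integrability functional $p$. The first routine task is to check that $(p,q)$ is a dual pair in the sense of Definition \ref{definition:(dual)_integrab-functional}: the identity $1/p[\rId] = 1/p[T] + 1/q[T]$ is built into \eqref{eq:proposition:RCRP-2product-pq}, the assumption \eqref{eq:proposition:RCRP-2product-pq-cond} guarantees $p[T]\geq 1$, and the required identities on $q$ are inherited from those of $(p_1,q)$ and $(p_2,q)$.

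To obtain the jet expansion of $\langle \bY_{s,t},\rId\rangle$, I would start from the telescoping identity
\[
\langle \bY_{s,t},\rId\rangle = \langle \bX^1_{s,t},\rId\rangle\otimes \langle \bX^2_s,\rId\rangle + \langle \bX^1_s,\rId\rangle \otimes \langle \bX^2_{s,t},\rId\rangle + \langle \bX^1_{s,t},\rId\rangle \otimes \langle \bX^2_{s,t},\rId\rangle,
\]
then substitute the jet expansion of each $\langle \bX^i_{s,t},\rId\rangle$ given by \eqref{eq:definition:RandomControlledRP1}. On the product space $\Omega\times\Omega^{\times|H^{T_1}|}\times\Omega^{\times|H^{T_2}|}$, independence together with Fubini permit writing a product of expectations as a joint expectation; combined with the character property $\langle \rw_{s,t},T_1\rangle \otimes \langle \rw_{s,t},T_2\rangle = \langle \rw_{s,t},T_1\circledast T_2\rangle$, each cross term regroups as $\bE^{H^{T_1\circledast T_2}}[(\langle \bX^1_s,T_1\rangle\otimes\langle \bX^2_s,T_2\rangle)\cdot \langle \rw_{s,t},T_1\circledast T_2\rangle]$. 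Summing over decompositions $T=T_1\circledast T_2$ with $\scG_{\alpha,\beta}[T]<\gamma$ reproduces exactly the jet $\sum_{T}\bE^{H^T}[\langle \bY_s,T\rangle \langle \rw_{s,t},T\rangle]$ through the definition \eqref{eq:proposition:RCRP-2product:GubDeriv}, and all leftovers (cross terms with $\scG_{\alpha,\beta}[T_1\circledast T_2]\geq \gamma$, along with every factor containing $\langle \bX^{i,\sharp}_{s,t},\rId\rangle$) define $\langle \bY^\sharp_{s,t},\rId\rangle$.

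For a general $Y\in\scF^{\gamma-,\alpha,\beta}$, the same strategy applies but needs two additional algebraic facts. First, since $\Delta$ is a homomorphism with respect to $\circledast$, the coproduct counting function satisfies a product formula already recorded in Lemma \ref{lemma:CoprodCountIdent}: for $Y=T_1\circledast T_2$ one has $c'(\tilde T,\Upsilon,Y)=\sum c(\tilde T_1,\Upsilon_1,T_1)c(\tilde T_2,\Upsilon_2,T_2)$ with $\tilde T=\tilde T_1\circledast \tilde T_2$, $\Upsilon=\Upsilon_1\circledast \Upsilon_2$. Second, the coupling functions and the sets $E^{\tilde T,\Upsilon,Y}$ restrict coherently to the factors, via Lemma \ref{lemma:TechnicalPhiVarphi}. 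Applying \eqref{eq:definition:RandomControlledRP2} to each $\langle \bX^i_{s,t},T_i\rangle$, then summing over all pairs $(T_1,T_2)$ with $T_1\circledast T_2=Y$, and collecting by $(\tilde T,\Upsilon)$ yields exactly the jet prescribed by \eqref{eq:definition:RandomControlledRP2} for $\bY$; the difference defines $\langle \bY^\sharp_{s,t},Y\rangle$.

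The remaining task is the regularity estimate for $\bY^\sharp$: every remainder term is a sum of products of three kinds of factors, namely $L^{p_i}$-controlled quantities $\langle \bX^i_s,\cdot\rangle$, H\"older-regular remainders $\langle \bX^{i,\sharp}_{s,t},\cdot\rangle$ (of order $\gamma - \scG_{\alpha,\beta}[\cdot]$ in $t-s$), and rough-path increments $\langle \rw_{s,t},\Upsilon\rangle$ (of order $\scG_{\alpha,\beta}[\Upsilon]$). A H\"older inequality with exponents dictated by \eqref{eq:proposition:RCRP-2product-pq}, together with Lemma \ref{lemma:(dual)_integrab-functional} and the bounds furnished by Proposition \ref{prop:Regu-RCRP}, yields the $L^{p[Y]}$-bound $O(|t-s|^{\gamma - \scG_{\alpha,\beta}[Y]})$ required for \eqref{eq:definition:RandomControlledRP4}, with the extra expectation over $\omega_0$ handled by the same H\"older inequality and the definition of $\|\cdot\|_{\rw,\gamma,p_i,q}$. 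The principal obstacle is a purely combinatorial one: tracking how the disjoint-union structure $H^{T_1\circledast T_2}=H^{T_1}\cup H^{T_2}$ interacts with the coupling functions $\phi^{T_i,\Upsilon_i,T_i'}$, $\varphi^{T_i,\Upsilon_i,T_i'}$ and the decoupled hyperedge sets $E^{T_i,\Upsilon_i,T_i'}$ arising from the jets of $\bX^1$ and $\bX^2$. Lemmas \ref{lemma:TechnicalPhiVarphi} and \ref{lemma:CoprodCountIdent} reduce this to a bookkeeping exercise, but verifying compatibility with the nested conditional expectations requires care in spelling out the Fubini applications on the product probability spaces.
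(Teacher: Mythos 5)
Your proposal is correct and follows essentially the same route as the paper's proof: verification that $(p,q)$ is a dual pair, expansion of the product increment by substituting the jets of $\bX^1$ and $\bX^2$ and regrouping via the character/product structure, use of Lemmas \ref{lemma:CoprodCountIdent} and \ref{lemma:TechnicalPhiVarphi} to handle the coproduct counting function and the coupling maps for general $Y$, and H\"older estimates (with Lemma \ref{lemma:(dual)_integrab-functional}) for the remainder terms. The only difference is that the paper spells out the remainder $\langle \bY^{\sharp}_{s,t},\cdot\rangle$ in closed form, which your sketch leaves implicit but would recover by the same bookkeeping.
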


\begin{proof}
Firstly, we verify that $(p, q)$ as described in Equation \eqref{eq:proposition:RCRP-2product-pq} satisfy Definition \ref{definition:(dual)_integrab-functional}. 

Suppose that $(p_1, q)$ and $(p_2, q)$ are a dual pair of integrability functionals that satisfy Equation \eqref{eq:proposition:RCRP-2product-pq-cond}. Then by Equation \eqref{eq:proposition:RCRP-2product-pq}, $p[\rId]\in (1, \infty)$. Further, for any choice of $T\in \scF^{\gamma, \alpha, \beta}$, we have that $\tfrac{1}{p[\rId]} - \tfrac{1}{q[T]} \in (0,1)$ so that $p[T]\in (1, \infty)$. By construction, $p$ satisfies Equation \eqref{eq:IntegrabilityP1} so that $(p, q)$ is a pair of dual integrability functionals. 

Suppose that $\bY$ satisfies Equation \eqref{eq:proposition:RCRP-2product}. We want to verify that it can be restated in the form described in Equation \eqref{eq:definition:RandomControlledRP1} and \eqref{eq:definition:RandomControlledRP2}. Then we verify that $\bY$ satisfies Definition \ref{definition:RandomControlledRP} by confirming the integrability and regularity of the remainder terms. Firstly, 
\begin{align*}
\Big\langle \bY_{s, t}&, \rId\Big\rangle (\omega_0)
=
\bigg( \Big\langle \bX^1, \rId\Big\rangle \otimes \Big\langle \bX^2, \rId\Big\rangle \bigg)_{s, t}(\omega_0)
\\
=& \sum_{T \in \scF}^{\gamma-, \alpha, \beta} \bE^{H^T}\bigg[ \bigg( \sum_{\substack{T_1, T_2\in\scF_0 \\ T_1\circledast T_2 = T}} \Big\langle \bX_s^1, T_1 \Big\rangle(\omega_0, \omega_{H^{T_1}}) \otimes \Big\langle \bX_s^2, T_2\Big\rangle(\omega_0, \omega_{H^{T_2}}) \bigg) \cdot \Big\langle \rw_{s,t}, T \Big\rangle(\omega_0, \omega_{H^T}) \bigg]
\\
&+ \Big\langle \bY_{s,t}^{\sharp}, \rId \Big\rangle(\omega_0), 
\end{align*}
where 
\begin{align}
\nonumber
\Big\langle& \bY_{s,t}^{\sharp}, \rId \Big\rangle(\omega_0) 
\\
\nonumber
=& \sum_{\substack{T_1, T_2\in \scF_0 \\ \scG_{\alpha, \beta}[T_1\circledast T_2] \geq \gamma }}^{\gamma-, \alpha, \beta} \bE^{H^{T_1\circledast T_2}} \Big[ \big\langle \bX_{s}^1, T_1 \big\rangle(\omega_0, \omega_{H^{T_1}}) \otimes \big\langle \bX_{s}^2, T_2 \big\rangle(\omega_0, \omega_{H^{T_2}}) \cdot \big\langle \rw_{s, t}, T_1 \circledast T_2 \big\rangle(\omega_0, \omega_{H^{T_1\circledast T_2} }) \Big]
\\
\label{eq:proposition:RCRP-2product:sharp}
&+ \bigotimes_{i=1}^2 \Big( \big\langle \bX_s^{i} + \bX_{s, t}^i, \rId\big\rangle(\omega_0) \Big) - \bigotimes_{i=1}^2 \Big( \big\langle \bX_s^{i} + \fJ\big[ \bX^{i} \big]_{s, t}, \rId\big\rangle(\omega_0) \Big). 
\end{align}
Substituting in Equation \eqref{eq:proposition:RCRP-2product:GubDeriv} yields Equation \eqref{eq:definition:RandomControlledRP1}. 

By measuring the resulting regularity (and denoting $\eta=|v-u|$), thanks to Equation \eqref{eq:proposition:RCRP-2product-pq} we obtain that
\begin{align*}
\sup_{s, t\in [u,v]}& \frac{ \Big| \big\langle \bY^{\sharp}_{s, t}, \rId \big\rangle(\omega_0) \Big|}{|t-s|^{\gamma}}
\\
\leq& \sum_{\substack{T_1, T_2\in \scF_0 \\ \scG_{\alpha, \beta}[T_1\circledast T_2] \geq \gamma }}^{\gamma-, \alpha, \beta} \bigotimes_{i=1}^2 \Big( \sup_{t\in [u,v]} \Big\| \big\langle \bX_t^i, T_i\big\rangle(\omega_0) \Big\|_{p_i[T_i]} \cdot \Big\| \big\langle \rw, T_i\big\rangle(\omega_0) \Big\|_{q[T_i], \scG_{\alpha, \beta}[T_i]}\Big)
\cdot
\eta^{\scG_{\alpha, \beta}[T_1 \circledast T_2] - \gamma}
\\
&+ \sup_{s, t\in[u, v]} \Big| \big\langle \bX_s^1, \rId\big\rangle(\omega_0) + \big\langle \fJ[\bX^1]_{s,t}, \rId\big\rangle(\omega_0) \Big| \cdot \Big\| \big\langle \bX^{2,{\sharp}}, \rId\big\rangle(\omega_0) \Big\|_{\gamma}
\\
&+ \Big\| \big\langle \bX^{1,{\sharp}}, \rId\big\rangle(\omega_0) \Big\|_{\gamma} \cdot \sup_{s, t\in[u, v]} \Big| \big\langle \bX_s^2, \rId\big\rangle(\omega_0) + \big\langle \fJ[\bX^2]_{s,t}, \rId\big\rangle(\omega_0) \Big| 
\\
&+ \Big\| \big\langle \bX^{1,{\sharp}}, \rId\big\rangle(\omega_0) \Big\|_{\gamma} \cdot \Big\| \big\langle \bX^{2,{\sharp}}, \rId\big\rangle(\omega_0) \Big\|_{\gamma} \cdot \eta^{\gamma}
<
\infty \quad (\Omega_0, \bP)\mbox{-almost surely}. 
\end{align*}
Further, integrating over the tagged probability space and applying the H\"older inequality with Equation \eqref{eq:proposition:RCRP-2product-pq} yields
\begin{align*}
&\bE^0\Bigg[ \sup_{s, t\in [u,v]} \frac{\Big| \big\langle \bY^{\sharp}_{s, t}, \rId \big\rangle(\omega_0) \Big|^{p[\rId]}}{|t-s|^{p[\rId]\gamma}} \Bigg]^{\tfrac{1}{p[\rId]}}
\\
&\leq \hspace{-10pt}
\sum_{\substack{T_1, T_2\in \scF_0 \\ \scG_{\alpha, \beta}[T_1\circledast T_2] \geq \gamma }}^{\gamma-, \alpha, \beta} \bigotimes_{i=1}^2 \Bigg( \bE^0\bigg[ \sup_{t\in [u,v]} \Big\| \big\langle \bX_t^i, T_i\big\rangle(\omega_0) \Big\|_{p_i[T_i]}^{p_i[T_i]} \bigg]^{\tfrac{1}{p_i[T_i]}} \cdot \Big\| \big\langle \rw, T_i\big\rangle \Big\|_{q[T_i], \scG_{\alpha, \beta}[T_i]}\Bigg)
\cdot
\eta^{\scG_{\alpha, \beta}[T_1 \circledast T_2] - \gamma}
\\
&+ \bE^0\bigg[ \sup_{s, t\in[u, v]} \Big| \big\langle \bX_s^1, \rId\big\rangle(\omega_0) + \big\langle \fJ[\bX^1]_{s,t}, \rId\big\rangle(\omega_0) \Big|^{p_1[\rId]}\bigg]^{\tfrac{1}{p_1[\rId]}} \cdot \Big\| \big\langle \bX^{2,{\sharp}}, \rId\big\rangle \Big\|_{\gamma, p_2[\rId]}
\\
&+ \Big\| \big\langle \bX^{1,{\sharp}}, \rId\big\rangle \Big\|_{\gamma, p_1[\rId]} \cdot \bE^0\bigg[ \sup_{s, t\in[u, v]} \Big| \big\langle \bX_s^2, \rId\big\rangle(\omega_0) + \big\langle \fJ[\bX^2]_{s,t}, \rId\big\rangle(\omega_0) \Big|^{p_2[\rId]} \bigg]^{\tfrac{1}{p_2[\rId]}} 
\\
&+ \Big\| \big\langle \bX^{1,{\sharp}}, \rId\big\rangle \Big\|_{\gamma, p_1[\rId]} \cdot \Big\| \big\langle \bX^{2,{\sharp}}, \rId\big\rangle \Big\|_{\gamma, p_2[\rId]} \cdot \eta^{\gamma}
<
\infty 
\end{align*}

Next, for $Y \in \scF^{\gamma-, \alpha, \beta}$ we verify that Equation \eqref{eq:proposition:RCRP-2product:GubDeriv} satisfies Equation \eqref{eq:definition:RandomControlledRP2}. We have
\begin{align*}
\Big\langle \bY_{s,t}, Y \Big\rangle(\omega_0, \omega_{H^Y}) =& \sum_{\substack{Y_1, Y_2\in \scF_0\\ Y_1\circledast Y_2 = Y}} \Big\langle \bX_{s,t}^1, Y_1 \Big\rangle(\omega_0, \omega_{H^{Y_1}}) \otimes \Big\langle \bX_s^2, Y_2 \Big\rangle(\omega_0, \omega_{H^{Y_2}})
\\
&+ \sum_{\substack{Y_1, Y_2\in \scF_0\\ Y_1\circledast Y_2 = Y}} \Big\langle \bX_{s}^1, Y_1 \Big\rangle(\omega_0, \omega_{H^{Y_1}}) \otimes \Big\langle \bX_{s,t}^2, Y_2 \Big\rangle(\omega_0, \omega_{H^{Y_2}})
\\
&+ \sum_{\substack{Y_1, Y_2\in \scF_0\\ Y_1\circledast Y_2 = Y}} \Big\langle \bX_{s,t}^1, Y_1 \Big\rangle(\omega_0, \omega_{H^{Y_1}}) \otimes \Big\langle \bX_{s,t}^2, Y_2 \Big\rangle(\omega_0, \omega_{H^{Y_2}}). 
\end{align*}
We substitute using the fact that both $\bX^1$ and $\bX^2$ satisfy Equation \eqref{eq:definition:RandomControlledRP2} to get
\begin{align}
\nonumber
\Big\langle \bY_{s,t}&, Y \Big\rangle(\omega_0, \omega_{H^Y})
= \sum_{\substack{Y_1, Y_2\in \scF_0\\ Y_1\circledast Y_2 = Y}} 
\sum_{T\in \scF}^{\gamma-, \alpha, \beta} \sum_{\substack{T_1, T_2 \in \scF_0\\ T = T_1\circledast T_2}} \sum_{\Upsilon_1, \Upsilon_2 \in \scF_0}
\bigg( \prod_{i=1}^2 c\Big(T_i, \Upsilon_i, Y_i \Big) - \prod_{i=1}^2 \delta_{T_i = Y_i, \Upsilon_i = \rId}\bigg)
\\
\nonumber
& \cdot \bigotimes_{i=1}^2 \bE^{E^{T_i, \Upsilon_i, Y_i}}\bigg[ \Big\langle \bX_s^1, T_i \Big\rangle (\omega_0, \omega_{\phi^{ T_i, \Upsilon_i, Y_i}[H^{T_i}]}) \cdot \Big\langle \rw_{s, t}, \Upsilon_i \Big\rangle (\omega_0, \omega_{\varphi^{T_i, \Upsilon_i, Y_i}[H^{\Upsilon_i}]}) \bigg] 
\\
\label{proposition:RCRP-2product:(simple)1.1}
&+\Big\langle \bY_{s, t}^{\sharp}, Y\Big\rangle (\omega_0, \omega_{H^Y}), 
\end{align}
where 
\begin{align}
\nonumber
\Big\langle \bY_{s, t}^{\sharp}&, Y\Big\rangle(\omega_0, \omega_{H^Y}) 
\\
\nonumber
=& \sum_{\substack{Y_1, Y_2\in \scF_0\\ Y_1\circledast Y_2 = Y}} \bigg( \bigotimes_{i=1}^2 \Big( \big\langle \bX_s^i + \bX_{s, t}^i, Y_i\big\rangle \Big)(\omega_0, \omega_{H^Y}) - \bigotimes_{i=1}^2 \Big(  \big\langle \bX_s^i + \fJ\big[ \bX^i \big]_{s, t}, Y_i\big\rangle \Big)(\omega_0, \omega_{H^Y})
\\
\nonumber
&+ \sum_{\substack{Y_1, Y_2\in \scF_0\\ Y_1\circledast Y_2 = Y}} 
\sum_{\substack{T_1, T_2 \in \scF_0 \\ \scG_{\alpha,\beta}[T_1 \circledast T_2]\geq \gamma}}^{\gamma-, \alpha, \beta} \sum_{ \Upsilon_1, \Upsilon_2 \in \scF_0} \bigg( \prod_{i=1}^2 c\Big(T_i, \Upsilon_i, Y_i \Big) - \prod_{i=1}^2 \delta_{\{ T_i = Y_i, \Upsilon_i = \rId\} }\bigg)
\\
\label{proposition:RCRP-2product:sharp2}
&\qquad \cdot \bigotimes_{i=1}^2 \bE^{E^{T_i, \Upsilon_i, Y_i}}\bigg[ \Big\langle \bX_s^i, T_i \Big\rangle(\omega_0, \omega_{\phi^{ T_i, \Upsilon_i, Y_i}[H^{T_i}]}) \cdot \Big\langle \rw_{s, t}, \Upsilon_i \Big\rangle(\omega_0, \omega_{\varphi^{T_i, \Upsilon_i, Y_i}[H^{\Upsilon_i}]}) \bigg]. 
\end{align}
Applying Lemma \ref{lemma:CoprodCountIdent} and Lemma \ref{lemma:TechnicalPhiVarphi} to Equation \eqref{proposition:RCRP-2product:(simple)1.1} and substituting in Equation \eqref{eq:proposition:RCRP-2product:GubDeriv} gives
\begin{align*}
\Big\langle \bY_{s,t}, Y \Big\rangle(\omega_0, \omega_{H^Y})
&= 
\sum_{T\in \scF}^{\gamma-,\alpha, \beta} \sum_{\Upsilon\in \scF} c'\Big( T, \Upsilon, Y\Big) 
\cdot
\bE^{E^{T, \Upsilon,Y}}\bigg[ \sum_{\substack{T_1, T_2\in \scF_0\\ T_1\circledast T_2 = T}} \bigotimes_{i=1}^2 \Big\langle \bX_s^i, T_i \Big\rangle(\omega_0, \omega_{\phi^{T, \Upsilon, Y}[H^{T_i}]}) 
\\
&\quad \cdot \Big\langle \rw_{s, t}, \Upsilon \Big\rangle(\omega_0, \omega_{\varphi^{T, \Upsilon, Y}[H^\Upsilon]}) \bigg] 
+ 
\Big\langle \bY^{\sharp}_{s, t}, Y \Big\rangle( \omega_0, \omega_{H^Y}), 
\end{align*}
so that $\bY$ satisfies Equation \eqref{eq:definition:RandomControlledRP2}. 

To conclude, we integrate over the probability space $\Omega^{\times |H^Y|}$ in \eqref{proposition:RCRP-2product:sharp2} and apply Lemma \ref{lemma:(dual)_integrab-functional} so that for any $Y\in \scF^{\gamma-, \alpha, \beta}$,
\begin{align*}
\sup_{s, t\in[u,v]} &
\frac{\bE^{H^T}\bigg[ \Big| \big\langle \bY^{\sharp}_{s, t}, Y \big\rangle (\omega_0, \omega_{H^Y}) \Big|^{p[Y]} \bigg]^{\tfrac{1}{p[Y]}}}{|t-s|^{\gamma - \scG_{\alpha, \beta}[Y]}} 
\\
&\leq \sum_{\substack{Y_1, Y_2\in \scF_0\\ Y_1\circledast Y_2 = Y}} 
\sum_{\substack{T_1, T_2 \in \scF_0 \\ \scG_{\alpha,\beta}[T_1 \circledast T_2]\geq \gamma}}^{\gamma-, \alpha, \beta} \sum_{ \Upsilon_1, \Upsilon_2 \in \scF_0} \bigg( \prod_{i=1}^2 c\Big( T_i, \Upsilon_i, Y_i \Big) - \prod_{i=1}^2 \delta_{\{T_i = Y_i, \Upsilon_i = \rId\}} \bigg)
\\
&\quad \cdot \prod_{i=1}^2 \Big( \sup_{t\in [u,v]} \Big\| \big\langle \bX_t^i, Y_i\big\rangle(\omega_0) \Big\|_{p_i[Y_i]} \cdot \Big\| \big\langle \rw, Y_i\big\rangle(\omega_0) \Big\|_{q[T_i], \scG_{\alpha, \beta}[T_i]}\Big) \cdot \eta^{\scG_{\alpha, \beta}[T_1 \circledast T_2] - \gamma}
\\
&+ \sum_{\substack{Y_1, Y_2\in \scF_0\\ Y_1\circledast Y_2 = Y}} \bigg( \sup_{s, t\in[u, v]} \Big\| \big\langle \bX_s^1 + \fJ[\bX^1]_{s,t}, Y_1 \big\rangle(\omega_0) \Big\|_{p_1[Y_1]} \cdot \Big\| \big\langle \bX^{2,{\sharp}}, Y_2\big\rangle(\omega_0) \Big\|_{p_2[Y_2],\gamma} \cdot \eta^{\scG_{\alpha, \beta}[Y_1]}
\\
&\quad + \Big\| \big\langle \bX^{1,{\sharp}}, Y_1\big\rangle(\omega_0) \Big\|_{p_1[Y_1],\gamma} \cdot \sup_{s, t\in[0, 1]} \Big\| \big\langle \bX_s^2 + \fJ[\bX^2]_{s,t}, Y_2 \big\rangle(\omega_0) \Big\|_{p_2[Y_2]} \cdot \eta^{\scG_{\alpha, \beta}[Y_2]}
\\
&\quad + \Big\| \big\langle \bX^{1,{\sharp}}, Y_1 \big\rangle(\omega_0) \Big\|_{p_1[Y_1], \gamma-\scG_{\alpha, \beta}[Y_1]} \cdot \Big\| \big\langle \bX^{2,{\sharp}}, Y_2 \big\rangle(\omega_0) \Big\|_{p_2[Y_2], \gamma-\scG_{\alpha, \beta}[Y_2]} \cdot \eta^{\gamma} \bigg)
< \infty
\end{align*}
$(\Omega_0, \bP)$-almost surely and
$$
\bE^0\Bigg[ \sup_{s, t\in[0,1]} 
\frac{\bE^{H^T}\bigg[ \Big| \big\langle \bY^{\sharp}_{s, t}, Y \big\rangle (\omega_0, \omega_{H^Y}) \Big|^{p[Y]} \bigg]}{|t-s|^{p[Y](\gamma - \scG_{\alpha, \beta}[Y])} } \Bigg]<\infty. 
$$
\end{proof}

Next, we consider the expectation of a random controlled rough path. 
\begin{proposition}
\label{proposition:ExpectationRCRP}
Let $\alpha, \beta>0$ and $\gamma:=\inf\{ \scG_{\alpha, \beta}[T]: T \in \scF, \scG_{\alpha, \beta}[T]>1-\alpha \}$. Let $(p, q)$ be a pair of dual integrability functionals. 

Let $\rw$ be a $(\scH^{\gamma, \alpha, \beta}, p, q)$-probabilistic rough path and let $\bX \in \cD_{\rw}^{\gamma, p, q}\big( \scH^{\gamma, \alpha, \beta}\big)$. We define
\begin{equation}
\label{eq:proposition:ExpectationRCRP_Id}
\Big\langle \bY_t, \rId \Big\rangle(\omega_0) = \tilde{\bE}\Big[ \big\langle \bX_t, \rId \big\rangle(\tilde{\omega}) \Big]. 
\end{equation}
Then there is a random controlled rough path $\bY\in \cD_{\rw}^{\gamma, p, q}\big( \scH^{\gamma, \alpha, \beta}\big)$ that satisfies Equation \eqref{eq:proposition:ExpectationRCRP_Id} and for $T\in \scF^{\gamma-, \alpha, \beta}$ such that $h_0^T \neq \emptyset$, 
\begin{equation}
\label{eq:proposition:ExpectationRCRP}
\begin{split}
\Big\langle \bY_t, T\Big\rangle&(\omega_0, \omega_{H^T}) = 0, 
\\
\Big\langle \bY_t, \cE[T] \Big\rangle&(\omega_0, \omega_{h_0^T},  \omega_{H^T}) = \Big\langle \bX_t, T\Big\rangle(\omega_{h_0^T}, \omega_{H^T}) + \tilde{\bE}\Big[ \Big\langle \bX_t, \cE[T] \Big\rangle(\tilde{\omega}, \omega_{h_0^T}, \omega_{H^T}) \Big]. 
\end{split}
\end{equation}
\end{proposition}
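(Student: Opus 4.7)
\emph{Proof plan.} The approach is to take the formulas \eqref{eq:proposition:ExpectationRCRP_Id}--\eqref{eq:proposition:ExpectationRCRP} as the \emph{definition} of $\bY$ (setting $\langle \bY_t, T\rangle := 0$ on any remaining components of $\scH^{\gamma-, \alpha, \beta}$) and verify that the resulting $\bY$ satisfies Definition \ref{definition:RandomControlledRP}. This splits into two tasks: (i) check the algebraic expansions \eqref{eq:definition:RandomControlledRP1}--\eqref{eq:definition:RandomControlledRP2}, identifying a candidate remainder $\bY^{\sharp}$; and (ii) check that this $\bY^{\sharp}$ enjoys the integrability and regularity conditions \eqref{eq:definition:RandomControlledRP4}.

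For task (i) at level $\rId$, I would start from $\langle \bY_{s,t}, \rId\rangle(\omega_0) = \tilde{\bE}[\langle \bX_{s,t}, \rId\rangle(\tilde{\omega})]$ and substitute the RCRP expansion of $\bX$. The main trick is to split the sum $\sum_{T \in \scF^{\gamma-, \alpha, \beta}}$ according to whether $h_0^T \neq \emptyset$ or $h_0^T = \emptyset$. For $T$ with $h_0^T \neq \emptyset$, the McKean--Vlasov character identity \eqref{eq:definition:DualModule2} (in the form \eqref{eq:proposition:omegaIndependence}) rewrites $\langle \rw_{s,t}, T\rangle(\tilde{\omega}, \omega_{H^T})$ as $\langle \rw_{s,t}, \cE[T]\rangle(\omega_0, \tilde{\omega}, \omega_{H^T})$; relabelling $\tilde{\omega}$ as $\omega_{h_0^T}$ then converts the outer $\tilde{\bE}$ into an expectation over the extended hyperedge-set $H^{\cE[T]}$. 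For $T$ with $h_0^T = \emptyset$, the factor $\langle \rw_{s,t}, T\rangle$ is already independent of its tagged slot, so $\tilde{\bE}$ passes inside onto $\langle \bX_s, T\rangle$. Re-indexing the combined sum by $T'' \in \cE[\scF^{\gamma-, \alpha, \beta}] \cup \{ T : h_0^T = \emptyset,\ T \notin \cE[\scF]\}$, the coefficient of $\langle \rw_{s,t}, T''\rangle$ is precisely the $\langle \bY_s, T''\rangle$ prescribed by \eqref{eq:proposition:ExpectationRCRP}, leaving the manifest remainder $\langle \bY_{s,t}^{\sharp}, \rId\rangle(\omega_0) = \tilde{\bE}[\langle \bX_{s,t}^{\sharp}, \rId\rangle(\tilde{\omega})]$.

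The identity \eqref{eq:definition:RandomControlledRP2} for $Y \in \scF^{\gamma-, \alpha, \beta}$ then splits into two cases. If $h_0^Y \neq \emptyset$, then $c'(T, \Upsilon, Y) > 0$ forces $h_0^T \supseteq h_0^Y \neq \emptyset$ (since by Definition \ref{definition:CutsCoproduct} any cut sends the $0$-hyperedge of $T$ onto that of the root $T_c^R = Y$), and $\langle \bY_s, T\rangle = 0$ for all such $T$; combined with $\langle \bY_t, Y\rangle = 0$, this yields $\langle \bY_{s,t}^{\sharp}, Y\rangle = 0$ and both sides of \eqref{eq:definition:RandomControlledRP2} vanish trivially. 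If instead $Y = \cE[Y']$ for some $Y'$ with $h_0^{Y'} \neq \emptyset$, the same split-and-reassemble strategy applies after invoking the compatibility $\Delta \circ \cE = (\cE \tilde{\times} \cE) \circ \Delta$ from \eqref{eq:definition:coproduct} to track how the coupling functions $\phi^{T, \Upsilon, Y}$ and $\varphi^{T, \Upsilon, Y}$ transform through the $\cE$-map; the key combinatorial identity is a Lemma \ref{lemma:CoprodCountIdent2}-type statement relating $c'(\cE[T], \Upsilon, \cE[Y'])$ to contributions indexed by preimages of $\Upsilon$ under $\cE$, supplemented by a matching of the decoupling variables.

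Task (ii) is then an immediate consequence of Jensen's inequality: since $p[Y] \geq 1$, one has $|\tilde{\bE}[Z(\tilde{\omega})]|^{p[Y]} \leq \tilde{\bE}[|Z(\tilde{\omega})|^{p[Y]}]$, and applying this (conditionally on the remaining $\omega_{H^Y}$-variables) transfers both the pathwise H\"older decay $|t-s|^{\gamma - \scG_{\alpha, \beta}[Y]}$ and the $L^{p[Y]}$-integrability from $\bX^{\sharp}$ to $\bY^{\sharp}$. The main obstacle I anticipate lies in the second case of the algebraic verification: matching the expansion of $\langle \bY_{s,t}, \cE[Y']\rangle$ derived by averaging $\bX$ against the expansion mandated by \eqref{eq:definition:RandomControlledRP2}, in particular tracking how the coupling-counting function $c'$ and the hyperedge mappings $\psi^{\Upsilon,T}, \phi^{T,\Upsilon,Y}, \varphi^{T,\Upsilon,Y}$ interact with $\cE$ acting simultaneously on both the ``splitting'' tree $T$ and the root tree $Y$. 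This is tedious bookkeeping but should follow the same style of argument used in Lemma \ref{lemma:CoprodCountIdent2}.
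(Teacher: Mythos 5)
Your overall strategy coincides with the paper's: take \eqref{eq:proposition:ExpectationRCRP_Id}--\eqref{eq:proposition:ExpectationRCRP} as the definition of $\bY$, expand $\tilde{\bE}\big[\langle \bX_{s,t},\cdot\rangle\big]$ using the RCRP structure of $\bX$, and use Proposition \ref{proposition:EAction} together with the McKean--Vlasov identity \eqref{eq:definition:DualModule2} to relabel $\tilde{\omega}$ as $\omega_{h_0^T}$ and merge the $T$- and $\cE[T]$-contributions into a single coefficient of $\langle \rw_{s,t}, \cE[T]\rangle$. Your disposal of the trivial case $h_0^Y\neq\emptyset$ (all jet terms vanish because $c'(T,\Upsilon,Y)>0$ forces $h_0^T\neq\emptyset$) is correct, and your plan for $Y=\cE[Y']$ — pushing $c'$, $E^{T,\Upsilon,Y}$, $\phi^{T,\Upsilon,Y}$, $\varphi^{T,\Upsilon,Y}$ through the $\cE$-map — is exactly what the paper does, via the identities $c'(\cE[T],\cE[\Upsilon],\cE[Y])=c'(T,\Upsilon,Y)$, $E^{\cE[T],\cE[\Upsilon],\cE[Y]}=E^{T,\Upsilon,Y}$, $\phi^{\cE[T],\cE[\Upsilon],\cE[Y]}[h_0^T]=h_0^Y$, $\varphi^{\cE[T],\cE[\Upsilon],\cE[Y]}[h_0^\Upsilon]=h_0^Y$.

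The genuine gap is your identification of the remainder. After re-indexing, the trees $T\in\scF^{\gamma-,\alpha,\beta}$ with $h_0^T\neq\emptyset$ produce terms paired against $\langle\rw_{s,t},\cE[T]\rangle$, and $\scG_{\alpha,\beta}\big[\cE[T]\big]=\beta|N^T|$ may be $\geq\gamma$ even though $\scG_{\alpha,\beta}[T]<\gamma$; this occurs whenever $\beta>\alpha$ (e.g.\ the three-node tree with all nodes in $h_0$ in the setting of Example \ref{example:Tikz}). Such $\cE[T]$ lie outside $\scF^{\gamma-,\alpha,\beta}$, so these terms cannot enter the jets of \eqref{eq:definition:RandomControlledRP1}--\eqref{eq:definition:RandomControlledRP2} and must be absorbed into $\bY^{\sharp}$: the remainder is therefore not ``manifestly'' $\tilde{\bE}\big[\langle\bX_{s,t}^{\sharp},\rId\rangle\big]$ at level $\rId$, nor $\langle\bX_{s,t}^{\sharp},Y\rangle+\tilde{\bE}\big[\langle\bX_{s,t}^{\sharp},\cE[Y]\rangle\big]$ at level $\cE[Y]$. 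As a consequence your task (ii) cannot be ``an immediate consequence of Jensen'': for these extra terms the required decay in $|t-s|$ does not come from $\bX^{\sharp}$ at all, but from the probabilistic-rough-path bound \eqref{eq:definition:ProbabilisticRoughPaths2} on $\langle\rw,\cE[T]\rangle$ (with exponent $\scG_{\alpha,\beta}[\cE[T]]\geq\gamma$) combined with the sup-in-time bound on $\langle\bX_t,T\rangle$ from \eqref{eq:definition:RandomControlledRP4}, estimated via H\"older with the dual pair $(p,q)$ — this is precisely the additional estimate the paper carries out for $\langle\bY^{\sharp}_{s,t},\rId\rangle$ and $\langle\bY^{\sharp}_{s,t},\cE[Y]\rangle$, and it is missing from your proposal.
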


We emphasise that each of the random variables $\big\langle \bY_t, T \big\rangle(\omega_0, \omega_{H^T})$ are constant in $\omega_0$ (deterministic). 

\begin{proof}
We have
\begin{align*}
\Big\langle \bY_{s, t}, \rId \Big\rangle(\omega_0) =& \tilde{\bE}\bigg[ \Big\langle \bX_{s, t}, \rId \Big\rangle(\tilde{\omega}) \bigg]
\\
=& \sum_{T\in\scF}^{\gamma-, \alpha, \beta} \tilde{\bE}\bigg[ \bE^{H^T}\bigg[ \Big\langle \bX_s, T \Big\rangle(\tilde{\omega}, \omega_{H^T}) \cdot \Big\langle \rw_{s, t}, T\Big\rangle(\tilde{\omega}, \omega_{H^T}) \bigg]\bigg] + \tilde{\bE}\bigg[ \Big\langle \bX_{s, t}^{\sharp}, \rId\Big\rangle(\tilde{\omega}) \bigg]. 
\end{align*}
Next, an application of Proposition \ref{proposition:EAction} yields that for $T\in\scF^{\gamma-, \alpha, \beta}$ such that $h_0^T\neq \emptyset$, 
\begin{align*}
\tilde{\bE}\bigg[ \bE^{H^T}\bigg[& \Big\langle \bX_s, T \Big\rangle(\tilde{\omega}, \omega_{H^T}) \cdot \Big\langle \rw_{s, t}, T\Big\rangle(\tilde{\omega}, \omega_{H^T}) \bigg]\bigg] 
\\
&= \bE^{(H^T)'} \bigg[ \Big\langle \bX_s, T \Big\rangle(\omega_{h_0^T}, \omega_{H^T}) \cdot \Big\langle \rw_{s, t}, \cE[T] \Big\rangle(\omega_0, \omega_{h_0^T}, \omega_{H^T}) \bigg]
\quad 
\mbox{and}
\\
\tilde{\bE}\bigg[ \bE^{H^{\cE[T]}}\bigg[& \Big\langle \bX_s, \cE[T] \Big\rangle(\tilde{\omega}, \omega_{H^{\cE[T]}}) \cdot \Big\langle \rw_{s, t}, \cE[T] \Big\rangle(\tilde{\omega}, \omega_{H^{\cE[T]}}) \bigg]\bigg] 
\\
&= \bE^{(H^T)'} \bigg[ \tilde{\bE}\bigg[ \Big\langle \bX_s, T \Big\rangle(\tilde{\omega}, \omega_{h_0^T}, \omega_{H^T})\bigg] \cdot \Big\langle \rw_{s, t}, \cE[T] \Big\rangle(\omega_0, \omega_{h_0^T}, \omega_{H^T}) \bigg]. 
\end{align*}
Thus
\begin{align*}
\Big\langle \bY_{s, t}, \rId\Big\rangle(\omega_0) 
=& 
\sum_{\substack{T\in\scF: h_0^T\neq \emptyset \\ \scG_{\alpha, \beta}[\cE[T]]< \gamma}} \bE^{(H^T)'}\Big[ \Big( \big\langle \bX_s, T\big\rangle(\omega_{h_0^T}, \omega_{H^T}) + \tilde{\bE}\Big[ \big\langle \bX_s, \cE[T]\big\rangle(\tilde{\omega}, \omega_{h_0^T}, \omega_{H^T}) \Big] \Big) 
\\
&\quad \cdot \big\langle \rw_{s, t}, \cE[T] \big\rangle(\omega_0, \omega_{h_0^T}, \omega_{H^T}) \Big]
+ \Big\langle \bY_{s, t}^{\sharp}, \rId \Big\rangle(\omega_0), 
\end{align*}
where
\begin{align}
\nonumber
&\Big\langle \bY_{s, t}^{\sharp}, \rId\Big\rangle(\omega_0) 
\\
&= \tilde{\bE}\Big[ \big\langle \bX_{s, t}^{\sharp}, \rId\big\rangle(\tilde{\omega}) \Big] + \sum_{\substack{ T\in \scF^{\gamma-, \alpha, \beta} \\ \scG_{\alpha, \beta}[\cE[T]]\geq \gamma}} \bE^{(H^T)'} \Big[ \big\langle \bX_s, T \big\rangle(\omega_{h_0^T}, \omega_{H^T}) \cdot \big\langle \rw_{s, t}, T \big\rangle (\omega_{h_0^T}, \omega_{H^T}) \Big], 
\end{align}
and choosing $\langle \bY, T\rangle$ according to Equation \eqref{eq:proposition:ExpectationRCRP} yields Equation \eqref{eq:definition:RandomControlledRP1}. 

By measuring the resulting regularity (and denoting $\eta=|v-u|$), we obtain that
\begin{align*}
\sup_{s,t\in[u, v]}& \frac{\Big| \big\langle \bY_{s, t}^{\sharp}, \rId\big\rangle(\omega_0)\Big| }{|t-s|^{\gamma}} 
\\
\leq& 
\sum_{\substack{ T\in \scF^{\gamma-, \alpha, \beta} \\ \scG_{\alpha, \beta}[\cE[T]]\geq \gamma}} \sup_{t\in[u,v]} \Big\| \big\langle \bX_s, T \big\rangle\Big\|_{p[T]} \cdot \Big\| \big\langle \rw, \cE[T] \big\rangle \Big\|_{q[T], \scG_{\alpha, \beta}[\cE[T]]} \cdot \eta^{\scG_{\alpha, \beta}[\cE[T]] - \gamma}
\\
&+ 
\sup_{s, t\in[u,v]} \frac{\tilde{\bE}\Big[ \big\langle \bX_{s, t}^{\sharp}, \rId\big\rangle(\tilde{\omega})\Big]}{|t-s|^{\gamma}} < \infty.  
\end{align*}
Further, this is constant in the tagged probability space so that
$$
\bE^0\bigg[ \sup_{s,t\in[0, 1]} \frac{\big| \langle \bY_{s, t}^{\sharp}, \rId\rangle(\omega_0)\big| }{|t-s|^{\gamma}} \bigg]<\infty. 
$$

Secondly, we verify Equation \ref{eq:definition:RandomControlledRP2} is satisfied. By construction, for $Y \in \scF^{\gamma-, \alpha, \beta}$ such that $h_0^Y\neq \emptyset$, 
\begin{align}
\nonumber
&\Big\langle \bY_{s, t}, \cE[Y] \Big\rangle(\omega_0, \omega_{h_0^Y}, \omega_{H^Y}) 
\\
\nonumber
&= \Big\langle \bX_{s, t}, Y \Big\rangle(\omega_{h_0^Y}, \omega_{H^Y}) + \tilde{\bE}\bigg[ \Big\langle \bX_{s, t}, \cE[Y]\Big\rangle(\tilde{\omega}, \omega_{h_0^Y}, \omega_{H^Y}) \bigg], 
\\
\label{eq:proposition:ExpectationRCRP:t(1.1)}
&=\sum_{\substack{T\in \scF \\ \Upsilon\in \scF}}^{\gamma-, \alpha, \beta} c'\Big( T, \Upsilon, Y\Big) \cdot \bE^{E^{T, \Upsilon, Y}}\bigg[ \Big\langle \bX_s, T \Big\rangle(\omega_{h_0^Y}, \omega_{\phi^{T, \Upsilon, Y}[H^T]}) \cdot \Big\langle \rw_{s,t}, \Upsilon\Big\rangle(\omega_{h_0^Y}, \omega_{\varphi^{T, \Upsilon, Y}[H^{\Upsilon}]}) \bigg]
\\
\label{eq:proposition:ExpectationRCRP:t(1.2)}
&+\sum_{\substack{T\in \scF \\ \Upsilon\in \scF }}^{\gamma-, \alpha, \beta} c'\Big( T, \Upsilon, \cE[Y] \Big) \cdot \bE^{E^{T, \Upsilon, \cE[Y]}}\bigg[ \tilde{\bE}\bigg[ \Big\langle \bX_s, T \Big\rangle(\tilde{\omega}, \omega_{\phi^{T, \Upsilon, \cE[Y]}[H^T]}) \cdot \Big\langle \rw_{s,t}, \Upsilon\Big\rangle(\tilde{\omega}, \omega_{\varphi^{T, \Upsilon, \cE[Y]}[H^{\Upsilon}]}) \bigg]\bigg]
\\
\nonumber
&+ \Big\langle \bX_{s, t}^{\sharp}, Y \Big\rangle(\omega_{h_0^Y}, \omega_{H^Y}) + \tilde{\bE}\bigg[ \Big\langle \bX_{s, t}^{\sharp}, \cE[Y]\Big\rangle(\tilde{\omega}, \omega_{h_0^Y}, \omega_{H^Y}) \bigg]. 
\end{align}

By assumption, $h_0^Y\neq \emptyset$, so that $h_0^{T}\neq \emptyset$ whenever $c'(T, \Upsilon, Y)>0$. Recalling Definition \ref{definition:CouplingFunctions} and Definition \ref{definition:E-Set}, consider the triple $\big( \cE[T], \cE[\Upsilon], \cE[Y] \big)$. We remark that 
\begin{align*}
c'\Big( \cE[T], \cE[\Upsilon], \cE[Y]\Big) =& c'\Big( T, \Upsilon, Y\Big), \quad&\quad 
E^{\cE[T], \cE[\Upsilon], \cE[Y]} =& E^{T, \Upsilon, Y} 
\\
\phi^{\cE[T], \cE[\Upsilon], \cE[Y]}\big[ h_0^T \big] =& h_0^Y,
\quad&\quad 
\varphi^{\cE[T], \cE[\Upsilon], \cE[Y]}\big[ h_0^\Upsilon \big] =& h_0^Y
\end{align*}
so that
\begin{align*}
\eqref{eq:proposition:ExpectationRCRP:t(1.1)} =& \sum_{\substack{T\in \scF: h_0^T\neq \emptyset \\ \Upsilon\in \scF}}^{\gamma-,\alpha, \beta} c'\Big( \cE[T], \cE[\Upsilon] , \cE[Y] \Big) \cdot \bE^{E^{\cE[T], \cE[\Upsilon], \cE[Y] }}\bigg[ \Big\langle \bX_s, T \Big\rangle( \omega_{\phi^{\cE[T], \cE[\Upsilon], \cE[Y]}[H^{\cE[T]}]}) 
\\
&\quad \cdot \Big\langle \rw_{s,t}, \Upsilon\Big\rangle ( \omega_{\varphi^{\cE[T], \cE[\Upsilon], \cE[Y]}[H^{\cE[\Upsilon]}]}) \bigg]. 
\end{align*}
In the same fashion, the Lions tree $\cE[Y]$ satisfies that $h_0^{\cE[Y]} = \emptyset$ so for any triple $(T, \Upsilon, \cE[Y])$ such that $c'( T, \Upsilon, \cE[Y])>0$, we have that $h_0^T \cap N^{\Upsilon} = h_0^\Upsilon$ and 
$$
c'\Big( \cE[T], \cE[\Upsilon], \cE[Y]\Big) = c'\Big( T, \Upsilon, \cE[Y] \Big). 
$$
Therefore, 
\begin{align*}
\eqref{eq:proposition:ExpectationRCRP:t(1.2)} =& \sum_{\substack{T\in \scF:h_0^T \neq \emptyset \\ \Upsilon\in \scF}}^{\gamma-,\alpha, \beta} c'\Big( \cE[T], \cE[\Upsilon], \cE[Y] \Big) \cdot \bE^{E^{\cE[T], \cE[\Upsilon], \cE[Y]}}\bigg[ \tilde{\bE}\bigg[ \Big\langle \bX_s, \cE[T] \Big\rangle(\tilde{\omega}, \omega_{\phi^{\cE[T], \cE[\Upsilon], \cE[Y]}[H^{\cE[T]}]}) 
\\
&\quad \cdot \Big\langle \rw_{s,t}, \Upsilon\Big\rangle(\tilde{\omega}, \omega_{\varphi^{\cE[T], \cE[\Upsilon], \cE[Y]}[H^{\Upsilon}]}) \bigg]\bigg]. 
\end{align*}

Making these substitutions and noticing that by assumption, $c'(T, \Upsilon, Y)>0$ implies that $h_0^\Upsilon = \emptyset$ and we obtain
\begin{align*}
\Big\langle \bY_{s, t}&, \cE[Y] \Big\rangle(\omega_0, \omega_{h_0^Y}, \omega_{H^Y}) = \sum_{T \in \scF: h_0^T \neq \emptyset }^{\gamma-,\alpha, \beta} \sum_{\Upsilon\in\scF} c'\Big( \cE[T], \Upsilon, \cE[Y] \Big) \cdot \bE^{E^{\cE[T], \Upsilon, \cE[Y]}} \Bigg[ 
\\
&\bigg( \Big\langle \bX_s, T\Big\rangle (\omega_{\phi^{\cE[T], \Upsilon, \cE[Y]}[H^{\cE[T]}]}) 
+ \tilde{\bE}\Big[ \Big\langle \bX_s, \cE[T]\Big\rangle (\tilde{\omega}, \omega_{\phi^{\cE[T], \Upsilon, \cE[Y]}[H^{\cE[T]}]})\Big] \bigg) 
\\
&\cdot \Big\langle \rw_{s, t}, \Upsilon\Big\rangle( \omega_0, \omega_{\varphi^{\cE[T], \Upsilon, \cE[Y]}[H^{\Upsilon}]} ) \Bigg]
+ \Big\langle \bY_{s, t}^{\sharp}, \cE[Y] \Big\rangle(\omega_0, \omega_{h_0^Y}, \omega_{H^Y}), 
\end{align*}
where
\begin{align*}
\Big\langle \bY_{s, t}^{\sharp}&, \cE[Y] \Big\rangle(\omega_0, \omega_{h_0^Y}, \omega_{H^Y}) = \Big\langle \bX_{s, t}^{\sharp}, Y \Big\rangle(\omega_{h_0^Y}, \omega_{H^Y}) + \tilde{\bE}\bigg[ \Big\langle \bX_{s, t}^{\sharp}, \cE[Y]\Big\rangle(\tilde{\omega}, \omega_{h_0^Y}, \omega_{H^Y}) \bigg]
\\
&+ \sum_{\substack{T\in \scF\\ \scG_{\alpha, \beta}[\cE[T]]\geq \gamma}}^{\gamma-, \alpha, \beta}  \sum_{\Upsilon\in \scF} c'\Big( T, \Upsilon, Y\Big) \cdot \tilde{\bE} \bigg[ \bE^{E^{T, \Upsilon, Y}}\bigg[ \Big\langle \bX_s, T\Big\rangle(\tilde{\omega}, \omega_{\phi^{T, \Upsilon, Y}[H^T]}) 
\\
&\qquad \cdot \Big\langle \rw_{s, t}, \Upsilon\Big\rangle (\omega_0, \omega_{\varphi^{T, \Upsilon, Y}[H^{\Upsilon}]}) \bigg] \bigg], 
\end{align*}
and substituting in Equation \eqref{eq:proposition:ExpectationRCRP} yields Equation \ref{eq:definition:RandomControlledRP2}. 

Finally, for $Y\in\scF^{\gamma, \alpha, \beta}$ such that $h_0^Y \neq \emptyset$
\begin{align*}
\sup_{s, t\in[u, v]}& \frac{ \Big\| \big\langle \bY_{s, t}^{\sharp}, \cE[Y] \big\rangle(\omega_0) \Big\|_{p[\cE[Y]]} }{|t-s|^{\gamma - \scG_{\alpha, \beta}[\cE[Y]]}}
\\
\leq& \sup_{s, t\in[u, v]} \frac{ \Big\| \big\langle \bX_{s, t}^{\sharp}, Y \big\rangle \Big\|_{p[Y]} }{|t-s|^{\gamma - \scG_{\alpha, \beta}[Y]}}
+
\sup_{s, t\in[u, v]} \frac{ \Big\| \big\langle \bX_{s, t}^{\sharp}, \cE[Y] \big\rangle \Big\|_{p[\cE[Y]]} }{|t-s|^{\gamma - \scG_{\alpha, \beta}[\cE[Y]]}}
\\
&+ \sum_{\substack{T\in \scF\\ \scG_{\alpha, \beta}[\cE[T]]\geq \gamma}}^{\gamma-, \alpha, \beta}  \sum_{\Upsilon\in \scF} c'\Big( T, \Upsilon, Y\Big) \cdot \sup_{t\in[u,v]} \Big\| \big\langle \bX_t, T\big\rangle \Big\|_{p[T]}
\cdot 
\Big\| \big\langle \rw, \Upsilon \big\rangle \Big\|_{q[\Upsilon], \scG_{\alpha, \beta}[\Upsilon]} \cdot \eta^{\scG_{\alpha, \beta}[\cE[T]] - \gamma}. 
\end{align*}
\end{proof}

The combination of Propositions \ref{proposition:RCRP-2product} and \ref{proposition:ExpectationRCRP} yields the following wider result:

\begin{proposition}
\label{proposition:productRCRP}
Let $\alpha, \beta>0$ and $\gamma:=\inf\{ \scG_{\alpha, \beta}[T]: T \in \scF, \scG_{\alpha, \beta}[T]>1-\alpha \}$. Let $V$ be a vector space and let $\tilde{\scH}^{\gamma, \alpha, \beta}$ denote the $L^0\big( \Omega, \bP; V\big)$-module
$$
\tilde{\scH}^{\gamma, \alpha, \beta} = \bigoplus_{T\in \scF_0}^{\gamma, \alpha, \beta} L^0\Big( \Omega \times \Omega^{\times |H^T|}, \bP \times \bP^{\times |H^T|}; \lin\big( (\bR^d)^{\otimes |N^T|}, V \big) \Big) \cdot T
$$
so that $\big( \tilde{\scH}^{\gamma, \alpha, \beta}, \circledast, \rId, \Delta, \epsilon, \scS\big)$ is a coupled Hopf algebra. 

Let $r>1$ and let $a\in \A{n}$ . For each $i=0, ...n$, let $(p_i, q)$ be a collection of pairs of dual integrability functionals and suppose that
\begin{align}
\label{eq:proposition:productRCRP-pq-cond}
\sup_{T\in \scF^{\gamma, \alpha, \beta}} \frac{1}{q[T]} \leq \inf_{j=0,...,m[a]} \bigg( \frac{1}{r} + \sum_{\substack{i: \\a_i=j}} \frac{1}{p_i[\rId]}\bigg), 
\qquad
\sup_{j=0, ..., m[a]} \bigg( \frac{1}{r} + \sum_{\substack{i: \\a_i=j}} \frac{1}{p_i[\rId]}\bigg) \leq 1. 
\end{align}
We define $p_0:\scF^{\gamma, \alpha, \beta} \to [1, \infty)$ such that 
\begin{equation}
\label{eq:proposition:productRCRP-pq}
\frac{1}{p_0[\rId]} = \inf_{j=0, ..., m[a]} \bigg( \frac{1}{r} + \sum_{\substack{i: \\ a_i=j}} \frac{1}{p_i[\rId]} \bigg), 
\quad
\frac{1}{p_0[T]} = \frac{1}{p_0[\rId]} - \frac{1}{q[T]}. 
\end{equation}

Let $\rw$ be a $(\scH^{\gamma, \alpha, \beta}, p_0, q)$-probabilistic rough path. Let 
$$
f \in L^r\Big( \Omega\times \Omega^{\times m[a]}, \bP \times \bP^{\times m[a]}; \lin\big( (\bR^e)^{\otimes n}, V \big) \Big).
$$
For $i=1, ...,n$, let $\bX^i \in \cD_{\rw}^{\gamma, p_i, q}(\scH^{\gamma, \alpha, \beta})$ and let $\bY:[0,1]\to \scH^{\gamma-, \alpha, \beta}$ defined by
\begin{equation}
\label{eq:proposition:productRCRP(simple*)}
\Big\langle \bY_t, \rId\Big\rangle(\omega_0) = \bE^{1, ..., m[a]} \bigg[ f(\omega_0, ..., \omega_{m[a]}) \cdot \bigotimes_{i=1}^n \Big\langle \bX_t^i, \rId\Big\rangle(\omega_{a_i}) \bigg]. 
\end{equation}

Then there is a random controlled rough path $\bY\in \cD_{\rw}^{\gamma, p_0, q}\big( \tilde{\scH}^{\gamma, \alpha, \beta}\big)$ that satisfies Equation \eqref{eq:proposition:productRCRP(simple*)} and $\forall T \in \scF_0^{\gamma-, \alpha, \beta}$
\begin{align}
\nonumber
\Big\langle &\bY_t, T\Big\rangle(\omega_{0}, \omega_{H^T}) 
\\
\label{eq:proposition:productRCRP(simple)}
&= \sum_{\substack{T_1, ..., T_n\in \scF_0\\ \cE^a[ T_1, ..., T_n] = T}} \bE^{Z^a[T_1, ..., T_n]}\Big[ f\big(\omega_{0}, \omega_{\tilde{h}_1^T}, ..., \omega_{\tilde{h}_{m[a]}^T}\big) 
\cdot 
\bigotimes_{i=1}^n \big\langle \bX_t^i, T_i \big\rangle (\omega_{\tilde{h}_{a_i}^T}, \omega_{H^{T_i}}) \Big]
\end{align}
where for $T_1, ..., T_n\in \scF$ and $T=\cE^a[T_1, ..., T_n]$, the sets $\tilde{h}_j^{T}$ and $Z^a[T_1, ..., T_n]$ are defined as in Definition \ref{definition:Z-set}, see Equation \eqref{eq:omega_zero-zero_hyperedge}). 

Proposition \ref{proposition:ExpectationRCRP}
is an extension of Proposition \ref{proposition:RCRP-2product}, which is restricted to the case $n=2$ and $a=(0,0)$, and the proof of 
the former one precisely consists in iterating the latter one.
\end{proposition}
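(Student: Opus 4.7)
The plan is to reduce the $n$-fold product to iterated applications of Proposition \ref{proposition:RCRP-2product} (products within a common probability space) and Proposition \ref{proposition:ExpectationRCRP} (integration over a fresh probability space), with Proposition \ref{proposition:EAction} serving as the bookkeeping device that correctly identifies the ghost hyperedges when the two operations are interleaved.

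First I would verify that $(p_0,q)$ is a dual pair of integrability functionals: condition \eqref{eq:IntegrabilityP1} holds by construction in \eqref{eq:proposition:productRCRP-pq}, and the hypothesis $\sup_T 1/q[T] \leq 1/p_0[\rId] \leq 1$ is exactly \eqref{eq:proposition:productRCRP-pq-cond}. Next, for each $j\in\{0,\ldots,m[a]\}$ I would denote $I_j = \{i : a_i = j\}$ and iterate Proposition \ref{proposition:RCRP-2product} across the $|I_j|$ RCRPs $\{\bX^i\}_{i\in I_j}$, all controlled by $\rw$ and viewed as living above a common probability space $\omega_j$. This produces a grouped RCRP $\widehat{\bX}^j$ with
\[
\langle \widehat{\bX}^j_t, T\rangle(\omega_j, \omega_{H^T}) = \sum_{\substack{(T_i)_{i\in I_j}\\ \circledast_{i\in I_j} T_i = T}} \bigotimes_{i\in I_j}\langle \bX^i_t, T_i\rangle(\omega_j, \omega_{H^{T_i}}),
\]
with integrability exponent whose reciprocal is $\sum_{i\in I_j} 1/p_i[\rId]$. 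Hypothesis \eqref{eq:proposition:productRCRP-pq-cond} is precisely what guarantees that every intermediate dual pair formed along this iteration satisfies \eqref{eq:proposition:RCRP-2product-pq-cond}.

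It then remains to couple the $m[a]+1$ RCRPs $(\widehat{\bX}^j)_{j=0,\ldots,m[a]}$ living on distinct probability spaces via $f$ and to integrate out $\omega_1,\ldots,\omega_{m[a]}$. I would proceed by iterating Proposition \ref{proposition:ExpectationRCRP} across $j=1,\ldots,m[a]$: each application integrates out one auxiliary variable and triggers the $\cE$-functor on the $j$-th grouped RCRP. Two sub-cases arise at step $j$, depending on whether $\bigcup_{i\in I_j} h_0^{T_i}$ is empty or not; the empty case produces a ghost hyperedge $\tilde h_j^T \in Z^a[T_1,\ldots,T_n]$, whereas the non-empty case has the $\cE$-operation transform the pre-existing tagged hyperedge into a regular element of $H^T$. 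The combinatorial identity linking the iterated tensor $\bigotimes_j \bigotimes_{i\in I_j}$ with the single sum over $\{(T_1,\ldots,T_n) : \cE^a[T_1,\ldots,T_n] = T\}$ in \eqref{eq:proposition:productRCRP(simple)} is provided by Lemma \ref{lemma:CoprodCountIdent} combined with the Fubini-type identity \eqref{eq:proposition:EAction}, which exactly reassembles the ghost variables.

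The main obstacle will be the simultaneous bookkeeping at each stage of the iteration: one must keep track of (i) how prunes/roots contribute to the coproduct of the composite tree $T = \cE^a[T_1,\ldots,T_n]$; (ii) how the couplings $\phi^{\cdot,\cdot,\cdot}$, $\varphi^{\cdot,\cdot,\cdot}$ of Definition \ref{definition:CouplingFunctions} restrict compatibly along the iteration (cf.~Lemmas \ref{lemma:TechnicalPhiVarphi} and \ref{lemma:ESets}); and (iii) how the ghost hyperedges generated stage by stage globally assemble to give $Z^a[T_1,\ldots,T_n]$ of Definition \ref{definition:Z-set}. Once the algebraic identification is in place, the analytic estimates for $\bY$ and the $(\gamma - \scG_{\alpha,\beta}[Y])$-H\"older regularity of the remainder $\bY^\sharp$ follow by iterating the H\"older bounds already proved for the two base propositions, together with Lemma \ref{lemma:TechLem_Products} applied to telescope the product of increments $\bigotimes_i(\langle \bX_s^i,\rId\rangle + \langle \bX^i_{s,t},\rId\rangle)$ into dominant and remainder pieces.
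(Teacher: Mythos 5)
Your high-level architecture matches the paper's own framing: the authors themselves assert that iterating Propositions \ref{proposition:RCRP-2product} and \ref{proposition:ExpectationRCRP} shows \eqref{eq:proposition:productRCRP(simple*)} is a $p_0$-RCRP, and the substance of their proof is the verification of the representation \eqref{eq:proposition:productRCRP(simple)}, for which you name exactly the right tools (Definition \ref{definition:Z-set}, Proposition \ref{proposition:EAction}, Lemmas \ref{lemma:TechnicalPhiVarphi}, \ref{lemma:ESets}, \ref{lemma:CoprodCountIdent}, and Lemma \ref{lemma:TechLem_Products} for the remainder). However, two steps of your stage-by-stage reduction fail as stated. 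First, your claim that \eqref{eq:proposition:productRCRP-pq-cond} guarantees that every intermediate pair formed while grouping $\{\bX^i\}_{i\in I_j}$ satisfies \eqref{eq:proposition:RCRP-2product-pq-cond} is not true: the hypothesis only gives $\sup_T 1/q[T]\leq 1/r+\sum_{i\in I_j}1/p_i[\rId]$, and since $1/r>0$ (the weight $f$ is only $L^r$, not bounded) this does not imply $\sup_T 1/q[T]\leq \sum_{i\in S}1/p_i[\rId]$ even for the full group $S=I_j$. For instance $a=(0,0)$, $1/p_1[\rId]=1/p_2[\rId]=0.1$, $1/r=0.3$, $\sup_T 1/q[T]=0.4$ satisfies all hypotheses of the proposition, yet the product condition $0.4\leq 0.2$ fails, so the grouped object $\widehat{\bX}{}^j$ admits no exponent dual to $q$ (the prescription $1/\tilde p[T]=1/\tilde p[\rId]-1/q[T]$ turns negative) and Proposition \ref{proposition:RCRP-2product} cannot be invoked. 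Second, Proposition \ref{proposition:ExpectationRCRP} takes a plain expectation of the tagged variable, with no weight and no spectator variables; at your stage $j$ the integrand is $f(\omega_0,\ldots,\omega_{m[a]})$ times factors depending on all the remaining variables, so iterating that proposition verbatim is not licit and a weighted, parametrised version would first have to be established.

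Both defects are exactly what the paper's written proof circumvents: it keeps $f$ in the picture from the outset, defines $\tilde{\bY}$ as the full tensor $\bigotimes_i\langle \bX^i,\rId\rangle(\omega_{a_i})$, expands its increment once using the RCRP property of each $\bX^i$, relabels $\omega_i\mapsto\omega_{\tilde h_i^T}$, and applies Proposition \ref{proposition:EAction} with $g=f$; all H\"older estimates are then performed globally, where $\|f\|_r$ absorbs the $1/r$ and only the final condition \eqref{eq:proposition:productRCRP-pq} --- never any intermediate duality --- is used. Your combinatorial identification of the jets for a general $Y$ (ghost hyperedges, the coupling $\tilde\phi^{T,\Upsilon,Y}$ of Definition \ref{definition:CouplingFunctions-Ghost}, Lemma \ref{lemma:CoprodCountIdent}) is correct and coincides with the paper's; it is the integrability bookkeeping of the stepwise reduction that must be reorganised along these lines for the argument to close.
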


\begin{remark}
When we state Equation \eqref{eq:definition:RandomControlledRP2} for the random controlled rough path described in Proposition \ref{proposition:productRCRP}, we find that we additionally need to couple the ghost hyperedges according to the function described in Definition \eqref{definition:CouplingFunctions-Ghost}. We see this in Equation \eqref{eq:proposition:productRCRP:5} below.
\end{remark}

For Lions tree $T$, we use the notation
\begin{equation}
\label{eq:a_integral}
\Big\| \big\langle \bX, T \big\rangle(\omega_{a_i}) \Big\|_{[a], p} = \begin{cases}
\bE^{H^T}\bigg[ \Big| \big\langle \bX,  T \big\rangle(\omega_{0}, \omega_{H^T}) \Big|^p \bigg]^{\tfrac{1}{p}} \quad & \quad \mbox{if} \quad a_i=0,  
\\
\bE^{a_i}\bigg[ \bE^{H^T}\bigg[ \Big| \big\langle \bX,  T \big\rangle(\omega_{a_i}, \omega_{H^T}) \Big|^{p} \bigg] \bigg]^{\tfrac{1}{p}} \quad & \quad \mbox{if} \quad a_i>0. 
\end{cases}
\end{equation}

\begin{proof}
Firstly, we verify that $(p_0, q)$ as described in Equation \eqref{eq:proposition:productRCRP-pq} satisfy Definition \ref{definition:(dual)_integrab-functional}. 

Suppose that for each $i=1, ..., n$, $(p_i, q)$ satisfy Equation \eqref{eq:proposition:productRCRP-pq-cond}. Then by Equation \eqref{eq:proposition:productRCRP-pq}, $p_0[\rId]\in (1, \infty)$. Further, for any choice of $T\in \scF^{\gamma, \alpha, \beta}$, we have that $\tfrac{1}{p_0[\rId]} - \tfrac{1}{q[T]} \in (0,1)$ so that $p_0[T]\in (1, \infty)$. By construction, $p_0$ satisfies Equation \eqref{eq:IntegrabilityP1} so that $(p_0, q)$ is a pair of dual integrability functionals. 

Iterative applications of Proposition \ref{proposition:RCRP-2product} and Proposition \ref{proposition:ExpectationRCRP} mean that Equation \eqref{eq:proposition:productRCRP(simple*)} is a $p_0$-RCRP. This proof is to verify that Equation \eqref{eq:proposition:productRCRP(simple)} is the correct representation. 

Let $a \in \A{n}$. Define
\begin{equation}
\begin{split}
&\Big\langle \tilde{\bY}_t, \rId\Big\rangle (\omega_{0}, \omega_{1}, ..., \omega_{m[a]}) := \bigotimes_{i=1}^n \Big\langle \bX^i_t, \rId\Big\rangle(\omega_{a_i}), 
\\
&\Big\langle \bY_t, \rId \Big\rangle(\omega_0) := \bE^{1, ..., m[a]}\Big[ f\big(\omega_0, ..., \omega_{m[a]} \big) \cdot \big\langle \tilde{\bY}_t, \rId \big\rangle(\omega_0, ..., \omega_{m[a]}) \Big]. 
\end{split}
\end{equation}
Expanding as before, we have
\begin{align*}
\Big\langle \tilde{\bY}_{s,t}&, \rId\Big\rangle (\omega_{0}, \omega_{1}, ..., \omega_{m[a]}) = \bigg( \bigotimes_{i=1}^n \Big\langle \bX^i, \rId\Big\rangle(\omega_{a_i})  \bigg)_{s,t} 
\\
=&
\sum_{T\in \scF}^{\gamma-, \alpha, \beta} \sum_{\substack{T_1, ..., T_n\in \scF_0 \\ T=\cE^a[T_1, ..., T_n]}} 
\bigotimes_{i=1}^n \bE^{H^{T_i}}\bigg[ \Big\langle \bX_s^i, T_i \Big\rangle (\omega_{a_i}, \omega_{H^{T_i}})  
\cdot 
\Big\langle \rw_{s,t}, T_i \Big\rangle(\omega_{a_i}, \omega_{H^{T_k}})  \bigg]
\\
&+ \Big\langle \tilde{\bY}^{\sharp}_{s,t}, \rId\Big\rangle (\omega_{0}, ..., \omega_{m[a]})
\end{align*}
where
\begin{align*}
\Big\langle \tilde{\bY}^{\sharp}_{s,t}&, \rId\Big\rangle (\omega_{0}, ..., \omega_{m[a]})
\\
=& \sum_{k=1}^n \bigg( \bigotimes_{i=1}^{k-1} \Big\langle \bX_s^i + \fJ\big[ \bX^i \big]_{s,t}, \rId \Big\rangle(\omega_{a_i})\bigg) \otimes \Big\langle \bX_{s,t}^{i, {\sharp}}, \rId\Big\rangle(\omega_{a_k}) \otimes \bigg( \bigotimes_{i=k+1}^n \Big\langle \bX_{t}^i, \rId \Big\rangle(\omega_{a_i}) \bigg)
\\
&+\sum_{\substack{ T_1, ..., T_n\in \scF_0 \\ \scG_{\alpha, \beta}[\cE^a[T_1, ..., T_n]]\geq \gamma}} \bigotimes_{i=1}^n \bE^{H^{T_i}}\bigg[ \Big\langle \bX_s^i, T_i\Big\rangle(\omega_{a_i}, \omega_{H^{T_i}}) \cdot \Big\langle \rw_{s, t}, T_i \Big\rangle(\omega_{a_i}, \omega_{H^{T_i}}) \bigg]. 
\end{align*}

Next, we fix $T_1, ..., T_n$ and denote $\cE^a[T_1, ..., T_n] = T$. We relabel $\omega_{i} = \omega_{\tilde{h}_i^{\cE^a[T_1, ..., T_n]}}$ for $i=1, ..., m[a]$ where $\tilde{h}_i^{\cE^a[T_1, ..., T_n]}$ are the ghost hyperedges introduced in Definition \ref{definition:Z-set}. For brevity, we will denote these sets by $\tilde{h}_i^{T}$. 

We apply Proposition \ref{proposition:EAction} to observe that the random variable $\langle \rw_{s,t}, T \rangle$ is independent of $\omega_{\tilde{h}_j^T}$ for all $\tilde{h}_j^T \in Z^a[T_1, ..., T_n]$. Any $\tilde{h}_j^T \notin Z^a[T_1, ..., T_n]$ must be a hyperedge of the forest $T = \cE^a[T_1, ..., T_n]$. By interchangeably writing $\omega_0 = \omega_{\tilde{h}_0^T}$, we take expectations to get
\begin{align*}
\Big\langle \bY_{s,t}, \rId\Big\rangle (\omega_{0})
=& \sum_{T\in \scF}^{\gamma-, \alpha, \beta} \bE^{H^T} \bigg[ \sum_{\substack{T_1, ..., T_n \in \scF_0\\ \cE^a[T_1, ..., T_n] = T}} \hspace{-10pt} \bE^{Z^a[T_1, ..., T_n]}\Big[ f(\omega_{0}, \omega_{\tilde{h}_1^T}, ..., \omega_{\tilde{h}_{m[a]}^T}) 
\cdot 
\bigotimes_{i=1}^n \big\langle \bX_s^i, T_i\big\rangle( \omega_{\tilde{h}_{a_i}^T}, \omega_{H^{T_i}}) \Big]
\\
&\qquad 
 \cdot
\Big\langle \rw_{s, t}, T \Big\rangle (\omega_{0}, \omega_{H^T}) \bigg]
\\
&+ \bE^{1, ..., m[a]}\Big[ f(\omega_{0}, \omega_{1}, ..., \omega_{m[a]}) \cdot \big\langle \tilde{\bY}^{\sharp}_{s,t}, \rId\big\rangle (\omega_{0}, \omega_1, ..., \omega_{m[a]}) \Big], 
\end{align*}
which verifies Equation \eqref{eq:proposition:productRCRP(simple)}. 

Using Equation \eqref{eq:proposition:productRCRP-pq}, we can additionally verify that 
\begin{align*}
&\sup_{s, t\in[u,v]} \frac{\bE^{1, ..., m[a]}\Big[ f(\omega_0, \omega_1, ..., \omega_{m[a]}) \cdot \big\langle \tilde{\bY}_{s, t}^{\sharp}, \rId\big\rangle(\omega_0, \omega_1, ..., \omega_{m[a]})\Big] }{|t-s|^{\gamma}}
\\
&\leq \big\| f(\omega_0) \big\|_r \cdot \sum_{k=1}^n \bigg( \bigotimes_{i=1}^{k-1} \bigg\| \Big\| \big\langle \bX^i, \rId \big\rangle(\omega_{a_i}) \Big\|_{\infty} \bigg\|_{[a],p_i[\rId]} 
+ 
\bigg\| \Big\| \big\langle \fJ\big[ \bX^i \big], \rId \big\rangle(\omega_{a_i})\Big\|_{\alpha} \bigg\|_{[a],p_i[\rId]} \cdot |v-u|^{\alpha} \bigg)
\\
&\quad \otimes \bigg\| \Big\| \big\langle \bX^{i, {\sharp}}, \rId \big\rangle(\omega_{a_k}) \Big\|_{\gamma}\bigg\|_{[a],p_i[\rId]} 
\otimes 
\bigg( \bigotimes_{i=k+1}^n \bigg\| \Big\| \big\langle \bX^i, \rId \big\rangle(\omega_{a_i})\Big\|_{\infty} \bigg\|_{[a],p_i[\rId]} \bigg)
\\
&+ \big\| f(\omega_0) \big\|_r \cdot \sum_{\substack{T_1, ..., T_n \in \scF_0 \\ \scG_{\alpha, \beta}[\cE^a[T_1, ..., T_n]]\geq \gamma}}^{\gamma-, \alpha, \beta} \prod_{i=1}^n \bigg\| \Big\| \big\langle \bX^i, T_i\big\rangle(\omega_{a_i})\Big\|_{p_i[T_i], \infty} \bigg\|_{[a],p_i[T_i]} 
\\
&\quad\qquad \cdot \bigg\| \Big\| \big\langle \rw, \cE^a[T_1, ..., T_n] \big\rangle \Big\|_{q\big[ \cE^a[T_1, ..., T_n] \big], \scG_{\alpha, \beta}[\cE^a[T_1, ..., T_n]]} \bigg\|_{[a],q\big[ \cE^a[T_1, ..., T_n] \big]} \cdot |v-u|^{\scG_{\alpha, \beta}[\cE^a[T_1, ..., T_n]]-\gamma}
\\
&\quad < \infty \quad (\Omega_0, \bP)\mbox{-almost surely,}
\end{align*}
and similarly
\begin{align*}
\bE^0\Bigg[ \bigg| \sup_{s, t\in[0,1]}& \frac{\bE^{1, ..., m[a]}\Big[ f(\omega_0, \omega_1, ..., \omega_{m[a]}) \cdot \big\langle \tilde{\bY}_{s, t}^{\sharp}, \rId\big\rangle(\omega_0, \omega_1, ..., \omega_{m[a]})\Big] }{|t-s|^{\gamma}} \bigg|^{p_0[\rId]} \Bigg]^{\tfrac{1}{p_0[\rId]}}<\infty. 
\end{align*}

Next, we verify that an increment of Equation \eqref{eq:proposition:productRCRP(simple)} satisfies Definition \ref{definition:RandomControlledRP}. For any $Y \in \scF^{\gamma-, \alpha, \beta}$, we fix $(\omega_0, \omega_{H^Y}) \in \Omega \times \Omega^{\times |H^Y|}$ and get
\begin{align}
\nonumber
\bigg(& \sum_{\substack{Y_1, ..., Y_n \in \scF_0\\ \cE^a[Y_1, ..., Y_n] = Y}} \bE^{Z^a[Y_1, ..., Y_n]}\Big[ f(\omega_{0}, \omega_{\tilde{h}_1^Y}, ..., \omega_{\tilde{h}_{m[a]}^Y}) \cdot \bigotimes_{i=1}^n \big\langle \bX^{i}, Y_i \big\rangle(\omega_{\tilde{h}_{a_i}^Y}, \omega_{H^{Y_i}})\Big] \bigg)_{s, t}
\\
\nonumber
=&\sum_{\substack{Y_1, ..., Y_n \in \scF_0\\ \cE^a[Y_1, ..., Y_n] = Y}} 
\sum_{T\in \scF}^{\gamma-, \alpha, \beta} \sum_{\substack{T_1, ..., T_n \in \scF_0\\ T=\cE^a[T_1, ..., T_n]}}
\sum_{\Upsilon_1, ..., \Upsilon_n \in \scF_0}
\bigg( \prod_{i=1}^n  c\Big( T_i, \Upsilon_i, Y_i \Big) - \prod_{i=1}^n \delta_{\{ \Upsilon_i = \rId , T_i = Y_i\}} \bigg)
\\
\nonumber
&\quad 
\cdot \bE^{Z^a[Y_1, ..., Y_n]} \bigg[ \bE^{\bigcup_{i=1}^n E^{T_i, \Upsilon_i, Y_i}} \Big[ \bigotimes_{i=1}^n \big\langle \bX_s^i, T_i\big\rangle(\omega_{\tilde{h}_{a_i}^Y}, \omega_{\phi^{T_i, \Upsilon_i, Y_i}[H^{T_i}]}) 
\\
\label{eq:proposition:productRCRP:3}
&\quad \cdot \bigotimes_{i=1}^n \big\langle \rw_{s, t}, \Upsilon_i \big\rangle(\omega_{\tilde{h}_{a_i}^Y}, \omega_{\varphi^{T_i, \Upsilon_i, Y_i}[H^{\Upsilon_i}]}) \Big] \bigg]
+ 
\Big\langle \bY_{s, t}^{\sharp}, Y \Big\rangle( \omega_0, \omega_{H^{Y}}) 
\end{align}
where 
\begin{align}
\nonumber
&\Big\langle \bY_{s, t}^{\sharp}, Y \Big\rangle( \omega_0, \omega_{H^{Y}}) 
\\
\nonumber
&=
\sum_{\substack{Y_1, ..., Y_n \in \scF_0 \\ \cE^a[Y_1, ..., Y_n]=Y}} \bE^{Z^a[Y_1, ..., Y_n]} \bigg[ f(\omega_0, ..., \omega_{\tilde{h}_{m[a]}^Y}) \cdot \sum_{k=1}^n \bigg( \bigotimes_{i=1}^{k-1} \Big\langle \bX_s^i + \fJ[\bX]_{s, t}^i, Y_i \big\rangle(\omega_{\tilde{h}_{a_i}^Y}, \omega_{H^{Y_i}}) \bigg)
\\
\nonumber
&\qquad \otimes \Big\langle \bX_{s, t}^{k, {\sharp}}, Y_k \Big\rangle(\omega_{\tilde{h}_{a_k}^Y}, \omega_{H^{Y_k}}) 
\otimes 
\bigotimes_{i=k+1}^n \Big\langle \bX_t^i, Y_i \Big\rangle (\omega_{\tilde{h}_{a_i}^Y}, \omega_{H^{Y_i}}) \Big) \bigg]
\\
\nonumber
&+\sum_{\substack{Y_1, ..., Y_n \in \scF_0\\ \cE^a[Y_1, ..., Y_n] = Y}} 
\sum_{\substack{T_1, ..., T_n \in \scF_0\\ \scG_{\alpha, \beta}[\cE^a[T_1, ..., T_n]]\geq \gamma}}^{\gamma-, \alpha, \beta}
\sum_{\Upsilon_1, ..., \Upsilon_n \in \scF_0}
\bigg( \prod_{i=1}^n  c\Big( T_i, \Upsilon_i, Y_i \Big) - \prod_{i=1}^n \delta_{\{  T_i = Y_i, \Upsilon_i = \rId\}} \bigg)
\\
\nonumber
&\quad 
\cdot \bE^{Z^a[Y_1, ..., Y_n]} \bigg[ \bE^{\bigcup_{i=1}^n E^{T_i, \Upsilon_i, Y_i}} \Big[ \bigotimes_{i=1}^n \big\langle \bX_s, T_i\big\rangle(\omega_{\tilde{h}_{a_i}^Y}, \omega_{\phi^{T_i, \Upsilon_i, Y_i}[H^{T_i}]}) 
\\
\label{eq:proposition:productRCRP:4}
&\quad \cdot \bigotimes_{i=1}^n \big\langle \rw_{s, t}, \Upsilon_i \big\rangle(\omega_{\tilde{h}_{a_i}^Y}, \omega_{\varphi^{T_i, \Upsilon_i, Y_i}[H^{\Upsilon_i}]}) \Big] \bigg]
\end{align}

Thanks to Lemma \ref{lemma:TechnicalPhiVarphi}, we can replace $\phi^{T_i, \Upsilon_i, Y_i}$ by $\phi^{T, \Upsilon, Y}$. Next, using the same reasoning as Lemma \ref{lemma:ESets}, we note that for any $\tilde{h}_j^{Y} \notin Z^a[Y_1, ..., Y_n]$, we have that $\tilde{h}_j^{Y}\in H^{Y}$ so that $\omega_{\tilde{h}_j^{Y}}$ is fixed whereas for $\tilde{h}_j^{Y} \in Z^a[Y_1, ..., Y_n]$, we integrate over $\omega_{\tilde{h}_j^{Y}}$. We apply Proposition \ref{proposition:EAction} with the set $Z^a[Y_1, ..., Y_n]$ (instead of the set $\{1, ..., m[a]\}$) and relabel the ghost hyperedges according to the coupling function defined in Definition \ref{definition:CouplingFunctions-Ghost}. $\tilde{\phi}^{T, \Upsilon, Y}$ is the identity for all fixed $\omega_{\tilde{h}_j^{Y}}$ and for each probability space that is integrated over, $\tilde{\phi}^{T, \Upsilon, Y}$ divides these into the probability spaces that $\langle \rw_{s,t}, \cE^a[\Upsilon_1, ..., \Upsilon_n]\rangle$ is dependent on and independent of. 

This substitution motivates us to apply Equation \eqref{eq:lemma:ESets} to Equation \eqref{eq:proposition:productRCRP:3}, and additionally using  Lemma \ref{lemma:CoprodCountIdent} yields that 
\begin{align}
\nonumber
\Big\langle \bY_{s, t}&, Y\Big\rangle(\omega_{0}, \omega_{H^{Y}}) 
\\
\nonumber
&=\sum_{\substack{T\in \scF \\ \Upsilon \in \scF}}^{\gamma-,\alpha, \beta}
c'\Big( T, \Upsilon, Y\Big)
\cdot 
\bE^{E^{T, \Upsilon, Y}} \bigg[
\sum_{\substack{T_1, ..., T_n \in \scF_0\\ \cE^a[T_1, ..., T_n] = T }}
\bE^{Z^a[T_1, ..., T_n]}\Big[ f(\omega_{0}, \omega_{\tilde{\phi}^{T, \Upsilon, Y}[\tilde{h}_{1}^Y]}, ..., \omega_{\tilde{\phi}^{T, \Upsilon, Y}[\tilde{h}_{m[a]}^Y]}) 
\\
\nonumber
&\quad \cdot 
\bigotimes_{i=1}^n \Big\langle \bX_s, T_i\Big\rangle(\omega_{\tilde{\phi}^{T, \Upsilon, Y}[\tilde{h}_{a_i}^{Y}]}, \omega_{\phi^{T, \Upsilon, Y}[H^{T_i}]}) \Big] 
\cdot 
\Big\langle \rw_{s, t}, \Upsilon \Big\rangle (\omega_{0}, \omega_{\varphi^{T, \Upsilon, Y}[H^{\Upsilon}]}) \bigg]
\\
\label{eq:proposition:productRCRP:5}
&+ \Big\langle \bY_{s, t}^{\sharp}, Y \Big\rangle(\omega_0, \omega_{H^Y}). 
\end{align}
Thus we see that $\langle \bY_{s, t}, Y\rangle$ satisfies Equation \eqref{eq:definition:RandomControlledRP2} with the addition of the coupling to account for the ghost hyperedges.  

To conclude, thanks to Equation \eqref{eq:proposition:productRCRP:4} we have that
\begin{align*}
&\sup_{s, t\in[u,v]} \frac{\bE^{H^Y}\bigg[ \Big| \big\langle \bY_{s,t}^{\sharp}, Y \big\rangle(\omega_0, \omega_{H^Y}) \Big|^{p_0[Y]} \bigg]^{\tfrac{1}{p_0[Y]}}}{|t-s|^{\gamma - \scG_{\alpha, \beta}[Y]}} 
\leq 
\big\| f(\omega_0) \big\|_r \cdot \sum_{\substack{Y_1, ..., Y_n \in \scF_0 \\ \cE^a[Y_1, ..., Y_n]=Y}} \sum_{k=1}^n
\\
&\quad \bigg( \bigotimes_{i=1}^{k-1} \bigg\| \Big\| \big\langle\bX^i, Y_i \big\rangle(\omega_{a_i})\Big\|_{\infty} \bigg\|_{[a],p_i[Y_i]} 
 + \bigg\| \Big\| \big\langle  \fJ[\bX^i], Y_i \big\rangle(\omega_{a_i}) \Big\|_{\alpha} \bigg\|_{[a],p_i[Y_i]} \cdot |v-u|^{\alpha} \bigg) 
\\
&\qquad \otimes \bigg\| \Big\| \big\langle \bX^{k,{\sharp}}, Y_k\big\rangle(\omega_{a_k})\Big\|_{\gamma-\scG_{\alpha, \beta}[Y_k]} \bigg\|_{[a],p_k[Y_k]} \cdot |v-u|^{\scG_{\alpha, \beta}[Y] - \scG_{\alpha, \beta}[Y_k]} 
\\
&\qquad \otimes \bigg( \bigotimes_{i=k+1}^n \bigg\| \Big\| \big\langle \bX^i, Y_i\big\rangle(\omega_{a_i})\Big\|_{\infty} \bigg\|_{[a],p_i[Y_i]} \bigg)
\\
&+ \big\| f(\omega_0) \big\|_r \cdot \hspace{-20pt} \sum_{\substack{Y_1, ..., Y_n \in \scF_0\\ \cE^a[Y_1, ..., Y_n] = Y}} 
\sum_{\substack{T_1, ..., T_n \in \scF_0\\ \scG_{\alpha, \beta}[\cE^a[T_1, ..., T_n]]\geq \gamma}}^{\gamma-, \alpha, \beta}
\sum_{\Upsilon_1, ..., \Upsilon_n \in \scF_0}
\Big( \prod_{i=1}^n  c\big( T_i, \Upsilon_i, Y_i \big) - \prod_{i=1}^n \delta_{\{ \Upsilon_i = \rId , T_i = Y_i\}} \Big)
\\
&\quad \cdot \prod_{i=1}^n \bigg\| \sup_{s\in[u, v]} \Big\| \big\langle \bX_s, T_i\big\rangle(\omega_{a_i})\Big\|_{p_i[T_i]} \bigg\|_{[a],p_i[T_i]} 
\\
&\qquad \cdot \sup_{s, t\in [u,v]} \frac{ \prod_{i=1}^n  \bigg\| \Big\| \big\langle \rw, \Upsilon_i\big\rangle(\omega_{a_i}) \Big\|_{q[\Upsilon_i] }\bigg\|_{[a],q[\Upsilon_i]}}{|t-s|^{\scG_{\alpha, \beta}[\cE^a[\Upsilon_1, ..., \Upsilon_n]]}} \cdot |v-u|^{\scG_{\alpha, \beta}[\cE^a[T_1, ..., T_n]]-\gamma}
\\
&\quad < \infty \quad (\Omega_0, \bP)\mbox{-almost surely}
\end{align*}
and integrating through yields
\begin{align*}
&\bE^0\Bigg[ \sup_{s, t\in[0,1]} \frac{\bE^{H^Y}\bigg[ \Big| \big\langle \bY_{s,t}^{\sharp}, Y \big\rangle(\omega_0, \omega_{H^Y}) \Big|^{p_0[Y]} \bigg]}{|t-s|^{p_0[Y](\gamma - \scG_{\alpha, \beta}[Y])}} \Bigg] < \infty. 
\end{align*}
\end{proof}

\begin{remark}
This result illustrates that it is not really the choice of integrability functional $p$ that matters when defining a probabilistic rough path but rather the dual integrability functional $q$. Indeed, there can be multiple choices of $p$ for the same choice of $q$. This can be seen in Definition \ref{definition:(dual)_integrab-functional}, but this is the first time in this work where we use this property. 
\end{remark}

\subsubsection{Smooth functions of RCRPs and Lions-Taylor expansions}
\label{subsubsec:SfRCRPs,LTE}

Next, we use Proposition \ref{proposition:productRCRP} to find an expression for a Taylor expansion of some smooth function of a RCRP. These results demonstrate the interconnectivity between the Lions-Taylor expansion (from Section \ref{section:TaylorExpansions}), and Lions trees and probabilistic rough paths (from \cite{salkeld2021Probabilistic}). 

To streamline notation, we denote
$$
\scG_{\alpha, \beta}[a]:=\alpha\cdot l[a]_0 + \beta\cdot(|a| - l[a]_0). 
$$

In order to prove Theorem \ref{theorem:ContinIm-RCRPs}, we will first prove Equation \eqref{eq:theorem:ContinIm-RCRPs} followed by Equation \eqref{eq:theorem:ContinIm-RCRPs-Est}. To do this, first we reframe Theorem \ref{theorem:LionsTaylor2} and Corollary \ref{corollary:LionsTaylor2} as follows:
\begin{corollary}
\label{corollary:LionsTaylor3}
Let $\alpha, \beta>0$ and let 
$$
\gamma:=\inf \big\{ \alpha  i + \beta j: (i, j)\in \bN_0^{\times 2}, \alpha i + \beta j > 1-\alpha\big\}, 
\quad
n:=\sup \big\{ m\in \bN_0: m < \tfrac{\gamma}{\alpha\wedge \beta} \big\}.
$$
Let $f\in C_b^{n, (n)}\big( \bR^d \times \cP_2(\bR^d) \big)$. 

Let $(p_x, q)$ and $(p_y, q)$ be two dual pairs of integrability functionals that satisfy \eqref{eq:theorem:ContinIm-RCRPs4}. Let $\rw$ be a $(\scH^{\gamma, \alpha, \beta}, p, q)$-probabilistic rough paths and let $\bX\in \cD_{\rw}^{\gamma, p_x, q}$ and $\bY\in \cD_{\rw}^{\gamma, p_y, q}$. Then with the same notations as in Definition \ref{definition:B_set} and \eqref{eq:omega_zero-zero_hyperedge}
\begin{align}
\nonumber
f&\Big( \big\langle \bX, \rId\big\rangle(\omega_0), \cL^{\langle \bY, \rId\rangle} \Big)_{s,t}
\\
\nonumber
&= \sum_{a\in A^{\alpha, \beta}} \frac{1}{|a|!} \bE^{1, ..., m[a]}\bigg[ \partial_a f\Big( \big\langle \bX_s, \rId\big\rangle(\omega_0), \cL^{\langle \bY_s, \rId\rangle}, \big\langle \bY_s, \rId\big\rangle(\omega_{1}), ..., \big\langle \bY_s, \rId\big\rangle(\omega_{m[a]})\Big) 
\\
\label{eq:corollary:LionsTaylor2.1}
&\qquad \cdot 
\bigotimes_{r=1}^{|a|} \Big\langle \big[ \bX, \bY \big]_{s, t}, \rId \Big\rangle(\omega_{a_r}) \bigg] + \fR_{s, t}(\omega_0), 
\end{align}
where $\exists \cN \subset \Omega$ such that $\bP\big[ \cN \big] = 0$ and $\forall \omega_0 \in \Omega \backslash \cN$, 
$$
\fR_{s, t}(\omega_0) = O\Big( |t-s|^\gamma \Big), 
$$
with the constant underpinning the Landau notation being implicitly random, and 
\begin{equation}
\label{eq:corollary:LionsTaylor2.1-int}
\bE^0\Bigg[ \bigg( \sup_{s, t\in[0,1]} \frac{ \big| \fR_{s, t}(\omega_0) \big|}{|t-s|^{\gamma}} \bigg)^{\tfrac{p_x[\rId]}{n+1}} \Bigg]<\infty. 
\end{equation}

Next, for $i\in \{1, ..., n\}$, suppose that $a\in \A{i}$. Similar to before, using the notations from Definition \ref{definition:B_set}
\begin{align}
\nonumber
&\partial_a f\Big( \big\langle \bX, \rId\big\rangle(\omega_0), \cL^{\langle \bY, \rId\rangle}, \big\langle \bY, \rId\big\rangle(\omega_{1}), ... \big\langle \bY, \rId\big\rangle(\omega_{m[a]}) \Big)_{s, t}
\\
\nonumber
&= \sum_{\overline{a}\in A^{\alpha, \beta|a}} \frac{1}{(|\overline{a}| - |a|)!} \bE^{m[a]+1, ..., m[\overline{a}]} \bigg[ \partial_{\overline{a}} f\Big( \big\langle \bX_s, \rId\big\rangle(\omega_0), \cL^{\langle \bY_s, \rId\rangle}, \big\langle \bY_s, \rId\big\rangle(\omega_1), ..., \big\langle \bY_s, \rId\big\rangle(\omega_{m[\overline{a}]}) \Big)
\\
\label{eq:corollary:LionsTaylor2.2}
&\qquad  \cdot \bigotimes_{r=|a|+1}^{|\overline{a}|} \Big\langle \big[ \bX, \bY\big]_{s, t}, \rId \Big\rangle(\omega_{a_{r}}) \bigg] + \fR_{s, t}^a(\omega_0, ..., \omega_{m[a]})
\end{align}
where $\exists \cN^a\subset \Omega \times \Omega^{\times m[a]}$ such that $\bP \times \bP^{\times m[a]}\big[ \cN^a \big]=0$ and $\forall (\omega_0, ..., \omega_{m[a]}) \in \big( \Omega \times \Omega^{\times m[a]} \big) \backslash \cN^a$, 
$$
\fR_{s, t}^a(\omega_0, ..., \omega_{m[a]}) = O \Big( | t-s |^{\gamma-\scG_{\alpha, \beta}[a]} \Big), 
$$
and 
\begin{equation}
\label{eq:corollary:LionsTaylor2.2-int}
\bE^{0, 1, ..., m[a]}\Bigg[ \bigg( \sup_{s, t\in[0,1]} \frac{\big| \fR_{s, t}^a(\omega_0, \omega_1, ..., \omega_{m[a]}) \big|}{|t-s|^{\gamma - \scG_{\alpha, \beta}[a]}}\bigg)^{\tfrac{p_x[\rId]}{n+1-|a|}} \Bigg] < \infty. 
\end{equation}
\end{corollary}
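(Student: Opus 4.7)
The plan is to apply Theorem \ref{theorem:LionsTaylor2} pointwise in $\omega_0$, with the crucial observation that the coupling $\Pi^{\mu,\nu}$ appearing there may be freely chosen and we take it to be the joint law of $(\langle \bY_s, \rId\rangle, \langle \bY_t, \rId\rangle)$ under $\bP$, which has marginals $\mu = \cL^{\langle \bY_s, \rId\rangle}$ and $\nu = \cL^{\langle \bY_t, \rId\rangle}$. Under this identification, the defining integral \eqref{eq:rDa} for $\rD^a f$ is recast as a multiple expectation over independent copies $\omega_1, \ldots, \omega_{m[a]}$: each integration $\int \cdot \, d\Pi^{\mu,\nu}(u_j,v_j)$ becomes $\bE^j$, each free variable $u_j$ is realised as $\langle \bY_s, \rId\rangle(\omega_j)$, and each increment $v_{a_i}-u_{a_i}$ becomes $\langle \bY_{s,t}, \rId\rangle(\omega_{a_i})$ when $a_i>0$ or $\langle \bX_{s,t}, \rId\rangle(\omega_0)$ when $a_i=0$. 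This is precisely the convention \eqref{eq:omega_zero-zero_hyperedge}, yielding the identity \eqref{eq:corollary:LionsTaylor2.1} with $\fR_{s,t}(\omega_0):=R_n^{(x_0,y_0),\Pi^{\mu,\nu}}(f)$.

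For the pathwise regularity of $\fR_{s,t}(\omega_0)$, I would invoke the explicit upper bound stated in the remark after Theorem \ref{theorem:LionsTaylor2}, which writes $|\fR_{s,t}(\omega_0)|$ as a sum, for $j=0,\ldots,n$, of terms of the form $|\langle \bX_{s,t}, \rId\rangle(\omega_0)|^{j+1}\cdot \bE[|\langle \bY_{s,t}, \rId\rangle|^{n-j}]$ and $|\langle \bX_{s,t}, \rId\rangle(\omega_0)|^{j}\cdot \bE[|\langle \bY_{s,t}, \rId\rangle|^{n-j+1}]$. By Proposition \ref{prop:Regu-RCRP} applied to $\bX$ and $\bY$, we have $|\langle \bX_{s,t},\rId\rangle(\omega_0)|\leq C_{\bX}(\omega_0)|t-s|^{\alpha}$ with $C_{\bX}\in L^{p_x[\rId]}(\Omega,\bP)$, and $\bE[|\langle \bY_{s,t},\rId\rangle|^{k}]\leq C_{\bY,k}|t-s|^{k\beta}$ for $k\leq p_y[\rId]$. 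Consequently the $j$-th summand has time exponent $(j+1)\alpha+(n-j)\beta$ (respectively $j\alpha+(n-j+1)\beta$), and the definition $n:=\sup\{m\in\bN_0 : m<\gamma/(\alpha\wedge\beta)\}$ ensures $(n+1)(\alpha\wedge\beta)\geq\gamma$, which a short case analysis turns into $(j+1)\alpha+(n-j)\beta\geq\gamma$ for every $j\in\{0,\ldots,n\}$ (and similarly for the other family). This yields $|\fR_{s,t}(\omega_0)|=O(|t-s|^{\gamma})$ with a random multiplicative constant.

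For the integrability estimate \eqref{eq:corollary:LionsTaylor2.1-int}, I would apply the generalised H\"older inequality to $(|\fR_{s,t}(\omega_0)|/|t-s|^{\gamma})^{p_x[\rId]/(n+1)}$; the exponent $(n+1)$ appears precisely because each summand in the remainder is a product of $n+1$ increment factors (the $(j+1)$-fold power of $\bX_{s,t}$ and $(n-j)$-fold moment of $\bY_{s,t}$). Splitting $p_x[\rId]/(n+1)$ as $(j+1)\cdot p_x[\rId]/(n+1)\cdot 1/(j+1)$ on the $\bX$ factor and $(n-j)\cdot p_x[\rId]/(n+1)\cdot 1/(n-j)$ on the $\bY$ factor, and then combining with the compatibility condition \eqref{eq:theorem:ContinIm-RCRPs4} (which guarantees $(n+1)/p_y[\rId]\leq 1$ so that enough moments of $\langle \bY_{s,t},\rId\rangle$ are available), we obtain the bound \eqref{eq:corollary:LionsTaylor2.1-int} in terms of the RCRP norms of $\bX$ and $\bY$.

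For the derivative expansion \eqref{eq:corollary:LionsTaylor2.2}, the argument is strictly parallel but applied to $\partial_a f$ regarded as a function on $\bR^d\times\cP_2(\bR^d)\times(\bR^d)^{m[a]}$, using Corollary \ref{corollary:LionsTaylor2} in place of Theorem \ref{theorem:LionsTaylor2}; the extended partition sequences $\overline a\in A^{\alpha,\beta|a}$ encode exactly the additional free variables that may appear when differentiating further, and the remainder regularity $|t-s|^{\gamma-\scG_{\alpha,\beta}[a]}$ drops out by the same exponent-counting as before (with $n$ replaced by $n-|a|$). The main obstacle in the whole proof is not conceptual but notational: tracking the correspondence between the integrals in \eqref{eq:rDa} and the expectations over the independent copies while respecting the joint convention \eqref{eq:omega_zero-zero_hyperedge}, and then matching the resulting moment exponents with the dual pairs $(p_x,q)$ and $(p_y,q)$ in such a way that both the pathwise bound and the integrated bound \eqref{eq:corollary:LionsTaylor2.1-int}, \eqref{eq:corollary:LionsTaylor2.2-int} follow from a single application of H\"older's inequality.
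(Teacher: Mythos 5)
Your overall strategy is the same as the paper's: apply Theorem \ref{theorem:LionsTaylor2} (and Corollary \ref{corollary:LionsTaylor2} for the second half) with $\Pi^{\mu,\nu}$ taken to be the joint law of $(\langle \bY_s,\rId\rangle,\langle \bY_t,\rId\rangle)$, rewrite the integrals in \eqref{eq:rDa} as expectations over independent copies according to \eqref{eq:omega_zero-zero_hyperedge}, and control the remainder by moment counting. However, there is a genuine gap in your identification $\fR_{s,t}(\omega_0):=R_n^{(x_0,y_0),\Pi^{\mu,\nu}}(f)$. The Taylor expansion of Theorem \ref{theorem:LionsTaylor2} produces derivative terms for \emph{every} $a\in\bigcup_{i=1}^n\A{i}$, whereas the jet in \eqref{eq:corollary:LionsTaylor2.1} only retains $a\in A^{\alpha,\beta}$, i.e.\ those with $\scG_{\alpha,\beta}[a]=\alpha l[a]_0+\beta(|a|-l[a]_0)<\gamma$. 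The genuine derivative terms indexed by $A_\ast^{\alpha,\beta}$ (which is nonempty in general; e.g.\ in Example \ref{example:Tikz} the sequence with three measure derivatives has $3\beta\geq\gamma$ while $|a|=3\leq n$) must also be absorbed into $\fR_{s,t}$, and likewise the terms indexed by $A_\ast^{\alpha,\beta|a}$ must be absorbed into $\fR^a_{s,t}$ in \eqref{eq:corollary:LionsTaylor2.2}. This is exactly what the paper does before bounding the remainder.

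The omission is not merely notational, because your exponent-counting does not cover the missing terms. For the Theorem's remainder $R_n$ you correctly use that each of its $n+1$ increment factors contributes at least $\alpha\wedge\beta$, so $(n+1)(\alpha\wedge\beta)\geq\gamma$ suffices. But an $A_\ast^{\alpha,\beta}$ term carries only $|a|\leq n$ factors, and by the definition of $n$ one has $n(\alpha\wedge\beta)<\gamma$; to see that such a term is $O(|t-s|^\gamma)$ you must use the defining inequality $\scG_{\alpha,\beta}[a]\geq\gamma$ of $B_\ast^{\alpha,\beta}$, which requires the averaged $\bY$-increments $\tilde{\bE}[|\langle\bY_{s,t},\rId\rangle|^{l[a]_r}]$ to contribute at rate $\beta$ per factor rather than $\alpha\wedge\beta$; note that Proposition \ref{prop:Regu-RCRP}, which you cite, only yields the $\alpha$-rate for the pathwise trace, so this point needs its own justification (it is the mean-field, detagged rate used in the paper's estimate \eqref{eq:corollary:LionsTaylor3:remUpper} and \eqref{eq:remark:Polynomial-comments1.2}). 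Once these terms are included, the integrability claims \eqref{eq:corollary:LionsTaylor2.1-int} and \eqref{eq:corollary:LionsTaylor2.2-int} are unaffected, since the least integrable contribution is still the $a=(0,\ldots,0)\in\A{n}$ part of $R_n$ involving $|\langle\bX_{s,t},\rId\rangle(\omega_0)|^{n+1}$, exactly as in the paper; your H\"older argument for that part is in line with the paper's proof.
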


\begin{proof}
Firstly, $p_y[\rId]> n+1$ so that 
$$
\bE\bigg[ \Big| \big\langle \bY_t, \rId\big\rangle(\omega) \Big|^{n+1} \bigg]< \infty. 
$$
Therefore, by Theorem \ref{theorem:LionsTaylor2} we have
\begin{align}
\nonumber
f\Big( \big\langle \bX, \rId \big\rangle&(\omega_0), \cL^{\langle \bY, \rId\rangle} \Big)_{s, t}
\\
\nonumber
=& \sum_{a\in A^{\alpha, \beta} } \frac{1}{|a|!} \bE^{1, ..., m[a]}\bigg[ \partial_a f\Big( \big\langle \bX_s, \rId \big\rangle (\omega_0), \cL^{\langle \bY_s, \rId\rangle}, \big\langle \bY_s, \rId \big\rangle (\omega_1), ..., \big\langle \bY_s, \rId \big\rangle (\omega_{m[a]}) \Big) 
\\
\label{eq:theorem:ContinIm-RCRPs1.1}
&\qquad \cdot \bigotimes_{r=1}^{|a|} \Big\langle \big[ \bX, \bY \big]_{s,t}, \rId\Big\rangle (\omega_{a_r}) \bigg]
\\
\nonumber
&+ \sum_{a\in A_\ast^{\alpha, \beta}} \frac{1}{|a|!} \bE^{1, ..., m[a]}\bigg[ \partial_a f\Big( \big\langle \bX_s, \rId \big\rangle (\omega_0), \cL^{\langle \bY_s, \rId\rangle}, \big\langle \bY_s, \rId \big\rangle (\omega_1), ..., \big\langle \bY_s, \rId \big\rangle (\omega_{m[a]}) \Big) 
\\
\label{eq:theorem:ContinIm-RCRPs1.2}
&\qquad \cdot \bigotimes_{r=1}^{|a|} \Big\langle \big[ \bX, \bY \big]_{s,t}, \rId\Big\rangle (\omega_{a_r}) \bigg]
\\
\nonumber
&+ \frac{1}{ (n -1)!} \sum_{a \in \A{n} } \bE^{1, ..., m[a]}\bigg[ f^a \Big[ \big\langle \bX_s, \rId \big\rangle (\omega_0), \big\langle \bX_t, \rId \big\rangle (\omega_0), \Pi^{\langle \bY_s, \rId\rangle, \langle \bY_t, \rId\rangle} \Big](\omega_1, ..., \omega_{m[a]}) 
\\
\label{eq:theorem:ContinIm-RCRPs1.3}
&\qquad \cdot \bigotimes_{r=1}^{n} \Big\langle \big[ \bX, \bY \big]_{s, t}(\omega_{a_r}), \rId\Big\rangle \bigg], 
\end{align}
where
\begin{align}
\nonumber
f^a \Big[ \big\langle \bX_s, \rId \big\rangle& (\omega_0), \big\langle \bX_t, \rId \big\rangle (\omega_0), \Pi^{\langle \bY_s, \rId\rangle, \langle \bY_t, \rId\rangle} \Big](\omega_1, ..., \omega_{m[a]})
\\
\nonumber
=&\int_0^1 \bigg( \partial_a f\Big( \big\langle \bX_s + \xi \cdot \bX_{s,t}, \rId\big\rangle (\omega_0) , \Pi_\xi^{\langle \bY_s, \rId\rangle, \langle \bY_t, \rId\rangle}, ..., \big\langle \bY_s + \xi \cdot \bY_{s,t}, \rId \big\rangle (\omega_{m[a]})  \Big) 
\\
\label{eq:theorem:ContinIm-RCRPs1.2_rem1}
&- \partial_a f\Big( \big\langle \bX_s, \rId\big\rangle (\omega_0), \cL^{\langle \bY_s, \rId\rangle}, ..., \big\langle \bY_s, \rId \big\rangle (\omega_{m[a]}) \Big)\bigg) \cdot (1-\xi)^{n-1} d\xi. 
\end{align}
We emphasise that the summations in Equation \eqref{eq:theorem:ContinIm-RCRPs1.1} and \eqref{eq:theorem:ContinIm-RCRPs1.2} are distinct. 

The terms from Equation \eqref{eq:theorem:ContinIm-RCRPs1.2_rem1} correspond to those of the remainder terms \eqref{eq:theorem:LionsTaylor2_remainder1_1}. Using Equation  \eqref{eq:theorem:LionsTaylor2_Rem1}, we conclude that all the terms in \eqref{eq:theorem:ContinIm-RCRPs1.2} and \eqref{eq:theorem:ContinIm-RCRPs1.3} will be $O\Big( |t-s|^\gamma \Big)$ and
\begin{align}
\nonumber
\big| \fR_{s, t}&(\omega_0) \big| \leq \sum_{a \in A_\ast^{\alpha, \beta}} \frac{1}{|a|!} \Big\| \partial_a f \Big\|_\infty \cdot \Big| \big\langle \bX_{s, t}, \rId\big\rangle(\omega_0) \Big|^{l[a]_0} \cdot \prod_{r=1}^{m[a]} \tilde{\bE}\bigg[ \Big| \big\langle \bY_{s, t}, \rId \big\rangle(\tilde{\omega}) \Big|^{l[a]_r} \bigg]
\\
\nonumber
&+\frac{1}{n!} \sum_{a\in \A{n}}  \Big\| \partial_a f \Big\|_{\lip, 0} \cdot \Big| \big\langle \bX_{s, t}, \rId\big\rangle(\omega_0) \Big|^{l[a]_0+1} \cdot \prod_{r=1}^{m[a]} \tilde{\bE}\bigg[ \Big| \big\langle \bY_{s, t}, \rId\big\rangle(\tilde{\omega}) \Big|^{l[a]_r} \bigg]
\\
\nonumber
&+\frac{1}{n!} \sum_{a\in \A{n}} \Big\| \partial_a f \Big\|_{\lip, \mu} \cdot \Big| \big\langle \bX_{s, t}, \rId\big\rangle(\omega_0) \Big|^{l[a]_0} \cdot \tilde{\bE}\bigg[ \Big| \big\langle \bY_{s, t}, \rId\big\rangle(\tilde{\omega}) \Big|\bigg] \cdot \prod_{r=1}^{m[a]} \tilde{\bE}\bigg[ \Big| \big\langle \bY_{s, t}, \rId\big\rangle(\tilde{\omega}) \Big|^{l[a]_r} \bigg]
\\
\label{eq:corollary:LionsTaylor3:remUpper}
&+\frac{1}{n!} \sum_{a\in \A{n}} \sum_{j=1}^{m[a]} \Big\| \partial_a f \Big\|_{\lip, j} \cdot \Big| \big\langle \bX_{s, t}, \rId\big\rangle(\omega_0) \Big|^{l[a]_0} \cdot \prod_{r=1}^{m[a]} \tilde{\bE}\bigg[ \Big| \big\langle \bY_{s, t}, \rId\big\rangle(\tilde{\omega}) \Big|^{l[a]_r + \delta_{j=r}} \bigg]
\end{align}
where $\| \partial_a f\|_\infty$ is defined as in Equation \eqref{eq:definition:FunctionNorm1} and $\| \partial_a f\|_{\lip, 0}$, $\| \partial_a f \|_{\lip, \mu}$ and $\| \partial_a f\|_{\lip, j}$ are defined as in Equation \eqref{eq:definition:FunctionNorm2}. 

By choosing $a=(0, ..., 0) \in \A{n}$, we get the least integrable terms but even this satisfies Equation \eqref{eq:corollary:LionsTaylor2.1-int}. 

Similarly, by Corollary \ref{corollary:LionsTaylor2} we have that 
\begin{align}
\nonumber
\partial_a f\Big( \big\langle \bX, \rId\big\rangle& (\omega_0), \cL^{\langle \bY,\rId\rangle}, \big\langle \bY, \rId \big\rangle (\omega_{\tilde{h}_1^Y}) ,..., \big\langle \bY, \rId \big\rangle (\omega_{\tilde{h}_{m[a]}^Y }) \Big)_{s, t}
\\
\nonumber
=& \sum_{\overline{a}\in A^{\alpha, \beta|a}} \frac{1}{(|\overline{a}| - |a|)!} \bE^{m[a]+1, ..., m[\overline{a}]} \bigg[ \partial_{\overline{a}} f\Big( \big\langle \bX_s, \rId \big\rangle (\omega_0), \cL^{\langle \bY_s, \rId\rangle}, ..., \big\langle \bY_s, \rId \big\rangle (\omega_{\tilde{h}_{m[a]}^Y} ) 
\\
\label{eq:theorem:ContinIm-RCRPs2.1}
&\quad \cdot \big\langle \bY_s, \rId \big\rangle (\omega_{m[a]+1}), ..., \big\langle \bY_s, \rId \big\rangle (\omega_{m[\overline{a}]}) \Big) \cdot \bigotimes_{r=|a|+1}^{|\overline{a}|} \Big\langle \big[ \bX, \bY \big]_{s, t}, \rId\Big\rangle (\omega_{\overline{a}_r}) \bigg]
\\
\nonumber
&+ \sum_{\overline{a} \in A_{\ast}^{\alpha, \beta|a}} \frac{1}{(|\overline{a}| - |a|)!} \bE^{m[a]+1, ..., m[\overline{a}]} \bigg[ \partial_{\overline{a}} f\Big( \big\langle \bX_s, \rId \big\rangle (\omega_0), \cL^{\langle \bY_s, \rId\rangle}, ..., \big\langle \bY_s, \rId \big\rangle (\omega_{\tilde{h}_{m[a]}^Y} ) 
\\
\label{eq:theorem:ContinIm-RCRPs2.2}
&\quad \cdot \big\langle \bY_s, \rId \big\rangle (\omega_{m[a]+1}), ..., \big\langle \bY_s, \rId \big\rangle (\omega_{m[\overline{a}]}) \Big) \cdot \bigotimes_{r=|a|+1}^{|\overline{a}|} \Big\langle [\bX, \bY]_{s, t}, \rId\Big\rangle (\omega_{\overline{a}_r}) \bigg]
\\
\nonumber
&+\sum_{\overline{a}\in \A{n|a}} \bE^{m[a]+1, ..., m[\overline{a}]} \bigg[ \Big( \partial_a f\Big)^{\overline{a}} (\omega_0, \omega_{\tilde{h}_1^Y}, ..., \omega_{\tilde{h}_{m[a]}^Y}, \omega_{m[a]+1}, ..., \omega_{m[\overline{a}]}) 
\\
\label{eq:theorem:ContinIm-RCRPs2.3}
&\quad 
\cdot \bigotimes_{r=|a|+1}^{n} \Big\langle \big[ \bX, \bY \big]_{s, t}, \rId\Big\rangle (\omega_{\overline{a}_r}) \bigg]
\end{align}
where
\begin{align}
\nonumber
&\Big( \partial_a f \Big)^{\overline{a}} (\omega_0, \omega_{\tilde{h}_1^Y}, ..., \omega_{\tilde{h}_{m[a]}^Y}, \omega_{m[a]+1}, ..., \omega_{m[\overline{a}]})
\\
\nonumber
&=\int_0^1 \bigg( \partial_{\overline{a}} f\Big( \big\langle \bX_s, \rId\big\rangle (\omega_0) + \xi \big\langle \bX_{s,t}, \rId \big\rangle (\omega_0), \Pi_\xi^{\langle \bY_s, \rId\rangle, \langle \bY_t, \rId\rangle}, ..., \big\langle \bY_s, \rId \big\rangle (\omega_{m[\overline{a}]}) + \xi \big\langle \bY_{s,t}, \rId \big\rangle (\omega_{m[\overline{a}]}) \Big) 
\\
\label{eq:theorem:ContinIm-RCRPs1.2_rem2}
&\quad
- \partial_{\overline{a}} f\Big( \big\langle \bX_s, \rId\big\rangle (\omega_0), \cL^{\langle \bY_s, \rId\rangle}, ..., \big\langle \bY_s, \rId\big\rangle (\omega_{m[\overline{a}]}) \Big)\bigg) \cdot (1-\xi)^{n-1-|a|} d\xi. 
\end{align}
The terms from Equation \eqref{eq:theorem:ContinIm-RCRPs1.2_rem2} correspond to those of the remainder terms \eqref{eq:corollary:LionsTaylor2_remainder1_1}. 

For $\overline{a} \in A^{\alpha, \beta|a}$ and for $k=0, ..., m[\overline{a}]$, we denote
$$
l[\overline{a}|a]_k = \sum_{i=|a|+1}^{|\overline{a}|} {\mathbf 1}_{k}(a_{i}). 
$$

All terms in \eqref{eq:theorem:ContinIm-RCRPs2.2} and \eqref{eq:theorem:ContinIm-RCRPs2.3} will be $O\Big( |t-s|^{\gamma - \scG_{\alpha, \beta}[a]} \Big)$ and a similar argument as before implies 
\begin{align}
\nonumber
\big| \fR_{s, t}^{a}&(\omega_0, \omega_1, ..., \omega_{m[a]}) \big|
\\
\nonumber
\leq& 
\sum_{\overline{a} \in A_{\ast}^{\alpha, \beta|a}} \frac{1}{(|\overline{a}| - |a|)!} \cdot \Big\| \partial_{\overline{a}} f \Big\| \cdot \Big| \big\langle \bX_{s, t}, \rId\big\rangle(\omega_0) \Big|^{l[\overline{a}|a]_0} \cdot \prod_{r=1}^{m[a]} \Big| \big\langle \bY_{s, t}, \rId \big\rangle(\omega_{r}) \Big|^{l[\overline{a}|a]_r} 
\\
\nonumber
&\qquad \cdot \prod_{r=m[a]+1}^{m[\overline{a}]} \tilde{\bE}\bigg[ \Big| \big\langle \bY_{s, t}, \rId \big\rangle(\tilde{\omega}) \Big|^{l[\overline{a}|a]_r} \bigg]
\\
\nonumber
&+\frac{1}{(n-|a|)!} \sum_{\overline{a}\in \A{n|a}} \Big\| \partial_{\overline{a}} f \Big\|_{\lip, 0} \cdot \Big| \big\langle \bX_{s, t}, \rId\big\rangle(\omega_0) \Big|^{l[\overline{a}|a]_0 + 1} \cdot \prod_{r=1}^{m[a]} \Big| \big\langle \bY_{s, t}, \rId \big\rangle(\omega_r) \Big|^{l[\overline{a}|a]_r} 
\\
\nonumber
&\qquad \cdot \prod_{r=1}^{m[\overline{a}]} \tilde{\bE}\bigg[ \Big| \big\langle \bY_{s, t}, \rId \big\rangle(\tilde{\omega}) \Big|^{l[\overline{a}|a]_r} \bigg]
\\
\nonumber
&+\frac{1}{(n-|a|)!} \sum_{\overline{a}\in \A{n|a}} \Big\| \partial_{\overline{a}} f \Big\|_{\lip, \mu} \cdot \Big| \big\langle \bX_{s, t}, \rId\big\rangle(\omega_0) \Big|^{l[\overline{a}|a]_0} \cdot \tilde{\bE}\bigg[ \Big| \big\langle \bY_{s, t}, \rId \big\rangle(\tilde{\omega}) \Big| \bigg]
\\
\nonumber
&\qquad \cdot \prod_{r=1}^{m[a]} \Big| \big\langle \bY_{s, t}, \rId \big\rangle(\omega_r) \Big|^{l[\overline{a}|a]_r} \cdot \prod_{r=m[a]+1}^{m[\overline{a}]} \tilde{\bE}\bigg[ \Big| \big\langle \bY_{s, t}, \rId \big\rangle(\tilde{\omega}) \Big|^{l[\overline{a}|a]_r} \bigg]
\\
\nonumber
&+\frac{1}{(n-|a|)!} \sum_{\overline{a}\in \A{n|a}} \sum_{j=1}^{m[\overline{a}]} \Big\| \partial_{\overline{a}} f \Big\|_{\lip, j} \cdot \Big| \big\langle \bX_{s, t}, \rId\big\rangle(\omega_0) \Big|^{l[\overline{a}|a]_0} \cdot \prod_{r=1}^{m[a]} \Big| \big\langle \bY_{s, t}, \rId \big\rangle(\omega_r) \Big|^{l[\overline{a}|a]_r} 
\\
\label{eq:corollary:LionsTaylor3:remUpper_Y}
&\qquad \cdot \prod_{r=1}^{m[\overline{a}]} \tilde{\bE}\bigg[ \Big| \big\langle \bY_{s, t}, \rId \big\rangle(\tilde{\omega}) \Big|^{l[\overline{a}|a]_r + \delta_{j=r} } \bigg]. 
\end{align}
where $\| \partial_a f\|_\infty$ is defined as in Equation \eqref{eq:definition:FunctionNorm1} and $\| \partial_a f\|_{\lip, 0}$, $\| \partial_a f \|_{\lip, \mu}$ and $\| \partial_a f\|_{\lip, j}$ are defined as in Equation \eqref{eq:definition:FunctionNorm2}. 

Equation \eqref{eq:corollary:LionsTaylor2.2-int} follows by integrating over each probability space and we conclude. 
\end{proof}

Our first step in the proof of Theorem \ref{theorem:ContinIm-RCRPs}
is to build on Corollary \ref{corollary:LionsTaylor3} in order to prove \eqref{eq:theorem:ContinIm-RCRPs}:

\begin{proof}[Proof of Equation \eqref{eq:theorem:ContinIm-RCRPs}]
Let $T\in \scF^{\gamma, \alpha, \beta}$ and define $p_z[T]$ according to Equation \eqref{eq:theorem:ContinIm-RCRPs3}. By assumption, $p_z[T] \in (1, \infty)$. Further, $p_z[\rId] \in (1, \infty)$ and $\tfrac{1}{p_z[\rId]} = \tfrac{1}{p_z[T]} + \tfrac{1}{q[T]}$ so that Equation \eqref{eq:IntegrabilityP1} is satisfied. 

In order to draw comparison with classical techniques (see for instance \cite{gubinelli2010ramification}*{Lemma 8.4}), we denote for $i=1, ..., n$
$$
A_{i}^{\alpha, \beta} = \big\{ a\in A^{\alpha, \beta}: |a|=i\big\}, 
\quad
A_{i}^{\alpha, \beta|a} = \big\{ \overline{a}\in A^{\alpha, \beta|a}: |\overline{a}|=i \big\}, 
$$

We apply Corollary \ref{corollary:LionsTaylor3} (and recalling $\fJ$ as introduced in Equation \eqref{eq:Jet_Operator}) to obtain
\begin{align}
\nonumber
f\Big( \big\langle \bX, \rId\big\rangle&(\omega_0), \cL^{\langle \bY, \rId\rangle} \Big)_{s, t}
\\
\nonumber
=& \sum_{a\in A^{\alpha, \beta}} \frac{1}{|a|!} \bE^{1, ..., m[a]}\bigg[
\partial_a f\Big( \big\langle \bX_{s}, \rId \big\rangle (\omega_0), \cL^{\langle \bY_s, \rId\rangle}, ..., \big\langle \bY_{s}, \rId \big\rangle (\omega_{\tilde{h}_{m[a]}^T}) \Big) 
\\
\label{eq:theorem:ContinIm-RCRPs1.1.5}
&\quad \cdot 
\bigotimes_{r=1}^{|a|} \Big\langle \fJ\big[ \bX, \bY\big]_{s, t}, \rId\Big\rangle(\omega_{a_r}) \bigg]
\\
\nonumber
&+ \sum_{a\in A^{\alpha, \beta}} \frac{1}{|a|!} \bE^{1, ..., m[a]}\bigg[
\partial_a f\Big( \big\langle \bX_{s}, \rId \big\rangle (\omega_0), \cL^{\langle \bY_s, \rId\rangle}, ..., \big\langle \bY_{s}, \rId \big\rangle (\omega_{\tilde{h}_{m[a]}^T}) \Big) 
\\
\label{eq:theorem:ContinIm-RCRPs1.1.3}
&\quad \cdot \bigg( \bigotimes_{r=1}^{|a|} \Big\langle \big[ \bX, \bY\big]_{s, t}, \rId\Big\rangle(\omega_{a_r}) - \bigotimes_{r=1}^{|a|} \Big\langle \fJ\big[ \bX, \bY\big]_{s, t}, \rId\Big\rangle(\omega_{a_r}) \bigg)\bigg]
\\
\label{eq:theorem:ContinIm-RCRPs1.1.4}
&+ 
\fR_{s, t}(\omega_0). 
\end{align}
Thanks to Corollary \ref{corollary:LionsTaylor3}, the terms of Equation \eqref{eq:theorem:ContinIm-RCRPs1.1.4} will be of of order $O\big( |t-s|^{\gamma} \big)$. 

An application of Lemma \ref{lemma:TechLem_Products} allows us to write
\begin{align*}
\bigg( \bigotimes_{r=1}^{|a|}& \Big\langle \big[ \bX, \bY\big]_{s, t}, \rId\Big\rangle(\omega_{a_r}) - \bigotimes_{r=1}^{|a|} \Big\langle \fJ\big[ \bX, \bY\big]_{s, t}, \rId\Big\rangle(\omega_{a_r}) \bigg)
\\
=& \sum_{k=1}^{|a|} \bigg( \bigotimes_{r=1}^{k-1} \Big\langle \fJ\big[ \bX, \bY\big]_{s, t}, \rId\Big\rangle(\omega_{a_r}) \bigg) \otimes \Big\langle \big[\bX, \bY\big]_{s, t}^{\sharp}, \rId\Big\rangle(\omega_{a_k}) \otimes \bigg( \bigotimes_{r=k+1}^{|a|} \Big\langle \big[ \bX, \bY\big]_{s, t}, \rId\Big\rangle(\omega_{a_r}) \bigg)
\end{align*}
so that Equation \eqref{eq:theorem:ContinIm-RCRPs1.1.3} will be of order $O\big( |t-s|^{\gamma} \big)$ too.

By expanding out Equation \eqref{eq:theorem:ContinIm-RCRPs1.1.5}, we get
\begin{align}
\nonumber
f\Big( \big\langle \bX&, \rId\big\rangle(\omega_0), \cL^{\langle \bY, \rId\rangle} \Big)_{s, t}
\\
\nonumber
=&\sum_{T\in \scF}^{\gamma-, \alpha, \beta} \sum_{a\in A^{\alpha, \beta}} \frac{1}{|a|!} \bE^{H^T}\Bigg[ \sum_{\substack{T_1, ..., T_{|a|}\in \scF \\ T=\cE^a[T_1, ..., T_{|a|}]}} \bE^{Z^a[T_1, ..., T_{|a|}]} \bigg[ 
\partial_a f\Big( \big\langle \bX_{s}, \rId \big\rangle (\omega_0), \cL^{\langle \bY_s, \rId\rangle}, \big\langle \bY_{s}, \rId \big\rangle (\omega_{\tilde{h}_{1}^T}),
\\
\label{eq:theorem:ContinIm-RCRPs1.1.1}
&\quad  ..., \big\langle \bY_{s}, \rId \big\rangle (\omega_{\tilde{h}_{m[a]}^T}) \Big)
\cdot 
\bigotimes_{r=1}^{|a|} \Big\langle \big[ \bX, \bY \big]_s, T_r \Big\rangle (\omega_{\tilde{h}_{a_r}^T}, \omega_{H^{T_r}}) \bigg]
\cdot \Big\langle \rw_{s, t}, T\Big\rangle(\omega_0, \omega_{H^T}) \Bigg]
\\
\nonumber
&+ \sum_{a\in A^{\alpha, \beta}} \frac{1}{|a|!} \sum_{\substack{T_1, ..., T_{|a|}\in \scF \\ \scG_{\alpha, \beta}\big[ \cE^a[T_1, ..., T_{|a|}]\big]\geq \gamma}}^{\gamma-, \alpha, \beta} \bE^{H^{\cE^a[T_1, ..., T_{|a|}]}}\Bigg[  \bE^{Z^a[T_1, ..., T_{|a|}]} \bigg[ 
\\
\nonumber
&\quad \partial_a f\Big( \big\langle \bX_{s}, \rId \big\rangle (\omega_0), \cL^{\langle \bY_s, \rId\rangle}, \big\langle \bY_{s}, \rId \big\rangle (\omega_{\tilde{h}_1^T}) ,..., \big\langle \bY_{s}, \rId \big\rangle (\omega_{\tilde{h}_{m[a]}^T}) \Big) \cdot
\bigotimes_{r=1}^{|a|} \Big\langle \big[ \bX, \bY \big]_s, T_r \Big\rangle (\omega_{\tilde{h}_{a_r}^T}, \omega_{H^{T_r}}) \bigg]
\\
\label{eq:theorem:ContinIm-RCRPs1.1.2}
&\quad \cdot \Big\langle \rw_{s, t}, \cE^a[T_1, ..., T_{|a|}] \Big\rangle(\omega_0, \omega_{H^{\cE^a[T_1, ..., T_{|a|}]}}) \Bigg] + O\Big( |t-s|^{\gamma} \Big). 
\end{align}

The terms contained in the summation \eqref{eq:theorem:ContinIm-RCRPs1.1.2} are also of order $O(|t-s|^\gamma)$ and the summation in \eqref{eq:theorem:ContinIm-RCRPs1.1.1} motivates the terms of the random controlled rough path described in Equation \eqref{eq:theorem:ContinIm-RCRPs}. 

This leads us to
\begin{align}
\nonumber
\Big\langle& \bZ_{s, t}^{\sharp}, \rId \Big\rangle(\omega_0) 
=
\sum_{a\in A^{\alpha, \beta} } \frac{1}{|a|!} \sum_{\substack{T_1, ..., T_{|a|}\in \scF \\ \scG_{\alpha, \beta}\big[ \cE^a[T_1, ..., T_{|a|}]\big]\geq \gamma}}^{\gamma-, \alpha, \beta} \bE^{H^{\cE^a[T_1, ..., T_{|a|}]}}\Bigg[  \bE^{Z^a[T_1, ..., T_{|a|}]} \bigg[ 
\\
\nonumber
&\quad \partial_a f\Big( \big\langle \bX_{s}, \rId \big\rangle (\omega_0), \cL^{\langle \bY_s, \rId\rangle}, \big\langle \bY_{s}, \rId \big\rangle (\omega_{\tilde{h}_1^T}) ,..., \big\langle \bY_{s}, \rId \big\rangle (\omega_{\tilde{h}_{m[a]}^T}) \Big) \cdot
\bigotimes_{r=1}^{|a|} \Big\langle \big[ \bX, \bY \big]_s, T_r \Big\rangle (\omega_{\tilde{h}_{a_r}^T}, \omega_{H^{T_r}}) \bigg]
\\
\nonumber
&\quad \cdot \Big\langle \rw_{s, t}, \cE^a[T_1, ..., T_{|a|}] \Big\rangle(\omega_0, \omega_{H^{\cE^a[T_1, ..., T_{|a|}]}}) \Bigg]
\\
\nonumber
&+\sum_{a\in A^{\alpha, \beta}} \frac{1}{|a|!} \bE^{1, ..., m[a]}\Bigg[
\partial_a f\Big( \big\langle \bX_{s}, \rId \big\rangle (\omega_0), \cL^{\langle \bY_s, \rId\rangle}, ..., \big\langle \bY_{s}, \rId \big\rangle (\omega_{\tilde{h}_{m[a]}^T}) \Big) 
\\
\nonumber
&\quad \cdot \sum_{k=1}^{|a|} \bigg( \bigotimes_{r=1}^{k-1} \Big\langle \fJ\big[ \bX, \bY\big]_{s, t}, \rId\Big\rangle(\omega_{a_r}) \bigg) \otimes \Big\langle \big[\bX, \bY\big]_{s, t}^{\sharp}, \rId\Big\rangle(\omega_{a_k}) \otimes \bigg( \bigotimes_{r=k+1}^{|a|} \Big\langle \big[ \bX, \bY\big]_{s, t}, \rId\Big\rangle(\omega_{a_r}) \bigg) \Bigg]
\\
\label{eq:theorem:ContinIm-RCRPs_1stRem}
& + \fR_{s, t}(\omega_0). 
\end{align}

Next, we verify that the increments of Equation \eqref{eq:theorem:ContinIm-RCRPs} satisfy Definition \ref{definition:RandomControlledRP}. Fix $Y\in \scF^{\gamma-, \alpha, \beta}$. By taking increments, we get
\begin{align*}
\Big\langle \bZ_{s,t}&, Y \Big\rangle (\omega_0, \omega_{H^Y})
=
\sum_{i=1}^n \sum_{a\in A_i^{\alpha, \beta}}  \frac{1}{i!} 
\sum_{\substack{Y_1, ..., Y_{i} \in \scF \\ Y = \cE^a[Y_1, ..., Y_{i}] }} 
\bE^{Z^a[Y_1, ..., Y_{i}]} \Bigg[ 
\\
& \partial_a f\Big( \big\langle \bX, \rId \big\rangle (\omega_0), \cL^{\langle \bY,\rId\rangle}, ..., \big\langle \bY, \rId \big\rangle (\omega_{\tilde{h}_{m[a]}^Y}) \Big)_{s, t} \cdot
\bigotimes_{r=1}^{i} \Big\langle \big[ \bX, \bY \big]_s, Y_r \Big\rangle (\omega_{\tilde{h}_{a_r}^Y}, \omega_{H^{Y_r}}) 
\\
+&\partial_a f\Big( \big\langle \bX_{s}, \rId \big\rangle (\omega_0), \cL^{\langle \bY_s,\rId\rangle},..., \big\langle \bY_{s}, \rId \big\rangle (\omega_{\tilde{h}_{m[a]}^Y}) \Big) \cdot
\bigg( \bigotimes_{r=1}^{i} \Big\langle \big[ \bX, \bY \big], Y_r \Big\rangle (\omega_{\tilde{h}_{a_r}^Y}, \omega_{H^{Y_r}}) \bigg)_{s, t} 
\\
+& \partial_a f \Big( \big\langle \bX, \rId \big\rangle (\omega_0), \cL^{\langle \bY,\rId\rangle}, ..., \big\langle \bY, \rId \big\rangle (\omega_{\tilde{h}_{m[a]}^Y}) \Big)_{s, t} \cdot 
\bigg( \bigotimes_{r=1}^{i} \Big\langle \big[ \bX, \bY \big], Y_r \Big\rangle (\omega_{\tilde{h}_{a_r}^Y}, \omega_{H^{Y_r}}) \bigg)_{s, t} \Bigg]. 
\end{align*}

By applying Corollary \ref{corollary:LionsTaylor3} to get
\begin{align}
\nonumber
\Big\langle \bZ_{s,t}&, Y \Big\rangle (\omega_0, \omega_{H^Y})
=
\sum_{i=1}^n \sum_{a\in A_i^{\alpha, \beta}}  \frac{1}{i!} 
\sum_{\substack{Y_1, ..., Y_{i} \in \scF \\ Y = \cE^a[Y_1, ..., Y_{i}] }} 
\bE^{Z^a[Y_1, ..., Y_{i}]} \Bigg[ \sum_{j=i+1}^n \sum_{\overline{a} \in A_{j}^{\alpha, \beta|a}} \frac{1}{(j-i)!}
\\
\nonumber
&\bE^{m[a]+1, ..., m[\overline{a}]}\Bigg[ \partial_{\overline{a}} f \bigg( \big\langle \bX_s, \rId \big\rangle (\omega_0), \cL_s^{\langle \bY,\rId\rangle}, ..., \big\langle \bY_s, \rId \big\rangle (\omega_{\tilde{h}_{m[a]}^Y}), ..., \big\langle \bY_s, \rId \big\rangle (\omega_{m[\overline{a}]}) \bigg) 
\\
\label{eq:theorem:ContinIm-RCRPs1.2.1}
&\cdot \bigotimes_{r=i+1}^j \Big\langle \big[\bX, \bY\big]_{s, t}, \rId\Big\rangle(\omega_{\overline{a}_r})\Bigg] \otimes \bigg( \bigotimes_{r=1}^i \Big\langle \big[ \bX, \bY\big]_s + \big[ \bX, \bY\big]_{s,t}, Y_r\Big\rangle(\omega_{\tilde{h}_{a_r}^Y}, \omega_{H^{Y_r}}) \bigg) \Bigg]
\\
\label{eq:theorem:ContinIm-RCRPs1.2.2}
+& \sum_{i=1}^n \sum_{a\in A_i^{\alpha, \beta}}  \frac{1}{i!} 
\sum_{\substack{Y_1, ..., Y_{i} \in \scF \\ Y = \cE^a[Y_1, ..., Y_{i}] }} 
\bE^{Z^a[Y_1, ..., Y_{i}]} \Bigg[ \fR_{s, t}^a(\omega_0, ..., \omega_{\tilde{h}_{m[a]}^Y}) \otimes \bigotimes_{r=1}^{i} \Big\langle \big[ \bX, \bY\big]_t, Y_r\Big\rangle(\omega_{\tilde{h}_{a_r}^Y}, \omega_{H^{Y_r}}) \Bigg]
\\
\nonumber
+& \sum_{i=1}^n \sum_{a\in A_i^{\alpha, \beta}}  \frac{1}{i!} 
\sum_{\substack{Y_1, ..., Y_{i} \in \scF \\ Y = \cE^a[Y_1, ..., Y_{i}] }} 
\bE^{Z^a[Y_1, ..., Y_{i}]} \Bigg[ \partial_a f\Big( \big\langle \bX_{s}, \rId \big\rangle (\omega_0), \cL^{\langle \bY_s,\rId\rangle},..., \big\langle \bY_{s}, \rId \big\rangle (\omega_{\tilde{h}_{m[a]}^Y}) \Big)
\\
\label{eq:theorem:ContinIm-RCRPs1.2.3}
& \cdot \bigg( \bigotimes_{r=1}^{i} \Big\langle \big[ \bX, \bY \big]_s + \big[ \bX, \bY \big]_{s, t}, Y_r \Big\rangle (\omega_{\tilde{h}_{a_r}^Y}, \omega_{H^{Y_r}}) - \bigotimes_{r=1}^{i} \Big\langle \big[ \bX, \bY \big]_s, Y_r \Big\rangle (\omega_{\tilde{h}_{a_r}^Y}, \omega_{H^{Y_r}}) \bigg)  \Bigg]. 
\end{align}

If there exists $a\in A_{i, j} \cap A^{\alpha, \beta}$ such that the Lions tree $Y$ can be expressed of the form $Y=\cE^a\big[ Y_1, ..., Y_{|a|} \big]$, we know that $\scG_{\alpha, \beta}[Y]\geq \alpha i + \beta j$. Hence the terms in Equation \eqref{eq:theorem:ContinIm-RCRPs1.2.2} will be of order at least $O\Big( |t-s|^{\gamma - \scG_{\alpha, \beta}[Y]}\Big)$. 

Addressing Equation \eqref{eq:theorem:ContinIm-RCRPs1.2.3} first, thanks to Lemma \ref{lemma:TechLem_Products} we get that
\begin{align}
\nonumber
\eqref{eq:theorem:ContinIm-RCRPs1.2.3}
=&
\sum_{i=1}^n \sum_{a\in A_i^{\alpha, \beta}}  \frac{1}{i!} 
\sum_{\substack{Y_1, ..., Y_{i} \in \scF \\ Y = \cE^a[Y_1, ..., Y_{i}] }} 
\bE^{Z^a[Y_1, ..., Y_{i}]} \Bigg[ \partial_a f\Big( \big\langle \bX_{s}, \rId \big\rangle (\omega_0), \cL^{\langle \bY_s,\rId\rangle},..., \big\langle \bY_{s}, \rId \big\rangle (\omega_{\tilde{h}_{m[a]}^Y}) \Big)
\\
\nonumber
&\cdot \sum_{k=1}^i \bigg( \bigotimes_{r=1}^{k-1} \Big\langle \big[ \bX, \bY \big]_s + \fJ\big[ \bX, \bY \big]_{s, t}, Y_r \Big\rangle (\omega_{\tilde{h}_{a_r}^Y}, \omega_{H^{Y_r}}) \bigg) \otimes \Big\langle \big[ \bX, \bY \big]_{s, t}^{\sharp}, Y_k \Big\rangle (\omega_{\tilde{h}_{a_k}^Y}, \omega_{H^{Y_k}}) 
\\
\label{eq:theorem:ContinIm-RCRPs1.2.2.1}
&\qquad \otimes \bigg( \bigotimes_{r=k+1}^{i} \Big\langle \big[ \bX, \bY \big]_{t}, Y_r \Big\rangle (\omega_{\tilde{h}_{a_r}^Y}, \omega_{H^{Y_r}}) \bigg)
\\
\label{eq:theorem:ContinIm-RCRPs1.2.2.2}
&+ \bigotimes_{r=1}^{i} \Big\langle \big[ \bX, \bY \big]_{s} + \fJ\big[ \bX, \bY \big]_{s, t}, Y_r \Big\rangle (\omega_{\tilde{h}_{a_r}^Y}, \omega_{H^{Y_r}}) \Bigg]. 
\end{align}

Whenever we have that $Y = \cE^a\big[ Y_1, ..., Y_i\big]$, we know that $\scG_{\alpha, \beta}[Y_k] \leq \scG_{\alpha, \beta}[Y]$ for any choice of $k$ so that Equation \eqref{eq:theorem:ContinIm-RCRPs1.2.2.1} is of order $O\Big( |t-s|^{\gamma - \scG_{\alpha, \beta}[Y]}\Big)$. 

Using the same techniques and making the substitution $Y_{i+1}, ..., Y_j = \rId$, we see that
\begin{align}
\nonumber
&\eqref{eq:theorem:ContinIm-RCRPs1.2.1}
\\
\nonumber
=& 
\sum_{i=1}^n \sum_{j=i+1}^n \sum_{\overline{a} \in A_{j}^{\alpha, \beta}} \frac{1}{i!(j-i)!} 
\sum_{\substack{Y_1, ..., Y_{i} \in \scF \\ Y_{i+1}, ..., Y_j = \rId \\ Y = \cE^{\overline{a}}[Y_1, ..., Y_{j}] }} 
\bE^{Z^{\overline{a}}[Y_1, ..., Y_{j}]} \Bigg[ 
\partial_{\overline{a}} f \Big( \big\langle \bX_s, \rId \big\rangle (\omega_0), \cL_s^{\langle \bY,\rId\rangle}, ..., \big\langle \bY_s, \rId \big\rangle (\omega_{\tilde{h}_{m[\overline{a}]}^Y}) \Big) 
\\
\nonumber
&\cdot \Bigg( \sum_{k=i+1}^j \bigotimes_{r=i+1}^{k-1} \Big\langle \fJ\big[\bX, \bY\big]_{s, t}, Y_r\Big\rangle(\omega_{\tilde{h}_{\overline{a}_r}^Y}) \otimes \Big\langle \big[\bX, \bY\big]_{s, t}^{\sharp}, Y_r \Big\rangle(\omega_{\tilde{h}_{\overline{a}_k}^Y}) \otimes \bigotimes_{r=k+1}^{j} \Big\langle \big[\bX, \bY\big]_{s, t}, Y_r \Big\rangle (\omega_{\tilde{h}_{\overline{a}_r}^Y}) \bigg)
\\
\label{eq:theorem:ContinIm-RCRPs1.2.1.1}
&\quad \bigotimes_{r=1}^i \Big\langle \big[ \bX, \bY\big]_t, Y_r\Big\rangle(\omega_{\tilde{h}_{\overline{a}_r}^Y}, \omega_{H^{Y_r}}) \Bigg]
\\
\nonumber
+& 
\sum_{i=1}^n \sum_{j=i+1}^n \sum_{\overline{a} \in A_{j}^{\alpha, \beta}} \frac{1}{i!(j-i)!}
\sum_{\substack{Y_1, ..., Y_{i} \in \scF \\ Y_{i+1}, ..., Y_j = \rId \\ Y = \cE^{\overline{a}}[Y_1, ..., Y_{j}] }} 
\bE^{Z^{\overline{a}}[Y_1, ..., Y_{j}]} \Bigg[ 
\partial_{\overline{a}} f \Big( \big\langle \bX_s, \rId \big\rangle (\omega_0), \cL_s^{\langle \bY,\rId\rangle}, ..., \big\langle \bY_s, \rId \big\rangle (\omega_{\tilde{h}_{m[\overline{a}]}^Y}) \Big) 
\\
\nonumber
&\cdot \bigg( \bigotimes_{r=i+1}^j \Big\langle \fJ\big[\bX, \bY\big]_{s, t}, Y_r\Big\rangle(\omega_{\tilde{h}_{\overline{a}_r}^Y}) \bigg)
\otimes 
\Bigg( \sum_{k=1}^i \bigg( \bigotimes_{r=1}^{k-1} \Big\langle \big[ \bX, \bY\big]_s + \fJ\big[ \bX, \bY\big]_{s,t}, Y_r\Big\rangle(\omega_{\tilde{h}_{\overline{a}_r}^Y}, \omega_{H^{Y_r}}) \bigg) 
\\
\label{eq:theorem:ContinIm-RCRPs1.2.1.2}
&\otimes \Big\langle \big[ \bX, \bY\big]_{s, t}^{\sharp}, Y_k \Big\rangle(\omega_{\tilde{h}_{\overline{a}_k}^Y}) 
\otimes 
\bigg( \bigotimes_{r=k+1}^{i} \Big\langle \big[ \bX, \bY\big]_t, Y_r\Big\rangle(\omega_{\tilde{h}_{\overline{a}_r}^Y}, \omega_{H^{Y_r}}) \bigg) \Bigg) \Bigg]
\\
\nonumber
+& 
\sum_{i=1}^n \sum_{j=i+1}^n \sum_{\overline{a} \in A_{j}^{\alpha, \beta}} \frac{1}{i!(j-i)!}
\sum_{\substack{Y_1, ..., Y_{i} \in \scF \\ Y_{i+1}, ..., Y_j = \rId \\ Y = \cE^{\overline{a}}[Y_1, ..., Y_{j}] }} 
\bE^{Z^{\overline{a}}[Y_1, ..., Y_{j}]} \Bigg[ 
\partial_{\overline{a}} f \Big( \big\langle \bX_s, \rId \big\rangle (\omega_0), \cL_s^{\langle \bY,\rId\rangle}, ..., \big\langle \bY_s, \rId \big\rangle (\omega_{\tilde{h}_{m[\overline{a}]}^Y}) \Big) 
\\
\label{eq:theorem:ContinIm-RCRPs1.2.1.3}
&\cdot \bigg( \bigotimes_{r=i+1}^j \Big\langle \fJ\big[\bX, \bY\big]_{s, t}, Y_r\Big\rangle(\omega_{\tilde{h}_{\overline{a}_r}^Y}) \bigg)
\otimes 
\bigg( \bigotimes_{r=1}^{i} \Big\langle \big[ \bX, \bY\big]_s + \fJ\big[ \bX, \bY\big]_{s,t}, Y_r\Big\rangle(\omega_{\tilde{h}_{\overline{a}_r}^Y}, \omega_{H^{Y_r}}) \bigg) \Bigg]. 
\end{align}

By the presence of a remainder term, Equation \eqref{eq:theorem:ContinIm-RCRPs1.2.1.1} and Equation \eqref{eq:theorem:ContinIm-RCRPs1.2.1.2} are both of order $O\Big(|t-s|^{\gamma - \scG_{\alpha, \beta}[Y]}\Big)$. 

An application of Lemma \ref{lemma:TechLem_Products} gives us that
\begin{align*}
\bigotimes_{r=1}^{i}& \Big\langle \big[ \bX, \bY\big]_s + \fJ\big[ \bX, \bY\big]_{s,t}, Y_r\Big\rangle(\omega_{\tilde{h}_{\overline{a}_r}^Y}, \omega_{H^{Y_r}})
\\
=&  \sum_{k=1}^{i}\bigg( \bigotimes_{r=1}^{k-1} \Big\langle \big[ \bX, \bY\big]_{s}, Y_r\Big\rangle(\omega_{\tilde{h}_{\overline{a}_r}^Y}, \omega_{H^{Y_r}}) \bigg) 
\otimes 
\Big\langle \fJ\big[ \bX, \bY\big]_{s, t}, Y_k\Big\rangle(\omega_{\tilde{h}_{\overline{a}_k}^Y}, \omega_{H^{Y_k}}) 
\\
&\quad \otimes 
\bigg( \bigotimes_{r=k+1}^{i} \Big\langle \big[ \bX, \bY\big]_{s} + \fJ\big[ \bX, \bY\big]_{s, t}, Y_r\Big\rangle(\omega_{\tilde{h}_{\overline{a}_r}^Y}, \omega_{H^{Y_r}}) \bigg)
\\
&+ \bigg( \bigotimes_{r=1}^{i} \Big\langle \big[ \bX, \bY\big]_{s}, Y_r\Big\rangle(\omega_{\tilde{h}_{\overline{a}_r}^Y}, \omega_{H^{Y_r}}) \bigg). 
\end{align*}

We add the remaining terms together (and addressing the case $j=1$ first) we obtain
\begin{align*}
\Big\langle& \bZ_{s,t}, Y \Big\rangle (\omega_0, \omega_{H^Y}) = \delta_{h_0^Y \neq \emptyset} \cdot \nabla_{x_0}f\Big( \big\langle\bX_s, \rId\big\rangle(\omega_0), \cL_s^{\langle \bY, \rId\rangle}\Big) \cdot \Big\langle \fJ\big[ \bX]_{s, t}, Y\Big\rangle(\omega_0, \omega_{H^Y}) 
\\
&+\delta_{h_0^Y = \emptyset}\Bigg( \sum_{\substack{Y'\in \scF, h_0^{Y'}\neq \emptyset \\ \cE[Y'] = Y}} \partial_{\mu}f\Big( \big\langle \bX_s, \rId\big\rangle(\omega_0), \cL_s^{\langle \bY, \rId\rangle}, \big\langle \bY_s, \rId\big\rangle(\omega_{h_0^{Y'}}) \Big) \cdot\Big\langle \fJ\big[ \bY \big]_{s, t}, Y'\Big\rangle(\omega_{h_0^{Y'}}, \omega_{H^{Y'}})
\\
&\qquad +\bE^1\bigg[ \partial_\mu f\Big( \big\langle \bX_s, \rId\big\rangle(\omega_0), \cL_s^{\langle \bY, \rId\rangle}, \big\langle \bY_s, \rId\big\rangle(\omega_1)\Big) \cdot \Big\langle \fJ\big[ \bY\big]_{s, t}, Y\Big\rangle( \omega_1, \omega_{H^Y}) \bigg] \Bigg)
\\
&+\sum_{j=2}^n \frac{1}{j!} \sum_{i=1}^j \frac{j!}{i!(j-i)!} \sum_{a\in A_j^{\alpha, \beta}} \sum_{\substack{ Y_1, ..., Y_i \in \scF \\ Y_{i+1}, ..., Y_j = \rId \\ \cE^a[ Y_1, ..., Y_j] = Y}} \bE^{Z^a[Y_1, ..., Y_{j}]} \Bigg[ 
\partial_{a} f \Big( \big\langle \bX_s, \rId \big\rangle (\omega_0), \cL_s^{\langle \bY,\rId\rangle}, ..., \big\langle \bY_s, \rId \big\rangle (\omega_{\tilde{h}_{m[a]}^Y}) \Big) 
\\
&\cdot \Bigg( \bigotimes_{r=i+1}^j \Big\langle \fJ\big[\bX, \bY\big]_{s, t}, Y_r\Big\rangle(\omega_{\tilde{h}_{a_r}^Y}) \Bigg)
\otimes  
\Bigg( \sum_{k=1}^{i} \bigg( \bigotimes_{r=1}^{k-1} \Big\langle \big[ \bX, \bY\big]_{s}, Y_r\Big\rangle(\omega_{\tilde{h}_{a_r}^Y}, \omega_{H^{Y_r}}) \bigg) 
\\
&\otimes 
\Big\langle \fJ\big[ \bX, \bY\big]_{s, t}, Y_k\Big\rangle(\omega_{\tilde{h}_{a_k}^Y}, \omega_{H^{Y_k}}) 
\otimes 
\bigg( \bigotimes_{r=k+1}^{i} \Big\langle \big[ \bX, \bY\big]_{s} + \fJ\big[ \bX, \bY\big]_{s, t}, Y_r\Big\rangle(\omega_{\tilde{h}_{a_r}^Y}, \omega_{H^{Y_r}}) \bigg)
\\
&\quad + 
\bigg( \bigotimes_{r=1}^{i} \Big\langle \big[ \bX, \bY\big]_{s}, Y_r\Big\rangle(\omega_{\tilde{h}_{a_r}^Y}, \omega_{H^{Y_r}}) \bigg) \Bigg) + O\Big( |t-s|^{\gamma - \scG_{\alpha, \beta}[Y]}\Big). 
\end{align*}

To conclude, we make the substitution
\begin{align*}
&\Big\langle \fJ\big[ \bX, \bY\big]_{s, t}, Y\Big\rangle(\omega_0, \omega_{H^Y}) 
\\
&= \sum_{T\in \scF}^{\gamma-, \alpha, \beta} \sum_{\Upsilon\in \scF} c'\Big( T, \Upsilon, Y \Big) \cdot \bE^{E^{T, \Upsilon, Y}}\bigg[ \Big\langle \big[\bX, \bY\big]_s, T \Big\rangle(\omega_0, \omega_{\phi^{T, \Upsilon, Y}[H^{T}]}) \cdot \Big\langle \rw_{s, t}, \Upsilon \Big\rangle(\omega_0, \omega_{\varphi^{T, \Upsilon, Y}[H^{\Upsilon}]}) \bigg]
\end{align*}
and use the techniques described in the proof of Proposition \ref{proposition:productRCRP} along with classical combinatorial techniques to obtain that
\begin{align*}
\Big\langle \bZ_{s,t}&, Y \Big\rangle (\omega_0, \omega_{H^Y})
\\
&= \sum_{T\in \scF}^{\gamma-,\alpha, \beta} \sum_{ \Upsilon \in \scF} c'\Big( T, \Upsilon, Y \Big) \cdot \bE^{E^{T, \Upsilon, Y}}\Bigg[ \sum_{a\in A^{\alpha, \beta}} \frac{1}{|a|!} \sum_{\substack{T_1, ..., T_{|a|}\in\scF \\ \cE^a[ T_1, ..., T_{|a|}]=T}} \bE^{Z^a[ T_1, ..., T_{|a|}]}\bigg[ 
\\
&\qquad \partial_a f\Big( \big\langle \bX_s, \rId \big\rangle (\omega_{0}), \cL^{\langle \bY_s, \rId\rangle}, \big\langle \bY_s, \rId \big\rangle (\omega_{\tilde{\phi}^{T, \Upsilon, Y}[\tilde{h}_1^{T}]}), ..., \big\langle \bY_s, \rId \big\rangle (\omega_{\tilde{\phi}^{T, \Upsilon, Y}[\tilde{h}_{m[a]}^{T}]}) \Big) 
\\
&\qquad \cdot \bigotimes_{r=1}^{|a|} \Big\langle [\bX,\bY]_s, T_r \Big\rangle (\omega_{\tilde{\phi}^{T, \Upsilon, Y}[\tilde{h}_{a_r}^{Y}]}, \omega_{\phi^{T, \Upsilon, Y}[H^{T_r}]}) \bigg]
\cdot 
\Big\langle \rw_{s, t}, \Upsilon \Big\rangle (\omega_{0}, \omega_{\varphi^{T, \Upsilon, Y}[H^\Upsilon]}) \Bigg] 
\\
&+ O\Big( |t-s|^{\gamma - \scG_{\alpha, \beta}[Y]}\Big), 
\end{align*}
which verifies that there is a random controlled rough path that satisfies Equation \eqref{eq:theorem:ContinIm-RCRPs}. 
\end{proof}

Adapting the notation from Equation \eqref{eq:a_integral} earlier, we now denote
\begin{equation}
\label{eq:a_integral2}
\Big\| \big\langle [\bX,\bY], T \big\rangle(\omega_{a_i}) \Big\|_{[a], p_{x},p_{y}} = \begin{cases}
\bE^{H^T}\bigg[ \Big| \big\langle \bX,  T \big\rangle(\omega_{0}, \omega_{H^T}) \Big|^{p_x[T]} \bigg]^{\tfrac{1}{p_x[T]}} \quad & \quad \mbox{if} \quad a_i=0,  
\\
\bE^{a_i}\bigg[ \bE^{H^T}\bigg[ \Big| \big\langle \bY,  T \big\rangle(\omega_{a_i}, \omega_{H^T}) \Big|^{p_y[T]} \bigg] \bigg]^{\tfrac{1}{p_y[T]}} \quad & \quad \mbox{if} \quad a_i>0. 
\end{cases}
\end{equation}

\begin{proof}[Proof of Equation \eqref{eq:theorem:ContinIm-RCRPs-Est}]

We start this proof by returning to Equation \eqref{eq:theorem:ContinIm-RCRPs_1stRem}. 

Then
\begin{align}
\nonumber
\sup_{s, t\in[u, v]}& \frac{\Big\langle \bZ_{s, t}^{\sharp}, \rId \Big\rangle(\omega_0)}{|t-s|^{\gamma}} 
\leq
\sum_{a\in A^{\alpha, \beta} } \frac{1}{|a|!} \sum_{\substack{T_1, ..., T_{|a|}\in \scF \\ \scG_{\alpha, \beta}\big[ \cE^a[T_1, ..., T_{|a|}]\big]\geq \gamma}}^{\gamma-, \alpha, \beta} 
\Big\| \partial_a f\Big\|_\infty 
\\
\nonumber
&\cdot
\prod_{r:a_r=0} \Big\| \big\langle \bX, T_r \big\rangle(\omega_0) \Big\|_{p_x[T_r], \infty} 
\cdot 
\prod_{r:a_r>0} \Big\| \big\langle \bY, T_r \big\rangle \Big\|_{p_y[T_r], \infty} 
\\
\nonumber
&\quad \cdot 
\Big\| \big\langle \rw, \cE^a[T_1, ..., T_{|a|}] \big\rangle(\omega_0) \Big\|_{q\big[ \cE^a[T_1, ..., T_{|a|}] \big], \scG_{\alpha, \beta}\big[ \cE^a[T_1, ..., T_{|a|}] \big]} \cdot |v-u|^{\scG_{\alpha, \beta}\big[ \cE^a[T_1, ..., T_{|a|}] \big] - \gamma}
\\
\nonumber
&+\sum_{a\in A^{\alpha, \beta}} \frac{1}{|a|!} \Big\| \partial_a f \Big\|_{\infty}
\cdot 
\sum_{k=1}^{|a|} \Bigg( \prod_{r=1}^{k-1} \bigg\| \Big\| \big\langle \fJ\big[ \bX, \bY\big], \rId\big\rangle(\omega_{a_k}) \Big\|_{\alpha} \bigg\|_{[a],p_x, p_y} \cdot |v-u|^{\alpha} \Bigg) 
\\
\nonumber
&\cdot \bigg\| \Big\| \big\langle \big[\bX, \bY\big]^{\sharp}, \rId\big\rangle(\omega_{a_k}) \Big\|_{\gamma} \bigg\|_{[a], p_x,p_y} \cdot \Bigg( \prod_{r=k+1}^{|a|} \Big\| \big\langle [\bX,\bY], \rId \big\rangle(\omega_{a_r}) \Big\|_{\alpha} \bigg\|_{[a], p_x,p_y} \cdot |v-u|^{\alpha} \Bigg) \Bigg]
\\
\label{eq:remark:Polynomial-comments1.1}
& + \sup_{s, t\in[u, v]} \frac{ \big| \fR_{s, t}(\omega_0) \big| }{|t-s|^{\gamma}}. 
\end{align}

Recall from Definition \ref{definition:B_set} that $(n+1)\alpha\geq \gamma$ and that $\forall a\in A_{\ast}^{\alpha, \beta}$, we have that 
$\scG_{\alpha, \beta}[a] \geq \gamma$. Thanks to Equation \eqref{eq:corollary:LionsTaylor3:remUpper}, we have that
\begin{align}
\nonumber
\sup_{s, t\in[u, v]}& \frac{ \big| \fR_{s, t}(\omega_0) \big| }{|t-s|^{\gamma}} 
\\
\nonumber
\leq& \sum_{a \in A_\ast^{\alpha, \beta}} \frac{1}{|a|!} \Big\| \partial_a f \Big\|_\infty \cdot \Big\| \big\langle \bX, \rId\big\rangle(\omega_0) \Big\|_{\alpha}^{l[a]_0} \cdot \prod_{r=1}^{m[a]} \bigg\| \tilde{\bE}\Big[ \big| \langle \bY, \rId \rangle(\tilde{\omega}) \Big|^{l[a]_r} \Big] \bigg\|_{\beta \cdot l[a]_r} \cdot |v-u|^{\scG_{\alpha,\beta}[a] - \gamma}
\\
\nonumber
&+\frac{1}{n!} \sum_{a\in \A{n}}  \Bigg( \Big\| \partial_a f \Big\|_{\lip, 0} \cdot \Big\| \big\langle \bX, \rId\big\rangle(\omega_0) \Big\|_{\alpha}^{l[a]_0+1} \cdot \prod_{r=1}^{m[a]} \tilde{\bE}\bigg[ \Big\| \big\langle \bY, \rId \big\rangle(\tilde{\omega}) \Big\|_{\alpha}^{l[a]_r} \bigg] 
\\
\nonumber
&+ \Big\| \partial_a f \big\|_{\lip, \mu} \cdot \Big\| \big\langle \bX, \rId \big\rangle(\omega_0) \Big\|_{\alpha}^{l[a]_0} \cdot \tilde{\bE}\bigg[ \Big\| \big\langle \bY, \rId\big\rangle(\tilde{\omega}) \Big\|_{\alpha} \bigg] \cdot \prod_{r=1}^{m[a]} \tilde{\bE}\bigg[ \Big\| \big\langle \bY, \rId \big\rangle (\tilde{\omega}) \Big\|_{\alpha}^{l[a]_r} \bigg]
\\
\label{eq:remark:Polynomial-comments1.2}
&+\sum_{j=1}^{m[a]} \Big\| \partial_a f \Big\|_{\lip, j} \cdot \Big\| \big\langle \bX, \rId\big\rangle(\omega_0) \Big\|_{\alpha}^{l[a]_0} \cdot \prod_{r=1}^{m[a]} \tilde{\bE}\bigg[ \Big\| \big\langle \bY, \rId\big\rangle(\tilde{\omega}) \Big\|_{\alpha}^{l[a]_r + \delta_{j=r}} \bigg] \Bigg) \cdot |v-u|^{(n+1)\alpha - \gamma}. 
\end{align}

Hence, with an application of Equation \eqref{eq:prop:Regu-RCRP-4} and Equation \eqref{eq:prop:Regu-RCRP-5}, we can find a polynomial $\fP_\rId: \big(\bR^+\big)^{\times 7} \to \bR^+$ increasing in every variable such that
\begin{align*}
\sup_{s, t\in[u,v]}& \frac{\big\langle \bZ_{s, t}^{\sharp}, \rId \big\rangle(\omega_0) }{|t-s|^{\gamma}} 
\\
&\leq \| f \|_{C_b^{n, (n)}} \cdot 
\fP_\rId \bigg(
 \sum_{T\in\scF_0}^{\gamma-, \alpha, \beta} \Big\| \big\langle \bX^{\sharp}, T \big\rangle(\omega_0) \Big\|_{p_x[T], \gamma - \scG_{\alpha, \beta}[T]}
 , 
\sum_{T\in \scF}^{\gamma-, \alpha, \beta} \Big\| \big\langle \bX_u, T\big\rangle(\omega_0) \Big\|_{p_x[T]}, 
\\
&\qquad 
\sum_{T\in\scF_0}^{\gamma-, \alpha, \beta} \Big\| \big\langle \bY^{\sharp}, T \big\rangle \Big\|_{p_y[T], \gamma - \scG_{\alpha, \beta}[T]}
, 
\sum_{T\in \scF}^{\gamma-, \alpha, \beta} \Big\| \big\langle \bY_u, T\big\rangle \Big\|_{p_y[T]}, 
\\
&\qquad \sum_{T\in \scF}^{\gamma, \alpha, \beta} \Big\| \big\langle \rw, T\big\rangle(\omega_0) \Big\|_{q[T], \scG_{\alpha, \beta}[T]}
, 
|v-u|^\alpha
, 
|v-u|^{\beta} \bigg). 
\end{align*}

By detailed evaluation of Equations \eqref{eq:remark:Polynomial-comments1.1} and \eqref{eq:remark:Polynomial-comments1.2}, we observe that every time we have a term associated to the function $f$, this is included in Equation \eqref{eq:definition:FunctionNorm} so we can use $\| f\|_{C_b^{n, (n)}}$ as an upper bound. Secondly, for any term of the form $\| \partial_a f\|_\infty$, there will be a product of norms of $\bX$ and $\bY$ where the number of $\bX$ terms will be $l[a]_0$ and the number of $\bY$ terms will be $|a| - l[a]_0$, both of which will be less than or equal to n. Similarly, for any term of the form $\| \partial_a f\|_{\lip}$, there will be a product of $n+1$ terms. Thanks to Proposition \ref{prop:Regu-RCRP}, any of these norms of $\bX$ and $\bY$ can be upper-bounded by the product of any term from the set
\begin{align*}
\bigg\{& \Big\| \big\langle \bX_u, T\rangle(\omega_0)\Big\|_{p_x[T]}, \quad 
\Big\| \big\langle \bX^{\sharp}, T\rangle(\omega_0)\Big\|_{p_x[T], \gamma - \scG_{\alpha, \beta}[T]}, 
\\
&\Big\| \big\langle \bY_u, T\rangle\Big\|_{p_y[T]}, \quad 
\Big\| \big\langle \bY^{\sharp}, T\rangle\Big\|_{p_y[T], \gamma - \scG_{\alpha, \beta}[T]} 
: \quad T\in \scF_0^{\gamma-, \alpha, \beta}
\bigg\}
\end{align*}
and any term from the set
$$
\bigg\{ 1, \quad \Big\| \big\langle\rw, T\big\rangle(\omega_0)\Big\|_{q[T], \scG_{\alpha, \beta}[T]}: T\in \scF^{\gamma, \alpha, \beta} \bigg\}. 
$$
The presence of the unit in the second set means that, however many $\bX$ and $\bY$ terms there are, there will always be less $\rw$ terms. 

Finally, whenever a term of the form $|v-u|$ occurs, it is always to a positive power expressible of the form $\alpha \cdot i_6 + \beta \cdot i_7$. However, by remarking that one such example is $(n+1)\alpha - \gamma$, it should be clear that there are constructions where the integers $i_6$ and $i_7$ may not be positive. 

Thus, the polynomial $\fP_\rId: \big(\bR^+\big)^{\times 7} \to \bR^+$ expressed as
$$
\fP_{\rId} \Big( x_1, x_2, y_1, y_2, w, t_1, t_2 \Big) = \sum_{i\in I_{\fP_{\rId}}} C_i \cdot x_1^{i_1} \cdot x_2^{i_2} \cdot y_1^{i_3} \cdot y_2^{i_4} \cdot w^{i_5} \cdot t_1^{i_6} \cdot t_2^{i_7}. 
$$
satisfies that $i_1, ..., i_5 \in \bN_0$, $i_1+i_2+i_3+i_4 \leq n+1$, $i_5 \leq i_1 + i_2 + i_3 + i_4$ and $\alpha \cdot i_6 + \beta \cdot i_7\geq 0$. This verifies the additional claims made in Remark \ref{remark:Polynomial-comments1}. In particular, the first and second variables of the polynomial $\fP_\rId$ will only ever be of order $n+1$, so that
$$
\bE^0 \bigg[ \sup_{s, t\in [u,v]} \frac{ \Big| \big\langle \bZ_{s, t}^{\sharp}, \rId \big\rangle(\omega_0) \Big|^{p_z[\rId]}}{|t-s|^{p_z[\rId] \alpha}}  \bigg] < \infty. 
$$

For the second part of this proof, we fix $Y\in \scF^{\gamma-, \alpha, \beta}$: We represent the remainder term $\Big\langle \bZ_{s, t}^{\sharp}, Y\Big\rangle(\omega_0, \omega_{H^Y})$ of the form
\begin{equation}
\label{eq:Remainder_Y}
\Big\langle \bZ_{s, t}^{\sharp}, Y \Big\rangle(\omega_0, \omega_{H^Y}) = \hyperlink{Eq:one}{(1)} + \hyperlink{Eq:two}{(2)} + \hyperlink{Eq:three}{(3)}
\end{equation}
where, by combining the terms from \eqref{eq:theorem:ContinIm-RCRPs1.2.2} and \eqref{eq:theorem:ContinIm-RCRPs1.2.2.1}, we obtain
\begin{align*}
\hypertarget{Eq:one}{(1)}&= \sum_{i=1}^n \sum_{a\in A_i^{\alpha, \beta}}  \frac{1}{i!} 
\sum_{\substack{Y_1, ..., Y_{i} \in \scF \\ Y = \cE^a[Y_1, ..., Y_{i}] }} 
\bE^{Z^a[Y_1, ..., Y_{i}]} \Bigg[ \fR_{s, t}^a(\omega_0, ..., \omega_{\tilde{h}_{m[a]}^Y}) \cdot \bigotimes_{r=1}^{i} \Big\langle \big[ \bX, \bY\big]_t, Y_r\Big\rangle(\omega_{\tilde{h}_{a_r}^Y}, \omega_{H^{Y_r}}) 
\\
&\quad + 
\partial_a f\Big( \big\langle \bX_{s}, \rId \big\rangle (\omega_0), \cL^{\langle \bY_s,\rId\rangle},..., \big\langle \bY_{s}, \rId \big\rangle (\omega_{\tilde{h}_{m[a]}^Y}) \Big) \cdot \sum_{k=1}^i \bigg( \bigotimes_{r=1}^{k-1} \Big\langle \big[ \bX, \bY \big]_{t}, Y_r \Big\rangle (\omega_{\tilde{h}_{a_r}^Y}, \omega_{H^{Y_r}}) \bigg)
\\
&\qquad \otimes \Big\langle \big[ \bX, \bY \big]_{s, t}^{\sharp}, Y_k \Big\rangle (\omega_{\tilde{h}_{a_k}^Y}, \omega_{H^{Y_k}}) \otimes \bigg( \bigotimes_{r=k+1}^{i} \Big\langle \big[ \bX, \bY \big]_s + \fJ\big[ \bX, \bY \big]_{s, t}, Y_r \Big\rangle (\omega_{\tilde{h}_{a_r}^Y}, \omega_{H^{Y_r}}) \bigg) \Bigg], 
\end{align*}
by combining the terms from \eqref{eq:theorem:ContinIm-RCRPs1.2.1.1} and \eqref{eq:theorem:ContinIm-RCRPs1.2.1.2} we obtain
\begin{align*}
&\hypertarget{Eq:two}{(2)}
\\
&= 
\sum_{i=1}^n \sum_{j=i+1}^n \sum_{a \in A_{j}^{\alpha, \beta}}  \frac{1}{i!(j-i)!} 
\sum_{\substack{Y_1, ..., Y_{i} \in \scF \\ Y_{i+1}, ..., Y_j = \rId \\ Y = \cE^{a}[Y_1, ..., Y_{j}] }} 
\bE^{Z^{a}[Y_1, ..., Y_{j}]} \Bigg[ 
\partial_{a} f \bigg( \big\langle \bX_s, \rId \big\rangle (\omega_0), \cL_s^{\langle \bY,\rId\rangle}, ..., \big\langle \bY_s, \rId \big\rangle (\omega_{\tilde{h}_{m[a]}^Y}) \bigg) 
\\
&\cdot \sum_{k=i+1}^j \bigg( \bigotimes_{r=i+1}^{k-1} \Big\langle \fJ\big[\bX, \bY\big]_{s, t}, Y_r\Big\rangle(\omega_{\tilde{h}_{a_r}^Y}) \bigg) \otimes \Big\langle \big[\bX, \bY\big]_{s, t}^{\sharp}, Y_r \Big\rangle (\omega_{\tilde{h}_{a_k}^Y}) \otimes \bigg( \bigotimes_{r=k+1}^{j} \Big\langle \big[\bX, \bY\big]_{s, t}, Y_r \Big\rangle (\omega_{\tilde{h}_{a_r}^Y}) \bigg)
\\
&\quad \otimes \bigg( \bigotimes_{r=1}^i \Big\langle \big[ \bX, \bY\big]_t, Y_r\Big\rangle(\omega_{\tilde{h}_{a_r}^Y}, \omega_{H^{Y_r}}) \bigg)
\\
&+ \bigg( \bigotimes_{r=i+1}^j \Big\langle \fJ\big[\bX, \bY\big]_{s, t}, Y_r\Big\rangle(\omega_{\tilde{h}_{a_r}^Y}) \bigg)
\otimes 
\Bigg( \sum_{k=1}^i \bigg( \bigotimes_{r=1}^{k-1} \Big\langle \big[ \bX, \bY\big]_s + \fJ\big[ \bX, \bY\big]_{s,t}, Y_r\Big\rangle(\omega_{\tilde{h}_{a_r}^Y}, \omega_{H^{Y_r}}) \bigg) 
\\
&\otimes \Big\langle \big[ \bX, \bY\big]_{s, t}^{\sharp}, Y_k \Big\rangle(\omega_{\tilde{h}_{a_k}^Y}) 
\otimes 
\bigg( \bigotimes_{r=k+1}^{i} \Big\langle \big[ \bX, \bY\big]_t, Y_r\Big\rangle(\omega_{\tilde{h}_{a_r}^Y}, \omega_{H^{Y_r}}) \bigg) \Bigg) \Bigg],
\end{align*}
and finally
\begin{align*}
\hypertarget{Eq:three}{(3)} 
&= \sum_{j=1}^n \frac{1}{j!} \sum_{a\in A_j^{\alpha, \beta}} \sum_{i=1}^j \frac{j!}{i!(j-i)!} \sum_{\substack{ Y_1, ..., Y_i \in \scF \\ Y_{i+1}, ..., Y_j=\rId \\ \cE^a[ Y_1, ..., Y_j] = Y}} \sum_{\Upsilon_1, ..., \Upsilon_j \in \scF_0} \sum_{\substack{T_1, ..., T_j\in \scF \\ \scG_{\alpha, \beta}\big[ \cE^a[ T_1, ..., T_j] \big] \geq \gamma}}^{\gamma-, \alpha, \beta} \bE^{\bigcup_{k=1}^n E^{T_k, \Upsilon_k, Y_k}}\Bigg[
\\
&\bE^{Z^a[Y_1, ..., Y_{j}]} \Bigg[  \partial_{a} f \Big( \big\langle \bX_s, \rId \big\rangle (\omega_0), \cL_s^{\langle \bY,\rId\rangle}, ..., \big\langle \bY_s, \rId \big\rangle (\omega_{\tilde{h}_{m[a]}^Y}) \Big) 
\\
&\cdot \bigg( \bigotimes_{r=i+1}^j \Big\langle \big[\bX, \bY\big]_{s}, T_r\Big\rangle(\omega_{\tilde{h}_{a_r}^Y}, \omega_{\phi^{T_r, \Upsilon_r, Y_r}[H^{T_r}]}) \cdot \delta_{\{Y_r=\rId, T_r = \Upsilon_r\}} \bigg)
\\
&\otimes \Bigg( \bigg( \bigotimes_{r=1}^{i} \Big\langle \big[ \bX, \bY\big]_{s}, T_r\Big\rangle(\omega_{\tilde{h}_{a_r}^Y}, \omega_{\phi^{T_r, \Upsilon_r, Y_r}[H^{T_r}]}) \cdot \delta_{\{Y_r=T_r, \Upsilon_r=\rId \}} \bigg)
\\
&\quad+ \sum_{k=1}^{i} \bigg( \bigotimes_{r=1}^{k-1} \Big\langle \big[ \bX, \bY\big]_{s}, T_r\Big\rangle(\omega_{\tilde{h}_{a_r}^Y}, \omega_{\phi^{T_r, \Upsilon_r, Y_r}[H^{T_r}]}) \cdot \delta_{\{Y_r=T_r, \Upsilon_r=\rId\}} \bigg) 
\\
&\quad \otimes \Big\langle \big[ \bX, \bY\big]_{s}, T_k\Big\rangle(\omega_{\tilde{h}_{a_k}^Y}, \omega_{\phi^{T_k, \Upsilon_k, Y_k}[H^{T_k}]}) \cdot c'\big( T_k, \Upsilon_k, Y_k\big)
\\
&\quad \otimes \bigg( \bigotimes_{r=k+1}^{i} \Big\langle \big[ \bX, \bY\big]_{s}, T_r\Big\rangle(\omega_{\tilde{h}_{a_r}^Y}, \omega_{\phi^{T_r, \Upsilon_r, Y_r}[H^{T_r}]}) \cdot \Big( c'\big( T_r, \Upsilon_r, Y_r\big) + \delta_{\{ T_r = Y_r, \Upsilon_r=\rId \}} \Big) \bigg) \Bigg) \Bigg]
\\
&\cdot \bigotimes_{r=1}^j \Big\langle \rw_{s, t}, \Upsilon_r \Big\rangle(\omega_{\tilde{h}_{a_r}^Y}, \omega_{\varphi^{T_r, \Upsilon_r, Y_r}[H^{T_r}]}) \Bigg]
\end{align*}

We can upper bound the $\fR_{s, t}^a$ term in \hyperlink{Eq:one}{(1)} using Equation \eqref{eq:corollary:LionsTaylor3:remUpper_Y}. Integrating over the the detagged probability spaces and applying Proposition \ref{prop:Regu-RCRP}, we can find a polynomial $\fP_Y: \big(\bR^+\big)^{\times 7} \to \bR^+$ increasing in every variable such that
\begin{align*}
\sup_{s, t\in[u,v]}& \frac{\bE^{H^Y}\bigg[ \Big| \big\langle \bZ_{s, t}^{\sharp}, Y \big\rangle(\omega_0, \omega_{H^Y})\Big|^{p[Y]}\bigg]^{\tfrac{1}{p[y]}} }{|t-s|^{\gamma - \scG_{\alpha, \beta}[Y]}} 
\\
&\leq \| f \|_{C_b^{n, (n)}} \cdot 
\fP_Y \bigg(
 \sum_{T\in\scF_0}^{\gamma-, \alpha, \beta} \Big\| \big\langle \bX^{\sharp}, T \big\rangle(\omega_0) \Big\|_{p_x[T], \gamma - \scG_{\alpha, \beta}[T]}
 , 
\sum_{T\in \scF}^{\gamma-, \alpha, \beta} \Big\| \big\langle \bX_u, T\big\rangle(\omega_0) \Big\|_{p_x[T]}, 
\\
&\qquad 
\sum_{T\in\scF_0}^{\gamma-, \alpha, \beta} \Big\| \big\langle \bY^{\sharp}, T \big\rangle \Big\|_{p_y[T], \gamma - \scG_{\alpha, \beta}[T]}
, 
\sum_{T\in \scF}^{\gamma-, \alpha, \beta} \Big\| \big\langle \bY_u, T\big\rangle \Big\|_{p_y[T]}, 
\\
&\qquad \sum_{T\in \scF}^{\gamma, \alpha, \beta} \Big\| \big\langle \rw, T\big\rangle(\omega_0) \Big\|_{q[T], \scG_{\alpha, \beta}[T]}
, 
|v-u|^\alpha
, 
|v-u|^{\beta} \bigg)
\end{align*}
By arguing in the same fashion as earlier, we can also verify that this polynomial satisfies the description in Remark \ref{remark:Polynomial-comments1}. 

Summing over each of the polynomials yields Equation \eqref{eq:theorem:ContinIm-RCRPs-Est}. 
\end{proof}

\newpage
\section{Stability of random controlled rough paths}
\label{section:Stability-RCRP}

Let $(\Omega, \cF, \bP)$ be a probability space. The collection of probabilistic rough paths $\scC\big( \scH^{\gamma, \alpha, \beta}(\Omega), p, q \big)$ introduced in Definition \ref{definition:ProbabilisticRoughPaths} is a metric space (see \cite{salkeld2021Probabilistic}), while for any choice of 
$$
\rw\in \scC\big( \scH^{\gamma, \alpha, \beta}(\Omega), p, q \big)
$$ 
the set of random controlled rough paths controlled by $\rw$, $\cD_\rw^{\gamma, \alpha, \beta}$ is a Banach space (see Theorem \ref{theorem:Banachspace}). The set of pairs $(\rw, \bX)$ gives rise to the "\emph{fibre bundle}"
$$
\bigsqcup_{\rw \in \scC(\scH^{\gamma, \alpha, \beta}, p, q)} \cD_\rw^{\gamma, p, q}
$$
with base space $\scC\big( \scH^{\gamma, \alpha, \beta}(\Omega), p, q \big)$ and fibres $\cD_\rw^{\gamma, p, q}$. 

We need to introduce a new concept for comparing two different random controlled rough paths, each controlled by different probabilistic rough paths. In particular, each probabilistic rough path may be defined on different probability spaces so that (motivated by the Wasserstein distance, see Equation \eqref{eq:WassersteinDistance}) we need to consider all possible couplings between the two probability spaces. This should be seen as concrete evidence that we are not really interested in random controlled rough paths as random variables but as paths (when we evaluate the tagged probability space) with associated distributions (coming from all untagged probability spaces). 

\begin{definition}
\label{definition:Wasserstein}
Let $\alpha, \beta>0$ and let $\gamma>\alpha\wedge \beta$. Let $(\Omega, \cF, \bP)$ be probability spaces and denote $(\hat{\Omega}, \hat{\cF}, \hat{\bP})$ an identical probability space. Let $\Pi\in \cP(\Omega \times \hat{\Omega})$ with left marginal $\bP$ and right marginal $\hat{\bP}$. Let $\bX: [0,1] \to \scH^{\gamma-, \alpha, \beta}(\Omega)$ and $\hat{\bX}: [0,1] \to \scH^{\gamma-, \alpha, \beta}(\hat{\Omega})$ be continuous paths. 

We define
\begin{align}
\nonumber
\bW&_{\Pi}^{p}\big[ T \big] \Big( \bX_{t}, \hat{\bX}_{t} \Big)(\omega_0)
\\
\label{eq:WassersteinNorm1}
&:= \bigg( \int_{(\Omega\times \hat{\Omega})^{\times |H^T|}} \Big| \big\langle \bX_{t}, T \big\rangle(\omega_0, \omega_{H^T}) - \big\langle \hat{\bX}_{t}, T \big\rangle(\omega_0, \hat{\omega}_{H^T}) \Big|^{p} d \Pi^{\times |H^T|}\Big((\omega_h, \hat{\omega}_h)_{h\in H^T} \Big) \bigg)^{\tfrac{1}{p}}, 
\\
\nonumber
\bW&_{\Pi}^{p}\big[ T \big] \Big( \bX_{t}, \hat{\bX}_{t} \Big) 
\\
\label{eq:WassersteinNorm3}
&:= \bigg( \int_{(\Omega\times \hat{\Omega})^{\times |(H^T)'|}} \Big| \big\langle \bX_{t}, T \big\rangle(\omega_{(H^T)'}) - \big\langle \hat{\bX}_{t}, T \big\rangle(\hat{\omega}_{(H^T)'}) \Big|^{p} d\Pi^{\times |(H^T)'|} \Big( (\omega, \hat{\omega})_{h\in (H^T)'} \Big) \bigg)^{\tfrac{1}{p}}, 
\end{align}
\begin{align}
\label{eq:WassersteinNorm5}
\bW_{\infty, \Pi}^{p} \big[ T \big] \Big( \bX, \hat{\bX} \Big)(\omega_0)
:=& 
\sup_{t\in[0,1]} \bW_{\Pi}^{p}\big[ T \big] \Big( \bX_{t}, \hat{\bX}_{t} \Big)(\omega_0), 
\\
\label{eq:WassersteinNorm6}
\bW_{\infty, \Pi}^{p}\big[T\big] \Big( \bX, \hat{\bX} \Big) 
:=& 
\sup_{t\in[0,1]} \bW_{\Pi}^{p}\big[ T \big] \Big( \bX_{t}, \hat{\bX}_{t} \Big), 
\end{align}
where we recall that $(H^T)' = \big( H^T \cup \{ h_0\} \big) \backslash\{\emptyset\}$. Moreover, for $\alpha \in (0, 1)$, 
\begin{align}
\label{eq:WassersteinNorm2}
\bW_{\alpha, \Pi}^{p} \big[ T \big] \Big( \bX, \hat{\bX} \Big)(\omega_0)
:=& 
\sup_{s, t\in[0,1]} \frac{\bW_{\Pi}^{p}\big[ T \big] \Big( \bX_{s,t}, \hat{\bX}_{s,t} \Big)(\omega_0) }{|t-s|^{\alpha}} , 
\\
\label{eq:WassersteinNorm4}
\bW_{\alpha, \Pi}^{p}\big[T\big] \Big( \bX, \hat{\bX} \Big) 
:=& 
\sup_{s, t\in[0,1]} \frac{ \bW_{\Pi}^{p}\big[ T \big] \Big( \bX_{s,t}, \hat{\bX}_{s,t} \Big) }{|t-s|^{\alpha}}. 
\end{align}
\end{definition}

\begin{remark}
Let $(\Omega, \cF, \bP)$ and $(\hat{\Omega}, \hat{\cF}, \hat{\bP})$ be identical probability spaces. Recalling Equation \eqref{eq:definition:ProbabilisticRoughPaths3}, we note that for $\rw\in \scC\big( \scH^{\gamma, \alpha, \beta}(\Omega), ,p, q\big)$ and $\hat{\rw} \in \scC\big( \scH^{\gamma, \alpha, \beta}(\hat{\Omega}), p, q\big)$, we can restate
$$
\rho_{\alpha, \beta, p, q, 0}\Big( \rw, \hat{\rw} \Big)(\omega_0) = \inf_{\Pi} \sum_{T\in \scF}^{\gamma, \alpha,\beta} \bW_{\scG_{\alpha, \beta}[T], \Pi}^p\big[T\big] \Big( \rw, \hat{\rw}\Big)(\omega_0)
$$
\end{remark}

For $\Pi$ fixed, Equations \eqref{eq:WassersteinNorm1} and \eqref{eq:WassersteinNorm2} act like norms for a fixed choice of $\omega_0 \in \Omega_0$, but once we take an infimum over the choice of $\Pi$, we obtain an object similar to (but technically distinct from) the Wasserstein distance (see Equation \eqref{eq:WassersteinDistance}). 

By contrast, Equation \eqref{eq:WassersteinNorm3} involves integrating over every probability space, so this is actually a metric induced by a norm (specific to the choice of $\Pi$) and similarly for Equation \eqref{eq:WassersteinNorm4}. Taking an infimum over all choices of $\Pi$ transforms Equation \eqref{eq:WassersteinNorm3} and \eqref{eq:WassersteinNorm4} into Wasserstein distances induced by metrics on Euclidean and pathspace. 

Observe that for a Lions forest $T\in \scF$ such that $H^T = \emptyset$, we have that
$$
\bW_{\Pi}^{p}\big[ T \big] \Big( \bX_{t}, \hat{\bX}_{t} \Big)(\omega_0):= \Big| \big\langle \bX_{t}, T \big\rangle(\omega_0) - \big\langle \hat{\bX}_{t}, T \big\rangle(\omega_0) \Big| 
$$
so that the operation $\bW_{\Pi}^p\big[T\big]$ does not always act as an integral operator. 

To this end, we proceed as follows:
\begin{definition}
\label{definition:InhomogeneousMetric-RCRPs}
Let $\alpha, \beta>0$ and let $\gamma:=\inf\{ \scG_{\alpha, \beta}[T]: T \in \scF, \scG_{\alpha, \beta}[T]>1-\alpha \}$. 

Let $(p_1, q)$ and $(p_2, q)$ be two pairs of dual integrability functionals and define $p:\scF_0^{\gamma, \alpha, \beta} \to [1, \infty)$ by letting, for $T\in \scF_0^{\gamma, \alpha, \beta},$
$$
p[T] = p_1[T] \wedge p_2[T]. 
$$

Let $(\Omega, \cF, \bP)$ and $(\hat{\Omega}, \hat{\cF}, \hat{\bP})$ be identical probability spaces. Let 
$$
\rw\in \scC\big( \scH^{\gamma, \alpha, \beta}(\Omega), p_1, q \big), \quad \hat{\rw} \in \scC\big( \scH^{\gamma, \alpha, \beta}(\hat{\Omega}), p_2, q \big).
$$
For $\bX \in \cD_\rw^{\gamma, p, q}$ and $\hat{\bX} \in  \cD_{\hat{\rw}}^{\gamma, p, q}$, we define
$$
d_{\rw, \hat{\rw}, \gamma, 0}: \cD_\rw^{\gamma, p, q} \times \cD_{\hat{\rw}}^{\gamma, p, q} \to L^{p[\rId]}\big( \Omega, \bP; \bR^+\big)
$$
by
\begin{align}
\nonumber
d_{\rw, \hat{\rw}, \gamma, 0}&\Big( \bX, \hat{\bX}\Big)(\omega_0)
\\
\label{eq:definition:InhomogeneousMetric-RCRPs-0}
:=&
\inf_{\Pi} \sum_{T\in \scF_0}^{\gamma-, \alpha, \beta} \bigg( \bW_{\Pi}^{p[T]}\big[T\big] \Big( \bX_0, \hat{\bX}_0 \Big)(\omega_0)
+ 
\bW_{\gamma - \scG_{\alpha, \beta}[T], \Pi}^{p[T]} \big[T\big] \Big( \bX^{\sharp}, \hat{\bX}^{\sharp} \Big)(\omega_0) \bigg) 
\end{align}
where $\inf_{\Pi}$ runs over all probability measures on $\big( \Omega\times \hat{\Omega}, \cF \otimes \hat{\cF} \big)$ with left and right marginals $\bP$ and $\hat{\bP}$ respectively. 

Separately but of equal interest, for $\bY \in \cD_\rw^{\gamma, p, q}$ and $\hat{\bY} \in  \cD_{\hat{\rw}}^{\gamma, p, q}$, we define
$$
d_{\rw, \hat{\rw}, \gamma}: \cD_\rw^{\gamma, p, q} \times \cD_{\hat{\rw}}^{\gamma, p, q} \to \bR^+
$$
by
\begin{align}
\label{eq:definition:InhomogeneousMetric-RCRPs-E}
d_{\rw, \hat{\rw}, \gamma}&\Big( \bY, \hat{\bY}\Big)
:=
\inf_{\Pi} \sum_{T\in \scF_0}^{\gamma-, \alpha, \beta} \bigg( \bW_{\Pi}^{p[T]}\big[T\big]\Big( \bY_0, \hat{\bY}_0 \Big) 
+ 
\bW_{\gamma - \scG_{\alpha, \beta}[T], \Pi}^{p[T]} \big[T\big] \Big( \bY^{\sharp}, \hat{\bY}^{\sharp} \Big) \bigg) 
\end{align}
where $\inf_{\Pi}$ runs as before. 
\end{definition}

We emphasise that the probability spaces $(\Omega, \cF, \bP)$ and $(\hat{\Omega}, \hat{\cF}, \hat{\bP})$ are identical, although we denote them as distinct to emphasise that there is a coupling between these two spaces. Thus, in Equation \eqref{eq:definition:InhomogeneousMetric-RCRPs-0} we abuse notation so that $\omega_0$ is simultaneously an element of $\Omega$ and $\hat{\Omega}$. 

\begin{example}
Let $\alpha, \beta>0$ and let $\gamma:=\inf\{ \scG_{\alpha, \beta}[T]: T \in \scF, \scG_{\alpha, \beta}[T]>1-\alpha \}$. Let $(\Omega, \cF, \bP)$ be a probability space and let $(p, q)$ be a dual pair of integrability functionals. Let $\rw$ and $\hat{\rw}$ be $(\scH^{\gamma, \alpha, \beta}, p, q)$-probabilistic rough paths such that $\rho_{(\alpha, \beta, p,q), 0}\big(\rw, \hat{\rw}\big)\neq 0$. 

For each $T\in \scF_0^{\gamma-, \alpha, \beta}$ and $s, t\in[0,1]$, let
\begin{align*}
\Big\langle \bX_0, T \Big\rangle(\omega_0, \omega_{H^T}) \in& L^{p[T]}\Big( \Omega \times \Omega^{\times |H^T|}, \bP \times (\bP)^{\times |H^T|}; \lin\big( (\bR^d)^{\otimes |N^T|}, \bR^e\big) \Big), 
\\
\Big\langle \bX_{s, t}^{\sharp}, T \Big\rangle(\omega_0, \omega_{H^T}) \in& L^{p[T]}\Big( \Omega \times \Omega^{\times |H^T|}, \bP \times (\bP)^{\times |H^T|}; \lin\big( (\bR^d)^{\otimes |N^T|}, \bR^e\big) \Big), 
\end{align*}
and additionally suppose that
$$
\bP\Bigg[ \frac{\bE^{H^T}\bigg[ \Big| \big\langle \bX_{s, t}^{\sharp}, T \big\rangle(\omega_0, \omega_{H^T})\Big|^{p[T]} \bigg]}{|t-s|^{p[T](\gamma - \scG_{\alpha, \beta}[T])}} <\infty \Bigg] = 1. 
$$

We define two paths $\bY, \hat{\bY}:[0,1] \to \scH^{\gamma-, \alpha, \beta}(\Omega)$ such that $\forall t\in[0,1]$, 
\begin{align*}
\Big\langle \bY_t, \rId\Big\rangle(\omega_0) =& \Big\langle \bX_0, \rId\Big\rangle(\omega_0) + \Big\langle \bX_{0,t}^{\sharp}, \rId\Big\rangle(\omega_0)
\\
&+ \sum_{T\in \scF}^{\gamma-, \alpha, \beta} \bE^{H^T}\bigg[ \Big\langle \bX_0, T\Big\rangle(\omega_0, \omega_{H^T}) \cdot \Big\langle \rw_{0, t}, T\Big\rangle(\omega_0, \omega_{H^T}) \bigg],
\\
\Big\langle \hat{\bY}_t, \rId\Big\rangle(\omega_0) =& \Big\langle \bX_0, \rId\Big\rangle(\omega_0) + \Big\langle \bX_{0,t}^{\sharp}, \rId\Big\rangle(\omega_0)
\\
&+ \sum_{T\in \scF}^{\gamma-, \alpha, \beta} \bE^{H^T}\bigg[ \Big\langle \bX_0, T\Big\rangle(\omega_0, \omega_{H^T}) \cdot \Big\langle \hat{\rw}_{0, t}, T\Big\rangle(\omega_0, \omega_{H^T}) \bigg],
\end{align*}
and
\begin{align*}
\Big\langle \bY_t&, Y\Big\rangle(\omega_0, \omega_{H^Y}) = \Big\langle \bX_0, Y \Big\rangle(\omega_0, \omega_{H^Y}) + \Big\langle \bX_{0,t}^{\sharp}, Y \Big\rangle(\omega_0, \omega_{H^Y})
\\
&+ \sum_{T, \Upsilon \in \scF}^{\gamma-, \alpha, \beta} c'\Big( T, \Upsilon, Y\Big) \cdot \bE^{E^{T, \Upsilon, Y}}\bigg[ \Big\langle \bX_0, T\Big\rangle(\omega_0, \omega_{\phi^{T, \Upsilon, Y}[H^T]}) \cdot \Big\langle \rw_{0, t}, \Upsilon \Big\rangle(\omega_0, \omega_{\varphi^{T, \Upsilon, Y}[H^{\Upsilon}]}) \bigg],
\\
\Big\langle \hat{\bY}_t&, Y\Big\rangle(\omega_0, \omega_{H^Y}) = \Big\langle \bX_0, Y \Big\rangle(\omega_0, \omega_{H^Y}) + \Big\langle \bX_{0,t}^{\sharp}, Y \Big\rangle(\omega_0, \omega_{H^Y})
\\
&+ \sum_{T, \Upsilon \in \scF}^{\gamma-, \alpha, \beta} c'\Big( T, \Upsilon, Y\Big) \cdot \bE^{E^{T, \Upsilon, Y}}\bigg[ \Big\langle \bX_0, T\Big\rangle(\omega_0, \omega_{\phi^{T, \Upsilon, Y}[H^T]}) \cdot \Big\langle \hat{\rw}_{0, t}, \Upsilon \Big\rangle(\omega_0, \omega_{\varphi^{T, \Upsilon, Y}[H^{\Upsilon}]}) \bigg]. 
\end{align*}
Then it is a relatively simple exercise to verify that $\bY$ is a random controlled rough path controlled by $\rw$ and $\hat{\bY}$ is a random controlled rough path controlled by $\hat{\rw}$. Further, we do not have that $\bY = \hat{\bY}$ since they are controlled by different probabilistic rough paths. None-the-less, by construction we have that
$$
\bP\Bigg[ d_{\rw, \hat{\rw}, \gamma, 0}\Big( \bY, \hat{\bY}\Big)(\omega_0) = 0 \Bigg] = 0. 
$$

We provide this example to demonstrate that these concepts are \emph{not} metrics, but the notation we choose should aid the reader in identifying the links to optimal transport where appropriate. 
\end{example}

\subsection{Stability of operations on random controlled rough paths}
\label{subsec:Stability_RCRPs}

For a choice of norm $\| \cdot \|$, we use the notation
\begin{equation}
\label{eq:Max_2norms}
\Big\| \big\langle \{ \bX \vee \hat{\bX}\}, T \big\rangle \Big\| = \Big\| \big\langle \bX, T \big\rangle \Big\| \vee \Big\| \big\langle \hat{\bX}, T \big\rangle \Big\|
\end{equation}

Having established a ``Wasserstein-like'' metric on the collection of random controlled rough paths, the next question is to establish the stability properties of the operations on random controlled rough paths described in Section \ref{subsec:Operat_RCRPs}. 

\subsubsection{Stability of rough integration}

We saw in Section \ref{subsubsection:ReconstructionTheorem} with the Reconstruction Theorem (Theorem \ref{theorem:Reconstruction}) that random controlled rough paths are an ideal object for defining mean-field stochastic rough integrals. This next result allows us to compare two mean-field rough integrals using techniques that are analogous to the Wasserstein distance. 

\begin{theorem}
\label{theorem:Stability-RoughInt}
Let $\alpha, \beta>0$ and $\gamma:=\inf\{ \scG_{\alpha, \beta}[T]: T \in \scF, \scG_{\alpha, \beta}[T]>1-\alpha \}$. 

Let $(p_1, q_1)$ and $(p_2, q_2)$ be two dual pairs of integrability functionals that both satisfy \ref{eq:proposition:Reconstruction2}. We define $p_3: \scF_0^{\gamma, \alpha, \beta} \to [1, \infty)$ and $p_4: \scF_0^{\gamma, \alpha, \beta} \to [1, \infty)$ by
\begin{align*}
\frac{1}{p_3[\rId]}:=& \sup_{T\in \scF_0^{\gamma-\alpha, \alpha, \beta}} \bigg( \frac{1}{p_1[T]} + \frac{1}{q_1\big[ \lfloor T \rfloor_i \big]} \bigg), \quad \frac{1}{p_3[T]} := \frac{1}{p_3[\rId]} - \frac{1}{q_1[T]}, 
\\
\frac{1}{p_4[\rId]}:=& \sup_{T\in \scF_0^{\gamma-\alpha, \alpha, \beta}} \bigg( \frac{1}{p_2[T]} + \frac{1}{q_2\big[ \lfloor T \rfloor_i \big]} \bigg), \quad \frac{1}{p_4[T]} := \frac{1}{p_4[\rId]} - \frac{1}{q_2[T]}, 
\end{align*}
Additionally, we define
$$
p_5[T] := p_1[T] \wedge p_2[T], 
\quad
p_6[T] := p_3[T] \wedge p_4[T], 
\quad
q[T] := q_1[T] \wedge q_2[T]. 
$$
Then $(p_5, q)$ and $(p_6, q)$ are dual integrability functionals. 

Let $(\Omega, \cF, \bP)$ and $(\hat{\Omega}, \hat{\cF}, \hat{\bP})$ be identical probability spaces and let 
$$
\rw \in \scC\big(\scH^{\gamma, \alpha, \beta}(\Omega), p_1, q_1)\quad  \mbox{and} \quad \hat{\rw} \in \scC\big(\scH^{\gamma, \alpha, \beta}(\hat{\Omega}), p_2, q_2).
$$ 

Recalling Equation \eqref{eq:theorem:Reconstruction_Space} from earlier, let 
$$
\bX\in \cD_{\rw}^{\gamma, p_1, q_1}\big( (\ddot{\scH})^{\gamma, \alpha, \beta}(\Omega) \big)\quad \mbox{and} \quad
\hat{\bX} \in \cD_{\hat{\rw}}^{\gamma, p_2, q_2}\big( (\ddot{\scH})^{\gamma, \alpha, \beta}(\hat{\Omega}) \big).
$$ 
Let $\Phi: \cD_{\rw}^{\gamma, p_5, q}( (\ddot{\scH})^{\gamma, \alpha, \beta} \big) \to \cD_{\rw}^{\gamma, p_6, q} \big( \scH^{\gamma, \alpha, \beta}\big)$ be the operator defined in Equation \eqref{eq:theorem:Reconstruction:Phi}. Then

\begin{align}
\nonumber
\sum_{Y\in \scF_0}^{\gamma-, \alpha, \beta}& \sup_{s, t\in[u,v]} \frac{\bW_{\Pi}^{p_6[Y]}\big[Y\big] \Big( \Phi\big[ \bX \big]_{s, t}^{\sharp}, \Phi\big[ \hat{\bX} \big]_{s, t}^{\sharp} \Big)(\omega_0) }{|t-s|^{\gamma - \scG_{\alpha, \beta}[Y]}}
\\
\nonumber
\lesssim& \sum_{T\in\scF_0}^{\gamma-\alpha, \alpha, \beta} \Bigg( 
\bW_{\gamma-\scG_{\alpha, \beta}[T], \Pi}^{p_5[T]}\big[ T \big]\Big( \bX^{\sharp}, \hat{\bX}^{\sharp} \Big)(\omega_0) \cdot \bigg( \Big\| \big\langle \{ \rw \vee \hat{\rw}\}, \lfloor T\rfloor \big\rangle(\omega_0) \Big\|_{q[\lfloor T\rfloor ], \scG_{\alpha, \beta}[\lfloor T \rfloor]} + 1\bigg) 
\\
\nonumber
&\qquad + \Big\| \big\langle \{ \bX^{\sharp} \vee \hat{\bX}^{\sharp} \}, T\big\rangle(\omega_0) \Big\|_{p_5[T], \gamma-\scG_{\alpha, \beta}[T]} 
\cdot 
\bW_{\scG_{\alpha, \beta}[\lfloor T \rfloor], \Pi}^{q[\lfloor T \rfloor]}\big[\lfloor T \rfloor\big] \Big( \rw, \hat{\rw} \Big)(\omega_0)
\Bigg) \cdot \eta^{\alpha}
\\
\nonumber
&+ \hspace{-10pt}\sum_{\substack{T\in\scF_0 \\ \scG_{\alpha, \beta}[T]\in [\gamma-\alpha, \gamma)}} \hspace{-10pt}\Bigg( 
\bW_{\gamma-\scG_{\alpha, \beta}[T], \Pi}^{p_5[T]}\big[ T \big]\Big( \bX^{\sharp}, \hat{\bX}^{\sharp} \Big)(\omega_0) \cdot \bigg( \sum_{\Upsilon\in\scF}^{\gamma-\alpha-, \alpha, \beta} \Big\| \big\langle \{ \rw \vee \hat{\rw}\} , \Upsilon \big\rangle(\omega_0) \Big\|_{q[\Upsilon], \scG_{\alpha, \beta}[\Upsilon]} \bigg)
\\
\nonumber
&\qquad + \Big\| \big\langle \{ \bX^{\sharp} \vee \hat{\bX}^{\sharp}\} , T\big\rangle(\omega_0)\Big\|_{p_5[T], \gamma-\scG_{\alpha, \beta}[T]}  \cdot \bigg( \sum_{\Upsilon\in\scF}^{\gamma-\alpha-, \alpha, \beta} \bW_{\scG_{\alpha, \beta}[\Upsilon], \Pi}^{q[\Upsilon]}\big[\Upsilon\big] \Big( \rw, \hat{\rw} \Big)(\omega_0) \bigg) \Bigg)\cdot \eta^\alpha 
\end{align}
\begin{align}
\nonumber
&+ \hspace{-10pt}\sum_{\substack{T\in\scF_0 \\ \scG_{\alpha, \beta}[T]\in [\gamma-\alpha, \gamma)}} \hspace{-10pt}\Bigg( 
\bW_{\Pi}^{p_5[T]}\big[ T \big]\Big( \bX_u, \hat{\bX}_u \Big)(\omega_0) \cdot \bigg( \Big\| \big\langle \{ \rw \vee \hat{\rw}\} , \lfloor T\rfloor \big\rangle(\omega_0) \Big\|_{q[\lfloor T\rfloor], \scG_{\alpha, \beta}[\lfloor T\rfloor]} 
\\
\nonumber
&\qquad \quad + \sum_{\Upsilon\in\scF}^{\gamma-\alpha-, \alpha, \beta} \Big\| \big\langle \{ \rw \vee \hat{\rw}\} , \Upsilon \big\rangle(\omega_0) \Big\|_{q[\Upsilon], \scG_{\alpha, \beta}[\Upsilon]} \bigg)
\\
\nonumber
&\qquad + \Big\| \big\langle \{ \bX_u \vee \hat{\bX}_u \} , T\big\rangle(\omega_0)\Big\|_{p_5[T], \gamma-\scG_{\alpha, \beta}[T]}  \cdot \bigg( \bW_{\scG_{\alpha, \beta}[\lfloor T \rfloor], \Pi}^{q[\lfloor T \rfloor]}\big[\lfloor T \rfloor\big] \Big( \rw, \hat{\rw} \Big)(\omega_0)
\\
\label{eq:theorem:Stability-RoughInt-inccom}
&\qquad \quad + \sum_{\Upsilon\in\scF}^{\gamma-\alpha-, \alpha, \beta} \bW_{\scG_{\alpha, \beta}[\Upsilon], \Pi}^{q[\Upsilon]}\big[\Upsilon\big] \Big( \rw, \hat{\rw} \Big)(\omega_0) \bigg) \Bigg). 
\end{align}

In particular, suppose that $\exists \cO \subseteq \Omega$ such that $\forall \omega_0\in \cO$, $\exists M>0$ such that
\begin{equation}
\label{eq:theorem:Stability-RoughInt}
\left.\begin{aligned}
\max\Big( \rho_{(\alpha, \beta, p_y, q), 0}\big( \rw, \rId \big)(\omega_0), 
\quad& 
\rho_{(\alpha, \beta, p_y, q), 0}\big( \hat{\rw}, \rId \big)(\omega_0) \Big) <M, 
\\
\max\Big( \big\| \bX \big\|_{\rw, \gamma, p_y, q, 0} (\omega_0),
\quad&
\big\| \hat{\bX} \big\|_{\hat{\rw}, \gamma, p_y, q, 0} (\omega_0)\Big) < M,  
\end{aligned}\quad \right\rbrace
\end{equation}
with $\rho_{(\alpha, \beta, p_y, q), 0}$ defined in Definition \ref{definition:ProbabilisticRoughPaths}. 

Then for $\omega_0 \in \cO$, we have that there exists a constant $C_M$ dependent only on $M$ such that
\begin{align}
\nonumber
d&_{\rw, \hat{\rw}, \gamma, 0}\Big( \Phi\big[\bX\big], \Phi\big[ \hat{\bX} \big] \Big)(\omega_0) 
\\
\label{eq:theorem:Stability-RoughInt-inc}
&\leq C_M \bigg( \rho_{(\alpha, \beta, p_5, q)}\Big(\rw, \hat{\rw} \Big)(\omega_0) + d_{\rw, \hat{\rw}, \gamma}\Big( \bX, \hat{\bX} \Big)(\omega_0) \bigg). 
\end{align}

In particular, this means that the operator $\Phi$ is locally Lipschitz with respect to  the pseudo-metric $d_{\rw, \hat{\rw}, \gamma, 0}$. 
\end{theorem}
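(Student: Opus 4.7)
The plan is to mirror the proof of Theorem \ref{theorem:Reconstruction} (Proposition \ref{proposition:Reconstruction} together with the subsequent Sewing argument), but now applied to the \emph{difference} of the two compensated Riemann sums that define $\Phi[\bX]$ and $\Phi[\hat{\bX}]$. Concretely, fix an arbitrary coupling $\Pi$ of $(\Omega,\bP)$ and $(\hat{\Omega},\hat{\bP})$ and consider, for every $(\omega_{0},\hat{\omega}_{0})$ (with $\omega_0$ identified under $\Pi$), the germ
\begin{equation*}
\delta \Xi_{s,t} = \sum_{T\in \scF_{0}}^{\gamma-\alpha,\alpha,\beta} \Bigl( \bE^{H^{T}}\bigl[\langle \bX_{s},T\rangle \cdot \langle \rw_{s,t},\lfloor T\rfloor \rangle\bigr] - \bE^{H^{T}}\bigl[\langle \hat{\bX}_{s},T\rangle \cdot \langle \hat{\rw}_{s,t},\lfloor T\rfloor \rangle\bigr] \Bigr).
\end{equation*}
By linearity and the product-difference identity of Lemma \ref{lemma:TechLem_Products}, the integrand splits into a sum of two bilinear terms in which either the controlled rough path or the driver is replaced by its difference. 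The Sewing Lemma will then give the existence of $\int (\bX-\hat{\bX})dW$ in the coupled sense, provided one controls $\delta\Xi_{s,t,u}$ in the Wasserstein-type seminorm.

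First, I would repeat the coproduct manipulation that produced the identity
$$
\Xi_{s,t,u}(\omega_0) = -\sum_{Y\in \scF_{0}}^{\gamma-\alpha,\alpha,\beta} \bE^{H^{Y}}\bigl[\langle \bX_{s,t}^{\sharp},Y\rangle \cdot \langle \rw_{t,u},\lfloor Y\rfloor \rangle\bigr]
$$
in the proof of Proposition \ref{proposition:Reconstruction}. Subtracting the analogous expression for $\hat{\Xi}_{s,t,u}$ and again splitting with Lemma \ref{lemma:TechLem_Products} gives
\begin{align*}
\delta\Xi_{s,t,u} = -\sum_{Y}^{\gamma-\alpha,\alpha,\beta} \bE^{H^{Y}}\Bigl[& \bigl(\langle \bX_{s,t}^{\sharp},Y\rangle - \langle \hat{\bX}_{s,t}^{\sharp},Y\rangle\bigr) \cdot \langle \rw_{t,u},\lfloor Y\rfloor \rangle \\
& + \langle \hat{\bX}_{s,t}^{\sharp},Y\rangle \cdot \bigl(\langle \rw_{t,u},\lfloor Y\rfloor \rangle - \langle \hat{\rw}_{t,u},\lfloor Y\rfloor \rangle\bigr)\Bigr].
\end{align*}
Each factor can now be bounded by the appropriate $\bW_{\gamma-\scG_{\alpha,\beta}[Y],\Pi}^{p_{5}[Y]}$ or $\bW_{\scG_{\alpha,\beta}[\lfloor Y\rfloor],\Pi}^{q[\lfloor Y\rfloor]}$ norm (paired through the dual integrability conditions and H\"older's inequality exactly as in the proof of Theorem \ref{theorem:Reconstruction}), yielding a $\gamma+\alpha$-H\"older bound on $\delta\Xi_{\cdot,\cdot,\cdot}$. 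The Sewing Lemma then produces a $\Pi$-dependent remainder $\delta\Phi[\bX,\hat{\bX}]^{\sharp}_{s,t}$ whose $|t-s|^{\gamma}$-H\"older norm is controlled by the right-hand side of the desired estimate \eqref{eq:theorem:Stability-RoughInt-inccom}, at the level of the scalar component ($Y=\rId$).

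Next, for $Y\in \scF^{\gamma-,\alpha,\beta}$, I would emulate the derivation of \eqref{eq:norm-Phi-1.3}–\eqref{eq:norm-Phi-1.4} in the proof of Theorem \ref{theorem:Reconstruction} and rewrite $\langle \Phi[\bX]_{s,t}^{\sharp},\lfloor Y\rfloor\rangle - \langle \Phi[\hat{\bX}]_{s,t}^{\sharp},\lfloor Y\rfloor\rangle$ as a sum over pairs $(T,\Upsilon)$ with $c'(T,\Upsilon,Y)>0$ of differences of the form
\begin{equation*}
\bE^{E^{T,\Upsilon,Y}}\bigl[\langle \bX_{s}-\hat{\bX}_{s},T\rangle \cdot \langle \rw_{s,t},\Upsilon\rangle\bigr] + \bE^{E^{T,\Upsilon,Y}}\bigl[\langle \hat{\bX}_{s},T\rangle \cdot \langle \rw_{s,t}-\hat{\rw}_{s,t},\Upsilon\rangle\bigr],
\end{equation*}
together with the $\langle \bX_{s,t}^{\sharp}-\hat{\bX}_{s,t}^{\sharp},Y\rangle$ contribution. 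Applying Proposition \ref{prop:Regu-RCRP} (more precisely a difference analogue obtained by telescoping) to control the $\hat{\bX}$ and $\hat{\bX}_{u}$ terms, and the H\"older inequality with the dual pair $(p_{6},q)$, produces \eqref{eq:theorem:Stability-RoughInt-inccom} in its full form. Optimising over $\Pi$ and combining with the initial data estimate $\bW_{\Pi}^{p[T]}[T](\Phi[\bX]_{0},\Phi[\hat{\bX}]_{0})(\omega_{0})$ (which is controlled by the corresponding quantity for $\bX,\hat{\bX}$ via \eqref{eq:theorem:Reconstruction:Phi}) delivers the metric bound $d_{\rw,\hat{\rw},\gamma,0}(\Phi[\bX],\Phi[\hat{\bX}])(\omega_{0})$.

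For the local Lipschitz estimate \eqref{eq:theorem:Stability-RoughInt-inc}, on the event $\{\omega_{0}\in\cO\}$ every appearance of a pure (non-difference) norm of $\rw$, $\hat{\rw}$, $\bX$, $\hat{\bX}$ in the right-hand side of \eqref{eq:theorem:Stability-RoughInt-inccom} is bounded by $M$, so one can collect these prefactors into a single constant $C_{M}$ that depends polynomially on $M$. What remains is a linear combination of Wasserstein-type differences that are, up to relabelling, exactly the summands defining $\rho_{(\alpha,\beta,p_{5},q)}(\rw,\hat{\rw})(\omega_{0})$ and $d_{\rw,\hat{\rw},\gamma}(\bX,\hat{\bX})(\omega_{0})$, after taking the infimum over $\Pi$ on both sides simultaneously. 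The main obstacle in carrying out the plan is precisely this last step: the infimum defining the probabilistic-rough-path metric and the infimum defining the RCRP metric must be realised by a \emph{common} coupling $\Pi$ in order to combine the two bounds on the right-hand side. This is resolved by noting that both infima are taken over the same space of couplings between the same underlying probability spaces, so one can use an arbitrary $\Pi$ throughout and pass to the infimum at the end; the bookkeeping of hyperedge couplings in the terms involving $\varphi^{T,\Upsilon,Y}$ and $\phi^{T,\Upsilon,Y}$ is the other technically delicate point, but it is handled by the same duality/ghost-hyperedge accounting used in Proposition \ref{proposition:productRCRP}.
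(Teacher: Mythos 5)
Your proposal is correct and follows essentially the same route as the paper's own proof: the paper likewise forms the difference germ $\Xi_{s,t}$ for the two compensated Riemann sums, reuses the coproduct identity from Proposition \ref{proposition:Reconstruction} to reduce $\Xi_{s,t,u}$ to remainder terms, splits the differences into an $(\bX^{\sharp}-\hat{\bX}^{\sharp})$-against-driver piece and a driver-difference piece bounded by the $\bW_{\Pi}$ seminorms, applies the Sewing Lemma, and then mirrors the $\rId$ and $\lfloor Y\rfloor$ estimates \eqref{eq:norm-Phi-1.1} and \eqref{eq:norm-Phi-1.3} before localising on $\cO$ and passing to the infimum over couplings. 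Your remark about working with a single arbitrary coupling $\Pi$ throughout and optimising only at the end is exactly how the paper handles the final step as well.
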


The proof of Theorem \ref{theorem:Stability-RoughInt} is delayed until Section \ref{subsubsec:Proof-Reconstruction-Stability}. 

\subsubsection{Stability of continuous images of Random controlled rough paths}

We saw in Section \ref{subsubsect:Contin-Image-RandContRP} that the smooth image of a random controlled rough path is also a random controlled rough path. This next result allows us to compare the continuous image of two random controlled rough paths using the properties of the Taylor expansion established in Section \ref{subsect:Multivariate_Lions-Taylor} and techniques that are analogous to the Wasserstein distance. 

\begin{theorem}
\label{theorem:Stability-ContinImage}
Let $\alpha, \beta>0$ and let 
$$
\gamma:=\inf \big\{ \alpha  i + \beta j: (i, j)\in \bN_0^{\times 2}, \alpha i + \beta j > 1-\alpha\big\}, 
\quad
n:=\sup \big\{ m\in \bN_0: m < \tfrac{\gamma}{\alpha\wedge \beta} \big\}.
$$

Let $(p_1, q_1)$, $(p_2, q_1)$, $(p_3, q_2)$ $(p_4, q_2)$ be four pairs of dual integrability functionals and additionally suppose that
\begin{equation}
\label{eq:theorem:Stability-ContinImage4}
\begin{split}
\sup_{T\in \scF^{\gamma, \alpha, \beta}} \Big( \tfrac{1}{q_1[T]} \Big) < \tfrac{n+1}{p_1[\rId]} \leq 1,  
\quad
\sup_{T\in \scF^{\gamma, \alpha, \beta}} \Big( \tfrac{1}{q_1[T]} \Big) < \tfrac{n+1}{p_2[\rId]} \leq 1, 
\\
\sup_{T\in \scF^{\gamma, \alpha, \beta}} \Big( \tfrac{1}{q_2[T]} \Big) < \tfrac{n+1}{p_3[\rId]} \leq 1,  
\quad
\sup_{T\in \scF^{\gamma, \alpha, \beta}} \Big( \tfrac{1}{q_2[T]} \Big) < \tfrac{n+1}{p_4[\rId]} \leq 1. 
\end{split}
\end{equation}
We define $p:\scF_0^{\gamma, \alpha, \beta}\to [1, \infty)$ such that 
\begin{equation}
\label{eq:theorem:Stability-ContinImage3}
\begin{split}
\frac{1}{p_5[\rId]}:= \frac{n+2}{ p_1[\rId]}, 
\quad
\frac{1}{p_5[T]}:= \frac{1}{p_5[\rId]} - \frac{1}{q_1[T]}, 
\\
\frac{1}{p_6[\rId]}:= \frac{n+2}{ p_3[\rId]}, 
\quad
\frac{1}{p_6[T]}:= \frac{1}{p_6[\rId]} - \frac{1}{q_2[T]}. 
\end{split}
\end{equation}
Then $(p_5, q_1)$ and $(p_6, q_2)$ are dual pairs of integrability functionals.

Additionally, we define for $T\in \scF^{\gamma, \alpha, \beta}$
\begin{align*}
p_x[T] = p_1[T] \wedge p_3[T], \quad& p_y[T] = p_2[T] \wedge p_4[T],
\\
p[T] = p_5[T] \wedge p_6[T], \quad& q[T] = q_1[T] \wedge q_2[T]. 
\end{align*}
Then $(p, q)$ is a dual pair of integrability functionals. 

Let $(\Omega, \cF, \bP)$ and $(\hat{\Omega}, \hat{\cF}, \hat{\bP})$ be identical probability spaces and let 
$$
\rw \in \scC\big(\scH^{\gamma, \alpha, \beta}(\Omega), p, q)\quad  \mbox{and} \quad \hat{\rw} \in \scC\big(\scH^{\gamma, \alpha, \beta}(\hat{\Omega}), p, q).
$$ 
Let 
\begin{align*}
&\bX \in \cD_{\rw}^{\gamma, p_1, q_1}\big( \scH^{\gamma-, \alpha, \beta}(\Omega)\big), 
\quad
\bY\in \cD_{\rw}^{\gamma, p_2, q_1}\big( \scH^{\gamma-, \alpha, \beta}(\Omega)\big), 
\\
\quad \mbox{and} \quad 
&\hat{\bX} \in \cD_{\hat{\rw}}^{\gamma, p_3, q_2}\big( \scH^{\gamma-, \alpha, \beta}(\hat{\Omega})\big), 
\quad
\hat{\bY} \in \cD_{\hat{\rw}}^{\gamma, p_4, q_2}\big( \scH^{\gamma-, \alpha, \beta}(\hat{\Omega})\big).
\end{align*}
Let $f: \bR^e \times \cP_2(\bR^e) \to \lin(\bR^d, \bR^e)$ satisfy that $f \in C_{b}^{n+1, (n+1)}\big( \bR^d \times \cP_2(\bR^d)\big)$. We define 
$$
\bZ:[0,1]\to (\ddot{\scH})^{\gamma-, \alpha, \beta}
\quad \mbox{and} \quad 
\hat{\bZ}:[0,1]\to (\ddot{\scH})^{\gamma-, \alpha, \beta}
$$
to be random controlled rough paths that satisfy
\begin{align*}
\bZ\in \cD_{\rw}^{\gamma, p_5, q_1}\big( (\ddot{\scH})^{\gamma-, \alpha, \beta}\big),
\quad& 
\Big\langle \bZ_t, \rId \Big\rangle(\omega_0) = f\Big( \big\langle \bX_t, \rId \big\rangle(\omega_0), \cL^{\langle \bY_t, \rId\rangle} \Big)
\\
\hat{\bZ} \in \cD_{\hat{\rw}}^{\gamma, p_6, q_2} \big( (\ddot{\scH})^{\gamma-, \alpha, \beta}\big)
\quad&
\Big\langle \hat{\bZ}_t, \rId \Big\rangle(\omega_0) = f\Big( \big\langle \hat{\bX}_t, \rId \big\rangle(\omega_0), \cL^{\langle \hat{\bY}_t, \rId\rangle} \Big)
\end{align*}
Then there exists polynomials $\fP_1, \fP_2:(\bR^+)^{\times 7} \to \bR^+$ increasing in all variables such that
\begin{align}
\nonumber
\inf_{\Pi} &\sum_{T\in\scF_0}^{\gamma-, \alpha, \beta} \sup_{s, t\in[u,v]} \frac{\bW_{\Pi}^{p[T]}\big[ T\big] \Big( \bZ_{s, t}^{\sharp}, \hat{\bZ}_{s, t}^{\sharp}\Big)(\omega_0) }{|t-s|^{\gamma - \scG_{\alpha, \beta}[T]}} \leq \| f\|_{C_b^{n+1, (n+1)}}
\\
\label{eq:theorem:Stability-ContinImage}
&\cdot \Bigg( \rho_{(\alpha, \beta, p, q)}\Big( \rw, \hat{\rw}\Big)(\omega_0) \cdot \fP_1 + \bigg( d_{\rw, \hat{\rw}, \gamma, 0} \Big( \bX, \hat{\bX} \Big)(\omega_0)
+ d_{\rw, \hat{\rw}, \gamma} \Big( \bY, \hat{\bY} \Big) \bigg) \cdot \fP_2 \Bigg) 
\end{align}
where 
\begin{align*}
\fP_i:&=\fP_i\bigg( \sum_{T\in\scF_0}^{\gamma-, \alpha, \beta} \Big\| \big\langle \{\bX^{\sharp}\vee \hat{\bX}^{\sharp}, T \big\rangle(\omega_0) \Big\|_{p_x[T], \gamma - \scG_{\alpha, \beta}[T]}
, 
\sum_{T\in \scF}^{\gamma-, \alpha, \beta} \Big\| \big\langle \{\bX_u\vee\hat{\bX}_u\}, T \big\rangle(\omega_0) \Big\|_{p_x[T]}, 
\\
&\qquad 
\sum_{T\in\scF_0}^{\gamma-, \alpha, \beta} \Big\| \big\langle \{ \bY^{\sharp} \vee \hat{\bY}^{\sharp}, T \big\rangle \Big\|_{p_y[T], \gamma - \scG_{\alpha, \beta}[T]}
, 
\sum_{T\in \scF}^{\gamma-, \alpha, \beta} \Big\| \big\langle \{ \bY_u \vee \hat{\bY}_u\}, T\big\rangle \Big\|_{p_y[T]}, 
\\
&\qquad \sum_{T\in \scF}^{\gamma, \alpha, \beta} \Big\| \big\langle \{\rw\vee \hat{\rw} \}, T\big\rangle(\omega_0) \Big\|_{q[T], \scG_{\alpha, \beta}[T]}
, 
|v-u|^\alpha
, 
|v-u|^{\beta} \bigg). 
\end{align*}

In particular, suppose that $\exists \cO \subseteq \Omega$ such that $\forall \omega_0\in \cO$, $\exists M>0$ such that
\begin{equation}
\label{eq:theorem:Stability-RoughInt2}
\left.\begin{aligned}
\max\Big( \rho_{(\alpha, \beta, p, q), 0}\Big( \rw, \rId \Big)(\omega_0)
, & \quad
\rho_{(\alpha, \beta, p, q), 0}\Big( \hat{\rw}, \rId \Big)(\omega_0) \Big)<M, 
\\
\max\Big( \big\| \bX \big\|_{\rw, \gamma, p_y, q, 0} (\omega_0)
, & \quad
\big\| \hat{\bX} \big\|_{\hat{\rw}, \gamma, p_y, q, 0} (\omega_0) \Big)< M, 
\\
\max\Big( \big\| \bY \big\|_{\rw, \gamma, p_y, q}
, & \quad
\big\| \hat{\bY} \big\|_{\hat{\rw}, \gamma, p_y, q} \Big)< M. 
\end{aligned}\quad \right\rbrace
\end{equation}

Then for $\omega_0 \in \cO$, we have that there exists a constant $C_M$ dependent only on $M$ such that 
\begin{align}
\nonumber
d&_{\rw, \hat{\rw}, \gamma, 0}\Big( \bZ, \hat{\bZ} \Big)(\omega_0) 
\\
\label{eq:theorem:Stability-ContinImage-inc}
&\leq \| f\|_{C_b^{n+1, (n+1)}} \cdot C_M \cdot \bigg( \rho_{(\alpha, \beta, p, q)}\Big( \rw, \hat{\rw}\Big)(\omega_0) 
+ d_{\rw, \hat{\rw}, \gamma, 0} \Big( \bX, \hat{\bX} \Big)(\omega_0)
+ d_{\rw, \hat{\rw}, \gamma} \Big( \bY, \hat{\bY} \Big)\bigg). 
\end{align}
\end{theorem}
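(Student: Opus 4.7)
The plan is to follow closely the structure of the proof of Theorem \ref{theorem:ContinIm-RCRPs} but applied to differences rather than increments. Since $\bZ$ and $\hat{\bZ}$ admit the explicit expansion \eqref{eq:theorem:ContinIm-RCRPs}, for each $Y \in \scF_0^{\gamma-,\alpha,\beta}$ one can write $\langle \bZ_{s,t}^{\sharp}, Y\rangle(\omega_0,\omega_{H^Y}) - \langle \hat{\bZ}_{s,t}^{\sharp}, Y\rangle(\omega_0,\hat{\omega}_{H^Y})$ as a sum indexed by $a \in A^{\alpha,\beta}$ and by forests $(Y_1,\dots,Y_{|a|})$ with $\cE^a[Y_1,\dots,Y_{|a|}]=Y$, of three kinds of pieces: (i) differences of the coefficients $\partial_a f(\langle\bX_s,\rId\rangle,\cL^{\langle\bY_s,\rId\rangle},\dots)$ versus the same at $\hat{\bX}_s,\hat{\bY}_s$; (ii) telescoping differences in the tensor products $\bigotimes_r \langle [\bX,\bY]_\cdot, Y_r\rangle - \bigotimes_r \langle [\hat{\bX},\hat{\bY}]_\cdot, Y_r\rangle$; and (iii) analogous differences in the rough-path factors $\langle\rw_{s,t},\Upsilon\rangle-\langle\hat{\rw}_{s,t},\Upsilon\rangle$ appearing after substituting the jet expansion \eqref{eq:definition:RandomControlledRP2} into the formula.

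First, I would apply Corollary \ref{corollary:LionsTaylor3} with the extra smoothness $C_b^{n+1,(n+1)}$ hypothesis to bound differences of $\partial_a f$ at order $n-|a|+1$, so that the comparison yields a genuine Lipschitz factor in the arguments, measured via $|\langle\bX_s-\hat{\bX}_s,\rId\rangle|$, Wasserstein-$\bW^{(1)}$ distance between $\cL^{\langle\bY_s,\rId\rangle}$ and $\cL^{\langle\hat{\bY}_s,\rId\rangle}$, and $|\langle\bY_s,\rId\rangle(\omega_{\tilde{h}_j^Y})-\langle\hat{\bY}_s,\rId\rangle(\hat{\omega}_{\tilde{h}_j^Y})|$ evaluated along a coupling. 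Second, I would repeatedly apply Lemma \ref{lemma:TechLem_Products} to expand each tensor-product difference as a telescoping sum picking up one factor of the form $\langle(\bX,\bY)_\cdot - (\hat{\bX},\hat{\bY})_\cdot, Y_r\rangle$ (which, after invoking Proposition \ref{prop:Regu-RCRP} applied to the increments and the jets, is controlled by a sum of $\bW$-type distances between initial values, sharps, and $\rw,\hat{\rw}$ terms). Third, for the rough-path factors I would use the Chen relation and \eqref{eq:increment:identity} to rewrite $\langle\rw_{s,t}-\hat{\rw}_{s,t},\Upsilon\rangle$ as controlled by $\bW_{\scG_{\alpha,\beta}[\Upsilon],\Pi}^{q[\Upsilon]}[\Upsilon](\rw,\hat{\rw})$ together with bounds on the maxima of $\rw,\hat{\rw}$.

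The key technical difficulty, and the place where the hypotheses on $(p_i,q_j)$ in \eqref{eq:theorem:Stability-ContinImage4}--\eqref{eq:theorem:Stability-ContinImage3} are used, is to carry out all the H\"older-type estimates on a common coupling $\Pi$ between $\Omega$ and $\hat{\Omega}$: each term produced by telescoping decouples the random variables living above $\Omega$ and $\hat{\Omega}$ respectively, and the ghost-hyperedge combinatorics of Definition \ref{definition:Z-set} and Definition \ref{definition:CouplingFunctions-Ghost} must be respected so that the infimum over $\Pi$ can be taken at the very end (mirroring the role of the optimal coupling in the $2$-Wasserstein distance). Just as in the proof of \eqref{eq:theorem:ContinIm-RCRPs-Est}, every such term can be bounded by a product of two factors: a difference-factor (one of $\bW_\Pi^{p[T]}[T](\bX^{\sharp},\hat{\bX}^{\sharp})$, $\bW_\Pi^{p[T]}[T](\bY^{\sharp},\hat{\bY}^{\sharp})$, $\bW_{\scG_{\alpha,\beta}[T],\Pi}^{q[T]}[T](\rw,\hat{\rw})$, or analogous initial-value distances) and a polynomial in the graded norms of $\{\bX\vee\hat{\bX}\}, \{\bY\vee\hat{\bY}\}, \{\rw\vee\hat{\rw}\}$ with $\|f\|_{C_b^{n+1,(n+1)}}$ out front, in the manner described by Remark \ref{remark:Polynomial-comments1}. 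Summing the three categories of pieces produces two such polynomials $\fP_1, \fP_2$, multiplying respectively $\rho_{(\alpha,\beta,p,q)}(\rw,\hat{\rw})(\omega_0)$ and $d_{\rw,\hat{\rw},\gamma,0}(\bX,\hat{\bX})(\omega_0)+d_{\rw,\hat{\rw},\gamma}(\bY,\hat{\bY})$, yielding \eqref{eq:theorem:Stability-ContinImage} after taking the infimum over $\Pi$.

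The Lipschitz consequence \eqref{eq:theorem:Stability-ContinImage-inc} is then immediate: on the set $\cO$ where \eqref{eq:theorem:Stability-RoughInt2} holds, every argument of $\fP_1$ and $\fP_2$ is uniformly bounded by a constant depending only on $M$, so $\fP_1(\omega_0), \fP_2(\omega_0) \leq C_M$, and the claim follows by absorbing the bound into the constant. I expect the main obstacle to lie not in any single estimate (each mimics a step already done in the proofs of Theorem \ref{theorem:ContinIm-RCRPs} and Theorem \ref{theorem:Stability-RoughInt}) but in the bookkeeping required to maintain a single coupling $\Pi$ through all telescopings while simultaneously respecting the combinatorial identities on hyperedges and ghost hyperedges established in Lemmas \ref{lemma:TechnicalPhiVarphi}--\ref{lemma:ESets} and Proposition \ref{proposition:EAction}.
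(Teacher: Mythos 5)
Your proposal is correct and follows essentially the same route as the paper's proof: the paper likewise decomposes $\bW_{\Pi}^{p[T]}[T]\big(\bZ_{s,t}^{\sharp},\hat{\bZ}_{s,t}^{\sharp}\big)$ into coefficient differences of $\partial_a f$ (controlled through the Lipschitz norms $\|\partial_a f\|_{\lip,0}$, $\|\partial_a f\|_{\lip,\mu}$, $\|\partial_a f\|_{\lip,j}$, with the extra degree of smoothness $C_b^{n+1,(n+1)}$ absorbing the Taylor-remainder term $f^a$ via one further expansion over $\A{n+1}$), telescoped tensor-product differences via Lemma \ref{lemma:TechLem_Products} (in the form \eqref{eq:TechLem_Products2}), and rough-path differences measured by $\bW_{\scG_{\alpha,\beta}[\Upsilon],\Pi}^{q[\Upsilon]}$, all on a single coupling $\Pi$ with the infimum taken at the end, then sorts the resulting products into those carrying a $\rw$-versus-$\hat{\rw}$ difference (giving $\fP_1$) and those carrying an $\bX,\bY$-versus-$\hat{\bX},\hat{\bY}$ difference (giving $\fP_2$) via Propositions \ref{proposition:CompareRCRP} and \ref{prop:Regu-RCRP}, exactly as you describe. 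Your deduction of \eqref{eq:theorem:Stability-ContinImage-inc} by bounding every polynomial argument by a constant depending only on $M$ on the set $\cO$ also matches the paper.
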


The proof of Theorem \ref{theorem:Stability-ContinImage} is delayed until Section \ref{subsubsection:Stability-ContinImage}. 

\begin{remark}
\label{remark:Polynomial-comments2}
Following on from Remark \ref{remark:Polynomial-comments1}, we have that the polynomials $\fP_1$ and $\fP_2$
\begin{align*}
\fP_1\Big( x_1, x_2, y_1, y_2, w, t_1, t_2 \Big) =& \sum_{i\in I_{1}} C_i \cdot x_1^{i_1} \cdot x_2^{i_2} \cdot y_1^{i_3} \cdot y_2^{i_4} \cdot w^{i_5} \cdot t_1^{i_6} \cdot t_2^{i_7}, 
\\
\fP_2\Big( x_1, x_2, y_1, y_2, w, t_1, t_2 \Big) =& \sum_{j \in J_{2}} C_j \cdot x_1^{j_1} \cdot x_2^{j_2} \cdot y_1^{j_3} \cdot y_2^{j_4} \cdot w^{j_5} \cdot t_1^{j_6} \cdot t_2^{j_7}. 
\end{align*}
in Equation \eqref{eq:theorem:Stability-ContinImage} satisifes the following identities:
\begin{itemize}
\item $i_1, ..., i_5, j_1, ..., j_5 \in \bN_0$
\item $i_1+i_2+i_3+i_4 \leq n+2$. In particular, this means that $i_1 + i_2 \leq n+2$ and $i_3+i_4 \leq n+2$. 
\item $j_1+j_2+j_3+j_4 \leq n+1$. In particular, this means that $j_1 + j_2 \leq n+1$ and $j_3+j_4 \leq n+1$. 
\item $i_5+1 \leq i_1 + i_2 + i_3 + i_4$ and $j_5 \leq j_1 + j_2 + j_3 + j_4$. 
\item While $i_6, i_7, j_6, j_7\in \bZ$ (that is, may be negative), we always have that 
$$
\alpha \cdot i_6 + \beta \cdot i_7\geq 0, \quad \alpha \cdot j_6 + \beta \cdot j_7\geq 0.
$$ 
\end{itemize}
\end{remark}

\subsection{Optimal transport and random controlled rough paths}

Inspired by Proposition \ref{prop:Regu-RCRP}, have the following:
\begin{proposition}
\label{proposition:CompareRCRP}
Let $\alpha, \beta>0$ and $\gamma:=\inf\{ \scG_{\alpha, \beta}[T]: T \in \scF, \scG_{\alpha, \beta}[T]>1-\alpha \}$. Let $(p,q)$ be a dual pair of integrability functionals and let $\rw, \hat{\rw}$ be $(\scH^{\gamma, \alpha, \beta}, p, q)$-probabilistic rough paths. 

Let $\bX \in \cD_{\rw}^{\gamma, p, q}$ and let $\hat{\bX} \in \cD_{\hat{\rw}}^{\gamma, p, q}$. Let $u, v\in [0,1]$ and let $\eta=|v-u|$. Let $\Pi\in \cP(\Omega\times \hat{\Omega})$ with marginals $\bP$. Then 

\begin{enumerate}
\item For $Y\in \scF_0^{\gamma-, \alpha, \beta}$, 
\begin{align}
\nonumber
\sup_{t\in[u, v]}& \bW_{\Pi}^{p[Y]}\big[Y\big]\Big( \bX_t, \hat{\bX}_t \Big)(\omega_0)
\\
\label{eq:proposition:CompareRCRP-1}
\leq&
\bW_{\Pi}^{p[Y]} \big[Y\big] \Big( \bX_u, \hat{\bX}_u\Big)(\omega_0) 
+ 
\bW_{\alpha\wedge(\gamma - \scG_{\alpha, \beta}[Y]), \Pi}^{p[Y]} \big[Y\big] \Big( \bX, \hat{\bX} \Big)(\omega_0)
\cdot
\eta^{\alpha\wedge(\gamma - \scG_{\alpha, \beta}[Y])}.  
\end{align}
\item We have that
\begin{align}
\nonumber
\sup_{s, t \in [u, v]}& \frac{\Big| \big\langle \bX_{s, t}-\hat{\bX}_{s, t}, \rId \big\rangle(\omega_0)\Big|}{|t-s|^{\alpha}} 
\\
\nonumber
\leq& \sum_{T\in\scF}^{\gamma-, \alpha, \beta} \Bigg( \sup_{t\in[u, v]} \Big\| \big\langle \{ \bX_t \vee \hat{\bX}_t\} , T \big\rangle(\omega_0) \Big\|_{p[T]} 
\cdot 
\bW_{\scG_{\alpha, \beta}[T], \Pi}^{q[T]} \big[T\big] \Big( \rw, \hat{\rw} \Big)(\omega_0)
\\
\nonumber
&\quad + \Big\| \big\langle \{ \rw \vee \hat{\rw}\}, T \big\rangle(\omega_0) \Big\|_{q[T], \scG_{\alpha, \beta}[T]} \cdot \sup_{t\in [u,v]} \bW_{\Pi}^{p[T]} \big[T \big] \Big( \bX_t, \hat{\bX}_t \Big)(\omega_0) \Bigg) 
\cdot 
\eta^{\scG_{\alpha, \beta}[T] - \alpha}
\\
\label{eq:proposition:CompareRCRP-2}
&+ \sup_{s, t\in[u, v]} \frac{ \Big| \big\langle \bX_{s, t}^{\sharp} - \hat{\bX}_{s, t}^{\sharp}, \rId \big\rangle (\omega_0) \Big|}{|t-s|^{\gamma}} \cdot \eta^{\gamma - \alpha}. 
\end{align}
\item For $Y \in \scF^{\gamma-,\alpha, \beta}$, 
\begin{align}
\nonumber
\sup_{s, t\in[u,v]}& \frac{ \bW_{\Pi}^{p[Y]}\big[Y\big] \Big( \bX_{s,t}, \hat{\bX}_{s,t} \Big)(\omega_0) }{|t-s|^{\alpha\wedge(\gamma - \scG_{\alpha, \beta}[Y])}}
\\
\nonumber
\leq& \sum_{T, \Upsilon\in \scF}^{\gamma-, \alpha, \beta} c'\Big(T, \Upsilon, Y\Big) \cdot \bigg( \Big\| \big\langle \{ \bX \vee \hat{\bX} \}, T \big\rangle(\omega_0) \Big\|_{p[T], \infty}
\cdot 
\bW_{\scG_{\alpha, \beta}[\Upsilon], \Pi}^{q[\Upsilon]}\big[\Upsilon\big] \Big( \rw, \hat{\rw}\Big)(\omega_0) 
\\
\nonumber
&+ \bW_{\infty, \Pi}^{p[T]}\big[T\big] \Big( \bX, \hat{\bX} \Big)(\omega_0)
\cdot 
\Big\| \big\langle \{ \rw \vee \hat{\rw}\}, \Upsilon \big\rangle(\omega_0)\Big\|_{q[\Upsilon], \scG_{\alpha, \beta}[\Upsilon]} \bigg) \cdot \eta^{\scG_{\alpha, \beta}[\Upsilon] - \alpha}
\\
\label{eq:proposition:CompareRCRP-3}
&+ \sup_{s, t\in[u,v]} \frac{ \bW_{\Pi}^{p[Y]}\big[Y\big]\Big( \bX_{s,t}^{\sharp}, \bX_{s,t}^{\sharp} \Big)(\omega_0)}{|t-s|^{\gamma - \scG_{\alpha, \beta}[Y]}} \cdot \eta^{\gamma - \scG_{\alpha, \beta}[Y] - \alpha\wedge(\gamma - \scG_{\alpha, \beta}[Y])} . 
\end{align}
\end{enumerate}

In particular, 
\begin{align}
\nonumber
\sup_{s, t\in[u, v]}& \frac{ \bW_{\Pi}^{p[Y]}\big[Y\big] \Big( \bX_{s,t}, \hat{\bX}_{s,t} \Big)(\omega_0) }{|t-s|^{\alpha\wedge(\gamma - \scG_{\alpha, \beta}[Y])}} 
\\
\nonumber
\leq& 
\sum_{T, \Upsilon\in \scF}^{\gamma-, \alpha, \beta} c'\Big( T, \Upsilon, Y\Big) \cdot \bigg( \bW_{\Pi}^{p[T]}\big[T\big]\Big( \bX_u, \hat{\bX}_u \Big)(\omega_0) \cdot \Big\| \big\langle \{ \rw\vee \hat{\rw}\}, \Upsilon \big\rangle(\omega_0) \Big\|_{q[\Upsilon], \scG_{\alpha, \beta}[\Upsilon]}
\\
\nonumber
&\qquad + \Big\| \big\langle \{ \bX_u \vee \hat{\bX}_u \}, T \big\rangle(\omega_0) \Big\|_{p[T]} \cdot \bW_{\scG_{\alpha, \beta}[\Upsilon], \Pi}^{q[\Upsilon]}\big[\Upsilon\big] \Big( \rw, \hat{\rw} \Big)(\omega_0) \bigg)\cdot \eta^{\scG_{\alpha, \beta}[\Upsilon] - \alpha} 
\\
\nonumber
&+ \sum_{T, \Upsilon\in \scF}^{\gamma-, \alpha, \beta} c'\Big( T, \Upsilon, Y\Big) \cdot \bigg( \bW_{\gamma-\scG_{\alpha, \beta}[T], \Pi}^{p[T]}\big[T\big]\Big( \bX^{\sharp}, \hat{\bX}^{\sharp} \Big)(\omega_0) \cdot \Big\| \big\langle \{ \rw\vee \hat{\rw}\}, \Upsilon \big\rangle(\omega_0) \Big\|_{q[\Upsilon], \scG_{\alpha, \beta}[\Upsilon]}
\\
\nonumber
&\qquad + \Big\| \big\langle \{ \bX^{\sharp} \vee \hat{\bX}^{\sharp} \}, T \big\rangle(\omega_0) \Big\|_{p[T]} \cdot \bW_{\scG_{\alpha, \beta}[\Upsilon], \Pi}^{q[\Upsilon]}\big[\Upsilon\big] \Big( \rw, \hat{\rw} \Big)(\omega_0) \bigg) \cdot 
\eta^{\gamma - \scG_{\alpha, \beta}[Y] - \alpha\wedge(\gamma - \scG_{\alpha, \beta}[Y])}
\\
\nonumber
&+\sum_{T, \Upsilon', \Upsilon \in \scF}^{\gamma-, \alpha, \beta} c'\Big( T, \Upsilon', \Upsilon, Y\Big) \cdot \eta^{\scG_{\alpha, \beta}[\Upsilon'] + \scG_{\alpha, \beta}[\Upsilon] - \alpha\wedge(\gamma-\scG_{\alpha, \beta}[Y])} \cdot \bigg( \bW_{\Pi}^{p[T]}\big[T\big] \Big( \bX_u, \hat{\bX}_u \Big)(\omega_0)
\\
\nonumber
&\qquad \cdot \Big\| \big\langle \{ \rw\vee \hat{\rw}\}, \Upsilon' \big\rangle(\omega_0) \Big\|_{q[\Upsilon'], \scG_{\alpha, \beta}[\Upsilon']} \cdot \Big\| \big\langle \{ \rw\vee \hat{\rw}\}, \Upsilon \big\rangle(\omega_0) \Big\|_{q[\Upsilon], \scG_{\alpha, \beta}[\Upsilon]}
\\
\nonumber
&+
\Big\| \big\langle \{ \bX_u \vee \hat{\bX}_u \}, T \big\rangle(\omega_0) \Big\|_{p[T]} \cdot \bW_{\scG_{\alpha, \beta}[\Upsilon'], \Pi}^{q[\Upsilon']}\big[\Upsilon'\big] \Big( \rw, \hat{\rw} \Big)(\omega_0) \cdot  \Big\| \big\langle \{ \rw\vee \hat{\rw}\}, \Upsilon \big\rangle(\omega_0) \Big\|_{q[\Upsilon], \scG_{\alpha, \beta}[\Upsilon]}
\\
\nonumber
&+
\Big\| \big\langle \{ \bX_u \vee \hat{\bX}_u \}, T \big\rangle(\omega_0) \Big\|_{p[T]} \cdot  \Big\| \big\langle \{ \rw\vee \hat{\rw}\}, \Upsilon' \big\rangle(\omega_0) \Big\|_{q[\Upsilon'], \scG_{\alpha, \beta}[\Upsilon']} 
\\
\nonumber
&\qquad \cdot \bW_{\scG_{\alpha, \beta}[\Upsilon], \Pi}^{q[\Upsilon]}\big[\Upsilon\big] \Big( \rw, \hat{\rw} \Big)(\omega_0) \bigg)
\\
\label{eq:proposition:CompareRCRP-4}
&+\bW_{\alpha \wedge (\gamma - \scG_{\alpha, \beta}[Y]), \Pi}^{p[Y]}\big[Y\big]\Big( \bX^{\sharp}, \hat{\bX}^{\sharp} \Big)(\omega_0) \cdot \eta^{\gamma - \scG_{\alpha, \beta}[Y] - \big[ \alpha\wedge(\gamma - \scG_{\alpha, \beta}[Y])\big]} 
\end{align}

Further, 
$$
\bE^0 \Bigg[ \bigg| \bW_{\alpha \wedge (\gamma - \scG_{\alpha, \beta}[Y]), \Pi}^{p[Y]}\big[Y\big] \Big( \bX, \hat{\bX} \Big)(\omega_0) \bigg|^{p[Y]} \Bigg] < \infty. 
$$
\end{proposition}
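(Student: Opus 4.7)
The plan is to adapt the proof of Proposition \ref{prop:Regu-RCRP} to the coupled two-path setting, with each purely algebraic identity used there being replaced by an ``add and subtract'' splitting of the difference of the corresponding products. Throughout, the coupling $\Pi$ is fixed, so the operator $\bW_\Pi^p[T](\cdot,\cdot)(\omega_0)$ behaves like a seminorm on path differences indexed by $T$, which delivers a usable triangle inequality. Part (1), Equation \eqref{eq:proposition:CompareRCRP-1}, then follows at once by writing $\langle\bX_t-\hat{\bX}_t,Y\rangle=\langle\bX_u-\hat{\bX}_u,Y\rangle+\langle\bX_{u,t}-\hat{\bX}_{u,t},Y\rangle$, applying the triangle inequality under $\Pi^{\times|H^Y|}$, and bounding the second piece by its H\"older seminorm times $\eta^{\alpha\wedge(\gamma-\scG_{\alpha,\beta}[Y])}$.

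For Part (2), I start from \eqref{eq:definition:RandomControlledRP1} written for both $\bX$ and $\hat{\bX}$ and subtract. Each summand of the form $\bE^{H^T}[\langle\bX_s,T\rangle\langle\rw_{s,t},T\rangle]-\bE^{H^T}[\langle\hat{\bX}_s,T\rangle\langle\hat{\rw}_{s,t},T\rangle]$ is split via
\[
AB-\hat A\hat B = A(B-\hat B) + (A-\hat A)\hat B,
\]
under the coupling $\Pi^{\times|H^T|}$. The first factor in each summand is then handled by H\"older's inequality (using that $(p,q)$ is a dual pair in the sense of Definition \ref{definition:(dual)_integrab-functional}) to produce one term of the form $\|\langle\{ \bX\vee\hat{\bX}\},T\rangle\|_{p[T]}\cdot\bW_{\scG_{\alpha,\beta}[T],\Pi}^{q[T]}[T](\rw,\hat{\rw})$ and one of the form $\|\langle\{\rw\vee\hat{\rw}\},T\rangle\|_{q[T],\scG_{\alpha,\beta}[T]}\cdot\bW_\Pi^{p[T]}[T](\bX_t,\hat{\bX}_t)$; the remainder term $\langle\bX_{s,t}^\sharp-\hat{\bX}_{s,t}^\sharp,\rId\rangle$ is kept as is. Rescaling by $|t-s|^\alpha$ and absorbing the extra powers of $\eta$ yields \eqref{eq:proposition:CompareRCRP-2}. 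Part (3), Equation \eqref{eq:proposition:CompareRCRP-3}, is identical in spirit: I subtract \eqref{eq:definition:RandomControlledRP2} for $\bX$ and $\hat{\bX}$, apply the same two-term split inside $\bE^{E^{T,\Upsilon,Y}}$ (noting that the coupling maps $\phi^{T,\Upsilon,Y}$ and $\varphi^{T,\Upsilon,Y}$ are the same on both sides because $c'(T,\Upsilon,Y)$ is purely combinatorial), and use H\"older term-by-term under $\Pi^{\times|H^\Upsilon|}$.

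The ``in particular'' estimate \eqref{eq:proposition:CompareRCRP-4} is obtained by starting from the three-block identity \eqref{eq:increment:identity} for each path and subtracting. The key new ingredient here is a telescoping decomposition for the triple product
\[
\langle\bX_u,T\rangle\langle\rw_{u,s},\Upsilon'\rangle\langle\rw_{s,t},\Upsilon\rangle-\langle\hat{\bX}_u,T\rangle\langle\hat{\rw}_{u,s},\Upsilon'\rangle\langle\hat{\rw}_{s,t},\Upsilon\rangle,
\]
which I carry out by a direct application of Lemma \ref{lemma:TechLem_Products} with $n=3$, producing exactly the three cross-terms one reads in \eqref{eq:proposition:CompareRCRP-4}. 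The $\bX^\sharp$ versus $\hat{\bX}^\sharp$ block is handled by the same bilinear splitting as in Part (3), the regular $\bX$ versus $\hat{\bX}$ block follows from Part (1), and the triple block is the only genuinely new estimate; this is where the main (but still routine) bookkeeping lies, since one must track the factor of $\eta^{\scG_{\alpha,\beta}[\Upsilon']+\scG_{\alpha,\beta}[\Upsilon]-\alpha\wedge(\gamma-\scG_{\alpha,\beta}[Y])}$ needed to normalise by $|t-s|^{\alpha\wedge(\gamma-\scG_{\alpha,\beta}[Y])}$. Finally, the $L^{p[Y]}(\Omega,\bP)$-integrability of $\bW_{\alpha\wedge(\gamma-\scG_{\alpha,\beta}[Y]),\Pi}^{p[Y]}[Y](\bX,\hat{\bX})(\omega_0)$ follows by combining \eqref{eq:proposition:CompareRCRP-4} with Proposition \ref{prop:Regu-RCRP}, item \eqref{eq:prop:Regu-RCRP-6}, applied to both $\bX$ and $\hat{\bX}$, using the definition of a probabilistic rough path to handle the $\rw$- and $\hat{\rw}$-moments. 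The only mild obstacle throughout is making sure that when one side of each product split is bounded by a deterministic $q$-moment of $\rw$ or $\hat{\rw}$, the exponent is exactly what Lemma \ref{lemma:(dual)_integrab-functional} prescribes, so that the final H\"older step closes and no integrability is lost.
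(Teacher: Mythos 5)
Your proposal is correct and follows essentially the same route as the paper, whose own proof is a few lines delegating exactly to the ingredients you spell out: the seminorm/triangle properties of $\bW_{\Pi}^{p}$ for \eqref{eq:proposition:CompareRCRP-1}, subtraction of the defining relations \eqref{eq:definition:RandomControlledRP1}--\eqref{eq:definition:RandomControlledRP2} with the bilinear split and H\"older via the dual integrability functionals for \eqref{eq:proposition:CompareRCRP-2}--\eqref{eq:proposition:CompareRCRP-3}, and the increment identity \eqref{eq:increment:identity} with the telescoping estimate of Lemma \ref{lemma:TechLem_Products} for \eqref{eq:proposition:CompareRCRP-4}. The only slight imprecision is attributing the $\langle \bX_u, T\rangle\cdot\langle\rw_{s,t},\Upsilon\rangle$ block to Part (1) rather than to the same bilinear splitting, but this does not affect the validity of the argument.
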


\begin{proof}
Using the norm properties of $\bW_\Pi^p$ from Equation \eqref{eq:WassersteinNorm1}, the proof of Equation \eqref{eq:proposition:CompareRCRP-1} is standard. Equation \eqref{eq:proposition:CompareRCRP-2} follows from Equation \eqref{eq:definition:RandomControlledRP1} and Equation \eqref{eq:proposition:CompareRCRP-3} follows from Equation \eqref{eq:definition:RandomControlledRP2}. The proof of Equation \eqref{eq:proposition:CompareRCRP-4} follows from a similar inductive argument to the proof of Equation \eqref{eq:prop:Regu-RCRP-4}. 
\end{proof}

\subsection{Proof of the results of Section \ref{subsec:Stability_RCRPs}}

Throughout this section, we will regularly use the estimate
\begin{equation}
\label{eq:TechLem_Products2}
\Big| \bigotimes_{i=1}^n x_i - \bigotimes_{i=1}^n \hat{x}_i \Big| 
\leq 
\sum_{k=1}^n \bigg( \prod_{i=1}^{k-1} |x_i| \bigg) \cdot \big| x_k - \hat{x}_k \big| \cdot \bigg( \prod_{i=k+1}^n \big| \hat{x}_i \big| \bigg) 
\end{equation}
which is just an adaption of Lemma \ref{lemma:TechLem_Products}. 

\subsubsection{Proof of Theorem \ref{theorem:Stability-RoughInt}}
\label{subsubsec:Proof-Reconstruction-Stability}

This proof is a reformulation of the proof of Theorem  \ref{theorem:Reconstruction} that additionally incorporates ideas of optimal transport. 

\begin{proof}[Proof of Theorem \ref{theorem:Stability-RoughInt}]
It is a quick exercise to verify that $(p_5, q)$ and $(p_6, q)$ satisfy Definition \ref{definition:(dual)_integrab-functional} and so are dual integrability functionals. 

Inspired by Proposition \ref{proposition:Reconstruction}, we define
\begin{align*}
\Xi_{s, t}(\omega_0):=& \sum_{T\in \scF_0}^{\gamma - \alpha, \alpha, \beta} \bigg( \bE^{H^T} \Big[ \big\langle \bX_s, T\big\rangle(\omega_0, \omega_{H^T}) \cdot \big\langle \rw_{s, t}, \lfloor T\rfloor \big\rangle(\omega_0, \omega_{H^T}) \Big] 
\\
&\qquad - \hat{\bE}^{H^T}\Big[ \big\langle \hat{\bX}_s, T\big\rangle(\omega_0, \hat{\omega}_{H^T}) \cdot \big\langle \hat{\rw}_{s, t}, \lfloor T \rfloor \big\rangle(\omega_0, \hat{\omega}_{H^T}) \Big] \bigg). 
\end{align*}
Arguing as before, we can show that
\begin{align*}
\Xi_{s, t, u}(\omega_0) =& - \sum_{Y\in \scF_0}^{\gamma-\alpha, \alpha, \beta} \bigg( \bE^{H^Y}\Big[ \big\langle \bX_{s, t}^{\sharp}, Y\big\rangle(\omega_0, \omega_{H^Y}) \cdot \big\langle \rw_{t, u}, \lfloor Y \rfloor \big\rangle(\omega_0, \omega_{H^Y}) \Big]
\\
&\qquad - \hat{\bE}^{H^Y}\Big[ \big\langle \hat{\bX}_{s, t}^{\sharp}, Y\big\rangle(\omega_0, \hat{\omega}_{H^Y}) \cdot \big\langle \hat{\rw}_{t, u}, \lfloor Y \rfloor \big\rangle(\omega_0, \hat{\omega}_{H^Y}) \Big] \bigg). 
\end{align*}
Then (recalling Equation \eqref{eq:Max_2norms})
\begin{align*}
\sup_{s, t, u\in[0,1]}& \frac{\big| \Xi_{s, t, u}(\omega_0) \big|}{|u - s|^{\gamma + \alpha}} 
\\
\leq &\sum_{Y\in \scF_0}^{\gamma - \alpha, \alpha, \beta} 
\bigg( \bW_{\gamma - \scG_{\alpha, \beta}[Y], \Pi}^{p_5[Y]}\big[ Y \big]\Big( \bX^{\sharp}, \hat{\bX}^{\sharp}\Big)(\omega_{0}) 
\cdot
\Big\| \big\langle \{ \rw \vee \hat{\rw}\}, \lfloor Y\rfloor_i \big\rangle(\omega_{0}) \Big\|_{q[\lfloor Y \rfloor_i], \scG_{\alpha, \beta}[Y]+\alpha}
\\
&+ \Big\| \big\langle \{ \bX^{\sharp} \vee \hat{\bX}^{\sharp} \}, Y \big\rangle (\omega_{0}) \Big\|_{p_5[Y], \gamma - \scG_{\alpha, \beta}[Y]} 
\cdot
\bW_{\scG_{\alpha, \beta}[Y]+\alpha, \Pi}^{q[\lfloor Y \rfloor_i]}\big[ \lfloor Y\rfloor_i \big] \Big( \rw, \hat{\rw}\Big)(\omega_0) \bigg)<\infty
\end{align*}
$(\Omega_0, \bP)$-almost surely. For $r>1$ such that
$$
\frac{1}{r}:= \sup_{T\in \scF^{\gamma-\alpha, \alpha, \beta}}\bigg( \frac{1}{p_5[T]} + \frac{1}{q[\lfloor T \rfloor ]}\bigg)
$$
we have that
$$
\bE^0\bigg[ \sup_{s, t, u\in[0,1]} \frac{\big| \Xi_{s, t, u}(\omega_0) \big|^r}{|u - s|^{r(\gamma + \alpha)}} \bigg]<\infty. 
$$
We apply the Sewing Lemma to conclude that
\begin{align*}
\Bigg| &\bigg( \int_s^t X_r dW_r(\omega_0) - \int_s^t \hat{X}_r d\hat{W}_r(\omega_0) \bigg) 
\\
&- \sum_{T\in\scF_0}^{\gamma-\alpha, \alpha, \beta} \bigg( \bE^{H^T}\Big[ \big\langle \bX_s, T\big\rangle \cdot \big\langle \rw_{s, t}, \lfloor T \rfloor \big\rangle \Big](\omega_0) - \hat{\bE}^{H^T}\Big[ \big\langle \hat{\bX}_s, T \big\rangle \cdot \big\langle \hat{\rw}_{s, t}, \lfloor T \rfloor \big\rangle \Big](\omega_0) \bigg) \Bigg|
\\
\leq& C \sum_{T \in \scF_0}^{\gamma - \alpha, \alpha, \beta} \sup_{u, \upsilon, v\in [0,1]} \Bigg| \frac{\bE^{H^T}\Big[ \big\langle \bX_{u, \upsilon}^{\sharp}, T\big\rangle \cdot \big\langle \rw_{\upsilon, v}, \lfloor T\rfloor\big\rangle \Big](\omega_0)}{|v-u|^{\gamma+\alpha} } 
\\
&\qquad - \frac{\hat{\bE}^{H^T}\Big[ \big\langle \hat{\bX}_{u, \upsilon}^{\sharp}, T\big\rangle \cdot \big\langle \hat{\rw}_{\upsilon, v}, \lfloor T\rfloor\big\rangle \Big](\omega_0)}{|v-u|^{\gamma+\alpha} } \Bigg| \cdot |t-s|^{\gamma+\alpha}. 
\end{align*}

Inspired by Equation \eqref{eq:norm-Phi-1.1}, we have that for any choice of $\Pi \in \cP(\Omega \times \hat{\Omega})$ with marginals $(\Omega, \bP)$ and $(\hat{\Omega}, \hat{\bP})$ and using the notation $\eta=|v-u|$
\begin{align}
\nonumber
\bW_{\Pi}^{p[\rId]}\big[\rId\big]\Big( \Phi&[\bX]_{u, v}^{\sharp}, \Phi[\hat{\bX}]_{u, v}^{\sharp}\Big) (\omega_0) 
\\
\nonumber
\leq& C \sum_{T\in \scF_0}^{\gamma - \alpha, \alpha, \beta} \Bigg( \bW_{\gamma - \scG_{\alpha, \beta}[T], \Pi}^{p_5[T]} \big[T\big] \Big( \bX^{\sharp}, \hat{\bX}^{\sharp} \Big)(\omega_0)
\cdot 
\Big\| \big\langle \{ \rw \vee \hat{\rw} \}, \lfloor T \rfloor \big\rangle(\omega_0) \Big\|_{q[\lfloor T \rfloor ], \scG_{\alpha, \beta}[\lfloor T \rfloor]}
\\
\nonumber
&\quad + 
\Big\| \big\langle \{ \bX^{\sharp} \vee \hat{\bX}^{\sharp}\}, T\big\rangle(\omega_0) \Big\|_{p_5[T], \gamma - \scG_{\alpha, \beta}[T]} 
\cdot 
\bW_{\scG_{\alpha, \beta}[\lfloor T \rfloor], \Pi}^{q[\lfloor T \rfloor]} \big[ \lfloor T \rfloor \big]\Big( \rw, \hat{\rw} \Big)(\omega_0) \Bigg) \cdot \eta^{\gamma+\alpha}
\\
\nonumber
&+\sum_{\substack{T\in \scF \\ \scG_{\alpha, \beta}[\lfloor T \rfloor] = \gamma}} \Bigg( \bW_{\Pi}^{p_5[T]} \big[T\big] \Big( \bX_u, \hat{\bX}_u \Big)(\omega_0)
\cdot
\Big\| \big\langle \{ \rw \vee \hat{\rw} \}, \lfloor T \rfloor \big\rangle(\omega_0) \Big\|_{q[\lfloor T \rfloor], \scG_{\alpha, \beta}[\lfloor T\rfloor]} 
\\
\label{eq:theorem:Stability-RoughInt-1.1}
&\quad + \Big\| \big\langle \{ \bX_u \vee \hat{\bX}_u \}, T \big\rangle(\omega_0) \Big\|_{p_5[T]} \cdot \bW_{\scG_{\alpha, \beta}[\lfloor T \rfloor], \Pi}^{q[\lfloor T \rfloor]} \big[\lfloor T \rfloor\big] \Big( \rw, \hat{\rw} \Big)(\omega_0) \Bigg) \cdot \eta^{\gamma}. 
\end{align}
Similarly, inspired by Equation \eqref{eq:norm-Phi-1.3}, we have that
\begin{align*}
&\Big\langle \Phi[\bX]_{u, v}^{\sharp}, \lfloor Y \rfloor \Big\rangle (\omega_0, \omega_{H^Y} ) - \Big\langle \Phi[\hat{\bX}]_{u, v}^{\sharp}, \lfloor Y \rfloor \Big\rangle (\omega_0, \hat{\omega}_{H^Y} )
\\
&= \Big\langle \bX_{u, v}^{\sharp}, Y \Big\rangle (\omega_0, \omega_{H^Y} ) - \Big\langle \hat{\bX}_{u, v}^{\sharp}, Y \Big\rangle (\omega_0, \hat{\omega}_{H^Y} )
\\
&+ \sum_{\substack{T, \Upsilon\in \scF \\ \scG_{\alpha, \beta}[T]\in[\gamma-\alpha, \gamma)}} c'\Big( T, \Upsilon, Y\Big) \cdot \bigg( \bE^{E^{T, \Upsilon, Y}}\Big[ \big\langle \bX_u, T\big\rangle(\omega_0, \omega_{\phi^{T, \Upsilon, Y}[H^T]})
\cdot \big\langle \rw_{u,v}, \Upsilon \big\rangle(\omega_0, \omega_{\varphi^{T, \Upsilon, Y}[H^\Upsilon]}) \Big] 
\\
&\qquad - \hat{\bE}^{E^{T, \Upsilon, Y}}\Big[ \big\langle \hat{\bX}_u, T\big\rangle(\omega_0, \hat{\omega}_{\phi^{T, \Upsilon, Y}[H^T]})
\cdot \big\langle \hat{\rw}_{u,v}, \Upsilon \big\rangle(\omega_0, \hat{\omega}_{\varphi^{T, \Upsilon, Y}[H^\Upsilon]}) \Big] \bigg)
\end{align*}
so that
\begin{align}
\nonumber
\bW&_{\Pi}^{p_6[\lfloor Y \rfloor]} \big[\lfloor Y \rfloor\big] \Big( \Phi[\bX]_{u,v}^{\sharp}, \Phi[\hat{\bX}]_{u,v}^{\sharp} \Big)(\omega_0)
\\
\nonumber
\leq& \bW_{\gamma - \scG_{\alpha, \beta}[Y], \Pi}^{p_5[Y]}\big[Y\big] \Big( \bX^{\sharp}, \hat{\bX}^{\sharp} \Big)(\omega_0) \cdot \eta^{\gamma - \scG_{\alpha, \beta}[Y]}
\\
\nonumber
&+ \hspace{-10pt}\sum_{\substack{T, \Upsilon \in \scF \\ \scG_{\alpha, \beta}[T]\in[\gamma-\alpha, \gamma)}}\hspace{-10pt} c'\Big( T, \Upsilon, Y\Big) \cdot \Bigg( \bW_{\Pi}^{p_5[T]} \big[T\big]\Big( \bX_u, \hat{\bX}_u \Big)(\omega_0) \cdot \Big\| \big\langle \{ \rw \vee \hat{\rw} \}, \Upsilon \big\rangle(\omega_0)\Big\|_{q[\Upsilon], \scG_{\alpha, \beta}[\Upsilon]}
\\
\nonumber
&\quad + \Big\| \big\langle \{ \bX_u \vee \hat{\bX}_u \}, T \big\rangle(\omega_0) \Big\|_{p_5[T]} \cdot \bW_{\scG_{\alpha, \beta}[\Upsilon], \Pi}^{q[\Upsilon]} \big[ \Upsilon \big] \Big( \rw, \hat{\rw} \Big)(\omega_0) \Bigg)\cdot \eta^{\scG_{\alpha, \beta}[\Upsilon]}
\\
\nonumber
&+ \hspace{-10pt}\sum_{\substack{T, \Upsilon \in \scF \\ \scG_{\alpha, \beta}[T]\in[\gamma-\alpha, \gamma)}}\hspace{-10pt} c'\Big( T, \Upsilon, Y\Big) \cdot \Bigg( \bW_{\gamma-\scG_{\alpha, \beta}[T], \Pi}^{p_5[T]} \big[T\big]\Big( \bX^{\sharp}, \hat{\bX}^{\sharp} \Big)(\omega_0) \cdot \Big\| \big\langle \{ \rw \vee \hat{\rw} \}, \Upsilon \big\rangle(\omega_0)\Big\|_{q[\Upsilon], \scG_{\alpha, \beta}[\Upsilon]}
\\
\label{eq:theorem:Stability-RoughInt-1.2}
&\quad + \Big\| \big\langle \{ \bX^{\sharp} \vee \hat{\bX}^{\sharp} \}, T \big\rangle(\omega_0) \Big\|_{p_5[T], \gamma - \scG_{\alpha, \beta}[T]} \cdot \bW_{\scG_{\alpha, \beta}[\Upsilon], \Pi}^{q[\Upsilon]} \big[ \Upsilon \big] \Big( \rw, \hat{\rw} \Big)(\omega_0) \Bigg)\cdot \eta^{\gamma - \scG_{\alpha, \beta}[Y]}. 
\end{align}

Therefore, combining Equation \eqref{eq:theorem:Stability-RoughInt-1.1} and Equation \eqref{eq:theorem:Stability-RoughInt-1.2} yields Equation \eqref{eq:theorem:Stability-RoughInt-inccom}. By restricting ourselves to when $\omega_0\in \cO$, thanks to the assumption of Equation \eqref{eq:theorem:Stability-RoughInt}, we can restate \eqref{eq:theorem:Stability-RoughInt-inccom} as a local Lipschitz type estimate and taking an infimum over the choice of $\Pi$ yields Equation \eqref{eq:theorem:Stability-RoughInt-inc}.
\end{proof}

\subsubsection{Proof of Theorem \ref{theorem:Stability-ContinImage}}
\label{subsubsection:Stability-ContinImage}

Similarly, this next proof is a reformulation of the proof of Theorem \ref{theorem:ContinIm-RCRPs} with additional ideas from optimal transport. 

Inspired by Equation \eqref{eq:a_integral2}, for any $T\in \scF_0$ we define
$$
\bW_{\Pi}^{[a],p_x,p_y}\big[T\big]\Big( [\bX, \bY], [\hat{\bX}, \hat{\bY}] \Big) (\omega_{a_i}) = 
\begin{cases}
\bW_{\Pi}^{p_x}\big[T\big]\Big( \bX, \hat{\bX}\Big) (\omega_{0})
\quad & \quad \mbox{if} \quad a_i=0, 
\\
\bW_{\Pi}^{p_y}\big[T\big]\Big( \bY, \hat{\bY}\Big) 
\quad & \quad \mbox{if} \quad a_i>0. 
\end{cases}
$$

\begin{proof}[Proof of Theorem \ref{theorem:Stability-ContinImage}]
Firstly, we consider the empty Lions tree $\rId$: Thanks to Equation \eqref{eq:theorem:ContinIm-RCRPs_1stRem}, we can write 
$$
\bW_{\Pi}^{p[\rId]}\big[\rId\big]\Big( \bZ_{s, t}^{\sharp}, \hat{\bZ}_{s, t}^{\sharp}\Big) (\omega_0) = \hyperlink{Eq:Stab_1_one}{(1)} + \hyperlink{Eq:Stab_1_two}{(2)} + \hyperlink{Eq:Stab_1_three}{(3)}
$$
where
\begin{align*}
\hypertarget{Eq:Stab_1_one}{(1)} =& \sum_{a\in A^{\alpha, \beta} } \frac{1}{|a|!} \sum_{\substack{T_1, ..., T_{|a|}\in \scF \\ \scG_{\alpha, \beta}\big[ \cE^a[T_1, ..., T_{|a|}]\big]\geq \gamma}}^{\gamma-, \alpha, \beta} \Bigg( \bE^{H^{\cE^a[T_1, ..., T_{|a|}]}}\Bigg[  \bE^{Z^a[T_1, ..., T_{|a|}]} \bigg[ \partial_a f\Big( \big\langle \bX_{s}, \rId \big\rangle (\omega_0), \cL^{\langle \bY_s, \rId\rangle}, ... \Big)
\\
&\qquad  \cdot
\bigotimes_{r=1}^{|a|} \Big\langle \big[ \bX, \bY \big]_s, T_r \Big\rangle (\omega_{\tilde{h}_{a_r}^T}, \omega_{H^{T_r}}) \bigg] \cdot \Big\langle \rw_{s, t}, \cE^a[T_1, ..., T_{|a|}] \Big\rangle(\omega_0, \omega_{H^{\cE^a[T_1, ..., T_{|a|}]}}) \Bigg]
\\
&-\hat{\bE}^{H^{\cE^a[T_1, ..., T_{|a|}]}}\Bigg[ \hat{\bE}^{Z^a[T_1, ..., T_{|a|}]} \bigg[ \partial_a f\Big( \big\langle \hat{\bX}_{s}, \rId \big\rangle (\omega_0), \cL^{\langle \hat{\bY}_s, \rId\rangle}, ... \Big) \cdot
\bigotimes_{r=1}^{|a|} \Big\langle \big[ \hat{\bX}, \hat{\bY} \big]_s, T_r \Big\rangle (\hat{\omega}_{\tilde{h}_{a_r}^T}, \hat{\omega}_{H^{T_r}}) \bigg] 
\\
&\qquad \cdot \Big\langle \hat{\rw}_{s, t}, \cE^a[T_1, ..., T_{|a|}] \Big\rangle(\omega_0, \hat{\omega}_{H^{\cE^a[T_1, ..., T_{|a|}]}}) \Bigg] \Bigg), 
\end{align*}
\begin{align*}
\hypertarget{Eq:Stab_1_two}{(2)} =& \sum_{a\in A^{\alpha, \beta}} \frac{1}{|a|!} \Bigg( \bE^{1, ..., m[a]}\Bigg[
\partial_a f\Big( \big\langle \bX_{s}, \rId \big\rangle (\omega_0), \cL^{\langle \bY_s, \rId\rangle}, ..., \big\langle \bY_{s}, \rId \big\rangle (\omega_{m[a]}) \Big) 
\\
&\qquad \cdot \sum_{k=1}^{|a|} \bigg( \bigotimes_{r=1}^{k-1} \Big\langle \fJ\big[ \bX, \bY\big]_{s, t}, \rId\Big\rangle(\omega_{a_r}) \bigg) \otimes \Big\langle \big[\bX, \bY\big]_{s, t}^{\sharp}, \rId\Big\rangle(\omega_{a_k}) \otimes \bigg( \bigotimes_{r=k+1}^{|a|} \Big\langle \big[ \bX, \bY\big]_{s, t}, \rId\Big\rangle(\omega_{a_r}) \bigg) \Bigg]
\\
&-\hat{\bE}^{1, ..., m[a]}\Bigg[
\partial_a f\Big( \big\langle \hat{\bX}_{s}, \rId \big\rangle (\omega_0), \cL^{\langle \hat{\bY}_s, \rId\rangle}, ..., \big\langle \hat{\bY}_{s}, \rId \big\rangle (\hat{\omega}_{m[a]}) \Big) 
\\
&\qquad \cdot \sum_{k=1}^{|a|} \bigg( \bigotimes_{r=1}^{k-1} \Big\langle \fJ\big[ \hat{\bX}, \hat{\bY} \big]_{s, t}, \rId\Big\rangle(\hat{\omega}_{a_r}) \bigg) \otimes \Big\langle \big[\hat{\bX}, \hat{\bY} \big]_{s, t}^{\sharp}, \rId\Big\rangle(\hat{\omega}_{a_k}) \otimes \bigg( \bigotimes_{r=k+1}^{|a|} \Big\langle \big[ \hat{\bX}, \hat{\bY} \big]_{s, t}, \rId\Big\rangle(\hat{\omega}_{a_r}) \bigg) \Bigg]
\end{align*}
and 
\begin{align*}
\hypertarget{Eq:Stab_1_three}{(3)} =&  \sum_{a\in A_\ast^{\alpha, \beta}} \frac{1}{|a|!} \Bigg( \bE^{1, ..., m[a]}\bigg[ \partial_a f\Big( \big\langle \bX_s, \rId \big\rangle (\omega_0), \cL^{\langle \bY_s, \rId\rangle}, ...\Big) \cdot \bigotimes_{r=1}^{|a|} \Big\langle \big[ \bX, \bY \big]_{s,t}, \rId\Big\rangle (\omega_{a_r}) \bigg]
\\
&\quad - \hat{\bE}^{1, ..., m[a]}\bigg[ \partial_a f\Big( \big\langle \hat{\bX}_s, \rId \big\rangle (\omega_0), \cL^{\langle \hat{\bY}_s, \rId\rangle}, ...\Big) \cdot \bigotimes_{r=1}^{|a|} \Big\langle \big[ \hat{\bX}, \hat{\bY} \big]_{s,t}, \rId\Big\rangle (\hat{\omega}_{a_r}) \bigg] \Bigg)
\\
+& \frac{1}{ (n -1)!} \sum_{a \in \A{n} } \Bigg( \bE^{1, ..., m[a]}\bigg[ f^a \Big[ \big\langle \bX_s, \rId \big\rangle (\omega_0), \big\langle \bX_t, \rId \big\rangle (\omega_0), \Pi^{\langle \bY_s, \rId\rangle, \langle \bY_t, \rId\rangle} \Big] \cdot \bigotimes_{r=1}^{n} \Big\langle \big[ \bX, \bY \big]_{s, t}(\omega_{a_r}), \rId\Big\rangle \bigg]
\\
&-\hat{\bE}^{1, ..., m[a]}\bigg[ f^a \Big[ \big\langle \hat{\bX}_s, \rId \big\rangle (\omega_0), \big\langle \hat{\bX}_t, \rId \big\rangle (\omega_0), \Pi^{\langle \hat{\bY}_s, \rId\rangle, \langle \hat{\bY}_t, \rId\rangle} \Big] \cdot \bigotimes_{r=1}^{n} \Big\langle \big[ \hat{\bX}, \hat{\bY} \big]_{s, t}(\hat{\omega}_{a_r}), \rId\Big\rangle \bigg] \Bigg). 
\end{align*}
We address each of these terms individually. By using Equation \eqref{eq:TechLem_Products2}, we get that
\begin{align}
\nonumber
\hyperlink{Eq:Stab_1_one}{(1)} &= \sum_{a\in A^{\alpha, \beta} } \frac{1}{|a|!} \sum_{\substack{T_1, ..., T_{|a|}\in \scF \\ \scG_{\alpha, \beta}\big[ \cE^a[T_1, ..., T_{|a|}]\big]\geq \gamma}}^{\gamma-, \alpha, \beta} (\bE\times \hat{\bE})^{H^{\cE^a[T_1, ..., T_{|a|}]}}\Bigg[ (\bE\times \hat{\bE})^{ Z^a[T_1, ..., T_{|a|}]} \bigg[ 
\\
\nonumber
&\Bigg( \partial_a f\Big( \big\langle \bX_{s}, \rId \big\rangle (\omega_0), \cL^{\langle \bY_s, \rId\rangle}, ... \Big)(...,\omega_{\tilde{h}_{m[a]}^T})  - \partial_a f\Big( \big\langle \hat{\bX}_{s}, \rId \big\rangle (\omega_0), \cL^{\langle \hat{\bY}_s, \rId\rangle}, ... \Big)(...,\hat{\omega}_{\tilde{h}_{m[a]}^T}) \Bigg)
\\
\label{eq:theorem:Stability-ContinImage:one.1}
&\qquad \cdot
\bigotimes_{r=1}^{|a|} \Big\langle \big[ \hat{\bX}, \hat{\bY} \big]_s, T_r \Big\rangle (\hat{\omega}_{\tilde{h}_{a_r}^T}, \hat{\omega}_{H^{T_r}} ) \cdot \Big\langle \hat{\rw}_{s, t}, \cE^a[T_1, ..., T_{|a|}] \Big\rangle(\omega_0, \hat{\omega}_{H^{\cE^a[T_1, ..., T_{|a|}]}}) 
\\
\nonumber
+& \partial_a f\Big( \big\langle \bX_{s}, \rId \big\rangle (\omega_0), \cL^{\langle \bY_s, \rId\rangle}, ... \Big)(...,\omega_{\tilde{h}_{m[a]}^T}) \cdot \Bigg( \sum_{k=1}^{|a|} \bigg( \bigotimes_{r=1}^{k-1} \Big\langle \big[ \bX, \bY \big]_s, T_r \Big\rangle (\omega_{\tilde{h}_{a_r}^T}, \omega_{H^{T_r}}) \Big) 
\\
\nonumber
&\quad \otimes \Big\langle \big[ \bX, \bY \big]_s - \big[ \hat{\bX}, \hat{\bY} \big]_s, T_k \Big\rangle (\omega_{\tilde{h}_{a_k}^T}, \omega_{H^{T_k}}, \hat{\omega}_{\tilde{h}_{a_k}^T}, \hat{\omega}_{H^{T_k}}) \otimes \bigg( \bigotimes_{r=k+1}^{|a|} \Big\langle \big[ \bX, \bY \big]_s, T_r \Big\rangle (\hat{\omega}_{\tilde{h}_{a_r}^T}, \hat{\omega}_{H^{T_r}}) \bigg) \Bigg)
\\
\label{eq:theorem:Stability-ContinImage:one.2}
&\quad \cdot \Big\langle \hat{\rw}_{s, t}, \cE^a[T_1, ..., T_{|a|}] \Big\rangle(\omega_0, \hat{\omega}_{H^{\cE^a[T_1, ..., T_{|a|}]}})
\\
\nonumber
+& \partial_a f\Big( \big\langle \bX_{s}, \rId \big\rangle (\omega_0), \cL^{\langle \bY_s, \rId\rangle}, ... \Big) \cdot \bigotimes_{r=1}^{|a|} \Big\langle \big[ \bX, \bY \big]_s, T_r \Big\rangle (\omega_{\tilde{h}_{a_r}^T}, \omega_{H^{T_r}})) 
\\
\label{eq:theorem:Stability-ContinImage:one.3}
&\quad \cdot \Big\langle \rw_{s, t} - \hat{\rw}_{s, t}, \cE^a[T_1, ..., T_{|a|}] \Big\rangle(\omega_0, \omega_{H^{\cE^a[T_1, ..., T_{|a|}]}}, \hat{\omega}_{H^{\cE^a[T_1, ..., T_{|a|}]}}) \bigg] \Bigg]
\end{align}
Addressing each of these terms in turn, we see that
\begin{align*}
\eqref{eq:theorem:Stability-ContinImage:one.1} 
\leq&
\sum_{a\in A^{\alpha, \beta} } \frac{1}{|a|!} \sum_{\substack{T_1, ..., T_{|a|}\in \scF \\ \scG_{\alpha, \beta}\big[ \cE^a[T_1, ..., T_{|a|}]\big]\geq \gamma}}^{\gamma-, \alpha, \beta}
\bigg( \Big\| \partial_a f\Big\|_{\lip, 0} \cdot \bW_{\Pi}^{p_x[\rId]}\big[\rId\big]\Big( \bX_s, \hat{\bX}_s\Big)(\omega_0)
\\
&+ \Big\| \partial_a f\Big\|_{\lip, \mu} \cdot \bW_{\Pi}^{p_y[\rId]}\big[\rId\big]\Big( \bY_s, \hat{\bY}_s\Big) + \sum_{r=1}^{m[a]} \Big\| \partial_a f\Big\|_{\lip, r} \cdot \bW_{\Pi}^{p_y[\rId]}\big[\rId\big]\Big( \bY_s, \hat{\bY}_s\Big) \bigg)
\\
&\cdot
\Big\| \big\langle \hat{\bX}_s, T_r \big\rangle(\omega_0) \Big\|_{p_x[T_r]}^{l[a]_0} \cdot\prod_{r=1}^{m[a]} \Big\| \big\langle \hat{\bY}_s, T_r \big\rangle\Big\|_{p_y[T_r]}^{l[a]_r} \cdot \Big\| \big\langle \hat{\rw}_{s, t}, \cE^a[T_1, ..., T_{|a|}] \big\rangle(\omega_0) \Big\|_{q\big[ \cE^a[T_1, ..., T_{|a|}] \big]}, 
\end{align*}
\begin{align*}
\eqref{eq:theorem:Stability-ContinImage:one.2} 
\leq& 
\sum_{a\in A^{\alpha, \beta} } \frac{1}{|a|!} \sum_{\substack{T_1, ..., T_{|a|}\in \scF \\ \scG_{\alpha, \beta}\big[ \cE^a[T_1, ..., T_{|a|}]\big]\geq \gamma}}^{\gamma-, \alpha, \beta} \Big\| \partial_a f\Big\|_\infty \cdot \Bigg( \sum_{k=1}^{|a|} \prod_{r=1}^{k-1} \Big\| \big\langle [\bX, \bY]_s, T_r \big\rangle(\omega_{a_r}) \Big\|_{[a],p_x, p_y}  
\\
&\cdot \bW_{\Pi}^{[a],p_x,p_y} \big[T_k\big]\Big( [\bX,\bY]_s, [ \hat{\bX},\hat{\bY}]_s\Big)(\omega_{a_r}) \cdot \prod_{r=k+1}^{|a|} \Big\| \big\langle [\hat{\bX}, \hat{\bY}]_s, T_r \big\rangle(\omega_{a_r}) \Big\|_{[a],p_x,p_y}
\\
&\quad \cdot \Big\| \big\langle \hat{\rw}_{s, t}, \cE^a[T_1, ..., T_{|a|}] \big\rangle(\omega_0) \Big\|_{q\big[ \cE^a[T_1, ..., T_{|a|}] \big]}, 
\end{align*}
and
\begin{align*}
\eqref{eq:theorem:Stability-ContinImage:one.3} 
\leq& 
\sum_{a\in A^{\alpha, \beta} } \frac{1}{|a|!} \sum_{\substack{T_1, ..., T_{|a|}\in \scF \\ \scG_{\alpha, \beta}\big[ \cE^a[T_1, ..., T_{|a|}]\big]\geq \gamma}}^{\gamma-, \alpha, \beta} \Big\| \partial_a f\Big\|_\infty 
\cdot
\Big\| \big\langle \bX_s, T_r \big\rangle(\omega_0) \Big\|_{p_x[T_r]}^{l[a]_0} \cdot\prod_{r=1}^{m[a]} \Big\| \big\langle \bY_s, T_r \big\rangle\Big\|_{p_y[T_r]}^{l[a]_r} 
\\
&\quad \cdot \bW_{\Pi}^{q\big[ \cE^a[T_1, ..., T_{|a|}] \big]} \big[ \cE^a[T_1, ..., T_{|a|}] \big] \Big( \rw_{s, t}, \hat{\rw}_{s, t} \Big)(\omega_0). 
\end{align*}

Secondly,
\begin{align}
\nonumber
\hyperlink{Eq:Stab_1_two}{(2)} &= \sum_{a\in A^{\alpha, \beta}} \frac{1}{|a|!} (\bE \times \hat{\bE})^{1, ..., m[a]}\Bigg[ 
\\
\nonumber
& \Bigg( \partial_a f\Big( \big\langle \bX_{s}, \rId \big\rangle (\omega_0), \cL^{\langle \bY_s, \rId\rangle}, ... \Big)(...,\omega_{m[a]})  - \partial_a f\Big( \big\langle \hat{\bX}_{s}, \rId \big\rangle (\omega_0), \cL^{\langle \hat{\bY}_s, \rId\rangle}, ... \Big)(...,\hat{\omega}_{m[a]}) \Bigg)
\\
\label{eq:theorem:Stability-ContinImage:two.1}
& \cdot \sum_{k=1}^{|a|} \bigg( \bigotimes_{r=1}^{k-1} \Big\langle \fJ\big[ \hat{\bX}, \hat{\bY} \big]_{s, t}, \rId\Big\rangle(\hat{\omega}_{a_r}) \bigg) \otimes \Big\langle \big[\hat{\bX}, \hat{\bY} \big]_{s, t}^{\sharp}, \rId\Big\rangle(\hat{\omega}_{a_k}) \otimes \bigg( \bigotimes_{r=k+1}^{|a|} \Big\langle \big[ \hat{\bX}, \hat{\bY} \big]_{s, t}, \rId\Big\rangle(\hat{\omega}_{a_r}) \bigg) 
\\
\nonumber
+&\partial_a f\Big( \big\langle \bX_{s}, \rId \big\rangle (\omega_0), \cL^{\langle \bY_s, \rId\rangle}, ... \Big)(...,\omega_{m[a]}) \cdot \sum_{k=1}^{|a|} \Bigg( \sum_{l=1}^{k-1} \bigg( \bigotimes_{r=1}^{l-1} \Big\langle \fJ[\bX, \bY]_{s, t}, \rId\Big\rangle(\omega_{a_r}) \bigg) 
\\
\nonumber
&\quad \otimes \Big\langle \fJ[\bX, \bY]_{s, t} - \fJ[\hat{\bX}, \hat{\bY}]_{s, t}, \rId\Big\rangle(\omega_{a_l}, \hat{\omega}_{a_l}) \otimes \bigg( \bigotimes_{r=l+1}^{k-1} \Big\langle \fJ[\hat{\bX}, \hat{\bY}]_{s, t}, \rId\Big\rangle(\omega_{a_r}) \Bigg)
\\
\label{eq:theorem:Stability-ContinImage:two.2}
&\quad \otimes \Big\langle \big[\hat{\bX}, \hat{\bY} \big]_{s, t}^{\sharp}, \rId\Big\rangle(\hat{\omega}_{a_k}) \otimes \bigg( \bigotimes_{r=k+1}^{|a|} \Big\langle \big[ \hat{\bX}, \hat{\bY} \big]_{s, t}, \rId\Big\rangle(\hat{\omega}_{a_r}) \bigg) \Bigg)
\\
\nonumber
+& \partial_a f\Big( \big\langle \bX_{s}, \rId \big\rangle (\omega_0), \cL^{\langle \bY_s, \rId\rangle}, ... \Big)(...,\omega_{m[a]}) 
\cdot 
\sum_{k=1}^{|a|} \bigg( \bigotimes_{r=1}^{k-1} \Big\langle \fJ\big[ \bX, \bY \big]_{s, t}, \rId\Big\rangle(\omega_{a_r}) \bigg) 
\\
\label{eq:theorem:Stability-ContinImage:two.3}
&\quad \otimes 
\Big\langle [\bX, \bY]_{s, t}^{\sharp} - [\hat{\bX}, \hat{\bY} ]_{s, t}^{\sharp}, \rId\Big\rangle(\omega_{a_k}, \hat{\omega}_{a_k}) 
\otimes 
\bigg( \bigotimes_{r=k+1}^{|a|} \Big\langle \big[ \hat{\bX}, \hat{\bY} \big]_{s, t}, \rId\Big\rangle(\hat{\omega}_{a_r}) \bigg) 
\\
\nonumber
+& \partial_a f\Big( \big\langle \bX_{s}, \rId \big\rangle (\omega_0), \cL^{\langle \bY_s, \rId\rangle}, ... \Big)(...,\omega_{m[a]}) 
\cdot 
\sum_{k=1}^{|a|} \bigg( \bigotimes_{r=1}^{k-1} \Big\langle \fJ\big[ \bX, \bY \big]_{s, t}, \rId\Big\rangle(\omega_{a_r}) \bigg) 
\\
\nonumber
&\quad \otimes 
\Big\langle [\bX, \bY]_{s, t}^{\sharp}, \rId\Big\rangle(\omega_{a_k}) 
\otimes \Bigg( \sum_{l=k+1}^{|a|} \bigg( \bigotimes_{r=k+1}^{l-1} \Big\langle [\bX, \bY]_{s, t}, \rId\Big\rangle(\omega_{a_r}) \bigg) 
\\
\label{eq:theorem:Stability-ContinImage:two.4}
&\quad \otimes \Big\langle [\bX, \bY]_{s, t} - [\hat{\bX}, \hat{\bY}]_{s, t}, \rId\Big\rangle(\omega_{a_r}, \hat{\omega}_{a_r}) \otimes \bigg( \bigotimes_{r=l+1}^{|a|} \Big\langle [\hat{\bX}, \hat{\bY}]_{s, t}, \rId\Big\rangle(\hat{\omega}_{a_r}) \bigg) \Bigg) \Bigg]. 
\end{align}

Addressing each these terms in turn, we see that
\begin{align*}
\eqref{eq:theorem:Stability-ContinImage:two.1} \leq& \sum_{a\in A^{\alpha, \beta}} \frac{1}{|a|!}
\bigg( \Big\| \partial_a f\Big\|_{\lip, 0} \cdot \bW_{\Pi}^{p_x[\rId]}\big[\rId\big]\Big( \bX_s, \hat{\bX}_s\Big)(\omega_0) + \Big\| \partial_a f\Big\|_{\lip, \mu} \cdot \bW_{\Pi}^{p_y[\rId]}\big[\rId\big]\Big( \bY_s, \hat{\bY}_s\Big)
\\
& + \sum_{r=1}^{m[a]} \Big\| \partial_a f\Big\|_{\lip, r} \cdot \bW_{\Pi}^{p_y[\rId]}\big[\rId\big]\Big( \bY_s, \hat{\bY}_s\Big) \bigg)
\cdot
\sum_{k=1}^{|a|} \bigg( \prod_{r=1}^{k-1} \Big\| \big\langle \fJ[ \hat{\bX}, \hat{\bY} ]_{s, t}, \rId\big\rangle(\hat{\omega}_{a_r})\Big\|_{[a],p_x,p_y} \bigg) 
\\
&\quad \cdot \Big\| \big\langle [\hat{\bX}, \hat{\bY} ]_{s, t}^{\sharp}, \rId\big\rangle(\hat{\omega}_{a_k}) \Big\|_{[a],p_x,p_y} \cdot \bigg( \prod_{r=k+1}^{|a|} \Big\| \big\langle [ \hat{\bX},\hat{\bY}]_{s, t}, \rId\big\rangle(\hat{\omega}_{a_r})\Big\|_{[a],p_x,p_y} \bigg), 
\end{align*}
\begin{align*}
\eqref{eq:theorem:Stability-ContinImage:two.2} \leq&
\sum_{a\in A^{\alpha, \beta}} \frac{1}{|a|!}
\Big\| \partial_a f \Big\|_{\infty} \cdot \sum_{k=1}^{|a|} \Bigg( \sum_{l=1}^{k-1} \bigg( \prod_{r=1}^{l-1} \Big\| \big\langle \fJ[\bX, \bY]_{s, t}, \rId\big\rangle(\omega_{a_r}) \Big\|_{[a], p_x,p_y} \bigg) 
\\
&\quad \cdot \bW_{\Pi}^{[a],p_x,p_y}\big[\rId\big]\Big( \fJ[\bX, \bY]_{s, t}, \fJ[\hat{\bX}, \hat{\bY}]_{s, t}\Big)(\omega_{a_l}) \cdot \bigg( \prod_{r=l+1}^{k-1} \Big\| \big\langle \fJ[\hat{\bX}, \hat{\bY}]_{s, t}, \rId\big\rangle(\omega_{a_r}) \Big\|_{[a],p_x,p_y} \bigg)
\\
&\quad \cdot \Big\| \big\langle [\hat{\bX}, \hat{\bY}]_{s, t}^{\sharp}, \rId\big\rangle(\hat{\omega}_{a_k})\Big\|_{[a], p_x,p_y} \cdot \bigg( \bigotimes_{r=k+1}^{|a|} \Big\| \big\langle [\hat{\bX},\hat{\bY}]_{s, t}, \rId\big\rangle(\hat{\omega}_{a_r})\Big\|_{[a],p_x,p_y} \bigg) \Bigg), 
\end{align*}
\begin{align*}
\eqref{eq:theorem:Stability-ContinImage:two.3} \leq&
\sum_{a\in A^{\alpha, \beta}} \frac{1}{|a|!} 
\Big\| \partial_a f\Big\|_\infty \cdot 
\sum_{k=1}^{|a|} \bigg( \prod_{r=1}^{k-1} \Big\| \big\langle \fJ[\bX, \bY]_{s, t}, \rId\big\rangle(\omega_{a_r})\Big\|_{[a], p_x,p_y} 
\\
&\quad \cdot \bW_{\Pi}^{[a],p_x,p_y} \big[ \rId\big] \Big(
[\bX, \bY]_{s, t}^{\sharp}, [\hat{\bX}, \hat{\bY} ]_{s, t}^{\sharp}\Big)
\cdot \prod_{r=k+1}^{|a|} \Big\| \big\langle [\hat{\bX},\hat{\bY}]_{s, t}, \rId\big\rangle(\hat{\omega}_{a_r})\Big\|_{[a],p_x,p_y} \bigg) 
\end{align*}
and
\begin{align*}
\eqref{eq:theorem:Stability-ContinImage:two.4} \leq&
\sum_{a\in A^{\alpha, \beta}} \frac{1}{|a|!}
\Big\| \partial_a f \Big\|_{\infty} \cdot \sum_{k=1}^{|a|} \Bigg(  \bigg(\prod_{r=1}^{k-1} \Big\| \big\langle \fJ[\bX, \bY]_{s, t}, \rId\big\rangle(\omega_{a_r}) \Big\|_{[a], p_x,p_y} \bigg) 
\\
&\quad \cdot \Big\| \big\langle [\bX, \bY]_{s, t}^{\sharp}, \rId \big\rangle(\omega_{a_k})\Big\|_{[a], p_x,p_y} \cdot \bigg( \sum_{l=k+1}^{|a|} \bigg( \prod_{r=1}^{l-1} \Big\| \big\langle [\bX, \bY]_{s, t}, \rId\big\rangle(\omega_{a_r}) \Big\|_{[a], p_x,p_y} \bigg) 
\\
&\quad \cdot \bW_{\Pi}^{[a],p_x,p_y}\big[\rId\big]\Big( [\bX, \bY]_{s, t}, [\hat{\bX}, \hat{\bY}]_{s, t}\Big)(\omega_{a_l}) \cdot \bigg( \prod_{r=l+1}^{k-1} \Big\| \big\langle [\hat{\bX}, \hat{\bY}]_{s, t}, \rId\big\rangle(\omega_{a_r}) \Big\|_{[a],p_x,p_y} \bigg). 
\end{align*}

Finally, 
\begin{align}
\nonumber
\hyperlink{Eq:Stab_1_three}{(3)} =&  \sum_{a\in A_\ast^{\alpha, \beta}} \frac{1}{|a|!} \Bigg( (\bE\times \hat{\bE})^{1, ..., m[a]}\Bigg[ \bigg( \partial_a f\Big( \big\langle \bX_s, \rId \big\rangle (\omega_0), \cL^{\langle \bY_s, \rId\rangle}, ...\Big)(...,\omega_{m[a]})
\\
\label{eq:theorem:Stability-ContinImage:three.1}
&\quad - \partial_a f\Big( \big\langle \hat{\bX}_s, \rId \big\rangle (\omega_0), \cL^{\langle \hat{\bY}_s, \rId\rangle}, ...\Big)(...,\hat{\omega}_{m[a]}) \bigg) \cdot \bigg( \bigotimes_{r=1}^{|a|} \Big\langle \big[ \hat{\bX}, \hat{\bY} \big]_{s,t}, \rId\Big\rangle (\hat{\omega}_{a_r}) \bigg)
\\
\nonumber
&+\partial_a f\Big( \big\langle \bX_s, \rId \big\rangle (\omega_0), \cL^{\langle \bY_s, \rId\rangle}, ...\Big)(...,\omega_{m[a]})
\cdot
\sum_{k=1}^{|a|} \bigg( \bigotimes_{r=1}^{k-1} \Big\langle \big[ \bX, \bY \big]_{s,t}, \rId\Big\rangle (\omega_{a_r}) \bigg)
\\
\label{eq:theorem:Stability-ContinImage:three.2}
&\quad \otimes \Big\langle \big[ \bX, \bY \big]_{s,t} - \big[ \hat{\bX}, \hat{\bY} \big]_{s,t}, \rId\Big\rangle (\omega_{a_r}, \hat{\omega}_{a_r}) \otimes \bigg( \bigotimes_{r=k+1}^{|a|} \Big\langle \big[ \hat{\bX}, \hat{\bY} \big]_{s,t}, \rId\Big\rangle (\hat{\omega}_{a_r}) \bigg) \Bigg]
\\
\nonumber
+& \tfrac{1}{ (n -1)!} \sum_{a \in \A{n} } (\bE\times \hat{\bE})^{1, ..., m[a]}\Bigg[ \bigg( f^a \Big[ \big\langle \bX_s, \rId \big\rangle (\omega_0), \big\langle \bX_t, \rId \big\rangle (\omega_0), \Pi^{\langle \bY_s, \rId\rangle, \langle \bY_t, \rId\rangle} \Big] (...,\omega_{m[a]})
\\
\label{eq:theorem:Stability-ContinImage:three.3}
&\quad - f^a \Big[ \big\langle \hat{\bX}_s, \rId \big\rangle (\omega_0), \big\langle \hat{\bX}_t, \rId \big\rangle (\omega_0), \Pi^{\langle \hat{\bY}_s, \rId\rangle, \langle \hat{\bY}_t, \rId\rangle} \Big](...,\hat{\omega}_{m[a]}) \bigg) \cdot \bigotimes_{r=1}^{n} \Big\langle \big[ \hat{\bX}, \hat{\bY} \big]_{s, t}(\hat{\omega}_{a_r}), \rId\Big\rangle \bigg]
\\
\nonumber
&+ f^a \Big[ \big\langle \bX_s, \rId \big\rangle (\omega_0), \big\langle \bX_t, \rId \big\rangle (\omega_0), \Pi^{\langle \bY_s, \rId\rangle, \langle \bY_t, \rId\rangle} \Big] (...,\omega_{m[a]}) \cdot \sum_{k=1}^{|a|} \bigg( \bigotimes_{r=1}^{k-1} \Big\langle \big[ \bX, \bY \big]_{s,t}, \rId\Big\rangle (\omega_{a_r}) \bigg)
\\
\label{eq:theorem:Stability-ContinImage:three.4}
&\quad \otimes \Big\langle \big[ \bX, \bY \big]_{s,t} - \big[ \hat{\bX}, \hat{\bY} \big]_{s,t}, \rId\Big\rangle (\omega_{a_r}, \hat{\omega}_{a_r}) \otimes \bigg( \bigotimes_{r=k+1}^{|a|} \Big\langle \big[ \hat{\bX}, \hat{\bY} \big]_{s,t}, \rId\Big\rangle (\hat{\omega}_{a_r}) \bigg) \Bigg].  
\end{align}

Addressing each these terms in turn, we see that
\begin{align*}
\eqref{eq:theorem:Stability-ContinImage:three.1} \leq& 
\sum_{a\in A_\ast^{\alpha, \beta}} \frac{1}{|a|!}
\bigg( \Big\| \partial_a f\Big\|_{\lip, 0} \cdot \bW_{\Pi}^{p_x[\rId]}\big[\rId\big]\Big( \bX_s, \hat{\bX}_s\Big)(\omega_0) + \Big\| \partial_a f\Big\|_{\lip, \mu} \cdot \bW_{\Pi}^{p_y[\rId]}\big[\rId\big]\Big( \bY_s, \hat{\bY}_s\Big)
\\
&\quad+ \sum_{r=1}^{m[a]} \Big\| \partial_a f\Big\|_{\lip, r} \cdot \bW_{\Pi}^{p_y[\rId]}\big[\rId\big]\Big( \bY_s, \hat{\bY}_s\Big) \bigg)
\cdot
\prod_{r=1}^{|a|} \Big\| \big\langle [\hat{\bX},\hat{\bY}]_{s,t}, \rId \big\rangle (\omega_{a_r}) \Big\|_{[a],p_x,p_y}
\end{align*}
and
\begin{align*}
\eqref{eq:theorem:Stability-ContinImage:three.2} \leq&
\sum_{a\in A_\ast^{\alpha, \beta}} \frac{1}{|a|!}
\Big\| \partial_a f \Big\|_{\infty}
\cdot
\sum_{k=1}^{|a|} \bigg( \prod_{r=1}^{k-1} \Big\| \big\langle [\bX,\bY]_{s,t}, \rId\big\rangle(\omega_{a_r})\Big\|_{[a],p_x,p_y} 
\\
&\quad \cdot \bW_{\Pi}^{[a],p_x,p_y}\big[ \rId\big]\Big( [\bX,\bY]_{s,t},  [\hat{\bX},\hat{\bY}]_{s,t}\Big)(\omega_{a_r}) \cdot \prod_{r=k+1}^{|a|} \Big\| \big\langle [\hat{\bX},\hat{\bY}]_{s,t}, \rId \big\rangle (\omega_{a_r})\Big\|_{[a], p_x,p_y} \bigg)
\end{align*}
Thanks to Equation \eqref{eq:theorem:ContinIm-RCRPs1.2_rem1} and the assumption that $f\in C_b^{n+1, (n+1)}\big( \bR^d \times \cP_2(\bR^d)\big)$, we can additionally say that 
\begin{align*}
\eqref{eq:theorem:Stability-ContinImage:three.3}
=&
\frac{1}{n!} \sum_{a\in \A{n+1}} (\bE \times \hat{\bE})^{1, ..., m[a]} \Bigg[ \int_0^1 \partial_a f\Big( \big\langle \bX_s + \xi \cdot \bX_{s, t}, \rId\big\rangle(\omega_0), \Pi_{\xi}^{\langle \bY_s, \rId\rangle, \langle \bY_t, \rId\rangle}, ...\Big)\cdot (1-\xi)^{n} d\xi
\\
&\qquad \cdot \Big\langle [\bX, \bY]_{s, t}, \rId\Big\rangle(\omega_{a_{n+1}}) \otimes \bigotimes_{r=1}^n \Big\langle [\hat{\bX}, \hat{\bY}]_{s,t}, \rId\Big\rangle(\hat{\omega}_{a_r})
\\
&- \int_0^1 \partial_a f\Big( \big\langle \hat{\bX}_s + \xi \cdot \hat{\bX}_{s, t}, \rId\big\rangle(\omega_0), \Pi_{\xi}^{\langle \hat{\bY}_s, \rId\rangle, \langle \hat{\bY}_t, \rId\rangle}, ...\Big)\cdot (1-\xi)^{n} d\xi
\\
&\qquad \cdot \Big\langle [\hat{\bX}, \hat{\bY}]_{s, t}, \rId\Big\rangle(\hat{\omega}_{a_{n+1}}) \otimes \bigotimes_{r=1}^n \Big\langle [\hat{\bX}, \hat{\bY}]_{s, t}, \rId\Big\rangle(\hat{\omega}_{a_r})\Bigg]
\\
\leq& \frac{1}{n!} \sum_{a\in \A{n+1}} \Bigg( \Big\| \partial_a f\Big\|_{\lip, 0} \cdot \bW_{\Pi}^{p_x[\rId]}\big[\rId\big]\Big( \bX_s, \hat{\bX}_s\Big)(\omega_0) + \Big\| \partial_a f\Big\|_{\lip, \mu} \cdot \bW_{\Pi}^{p_y[\rId]}\big[\rId\big]\Big( \bY_s, \hat{\bY}_s\Big)
\\
&+ \sum_{r=1}^{m[a]} \Big\| \partial_a f\Big\|_{\lip, r} \cdot \bW_{\Pi}^{p_y[\rId]}\big[\rId\big]\Big( \bY_s, \hat{\bY}_s\Big) \Bigg)
\cdot 
\Big\| \big\langle [\bX, \bY]_{s, t}, \rId\big\rangle(\omega_{a_{n+1}})\Big\|_{[a],p_x,p_y} 
\\
&\qquad \cdot \prod_{r=1}^n \Big\| \big\langle [\hat{\bX}, \hat{\bY}]_{s,t}, \rId\big\rangle(\hat{\omega}_{a_r}) \Big\|_{[a],p_x,p_y}
\\
+&\frac{1}{n!} \sum_{a\in \A{n+1}} \Big\| \partial_a f \Big\|_{\infty} \cdot \bW_{\Pi}^{[a], p_x,p_y}\big[\rId\big]\Big( [\bX, \bY]_{s, t}, [\hat{\bX}, \hat{\bY}]_{s, t}\Big)(\omega_{a_{n+1}}) \cdot \prod_{r=1}^n \Big\| \big\langle [\hat{\bX}, \hat{\bY}]_{s,t}, \rId\big\rangle(\hat{\omega}_{a_r}) \Big\|_{[a],p_x,p_y}
\end{align*}
and
\begin{align*}
\eqref{eq:theorem:Stability-ContinImage:three.4} \leq& \frac{1}{n!} \sum_{a\in \A{n+1}} \Big\| \partial_a f \Big\|_\infty \cdot \sum_{k=1}^n \bigg( \prod_{r=1}^{k-1} \Big\| \big\langle [\bX,\bY]_{s,t}, \rId \big\rangle (\omega_{a_r}) \Big\|_{[a],p_x,p_y} \bigg) 
\\
&\cdot \bW_{\Pi}^{[a],p_x,p_y}\big[\rId\big]\Big( [\bX, \bY]_{s,t}, [\hat{\bX}, \hat{\bY}]_{s, t}\Big)(\omega_{a_k}) 
\cdot 
\bigg( \prod_{r=k+1}^n \Big\| \big\langle [\hat{\bX},\hat{\bY}]_{s,t}, \rId \big\rangle (\omega_{a_r}) \Big\|_{[a],p_x,p_y} \bigg) 
\\
&\qquad \cdot \Big\| \big\langle [\bX,\bY]_{s,t}, \rId \big\rangle (\omega_{a_{n+1}}) \Big\|_{[a],p_x,p_y}
\end{align*}

By combining all of this together and applying Proposition \ref{proposition:CompareRCRP} many times over, we can construct a polynomial $\fP_{\rId}$ such that
\begin{align*}
\sup_{s, t\in[u,v]}& \frac{\bW_{\Pi}^{p[\rId]}\big[ \rId \big] \Big( \bZ_{s, t}^{\sharp}, \hat{\bZ}_{s, t}^{\sharp}\Big)(\omega_0) }{|t-s|^{\gamma}} \leq \| f\|_{C_b^{n+1, (n+1)}}
\\
& \cdot \Bigg( \rho_{(\alpha, \beta, p, q)}\Big( \rw, \hat{\rw}\Big)(\omega_0) \cdot \fP_{\rId, 1}
+ \bigg( d_{\rw, \hat{\rw}, \gamma, 0} \Big( \bX, \hat{\bX} \Big)(\omega_0)
+ d_{\rw, \hat{\rw}, \gamma} \Big( \bY, \hat{\bY} \Big)\bigg) \cdot \fP_{\rId,2} \Bigg)
\end{align*}
where (recalling Equation \eqref{eq:Max_2norms})
\begin{align*}
\fP_{\rId, i}=& \fP_{\rId, i}\Bigg( \sum_{T\in\scF_0}^{\gamma-, \alpha, \beta} \Big\| \big\langle \{\bX^{\sharp}\vee \hat{\bX}^{\sharp}, T \big\rangle(\omega_0) \Big\|_{p_x[T], \gamma - \scG_{\alpha, \beta}[T]}
, 
\sum_{T\in \scF}^{\gamma-, \alpha, \beta} \Big\| \big\langle \{\bX_u\vee\hat{\bX}_u\}, T \big\rangle(\omega_0) \Big\|_{p_x[T]}, 
\\
&\qquad 
\sum_{T\in\scF_0}^{\gamma-, \alpha, \beta} \Big\| \big\langle \{ \bY^{\sharp} \vee \hat{\bY}^{\sharp}, T \big\rangle \Big\|_{p_y[T], \gamma - \scG_{\alpha, \beta}[T]}
, 
\sum_{T\in \scF}^{\gamma-, \alpha, \beta} \Big\| \big\langle \{ \bY_u \vee \bY_u\}, T\big\rangle \Big\|_{p_y[T]}, 
\\
&\qquad \sum_{T\in \scF}^{\gamma, \alpha, \beta} \Big\| \big\langle \{\rw\vee\hat{\rw} \}, T\big\rangle(\omega_0) \Big\|_{q[T], \scG_{\alpha, \beta}[T]}
, 
|v-u|^\alpha
, 
|v-u|^{\beta} \Bigg). 
\end{align*}

By detailed evaluation of the upper bound established for $\hyperlink{Eq:Stab_1_one}{(1)}$ $\hyperlink{Eq:Stab_1_two}{(2)}$ and $\hyperlink{Eq:Stab_1_three}{(3)}$, we observe that every time we have a term associated to the function $f$, this is included in the set
\begin{align*}
\bigg\{& \Big\| \partial_a f \Big\|_\infty : a\in \bigcup_{k=1}^{n+1} \A{k} \bigg\} \cup \bigg\{ \Big\| \partial_a f \Big\|_{\lip, 0}, \quad \Big\| \partial_a f \Big\|_{\lip, \mu}, \quad \Big\| \partial_a f \Big\|_{\lip, j}: j=1, ..., m[a], \quad a\in \A{n+1} \bigg\}
\\
&\cup \bigg\{ \Big\| \partial_a f \Big\|_{\lip, 0}, \quad \Big\| \partial_a f \Big\|_{\lip, \mu}, \quad \Big\| \partial_a f \Big\|_{\lip, j}: j=1, ..., m[a], \quad a\in \bigcup_{k=1}^{n} \A{k} \bigg\}. 
\end{align*}
Using the standard estimates
$$
\Big\| \partial_a f \Big\|_{\lip, j} \leq \Big\| \partial_{a, j} f \Big\|_{\infty}, \quad
\Big\| \partial_a f \Big\|_{\lip, \mu} \leq \Big\| \partial_{a, m[a]+1} f \Big\|_{\infty}
$$
we conclude that we can use $\| f\|_{C_b^{n+1, (n+1)}}$ as an upper bound. Using the same intuition as in Remark, we would expect to see polynomials of order $n+2$, but we must also take care to accommodate the comparative terms. Thus, we divide our upper bound into two collections of terms: those containing any terms from the set
\begin{equation}
\label{eq:theorem:Stability-ContinImage:set.1}
\bigg\{ \bW_{\Pi}^{q[T]} \big[ T\big] \Big( \rw, \hat{\rw}\Big) : \quad T\in \scF^{\gamma, \alpha, \beta} \bigg\}. 
\end{equation}
and those containing any of the terms from the set
\begin{equation}
\label{eq:theorem:Stability-ContinImage:set.2}
\bigg\{ \bW_{\Pi}^{p_x[T]} \big[ T\big]\Big( \bX, \hat{\bX}\Big)(\omega_0), \quad \bW_{\Pi}^{p_y[T]} \big[ T\big]\Big( \bY, \hat{\bY}\Big) : T\in \scF_0^{\gamma-, \alpha, \beta} \bigg\}
\end{equation}
We note there are no product of terms in our upper bound that contains a term from both set \eqref{eq:theorem:Stability-ContinImage:set.1} and set \eqref{eq:theorem:Stability-ContinImage:set.2}. Terms that contain an element of \eqref{eq:theorem:Stability-ContinImage:set.1} will contribute to the polynomial $\fP_{\rId, 1}$ and terms that contain an element of \eqref{eq:theorem:Stability-ContinImage:set.1} will contribute to the polynomial $\fP_{\rId, 2}$. 

We can apply Proposition \ref{proposition:CompareRCRP} and Proposition \ref{prop:Regu-RCRP} to all terms in the upper bound to conclude that any terms that contain an element from \eqref{eq:theorem:Stability-ContinImage:set.1} will contain at most $n+2$ products from the set
\begin{align}
\nonumber
\bigg\{& \Big\| \big\langle \{\bX_u\vee \hat{\bX}_u \}, T\rangle(\omega_0)\Big\|_{p_x[T]}, \quad 
\Big\| \big\langle \{ \bX^{\sharp} \vee \hat{\bX}^{\sharp}\}, T\rangle(\omega_0)\Big\|_{p_x[T], \gamma - \scG_{\alpha, \beta}[T]}, 
\\
\label{eq:theorem:Stability-ContinImage:set.3}
&\Big\| \big\langle \{ \bY_u \vee \hat{\bY}_u\}, T\rangle\Big\|_{p_y[T]}, \quad 
\Big\| \big\langle \{ \bY^{\sharp} \vee \hat{\bY}\}, T\rangle\Big\|_{p_y[T], \gamma - \scG_{\alpha, \beta}[T]} 
: \quad T\in \scF_0^{\gamma-, \alpha, \beta}
\bigg\}
\end{align}
and a product of one less terms from the set
\begin{equation}
\label{eq:theorem:Stability-ContinImage:set.4}
\bigg\{ 1, \quad \Big\| \big\langle\{ \rw\vee \hat{\rw} \}, T\big\rangle(\omega_0)\Big\|_{q[T], \scG_{\alpha, \beta}[T]}: T\in \scF^{\gamma, \alpha, \beta} \bigg\}. 
\end{equation}

By contrast, any terms that contain an element from \eqref{eq:theorem:Stability-ContinImage:set.2} will contain at most $n+1$ products from the set \eqref{eq:theorem:Stability-ContinImage:set.3} and at most n+1 terms from set \eqref{eq:theorem:Stability-ContinImage:set.4}. 

Finally, whenever a term of the form $|v-u|$ occurs, it is always to a positive power expressible of the form $\alpha \cdot i_6 + \beta \cdot i_7$. However, as before it should be clear that there are constructions where the integers $i_6$ and $i_7$ may not be positive. 

Thus, the polynomials $\fP_{\rId,1}, \fP_{\rId, 2}: \big(\bR^+\big)^{\times 7} \to \bR^+$ expressed as 
\begin{align*}
\fP_{\rId,1} \Big( x_1, x_2, y_1, y_2, w, t_1, t_2 \Big) =& \sum_{i\in I_{\rId, 1}} C_i \cdot x_1^{i_1} \cdot x_2^{i_2} \cdot y_1^{i_3} \cdot y_2^{i_4} \cdot w^{i_5} \cdot t_1^{i_6} \cdot t_2^{i_7}, 
\\
\fP_{\rId,1} \Big( x_1, x_2, y_1, y_2, w, t_1, t_2 \Big) =& \sum_{j \in J_{\rId, 2}} C_j \cdot x_1^{j_1} \cdot x_2^{j_2} \cdot y_1^{j_3} \cdot y_2^{j_4} \cdot w^{j_5} \cdot t_1^{j_6} \cdot t_2^{j_7}. 
\end{align*}
satisfies that $i_1, ..., i_5, j_1, ..., j_5 \in \bN_0$, $i_1+i_2+i_3+i_4 \leq n+2$,$j_1+j_2+j_3+j_4 \leq n+1$, and $i_5+1 \leq i_1 + i_2 + i_3 + i_4$ and $j_5 \leq j_1 + j_2 + j_3 + j_4$. This verifies the additional claims made in Remark \ref{remark:Polynomial-comments2}. 

To conclude, we need to prove a similar result for a general Lions tree $Y\in \scF^{\gamma-, \alpha, \beta}$: To do this, we start with Equation \eqref{eq:Remainder_Y} and proceed in a similar fashion as above. There are no additional issues and Equation \eqref{eq:theorem:Stability-ContinImage} follows. 
\end{proof}

\begin{bibdiv}
\begin{biblist}

\bib{Ambrosio2008Gradient}{book}{
      author={Ambrosio, Luigi},
      author={Gigli, Nicola},
      author={Savar\'{e}, Giuseppe},
       title={Gradient flows in metric spaces and in the space of probability
  measures},
     edition={Second},
      series={Lectures in Mathematics ETH Z\"{u}rich},
   publisher={Birkh\"{a}user Verlag, Basel},
        date={2008},
        ISBN={978-3-7643-8721-1},
      review={\MR{2401600}},
}

\bib{Bailleul2015Flows}{article}{
      author={Bailleul, Isma\"{e}l},
       title={Flows driven by rough paths},
        date={2015},
        ISSN={0213-2230},
     journal={Rev. Mat. Iberoam.},
      volume={31},
      number={3},
       pages={901\ndash 934},
         url={https://doi.org/10.4171/RMI/858},
      review={\MR{3420480}},
}

\bib{2019arXiv180205882.2B}{article}{
      author={Bailleul, Isma\"{e}l},
      author={Catellier, R\'{e}mi},
      author={Delarue, Fran\c{c}ois},
       title={Solving mean field rough differential equations},
        date={2020},
     journal={Electron. J. Probab.},
      volume={25},
       pages={Paper No. 21, 51},
         url={https://doi.org/10.1214/19-ejp409},
      review={\MR{4073682}},
}

\bib{boedihardjo2020lipschitz}{article}{
      author={Boedihardjo, Horatio},
      author={Geng, Xi},
       title={Lipschitz-stability of controlled rough paths and rough
  differential equations},
        date={2020},
     journal={arXiv preprint},
      eprint={2009.13084},
}

\bib{buckdahn2017mean}{article}{
      author={Buckdahn, Rainer},
      author={Li, Juan},
      author={Peng, Shige},
      author={Rainer, Catherine},
       title={Mean-field stochastic differential equations and associated
  {PDE}s},
        date={2017},
        ISSN={0091-1798},
     journal={Ann. Probab.},
      volume={45},
      number={2},
       pages={824\ndash 878},
         url={https://doi.org/10.1214/15-AOP1076},
      review={\MR{3630288}},
}

\bib{chassagneux2014classical}{article}{
      author={Chassagneux, Jean-Fran{\c{c}}ois},
      author={Crisan, Dan},
      author={Delarue, Fran{\c{c}}ois},
       title={Classical solutions to the master equation for large population
  equilibria},
        date={2014},
     journal={arXiv preprint},
      eprint={1411.3009},
}

\bib{CarmonaDelarue2017book1}{book}{
      author={Carmona, Ren\'{e}},
      author={Delarue, Fran\c{c}ois},
       title={Probabilistic theory of mean field games with applications. {I}},
      series={Probability Theory and Stochastic Modelling},
   publisher={Springer, Cham},
        date={2018},
      volume={83},
        ISBN={978-3-319-56437-1; 978-3-319-58920-6},
        note={Mean field FBSDEs, control, and games},
      review={\MR{3752669}},
}

\bib{CarmonaDelarue2017book2}{book}{
      author={Carmona, Ren\'{e}},
      author={Delarue, Fran\c{c}ois},
       title={Probabilistic theory of mean field games with applications.
  {II}},
      series={Probability Theory and Stochastic Modelling},
   publisher={Springer, Cham},
        date={2018},
      volume={84},
        ISBN={978-3-319-56435-7; 978-3-319-56436-4},
        note={Mean field games with common noise and master equations},
      review={\MR{3753660}},
}

\bib{connes1999hopf}{incollection}{
      author={Connes, A.},
      author={Kreimer, D.},
       title={Hopf algebras, renormalization and noncommutative geometry},
        date={1999},
   booktitle={Quantum field theory: perspective and prospective ({L}es
  {H}ouches, 1998)},
      series={NATO Sci. Ser. C Math. Phys. Sci.},
      volume={530},
   publisher={Kluwer Acad. Publ., Dordrecht},
       pages={59\ndash 108},
      review={\MR{1725011}},
}

\bib{CassLyonsEvolving}{article}{
      author={Cass, Thomas},
      author={Lyons, Terry},
       title={Evolving communities with individual preferences},
        date={2015},
        ISSN={0024-6115},
     journal={Proc. Lond. Math. Soc. (3)},
      volume={110},
      number={1},
       pages={83\ndash 107},
         url={https://doi.org/10.1112/plms/pdu040},
      review={\MR{3299600}},
}

\bib{cartier2021hopf}{book}{
      author={Cartier, Pierre},
      author={Patras, Fr\'{e}d\'{e}ric},
       title={Classical {H}opf algebras and their applications},
      series={Algebra and Applications},
   publisher={Springer, Cham},
        date={2021},
      volume={29},
        ISBN={978-3-030-77844-6; 978-3-030-77845-3},
         url={https://doi.org/10.1007/978-3-030-77845-3},
      review={\MR{4369962}},
}

\bib{deuschel2017enhanced}{article}{
      author={Deuschel, Jean-Dominique},
      author={Friz, Peter~K.},
      author={Maurelli, Mario},
      author={Slowik, Martin},
       title={The enhanced {S}anov theorem and propagation of chaos},
        date={2018},
        ISSN={0304-4149},
     journal={Stochastic Process. Appl.},
      volume={128},
      number={7},
       pages={2228\ndash 2269},
         url={https://doi.org/10.1016/j.spa.2017.09.010},
      review={\MR{3804792}},
}

\bib{salkeld2021Probabilistic}{article}{
      author={Delarue, Francois},
      author={Salkeld, William},
       title={Probabilistic rough paths {I} lions trees and coupled hopf
  algebras},
        date={2021},
     journal={arXiv preprint},
      eprint={2106.09801},
}

\bib{salkeld2021Probabilistic3}{article}{
      author={Delarue, Francois},
      author={Salkeld, William},
       title={Probabilistic rough paths {III}},
        date={2022},
     journal={To appear on ArXiv},
}

\bib{frizhairer2014}{book}{
      author={Friz, Peter~K.},
      author={Hairer, Martin},
       title={A course on rough paths},
      series={Universitext},
   publisher={Springer, Cham},
        date={2014},
        ISBN={978-3-319-08331-5; 978-3-319-08332-2},
         url={https://doi.org/10.1007/978-3-319-08332-2},
        note={With an introduction to regularity structures},
      review={\MR{3289027}},
}

\bib{GangboDifferentiability2019}{article}{
      author={Gangbo, Wilfrid},
      author={Tudorascu, Adrian},
       title={On differentiability in the {W}asserstein space and
  well-posedness for {H}amilton-{J}acobi equations},
        date={2019},
        ISSN={0021-7824},
     journal={J. Math. Pures Appl. (9)},
      volume={125},
       pages={119\ndash 174},
         url={https://doi.org/10.1016/j.matpur.2018.09.003},
      review={\MR{3944201}},
}

\bib{gubinelli2004controlling}{article}{
      author={Gubinelli, M.},
       title={Controlling rough paths},
        date={2004},
        ISSN={0022-1236},
     journal={J. Funct. Anal.},
      volume={216},
      number={1},
       pages={86\ndash 140},
         url={https://doi.org/10.1016/j.jfa.2004.01.002},
      review={\MR{2091358}},
}

\bib{gubinelli2010ramification}{article}{
      author={Gubinelli, Massimiliano},
       title={Ramification of rough paths},
        date={2010},
        ISSN={0022-0396},
     journal={J. Differential Equations},
      volume={248},
      number={4},
       pages={693\ndash 721},
         url={https://doi-org.ezproxy.is.ed.ac.uk/10.1016/j.jde.2009.11.015},
      review={\MR{2578445}},
}

\bib{hairer2014theory}{article}{
      author={Hairer, M.},
       title={A theory of regularity structures},
        date={2014},
        ISSN={0020-9910},
     journal={Invent. Math.},
      volume={198},
      number={2},
       pages={269\ndash 504},
         url={https://doi.org/10.1007/s00222-014-0505-4},
      review={\MR{3274562}},
}

\bib{Huang2006Large}{article}{
      author={Huang, Minyi},
      author={Malham\'{e}, Roland~P.},
      author={Caines, Peter~E.},
       title={Large population stochastic dynamic games: closed-loop
  {M}c{K}ean-{V}lasov systems and the {N}ash certainty equivalence principle},
        date={2006},
        ISSN={1526-7555},
     journal={Commun. Inf. Syst.},
      volume={6},
      number={3},
       pages={221\ndash 251},
         url={http://projecteuclid.org/euclid.cis/1183728987},
      review={\MR{2346927}},
}

\bib{Jordan1998variation}{article}{
      author={Jordan, Richard},
      author={Kinderlehrer, David},
      author={Otto, Felix},
       title={The variational formulation of the {F}okker-{P}lanck equation},
        date={1998},
        ISSN={0036-1410},
     journal={SIAM J. Math. Anal.},
      volume={29},
      number={1},
       pages={1\ndash 17},
         url={https://doi.org/10.1137/S0036141096303359},
      review={\MR{1617171}},
}

\bib{kac1956foundations}{inproceedings}{
      author={Kac, M.},
       title={Foundations of kinetic theory},
        date={1956},
   booktitle={Proceedings of the {T}hird {B}erkeley {S}ymposium on
  {M}athematical {S}tatistics and {P}robability, 1954--1955, vol. {III}},
   publisher={University of California Press, Berkeley and Los Angeles},
       pages={171\ndash 197},
      review={\MR{0084985}},
}

\bib{kurtz1999particle}{article}{
      author={Kurtz, Thomas~G.},
      author={Xiong, Jie},
       title={Particle representations for a class of nonlinear {SPDE}s},
        date={1999},
        ISSN={0304-4149},
     journal={Stochastic Process. Appl.},
      volume={83},
      number={1},
       pages={103\ndash 126},
         url={https://doi.org/10.1016/S0304-4149(99)00024-1},
      review={\MR{1705602}},
}

\bib{lasry2007mean}{article}{
      author={Lasry, Jean-Michel},
      author={Lions, Pierre-Louis},
       title={Mean field games},
        date={2007},
     journal={Japanese journal of mathematics},
      volume={2},
      number={1},
       pages={229\ndash 260},
      review={\MR{2295621}},
}

\bib{lyons1998differential}{article}{
      author={Lyons, Terry~J.},
       title={Differential equations driven by rough signals},
        date={1998},
        ISSN={0213-2230},
     journal={Rev. Mat. Iberoamericana},
      volume={14},
      number={2},
       pages={215\ndash 310},
         url={https://doi.org/10.4171/RMI/240},
      review={\MR{1654527}},
}

\bib{meleard1996asymptotic}{incollection}{
      author={M\'{e}l\'{e}ard, Sylvie},
       title={Asymptotic behaviour of some interacting particle systems;
  {M}c{K}ean-{V}lasov and {B}oltzmann models},
        date={1996},
   booktitle={Probabilistic models for nonlinear partial differential equations
  ({M}ontecatini {T}erme, 1995)},
      series={Lecture Notes in Math.},
      volume={1627},
   publisher={Springer, Berlin},
       pages={42\ndash 95},
         url={https://doi.org/10.1007/BFb0093177},
      review={\MR{1431299}},
}

\bib{McKean1966}{article}{
      author={McKean, H.~P., Jr.},
       title={A class of {M}arkov processes associated with nonlinear parabolic
  equations},
        date={1966},
        ISSN={0027-8424},
     journal={Proc. Nat. Acad. Sci. U.S.A.},
      volume={56},
       pages={1907\ndash 1911},
         url={https://doi-org.ezproxy.is.ed.ac.uk/10.1073/pnas.56.6.1907},
      review={\MR{221595}},
}

\bib{reutenauer2003free}{incollection}{
      author={Reutenauer, Christophe},
       title={Free {L}ie algebras},
        date={2003},
   booktitle={Handbook of algebra, {V}ol. 3},
      series={Handb. Algebr.},
      volume={3},
   publisher={Elsevier/North-Holland, Amsterdam},
       pages={887\ndash 903},
         url={https://doi.org/10.1016/S1570-7954(03)80075-X},
      review={\MR{2035110}},
}

\bib{Sznitman}{incollection}{
      author={Sznitman, Alain-Sol},
       title={Topics in propagation of chaos},
        date={1991},
   booktitle={{\'E}cole d{'}{{\'E}}t\'e de {P}robabilit\'es de {S}aint-{F}lour
  {XIX}---1989},
      series={Lecture Notes in Math.},
      volume={1464},
   publisher={Springer, Berlin},
       pages={165\ndash 251},
         url={http://dx.doi.org/10.1007/BFb0085169},
      review={\MR{1108185}},
}

\bib{TseHigher2021}{article}{
      author={Tse, Alvin},
       title={Higher order regularity of nonlinear {F}okker-{P}lanck {PDE}s
  with respect to the measure component},
        date={2021},
        ISSN={0021-7824},
     journal={J. Math. Pures Appl. (9)},
      volume={150},
       pages={134\ndash 180},
         url={https://doi.org/10.1016/j.matpur.2021.04.005},
      review={\MR{4248465}},
}

\bib{tapia2020geometry}{article}{
      author={Tapia, Nikolas},
      author={Zambotti, Lorenzo},
       title={The geometry of the space of branched rough paths},
        date={2020},
        ISSN={0024-6115},
     journal={Proc. Lond. Math. Soc. (3)},
      volume={121},
      number={2},
       pages={220\ndash 251},
         url={https://doi.org/10.1112/plms.12311},
      review={\MR{4093955}},
}

\end{biblist}
\end{bibdiv}

\end{document}